\newif\ifdraft
\newcommand{\draftAAA}[1]{\ifdraft{\color{blue}#1}\fi}
\newcommand{\draftGGG}[1]{\ifdraft{\color{red}#1}\fi}
\newcommand{\draftRRR}[1]{\ifdraft{\color{green}#1}\fi}
\newcommand{\Aone}{\areaonecod}
\newcommand{\area}{\mathcal A}
\newcommand{\areaonecod}{\mathbb A}
\newcommand{\angcoord}{\omega}
\newcommand{\axialcoordofcylinder}{{w_1}}
\newcommand{\badset}{D_k}
\newcommand{\BallR}{{\Omega}}
\newcommand{\Balleps}{{\sourcedisk_{r_k}}}
\newcommand{\BV}{{\rm BV}}
\newcommand{\Coneeps}{{{C}_k}}
\newcommand{\currentgengraphFkJkzero}{{\mathcal G}^+_{k,\eps}}
\newcommand{\currentgengraphminusFkJkzero}{{\mathcal G}_{k,\eps}^-}
\newcommand{\currgraphk} {\jump{G_{u_k}}}
\newcommand{\deltaeps}{\delta_ k}
\newcommand{\diffuku} {d_k}
\newcommand{\dirdatum}{\varphi}
\newcommand{\dts}{\Sigma}
\newcommand{\dtsm}{\Sigma^-}
\newcommand{\dtsn}{\Sigma_n}
\newcommand{\dtsp}{\Sigma^+}
\newcommand{\dtsnm}{\Sigma_n^-}
\newcommand{\dtsnp}{\Sigma_n^+}
\newcommand{\domalaa}{X_{2\longR}^{\rm conv}}
\newcommand{\doubledrectangle} {R_{2\longR}}
\def\eps{\varepsilon}
\newcommand\extpsi{\widehat \psi}
\newcommand{\FB}{{\mathcal F}_{2\longR}}
\newcommand{\FBl}{{\mathcal F}_\longR}
\newcommand{\Fke}{\vartheta_{k,\eps}}
\newcommand{\graphh}{G_ \h}
\newcommand\grad{\nabla}
\newcommand{\GhatFkfour}{\mathcal G_{\hatFke}^{(4)}}
\newcommand{\GFkethree}{\mathcal G_{\Fke}^{(3)}}
\newcommand{\GminushatFkfour}{\mathcal G_{-\hatFke}^{(4)}}
\newcommand{\GplusminushatFkfour}{\mathcal G_{\pm\hatFke}^{(4)}}
\newcommand{\GminusFkethree}{\mathcal G_{-\Fke}^{(3)}}
\newcommand{\GplusminusFkethree}{\mathcal G_{\pm\Fke}^{(3)}}
\newcommand{\h}{h}
\newcommand{\hatFke}{\widehat \vartheta_{k,\eps}}
\newcommand{\hke}{h_{k,\eps}}
\newcommand{\hn}{{\h_n}}
\newcommand{\Hone}{\mathcal{H}^1}
\newcommand{\Htwo}{\mathcal{H}^2}
\newcommand{\Hricc}{\mathcal H_\longR}
\newcommand{\Hspace}{\mathcal H_{2l}}
\newcommand{\hstar}{{\h^{\star}}}
\newcommand{\hstark}{{\h^{\star}_k}}
\newcommand{\intannulus}{\mathsf{a}_{k,\eps}}
\newcommand{\invr}{R_k}
\newcommand{\invtheta}{\Theta_k}
\newcommand{\Jkzerotwopi}{J_{k,\eps}^{0,2\pi}}
\newcommand{\JQke}{J_{\Qke}}
\newcommand{\jump}[1]{\text{{\rm \textlbrackdbl}}{#1}\text{{\rm \textrbrackdbl}}}
\newcommand{\Lh}{L_\h}
\newcommand{\Lip}{{\rm Lip}}
\newcommand{\Lone}{L^1}
\newcommand{\longR}{l}
\newcommand{\limitcurrent}{\mathcal{T}}
\newcommand{\M}{\mathcal{M}}
\newcommand{\maxuk}{\vert u_k\vert^+}
\newcommand{\minuk}{\vert u_k\vert^-}
\newcommand{\modh}{h^\star}
\newcommand{\modpsi}{\psi^\star}
\newcommand{\modhbis}{h^\#}
\newcommand{\modpsibis}{\psi^\#}
\newcommand{\nada}[1]{}
\def\NN{\mathbb{N}}
\newcommand{\OmAlaa}{\doubledrectangle}
\newcommand{\Omegah}{\subgraph_\h}
\newcommand{\Omegahn}{\subgraph_\hn}
\newcommand{\Omegahstar}{\subgraph_{\hstar}}
\newcommand{\Omegahstark}{\subgraph_{\hstark}}
\newcommand{\Oke}{O_{k,\eps}}
\newcommand{\oldDom}{X_\longR}
\newcommand{\Om}{\Omega}
\newcommand{\pmcurrentgengraphFkJkzero}{{\mathcal G}^{\pm}_{k,\eps}}
\newcommand{\pointP}{\mathcal P}
\newcommand{\pointQ}{\mathcal Q}
\newcommand{\psione}{\psi}
\newcommand{\psionen}{\psione_n}
\newcommand{\psionestar}{\psi^\star}
\newcommand{\psionekstar}{\psi^\star_k}
\newcommand{\partialbar}{ \partial_D}
\newcommand{\piPsiDk}{{\mathfrak D}_k}
\newcommand{\piPsiDkXik}{\widehat {\mathfrak D}_k}
\newcommand{\projlambdak}{\pi_{\lambda_k}}
\newcommand{\psike}{\psi_{k,\eps}}
\newcommand{\Qke}{Q_{k,\eps}}
\newcommand{\R}{\mathbb{R}}
\newcommand{\radialdifference}{\lambda}
\newcommand{\relarea}{\overline \area}
\newcommand{\reps}{{r_k}}
\newcommand{\res}{\mathop{\hbox{\vrule height 7pt width 0.5pt depth 0pt
\vrule height 0.5pt width 6pt depth 0pt}}\nolimits}
\newcommand{\scoord}{{w_2}}
\newcommand{\scalarfunction}{\psi}
\newcommand{\scriptHkeps}{{\mathcal H}_{k,\eps}}
\newcommand{\seps}{s_k}
\newcommand{\Sigmake}{\Sigma_{k,\eps}}
\newcommand{\Sone}{\mathbb{S}^1}
\newcommand{\sourcedisk}{{\rm B}}
\newcommand{\sourceradialcoordinate}{r}
\newcommand{\sourceangularcoordinate}{\alpha}
\newcommand{\stripfour}{S^{(4)}_{k,\eps}}
\newcommand{\stripone}{S^{(1)}_{k,\eps}}
\newcommand{\striptwo}{S^{(2)}_{k,\eps}}
\newcommand{\subgraph}{SG}
\newcommand{\subgraphh}{SG_h}
\def\supp{\,{\rm supp \ }}
\newcommand{\tcoord}{{w_1}}
\newcommand{\thetaeps}{\theta_ k}
\newcommand{\thetaepsbar}{\overline \theta_ k}
\newcommand{\teps}{\tau_k}
\newcommand{\Teps}{T_ k}
\newcommand{\vmap}{u}
\newcommand{\veps}{{\vmap_ k}}
\newcommand{\Vke}{\mathcal V_{k,\eps}}
\newcommand{\vortexmap}{u}
\newcommand{\Xik}{{\mathcal W}_k}
\newcommand{\Wspace}{X_\longR^{\rm conv}}
\numberwithin{equation}{section}
\mathchardef\emptyset="001F
\newtheorem{theorem}{Theorem}[section]
\newtheorem{definition}[theorem]{Definition}
\newtheorem{prop}[theorem]{Proposition}
\newtheorem{cor}[theorem]{Corollary}
\newtheorem{lemma}[theorem]{Lemma}
\theoremstyle{definition}
\newtheorem{remark}[theorem]{Remark}
\newtheorem{Remark}[theorem]{Remark}
\title{The $L^1$-relaxed area of the graph of the vortex map}
\author{
Giovanni Bellettini\footnote{
Dipartimento di Ingegneria dell'Informazione e Scienze Matematiche, Universit\`a di Siena, 53100 Siena, Italy,
and International Centre for Theoretical Physics ICTP,
Mathematics Section, 34151 Trieste, Italy.
E-mail: bellettini@diism.unisi.it
                      }\and
Alaa Elshorbagy\footnote{
Technische Universit\"at Dortmund, 
Fakult\"at f\"ur Mathematik, 
44227 Dortmund, Germany. E-mail: alaa.elshorbagy@math.tu-dortmund.de 
                         }
                          \and
Riccardo Scala\footnote{ 
Dipartimento di Ingegneria dell'Informazione e Scienze Matematiche, Universit\`a di Siena, 53100 Siena, Italy.
E-mail: riccardo.scala@unisi.it
}
}
\begin{document}

\maketitle

\begin{abstract}
We compute the value of the $L^1$-relaxed area of the graph of the 
map $\vortexmap : \sourcedisk_\longR(0)\subset \R^2 
\to \R^2$, $\vortexmap(x):= x/\vert x\vert$,
$x \neq 0$, for all values of $\longR>0$. Interestingly, for $\longR$ in 
a certain range, in particular $\longR$ not too large, a Plateau-type 
problem, having as solution 
a sort of catenoid constrained to contain a segment,  has to be solved.
\end{abstract}

\noindent {\bf Key words:}~~Relaxation, Cartesian currents, area functional, minimal surfaces, Plateau problem.

\vspace{2mm}

\noindent {\bf AMS (MOS) subject clas\-si\-fi\-ca\-tion:} 
49Q15, 49Q20, 49J45, 58E12.

\tableofcontents
\section{Introduction}\label{sec:introduction}
Determining the domain and the expression of 
the relaxed area functional for graphs of nonsmooth maps 
in codimension greater than $1$ 
is a challenging problem whose solution is far from being reached. 
Given a bounded open set $\Omega \subset \R^n$ 
and a map $v:\Omega \rightarrow\R^N$ of class $C^1$, 
the area of the graph of $v$ over $\Omega$ is given by the classical formula 
\begin{align}\label{area_classical}
 \area(v,\Omega)=\int_\Omega|\mathcal M(\nabla v)|~dx,
\end{align}
where $\mathcal M(\nabla v)$ is the vector whose entries are the 
determinants of the minors of the gradient $\nabla v$ of $v$ 
of all orders\footnote{By convention, the determinant
of order $0$ is $1$.} $k$, $0\leq k\leq\min\{n,N\}$.
Classical methods of relaxation suggest to consider the functional
defined, for any $v\in L^1(\Omega,\R^N)$, as
\begin{equation}\label{area_relax}
 \relarea(v,\Omega):=\inf
\Big\{\liminf_{k\rightarrow +\infty}\area(v_k,\Omega)\Big\},
\end{equation}
and called \emph{(sequential) relaxed area functional}. 
The infimum is computed 
over all sequences of maps $v_k\in C^1(\Omega,\R^N)$ 
approaching $v$ in $L^1(\Omega,\R^N)$. 
The results of Acerbi and Dal Maso \cite{AcDa:94} show that $\relarea(\cdot,\Om)$ 
extends $\area(\cdot,\Om)$ and is $L^1$-lower-semicontinuous. This procedure of relaxation, 
besides extending the notion of graph's area to non-smooth maps, is 
needed also because $\mathcal A(\cdot,\Omega)$ 
is not   $L^1$-lower-semicontinuous\footnote{When $n=N=2$, there 
are sequences $(v_k)\subset W^{1,p}(\Om, \R^2)$,
with $p \in [1,2)$,
weakly converging in $W^{1,p}(\Omega,\R^2)$ 
to a smooth map $v$ for which $\mathcal A(v,\Omega)>
\limsup_{k\rightarrow+\infty}\mathcal A(v_k,\Omega)$,
where $\area(v_k,\Omega)$ is defined as for $C^1$-maps in 
\eqref{area_classical}, with
the determinant of $\grad v_k$ intended in the almost everywhere pointwise sense;
see \cite[Counterexample 7.4]{Ball_Murat:84} and \cite{AcDa:94}.
This counterexample must be slightly modified, considering
$u_k(x) = k x + \lambda (x/\Vert x\Vert_\infty -x)$
for $x \in [-1/k,1/k]$, with $\lambda > 0$ satisfying $(1+\lambda^2)/2 > \sqrt{1+\lambda^2}$,
in order to get the strict inequality above.
}, in 
contrast with  similar polyconvex functionals that enjoy a growth condition of the form $F(u)\geq C|\mathcal M(\nabla u)|^p$ for some $C>0$, and suitable $p>1$ (see, {\it e.g.},  \cite{Morrey:66,Dacorogna:89,FuHu:95}).

When $N=1$ it is possible to characterize 
the domain of $\relarea(\cdot,\Omega)$ 
and its expression \cite{DalMaso:80}:
$\relarea(v,\Omega)$ 
is finite 
if and only if $v\in BV(\Omega)$, in which case 
\begin{align}\label{integral_formula}
\relarea(v,\Omega)= 
\int_\Omega\sqrt{1+|\nabla v|^2}dx+|D^sv|(\Omega),
\end{align}
$\nabla v$ and $D^sv$ representing the absolutely continuous and singular parts of the distributional gradient $Dv$ of $v$. 
Formula \eqref{integral_formula} is a basic 
example of non-parametric variational integral that is a measure 
when considered as a function of $\Omega$
\cite{GS:64}, and is crucial, among others, 
in the study of capillarity
problems \cite{Finn:86}, and in the analysis 
of the Cartesian Plateau problem \cite{Giusti:84}. 
The case $N>1$ (referred here to as the case of codimension greater than $1$) 
is much more involved.  Again, 
one of
its main motivations is 
the study of the Cartesian Plateau problem in higher codimension; in 
addition, from 
the point of view of Calculus of Variations, it is of interest
in those vector minimum problems involving nonconvex integrands with
nonstandard growth \cite{Ball:77}, \cite{Dacorogna:89}, \cite{GiMoSu:98}.

Let us restrict  our attention to the case $n=N=2$.
For a map $v \in C^1(\Omega, \R^2)$ and $\Omega \subset \subset \R^2$,
$\mathcal A(v,\Omega)$
coincides with the area of the graph 
$G_v:=\{(x,y)\in \Omega\times\R^2:y=v(x)\}$ of $v$ seen as a 
Cartesian surface of codimension $2$ in $\Omega\times\R^2$, and is given by
\begin{equation*}
 \area(v,\Omega)
=\int_\Omega\sqrt{1+|\nabla v(x_1,x_2)|^2+|Jv(x_1,x_2)|^2}~dx_1dx_2.
\end{equation*}
Here $\nabla v$ is the gradient of $v$, a $2\times2$ matrix, 
$\vert \nabla v\vert^2$ is the sum of the squares of all
elements of $\nabla v$, 
and $Jv$ is the Jacobian determinant of $v$, {\it i.e.}, the determinant of $\nabla v$.
It is worth to point out once more a couple of relevant difficulties
arising when the codimension is greater than $1$: the 
functional $\mathcal A(\cdot,\Omega)$ is 
no longer convex, but just polyconvex;  in addition
it has a sort of unilateral linear growth, in 
the sense that it is bounded below, but not necessarily above, by the total
variation. 
A characterization of the domain of $\relarea(\cdot, \Omega)$ and of its expression is,
at the moment, not available. Specifically, it is 
only known that the domain of $\relarea(\cdot,\Omega)$ 
is a proper subset of $BV(\Omega,\R^2)$, 
and that integral representation 
formulas such as \eqref{integral_formula} (on the domain of 
$\relarea(\cdot, \Omega)$) are not
possible. This is due to the 
additional difficulty that in general, for a fixed map $v$, 
the set function $A\subseteq \Omega \mapsto \relarea (v,A)$ 
may be not subadditive, and in particular it cannot be a measure (as opposite
to what happens in codimension $1$ for a large class of non-parametric variational integrals \cite{GS:64}). 
This interesting phenomenon 
was conjectured by De Giorgi \cite{DeGiorgi:92} for the 
triple junction map $u_T:\Omega=\sourcedisk_\longR(0)\rightarrow\R^2$, and proved in \cite{AcDa:94}, where the authors exhibited 
three subsets $\Omega_1, \Omega_2, \Omega_3$ of the open disk
$\sourcedisk_\longR(0)$ of radius $\longR$ centered at $0$,
such that 
\begin{align}\label{non-locality}
 \Omega_1\subset \Omega_2\cup\Omega_3\qquad\text{and }\qquad \relarea 
(u_T,\Omega_1)>\relarea(u_T,\Omega_2)+\relarea(u_T,\Omega_3).
\end{align}
The triple junction map $u_T \in BV(\Omega,\R^2)$ 
takes only three values $\alpha,\beta,\gamma\in \R^2$, 
the vertices of an equilateral triangle, in three circular $120^o$-degree
sectors of $\Omega$ meeting at $0$.
The same authors show that  the non-locality property \eqref{non-locality} holds also for the Sobolev
map $u(x)=\frac{x}{|x|}$, called here the vortex map,  where $\Omega$ is a 
ball of radius $\longR$ centered at the origin,
the singular point, and $n=N \geq 3$.
For these two maps $u_T$ and $u$ much effort
has been done to understand the exact value of the area 
functional;
the corresponding geometric problem stands in finding the optimal
way, in terms of area, to ``fill the holes'' of the graph 
of $u_T$ and $u$ (two non-smooth $2$-dimensional sets of 
codimension two) with limits
of sequences of smooth two-dimensional graphs. 
 In \cite{AcDa:94} it is proved that both $u_T$ and $u$ have finite relaxed area, but 
only lower and upper bounds were available for $u_T$, whereas the sharp estimate for $u$ is provided only for $l$ large enough. For the 
triple junction map
 $u_T$ an improvement is obtained in 
\cite{BePa:10}, where it is exhibited a sequence $(u_k)$ of Lipschitz maps 
$u_k:\sourcedisk_\longR(0)\rightarrow\R^2$ 
converging to $u$ in $L^1(\Om, \R^2)$, such 
that
\begin{align*}
\lim_{k \to +\infty}
 \area(u_k,\sourcedisk_\longR(0)) = |\mathcal G_{u_T}|+3m_\longR,
\end{align*}
where $|\mathcal G_{u_T}|$ is the area of the graph of $u_T$ 
out of the jump set, and $m_\longR$ 
is the area of an area-minimizing surface, solution of a Plateau-type problem in $\R^3$. 
Roughly speaking, three entangled area-minimizing surfaces with area $m_\longR$ 
(each sitting in a copy of $\R^3 \subset \R^4$, the three $\R^3$'s being 
mutually nonparallel)
are needed in $\sourcedisk_\longR(0)
\times \R^2$ to ``fill the holes'' left by the graph $\mathcal G_{u_T} $ of $u_T$, which is not boundaryless ({\it i.e.}, 
the boundary as a 
current is nonzero). The optimality of $(u_k)$ was also conjectured
in \cite{BePa:10}, and proven subsequently in  
\cite{Scala:19}, where a crucial tool is 
a symmetrization technique for boudaryless 
integral currents.

In the present paper we compute the value of the relaxed area functional 
for the \emph{vortex map} $u$ in two dimensions.
That is,  
\begin{equation}\label{vortexmapdef}
 \vortexmap(x):=\frac{x}{|x|}, \qquad x \in \Om \setminus \{0\}, \ 
\Om = \sourcedisk_\longR(0) \subset \R^2. 
\end{equation}
Observe that $u$ belongs to $W^{1,p}(\Omega,\R^2)$ for all $p \in [1,2)$,
and that the image of $u$ is the one-dimensional unit circle $\mathbb S^1\subset\R^2$, 
so that  $Ju(x)= {\rm det}(\grad u(x)) =0$ for all 
$x\in \Omega \setminus \{0\}$. 
In \cite[Lemma 5.2]{AcDa:94}, the authors show\footnote{In \cite{AcDa:94} the 
proof of \eqref{valueA_llarge} is given also for $N=n\geq 2$, where now $\pi$ 
in \eqref{valueA_llarge} is replaced by $\omega_n$.} that, 
for $\longR$ large enough,
\begin{align}\label{valueA_llarge}
 \relarea(u,\sourcedisk_\longR(0))
=|\mathcal G_u|+\pi=\int_{\sourcedisk_\longR(0)}\sqrt{1+|\nabla u|^2}dx+\pi.
\end{align}
With the aid of an example, they also show that $\relarea(u,
\sourcedisk_l(0))$ must be strictly smaller than the right-hand side
of \eqref{valueA_llarge}, 
since there is a sequence of $C^1$-maps 
approximating $u$ and having, asymptotically, a lower
value of $\area(\cdot, \Om)$. 
We anticipate here 
that, when $\longR$ is small, the above mentioned 
sequence is not optimal, and the construction 
of a recovery sequence for $\relarea(u,\sourcedisk_\longR)$ 
is much more involved and requires to solve a sort of Plateau-type 
problem in $\R^3$ with singular boundary, with a part of multiplicity $2$. 
Equivalently, with a reflection argument
with respect to a plane,  it can be seen as a non-parametric Plateau-type problem with 
a partial free boundary; one of our results (Theorem \ref{prop_zero}, valid for 
any $\longR>0$)
consists in the analysis of solutions of this problem, 
in particular we show that, excluding a singular configuration\footnote{This
corresponds to assumption (iii) in Lemma \ref{lemma_reg2_bis}.}, there is a non-parametric solution
attaining a zero boundary condition on the free part. 

In order to give an idea of how
the value $\pi$ in \eqref{valueA_llarge} pops up, it is convenient to introduce the tool of Cartesian currents.
One can regard the graphs $G_v=\{(x,y)\in \Omega\times \R^2:y=v(x)\}$ of $C^1$ maps $v:\Omega\rightarrow\R^2$ as integer multiplicity 
$2$-currents in $\Omega\times \R^2$. It is seen that a 
sequence $(G_{u_k})$ with $u_k$ 
approaching $u$ and with $\sup_k \area(u_k,\Om)<+\infty$, 
converges\footnote{This is a consequence of Federer-Fleming closure theorem.}, up to subsequences, to a Cartesian current $T$ which splits as $T=\mathcal G_u+S$, with $S$ a vertical integral current such that $\partial S=-\partial \mathcal G_u$.
A direct computation shows that 
$$\partial \mathcal G_u=-\delta_0\times \partial \jump{B_1}$$ 
(see \cite[Section 3.2.2]{GiMoSu:98}),
so that the problem of determining the value of $\relarea(u,\Omega)$ is somehow related to the 
computation of the mass of a {mass-minimizing} vertical current $S_\textrm{min}\in \mathcal D_2(\Omega\times \R^2)$ satisfying 
\begin{align}\label{boundary_of_u}
 \partial S_\textrm{min}=\delta_0\times \partial \jump{B_1}\qquad \text{in }\mathcal D_1(\Omega\times \R^2).
\end{align}
In some cases, and in particular for $\longR$ large, these two problems are related, and it turns out that $S_{\rm min}
=\delta_0\times\jump{B_1}$, whose mass is $\pi$. However $S_\textrm{min}\neq \delta_0\times\jump{B_1}$ for $\longR$ small. Moreover, the two problems of determining $S_\textrm{min}$ and the value of the relaxed area 
functional are, unfortunately, not
 related in general. This is mainly due  to the following two obstructions:
\begin{itemize}
 \item we have to guarantee that the current $\mathcal G_u+S_\textrm{min}$ is obtained as a limit of smooth graphs, that is not easy to establish since not all Cartesian currents can be obtained as such limits (see \cite[Section 4.2.2]{GiMoSu:98});
 \item even if $\mathcal G_u+S_\textrm{min}$ is 
limit of graphs $\mathcal G_{u_k}$ of smooth maps $u_k$, nothing ensures 
that $\area(u_k,\Omega)\rightarrow\relarea(u,\Omega)$, 
due to possible cancellations of the currents $\mathcal G_{u_k}$ that, 
in the limit, might overlap with opposite orientation.
\end{itemize}

Actually, in many cases, as in the one considered in this paper, 
for an optimal sequence $(u_k)$ 
realizing the value of $\relarea(u,\Omega)$, it holds
\begin{align}\label{Sopt}
\mathcal G_{u_k}\rightharpoonup\mathcal G_u+S_{\textrm{opt}}\neq \mathcal G_u+S_{\textrm{min}},
\end{align}
and the limit vertical part $S_{\textrm{opt}}$ 
satisfies $|S_{\textrm{opt}}|>|S_{\textrm{min}}|$. 
For instance, if $l$ is small, 
it is possible to construct a sequence $(\widehat u_k)$ 
approaching $u$ which is not a recovery sequence 
for $\relarea(u,\Om)$, but whose limit vertical part $S_{\textrm{min}}$ has mass strictly smaller than the one of $S_{\textrm{opt}}$ (see Section \ref{sec:catenoid_with_a_flap}). In this case, a 
suitable projection of $S_\textrm{min}$ in $\mathbb R^3$ is half of a classical area-minimizing catenoid between two unit circles at distance $2l$ from each other.

An additional source of difficulties 
in the computation of $\relarea(u, \Om)$ is due to an 
example \cite{Scala:19}
valid for the triple junction map $u_T$, 
 and showing that the equality
\begin{align}\label{triplevalue}
 \relarea(u_T,\Omega)=|\mathcal G_{u_T}|+3m_\longR
\end{align}
holds only under some additional requirements; for instance if the 
triple junction point is exactly located at
the origin $0$ and the domain is a 
disc $\Omega=\sourcedisk_\longR(0)$ around it. In particular, for different domains, \eqref{triplevalue} is no longer valid, and $S_\textrm{opt}$ is a vertical current whose support projection on $\Omega$ is a set connecting the triple point with $\partial\Omega$, and which does not coincide
 with (neither is a subset of) the jump set of $u_T$ (see 
\cite[Example in Section 6]{Scala:19} and also \cite{BeElPaSc:19} for other non-symmetric settings).

A similar behaviour of the vertical part $S_\textrm{opt}$ 
holds for $u$: when $\longR$ is small, 
the projection of 
$S_\textrm{opt}$ on $\sourcedisk_\longR(0)$ concentrates over a radius connecting $0$ to $\partial \sourcedisk_\longR(0)$,
see Fig. \ref{elliss_1}, left. However, if the domain $\Omega$ loses its symmetry, 
almost nothing is known about $S_\textrm{opt}$.

This kind of phenomena have been observed also in 
other cases, as in \cite{BePaTe:15,BePaTe:16} where $BV$-maps  $u:\Omega\rightarrow\R^2$
with a prescribed discontinuity on a curve (jump set) are considered. The creation of such ``phantom bridges'' 
between the singularities of the 
map $u$ and the boundary of the domain is very specific of the choice of the $L^1$ topology 
in the computation of $\relarea(\cdot, \Omega)$. 
Other choices are possible, giving rise to different relaxed 
functionals\footnote{Relaxing $\area(\cdot,\Om)$ in stronger
	topologies $\tau$ is possible; however, this would make more
	difficult to prove, eventually, $\tau$-coercivity of 
$\relarea(\cdot,\Omega)$. In 
	addition, it could destroy the interesting
	nonlocal phenomena related to the 
	appearence of certain nonstandard Plateau problems,
	which are the focus of this paper.} 
(see  \cite{BePaTe:15,BePaTe:16}).  

The nonlocality and the uncontrollability of $S_\textrm{opt}$ are more and more evident if we try to generalize \eqref{triplevalue} dropping the assumption that the range of $u_T$ 
consists of the vertices of an equilateral triangle. If we assume that $u_T$ takes values in $\{\alpha,\beta,\gamma\}$, three generic (not aligned)
points in $\R^2$ then, also if the domain of $u_T$ is symmetric, there is no 
sharp computation of $\relarea(u_T,\Omega)$.
In this case, the analysis  is related to an entangled Plateau problem, where three area-minimizing discs have as partial free boundary three curves connecting the couples of points in $\{\alpha,\beta,\gamma\}$, respectively, and where these three curves are forced to overlap. Some  partial results had been obtained in \cite{BeElPaSc:19}, where the authors find an upper bound for $\relarea(u_T,\Omega)$. However the question of finding the value of $\relarea(\cdot, \Omega)$ for this piecewise constant maps seems to be 
difficult.

Before stating our main results we need to fix some 
notation\footnote{The relation with the map $u$ will be clear after Section \ref{sec:lower_bound}; at this point we remark that  $R_{2\longR}$ has its first coordinate which is essentially the radial coordinate in the source $\Omega=\sourcedisk_\longR(0)$, 
and the second coordinate is instead the first coordinate $\rho$ in the target space $\mathbb R^2$. The 
graph of the function $\varphi$ is (half of) the lateral boundary of a cylinder, which coincides 
with (one half of) the image of the map $\Omega\ni x=(\rho,\theta)\mapsto (\rho,u(x))$.}. 
For $\longR>0$ we denote $R_{2\longR}:=(0,2l)\times (-1,1)$
and let $\partial_DR_{2l}:=(\{0,2l\}\times[-1,1])\cup((0,2l)\times \{-1\})$ be
what we call the Dirichlet boundary  of $R_{2\longR}$.
Define  
$\varphi: \partial_D R_{2\longR} \rightarrow [0,1]$ as
$\varphi(t,s)
:= \sqrt{1-s^2}$ 
if $(t,s) \in \{0,2\longR\} \times [-1,1]$
and $\varphi(t,s):=0$ if $(t,s) \in (0,2\longR) \times \{-1\}$.
Let 
\begin{align*}
&\widetilde {\mathcal H}_{2l}:=\{
h:[0,2l]\rightarrow [-1,1], \ h ~{\rm continuous},~ h(0)=h(2l)=1
\},
\\
&\mathcal X_{D,\varphi}:=\{\psi\in W^{1,1}(R_{2l}):\psi=\varphi\text{ on }\partial_DR_{2l}\},
\end{align*}
and for any $h\in \widetilde {\mathcal H}_{2l}$ set
$ G_h:=\{(t,s)\in R_{2l}:s=h(t)\}$ and $SG_h:=\{(t,s)\in R_{2l}:s\leq h(t)\}$ (see Fig. \ref{fig:graphofphi} for a view of the setting). The 
main result of the present paper (see Theorems \ref{teo:step1} and \ref{Thm:maintheorem}) reads as follows: 

\begin{theorem}\label{teo_intro}
 Let $N=n=2$, $l>0$ and $u:\sourcedisk_\longR(0)\rightarrow\R^2$ be the vortex map defined in \eqref{vortexmapdef}. Then 
 \begin{align}\label{value_main}
 \relarea (u,\sourcedisk_\longR(0))
=\int_{\sourcedisk_\longR(0)}\sqrt{1+|\nabla u|^2}dx+\inf\{ \mathcal A(\psi,SG_h):(h,\psi)\in \widetilde {\mathcal H}_{2l}\times 
\mathcal X_{D,\varphi},\;\psi=0\text{ on }G_h\}. 
 \end{align}
\end{theorem}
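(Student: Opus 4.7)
The plan is to prove Theorem \ref{teo_intro} by establishing a matching pair of inequalities: $\relarea(u, \sourcedisk_\longR(0))$ is at most the right-hand side of \eqref{value_main} (upper bound, via a recovery sequence for each admissible pair $(h, \psi)$), and at least the right-hand side (lower bound, via an analysis at the level of Cartesian currents). Because the vortex map is rotationally equivariant, both bounds should be engineered so as to reduce an a priori four-dimensional area computation in $\sourcedisk_\longR(0) \times \R^2$ to a non-parametric problem on the rectangle $R_{2\longR}$. The pair $(h, \psi)$ then codifies, respectively, the shape of the vertical part of the limit current (whose axial section in $R_{2\longR}$ is $SG_h$) and the scalar height of that vertical part, with the boundary conditions $\psi = \varphi$ on $\partial_D R_{2\longR}$ and $\psi = 0$ on $G_h$ corresponding to matching with $\mathcal G_u$ and to the forced self-overlap of the filling.

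For the upper bound, I would fix an admissible pair $(h, \psi)$ and construct $u_k \in C^1(\sourcedisk_\longR(0), \R^2)$ by combining two regions. Away from a thin axisymmetric neighbourhood of the origin, $u_k$ will coincide with a smooth mollification of $u$, contributing $|\mathcal G_u|$ in the limit. Inside this neighbourhood, I would use $\psi$ as the blueprint of an axisymmetric vertical filler: working in polar coordinates $(r, \alpha)$ on the source and lifting $\psi$ to an $\alpha$-parametrized family of curves in the target $\R^2$, this produces a graph whose area, by Fubini in $\alpha$ and the non-parametric area formula in $(t, s)$-coordinates, converges to $\mathcal A(\psi, SG_h)$. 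The hypotheses $\psi = \varphi$ on the vertical sides of $R_{2\longR}$ and $\psi = 0$ on both the bottom side and on $G_h$ are exactly what is needed so that this construction matches the mollified vortex map along the transition circle and closes continuously, with no spurious area in the limit $k \to +\infty$.

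For the lower bound, I would take any $u_k \in C^1(\sourcedisk_\longR(0), \R^2)$ with $u_k \to u$ in $L^1$ and $\liminf_k \mathcal A(u_k, \sourcedisk_\longR(0)) < +\infty$, and invoke Federer--Fleming compactness to extract a Cartesian limit $T = \mathcal G_u + S$, where $S$ is a vertical integer multiplicity current satisfying $\partial S = \delta_0 \times \partial \jump{B_1}$. A rotational symmetrization step applied to $S$ (or, at the level of sequences, to the graphs $\mathcal G_{u_k}$) provides an axisymmetric competitor of mass no larger than $|S|$, which is then representable as the rotation of a non-parametric scalar graph $\psi$ over some subdomain of $R_{2\longR}$. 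Reading off the upper envelope of its projection identifies the profile $h$; the boundary condition $\partial S = \delta_0 \times \partial \jump{B_1}$ translates into the Dirichlet datum $\psi = \varphi$ on $\partial_D R_{2\longR}$; and the forced cancellations of opposite-orientation sheets yield $\psi = 0$ on $G_h$. Lower semicontinuity of the non-parametric area functional then gives $|S| \geq \mathcal A(\psi, SG_h)$, closing the bound.

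The principal obstacle will be the lower bound, and more precisely the rigorous axisymmetric rearrangement of the vertical current $S$: one has to justify that the rearranged competitor lies in the non-parametric class used on the right-hand side of \eqref{value_main}, in particular that $h$ is a well-defined element of $\widetilde{\mathcal H}_{2\longR}$ (continuous, with value $1$ at both endpoints) and that the free-boundary condition $\psi = 0$ on $G_h$ is genuinely forced in the limit. This is precisely the point addressed by the analysis announced as Theorem \ref{prop_zero}, which guarantees that (away from a singular configuration) the associated Plateau problem admits a non-parametric solution with zero data on the free part, thereby identifying the class of competitors in \eqref{value_main} as the correct one.
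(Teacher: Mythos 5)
Your overall two-sided strategy (a lower bound and an upper bound, both designed to reduce to the non-parametric problem on $R_{2\longR}$) is the one the paper uses, but both halves of your plan contain a structural gap that the paper has to work hard to circumvent, and neither is a matter of mere detail.

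For the lower bound, passing to a Cartesian limit $T=\mathcal G_u+S$ via Federer--Fleming and then invoking lower semicontinuity of mass gives $\liminf_k\area(u_k,\Omega)\geq |\mathcal G_u|+|S|$, but this estimate is in general \emph{lossy}: $|S|$ can be strictly smaller than $\inf\mathcal A(\psi,SG_h)$. The paper makes this point explicitly in the introduction, and exhibits in Section~\ref{sec:catenoid_with_a_flap} a non-optimal sequence whose limit vertical part $S_{\rm min}$ is (a projection of) half a catenoid, with $|S_{\rm min}|$ strictly below both $|S_{\rm opt}|$ and the infimum on the right side of \eqref{value_main}. In other words, a recovery sequence with degree zero can produce cancellations in the weak limit that throw away exactly the ``flap'' area. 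You can therefore not hope to close the lower bound by showing $|S|\geq\inf\mathcal A(\psi,SG_h)$ for the limit vertical current, because this inequality is false for some admissible $S$. The paper's actual proof never symmetrizes the limit current $S$; it projects the approximating graphs $\mathcal G_{u_k}$ into $\R^3$ (Definitions~\ref{def:the_map_Psikk}, \ref{def:the_projection_pi_k}), applies a cylindrical Steiner symmetrization at the level of the sequence (Section~\ref{sec:cylindrical_Steiner_symmetrization}), and then ``artificially'' glues small rectifiable walls (Section~\ref{subsec:gluing}, the sets $W_k$, $\Sigma_{k,\eps}$, $\mathcal V_{k,\eps}$, $\mathsf a_{k,\eps}$) in order to recover the cancelled area and turn the symmetrized set into an admissible competitor for the Plateau problem on $R_{2\longR}$, with errors that vanish as $k\to+\infty$. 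Those added pieces are precisely what your ``forced cancellations $\Rightarrow \psi=0$ on $G_h$'' sentence elides.

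For the upper bound, constructing a $C^1$ (or Lipschitz) recovery sequence ``for each admissible pair $(h,\psi)$'' does not go through directly: a generic $\psi\in\mathcal X_{D,\varphi}$ is only $W^{1,1}$, $h$ is only continuous, and the functional $(h,\psi)\mapsto\mathcal A(\psi,SG_h)$ is not lower semicontinuous (as the paper notes just before Definition~\ref{def:FB}), so you cannot approximate freely inside the admissible class. The paper instead first replaces the infimum by the relaxed problem for $\FB$ on pairs with \emph{convex} $h$ (Proposition~\ref{modifications_of_h} and Definition~\ref{def:FB}), proves existence of a minimizing pair $(h^\star,\psi^\star)$ and, crucially, its regularity and boundary behaviour (Theorem~\ref{Thm:existenceofminimizer}, Theorem~\ref{prop_zero}: $h^\star$ smooth and convex, $\psi^\star$ analytic in the subgraph, continuous up to the boundary with the right trace, $L_{h^\star}=\emptyset$), and only \emph{then} builds a recovery sequence out of the regularized $(h^\star_k,\psi^\star_k)$ via the explicit change of variables $\mathcal T_k$ of Section~\ref{sec:upper_bound}. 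In short, regularity of the minimizer comes before, not after, the recovery construction; your plan tries to do it in the opposite order and will run aground at the Lipschitz requirement on $u_k$ and the non-lsc of the non-parametric area in the pair $(h,\psi)$.
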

We show that for $\longR$ large enough the infimum is not attained in $\widetilde {\mathcal H}_{2l}\times \mathcal X_{D,\varphi}$ and equals $\pi$. We prove that a minimizer instead exists for $\longR$ small, hence $\psi$ is 
real analytic 
in the interior of $SG_h$;
furthermore, we show that 
$h$ is smooth and convex, and 
$\psi$ has vanishing 
trace on the graph of $h$ (Theorem \ref{prop_zero}). 

We also show that the infimum on the right-hand side 
of \eqref{value_main}
can be equivalently rewritten in many ways. Let us first point out 
that the functional $\area(\cdot, SG_\cdot)$ 
is not lower-semicontinuous, so also at this stage we need a relaxation of the infimum problem, and we introduce the functional $\FB$, defined on a 
new class $X_{2l}^{\textrm{conv}}$ 
of admissible pairs of functions, 
which is obtained after specializing the choice of $h\in \widetilde {\mathcal H}_{2l}$ and then generalizing the choice of $\psi\in \mathcal X_{D,\varphi}$ (see Definition \ref{def:FB} in Section \ref{sec:structure_of_minimizers}). 

An equivalent formulation for this minimum problem is the following: Let us consider any Lipschitz 
simple curve $\gamma:[0,1]\rightarrow \overline R_{2l}$ with $\gamma(0)=(0,1,0)$
 and $\gamma(1)=(2l,1,0)$, and then the closed curve 
$\Gamma\subset\R^3$ defined by glueing the trace of $\gamma$ with the graph of 
$\varphi$ over $\partial_D R_{2l}$. 
We can then consider an  area-minimizing disc $\Sigma^+$ spanning $\Gamma$, solution of the classical Plateau problem. 
Assuming for simplicity that $\gamma([0,1])$ 
is the graph of a function $h\in\widetilde{\mathcal H}_{2l}$, then
\begin{align}\label{min_problem_plateau}
\inf\{ \mathcal A(\psi,SG_h):(h,\psi)\in \widetilde {\mathcal H}_{2l}\times 
\mathcal X_{D,\varphi},\;\psi=0\text{ on }G_h\}=\inf \mathcal H^2(\Sigma^+),
\end{align} 
where the infimum on the right-hand side is computed over the set of all such 
curves $\gamma$ (see Corollary \ref{main_cor}). For $\longR$ sufficiently
small, say $l
\in (0,l_0)$, the infimum is attained by a disc-type surface $\Sigma^+$, and $\gamma([0,1])$ coincides with the graph of a smooth convex 
function $h\in \widetilde {\mathcal H}_{2l}$. On the contrary, for $l\geq l_0$, $\gamma$ is degenerate, in the sense that if $\sigma_n\subset R_{2l}$ is the free-boundary of $\Sigma^+_n$, where $(\Sigma_n^+)$ is 
a minimizing sequence of discs for the Plateau problem, 
then $\sigma_n$ converges to 
the set $\partial_D R_{2l}$ and $\Sigma^+_n$ converges to 
two distinct half-circles of radius $1$, whose total area is $\pi$.

We do not know the explicit value of the threshold $l_0$.
However, it is clear that $l_0>\frac12$ 
(see the discussion at the end of Section \ref{subsec:a_Plateau_problem_for_a_self-intersecting_boundary_space_curve} and Remark \ref{rem:threshold}).
Furthermore, doubling the surface $\Sigma^+$ by considering 
its symmetric with respect to the plane containing $R_{2l}$, and then 
taking the union $\Sigma$ of these two area-minimizing surfaces, 
it turns out that $\Sigma$ solves a non-standard Plateau problem, spanning a nonsimple curve which shows self-intersections 
(this is the union of $\Gamma$ with its symmetric with 
respect to $R_{2l}$, the obtained curve is the union of two circles connected by a segment, see Section \ref{subsec:a_Plateau_problem_for_a_self-intersecting_boundary_space_curve} and Fig. \ref{fig:curvagamma}). 
Again, the obtained
area-minimizing surface is a sort of catenoid forced to contain a segment
(see Fig. \ref{elliss_1}, left) for $\longR$ small, and two distinct discs spanning the two circles for $\longR$ large (Fig. \ref{elliss_1}, right). 
The restriction of $\Sigma$ to the set $\overline B_1\times [0,l]$ is a suitable projection in $\R^3$ of the aforementioned vertical current $S_{\rm opt}$.

We will discuss on the appearence of this Plateau problem in the end of this introduction:
Let us first spend some words on how we prove Theorem \ref{teo_intro}. The proof is divided into two parts, namely the lower bound 
(Sections \ref{sec:cylindrical_Steiner_symmetrization}-\ref{sec:lower_bound} 
excluding 
Section \ref{sec:three_examples}) and the 
upper bound (Sections \ref{sec:structure_of_minimizers} and 
\ref{sec:upper_bound}). The
proof of the lower bound, {\it i.e.}, 
the inequality $\geq$ 
in \eqref{value_main}, 
 is extremely involved:
we assume $(u_k)$ to be a recovery sequence converging to $u$, 
so that $\mathcal A(u_k,\sourcedisk_\longR(0))\rightarrow\relarea (u,\sourcedisk_\longR(0))$, and we analyse the behaviour of the graphs $G_{u_k}$ 
over two distinct subsets of $\sourcedisk_\longR(0)$, respectively one on which $u_k$ converges uniformly to $u$, and one where concentration phenomena are allowed (let us call this the 
``bad set'', denoted $\badset$ in the sequel). In the former, studied in 
Section 
\ref{sec:lower_bound:first_reductions_on_a_recovery_sequence},
we see that, up to small errors, the contribution of the areas of $G_{u_k}$ gives the first term on
 the right-hand side of \eqref{value_main}. In the set $\badset$, the graphs $G_{u_k}$ might behave very badly. In order to 
detect their behaviour we introduce 
suitable projections in $\R^3$ (the maps $\Psi_k$ in Definition 
\ref{def:the_map_Psikk}
and the maps $\pi_{\lambda_k}$ in 
Definition \ref{def:the_projection_pi_k}) and use 
them to reduce the currents carried by the graphs $G_{u_k}$ to 
integral $2$-currents supported 
in the cylinder $[0,l]
\times \overline \sourcedisk_1 (0)
\subset\R^3$. It is necessary to use a cylindrical Steiner-type 
symmetrization technique for these integral currents, 
described in Section \ref{sec:cylindrical_Steiner_symmetrization}.
Afterwards, an additional partition of the domain is needed, and we focus on what happens far from the origin and in a neighbourhood $\sourcedisk_\eps(0)$ of it. The first analysis is carried on in Sections \ref{sec:the_maps}, \ref{subsec:preliminary_estimates}, and \ref{subsec:est_Omegaeps_Dk}.  
The analysis around $0$ is instead done in Section \ref{subsec:symmetrization_of_the_image_of_D_k_cap_B_eps}. Roughly speaking, we 
construct 
a cylindrically symmetric integral $2$-current in 
$[0,\longR] \times \overline \sourcedisk_1(0)$ 
whose area, up to small errors, is equal to the area of $G_{u_k}$ 
over $\badset$. In order to relate the area of this current with the 
second term on the right-hand side of \eqref{value_main}, we 
have to artificially add some rectifiable sets to this current 
(see Section \ref{subsec:gluing})\footnote{To elucidate the meaning 
of all the objects we introduce, we have complemented this 
section with some examples contained in 
Section \ref{sec:three_examples}. Note that the 
construction in Section \ref{subsec:an_approximating_sequence_of_maps_with_degree_zero:cylinder}, as well as the 
catenoid
with flap in Fig. \ref{fig:cat_flap} analysed in Section 
\ref{sec:catenoid_with_a_flap}, does not lead to a recovery sequence, 
for any value of $\longR$. Nevertheless, we believe the examples
to be useful in order to follow the construction made
to prove the lower bound.}, in such a way to force the new
integral current 
to be a candidate for the minimum problem on 
the left-hand side of \eqref{min_problem_plateau}. 
Some additional rearrangements are needed here,  which are
described in Section \ref{sec:lower_bound}. The passage to the  
limit as $k\rightarrow +\infty$ is then performed in 
Theorem \ref{teo:step1}, where we also show that all the errors 
in the estimates of the previous sections are negligible.

The second part of the paper concerns the upper bound in \eqref{value_main}. This 
consists in a careful definition of a recovery sequence $(u_k)$
converging to the vortex map, and thus such that 
$\area(u_k,
\sourcedisk_\longR(0))$
approaches the value on the right-hand side of \eqref{value_main}
as $k \to +\infty$. 
In order to explicitely construct $u_k$, we need first to show that the minimum problem stated in Theorem \ref{teo:step1} is in fact equivalent to the non-parametric Plateau-type problem in \eqref{value_main},
{\it i.e.}, we have to prove \eqref{min_problem_plateau}. This is 
done in Section \ref{sec:structure_of_minimizers}, where we exploit the convexity of the domain together with some well-known regularity results for the solution of the Plateau problem in this setting. This analysis leads us to Theorem \ref{Thm:existenceofminimizer}, which 
characterizes the solution of 
\eqref{min_problem_plateau}, and which is based on a regularity result for the minimizing pair $(h^\star,\psi^\star)\in \widetilde {\mathcal H}_{2l}\times 
\mathcal X_{D,\varphi}$.  Finally, thanks to the regularity 
results that we have obtained (especially, boundary
regularity), in Section \ref{sec:upper_bound} we define explicitely the 
maps $u_k$,
making a crucial use of rescaled versions 
of the area-minimizing surface $\Sigma$ in a vertical copy of $\R^3$ inside
$\R^4$,
 and prove the upper bound in Theorem \ref{Thm:maintheorem}.

From this discussion 
the appearence of the aforementioned nonstandard Plateau problem
should be more clear.
The shape of the solution $\Sigma$ of this problem (after a suitable projection from $\R^4$ to $\R^3$) is related to the graph $\mathcal G_u$ upon the ``limit of the bad set'' (in turn related to $S_{\textrm{opt}}$ in \eqref{Sopt}). More precisely, let us fix a map $u_k$ in the 
recovery sequence for $\relarea(u,\sourcedisk_\longR(0))$, 
and let us call $D_{u_k}$ the corresponding
bad set, roughly 
the set where the values of  $u_k$ remain ``far'' from those of $u$. 
Essentially, the slice of the 
half catenoid-type (containing the segment)
surface $\Sigma\subset \overline \sourcedisk_1(0)\times [0,l]$ 
with respect to a plane $ \R^2\times\{t\}$, $t\in(0,l)$,
 is a closed curve touching the lateral boundary of $\overline \sourcedisk_1(0)\times [0,l]$ at a point. This will be the limit of the 
image of the restriction
${u_k}_{\vert D_{u_k}\cap \partial \sourcedisk_t(0)}$ in $\R^2$ 
(identified with $\R^2\times\{t\}$).
Indeed, 
$u_k(
(\sourcedisk_1(0) \setminus D_{u_k}) \cap
\partial \sourcedisk_t(0))$ 
is a closed
curve that lies very close to $\mathbb S^1$,  
whereas
$u_k(
D_{u_k}\cap 
\partial \sourcedisk_t(0))$ 
makes a trip in $(\overline \sourcedisk_1(0)\times [0,l])
\cap (\R^2\times\{t\})$ in order to approach the shape of a $t$-slice 
of  $\Sigma$. Since $u_k(\partial \sourcedisk_t(0))$ is a closed curve, 
after passing to the limit 
as $k\rightarrow+\infty$, we 
obtain a closed curve which overlaps $\mathbb S^1$ 
(limit of the images of the complements of the bad sets) and then is a 
closed curve (limit of the images of $D_{u_k}$) attached to $\mathbb S^1$ 
at a point whose shape is the slice of $\Sigma$.

\section{Preliminaries}\label{sec:preliminaries}

\subsection{Notation and conventions}\label{subsec:notation}
The symbol $\area(v,\Omega)$ denotes  the classical area of the graph of a smooth map $v:\Omega\subset\R^n\rightarrow \R^N$, given by \eqref{area_classical}.  We will deal with the case $n=2$ and mostly with the cases $(n,N)=(2,1)$ and $(n,N)=(2,2)$. The relaxed area functional (with respect to the $L^1$-convergence) 
is denoted by $\relarea(v,\Omega)$ and is defined in \eqref{area_relax}.

We first remark that the infimum in \eqref{area_relax} can be considered as taken over the class of sequences $v_k\in \textrm{Lip}(\Omega;\R^2)$. 
This does not change the value of $\relarea(\cdot,\Om)$, as observed in \cite{BePa:10}. 

Recall that in formula \eqref{area_classical} the symbol $\mathcal M(\nabla v)$ denotes the vector whose entries are all 
determinants of the minors of $\nabla v$. Precisely, let $\alpha$ and $\beta$ be subsets of $\{1,2\}$, let $\bar\alpha$ denote the complementary set of $\alpha$, namely $\bar\alpha=\{1,2\}\setminus\alpha$, let $|\cdot|$ denote the cardinality, and let $A\in \R^{2\times2}$ be a matrix. Then, if $|\alpha|+|\beta|=2$, we denote by  
\begin{align}\label{eq:subdeterminant}
M_{\bar\alpha}^\beta(A)
\end{align}
 the determinant of the submatrix of $A$ whose lines are those with index in $\beta$, and columns with index in $\bar\alpha$. By convention $M_\emptyset^\emptyset(A)=1$ and  moreover
$$M_{j}^i=a_{ij},\qquad i,j\in \{1,2\},\qquad \qquad M_{\{1,2\}}^{\{1,2\}}(A)=\det A,$$
and the vector $\mathcal M(A)$ will take the form $$\mathcal M(A)=(M_{\bar\alpha}^\beta)(A))=(1,a_{11},a_{12},a_{21},a_{22},\det A),$$
where $\alpha$ and $\beta$ run over all the subsets of $\{1,2\}$ with the constraint $|\alpha|+|\beta|=2$. We will identify $\alpha$ and $\beta$ as multi-indeces in $\{1,2\}$.

\subsubsection{Area in cylindrical coordinates}
Polar coordinates
in $\R^2_{{\rm source}}$ are denoted by $(\sourceradialcoordinate,
\sourceangularcoordinate)$.
Polar coordinates
in the target space $\R^2_{{\rm target}}$ are denoted by $(\rho,\theta)$. 

Assume that $B=\{(r,\sourceangularcoordinate)\in \R^2: r \in (r_0,r_1),
~\sourceangularcoordinate \in (\sourceangularcoordinate_0,\sourceangularcoordinate_1)\}$; then the area of the graph of 
$v=(v_1,v_2)$ in polar coordinates is given by 
 \begin{equation*}
  \area (v,B)=\int_{r_0}^{r_1}\int_{\sourceangularcoordinate_0}^{\sourceangularcoordinate_1} \vert \M(\nabla v)\vert (r,\sourceangularcoordinate)~ r dr d\sourceangularcoordinate.
 \end{equation*}
Recall that, for $i \in \{1,2\}$, we have 
\begin{align*}
\partial_{x_1}v_i=\cos \sourceangularcoordinate
 \partial_r v_i - \frac{1}{r} \sin \sourceangularcoordinate
  \partial_\sourceangularcoordinate v_i,\qquad 
\partial_{x_2}v_i=\sin\sourceangularcoordinate
 \partial_r v_i + \frac{1}{r} \cos \sourceangularcoordinate
 \partial_\sourceangularcoordinate v_i.
\end{align*}
Hence 
\begin{align}
&\vert \nabla v_i\vert ^2 = 
(\partial _r v_i)^2 +\frac{1}{r^2} (\partial_\sourceangularcoordinate v_i)^2,\qquad i \in \{1,2\},\label{eqn:gradinpolar}
\\
&\partial_ {x_1} v_1 \partial_{ x_2} v_2-\partial_{ x_2}v_1
\partial_{ x_1}v_2 =\frac{1}{r}\Big( 
\partial_ {r} v_1 \partial_{\sourceangularcoordinate} v_2-\partial_{\sourceangularcoordinate}v_1
\partial_{ r}v_2\Big).
\nonumber
\end{align}

Thus the area of the graph of $v$ on $B$ is given by 
\begin{equation}\label{eqn:areapolarexpression}
\begin{aligned}
& \area (v,B)
\\
=& \int_{r_0}^{r_1}\int_{\sourceangularcoordinate_0}^{\sourceangularcoordinate_1} 
\sqrt{1+(\partial_r v_1)^2+(\partial_r v_2)^2+\frac{1}{r^2}
\left\{
(\partial_\sourceangularcoordinate
 v_1)^2+
(\partial_\sourceangularcoordinate
 v_2)^2+\Big( 
\partial_ {r} v_1 \partial_{\sourceangularcoordinate} v_2-\partial_{\sourceangularcoordinate}v_1
\partial_{ r}v_2\Big)^2\right\}}~ r dr d\sourceangularcoordinate.
\end{aligned}
\end{equation}
%
We denote by $\sourcedisk_r=\sourcedisk_r(0)\subset\R^2 = \R^2_{{\rm source}}$ the open disc centered at $0$ 
with radius $r>0$ in the source space. 
Our reference domain is $\Omega=\sourcedisk_\longR \subset \R^2_{{\rm source}}= \R^2_{(x_1,x_2)}$
where $l>0$ is fixed once for all. The symbol $u$ will be used to note the vortex map in \eqref{vortexmapdef}, which we assume to be defined on
 $\sourcedisk_\longR$.

For any $\varrho>0$, 
it is convenient to introduce the (portion of) cylinder $C_\longR(\varrho)$, 
as 
\begin{equation}\label{eq:portion_of_cylinder}
\begin{aligned}
 C_\longR(\varrho):=(-1,l)\times B_\varrho=\{(t,\rho,\theta)\in (-1,l)\times \R^+\times 
(-\pi,\pi]:\rho<\varrho\} 
\subset \R^3 = \R_t 
\times \R^2_{{\rm target}},
\end{aligned}
\end{equation}
where $(t,\rho,\theta)$ 
are cylindrical coordinates in $\R^3$, with the cylinder axis the $t$-axis. For $\varrho=1$ we 
simply write 
\begin{equation}\label{eq:cyl_one}
C_\longR(1)=C_\longR.
\end{equation}
For a fixed parameter $\eps \in (0,\longR)$, we introduce
the cylinders 
\begin{equation}
\label{eq:C_l_eps_r}
 C_\longR^\eps(\varrho):=(\eps,l)\times B_\varrho=
\{(t,\rho,\theta)
\in (0,\longR)\times \R^+\times (-\pi,\pi]:\eps<\rho<\varrho\}
\subset \R_t 
\times \R^2_{{\rm target}}.
\end{equation}
Also in this case we use the notation 
\begin{equation}\label{eq:C_l_eps_1}
C_\longR^\eps(1)=C_\longR^\eps.
\end{equation}
The closure of 
$C_\longR(\rho)$ (resp. $C_\longR^\eps(\rho)$)
is denoted by 
$\overline C_\longR(\rho)$ (resp. $\overline C_\longR^\eps(\rho)$),
and the lateral boundary
of 
$C_\longR(\rho)$ (resp. $C_\longR^\eps(\rho)$)
is denoted by 
$\partial_{\rm lat}C_\longR(\rho)$ (resp. $\partial_{\rm lat}C_\longR^\eps(\rho)$).

We will often deal with integral currents whose support is in the cylinder 
$$[0,\longR]\times \overline B_1\subset \overline C_\longR. $$
\begin{Remark}\rm
The choice of $C_\longR=(-1,l)\times B_1$ covering also 
certain negative values of the first coordinate $t$ is useful to control and detect the behaviour of these currents on the plane $\{t=0\}$.
\end{Remark}
\subsubsection{Area formula}\label{subsec:area_formula}
Let $f:U\subset\R^k\rightarrow \R^n$ be Lipschitz continuous, with $k\leq n$. 
The area of the image $f(U)$ of $U$ by $f$ is given by
\begin{align*}
\int_UJf(x)dx
\end{align*}
with the Jacobian matrix of $f$ given by 
\begin{align*}
Jf= \sqrt{\det\big((\grad f)^T \grad f\big)}
=
\sqrt{\sum(\det A)^2} \qquad {\rm a.e.in~} U,
\end{align*}
where, for almost every $x \in U$, the sum is made on all 
submatrices $A(x)$ of $\nabla f(x)$ of dimension $k\times k$.

\subsection{Currents}\label{subsec:currents}

For the reader convenience we recall some basic notion on currents. We refer to \cite{Krantz_Parks:08} and \cite{GiMoSu:98} for a more exhaustive discussion (see also \cite{Federer:69}).

Given an open set $U\subset \R^n$ we denote by $\mathcal D^k(U)$ the 
space of smooth $k$-forms compactly 
supported in $U$ and by $\mathcal D_k(U)$ the space of $k$-dimensional currents, for $0\leq k\leq n$. If $T\in \mathcal D_k(\R^n)$, the symbol $\vert T\vert$ denotes the mass of the current $T$, and if $U\subset\R^n$ is an open set, the symbol $\vert T\vert_U$ will denote the mass of $T$ in $U$, namely
$$|T|_U:=\sup T(\omega),$$
 the supremum being over all $\omega\in \mathcal D^k(U)$ with $\|\omega\|\leq 1$. 
 
 For $k\geq 1$ it is defined the boundary $\partial T\in \mathcal D_{k-1}(U)$ of a current $T\in \mathcal D_k(U)$ 
by the formula
 $$\partial T(\omega):=T(d \omega) \text{ for all }\omega\in \mathcal D^{k-1}(U),$$
 where $d\omega$ is the external differential of $\omega$. 
For $T\in \mathcal D_0(U)$ one sets $\partial T:=0$.
 
 If $F:U\rightarrow V$ a Lipschitz 
map between open sets, and $T\in \mathcal D_k(U)$, we denote by $F_\sharp T\in \mathcal D_k(V)$ the push-forward of $T$ by $F$ (see \cite[Section 7.4.2]{Krantz_Parks:08}).
 
 Given a $k$-dimensional rectifiable set $S\subset U$ and a tangent 
unit simple $k$-vector $\tau$ to it, we denote by 
$\jump{S}$ the current given by integration over $S$, namely
$$
\jump{S}(\omega)=\int_S\textlangle\tau(x),\omega(x)\textrangle~ 
d\mathcal H^{k}(x),\qquad \omega\in \mathcal D^k(U).
$$ 
We will often omit specifying which is the vector $\tau$ if it is clear from the context. 
We will often deal with the case $k=2$, and $U\subset \R^3$ where there are only two possible orientations. Moreover in the case $k=3$ and $U\subset \R^3$ the current $\jump{S}$ reduces to the integration over the  $3$-dimensional set $S\subset\R^3$, and $\tau=e_1\wedge e_2\wedge e_3$. 
 
We call $T\in \mathcal D_k(U)$ an integral current if it is rectifiable with integer multiplicity and if both  $|T|_U$ and $|\partial T|_U$ are finite. The Federer-Fleming theorem for integral currents then states that a 
sequence of integral currents $T_i\in \mathcal D_k(U)$ with $\sup_i (T_i|+|\partial T_i|)<+\infty$ admits a 
subsequence converging weakly in the sense of currents to an integral current $T$.
  
A finite perimeter set is a subset $E\subset\R^n$ such that the current $\jump{E}\in \mathcal D_n(U)$ is integral. The symbol $\partial^*E$ denotes the 
reduced boundary of $E$. 
$E$ is unique up to negligible sets, so that we always choose 
a representative of $E$ for 
which the closure of the reduced boundary equals the topological boundary \cite{Maggi:12}.
  
An integral current $T\in \mathcal D_k(U)$ is called indecomposable if there is no integral current $R\in \mathcal D_k(U)$ such that $R\neq0\neq T-R$ with 
$$|T|_U+|\partial T|_U=|R|_U+|\partial R|_U+|T-R|_U+|\partial (T- R)|_U.$$  
We will often use the following decomposition theorem for integer multiplicity currents: For every integral current $T\in \mathcal D_k(U)$ there is a sequence of indecomposable integral currents  
$T_i\in \mathcal D_k(U)$ with $T=\sum_iT_i$ and  
$|T|+|\partial T|=\sum_i|T_i|+\sum_i|\partial T_i|$ 
(see \cite[Section 4.2.25]{Federer:69}). 
In  the case that $T\in \mathcal D_n(U)$, $U\subseteq\R^n$, 
the previous  decomposition theorem can be stated as follows: There is a sequence of finite perimeter sets with $\{E_i\}_{i\in \mathbb Z}$ such that $T=\sum_{i\geq0}\jump{E_i\cap U}-\sum_{i<0}\jump{E_i\cap U}$ with $\sum_i|E_i\cap U|
+\sum_i\mathcal H^{n-1}(U \cap \partial^* E_i)=|T|+|\partial T|$ 
(see \cite[Theorem 7.5.5]{Krantz_Parks:08} and its proof). Moreover, the decomposition theorem applied to $E_i$ allows us to assume that the 
sequence $(\jump{E_i})$ consists of indecomposable currents. In the case of $1$-dimensional currents, 
it is possible also to characterize indecomposable currents, namely $T\in \mathcal D_1(\R^n)$ is indecomposable if $T=\gamma_\sharp \jump{[0,|T|]}$ with $\gamma:[0,|T|]\rightarrow\R^n$ a $1$-Lipschitz simple curve, 
{\it i.e.}, injective on $[0,|T|)$. If moreover $\partial T=0$ then $\gamma(0)=\gamma(|T|)$.

We will exploit the property that any boudaryless current $T\in \mathcal D_{n-1}(\R^n)$ is the boundary of a sum of currents given by integration over locally finite perimeter sets $E_i$, 
{\it i.e.}, $T=\sum_i\partial \jump{E_i}$. This is a consequence of the cone construction, and for integral currents can be obtained also from the isoperimetric inequality (see \cite[Formula (7.26)]{Krantz_Parks:08} and \cite[Theorem 7.9.1]{Krantz_Parks:08}).

We need also the concept of slice of an integral current with respect to a Lipschitz 
function $f$ (see \cite[Section 7.6]{Krantz_Parks:08}). Since we only employ it for slices with respect to parallel planes, 
the function $f$ will be $f(x)=x_h$ where $x_h$ is the coordinate in $\R^n$ whose axis is orthogonal to the considered planes. We denote by $T_t\in \mathcal D_{k-1}(\R^n)$ the slices of $T\in \mathcal D_k(\R^n)$ on the plane $\{x_h=t\}$, which will be 
supported on this plane. We will also use that, if $T$ is boundaryless, then 
$$\partial (T\res \{x_h<t\})=T_t
\qquad
{\rm for~ a.e.}~ t\in \R.
$$

\subsection{Generalized graphs in codimension $1$}\label{sec:generalized_graphs_in_codimension_one}
Let $v\in L^1(\Omega)$.
We denote by $R_v\subseteq\Omega$ the set of regular points of $v$, 
{\it i.e.}, the set consisting of points $x$ which are Lebesgue points for 
$v$, $v(x)$ 
coincides with the Lebesgue value of $v$ at $x$, and $v$ 
is approximately differentiable at $x$. We also set
\begin{align*}
 &G_v^R:=\{(x,v(x))\in R_v\times \R\},
\\
 &SG_v^R:=\{(x,y)\in R_v\times \R:y< v(x)\}.
 \end{align*}
 We often will identify $SG_v^R$ with the integral $3$-current $\jump{SG_v}\in \mathcal D_3(\Omega\times\R)$. If $v$ is a 
function of bounded variation, $\Omega\setminus R_v$ has 
zero Lebesgue measure, so that the current $ \jump{SG_v}$ 
coincides with the integration over the subgraph 
 \begin{align*}
 &SG_v:=\{(x,y)\in \Omega\times \R:y< v(x)\}.
 \end{align*}
For this reason we often identify $SG_v=SG_v^R$.
It is well-known that the  perimeter of $SG_v$ in $\Omega\times \R$ 
coincides with $\relarea(v,\Omega)$.

The support of the boundary of $\jump{SG_v}$ 
includes the graph $G_v^R$, but in general consists also of additional parts, called vertical. We denote by
$$\mathcal G_v:=\partial \jump{SG_v}\res(\Omega\times\R),$$
the generalized graph of $u$, which is a $2$-integral current supported on $\partial^*SG_v$, the reduced boundary of $SG_v$ in $\Omega\times\R$. 

Let $\widehat \Omega \subset \R^2$ 
be a bounded  open set such that $\Omega\subseteq \widehat\Omega$, 
and suppose that 
$L := \widehat \Omega\cap
\partial \Omega$ is a rectifiable 
curve. Given $\scalarfunction 
\in BV(\Omega)$ and  a $W^{1,1}$ function $\varphi:
\widehat \Omega \to \R$, we can consider 
\begin{align*}
 \overline{\scalarfunction}:=\begin{cases}
               f & \text{on }\Omega,
\\
               \varphi&\text{on }\widehat \Omega\setminus\Omega.
               \end{cases}
\end{align*}
Then (see \cite{Giusti:84}, \cite{AmFuPa:00})
\begin{align*}
 \relarea(\overline{\scalarfunction},\widehat\Om)=
\relarea(\scalarfunction,\Om)+\int_{L}|\scalarfunction-\varphi|d\mathcal H^{1}
+\relarea(\varphi,\widehat\Om
\setminus\overline{\Om}).
\end{align*}

\subsection{Polar graphs in a cylinder}\label{sec:polar_graphs_in_a_cylinder}
Consider the (portion of) cylinder $C_\longR=(-1,l)\times B_1$ 
defined in \eqref{eq:cyl_one}, endowed with cylindrical 
coordinates $(t,\rho,\theta)\in (-1,l)\times [0,1)\times (-\pi,\pi]$. 
Take the rectangle $H=\{(t,\rho,\theta) \in C_\longR: \theta=0\}$,
 which is 
endowed with Cartesian coordinates $(t,\rho)\in (-1,l)\times (0,1)$.
If $\Theta:H\rightarrow[0,\pi]$ 
is a 
function defined on $H$, we can associate to it the map
${\rm id} \bowtie \Theta : H \to C_\longR$ defined as
\begin{align*}
 (t,\rho)\to(t,\rho,\Theta(t,\rho)), \qquad (t,\rho) \in H.
\end{align*}
The polar graph of $\Theta$ 
is defined as 
\begin{align*}
 G^{\rm pol}_\Theta:=\{(t,\rho,\Theta(t,\rho)):t\in(-1,l),\;\rho\in (0,1)\} = 
{\rm id} \bowtie \Theta(H),
\end{align*}
where again we have used cylindrical coordinates. 

We define a sort of polar subgraph of $\Theta$ as 
\begin{align*}
 SG^{\rm pol}_\Theta:=
\{(t,\rho,\theta):t\in(-1,l),\;\rho\in [0,1),\;\theta\in (-\eta,\Theta(t,\rho))\}.
\end{align*}
Here $\eta>0$ is a small number introduced for convenience, and
 it will suffice to take $\eta<1$.
If the set $SG^{\rm pol}_{\Theta}$ has finite perimeter, its reduced
boundary 
in $\{-\eta<\theta<\pi+\eta\}\cap C_\longR$ 
coincides with the generalized 
polar graph $\mathcal G_\Theta$ of $\Theta$, 
\begin{equation}\label{eq:generalized_polar_graph}
\mathcal G_\Theta
= (\partial^* SG^{\rm pol}_\Theta)\cap (\{-\eta<\theta<\pi+\eta\} \cap C_\longR).
\end{equation}
This set includes, up to $\mathcal H^2$-negligible sets, the polar graph $G^{\rm pol}_\Theta$. 
When $SG^{\rm pol}_{\Theta}$ has finite perimeter, we see that the current $\jump{SG^{\rm pol}_\Theta}\in \mathcal D_3(C_\longR)$ is integral and its boundary in  $\{-\eta<\theta<\pi+\eta\}\cap C_\longR$ is the integration over the generalized polar graph of $\Theta$, 
{\it i.e.},
\begin{align*}
\partial \jump{SG^{\rm pol}_{\Theta}}\res (\{-\eta<\theta<\pi+\eta\}\cap C_\longR)=\jump{\mathcal G_\Theta},
\end{align*}
where $\mathcal G_\Theta$ is naturally oriented by the outer normal to $\partial^* SG^{\rm pol}_{\Theta}$. 

Notice also that since $\Theta\in[0,\pi]$ the current $\jump{\mathcal G_\Theta}$ 
carried by the generalized polar graph $\mathcal G_\Theta$ is 
supported in $\{0\leq\theta\leq\pi\} \cap C_\longR$.


\subsection{Plateau problem in parametric form}

We report here some 
results about the classical solution to the 
disc-type Plateau problem. If $\Gamma\subset\R^3$ is a closed rectifiable Jordan curve, the Plateau problem consists into minimize the functional
\begin{align}
\label{plateau}
\mathcal P_\Gamma(X):= \int_{B_1}|\partial_{x_1}X\wedge\partial_{x_2} X|dx_1dx_2,
\end{align}
on the class of all functions $X\in C^0(\overline{B}_1;\R^3)\cap H^1(B_1;\R^3)$ with
 $X\res\partial B_1$ being a weakly monotonic parametrization of the curve $\Gamma$. 
The functional \eqref{plateau} measures the area (with multiplicity) of the surface $X(B_1)$. 
We can always associate to a 
map $X$ the current $X_\sharp\jump{B_1}$, the integration over the surface $X(B_1)$. Notice in general 
$$|X_\sharp\jump{B_1}|\leq \mathcal P_\Gamma(X),$$
and strict inequality can occur if for instance the map $X$ parametrizes two times and with opposite orientation a part of the surface $X(B_1)$.

A solution $X_\Gamma$ to the Plateau problem exists and satisfies the properties: 
it is harmonic (hence analytic)
\begin{align*}
 \Delta X_\Gamma=0\qquad\text{ in }B_1,
\end{align*}
it is a conformal parametrization 
\begin{align*}
 |\partial_{x_1}X_\Gamma|^2=|\partial_{x_2}X_\Gamma|^2,
\qquad \partial_{x_1}X_\Gamma\cdot \partial_{x_2}
X_\Gamma=0\qquad\text{ in }B_1,
\end{align*}
and $X_\Gamma\res\partial B_1$ is a strictly monotonic parametrization of $\Gamma$.
We will say that the surface $X_\Gamma(B_1)$ has the topology of the disc.

Thanks to the properties above it is always possible, with the aid of a conformal change of variables, to parametrize $X(B_1)$ over any simply connected bounded domain. In other words, if $U$ is any such domain, and if $\Phi:U\rightarrow B_1$ is any conformal homeomorphism, then $X\circ\Phi$ is a solution to the Plateau problem on $U$.

\subsection{A Plateau problem for a self-intersecting boundary space curve}
\label{subsec:a_Plateau_problem_for_a_self-intersecting_boundary_space_curve}

The classical disc-type 
Plateau problem is solved for boundary value a simple Jordan space curve, 
in particular $\Gamma$ 
does not have self-intersections. Here we will treat a specific Plateau problem where the curve $\Gamma$ has non-trivial intersections, and it overlaps itself on a segment which is parametrized two times with opposite directions.

Specifically, we consider the cylinder $(0,2l)\times B_1$ 
and 
two circles $\mathcal C_1, \mathcal C_2$
 which are the boundaries of its two circular bases, namely 
$\mathcal C_1:=\{0\}\times \partial B_1$ and $\mathcal C_{2}:=\{2l\}\times \partial B_1$. 
Then we take the segment $(0,2l)\times\{1\}\times\{0\}$. If $\gamma_0$ 
is a monotonic parametrization of this segment, 
starting from $(0,1,0)$ up to $(2l,1,0)$, $\gamma_1$ is a 
monotonic parametrization of $\mathcal C_1$ starting from the point $(0,1,0)$ and ending at the same point, and  $\gamma_2$ a parametrization of $\mathcal C_2$ with initial and final point $(2l,1,0)$
with the same orientation of $\mathcal C_1$, then we consider the parametrization 
\begin{align}\label{curvegamma_def}
 \gamma:=\gamma_1\star\gamma_0\star(-\gamma_2)\star(-\gamma_0),
\end{align}
(read from left to right)
which is a closed curve in $\R^3$ which 
travels two times across the segment $(0,2l)\times\{1\}\times\{0\}$ with opposite directions (the orientation of this curve is depicted in Fig. \ref{fig:curvagamma}).
We want to solve the Plateau problem with $\Gamma$ 
to be the image of $\gamma$.

\begin{figure}
\begin{center}
    \includegraphics[width=0.3\textwidth]{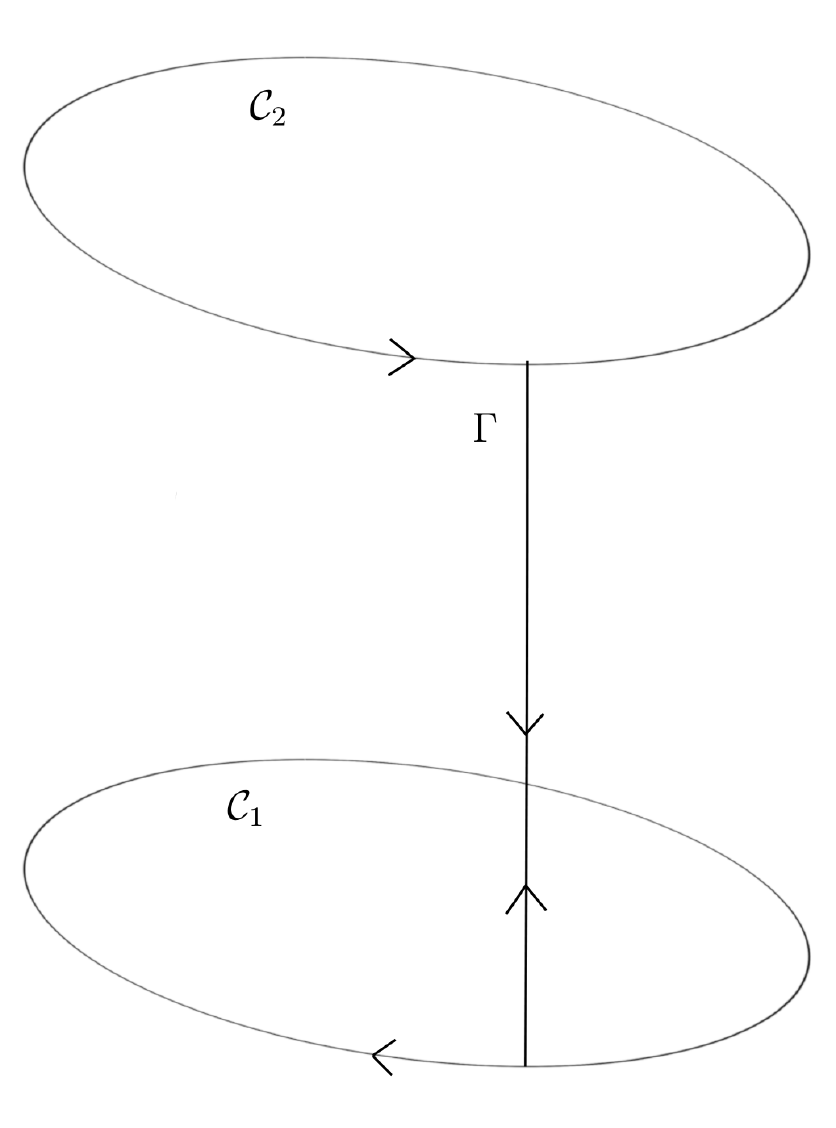}
 \caption{The self-overlapping curve $\Gamma$ with its orientation. }
\label{fig:curvagamma}
\end{center}
\end{figure}

The existence of solutions to the 
Plateau problem spanning self-intersecting boundaries has been addressed in \cite{Hass:91}, whose results have been recently improved in \cite{Creutz:20}. Without entering deeply into the details, it is known that, depending on the geometry of $\gamma$ (in this case, depending on the distance between the two circles $\mathcal C_1$ and $\mathcal C_2$) two kind of solutions are expected:
\begin{itemize}
 \item[(a)] The solution 
consists of two discs filling $\mathcal C_1$ and $\mathcal C_2$, see Fig. \ref{elliss_1}, right. In this case, a parametrization of it  $X:\overline B_1\rightarrow\R^3$ can be chosen so that, if $L_1$ and $L_2$ are two parallel chords in $B_1$ dividing $B_1$ in three sectors, then $X$ restricted to the sector enclosed between $L_1$ and $L_2$ parametrizes the segment $\gamma_0$ (and then its resulting area is null), $X(L_1)=P_1$ and $X(L_2)=P_2$ are the two endpoints of $\gamma_0$, and $X$ restricted to the sectors between $L_i$, $i=1,2$, and $\partial B_1$ parametrizes the disc filling $\mathcal C_i$, $i=1,2$. Moreover the map $X$ can be still taken Sobolev regular 
(see \cite{Creutz:20} for details).

 \item[(b)] There is a classical solution, 
{\it i.e.}, there is a harmonic and conformal map $X:B_1\rightarrow\R^3$,
continuous up to the boundary of $B_1$, such that $X\res \partial B_1$ is a weakly monotonic parametrization of $\Gamma$. 
In this case the resulting minimal surface is a sort of catenoid attached to the segment $(0,2l)\times \{(1,0)\}$ (see Fig. \ref{elliss_1} left).
\end{itemize}

\begin{remark}\label{rem:threshold}
We expect that there is a threshold $l_0$ such that if $l< l_0$ an area-minimizing disc with boundary $\gamma$ is of the form (b),
and for values $l>l_0$ the two discs have minimal area. We do not find explicitly $l_0$ but 
it is easy to see that if $l\leq \frac12$ an area-minimizing disc with boundary $\gamma$ has always less area than the solution with two discs. Indeed, the area of the two discs is $2\pi$, whereas we can always compare 
the area of the surface  $\Sigma$ as in (b) with the area  of the lateral surface of the cylinder $(0,2l)\times B_1$, that is $4l\pi$. Hence $\mathcal H^2(\Sigma)<4l\pi\leq 2\pi$ for $l\leq \frac12$.	
\end{remark}

		\begin{figure}
		\includegraphics[width=0.8\textwidth]{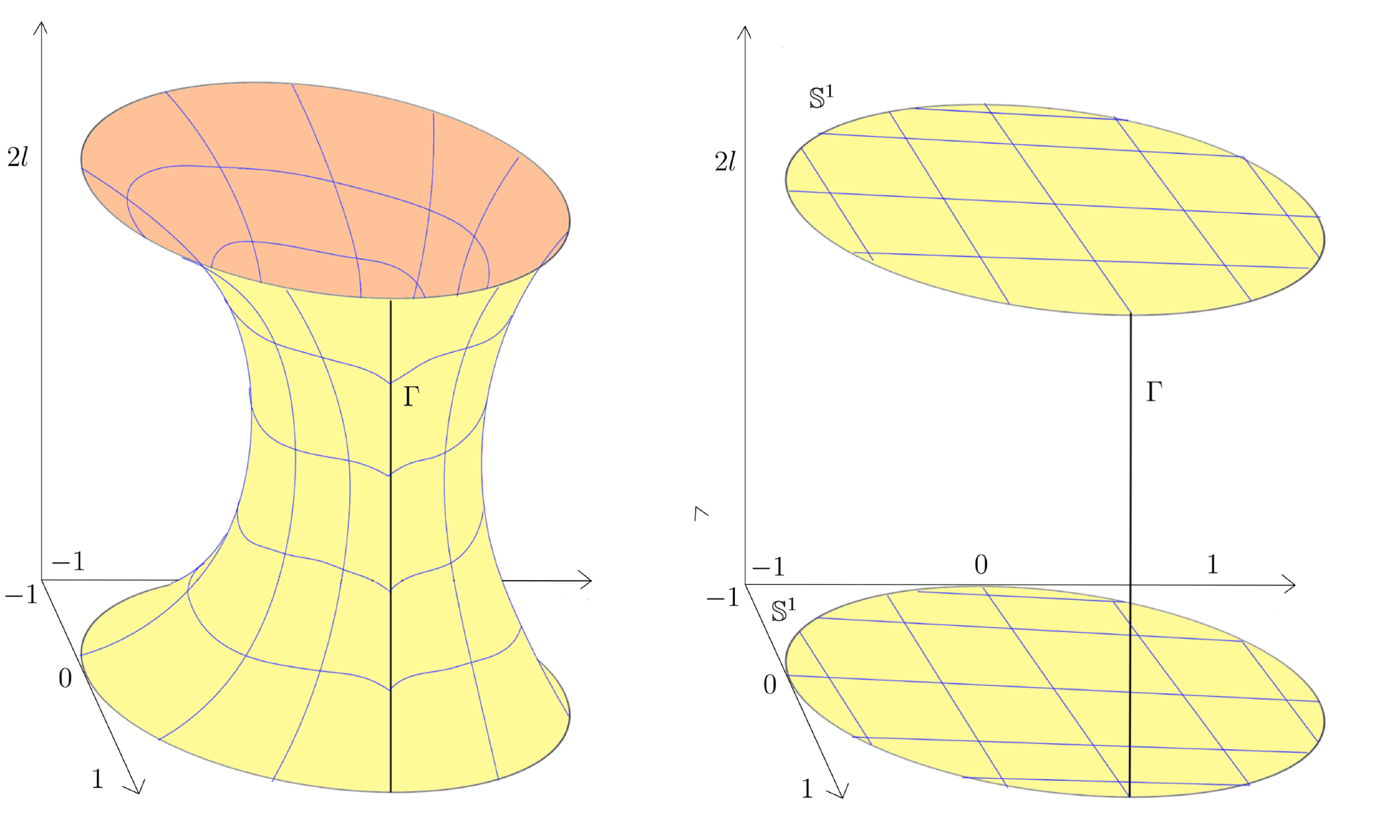}
		\caption{on the left the shape of a possible solution to the Plateau problem with boundary $\Gamma$. 
On the right another solution to the Plateau problem with boundary $\Gamma$.
See Section \ref{subsec:a_Plateau_problem_for_a_self-intersecting_boundary_space_curve}}\label{elliss_1}
		\end{figure}

\section{Cylindrical Steiner symmetrization}\label{sec:cylindrical_Steiner_symmetrization}
In this section we introduce  the cylindrical Steiner symmetrization of a
finite perimeter\footnote{Recall that we choose a representative of 
$U$ such that the closure of its reduced boundary $\partial^* U$  
equals the topological boundary.} set $U\subseteq C_\longR=(-1,l)\times B_1(0)$. 
This rearrangement is obtained slice by slice by spherical (two dimensional) symmetrization, a technique introduced first by P\`olya. We refer to \cite{CPS} and references therein for a exhaustive description of the subject. Here we collect the main properties we will use in the sequel of the paper.
Furthermore we will introduce a generalization of this symmetrization in order to apply it to $2$-integral currents.

Let us recall that $C_\longR$ is endowed with cylindrical coordinates 
$(t,\rho,\theta)\in (-1,l)\times[0,1)\times (-\pi,\pi]$. If $x_1,x_2,x_3$ are cartesian coordinates, we have
 $x_1=t$, $x_2=\rho\cos\theta$, $x_2=\rho \sin\theta$. 
Sometimes it will be convenient to 
extend $2\pi$-periodically
the values of $\theta$ 
on the whole of $\R$.

For every $t\in(-1,l)$ let $U_t
:=U\cap(\{t\}\times\R^2)$ 
the slice of $U$ on the plane 
with first coordinate $t$,
and for every $\rho\in (0,1)$ let $U_t
(\rho):=U_t\cap (\{t\}
\times \{\rho\}\times (-\pi,\pi])$ be the slice of $U_t$
with the circle of radius $\rho$.

\begin{definition}[\textbf{Symmetrization of solid sets in $C_\longR$}]
\label{def:symmetrization_of_solid_sets_in_C_l} 
For every $t\in(-1,l)$ 
and $\rho\in (0,1)$ we let
\begin{equation}\label{eq:Theta_t_rho}
\Theta(t,\rho) = 
\Theta_U(t,\rho)
:=\frac{1}{\rho}\mathcal H^1(U_t
(\rho)),
\end{equation}
and we define
the cylindrically symmetrized set 
$\mathbb S(U)\subseteq C_\longR$ as
\begin{equation}
\label{eq:def_S(U)}
 \mathbb S(U):=\left\{(t,\rho,\theta): t\in (-1,l), 
~ \rho\in (0,1),
~ \theta\in 
\big(-\Theta(t,\rho)/2,\Theta(t,\rho)/2\big)\right\}.
\end{equation}
\end{definition}
Notice that $\Theta_U = \Theta_{\mathbb S(U)}$.
The set $\mathbb S(U)$ enjoys the following properties:
\begin{itemize}
 \item[(1)]
$\mathcal H^2(\mathbb S(U)_t)=
\mathcal H^2(U_t)$
and 
$\mathcal H^1(\partial^*(\mathbb S(U)_t)) \leq
\mathcal H^1(\partial^* (U_t))$
for every $t\in (-1,l)$;
 \item[(2)] $|\mathbb S(U)|=|U|$ and $\mathcal H^2(\partial^*\mathbb S(U)) \leq 
\mathcal H^2(\partial^* U)$.
\end{itemize}

A proof of these properties is contained in \cite[Theorem 1.4]{CPS}.
In particular, since $U$ has
finite perimeter, so is 
$\mathbb S(U)$ and its perimeter cannot increase after symmetrization.
We will need to apply it 
to  $3$-integral currents in $C_\longR$. That is,
(possibly infinite) sums of finite perimeter 
sets with integer coefficients.
For this reason we introduce the following generalization of 
cylindrical symmetrization.

 Let $\mathcal E \in \mathcal D_3(C_\longR)$ 
be an integral $3$-current.
By Federer decomposition theorem \cite[Section 4.2.25, p. 420]{Federer:69} (see
also \cite[Section 4.5.9]{Federer:69} and \cite[Theorem 7.5.5]{Krantz_Parks:08}) 
it follows that there is a sequence $(E_i)_{i\in\mathbb N}$ 
of finite perimeter sets 
such that 
 \begin{align}
\label{eq:dec_mathcalE}
  \mathcal E=\sum_i(-1)^{\sigma_i}\jump{E_i},
 \end{align}
for suitable $\sigma_i\in \{0,1\}$. 
We can also assume the decomposition is done 
in undecomposable components, so that 
\begin{align}\label{eq:126}
|\mathcal E|=
 \sum_i|E_i|
\qquad \text{ and }\qquad |\partial \mathcal E|=
\sum_i\mathcal H^2(\partial^* E_i).
\end{align}
According to Definition \ref{def:symmetrization_of_solid_sets_in_C_l}, we can symmetrize each
set $E_i$ into $\mathbb S(E_i)$.

\begin{definition}[\textbf{Symmetrization of an integer $3$-current}]
\label{def:symmetrization_of_an_integer_3_current}
Let $E:=\textrm{supp}(\mathcal E)$ denote the support of the current 
$\mathcal E {\in \mathcal D_3(C_\longR)}$.
We let 
$$
\mathbb S(E)
:=
\bigcup_i
\mathbb S(E_i),
$$
which will be referred to as the symmetrized support of $\mathcal E$.
The symmetrized current $\mathbb S(\mathcal E)
{\in \mathcal D_3(C_\longR)}
$ is defined as
\begin{equation}\label{eq:def_symmetrization_of_mathcal_E}
\mathbb S(\mathcal E):=\jump{\mathbb S(E)}. 
\end{equation}
\end{definition}
Notice that the multiplicity of 
$\jump{\mathbb S(E)}$ is one, hence $\jump{\mathbb S(E)}$ is the integration over a finite perimeter set.

\subsection{Cylindrical symmetrization of a two-current. Slicings}\label{subsec:slicing_and_cylindrical_symmetrization}

Let us focus on a 
slice $\mathcal E_t$ 
of the current 
$\mathcal E$ 
with respect to a plane 
$\{t\}
\times \R^2_{{\rm target}}$. 
Suppose 
for the moment that $\mathcal E$ is the integration 
over a finite perimeter set (that we identify with $E$)
in $C_\longR$; 
$\mathcal E_t$ is the integration over the slice $E_t$
of $E$, and suppose that the boundary of $E_t$ 
is the trace $\sigma$
of a rectifiable Jordan curve.
Applying Definition 
\ref{def:symmetrization_of_an_integer_3_current}
to the set $ E$ 
we see that $E_t$ is 
transformed into the symmetrized set 
$\mathbb S(E_t)$ 
whose 
boundary is again\footnote{$\mathbb S(E_t)$ 
is simply connected. Indeed the support of $\rho\mapsto \Theta(t,\rho)$ 
is a connected subset of $(0,1)$.}
 the trace $\sigma_s$
of a Jordan curve. 
By the properties of the symmetrization we 
infer $\mathcal H^1(\sigma) \geq 
\mathcal H^1(\sigma_s)$.

However, if
 the boundary of $E_t$ is the trace $\sigma$ of a nonsimple
curve, then the procedure is more involved. More generally, from
Definition \ref{def:symmetrization_of_an_integer_3_current}, we see that for a.e. $t\in (-1,l)$ 
the slice $\mathcal E_t$ 
of $\mathcal E$ is an integral  $2$-current, and it can 
be represented by integration over finite perimeter sets 
$(E_i)_t$ (with suitable signs) which are 
exactly the slices of the sets $E_i$ 
in \eqref{eq:dec_mathcalE}. Moreover for a.e. $t\in (-1,l)$ the boundary of $\mathcal E_t$ is a $1$-dimensional integral current with finite mass, and it 
coincides with the integration    (with 
suitable signs) over the boundaries of $(E_i)_t$, namely 
$$
\partial \mathcal E_t=
\sum_i(-1)^{\sigma_i}\partial\jump{ (E_i)_t}.
$$
Let us call this boundary $\sigma$ (which, with a little abuse of notation, we identify with an integral  $1$-current, an at most
countable sum of simple curves), 
and set
$\sigma_s:=\partial \jump{\mathbb S(E)_t}$. 
By
Definition 
\ref{def:symmetrization_of_an_integer_3_current}
it then follows that $\mathbb S(\mathcal E)_t
=\jump{\mathbb S(E)_t}$.
Now, by the properties of the symmetrization, 
we see that $\mathcal H^1(\textrm{supp}(\sigma)) \geq
\mathcal H^1({\rm supp}(\sigma_s))$.
Also in this case it turns out that $\sigma_s$ is the integration over countable many simple curves (with 
suitable orientation).

We have described  so far how the boundary of $\mathcal E$ is trasformed slice by slice. 
In general if $\mathcal E$ is a $3$-integral current in $C_\longR$, then the current $\mathcal S:=\partial \mathcal E$ has the property that 
$$|\mathcal S|\geq\mathcal H^2(\partial^*\mathbb S(E)).$$
There is also a viceversa.  
Precisely assume that  
$\mathcal S$ is any boundaryless integral $2$-current in $C_\longR$. 
Then there is an integral $3$-current $\mathcal E$ whose boundary is $\mathcal S$. 
So that we can define the symmetrization of 
$\mathcal S$ by symmetrizing $\mathcal E$. 

\begin{definition}[\textbf{Cylindrical symmetrization of 
the boundary of a three-current}]
\label{def:symmetrization_of_the_boundary_of_a_three_current}
 The symmetrization of $\mathcal S = \partial \mathcal E$ is 
defined as 
$$
\mathbb S(\mathcal S):=\partial \mathbb S(\mathcal E).
$$
 \end{definition}

The next lemma will be useful in Section \ref{subsec:symmetrization_of_the_image_of_D_k_cap_B_eps}. 

\begin{lemma}\label{lem:sections_of_symm_current}
 Let $t\in(-1,l)$ be such that $\mathcal S\res (\{t\}\times\R^2)=0$. Then 
 \begin{equation}\label{eq:sec_S_t}
\mathbb S(\mathcal S)\res (\{t\}\times\R^2)=0.
\end{equation}
\end{lemma}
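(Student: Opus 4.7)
The plan is to reduce the lemma to a slice-by-slice statement about individual indecomposable pieces of $\mathcal E$, and then pass to the union via the elementary containment $\partial^*(\bigcup_i A_i) \subseteq \bigcup_i \partial^* A_i$ (modulo $\mathcal H^2$-null sets).

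First I would fix the decomposition $\mathcal E=\sum_i(-1)^{\sigma_i}\jump{E_i}$ afforded by \eqref{eq:dec_mathcalE}-\eqref{eq:126} and extract the key consequence that the no-cancellation identity $|\partial\mathcal E|=\sum_i\mathcal H^2(\partial^*E_i)$ is inherited by restrictions: writing
\[
|\partial\mathcal E|=|\mathcal S\res\{x_1=t\}|+|\mathcal S\res\{x_1\neq t\}|\le \sum_i \mathcal H^2(\partial^*E_i\cap\{x_1=t\})+\sum_i \mathcal H^2(\partial^*E_i\cap\{x_1\neq t\})=|\partial\mathcal E|,
\]
all inequalities must be equalities, so that in particular $|\mathcal S\res\{x_1=t\}|=\sum_i \mathcal H^2(\partial^*E_i\cap\{x_1=t\})$. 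The hypothesis therefore upgrades to the stronger statement that $\mathcal H^2(\partial^*E_i\cap\{x_1=t\})=0$ for \emph{every} index $i$.

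Next I would transfer this vanishing to the width function $\Theta_{E_i}(\cdot,\rho)$ defined in \eqref{eq:Theta_t_rho}. Passing to cylindrical coordinates on the plane $\{x_1=t\}$, where the Jacobian $\rho$ never degenerates for $\rho>0$, Fubini gives that for a.e.\ $\rho\in(0,1)$ the one-dimensional set $\partial^*E_i\cap(\{t\}\times\{\rho\}\times(-\pi,\pi])$ has zero $\mathcal H^1$-measure. Slicing $E_i$ by the cylindrical surface $\{\rho=\rho_0\}$ produces, for a.e.\ $\rho_0$, a two-dimensional finite-perimeter set whose horizontal trace at $\{x_1=t\}$ has zero $\mathcal H^1$-measure; by the standard BV-slicing formula (the distributional derivative of $s\mapsto \mathcal H^1((E_i)_s(\rho))$ is the push-forward of the first component of the inner normal), this forces $s\mapsto \Theta_{E_i}(s,\rho)$ to be continuous at $s=t$ for a.e.\ $\rho$.

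By Definition \ref{def:symmetrization_of_solid_sets_in_C_l}, $\mathbb S(E_i)=\{(s,\rho,\theta):|\theta|<\Theta_{E_i}(s,\rho)/2\}$, so the only way $\partial^*\mathbb S(E_i)$ can charge the plane $\{x_1=t\}$ is through a jump of $\Theta_{E_i}(\cdot,\rho)$ at $t$; in fact
\[
\mathcal H^2\bigl(\partial^*\mathbb S(E_i)\cap\{x_1=t\}\bigr)=\int_0^1\rho\,\bigl|\Theta_{E_i}(t^+,\rho)-\Theta_{E_i}(t^-,\rho)\bigr|\,d\rho,
\]
which vanishes by the previous step. Finally, since by Definition \ref{def:symmetrization_of_an_integer_3_current} we have $\mathbb S(E)=\bigcup_i\mathbb S(E_i)$, the inclusion $\partial^*\big(\bigcup_i\mathbb S(E_i)\big)\subseteq\bigcup_i\partial^*\mathbb S(E_i)$ (valid up to $\mathcal H^2$-null sets for any countable family) together with subadditivity yields $\mathcal H^2(\partial^*\mathbb S(E)\cap\{x_1=t\})\le\sum_i\mathcal H^2(\partial^*\mathbb S(E_i)\cap\{x_1=t\})=0$, so $\mathbb S(\mathcal S)\res(\{t\}\times\R^2)=\partial\jump{\mathbb S(E)}\res\{x_1=t\}=0$, as required.

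The main obstacle is the possibility of cancellation across different indecomposable components $E_i$ on the plane $\{x_1=t\}$, which would prevent the hypothesis on $\mathcal S$ from propagating to each component; the minimality built into \eqref{eq:126} is precisely what rules this out. A secondary technical point is the BV-slicing identification in Step 2, which requires choosing the good representative of $E_i$ and invoking the slicing of finite-perimeter sets by a family of parallel cylindrical surfaces; this is classical and causes no trouble for a.e.\ radius.
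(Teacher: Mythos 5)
Your first step --- propagating $\mathcal S\res(\{t\}\times\R^2)=0$ to $\mathcal H^2(\partial^*E_i\cap\{x_1=t\})=0$ for every indecomposable piece $E_i$ via the no-cancellation identity \eqref{eq:126} --- coincides with the paper's (its footnote makes precisely this observation). Your second step is a genuinely different route: the paper cites the perimeter-decrease property of the cylindrical symmetrization, whereas you attempt to re-derive $\mathcal H^2(\partial^*\mathbb S(E_i)\cap\{x_1=t\})=0$ directly by showing, via BV slicing, that the width function $\Theta_{E_i}(\cdot,\rho)$ has no jump at $t$ for a.e.\ $\rho$. This is more explicit (and lays bare what the perimeter-decrease is doing geometrically) but also more delicate, since the slicing statements you use hold only for a.e.\ radius and $\Theta_{E_i}$ is defined only up to representatives; it is nevertheless a viable alternative.

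The genuine gap is in your last step. You invoke the inclusion $\partial^*\bigl(\bigcup_i\mathbb S(E_i)\bigr)\subseteq\bigcup_i\partial^*\mathbb S(E_i)$ up to $\mathcal H^2$-null sets, asserting it is ``valid for any countable family.'' It is not. Take, in $\R^3$, the slabs $A_i:=(t-2^{-i},\,t-2^{-i-1})\times B_{1/2}$: each $\partial^*A_i$ avoids $\{x_1=t\}$, yet $\bigcup_iA_i$ is $\mathcal L^3$-equivalent to $(t-1/2,t)\times B_{1/2}$, whose reduced boundary charges $\{x_1=t\}$ with mass $\pi/4>0$. The inclusion can be recovered under the extra hypothesis $\sum_i\mathcal H^2(\partial^*A_i)<\infty$ --- a hypothesis available here from \eqref{eq:126} together with the perimeter decrease under symmetrization --- but establishing it under that hypothesis is exactly where the content of the argument lies for infinite families. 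The paper supplies this by splitting $\mathbb S(E)$ into the finite union $\bigcup_{i\le N_\eps}\mathbb S(E_i)$, for which the boundary inclusion is elementary, and the tail $\bigcup_{i>N_\eps}\mathbb S(E_i)$, whose perimeter is controlled below $\eps$ by a lower-semicontinuity estimate. As written, your proof substitutes an appeal to a false elementary fact for precisely this step.
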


\begin{proof}
 We know that $\mathcal S=\partial \mathcal E$.
 By the properties of the cylindrical symmetrization (see item (2) above)
for  each set $E_{i}$ we have  
 \begin{align*}
  \mathcal H^2\Big(
(\{t\}\times \R^2)\cap \partial^* E_{i}
\Big)\geq\mathcal 
H^2\Big(
(\{t\}\times \R^2) \cap
\partial^*\mathbb S(E_{i})
\Big).
 \end{align*}
{}From our assumption 
it follows\footnote{
This follows since the decomposition is done in 
undecomposable components: if there is some boundary of some $E_i$ 
then it cannot cancel with some other boundary (opposite oriented) 
of some $E_j$.
}
 that for 
all $i$ 
we have
$\mathcal H^2(
(\{t\}\times \R^2)
\cap \partial^* E_{i})=0$, and thus 
$$ \mathcal H^2(
(\{t\}\times \R^2)\cap 
\partial^* \mathbb S(E_{i})
)=0, 
\qquad i\in \mathbb N.
$$
To conclude the proof we have to show that 
 \begin{align}\label{claim_conclusion}
\mathcal H^2\Big(
(\{t\}\times \R^2)\cap 
\partial^*\mathbb S(E)\Big)=
\mathcal H^2
\Big((\{t\}\times \R^2)\cap \partial^*(\cup_i \mathbb S(E_{i}))\Big)=0.  
 \end{align}
 The conclusion easily follows if the family $\{E_{i}\}$ is finite, since in 
this case $\partial (\cup_i \mathbb S(E_{i}))\subseteq 
\cup_i\partial\mathbb S(E_{i})$. 
If this family is not finite we argue as follows: 
fix $\eps>0$ and $N_\eps\in \mathbb N$ so that (see \eqref{eq:126}) 
 \begin{align}\label{control_perimeters}
  \sum_{i=N_\eps+1}^{+\infty}\mathcal H^2(\partial^* E_{i})\leq \eps.
 \end{align}
We have $$\mathbb S (E)=\cup_i\mathbb S(E_{i})=
\big(\cup_{i=1}^{N_\eps}\mathbb S(E_{i})\big)\cup 
\big(\cup_{i=N_\eps+1}^{+\infty}\mathbb S(E_{i})\big)=:A\cup B,$$
 thus 
 $$
(\{t\}\times \R^2)\cap
\partial \mathbb S (E)\subseteq 
\Big(
(\{t\}\times \R^2)\cap \partial^* A\
\Big)\cup 
\Big(
(\{t\}\times \R^2)
\cap 
\partial^* B\Big).$$
 By the previous observations 
$\mathcal H^2(
(\{t\}\times \R^2)\cap
\partial^* A
)=0$; we will prove that 
 $$\mathcal H^2(
(\{t\}\times \R^2)
\cap \partial^* B)=
\mathcal 
H^2\Big((\{t\}\times \R^2)
\cap 
\partial^* (\cup_{i=N_\eps+1}^{+\infty}\mathbb S(E_{i}))\Big)
\leq \eps,$$
 so that \eqref{claim_conclusion} follows by arbitrariness of $\eps>0$. 
To do so, it suffices to  write
 $$
\mathcal 
H^2\Big((\{t\}\times \R^2)
\cap 
\partial^* (\cup_{i=N_\eps+1}^{+\infty}\mathbb S(E_{i}))\Big)
\leq \mathcal H^2\big(\partial^* (\cup_{i=N_\eps+1}^{+\infty}\mathbb S(E_{i}))\big)\leq \sum_{i=N_\eps+1}^{+\infty}
\mathcal H^2(\partial^* \mathbb S(E_{i}))\leq \eps.$$
The last inequality follows from \eqref{control_perimeters} and from the fact that symmetrization does not increase the perimeter. As for the second inequality, it follows from the lower semicontinuity of the perimeter. Indeed, 
setting $F_k:=\cup_{i=N_\eps+1}^{k}\mathbb S(E_{i})$ for $k \geq N_\eps+1$, we see that $F_k\rightarrow F_\infty:=\cup_{i=N_\eps+1}^{\infty}\mathbb S(E_{i})$ in $L^1(C_\longR)$, 
and since $F_k$ has finite perimeter we infer
\begin{align*}
 \mathcal H^2(\partial^* F_\infty)\leq \liminf_{k\rightarrow +\infty}
\mathcal H^2(\partial^* F_k)\leq \liminf_{k\rightarrow +\infty}\sum_{i=N_\eps+1}^{k}\mathcal H^2(\partial^* \mathbb S(E_{i})). 
\end{align*}
 \end{proof}

As before, 
we can look at what happens to the current $\mathcal S$ slice by slice. 
If 
$\partial \mathcal E=\mathcal S$, then 
$\mathcal S_t=-\partial (\mathcal E_t)$
for a.e. $t\in(-1,l)$. 
Assume that $\mathcal E$ decomposes as 
in \eqref{eq:dec_mathcalE}, then 
\begin{align}\label{eq:dec_slice}
  \mathcal E_t=\sum_i(-1)^{\sigma_i}\jump{(E_i)_t}\qquad
\textrm{for~ a.e.~} t \in (-1,l).
\end{align}
Now the sets $(E_i)_t$ are symmetrized as before, and their union, 
denoted $\mathbb S(E_t)$ (so
that $\mathbb S(\mathcal E)_t=\jump{\mathbb S(E_t)}$) satisfies 
\begin{align*}
 \partial \jump{\mathbb S(E_t)}=-\mathbb S(\mathcal S)_t
\end{align*}
and 
\begin{align*}
 |\mathcal S_t|\geq \mathcal H^1(\partial^* \mathbb S(E)_t).
\end{align*}

Let us go back to \eqref{eq:dec_slice}. In general
\begin{equation}\label{eq:slice_ineq}
|\mathcal E_t|\leq \sum_i\mathcal H^2((E_i)_t); 
\end{equation}
 however, since the decomposition is made of undecomposable components, \eqref{eq:126} holds 
and hence
\begin{align}\label{eq:dec_slice_mass}
 |\mathcal E_t|= \sum_i\mathcal H^2((E_i)_t)
\qquad
\textrm{~ for~ a.e.~} t\in(-1,l). 
\end{align}
This can be seen integrating in $t$ formula \eqref{eq:slice_ineq}, so that if strict inequality holds for a positive measured set of $t\in(-1,l)$ 
we would get strict inequality in the first equation of \eqref{eq:126}, which 
is a contradiction. 

Moreover, by construction, $\mathcal H^2((E_i)_t)=\mathcal H^2(\mathbb S(E_i)_t)$ for all $i$, and since $\mathbb S(E)_t=\cup_i\mathbb S(E_i)_t$ it also follows $$|\mathcal E_t|= \sum_i\mathcal H^2((E_i)_t)=\sum_i\mathcal H^2(\mathbb S(E_i)_t)\geq\mathcal H^2(\mathbb S(E)_t).$$

Now we fix $t$ such that \eqref{eq:dec_slice_mass} holds 
and set $F_i:=(E_i)_t$, $\mathcal{F}:=\mathcal E_t$, 
$F := {\rm supp}(\mathcal F)$, 
$\mathbb S(F)=\mathbb S(E)_t$.
The set $F_i\in B_1$ can be sliced with respect to the radial coordinate $\rho\in (0,1)$, so that exploiting that 
$$
(\mathcal E_t)_\rho = 
\sum_i (-1)^{\sigma_i}
\jump{((E_i)_t)_\rho}
$$
holds for a.e. $\rho$, we can repeat the same argument as before 
to obtain 
\begin{align*}
 |\mathcal F_\rho|= \sum_i\mathcal H^1((F_i)_\rho) \qquad
\textrm{for~ a.e.~} \rho\in(0,1).
\end{align*}
Again we have $\sum_i\mathcal H^1((F_i)_\rho)\geq \mathcal H^1(\mathbb S(F)_\rho)$.
Recalling that $\mathbb S(F)_\rho=\mathbb S(E)_t\cap \partial B_\rho$, 
we conclude that, for a.e. $t\in(-1,l)$ and for a.e. $\rho\in(0,1)$ the slice 
$(\mathcal E_t)_\rho$ satisfies
\begin{equation}\label{eq:in_conclusion}
 |(\mathcal E_t)_\rho|\geq \mathcal H^1(\mathbb S(E)_t\cap \partial B_\rho)=\rho\Theta(t,\rho),
\end{equation}
where we have defined 
$\Theta(t,\rho):=\rho^{-1}\mathcal H^1(\mathbb S(E)_t\cap \partial B_\rho)$ the 
measure in radiants of the arc $\mathbb S(E)_t\cap \partial B_\rho$.

\begin{figure}
\begin{center}
    \includegraphics[width=0.7\textwidth]{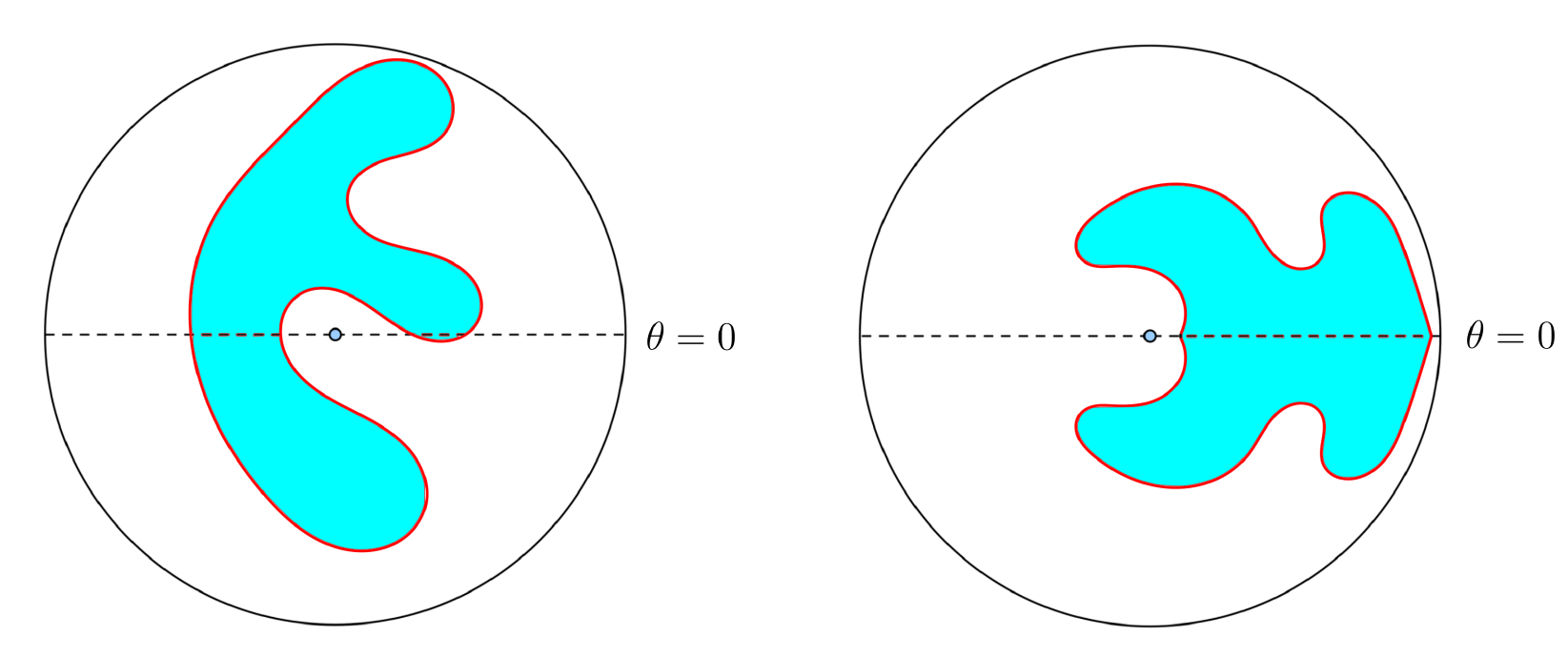}
 \caption{The 
symmetrization of a subset of $B_1$ bounded by a Jordan curve,
with the respect to the radius $\{\theta=0\}$; see formula \eqref{eq:Theta_t_rho}.
}\label{fig2}
\end{center}
\end{figure}

\begin{remark}
 In the sequel we are going to apply the cylindrical symmetrization to a current supported in the portion of the cylinder 
$(0,\longR)\times B_1\subset C_\longR$. The fact that we set the symmetrization in $C_\longR=(-1,l)\times B_1$ will be useful to avoid possible creation of boundary on the disc $\{0\}\times B_1$.  
\end{remark}

\section{Lower bound: first reductions on a recovery sequence}
\label{sec:lower_bound:first_reductions_on_a_recovery_sequence}
Let $\vortexmap(x) = x/\vert x\vert$, $x\neq 0$, be the vortex map and
$\Omega = \sourcedisk_\longR$;
we aim to prove that 
\begin{align*}
 \relarea(\vortexmap,\Omega)\geq \int_\Omega
|\mathcal M(\nabla \vortexmap)|~dx+\frac12\mathcal P_\Gamma(X),
\end{align*}
where $\Gamma$ is the image of 
the self-intersecting curve parametrized in \eqref{curvegamma_def}, 
see Fig. \ref{elliss_1}, 
and $X$ 
is a disc-type solution of the Plateau problem for $\Gamma$.

Let $(u_k)\subset C^1(\Omega,\R^2)$ be a recovery sequence 
for the area of the graph of $\vortexmap$, {\it i.e.}, 
$u_k\rightarrow \vortexmap$ in $L^1(\Omega,\R^2)$ and
\begin{align*}
\liminf_{k\rightarrow +\infty}\area(u_k,\Omega) = 
\relarea(\vortexmap,\Omega);
\end{align*}
with no loss of generality 
we can suppose that $u_k \to \vortexmap$ almost everywhere in $\Omega$
and
\begin{equation}\label{eq:with_no_loss}
\liminf_{k\rightarrow +\infty}\area(u_k,\Omega) = 
\displaystyle
\lim_{k\rightarrow+ \infty}\area(\vortexmap_k,\Omega)< +\infty.
\end{equation}
If $\Pi:\R^2_{{\rm target}}\rightarrow
\overline B_1 \subset \R^2_{{\rm target}}$
 is the projection map onto $\overline B_1$,
\begin{align}\label{eq:Pi}
\Pi(x):=\begin{cases}
        \frac{x}{|x|}&\text{if }|x|>1
\\
        x&\text{if }\vert x\vert \leq 1,
       \end{cases}
\end{align}
then
\begin{align*}
 \area(v,\Omega)\geq \area(\Pi\circ v,\Omega) \qquad \forall 
v\in C^1(\Omega,\R^2). 
\end{align*}
Notice that in general $\Pi\circ v 
\notin C^1(\Omega,\R^2)$;  
however $\Pi\circ v$ is of class $C^1$ on the set $\{x \in \Om:|v(x)|<1\}$ and Lipschitz continuous in $\Omega$. Therefore, possibly
replacing $u_k$ by $\Pi\circ u_k$, we can assume that 
$u_k$ takes values in $\overline B_1$ for all $k\in \mathbb N$. 

We start by dividing the source disc
$\Omega$ in several suitable subsets. 
First we observe that from \eqref{eq:with_no_loss}
there exists a constant $C>0$ such that
\begin{equation}
\label{eq:uniform_bound_of_grad_u_k}
 C\geq \int_{\Omega}|\nabla u_k|~dx
=\int_0^l\int_{\partial \sourcedisk_\sourceradialcoordinate}
|\nabla u_k(\sourceradialcoordinate,\sourceangularcoordinate)|
~d\mathcal H^1(\sourceangularcoordinate)d\sourceradialcoordinate \qquad \forall k \in \NN.
\end{equation}
By Fatou's lemma, we then infer
\begin{align*}
\int_0^l L(\sourceradialcoordinate)~d\sourceradialcoordinate\leq C,
\end{align*}
where 
$$
L(\sourceradialcoordinate):=
\liminf_{k\rightarrow+\infty} \int_{\partial \sourcedisk_\sourceradialcoordinate}|\nabla u_k(\sourceradialcoordinate,\sourceangularcoordinate)|
~d\mathcal H^1(\sourceangularcoordinate)\qquad 
{\rm for~ a.e.}~\sourceradialcoordinate \in (0,\longR).
$$ 
In particular, $L(\sourceradialcoordinate)$ is finite for 
almost every $\sourceradialcoordinate\in (0,\longR)$. 
Since $u_k\rightarrow \vortexmap$ 
almost everywhere
in $\Omega$, we have that 
for almost every
 $\sourceradialcoordinate\in (0,\longR)$ 
$$
u_k(\sourceradialcoordinate,\sourceangularcoordinate)\rightarrow u(\sourceradialcoordinate,\sourceangularcoordinate) \qquad {\rm 
for~} \mathcal H^1-{\rm a.e.}~
\sourceangularcoordinate \in \partial \sourcedisk_\sourceradialcoordinate. 
$$
Thus we can choose $\eps \in (0,1)$ arbitrarily small 
such that the two following properties
are satisfied:
\begin{align}
& L(\eps)
\leq C_\eps
\ \ ~{\rm for~ a~ constant~} C_\eps>0 ~{\rm depending~ on~} \eps;
\label{eq:H1}
\\
& \lim_{k \to +\infty}
u_k(\eps,\sourceangularcoordinate) = \vortexmap(\eps,\sourceangularcoordinate) 
\ \ ~{\rm for~} \mathcal H^1-{\rm a.e.}~ \sourceangularcoordinate \in \partial \sourcedisk_\eps.
\label{eq:H2}
\end{align}

\subsection{The functions $\diffuku$, 
the subdomains $A_n$ and $D_k^\delta$, and selection of $(\lambda_k)$}\label{subsec:selection_of_subdomains}
By Egorov lemma, there exists a sequence $(A_n)$ of 
measurable subsets of $\Omega$ such that, for any $n \in \mathbb N$,
$A_{n+1}\subseteq A_n$, 
\begin{align}\label{eq:measure_of_A_n}
 |A_n|<\frac{1}{n},
\end{align} 
and
\begin{align}
\label{eq:u_k_converges_to_u_uniformly_out_of_A_n}
 u_k\rightarrow u \;\text{ in }L^\infty(\Omega\setminus A_n,\R^2) \ 
{\rm as}~ k \to +\infty.
\end{align}

\begin{definition}[\textbf{The function $\diffuku$ and the set $D_k^\delta$}]
We indicate by $\diffuku:\Omega\setminus \{0\}\rightarrow [0,2]$ the function 
\begin{equation}\label{eq:w_k}
 \diffuku:=\big|u_k-\vortexmap\big|,
\end{equation}
and 
for any $\delta>0$ we set
\begin{equation}\label{eq:D_k_delta}
 D^\delta_k:=\{x\in \Omega \setminus \{0\}:\diffuku(x)>\delta\} =: \{\diffuku > \delta\}.
\end{equation}
\end{definition}
Notice that 
\begin{equation}\label{eq:partial_D_k_inside_level_set}
 \partial D^\delta_k\subseteq \{x\in \Omega \setminus \{0\}: \diffuku(x)=\delta\} =:
\{\diffuku = \delta\}.
\end{equation}
For $\eps$ satisfying 
\eqref{eq:H1} 
and \eqref{eq:H2}, we have
 $\diffuku \in {\rm Lip}(\Om \setminus \overline\sourcedisk_\eps;\R^2) \cap 
W^{1,1}(\Om; \R^2)$.
For any $n \in \mathbb N$, 
from
\eqref{eq:u_k_converges_to_u_uniformly_out_of_A_n}
it follows that 
for any $\delta>0$ there exists
$k_{\delta,n} \in \NN$ 
such that $\diffuku<\frac{\delta}{2}$ in $\Omega\setminus A_n$ 
for any $k\geq k_{\delta,n}$, and thus  
\begin{equation*}
\Omega\setminus A_n\subseteq \Big\{\diffuku<\frac{\delta}{2}\Big\}
\subseteq\Omega\setminus D^\delta_k
\qquad \forall k > k_{\delta,n}.
\end{equation*}
Passing to the complement, from \eqref{eq:partial_D_k_inside_level_set}
 and the inclusion 
$\{\diffuku = \delta\} \subseteq
\{\diffuku\geq \delta/2\}$,
we get 
\begin{align}\label{inclusion}
 D^\delta_k\subseteq A_n\qquad \text{and }\qquad \partial D^\delta_k\subseteq A_n
\qquad 
\forall k > k_{\delta,n}.
\end{align}

\begin{lemma}[\textbf{Choice of $\lambda_k$ and estimates on $D_k^{\lambda_k}$}]
\label{lem:choice_of_u_k_and_t_k} 
Let $\eps \in (0,1)$ satisfy 
\eqref{eq:H1}  and \eqref{eq:H2}. 
Let $n>0$ and $A_n \subset \Om$ be a measurable 
set satisfying
 \eqref{eq:measure_of_A_n} and \eqref{eq:u_k_converges_to_u_uniformly_out_of_A_n}.
Then there are a (not relabelled) subsequence of  
$(u_k)$ and  a decreasing
 infinitesimal sequence $(\radialdifference_k)$ of positive numbers,
both depending on $n$ and $\eps$, such that the following properties hold:
\begin{itemize}
 \item[(i)] for all $k\in \mathbb N$ we have 
$\radialdifference_k\neq1-|u_k(0)|$ 
and the boundary of the set $D^{\radialdifference_k}_k 
= \{\diffuku > \radialdifference_k\}$ 
consists of an at most countable 
number of continuous curves 
which are either closed or with endpoints on $\partial\Omega$, and 
whose total length is finite;
 \item[(ii)] $D^{\radialdifference_k}_k \cup \partial D^{\radialdifference_k}_k\subseteq A_n$
for all $k \in \mathbb N$;
 \item[(iii)] 
$\displaystyle 
\lim_{k\to +\infty}  
\displaystyle 
\int_{\partial D^{\radialdifference_k}_k}\diffuku ~d\mathcal H^1 = 
\lim_{k\to +\infty}  
\left(\radialdifference_k \mathcal H^1(\partial D_k^{\radialdifference_k})\right) 
=0$;
 \item[(iv)] 
$\partial D^{\radialdifference_k}_k\cap 
\partial \sourcedisk_\eps$ consists of a finite set 
of points. Hence\footnote{
The relative boundary of 
$D^{\radialdifference_k}_k\cap 
\partial \sourcedisk_\eps$ is contained 
in 
$\partial D^{\radialdifference_k}_k\cap 
\partial \sourcedisk_\eps$.}, 
also the relative boundary of $D^{\radialdifference_k}_k\cap 
\partial \sourcedisk_\eps$ in $\partial \sourcedisk_\eps$
consists of a finite set
$\{x_i\}_{i \in I_k}$ of points which are the endpoints 
of the corresponding finite number of arcs forming 
$D^{\radialdifference_k}_k \cap
\partial \sourcedisk_\eps$, and
\begin{align}
\label{sum_xi}
\lim_{k \to +\infty} \sum_{i \in I_k}\diffuku(x_i) = 
0;
\end{align}
\item[(v)] 
$  \mathcal {H}^1(D^{\radialdifference_k}_k \cap\partial \sourcedisk_\eps
)\leq \frac{1}{n}$
for all $k \in \NN$.
 \end{itemize}
\end{lemma}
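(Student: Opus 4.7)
\emph{Proof plan.} The strategy is to select $\lambda_k$ as a regular value of $\diffuku$ at which $\{\diffuku=\lambda_k\}$ is a rectifiable $1$-set of controlled total length, enforcing the inclusion requirements by Egorov's theorem. As a starting point, from \eqref{eq:uniform_bound_of_grad_u_k} and the $L^1$-integrability of $|\nabla \vortexmap(x)|=1/|x|$ on $\sourcedisk_\longR$, one has $\int_\Omega|\nabla \diffuku|\,dx \leq C'$, and from \eqref{eq:H1} the tangential bound $\int_{\partial \sourcedisk_\eps}|\nabla_T \diffuku|\,d\mathcal H^1 \leq C_\eps + 2\pi$, both uniformly in $k$. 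The two- and one-dimensional co-area formulas then yield
\begin{equation*}
\int_0^\infty \mathcal H^1(\{\diffuku = \lambda\})\, d\lambda \leq C',
\qquad
\int_0^\infty \#\bigl(\{\diffuku=\lambda\}\cap\partial\sourcedisk_\eps\bigr)\, d\lambda \leq C_\eps + 2\pi,
\end{equation*}
which are the source of all quantitative information used below.

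For (i), I would invoke the standard consequence of the co-area formula in two dimensions: for a.e. $\lambda$ the sublevel set $\{\diffuku<\lambda\}$ has finite perimeter and $\{\diffuku=\lambda\}$ is $\mathcal H^1$-rectifiable; combined with the fact that $\diffuku$ is $C^1$ outside the origin (since $\vortexmap_k\in C^1(\Omega)$ and $\vortexmap$ is smooth on $\Omega\setminus\{0\}$), this boundary decomposes into an at most countable disjoint union of continuous curves (applying Sard at $C^1$ regular points) that are either closed or end on $\partial\Omega$. One-dimensional Sard applied to $\diffuku|_{\partial\sourcedisk_\eps}$ gives the finiteness of $\{\diffuku=\lambda\}\cap\partial\sourcedisk_\eps$ stated in (iv); the constraint $\lambda\neq 1-|\vortexmap_k(0)|$ rules out only one value. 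For (ii), \eqref{inclusion} automatically forces $D_k^{\lambda_k}\cup\partial D_k^{\lambda_k}\subseteq A_n$ once $\lambda_k>\delta$ and $k>k_{\delta,n}$. For (v), I would apply Egorov a second time to $\alpha\mapsto \vortexmap_k(\eps,\alpha)$ on $\partial\sourcedisk_\eps$, exploiting \eqref{eq:H2}: there exists $\widetilde A_n\subseteq\partial\sourcedisk_\eps$ with $\mathcal H^1(\widetilde A_n)<1/n$ outside of which the convergence is uniform, so $D_k^{\lambda_k}\cap\partial\sourcedisk_\eps\subseteq\widetilde A_n$ for $k$ large.

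The final step is a diagonal extraction. Fixing any auxiliary sequence $\delta_k\to 0^+$, Chebyshev's inequality applied to both co-area estimates produces, for each large $k$, a set of $\lambda\in(0,\delta_k)$ of positive measure on which the rectifiability/regularity holds, the magnitudes $\mathcal H^1(\{\diffuku=\lambda\})$ and $\#(\{\diffuku=\lambda\}\cap\partial\sourcedisk_\eps)$ are jointly controlled, the inclusions (ii) and (v) are satisfied, and $\lambda\neq 1-|\vortexmap_k(0)|$. Picking $\lambda_k$ from this set finishes the selection. I expect the main obstacle to be the upgrade of this Chebyshev output, which a priori only gives \emph{boundedness} of $\lambda_k\mathcal H^1(\partial D_k^{\lambda_k})$ and $\lambda_k|I_k|$, to the genuine vanishing demanded by (iii) and \eqref{sum_xi}. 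The decisive ingredient here is the Egorov confinement \eqref{inclusion}: the relevant co-area mass is $\int_{A_n}|\nabla \diffuku|\,dx$ rather than the full integral on $\Omega$, and by extracting a further subsequence and refining $A_n$ if necessary one arranges that this mass vanishes as $k\to\infty$; a suitably slow choice of $\delta_k\to 0$ relative to this vanishing rate makes Chebyshev strong enough to produce $\lambda_k\mathcal H^1(\partial D_k^{\lambda_k})\to 0$ and $\lambda_k|I_k|\to 0$.
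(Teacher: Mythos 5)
The proposal correctly identifies the crux — that Chebyshev applied to the co-area bound on $(0,\delta_k)$ only gives $\lambda_k\mathcal H^1(\partial D_k^{\lambda_k})\lesssim C'$, i.e.\ boundedness rather than vanishing — but the proposed fix does not work. You claim that by ``localizing'' the co-area mass to $A_n$ and extracting a further subsequence (``refining $A_n$ if necessary'') one can arrange $\int_{A_n}|\nabla \diffuku|\,dx\to 0$ as $k\to+\infty$. This is false: for fixed $n$ the set $A_n$ is fixed while $k$ varies, and all one knows is that $\int_{\Omega}|\nabla u_k|\,dx$ is bounded, not that $\int_{A_n}|\nabla u_k-\nabla u|\,dx$ decays; gradient mass can concentrate in $A_n$. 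Moreover the level sets $\{\diffuku=\lambda\}$ for $\lambda$ small are not confined to $A_n$ (only $D_k^{\lambda}$ and $\partial D_k^\lambda$ are, and only once $\lambda\geq\delta$ and $k\geq k_{\delta,n}$), so the co-area integral over the range of $\lambda$ you need is not controlled by the restriction to $A_n$.

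The argument the paper uses to upgrade boundedness to vanishing is different and is the real content of the lemma. One treats $\varphi_k(\lambda):=\mathcal H^1(\{\diffuku=\lambda\})$ and $\psi_k(\lambda):=\mathcal H^0(\{\diffuku=\lambda\}\cap\partial\sourcedisk_\eps)$ as sequences of nonnegative functions of the \emph{level} $\lambda$, and applies Fatou to get $\int_I\varphi\,d\lambda<\infty$ and $\int_I\psi\,d\lambda<\infty$ for $\varphi:=\liminf_k\varphi_k$, $\psi:=\liminf_k\psi_k$, where $I\subset(0,2)$ has full measure. Then a dichotomy: if no sequence $\lambda_m\to 0^+$ in $I$ makes both products $\varphi(\lambda_m)\lambda_m$ and $\psi(\lambda_m)\lambda_m$ vanish, then $\varphi(\lambda)+\psi(\lambda)\gtrsim 1/\lambda$ near $0$, contradicting integrability. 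This produces the desired sequence $(\lambda_m)$ for the \emph{liminf} functions, and the final (and necessary) step is a diagonal extraction of a subsequence $(u_{h_m})$ along which $\varphi_{h_m}(\lambda_m)\lambda_m\to 0$ and $\psi_{h_m}(\lambda_m)\lambda_m\to 0$, while also enforcing (ii) and (v) by taking $h_m\geq k(n,\lambda_m)$. Your plan has all the surrounding ingredients — co-area, Sard/rectifiability of level sets for (i) and (iv), convergence in measure for (v), \eqref{inclusion} for (ii) — but is missing this Fatou-plus-dichotomy mechanism, which cannot be replaced by localization of the gradient mass.
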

\begin{proof}
Let 
$$
I:=(0,2)\setminus \bigcup_{k\in \mathbb N}\{1-|u_k(0)|\},
$$
which is of full measure in $(0,2)$.

We have, for an absolute positive constant $\alpha$, recalling
the definition of $\diffuku$ in \eqref{eq:w_k},
\begin{equation}\label{eq:alpha}
 \int_{\Om}|\nabla u_k-\nabla\vortexmap|~dx\geq
\alpha \int_{\Om}|\grad \diffuku|~dx= \alpha \int_0^{2}\mathcal  H^1(\{\diffuku=\radialdifference\})~d\radialdifference,
\end{equation}
where the last equality 
follows from the coarea formula, recalling also that
$u_k$ takes values in $\overline B_1$. 
The left-hand side is uniformly bounded with respect to $k$, thanks to
\eqref{eq:uniform_bound_of_grad_u_k} and the fact that $\grad 
\vortexmap \in L^1(\Omega, \R^2)$. Thus, denoting 
\begin{equation}\label{eq:varphi}
\varphi_k(\cdot):=\mathcal  H^1(\{\diffuku=\cdot\}), \qquad 
\varphi:=\liminf_{k \to+\infty} \varphi_k,
\end{equation}
 we get, from Fatou's lemma,
\begin{align}\label{const}
 \int_0^2\varphi(\radialdifference)~d\radialdifference=
\int_I\varphi(\radialdifference)~d\radialdifference\leq C_1,
\end{align}
for some constant $C_1>0$.

Let us now focus attention on the set $\partial \sourcedisk_\eps$. 
We apply the tangential coarea formula to $\partial \sourcedisk_\eps$ (see 
for instance \cite[Theorems 11.4, 18.8]{Maggi:12}) so that, if
 $\partial_{\rm tg}$ 
stands for the tangential derivative
 along $\partial \sourcedisk_\eps$, we have
\begin{align*}
 \int_{\partial \sourcedisk_\eps}\big|\partial_{\rm tg} \diffuku \big|
~d\mathcal H^1=
\int_0^2\mathcal H^0(\{\diffuku=\radialdifference\}\cap \partial \sourcedisk_\eps)~d\radialdifference.
\end{align*}
Arguing in a similar manner as before,
denoting 
\begin{equation}\label{eq:psi}
\psi_k(\cdot):=\mathcal H^0(\{\diffuku=\cdot\}\cap \partial \sourcedisk_\eps), \qquad
\psi:=\liminf_{k\rightarrow+\infty}\psi_k,
\end{equation}
it follows that, exploiting condition \eqref{eq:H1},
there exists a constant $C_\eps'>0$ 
such that 
\begin{equation}\label{eq:integrability_of_psi}
 \int_I\psi(\radialdifference)~d\radialdifference\leq C_\eps'.
\end{equation}
We now claim that
\begin{equation}\label{claim1}
\exists 
(\radialdifference_m)\subset I: \ 
 \lim_{m\to +\infty} \lambda_m=0, \ \ 
\lim_{m\rightarrow +\infty} (\varphi(\radialdifference_m) \radialdifference_m)=0=
\lim_{m\rightarrow + \infty} (\psi(\radialdifference_m) \radialdifference_m).
\end{equation}
Recalling that $I$ is of full measure, assume \eqref{claim1} is false, 
so that either there are $c_0>0$ and $\delta_0>0$ such that 
\begin{align}\label{c_0estimate}
 \varphi(\radialdifference)>\frac{c_0}{\radialdifference} \qquad \forall \radialdifference
 \in (0,\delta_0)\cap I,
\end{align}
or 
there are $c_0'>0$ and $\delta_0'>0$ such that 
\begin{align}\label{c_0estimate'}
 \psi(\radialdifference)>\frac{c_0'}{\radialdifference} \qquad 
\forall \radialdifference \in (0,\delta_0')\cap I.
\end{align}
Suppose for instance we are in case \eqref{c_0estimate}:
since $I$ has full measure, this contradicts \eqref{const}; 
the same argument applied to \eqref{c_0estimate'}
 leads to contradict 
\eqref{eq:integrability_of_psi}. Hence claim
\eqref{claim1} is proven, and therefore,
upon passing to a (not relabelled) subsequence 
we might assume that $(\lambda_m)$ is decreasing, and  
\begin{align*}
 \varphi(\radialdifference_m)\radialdifference_m
<\frac{1}{m},\qquad \psi(\radialdifference_m)\radialdifference_m<\frac{1}{m} \qquad \qquad
\forall m \in \NN.
\end{align*}
Thus, recalling \eqref{eq:varphi} and \eqref{eq:psi}, 
 for any $m \in \NN$ there are infinitely many $l \in \mathbb N$  such that 
\begin{equation}\label{conditions_tm}
 \varphi_{l}(\radialdifference_m)\radialdifference_m
<\frac{2}{m},\qquad \psi_{l}(\radialdifference_m)\radialdifference_m<\frac{2}{m}.
\end{equation}
Moreover, for any $n \in \NN$ and $m \in \NN$ 
there exists $k(n, \radialdifference_m) \in \NN$ 
such that 
\begin{align}\label{inclusion2}
D^{\radialdifference_m}_h \cup
 \partial D^{\radialdifference_m}_h
\subseteq A_n\qquad \text{ and }\qquad 
\mathcal {H}^1(D^{\radialdifference_m}_h \cap \partial \sourcedisk_\eps)
\leq \frac{1}{n} \qquad
\forall h\geq k(n,\radialdifference_m),
\end{align}
where the inclusion follows from 
 \eqref{inclusion} and 
the inequality being a consequence of \eqref{eq:H2}.
For any $m \in \mathbb N$ we can choose $h_m \in \mathbb N$ 
(depending also on $n$)
such that $h_m < h_{m+1}$, 
 $h_m\geq k(n,\radialdifference_m)$, and \eqref{conditions_tm} is verified for $l=h_m$.
Therefore
\begin{align}
& \lim_{m\rightarrow+\infty}(\varphi_{h_m}(\radialdifference_m)\radialdifference_m)=0,
\label{eq:phi_h_m_t_m_and_inclusion}
\\
& D^{\radialdifference_m}_{h_m}\cup 
 \partial 
D^{\radialdifference_m}_{h_m}
\subseteq A_n \ \ \text{ for all}~n,m\in \NN,
\label{eq:phi_h_m_t_m_and_inclusion_1}
\\
&  \lim_{m\rightarrow + \infty}
(\psi_{h_m}(\radialdifference_m)\radialdifference_m)=0,
\label{eq:psi_h_m_t_m_and_inclusion}
\\
& 
\mathcal {H}^1
(D^{\radialdifference_m}_{h_m}\cap
\partial \sourcedisk_\eps)\leq 
\frac{1}{n} \ \ \text{ for all}~n,m \in \NN.
\label{eq:psi_h_m_t_m_and_inclusion_1}
\end{align}
Notice also that from \eqref{conditions_tm}  
we have $\psi_{h_m}(\radialdifference_m)<+\infty$, 
so that $\{d_{h_m}=\radialdifference_m\}\cap \partial \sourcedisk_\eps$ is a finite
set $\{\widetilde x_i\}$ of points.
The relative
boundary $\partial(D^{\radialdifference_m}_{h_m} \cap \partial \sourcedisk_\eps)$ 
of 
$D^{\radialdifference_m}_{h_m} \cap \partial \sourcedisk_\eps$ 
in $\partial \sourcedisk_\eps$ must belong to $\partial D^{\radialdifference_m}_{h_m} 
\cap \partial \sourcedisk_\eps\subseteq
\{d_{h_m}=\radialdifference_m\}\cap \partial \sourcedisk_\eps
=\{\widetilde x_i\}$. 
Hence, let 
$\{x_i\}\subseteq\{\widetilde x_i\}$ be 
the set of boundary points of $D^{\radialdifference_m}_{h_m} 
\cap \partial \sourcedisk_\eps$ 
in $\partial \sourcedisk_\eps$. 

Since $D^{\radialdifference_m}_{h_m}\cap \partial \sourcedisk_\eps$ is 
open in $\partial \sourcedisk_\eps$, we have that (whenever it is nonempty)
it consists either of the union of arcs 
with endpoints $\{x_i\}$ or is the whole of $\partial
\sourcedisk_\eps$, and  statements (ii) and (v) follow.
Notice also that 
$$
\sum_{x \in \partial (D_{h_m}^{\radialdifference_m}\cap \partial
\sourcedisk_\eps)} d_{h_m}(x) =
 \mathcal H^0(\partial (D_{h_m}^{\radialdifference_m}\cap \partial
\sourcedisk_\eps)) \radialdifference_m \leq
\psi_{h_m}(\radialdifference_m) \radialdifference_m,
$$
and \eqref{sum_xi} follows from \eqref{eq:psi_h_m_t_m_and_inclusion}. 

To prove (iii) 
we see that, by definition of $\varphi_k$ in \eqref{eq:varphi}
and 
recalling \eqref{eq:phi_h_m_t_m_and_inclusion}, we obtain
\begin{align*}
 \lim_{m\rightarrow +\infty}\int_{\{d_{h_m} = \radialdifference_m\}}
d_{h_m} ~d\mathcal H^1 = 
\lim_{m\rightarrow+\infty}\left(
\mathcal H^1(\{d_{h_m}=\radialdifference_m\})\radialdifference_m
\right)
=\lim_{m\rightarrow+\infty}(\varphi_{h_m}(\radialdifference_m)\radialdifference_m)=0.
\end{align*}
A similar argument holds for $\psi_k$ using
\eqref{eq:psi_h_m_t_m_and_inclusion},  and also (v) follows.

It remains to prove (i). 
The first assertion follows since $\radialdifference_m\in I$ 
from \eqref{claim1}. 
As for the second assertion, we see that $D_{h_m}^{\radialdifference_m}$ 
is a subset of $\Omega\setminus\{0\}$ whose perimeter is finite:
indeed, by definition the reduced boundary of $D_{h_m}^{\radialdifference_m}$ is a
subset of $\{d_{h_m}=\radialdifference_m\}$, which has finite $\mathcal H^1$
measure by \eqref{conditions_tm}.
Thus $\partial D_{h_m}^{\radialdifference_m}$  is a closed $1$-integral current 
in $\Omega\setminus \{0\}$ 
and by the decomposition theorem for $1$-dimensional currents 
it is the sum of integration on simple curves 
\cite[pag. 420, 421]{Federer:69}, 
either closed or with endopoints on the 
boundary of $\Omega\setminus\{0\}$, {\it i.e.}, $\{0\} \cup \partial\Om$. 
The finiteness of the total length of these curves follows,
 since $D_{h_m}^{\lambda_m}$
is a set of finite perimeter.
This concludes the proof of (i), and of the lemma.
\end{proof}

\begin{cor}\label{cor:t_k_H^1}
Let $\eps$, $n$ and $(\radialdifference_k)$
 be as in Lemma \ref{lem:choice_of_u_k_and_t_k}.  Then 
 \begin{align*}
\lim_{k \to +\infty}
\left(
\mathcal H^1(\{\diffuku=\radialdifference_k\})\radialdifference_k  
\right)=0, \qquad 
\lim_{k \to +\infty}
\left(
\mathcal H^0\big(\{\diffuku=\radialdifference_k\}
\cap \partial \sourcedisk_\eps(0)\big)\radialdifference_k
\right)=0.
 \end{align*}
\end{cor}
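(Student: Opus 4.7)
This corollary is essentially a re-packaging of estimates that were already produced inside the proof of Lemma~\ref{lem:choice_of_u_k_and_t_k}, so the plan is to read them off directly rather than to run a new argument. Recall that the subsequence $(\lambda_m)$ (which later got relabelled as $(\lambda_k)$) was constructed precisely so that, for the index $h_m$ chosen in the proof, estimate \eqref{conditions_tm} holds, namely
\begin{equation*}
\varphi_{h_m}(\lambda_m)\,\lambda_m < \frac{2}{m},\qquad \psi_{h_m}(\lambda_m)\,\lambda_m < \frac{2}{m}.
\end{equation*}

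Next, I would unravel the definitions. By \eqref{eq:varphi} the function $\varphi_k$ is $\varphi_k(\lambda)=\mathcal H^1(\{\diffuku=\lambda\})$, and by \eqref{eq:psi} the function $\psi_k$ is $\psi_k(\lambda)=\mathcal H^0(\{\diffuku=\lambda\}\cap\partial\sourcedisk_\eps)$. Therefore the two inequalities displayed above read
\begin{equation*}
\mathcal H^1(\{d_{h_m}=\lambda_m\})\,\lambda_m<\frac{2}{m},\qquad
\mathcal H^0\bigl(\{d_{h_m}=\lambda_m\}\cap\partial\sourcedisk_\eps\bigr)\,\lambda_m<\frac{2}{m}.
\end{equation*}
After the standard relabelling $k\leftrightarrow h_m$, $\lambda_k\leftrightarrow\lambda_m$, letting $k\to+\infty$ gives both limits in the statement of Corollary~\ref{cor:t_k_H^1}.

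There is no real obstacle here: the corollary is stronger-looking than Lemma~\ref{lem:choice_of_u_k_and_t_k}(iii) only because the latter records $\lambda_k\,\mathcal H^1(\partial D_k^{\lambda_k})\to 0$, where $\partial D_k^{\lambda_k}$ is in general a proper subset of the level set $\{\diffuku=\lambda_k\}$ (cf.~\eqref{eq:partial_D_k_inside_level_set}), while the corollary concerns the full level set. The point is that during the proof of the lemma we actually controlled the full level set via the coarea formula on $\Omega$ and the tangential coarea formula on $\partial\sourcedisk_\eps$, so the stronger conclusion is already available. Hence the plan is simply to cite \eqref{conditions_tm} together with the definitions \eqref{eq:varphi}--\eqref{eq:psi}, and conclude by arbitrariness of $m$.
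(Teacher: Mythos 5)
Your proposal is correct and matches the paper's own (one-line) proof: the paper simply asserts "It follows from the proof of Lemma \ref{lem:choice_of_u_k_and_t_k}," and you correctly identify that the estimates \eqref{conditions_tm} for $l=h_m$, combined with the definitions \eqref{eq:varphi} and \eqref{eq:psi} and the relabelling $h_m \rightsquigarrow k$, $\lambda_m \rightsquigarrow \lambda_k$, give exactly the two limits in the statement. Your closing observation — that the lemma's item (iii) only records the weaker statement for $\partial D_k^{\lambda_k} \subset \{\diffuku=\lambda_k\}$, whereas the full level-set bound is what was actually proved via the coarea formulas — is also correct and explains why the corollary is needed as a separate statement.
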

\begin{proof}
It follows from the proof of Lemma \ref{lem:choice_of_u_k_and_t_k}.
\end{proof}

Once for all we fix the sequence $(\radialdifference_k)$ as in Lemma 
\ref{lem:choice_of_u_k_and_t_k}
and, 
in order to shorten the notation, we give the following:
\begin{definition}[\textbf{Definite choice of $\badset$}] We set
\begin{equation}\label{eq:D_k_once_for_all}
\badset
:=D_k^{\radialdifference_k}.
\end{equation}
\end{definition}
Let us recall that
\begin{align}\label{inclusion_partialDk}
 \partial \badset
\subseteq\{\diffuku=\radialdifference_k\}.
\end{align}
Also, observe that,
 upon extracting a further  (not relabelled) subsequence, 
we might assume that the  characteristic functions $\chi^{}_{\badset}$ converge 
${\rm weakly}^*$ 
in $L^\infty(\Omega)$ to some $\zeta_n 
\in L^\infty(\Omega; [0,1])$ (the 
sequence $(\badset$) depends on $n$, and so $\zeta_n$ depends on $n$).
Since the limit holds also weakly in $L^1(\Omega)$ we see that 
\begin{align}
\label{eq:estimate_zeta}
 \|\zeta_n\|_{L^1(\Omega)}\leq \liminf_{k\rightarrow+\infty}\|\chi^{}_{\badset}\|_{L^1(\Omega)}\leq \frac1n.
\end{align}
Recalling the definition of $M_{\bar\alpha}^\beta(A)$ in \eqref{eq:subdeterminant}, we prove the following statement.

\begin{lemma}[\textbf{The currents $T_k$ and the limit
current $\mathcal T_n$}]\label{lemma_conv_currents_unif2}
 Let $n\in \mathbb N$ be fixed and let $A_n$ satisfy  
\eqref{eq:measure_of_A_n} and \eqref{eq:u_k_converges_to_u_uniformly_out_of_A_n}. 
 For any $k \in \mathbb N$ 
define the current $T_k\in \mathcal D_2(\Omega\times\R^2)$ as 
 \begin{align*}
  T_k(\omega):=\begin{cases}
                \displaystyle
\int_{\Omega\setminus \badset}
\varphi(x,u_k(x))M^\beta_{\bar\alpha}(\nabla u_k(x))~dx&\text{if }|\beta|\leq 1,\\
                0&\text{if }|\beta|=2,
               \end{cases}
 \end{align*}
where $\omega\in \mathcal D^2(\Om \times \R^2)$ is a $2$-form that writes as
\begin{equation}\label{eq:omega}
\omega(x,y)=
\varphi(x,y)dx^\alpha\wedge dy^\beta, \quad 
\varphi \in C^\infty_c(\Omega\times \R^2),  \quad |\alpha|+|\beta|=2.
\end{equation}
 Then 
$$
\lim_{k \to +\infty}T_k
=\limitcurrent_n\in \mathcal D_2(\Omega\times\R^2)
\qquad {\rm weakly~ in~ the~ sense~ of~ currents},
$$
where
 \begin{align*}
\limitcurrent_n(\omega):=\int_\Omega\varphi(x,\vortexmap(x))M^\beta_{\bar\alpha}
(\nabla \vortexmap(x))(1-\zeta_n(x))~dx
\qquad 
\forall \omega {\rm ~as~in~} 
\eqref{eq:omega}.
 \end{align*}
\end{lemma}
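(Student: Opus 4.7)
The plan is to split test $2$-forms $\omega = \varphi\, dx^\alpha \wedge dy^\beta$, $|\alpha|+|\beta|=2$, according to $|\beta|$ and verify the convergence $T_k(\omega)\to \mathcal T_n(\omega)$ case by case. The case $|\beta|=2$ is immediate: $T_k(\omega)=0$ by definition and $\mathcal T_n(\omega)=0$ because $Ju = 0$ a.e.\ (the image of $u$ lies on $\mathbb S^1$). For $|\beta|\le 1$ one needs to pass to the limit in $\int_\Omega \varphi(x, u_k)\, M^\beta_{\bar\alpha}(\nabla u_k)\, \chi_{\Omega \setminus \badset}\, dx$.

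When $|\beta|=0$, the minor $M^\beta_{\bar\alpha}$ equals $1$. I would combine two facts: first, $\varphi(\cdot, u_k) \to \varphi(\cdot, u)$ in $L^1(\Omega)$ by dominated convergence, since $u_k \to u$ a.e.\ and $\varphi$ is bounded and continuous; second, $\chi_{\Omega \setminus \badset} \rightharpoonup^* 1 - \zeta_n$ in $L^\infty(\Omega)$ by construction. Taking the product (strong-$L^1$ against weak-$*$ $L^\infty$) then yields $T_k(\omega) \to \int_\Omega \varphi(x, u)(1-\zeta_n)\, dx = \mathcal T_n(\omega)$.

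When $|\beta|=1$, say $\omega = \varphi\, dx^i \wedge dy^j$ with $\{h\}=\bar\alpha$, the minor equals $\pm\partial_h u_k^j$. I would decompose
\[
\varphi(x, u_k)\, \partial_h u_k^j \;=\; \varphi(x, u)\, \partial_h u^j \;+\; \varphi(x, u)\, \partial_h(u_k^j - u^j) \;+\; \big(\varphi(x, u_k) - \varphi(x, u)\big)\, \partial_h u_k^j
\]
and treat the three summands over $\Omega \setminus \badset$ separately. The first converges to $\int_\Omega \varphi(x, u)\partial_h u^j(1-\zeta_n)\, dx$ exactly as in the $|\beta|=0$ case (with $\varphi(\cdot, u)\partial_h u^j \in L^1$, since $\nabla u \in L^1(\Omega)$). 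The third is dominated by $\|\nabla_y\varphi\|_\infty\, \lambda_k\, \|\nabla u_k\|_{L^1(\Omega)}$ (because $|u_k-u|\le \lambda_k$ on $\Omega \setminus \badset$) and vanishes by the uniform bound \eqref{eq:uniform_bound_of_grad_u_k} together with $\lambda_k\to 0$. For the middle summand---the delicate one---I would integrate by parts on the finite-perimeter set $\Omega \setminus \badset$ (recall Lemma \ref{lem:choice_of_u_k_and_t_k}(i)) to get
\[
\int_{\Omega \setminus \badset} \varphi(x, u)\,\partial_h(u_k^j - u^j)\, dx \;=\; -\int_{\Omega \setminus \badset} \partial_h[\varphi(\cdot, u)]\,(u_k^j - u^j)\, dx \;+\; \int_{\partial \badset} \varphi(x, u)(u_k^j - u^j)\, \nu_h\, d\mathcal H^1.
\]
On $\Omega \setminus \badset$ we have $|u_k-u|\le \lambda_k$ and $\partial_h[\varphi(\cdot, u)] \in L^1(\Omega)$ (since $\nabla \varphi$ is bounded and $\nabla u \in L^1$), giving a bulk term of size $O(\lambda_k)$; on $\partial \badset$ the inclusion \eqref{inclusion_partialDk} yields $|u_k-u|=\lambda_k$, so the boundary term is bounded by $\|\varphi\|_\infty \lambda_k \mathcal H^1(\partial \badset)$, which vanishes by Lemma \ref{lem:choice_of_u_k_and_t_k}(iii).

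The main technical hurdle will be justifying the Gauss--Green step rigorously, since $u$ is singular at the origin and $\partial \badset$ is a priori only $1$-rectifiable of finite length. The function $\varphi(\cdot, u(\cdot))(u_k^j - u^j)$ nevertheless lies in $W^{1,1} \cap L^\infty(\Omega)$ (its distributional gradient is dominated by $|\nabla u| \in L^1$ plus bounded contributions from $\nabla \varphi$ and $\nabla u_k$ restricted to the set in question), and the condition $\lambda_k \neq 1 - |u_k(0)|$ of Lemma \ref{lem:choice_of_u_k_and_t_k}(i) keeps $0$ away from $\partial \badset$; together these are enough for the standard Gauss--Green formula to apply on $\Omega \setminus \badset$.
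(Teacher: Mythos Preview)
Your proof is correct and follows essentially the same route as the paper's: the same case split on $|\beta|$, the same three-term decomposition for $|\beta|=1$, the same integration by parts on $\Omega\setminus\badset$ for the middle term, and the same use of Lemma~\ref{lem:choice_of_u_k_and_t_k}(iii) to kill the boundary contribution. The only cosmetic difference is that for $|\beta|=0$ you invoke a.e.\ convergence plus dominated convergence to get $\varphi(\cdot,u_k)\to\varphi(\cdot,u)$ in $L^1(\Omega)$, whereas the paper uses the pointwise bound $|u_k-u|\le\lambda_k$ on $\Omega\setminus\badset$ directly; both arguments are valid.
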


\begin{proof}
Since the Jacobian of $\vortexmap$ vanishes almost everywhere it follows that 
$\limitcurrent_n(\varphi dy^1\wedge dy^2)=0$ for all $\varphi$ as in 
\eqref{eq:omega}. Then for $2$-forms $\omega=\varphi dy^1\wedge dy^2$ 
the convergence
$T_k(\omega)\rightarrow \limitcurrent_n(\omega)$ is achieved.
We are then left to prove that for all $2$-forms $\omega$ with $\omega(x,y)=\varphi(x,y)dx^\alpha\wedge dy^\beta$, $\varphi \in C^\infty_c(\Omega\times \R^2)$, $|\alpha|+|\beta|=2$, and $|\beta|\leq 1$, it holds
\begin{align}\label{claim_lemma2}
 \lim_{k\rightarrow 
+\infty}\int_{\Omega\setminus \badset}\varphi(x,u_k(x))M^\beta_{\bar\alpha}
(\nabla u_k(x))~dx
= 
\int_{\Omega}\varphi(x,\vortexmap(x))M^\beta_{\bar\alpha}(\nabla 
\vortexmap)(1-\zeta_n(x))~dx.
\end{align}
To simplify the argument we treat separately the cases $\omega=\varphi(x,y)dx^1\wedge dx^2$ and $\omega=\varphi(x,y)dx^i\wedge dy^j$ for some $i,j\in\{1,2\}$. In the former case we simply have
\begin{align*}
\int_{\Omega\setminus \badset}\varphi(x,u_k(x))~dx
=
\int_{\Omega}\varphi(x,u_k(x))\chi_{\Omega\setminus \badset}(x)~dx.
\end{align*}
Then, using that $u_k\rightarrow u$ uniformly in $\Omega\setminus \badset$
(see \eqref{eq:u_k_converges_to_u_uniformly_out_of_A_n}, Lemma 
\ref{lem:choice_of_u_k_and_t_k}(ii) and \eqref{eq:D_k_once_for_all})
and 
$\chi_{\Omega\setminus \badset} \to \chi_\Omega-\zeta_n$ 
${\rm weakly}^*$ in $L^\infty(\Omega)$, it follows
\begin{align*}
\lim_{k\rightarrow +\infty}\int_{\Omega\setminus \badset}\varphi(x,u_k(x))~dx
=\int_{\Omega}\varphi(x,\vortexmap(x))(1-\zeta_n(x))~dx=\mathcal T_n(\omega).
\end{align*}
Assume now $\omega=\varphi(x,y)dx^i\wedge dy^j$, $i,j
\in \{1,2\}$, $i \neq j$.
In this case \eqref{claim_lemma2} reads as
\begin{align*}
 \lim_{k\rightarrow +\infty}\Big(
\int_{\Omega\setminus 
\badset}\varphi(x,u_k(x))D_{\bar i}
[( u_k(x))_j]~dx-  \int_{\Omega}\varphi(x,\vortexmap(x))D_{\bar i} \vortexmap_j(x)(1-\zeta_n(x))~dx\Big)=0,
\end{align*}
with $\bar i=\{1,2\}\setminus \{i\}$.
Since 
$\chi_{\badset}^{} \to \zeta_n$ ${\rm weakly}^*$ in $L^\infty(\Omega)$, 
this is equivalent to proving
\begin{align*}
 \lim_{k\rightarrow +\infty}
\Big(\int_{\Omega\setminus \badset}\varphi(x,u_k(x))D_{\bar i}[( u_k(x))_j]~dx
-  \int_{\Omega\setminus \badset}\varphi(x,\vortexmap(x))D_{\bar i} \vortexmap_j(x)~dx\Big)=0.
\end{align*}
The quantity between parentheses on the left-hand side can be written as
\begin{align*}
 \int_{\Omega\setminus \badset}\Big(
\varphi(x,u_k(x))-\varphi(x,\vortexmap(x))\Big)
D_{\bar i}[( u_k(x))_j]~dx+\int_{\Omega\setminus \badset}
\varphi(x,u(x))\Big(D_{\bar i}[( u_k(x))_j]-D_{\bar i} \vortexmap_j(x)\Big)~dx,  
\end{align*}
and we see that the first integral tends to zero 
as $k \to +\infty$, since $u_k\rightarrow u$ uniformly in 
$\Omega\setminus \badset$, $\varphi$ is Lipschitz continuous, 
and the $L^1(\Omega)$-norm
of
$D_{\bar i}[( u_k)_j]$ is uniformly bounded with respect to 
$k$. 
The second integral can be instead 
integrated by parts\footnote{From Lemma \ref{lem:choice_of_u_k_and_t_k}(i),  
$\badset$  has rectifiable boundary; 
moreover, $\varphi(\cdot, u_k(\cdot))$ is Lipschitz.
We can then apply a version of the Gauss-Green 
theorem, see for instance
\cite[pag. 124, exercise 12.12]{Maggi:12}.
},
 obtaining
 \begin{align*}
 &\int_{\Omega\setminus \badset}\varphi(x,u(x))(D_{\bar i}[( u_k(x))_j]-D_{\bar i}u_j(x))~dx\\
   =&\int_{\partial \badset}\varphi(x,u(x))(( u_k(x))_j-
\vortexmap_j(x))\nu_{\bar i}(x)~d\mathcal H^1(x)-
\int_{\Omega\setminus \badset}D_{\bar i}(\varphi(x,u(x)))(( u_k(x))_j-\vortexmap_j(x))~dx
\\
=: & 
~\textrm{I}_k+
\textrm{II}_k.
 \end{align*}
Thanks to the fact that $\varphi$ is bounded 
and that $|( u_k)_j(x)-u_j(x)|\leq \diffuku(x)=\lambda_k$ on $\partial \badset$, we 
conclude by Corollary \ref{cor:t_k_H^1} that 
$\lim_{k \to +
\infty} {\rm I}_k=0$.
Moreover
$$
\begin{aligned}
 {\rm II}_k = 
&-\int_{\Omega\setminus \badset}\partial_{x_{\bar i}}\varphi(x,u(x))(( u_k(x))_j-u_j(x))dx\nonumber\\
 &-\sum_{l=1}^2\int_{\Omega\setminus \badset}\partial_{y_l}\varphi(x,\vortexmap(x))D_{\bar i}u_l(x)(( u_k)_j(x)-u_j(x))dx
=:  {\rm II}_{k,1} +
 {\rm II}_{k,2}.
\end{aligned}
$$
Then 
$\lim_{k \to +
\infty} {\rm II}_{k,1}=
\lim_{k \to +
\infty} {\rm II}_{k,2}=0$,
since
the partial derivatives of $\varphi$ are bounded,
$D_{\bar i} \vortexmap \in L^1(\Omega\setminus \badset, \R^2)$, 
$|( u_k)_j-\vortexmap_j|\leq \diffuku\leq \lambda_k$ on
$\Om \setminus \badset$, and $\lim_{k \to +\infty} 
\lambda_k =0$. 
\end{proof}

\begin{Remark}\rm
The mass of the current $T_k$ is given by 
\begin{align}\label{eq:mass_of_Tk}
 |T_k|=\int_{\Omega\setminus \badset}\sqrt{1+|\nabla u_k|^2}dx.
\end{align}
To see \eqref{eq:mass_of_Tk} we choose a $2$-form 
$\omega \in \mathcal D^2(\Om \times \R^2)$ as
$$\omega:=\sum_{|\alpha|+|\beta|=2}\varphi_{\bar\alpha\beta}dx^\alpha\wedge dy^\beta,\qquad \Vert\omega\Vert\leq 1,$$
set\footnote{Here $\alpha$ and $\beta$ run over all the multi-indeces in $\{1,2\}$ with the constraint $|\alpha|+|\beta|=2$.} $\widehat \omega(x,y)=:(\varphi_{\bar\alpha\beta}(x,y))\in \R^6,$ and 
\begin{align*}
\widetilde{\mathcal M}(\nabla u_k(x)):=(1,D_1[(u_k(x))_1],D_2[(u_k(x))_1],D_1[(u_k(x))_2],D_2[(u_k(x))_2],0)\in \R^6 = \R \times \R^4 \times \R,
\end{align*}
so that 
\begin{equation}\label{eq:trivial_inequality}
\begin{aligned}
 T_k(\omega)&
=\int_{\Omega\setminus \badset}\langle\widehat \omega(x,u_k(x)),\widetilde{\mathcal M}(\nabla u_k(x))\rangle dx
\\
 &\leq \Vert\widehat \omega\Vert\int_{\Omega\setminus \badset}|\widetilde{\mathcal M}(\nabla u_k(x))|dx\leq \int_{\Omega\setminus \badset}\sqrt{1+|\nabla u_k|^2}dx.
\end{aligned}
\end{equation}
To prove the converse inequality, 
choosing $\widehat \omega(x,y)=\frac{\widetilde{\mathcal M}(\nabla u_k(x))}{|\widetilde{\mathcal M}(\nabla u_k(x))|}$ 
would give the equality in \eqref{eq:trivial_inequality}. 
However, 
$\frac{\widetilde{\mathcal M}(\nabla u_k)}{|\widetilde{\mathcal M}(\nabla u_k)|}$ is not necessarily of class $C^\infty_c$, so 
we need to use the density of $C^\infty_c(\Omega\times \R^2)$ in 
$L^1(\Omega\times \R^2)$ 
(here we use that ${\widetilde{\mathcal M}(\nabla u_k)}\in 
L^\infty(\Omega, \R^6)$ since $u_k $ is Lipschitz continuous).

With a similar argument, setting 
$$\widetilde{\mathcal M}(\nabla u(x)):=(1-\zeta_n(x)){\mathcal M}(\nabla u(x))\in \R^6, \quad x \in \Omega 
\setminus \overline \sourcedisk_\eps
$$
we can show that the total mass of
$\limitcurrent_n$ in $(\Omega\setminus \overline \sourcedisk_\eps)
\times\R^2$ is given by
\begin{align}\label{eq:mass_of_T}
 |\limitcurrent_n|_{(\Omega \setminus \overline \sourcedisk_\eps)\times\R^2}
=\int_{\Omega \setminus \overline \sourcedisk_\eps}|\mathcal M(\nabla u)||1-\zeta_n|~dx.
\end{align}
\end{Remark}
%
\subsection{Estimate of the mass of 
$\currgraphk$ over $\Omega \setminus \badset$}
\label{sec:estimate_over_D_k^c}
We denote by $\Phi_k = \Phi_{u_k} = 
{\rm Id} \bowtie u_k 
:\Omega\rightarrow \Omega\times \R^2$ the map
\begin{equation}\label{eq:Phi_k}
\begin{aligned}
 \Phi_k(x)=(x,u_k(x)),
\end{aligned}
\end{equation}
in such a way that $\Phi_k(\Omega)=G_{u_k}$, with 
$G_{u_k}=\{(x,y)\in\Omega\times\R^2:y=u_k(x)\}$
the graph of $u_k$. 

We denote as usual by 
\begin{equation}\label{eq:G_u_k}
\currgraphk
\in \mathcal D_2(\Omega\times\R^2)
\end{equation}
the integral current 
supported by the graph of $u_k$.

We now want to estimate the area of the graph of $u_k$ over 
the set $(\Om  \setminus \overline \sourcedisk_\eps)\setminus \badset$. 
\begin{prop}
Let $\eps \in (0,\longR)$ satisfy \eqref{eq:H1}  and \eqref{eq:H2},
$n \in \mathbb N$,  $(\lambda_k)$ be as in Lemma 
\ref{lem:choice_of_u_k_and_t_k}, and let $D_k$ be as in 
\eqref{eq:D_k_once_for_all}. Then
\begin{align}\label{estimate_Dc}
 \liminf_{k\rightarrow 
+\infty}\int_{\Omega \setminus \badset}|\mathcal M(\nabla u_k)|~dx
\geq \int_{
\Omega \setminus \overline \sourcedisk_\eps
}|\mathcal M(\nabla u)|~dx-\frac{1}{n}-\frac{2}{\eps n}.
\end{align}
\end{prop}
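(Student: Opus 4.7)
The plan is to recast the left-hand side of \eqref{estimate_Dc} as the mass of the $2$-current $T_k$ introduced in Lemma \ref{lemma_conv_currents_unif2}, and then exploit weak lower semicontinuity of mass together with the explicit formula for $|\limitcurrent_n|$. Indeed, by \eqref{eq:mass_of_Tk},
$$
\int_{\Omega \setminus \badset} |\mathcal M(\nabla u_k)|\, dx \;=\; |T_k|,
$$
so it suffices to give a lower bound on $\liminf_{k \to +\infty}|T_k|$.

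By Lemma \ref{lemma_conv_currents_unif2} we have $T_k \rightharpoonup \limitcurrent_n$ weakly in $\mathcal D_2(\Omega \times \R^2)$. Restricting test forms to the open set $(\Omega \setminus \overline \sourcedisk_\eps) \times \R^2$ and applying the standard lower semicontinuity of mass on open sets, I would deduce
$$
\liminf_{k \to +\infty}|T_k| \;\geq\; |\limitcurrent_n|_{(\Omega \setminus \overline \sourcedisk_\eps) \times \R^2},
$$
and \eqref{eq:mass_of_T} identifies the right-hand side with $\int_{\Omega \setminus \overline \sourcedisk_\eps} |\mathcal M(\nabla u)|(1-\zeta_n)\, dx$ (recall $\zeta_n \in [0,1]$, so the absolute value in \eqref{eq:mass_of_T} is redundant).

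It then remains to absorb the factor $(1-\zeta_n)$ into an explicit error. Splitting
$$
\int_{\Omega \setminus \overline \sourcedisk_\eps} |\mathcal M(\nabla u)|(1-\zeta_n)\, dx \;=\; \int_{\Omega \setminus \overline \sourcedisk_\eps} |\mathcal M(\nabla u)|\, dx \;-\; \int_{\Omega \setminus \overline \sourcedisk_\eps} |\mathcal M(\nabla u)|\zeta_n\, dx,
$$
and using that for the vortex map $|\nabla u(x)|=1/|x|$ and $Ju=0$, so that
$$
|\mathcal M(\nabla u(x))| \;=\; \sqrt{1+1/|x|^2} \;\leq\; 1 + 1/|x| \;\leq\; 1 + 1/\eps \qquad \text{on } \Omega \setminus \overline \sourcedisk_\eps,
$$
together with $\|\zeta_n\|_{L^1(\Omega)} \leq 1/n$ from \eqref{eq:estimate_zeta}, I obtain
$$
\int_{\Omega \setminus \overline \sourcedisk_\eps} |\mathcal M(\nabla u)|\zeta_n\, dx \;\leq\; \Big(1+\frac{1}{\eps}\Big)\frac{1}{n} \;\leq\; \frac{1}{n} + \frac{2}{\eps n},
$$
which is exactly the claimed error bound (the last step being a mildly non-tight rewriting).

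I do not expect any genuine obstacle at this stage: the proof is a short assembly of the two mass formulas \eqref{eq:mass_of_Tk} and \eqref{eq:mass_of_T}, the weak convergence provided by Lemma \ref{lemma_conv_currents_unif2}, and the classical lower semicontinuity of mass under weak convergence of currents. The role of the restriction to $\Omega \setminus \overline \sourcedisk_\eps$ is essentially technical: it guarantees that $|\mathcal M(\nabla u)|$ is bounded, so that the $L^1$-smallness of $\zeta_n$ transfers to an $L^1$-smallness of $|\mathcal M(\nabla u)|\zeta_n$; this is the only place where $\eps$ enters.
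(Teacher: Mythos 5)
Your argument follows the paper's proof line for line: lower bound the integral by the mass of $T_k$ restricted to $\Omega_\eps\times\R^2$, apply weak lower semicontinuity of mass to get $|\limitcurrent_n|_{\Omega_\eps\times\R^2}$, invoke \eqref{eq:mass_of_T}, and then control the $\zeta_n$ term via the sup bound on $|\mathcal M(\nabla u)|$ times the $L^1$ bound \eqref{eq:estimate_zeta}. One small inaccuracy to flag: you write that \eqref{eq:mass_of_Tk} gives $\int_{\Omega\setminus\badset}|\mathcal M(\nabla u_k)|\,dx = |T_k|$, but that formula actually says $|T_k|=\int_{\Omega\setminus\badset}\sqrt{1+|\nabla u_k|^2}\,dx$, whose integrand lacks the Jacobian term. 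Since $|\mathcal M(\nabla u_k)|=\sqrt{1+|\nabla u_k|^2+(Ju_k)^2}\geq\sqrt{1+|\nabla u_k|^2}$, the correct relation is an inequality $\int_{\Omega\setminus\badset}|\mathcal M(\nabla u_k)|\,dx \geq |T_k|$; happily this is the direction you need, so the proof survives unchanged. You also compute $|\nabla u|=1/|x|$ exactly, which is sharper than the bound $|\nabla u|\leq 2/|x|$ used in the paper (and in fact the paper's footnote computation of $\sum_{ij}(D_iu_j)^2$ appears to drop the cross-term $-2\delta_{ij}x_ix_j/|x|^4$; the correct value is $1/|x|^2$, as you say). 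Your resulting error bound $\tfrac1n+\tfrac1{\eps n}$ is therefore tighter, and you correctly note that rewriting it as $\tfrac1n+\tfrac2{\eps n}$ is the only non-tight step.
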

\begin{proof}
Set
$\Omega_\eps:=\Omega \setminus \overline \sourcedisk_\eps$.
Since, by definition, $T_k$ vanishes on smooth $2$-forms
supported in $(\badset \cap \Omega_\eps)\times\R^2$, 
we employ 
\eqref{eq:mass_of_Tk} 
to obtain
\begin{equation}
\label{eq:liminf_T_k_on_Omega_eps}
\begin{aligned}
&  
\liminf_{k\rightarrow 
+\infty}
\int_{\Omega \setminus \badset}|\mathcal M(\nabla u_k)|~dx
\geq 
& \liminf_{k\rightarrow 
+\infty}
\int_{\Omega \setminus \badset}\sqrt{1 + \vert
\nabla u_k|^2}~dx \geq
\liminf_{k\rightarrow +\infty}|T_k|_{(\Omega_\eps{\setminus \badset})\times\R^2}
\\
= &
\liminf_{k\rightarrow +\infty}|T_k|_{\Omega_\eps\times\R^2} \geq 
 |\limitcurrent_n|_{\Omega_\eps\times\R^2},
\end{aligned}
\end{equation}
where we use that $(T_k)$ weakly converges to $\limitcurrent_n$
(Lemma \ref{lemma_conv_currents_unif2}), and the weak lower semicontinuity of the mass.
In turn, from \eqref{eq:mass_of_T} and \eqref{eq:estimate_zeta},
\begin{align}
\label{eq:T_n_Omega_eps_seconda}
|\limitcurrent_n|_{\Omega_\eps\times\R^2}
= &\int_{\Omega_\eps}|\mathcal M(\nabla u)||1-\zeta_n|~dx\geq \int_{\Omega_\eps}|\mathcal M(\nabla u)|~dx-\int_{\Omega_\eps}|\mathcal M(\nabla u)||\zeta_n| ~dx
\nonumber
\\
\geq & 
\int_{\Omega_\eps}|\mathcal M(\nabla u)|~dx
-
\|\mathcal M(\nabla u)\|_{L^\infty(\Omega_\eps)}
\|\zeta_n\|_{L^1(\Omega_\eps)}
\\
\geq &\nonumber
\int_{\Omega_\eps}|\mathcal M(\nabla u)|~dx
-
\frac{1}{n} 
\|\mathcal M(\nabla u)\|_{L^\infty(\Omega_\eps)}.
\end{align}
Next, using
$\sqrt{1+z^2}\leq 1+|z|$ and\footnote{
$D_i u_j(x) 
= \frac{\delta_{ij}}{\vert x\vert} - \frac{x_i x_j}{\vert x\vert^3}$,
hence $\sum_{ij}(D_i u_j(x))^2 = \frac{2}{\vert x\vert^2} + 2 \frac{x_1^2 x_2^2}{\vert 
x\vert^6} \leq \frac{4}{\vert x\vert^2}$.} 
$|\nabla u(x)|\leq \frac{2}{|x|}$ 
which, on $\Omega_\eps$,
is bounded by $2/\eps$, we also get
$$
\|\mathcal M(\nabla u)\|_{L^\infty(\Omega_\eps)} 
= \|\sqrt{1+\vert\nabla u\vert^2}\|_{L^\infty(\Omega_\eps)} 
\leq
1 + \frac{2}{\eps}.
$$
We deduce 
$$
\vert 
\limitcurrent_n \vert_{\Omega_\eps\times\R^2}
\geq 
\int_{\Omega_\eps}|\mathcal M(\nabla u)|~dx
-
\frac{1}{n} - \frac{2}{\eps n}.
$$
From 
\eqref{eq:T_n_Omega_eps_seconda} and 
\eqref{eq:liminf_T_k_on_Omega_eps}
 inequality \eqref{estimate_Dc} follows.
\end{proof}
%

\section{
The maps $\Psi_k$, $\projlambdak$, and the currents $\piPsiDk$,
$\piPsiDkXik$, $\mathcal E_k$}
\label{sec:the_maps}
Recalling that 
$\badset$ is
defined in \eqref{eq:D_k_once_for_all} and \eqref{eq:D_k_delta},
in  Section 
\ref{sec:estimate_over_D_k^c}
we have estimated the area of the graph of $u_k$ over 
$\Omega\setminus \badset$. The next step, which is considerably more
difficult, 
is to estimate this area over $\badset$, 
and this will be splitted in several parts (Sections 
\ref{subsec:preliminary_estimates}-\ref{subsec:gluing}). 
After introducing some preliminaries in 
Section \ref{subsec:the_set_ecc},
the first step is to reduce the graph of $u_k$ 
(a surface of codimension $2$ in $\R^4$) 
to a suitable rectifiable set ($\Psi_k(\badset)$ and their projections) 
of codimension $1$ sitting 
in $\overline C_\longR\subset \R^3$. 
In this section we introduce all various objects needed to prove 
the lower bound.

\begin{definition}[\textbf{The map $\Psi_k$}]
\label{def:the_map_Psikk}
For all $k\in \mathbb N$, 
we define the map
$\Psi_k = \Psi_{u_k}:\Omega\rightarrow\R^3=\R_{\vert x\vert}\times \R^2_{{\rm target}}$ 
as
\begin{align}
\Psi_k(x):= &
(|x|,u_k(x))
\qquad \forall x \in \Omega.
\label{eq:Psi_k}
\end{align}
\end{definition}
Notice that 
$\Psi_k$ takes values in 
$\overline{C_\longR}$, and is Lipschitz continuous.
Moreover $\Psi_k=R\circ \Phi_k$, 
where $\Phi_k = {\rm Id} \bowtie u_k 
:\Omega\rightarrow\R^4$ is defined 
in \eqref{eq:Phi_k}, 
and $R:\R^4\ni (x,y)\mapsto (|x|,y)\in \R^3$. 
By the area formula and since ${\rm Lip}(R)=1$ 
we have
\begin{align*}
\int_B(\grad \Psi_k^T \grad \Psi_k)^{\frac12}~dx\leq \int_B(\grad
\Phi_k^T \grad \Phi_k)^{\frac12}~
dx =\int_B|\mathcal M(\nabla u_k)|~dx=
|\mathcal G_{u_k}|_{B\times \R^2},
\end{align*}
for any Borel set $B \subseteq \Om$.

\subsection{The sets $\Psi_k(\badset)$ and the currents
$(\Psi_k)_\sharp\jump{\badset}$
}\label{subsec:the_set_ecc}
We start noticing that 
\begin{equation}
\label{eq:Psi_k_in_R_3_minus_C_l_complement}
 \Psi_k(\Omega \setminus \badset)\subset \overline C_\longR\setminus 
C_\longR(1-\radialdifference_k),
\qquad 
k \in \mathbb N,
\end{equation}
where we recall that 
$C_\longR(1-\radialdifference_k)$ is defined in \eqref{eq:portion_of_cylinder}.
Indeed, since $\Omega \setminus \badset
\subseteq\{\diffuku\leq \radialdifference_k\}$ for any $k \in \mathbb N$ we have
\begin{equation}\label{eq:oss_utile_complement}
\radialdifference_k\geq |u_k(x)-\frac{x}{|x|}|\geq \textrm{dist}(u_k(x),\mathbb S^1) = 
1- \vert u_k(x)\vert,
\qquad x \in 
\Omega \setminus \badset,
\end{equation}
so that $|u_k(x)|\geq 1-\radialdifference_k$.
In particular
\begin{equation}
\label{eq:Psi_k_in_R_3_minus_C_l}
  \Psi_k(\partial \badset)\subset \overline C_\longR\setminus 
C_\longR(1-\radialdifference_k), 
\qquad 
k \in \mathbb N.
\end{equation}
As a consequence, since the map $\Psi_k$ is Lipschitz continuous, we have:
\begin{cor}
For all $k \in \mathbb N$
the integral $2$-current 
$(\Psi_k)_\sharp\jump{\badset}$
is boundaryless in $C_\longR(1-\lambda_k)$.
\end{cor}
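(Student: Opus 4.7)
My proposal is that this corollary is essentially immediate from equation \eqref{eq:Psi_k_in_R_3_minus_C_l} together with the commutation of boundary and pushforward, so I would spend most of the write-up simply making the chain of implications explicit.

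First, I would verify that $(\Psi_k)_\sharp \jump{\badset}$ is a well-defined integral $2$-current. By Lemma \ref{lem:choice_of_u_k_and_t_k}(i), the boundary $\partial \badset$ is the support of an at most countable union of simple rectifiable curves of finite total length, so $\badset$ has finite perimeter in $\Omega$ and $\jump{\badset} \in \mathcal D_2(\Omega)$ is an integral current. Since $\Psi_k$ is Lipschitz continuous (as observed right after Definition \ref{def:the_map_Psikk}), the pushforward $(\Psi_k)_\sharp \jump{\badset} \in \mathcal D_2(\R^3)$ is again integral.

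Next, I would compute the boundary. By the general commutation rule for Lipschitz pushforward,
\begin{equation*}
\partial (\Psi_k)_\sharp \jump{\badset} = (\Psi_k)_\sharp \, \partial \jump{\badset},
\end{equation*}
and $\partial \jump{\badset}$ is a $1$-dimensional integral current supported on $\partial \badset$. It follows that $\partial (\Psi_k)_\sharp \jump{\badset}$ is supported in $\Psi_k(\partial \badset)$.

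Finally, by \eqref{eq:Psi_k_in_R_3_minus_C_l} we have
\begin{equation*}
\Psi_k(\partial \badset) \subset \overline{C_\longR} \setminus C_\longR(1-\lambda_k),
\end{equation*}
so the support of $\partial (\Psi_k)_\sharp \jump{\badset}$ is disjoint from $C_\longR(1-\lambda_k)$. Restricting to this open set therefore yields the zero current, which is precisely the statement that $(\Psi_k)_\sharp \jump{\badset}$ is boundaryless in $C_\longR(1-\lambda_k)$. I do not foresee any genuine obstacle here; the only point worth stressing in the final write-up is that \eqref{eq:Psi_k_in_R_3_minus_C_l} was derived precisely for points in $\Omega \setminus \badset \supseteq \partial \badset$ via \eqref{eq:oss_utile_complement}, so the inclusion of $\Psi_k(\partial \badset)$ inside the complement of $C_\longR(1-\lambda_k)$ is exactly the one needed.
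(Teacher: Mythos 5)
Your proposal is correct and follows essentially the same (implicit) argument the paper uses: the paper records \eqref{eq:Psi_k_in_R_3_minus_C_l} and then deduces the corollary from the Lipschitz continuity of $\Psi_k$, which is exactly your chain of commuting $\partial$ with $(\Psi_k)_\sharp$ and then localizing the support of $(\Psi_k)_\sharp \partial\jump{\badset}$ away from $C_\longR(1-\lambda_k)$. Your additional remark that $\partial\badset\subseteq\Omega\setminus\badset$ (so that \eqref{eq:Psi_k_in_R_3_minus_C_l_complement} indeed applies to $\partial\badset$) is a correct and useful clarification of why \eqref{eq:Psi_k_in_R_3_minus_C_l} holds, but it does not change the route.
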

Observe that $\Psi_k(\badset)$ is rectifiable  
 and contains\footnote{It could be
different because of 
possible cancellations.}
the support of $(\Psi_k)_\sharp\jump{\badset}$;
also $\Psi_k(\badset)$ is contained in 
$[0,\longR)\times \overline{B}_{1}$. Specifically, the fact that  $C_\longR$ has axial 
coordinate in $(-1,l)$ and not in $(0,\longR)$ 
will be convenient in order to control the behaviour of $(\Psi_k)_\sharp\jump{\badset}$ on $\{0\}\times \R^2$.

\begin{definition}[\textbf{The projection $\projlambdak$}]
\label{def:the_projection_pi_k}
We let
\begin{equation}\label{eq:pi_k}
\projlambdak = \pi_{\lambda_k}:\R^3\rightarrow\overline{C}_\longR(1-\lambda_k)
\end{equation}
be the
orthogonal projection onto the compact convex set
 $\overline{C}_\longR(1-\lambda_k)$.
\end{definition} 
In Section \ref{subsec:construction_of_S_k} we project
$\Psi_k(\badset)$ on $\overline{C}_\longR(1-\lambda_k)$ in order to get a 
rectifiable set (and its associated current)
whose area (counted with multiplicity) is less than 
or equal to the area of the original set; the area
of the projected set, in turn, gives a lower bound for 
the mass of  $\currgraphk$ over $\badset$ 
(see formulas \eqref{eq:does_not_increase_the_area} and \eqref{eq:spa}). 
 Then, as a second step, 
 we symmetrize $\projlambdak\circ\Psi_k(\badset)$ using the cylindrical  
rearrangement introduced in Section \ref{sec:cylindrical_Steiner_symmetrization} to get a still smaller (in area) object. The
estimate of the area of the symmetrized object is divided in two parts: 
the first one (Section \ref{subsec:est_Omegaeps_Dk}) 
deals with  
$\projlambdak\circ\Psi_k(\badset\cap (\Omega \setminus \sourcedisk_\eps))$ whose symmetrized 
set can be seen as the generalized graph of a 
suitable polar function.
In Section \ref{subsec:symmetrization_of_the_image_of_D_k_cap_B_eps} 
we deal with the second part,
where we  estimate the area of the symmetrization obtained from $\projlambdak\circ\Psi_k(\badset\cap \sourcedisk_\eps)$. 
In Sections \ref{subsec:gluing} and \ref{sec:lower_bound}, 
we collect our estimates 
and we utilize the symmetrized object
as a competitor for a suitable non-parametric Plateau problem. To do this we 
need to glue to the obtained rectifiable set some artificial surfaces,
 whose areas are controlled and are infinitesimal 
in the limit as $k\rightarrow +\infty$. This limit is taken 
only at the end of Section \ref{sec:lower_bound}, allowing us to analyse a non-parametric Plateau problem 
whose boundary condition does not depend on $k$, so that also its solution 
does not depend on $k$. 
The area of such a solution will be the lower bound for the area of 
the rectifiable set 
$\projlambdak\circ\Psi_k(\badset\cap (\Omega \setminus \overline \sourcedisk_\eps))\cup \projlambdak\circ\Psi_k(\badset\cap \sourcedisk_\eps)$, 
and then finally for the area of the graph of $u_k$ on $\badset$.

\subsection{Construction of the current $\piPsiDkXik$
via the currents $\mathcal D_k$ and $\mathcal W_k$}
\label{subsec:construction_of_S_k}
We are interested in the part of the set  $\Psi_k(\badset)$ included 
in $\overline{C}_\longR(1-\lambda_k)$; 
we need an explicit description of the boundary of $\Psi_k(\badset)$, 
and to this aim
we compose $\Psi_k$ with the projection $\projlambdak$ in \eqref{eq:pi_k}. 

\begin{definition}[\textbf{Projection of $\Psi_k(\badset)$: 
the current
$\piPsiDk$}]
We define 
the current $\piPsiDk 
{\in \mathcal D_2(C_\longR)}$ 
as
\begin{equation}
\label{eq:S_hat_k}
\piPsiDk
:=
(\projlambdak\circ \Psi_k)_\sharp\jump{\badset}.
\end{equation}
\end{definition}
\begin{Remark}\rm 
In general 
$\Psi_k(\badset) \subseteq
\overline {C_\longR}(1-\lambda_k) \cup (\overline {C_\longR} \setminus
C_\longR(1-\lambda_k))$, 
while ${\rm spt}(\piPsiDk) \subseteq
\overline{C_\longR}(1-\lambda_k)$.
\end{Remark}

Since ${\rm Lip}(\projlambdak)=1$,
the map $\projlambdak$  does not increase the area, and 
therefore 
\begin{equation}\label{eq:does_not_increase_the_area}
 \int_{\badset}
\vert 
J(\projlambdak\circ\Psi_k)\vert 
~dx
\leq\int_{\badset}
\vert 
J(\Psi_k)\vert 
~dx\leq |\currgraphk|_{\badset\times \R^2},
\end{equation}
\begin{equation}\label{eq:masses_localized}
 \int_{\badset\cap (\Om \setminus \overline \sourcedisk_\eps)}
\vert 
J(\projlambdak\circ\Psi_k)\vert 
~dx\leq\int_{\badset\cap (\Om \setminus \overline \sourcedisk_\eps)}
\vert 
J(\Psi_k)\vert 
~dx\leq |\currgraphk|_{(\badset\cap (\Om \setminus \overline \sourcedisk_\eps))\times \R^2},
\end{equation}
The same holds for the mass of the current
$\piPsiDk$, 
{\it i.e.}, 
$$
\vert 
 \piPsiDk \vert \leq 
\vert 
(\Psi_k)_\sharp\jump{\badset}
\vert,
$$
and recalling also \eqref{eq:C_l_eps_1}, 
\begin{equation}\label{eq:masses}
|\piPsiDk|_{{\overline C}^\eps_\longR}\leq 
|(\Psi_k)_\sharp\jump{\badset}|_{ {\overline C}^\eps_\longR }\leq 
|\currgraphk|_{(\badset\cap (\Om \setminus \overline \sourcedisk_\eps))\times \R^2}.
\end{equation}
\begin{remark}\label{rem:hat_Z_k}
The area, counted with multiplicity, 
 of the $2$-rectifiable set  
$\projlambdak\circ \Psi_k(\badset)$ is
greater than or equal to  the mass of the current $\piPsiDk$, 
more specifically
\begin{equation}\label{eq:more_spec}
 \int_{\badset}
\vert 
J(\projlambdak\circ\Psi_k)
\vert ~
dx\geq |\piPsiDk|_{\overline{C}_\longR(1-\lambda_k)}
\qquad \text{ and }\qquad \int_{\badset\cap 
(\Om \setminus \overline \sourcedisk_\eps)
}
\vert 
J(\projlambdak\circ\Psi_k)\vert 
~dx\geq |\piPsiDk|_{\overline{C}^\eps_\longR(1-\lambda_k)}.
\end{equation}
This is due to the fact that 
$\projlambdak\circ \Psi_k(\badset)$ might overlap with opposite orientations 
so that the multiplicity of $\piPsiDk$ vanishes, 
and the overlappings do not contribute to its mass.
In particular, ${\rm spt}(\piPsiDk) \subseteq
 \projlambdak\circ \Psi_k(\badset)$. 
\end{remark}

From \eqref{eq:does_not_increase_the_area}
 and \eqref{eq:more_spec}
it follows
\begin{equation}\label{eq:spa}
\vert \jump{G_{u_k}}
\vert_{\badset \times \R^2}
\geq 
\vert 
\piPsiDk
\vert_{\overline C_\longR(1-\lambda_k)},
\qquad
\vert \jump{G_{u_k}}
\vert_{(\badset \cap (\Omega \setminus \overline
\sourcedisk_\eps))\times \R^2}
\geq 
\vert 
\piPsiDk
\vert_{\overline C_\longR^\eps(1-\lambda_k)}.
 \end{equation}

We now analyse the boundary of $\piPsiDk$. 
Up to small modifications, we will prove that it is boundaryless
in $C_\longR(1-\lambda_k')$ 
(see \eqref{eq:mass_of_Xik_2} and \eqref{eq:S_k_is_boundaryless_in_C_l}, where $\lambda_k'$ are suitable small numbers in $(0,\lambda_k)$ chosen below in Definition \ref{def_lambdak'})
and so $\piPsiDk$ can be symmetrized 
 according to Definition \ref{def:symmetrization_of_the_boundary_of_a_three_current}. Before proceeding to the symmetrization 
we need some preliminaries.
We build suitable currents 
$\Xik$,
with their support sets denoted by $W_k$
(see \eqref{eq:def_Xi_k} and \eqref{eq:W_k}), with 
$\partial\Xik$ 
coinciding with  $\partial \piPsiDk$ (see \eqref{partial_Xi},
\eqref{eq:S_k}, and \eqref{eq:S_k_is_boundaryless_in_C_l}).

\begin{Remark}\rm 
By \eqref{eq:Psi_k_in_R_3_minus_C_l},
$\projlambdak\circ\Psi_k(\partial \badset)$ is contained in $\partial_{\rm lat} C_\longR(1-\lambda_k)$. 
By Lemma \ref{lem:choice_of_u_k_and_t_k}(i),
$\projlambdak\circ\Psi_k(\partial \badset)$ is the union of the image of 
at most countably many curves, 
and this union, counted with multiplicities, has 
finite $\mathcal H^1$ 
measure: specifically, if we 
define
$$
M(\projlambdak\circ\Psi_k(\partial \badset)):=\int_{\partial \badset}
\Big|
\partial_{\rm tg} ~(\projlambdak\circ\Psi_k)\Big|~d\mathcal H^1,
$$
where
 $\partial_{\rm tg}$ 
stands for the tangential derivative
 along 
$\partial \badset$,  then $M(\projlambdak\circ\Psi_k(\partial \badset)) < +\infty$
 since $\mathcal H^1(\partial \badset)<+\infty$ (still by
Lemma \ref{lem:choice_of_u_k_and_t_k}(i))
and $u_k$ is Lipschitz continuous.

Moreover
\begin{equation}
\label{eq:boundary_hat_S_k_equals_Gamma_k}
\partial 
\piPsiDk = 
(\projlambdak\circ\Psi_k)_\sharp\partial\jump{\badset} {\in \mathcal D_1(C_\longR)}
\qquad {\rm in}~ C_\longR.
\end{equation}
\end{Remark}
It is convenient to introduce a suitable map
$\tau$  parametrizing the region 
$\overline C_\longR \setminus C_\longR(1-\lambda_k)$
in between
the two concentric cylinders; this map can 
then be pulled back by $\projlambdak\circ \Psi_k$, but only in $\Omega \setminus \badset$,
to get the map $\widetilde \tau$.

\begin{definition}[\textbf{The maps $\tau$, $\widetilde \tau$}]\label{def:the_map_tau}
We set
\begin{equation}
\label{eq:tau}
\begin{aligned}
& \tau = \tau_{\lambda_k}:
[1-\lambda_k,1]
\times \partial C_\longR(1-\lambda_k)\rightarrow 
\overline C_\longR
 \setminus C_\longR(1-\lambda_k)
\subset\R^3,
\\
& \tau(\rho,t,y):=\Big(t,\frac{y}{|y|} \rho \Big),
\qquad
\ \ \rho \in [1-\lambda_k,1], \ 
\ \ (t,y)\in \partial C_\longR(1-\lambda_k)=[-1,l]\times \partial B_{1-\lambda_k}. 
\end{aligned}
\end{equation}
By \eqref{eq:Psi_k_in_R_3_minus_C_l_complement} it follows
$\projlambdak \circ \Psi_k(\Omega \setminus \badset) \subset \partial C_\longR(1-\lambda_k)$,
hence we can also set
\begin{equation}
\label{eq:widetilde_tau}
\widetilde \tau(\rho,x) = 
\widetilde \tau_{u_k,\lambda_k}(\rho,x)
:=\tau(\rho,\projlambdak\circ\Psi_k(x)), \qquad \rho
\in [1-\lambda_k,1],\;x\in \Omega\setminus \badset.
\end{equation}
\end{definition}

Notice that $\tau(\rho,\cdot,\cdot)$ 
takes values in $\partial C_\longR(\rho)$ for any $\rho\in [1-\lambda_k,1]$,
that $\tau(\cdot,t,y)$ moves along the normal to the lateral boundary of
$\partial C_\longR(1-\lambda_k)$  at the point $(t,y)$,
and  $\tau(1-\lambda_k, \cdot,\cdot)$ is the identity. We also 
observe that, due to the fact that $\projlambdak\circ\Psi_k$ 
takes values in $[0,\longR)\times \overline 
B_1$, the same holds for $\widetilde \tau$.
\begin{remark}\rm
 If $\lambda_k>0$ is small enough 
(which is true for $k$ large enough), 
the Jacobian of $\tau$ 
is close to $1$ 
so that the $\mathcal H^1$-measure, counted
with multiplicities,
of the set
 $\tau(\rho,\projlambdak\circ\Psi_k(\partial \badset))$ is, for fixed $\rho$, 
bounded by two times 
the $\mathcal H^1$-measure 
of $\projlambdak\circ\Psi_k(\partial \badset)$, 
still counted with multiplicities. 
More precisely, 
\begin{equation}
\label{eq:bound_by_two}
\begin{aligned}
 2\int_{\partial \badset}\Big|\partial_{\rm tg}
~(\projlambdak\circ\Psi_k)\Big|~
d\mathcal H^1
\geq
\int_{\partial \badset}\Big|\partial_{\rm tg}
~\tau(\rho,\projlambdak\circ\Psi_k)\Big|~d\mathcal H^1,
\qquad
\rho\in [1-\lambda_k,1],
\end{aligned}
\end{equation}
\begin{align*}
 2\int_{(\Omega \setminus \overline \sourcedisk_\eps)  \cap
\partial \badset
}\Big|\partial_{\rm tg}~(\projlambdak\circ\Psi_k)\Big|~
~d\mathcal H^1\geq\int_{
(\Omega \setminus \overline \sourcedisk_\eps)  \cap
\partial \badset
}\Big|
\partial_{\rm tg}~\tau(\rho,\projlambdak\circ\Psi_k)\Big|~d\mathcal H^1,
\end{align*}
for all $\rho\in [1-\lambda_k,1]$ and $k \in \mathbb N$ large enough,
where we recall that, from Lemma \ref{lem:choice_of_u_k_and_t_k}(i), 
$\partial \badset$ is rectifiable.
\end{remark}

Now we take a sequence\footnote{The sequence 
$(\lambda_k')$ depends on $\eps$ and $n$.} of numbers 
$\lambda'_k\in(0,\lambda_k)$, which will be fixed in the sequel
(see Definition \ref{def:the_current_S_k}).
\begin{definition}[\textbf{The set $W_k$ and the current 
$\Xik$}]\label{def:W_k}
We define the 
$2$-rectifiable set\footnote{The set $W_k$ 
consists of ``vertical'' walls, normal to $\partial 
C_\longR(1-\lambda_k)$, build on $\projlambdak\circ\Psi_k(\partial \badset)$, with height $\lambda_k' -
\lambda_k$: see Fig. \ref{fig:tricilindro}.}  
\begin{equation}\label{eq:W_k}
W_k:=\tau\big([1-\lambda_k,1-\lambda'_k]\times \projlambdak\circ\Psi_k(\partial \badset)\big)
=\widetilde \tau\big(
[1-\lambda_k,1-\lambda'_k]\times \partial \badset\big),
\end{equation}
and the $2$-current 
\begin{align}
\label{eq:def_Xi_k} 
\Xik:=\widetilde \tau_\sharp\jump{[1-\lambda_k,1-\lambda'_k]
\times\partial \badset} \in {\mathcal D}_2(C_\longR).
\end{align}
\end{definition}
Clearly ${\rm spt}(\Xik) \subseteq W_k$; 
Again, although $\Xik$ is defined as a current in $C_\longR$, it is supported in $[0,\longR]\times B_1$.
\begin{remark}[\textbf{Use of $\jump{\cdot}$ for not 
top-dimensional currents}]\label{remark:orientationofboundary}
$\partial \badset$ is endowed with a natural orientation, inherited 
from the fact that it is the boundary of the set $\badset$; 
consistently, we sometimes use the identification
$\jump{\partial \badset}=\partial \jump{\badset}$.
With a little abuse of notation we have noted the current
integration over $[1-\lambda_k,1-\lambda'_k]
\times\partial \badset$, meaning that $\partial \badset$ is endowed with 
this natural orientation.
Finally, recalling that $\tilde \tau(\rho,\cdot )$ takes 
values in $\partial C_\longR(\rho)$, we can do the following identification:
\begin{align*}
 \widetilde \tau_\sharp\jump{[1-\lambda_k,1-\lambda'_k]
\times\partial \badset} = \widetilde \tau_\sharp\partial\jump{[1-\lambda_k,1-\lambda'_k]
\times \badset}\res\Big(C_\longR(1-\lambda_k')\setminus \overline{C}_\longR(1-\lambda_k)\Big).  
\end{align*}

\end{remark}
We denote 
\begin{equation}
\label{eq:area_of_W_k_counted_with_multiplicities}
M(W_k):=\int_{[1-\lambda_k,1-\lambda'_k]\times\partial \badset}
\vert 
J(\widetilde \tau(\rho,x))\vert 
~d\rho~ d{\mathcal H^1}(x)
\end{equation}
the area of $W_k$ counted
with multiplicities.
By the area formula and 
using \eqref{eq:bound_by_two} we infer
\begin{align}\label{eq:mass_of_Xik_1}
|\Xik|\leq M(W_k) \leq 
2(\lambda_k-\lambda_k')
\int_{\partial \badset}\Big|\partial_{\rm tg}
~(\projlambdak\circ\Psi_k)\Big|~d\mathcal H^1
=2(\lambda_k-\lambda_k')M(\projlambdak\circ\Psi_k(\partial \badset)).
\end{align}
 Then we are led to the following
\begin{definition}[\textbf{The sequence $(\lambda_k')$}] \label{def_lambdak'}
We select $\lambda_k'\in (0,\lambda_k)$ so that
\begin{align}
\label{eq:mass_of_Xik_2}
2(\lambda_k-\lambda_k')M(\projlambdak\circ\Psi_k(\partial \badset))\leq \frac1n 
\qquad \forall k \in \mathbb N.
\end{align}
\end{definition}
Finally we observe that 
\begin{align}\label{partial_Xi}
 \partial \Xik
=\tau(1-\lambda_k',\cdot,\cdot)_\sharp 
\Big((\projlambdak\circ\Psi_k)_\sharp\partial\jump{\badset}\Big) 
-(\projlambdak\circ\Psi_k)_\sharp\partial\jump{\badset}.
\end{align}
\begin{definition}[\textbf{The current $\piPsiDkXik$}]
\label{def:the_current_S_k}
We define
\begin{align}
\label{eq:S_k}
\piPsiDkXik
:=
\piPsiDk+ \Xik {\in \mathcal D_2(C_\longR)}.
 \end{align}
 \end{definition}
The next result will be useful to select a primitive of 
$\piPsiDkXik$.

\begin{cor}\label{boundaryat0_first}
The current 
$\piPsiDkXik$ 
is supported 
in $[0,\longR]\times \overline{B}_{1-\lambda_k'}$ 
and 
\begin{equation}
\label{eq:S_k_is_boundaryless_in_C_l}
\piPsiDkXik
\quad \textrm{is~ boundaryless~ in~the~open~cylinder~}
C_\longR(1-\lambda_k').
\end{equation}
In particular
 $\partial 
\piPsiDkXik
=0$ in 
$\mathcal D_1((-\infty,l)\times B_{1-\lambda_k'})$.
\end{cor}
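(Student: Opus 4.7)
The plan is to verify the support claim by tracking where the maps $\projlambdak \circ \Psi_k$ and $\widetilde\tau$ send points, and then to prove boundarylessness via an explicit cancellation of boundary terms using $\partial \piPsiDk$ and formula \eqref{partial_Xi} for $\partial \Xik$. The computation should be short since the construction of $\Xik$ was precisely designed so that its boundary contains $(\projlambdak \circ \Psi_k)_\sharp \partial \jump{\badset}$ with the opposite sign.

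For the support, I would observe that $\Omega$ is the open disk so $\Psi_k(x) = (|x|, u_k(x)) \in [0, l) \times \overline B_1$ for every $x \in \Omega$; the orthogonal projection $\projlambdak$ preserves the $t$-coordinate (since it already lies in $[-1, l]$) and compresses the target component into $\overline B_{1-\lambda_k}$. Hence $\piPsiDk$ is supported in $[0, l) \times \overline B_{1-\lambda_k} \subset [0, l] \times \overline B_{1-\lambda_k'}$. Likewise, since $\widetilde\tau(\rho, x)$ has first coordinate equal to $|x|$ (inherited from $\projlambdak \circ \Psi_k(x)$) and target coordinate of modulus $\rho \in [1-\lambda_k, 1-\lambda_k']$, the current $\Xik$ is supported in $[0, l) \times \overline B_{1-\lambda_k'}$ as well.

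For the boundary computation, using that push-forward commutes with $\partial$ for Lipschitz maps applied to integer rectifiable currents, one has
\begin{equation*}
\partial \piPsiDk = (\projlambdak \circ \Psi_k)_\sharp \partial \jump{\badset}.
\end{equation*}
Adding this to \eqref{partial_Xi}, the two occurrences of $(\projlambdak \circ \Psi_k)_\sharp \partial \jump{\badset}$ with opposite signs cancel and leave
\begin{equation*}
\partial \piPsiDkXik = \tau(1-\lambda_k', \cdot, \cdot)_\sharp \big((\projlambdak \circ \Psi_k)_\sharp \partial \jump{\badset}\big).
\end{equation*}
By the definition of $\tau$ in \eqref{eq:tau}, the right-hand side is supported on the lateral surface $[0, l) \times \partial B_{1-\lambda_k'}$, using also that $\projlambdak \circ \Psi_k(\partial \badset) \subset [0, l) \times \partial B_{1-\lambda_k}$ by \eqref{eq:Psi_k_in_R_3_minus_C_l}. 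This support set is disjoint from both the open cylinder $C_l(1-\lambda_k') = (-1, l) \times B_{1-\lambda_k'}$ and the larger set $(-\infty, l) \times B_{1-\lambda_k'}$, since in each case the target component must lie in the open disk $B_{1-\lambda_k'}$ whereas the boundary concentrates on $\partial B_{1-\lambda_k'}$. Both parts of the statement follow at once.

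The only step requiring minor care is to justify the commutation $\partial[(\projlambdak \circ \Psi_k)_\sharp \jump{\badset}] = (\projlambdak \circ \Psi_k)_\sharp \partial \jump{\badset}$; this is standard for push-forwards by Lipschitz maps of integer rectifiable currents whose boundary has finite mass, which holds here thanks to Lemma \ref{lem:choice_of_u_k_and_t_k}(i).
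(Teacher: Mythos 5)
Your proof is correct and follows essentially the same route as the paper: both use \eqref{eq:boundary_hat_S_k_equals_Gamma_k} and \eqref{partial_Xi} to cancel the $(\projlambdak\circ\Psi_k)_\sharp\partial\jump{\badset}$ terms, leaving $\partial\piPsiDkXik = \tau(1-\lambda_k',\cdot,\cdot)_\sharp\big((\projlambdak\circ\Psi_k)_\sharp\partial\jump{\badset}\big)$ supported on the lateral boundary $\partial_{\rm lat}C_\longR(1-\lambda_k')$, hence outside the open cylinder. You merely spell out the cancellation that the paper compresses into ``follows by construction.''
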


\begin{proof}
The statement follows by construction, and 
noticing that, 
since $\tau(1-\lambda_k',\cdot,\cdot)_\sharp \Big((\projlambdak\circ\Psi_k)_\sharp\partial\jump{\badset}\Big)$ 
has support in $\partial_{\rm lat} C_\longR(1-\lambda_k')$, 
one can use
\eqref{eq:boundary_hat_S_k_equals_Gamma_k} to deduce 
\eqref{eq:S_k_is_boundaryless_in_C_l}.
\end{proof}

\subsection{
The $3$-current $\mathcal E_k$
and the symmetrization of $\piPsiDkXik$} 
\label{subsec:symmetrization_of_S_k}
Since we want to symmetrize 
$\piPsiDkXik$
 according to Definition \ref{def:symmetrization_of_the_boundary_of_a_three_current}, we need to identify a unique 
primitive $3$-current $\mathcal E_k$ such that $\partial \mathcal E_k=\piPsiDkXik$. 

The restriction of the map $\projlambdak\circ \Psi_k$
to 
$\Omega\setminus \badset$ 
takes 
$\Omega\setminus \badset$ into
$\partial C_\longR(1-\lambda_k)$ (see \eqref{eq:Psi_k_in_R_3_minus_C_l_complement}), 
and  can also be written as
\begin{equation}\label{eq:P_k}
 \projlambdak\circ \Psi_k(x)=\Big(|x|,\frac{u_k(x)}{|u_k(x)|}(1-\lambda_k)\Big),
\qquad x \in \Omega \setminus \badset.
\end{equation}
The current $(\projlambdak\circ \Psi_k)_\sharp\jump{\Omega\setminus \badset}$ has boundary 
\begin{align}\label{eq:273}
 \partial(\projlambdak\circ \Psi_k)_\sharp\jump{\Omega\setminus \badset}=-(\projlambdak\circ \Psi_k)_\sharp\partial\jump{\badset}.
\end{align}
\begin{definition}[\textbf{The currents $\mathcal Y_k$ and $\mathcal X_k$}]
\label{def:the_currents_mathcal_Y_k_and_mathcal_x_k}
Recalling the definition of $\tau$ (see \eqref{eq:widetilde_tau}, \eqref{eq:tau}) we set
\begin{align}
\label{eq:defY_k}
\mathcal Y_k:=& \widetilde \tau_\sharp\jump{[1-\lambda_k,1-\lambda_k']\times (\Omega\setminus \badset)} 
\in 
\mathcal D_3(C_\longR),
\\
 \mathcal X_k:=& \jump{C_\longR(1-\lambda_k')\setminus \overline{C_\longR}
(1-\lambda_k)}-\mathcal Y_k
\in {\mathcal D}_3(C_\longR).
\label{eq:defX_k}
\end{align}
\end{definition}
Notice that $\mathcal X_k$ cannot be directly defined as a push-forward
via the map $\widetilde \tau$, for 
part of $\Psi_k(\badset)$ could be contained in $C_\longR(1-\lambda_k)$, 
and for this reason 
we are led to define it as a difference. 

The current $\mathcal Y_k$ could have multiplicity 
different from $0$ and $1$, and in particular could 
not be the integration over a finite perimeter set. 
This depends on the fact that the map $\Psi_k$ could generate 
overlappings and 
self-intersections of the set $\projlambdak\circ \Psi_k(\Omega\setminus \badset)$. 
If the multiplicity of $\mathcal Y_k$ is only $1$ or $0$ then the same holds
 for $\mathcal X_k$.
Also, $\mathcal Y_k$ might be null, and in this case $\mathcal X_k$ 
coincides with the integration over the region 
$C_\longR(1-\lambda_k')\setminus \overline C_\longR(1-\lambda_k)$.
A finer description of these two currents will be necessary later,  
and this will be done by a slicing argument in Lemma \ref{lemma_theta} below.

Recalling \eqref{eq:def_Xi_k}, 
\begin{align*}
 \partial \mathcal Y_k=- \Xik
=
-\partial \mathcal X_k \qquad \text{in } \ C_\longR(1-\lambda_k')\setminus \overline{C_\longR}(1-\lambda_k),
\end{align*}
as it can be seen by considering the push-forward by $\tau$ of \eqref{eq:273}.
We proceed to the symmetrization  in $C_\longR(1-\lambda_k')$
of the current 
$\piPsiDkXik$ in \eqref{eq:S_k}.
By \eqref{eq:S_k_is_boundaryless_in_C_l}
it follows the existence of an integer multiplicity $3$-current $\mathcal E_k\in \mathcal D_3(C_\longR(1-\lambda_k'))$ such that 
\begin{align}\label{eq:primitive}
 \partial \mathcal E_k=\piPsiDkXik \qquad {\rm in}~  \ C_\longR(1-\lambda_k').
\end{align}
The current $\mathcal E_k$ is unique up to a constant, that we might assume to be integer, since $\mathcal E_k$ has integer multiplicity. 
Hence we choose such a constant\footnote{
The fact that this choice is possible is a 
consequence of the 
constancy theorem (see for instance 
\cite[Proposition 7.3.1]{Krantz_Parks:08}). 
Indeed, let $\widehat {\mathcal E}_k$ have the same boundary 
({\it i.e.}, $\Xik$) 
of $\mathcal X_k$  in $C_\longR(1-\lambda_k')
\setminus \overline{C_\longR}(1-\lambda_k)$.
Thus $\widehat {\mathcal E}_k-\mathcal X_k$ is boundaryless, 
and must be an integer multiple of the integration 
over $C_\longR(1-\lambda_k')\setminus \overline{C_\longR}(1-\lambda_k)$, {\it
i.e.}, 
$\widehat {\mathcal E}_k-\mathcal X_k
=h\jump{C_\longR(1-\lambda_k')\setminus \overline{C_\longR}(1-\lambda_k)}$. We then set $\mathcal E_k:=\widehat {\mathcal E}_k-h\jump{C_\longR(1-\lambda_k')}$ so that $\mathcal E_k=\mathcal X_k$ in ${C_\longR(1-\lambda_k')\setminus \overline{C_\longR}(1-\lambda_k)}$.
} 
so that  
\begin{align}\label{crucial_identification}
\mathcal E_k\res \Big(C_\longR(1-\lambda_k')\setminus \overline{C_\longR}(1-\lambda_k)
\Big)=\mathcal X_k. 
\end{align}
 Let $E_k$ denote 
the support of $\mathcal E_k$;  by decomposition,
 \begin{align}\label{eq:dec_Ek}
 \mathcal E_k=\sum_i(-1)^{\sigma_i}\jump{E_{k,i}}\qquad 
\text{ in } \ C_\longR(1-\lambda_k'),  
 \end{align}
 with $E_{k,i}\subset C_\longR(1-\lambda_k')$ finite perimeter sets, the decomposition 
done with undecomposable components, see \eqref{eq:dec_mathcalE}, \eqref{eq:126}. We denote
$$
\mathbb S(E_k):=\cup_i \mathbb S(E_{k,i})
$$
the union of the cylindrical symmetrizations of the sets $E_{k,i}$,
see \eqref{eq:def_S(U)}.
Recalling
\eqref{eq:primitive},
Definition 
\ref{def:symmetrization_of_the_boundary_of_a_three_current}
and \eqref{eq:def_symmetrization_of_mathcal_E}, 
the symmetrization of the  current $\piPsiDkXik$ is 
\begin{align}\label{symmetrization_of_Sk}
 \mathbb S(\piPsiDkXik)=
\partial \mathbb S(\mathcal E_k)=
\partial\jump{  \mathbb S(E_k)}\res C_\longR(1-\lambda_k').
\end{align}
Formula \eqref{symmetrization_of_Sk}
contains the needed information about the symmetrization of $\Psi_k(\badset)$,
since by construction $\piPsiDk= \piPsiDkXik \res
\overline{C_\longR}(1-\lambda_k)$ (recall \eqref{eq:S_hat_k}).

We have
\begin{align}
\label{dec_Ubis}
 \piPsiDkXik=\sum_i(-1)^{\sigma_i}\jump{\partial^* E_{k,i}} \qquad\text{ in } C_\longR(1-\lambda_k'), 
\end{align}
and since  the decomposition in \eqref{eq:dec_Ek} is 
done by undecomposable components, by \eqref{eq:126} it follows, in  $C_\longR(1-\lambda_k')$, 

\begin{align*}
 |\piPsiDkXik|=\sum_i\mathcal H^2(\partial^* E_{k,i})\qquad \text{ and }\qquad
\partial^* E_{k,i}\subseteq {\rm spt}(\piPsiDkXik).
\end{align*}

\begin{remark}[\textbf{Nonuniqueness of the decomposition}]
 Once the decomposition \eqref{eq:dec_Ek} is fixed, the symmetrization is uniquely determined. However, the decomposition might not be unique, and the resulting symmetrized current in general depends on the choice of the decomposition. This will not be an issue, 
since our procedure will lead to a minimization problem which will not depend on this step.
\end{remark}

Since 
\begin{align*}
 \mathcal H^2(\partial^*\mathbb S( E_{k,i}))\leq \mathcal H^2(\partial^* E_{k,i}) \qquad \text{ for all }i\in \mathbb N,
\end{align*}
and $\mathbb S(E_k)=\cup_i\mathbb S(E_{k,i})$, we also have 
\begin{align}\label{1.78}
 \mathcal H^2(\partial^*\mathbb S( E_k))\leq \sum_i\mathcal H^2(\partial^* E_{k,i})=|\piPsiDkXik|.
\end{align}
The same inequalities hold if we restrict the mass to the set $C^\eps_\longR$,
namely
\begin{equation}\label{eq:same_inequality_for_the_mass}
 |\mathbb S(\piPsiDkXik)|_{C^\eps_\longR}\leq |\piPsiDkXik|_{C^\eps_\longR}.
\end{equation}

Now we want to understand whether $\mathbb S(\piPsiDkXik)$ has some boundary on $\{0\}\times \R^2$.
We have already observed (Corollary \ref{boundaryat0_first}) that  $\piPsiDkXik$ has no boundary in $C_\longR(1-\lambda_k')$. The same holds for the symmetrized current:

\begin{cor}[\textbf{Closedness of  
$\mathbb S(\piPsiDkXik)$ in $(-\infty,l)\times B_{1-\lambda_k'}$}] 
\label{boundaryat0}
The current $\mathbb S(\piPsiDkXik)$ is supported in $[0,\longR]\times \overline{B}_{1-\lambda_k'}$ and 
$ \partial
\mathbb S(\piPsiDkXik)
=0$ in 
 $\mathcal D_1((-\infty,l)\times B_{1-\lambda_k'})$.
\end{cor}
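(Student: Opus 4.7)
The plan is to derive both assertions from the identity $\mathbb S(\piPsiDkXik)=\partial\mathbb S(\mathcal E_k)$ inside $C_\longR(1-\lambda_k')$ of \eqref{symmetrization_of_Sk}, combined with Corollary \ref{boundaryat0_first} (which gives the analogous facts for the unsymmetrized current) and Lemma \ref{lem:sections_of_symm_current} (which propagates slice-vanishing through the symmetrization). The strategy is cheap in each individual step; the real content is organizing the three ingredients properly.

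First I would dispose of the boundary vanishing inside $C_\longR(1-\lambda_k')$: since $\mathbb S(\piPsiDkXik)$ is by definition the boundary of a $3$-current and $\partial\circ\partial=0$, we immediately obtain $\partial\mathbb S(\piPsiDkXik)=0$ in $\mathcal D_1(C_\longR(1-\lambda_k'))$.

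Next I would establish the support statement. Corollary \ref{boundaryat0_first} tells us $\piPsiDkXik$ is supported in $[0,\longR]\times\overline B_{1-\lambda_k'}$, so its slice at every $t<0$ vanishes. Applying Lemma \ref{lem:sections_of_symm_current} with $\mathcal S=\piPsiDkXik$ then yields $\mathbb S(\piPsiDkXik)\res(\{t\}\times\R^2)=0$ for every $t\in(-1,0)$; integrating these slice masses forces the total mass of $\mathbb S(\piPsiDkXik)$ on $(-1,0)\times B_{1-\lambda_k'}$ to vanish. The confinement in $\overline B_{1-\lambda_k'}$ in the $\rho$-coordinate is automatic, because the primitive $\mathcal E_k$ and each of the symmetrized sets $\mathbb S(E_{k,i})$ entering the decomposition \eqref{eq:dec_Ek} live by construction inside $C_\longR(1-\lambda_k')$. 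Together these facts place the support of $\mathbb S(\piPsiDkXik)$ in $[0,\longR]\times\overline B_{1-\lambda_k'}$.

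Finally I would upgrade the boundary vanishing to the larger open set $(-\infty,l)\times B_{1-\lambda_k'}$. Any test $1$-form $\omega$ compactly supported in $(-\infty,l)\times B_{1-\lambda_k'}$ vanishes near $t=l$ and near $\partial B_{1-\lambda_k'}$; combined with the support localization of $\mathbb S(\piPsiDkXik)$ just obtained, the pairing $\langle\partial\mathbb S(\piPsiDkXik),\omega\rangle=\langle\mathbb S(\piPsiDkXik),d\omega\rangle$ reduces to an integration against a form with support compactly contained in $C_\longR(1-\lambda_k')$, where the boundary has already been shown to vanish. The main (and really the only) subtlety is the invocation of Lemma \ref{lem:sections_of_symm_current}, whose applicability depends on the decomposition \eqref{eq:dec_Ek} being carried out into indecomposable components, so that no signed cancellation reintroduces boundary on slices where the original current has none.
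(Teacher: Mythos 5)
Your steps 1 and 3 (boundarylessness inside $C_\longR(1-\lambda_k')$ via $\partial\circ\partial=0$, and the extension to $(-\infty,l)\times B_{1-\lambda_k'}$ once the support is known) match the paper's very terse proof exactly. The problem is step 2, the support statement, where there is a genuine gap.

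Lemma~\ref{lem:sections_of_symm_current} concerns the \emph{restriction} $\mathbb S(\piPsiDkXik)\res(\{t\}\times\R^2)$, i.e.\ the part of the rectifiable current lying flat inside the plane $\{t\}\times\R^2$, not the slice. For a $2$-rectifiable current in $\R^3$ this restriction is zero for all but countably many $t$ no matter how the current looks, so knowing it vanishes for every $t\in(-1,0)$ carries essentially no information; in particular ``integrating these slice masses'' does not produce the mass of $\mathbb S(\piPsiDkXik)$ on $(-1,0)\times B_{1-\lambda_k'}$. A concrete counterexample to the inference you are making is $T=\jump{(-1,0)\times(0,1)\times\{0\}}$: its restriction to each plane $\{t\}\times\R^2$ vanishes for every $t$, yet $T$ has unit mass in $(-1,0)\times\R^2$. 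Even switching from restriction to genuine slicing would not close the gap, because the slicing inequality goes the wrong way ($\int_{-1}^0|T_t|\,dt\leq |T|_{(-1,0)\times\R^2}$): vanishing slices for a non-top-dimensional current do not force vanishing mass, as the same example shows.

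What actually makes the support localization true is a constancy argument at the level of the primitive, which your proposal does not touch. From Corollary~\ref{boundaryat0_first}, $\partial\mathcal E_k=\piPsiDkXik$ carries no mass in $(-1,0)\times B_{1-\lambda_k'}$; the constancy theorem then forces $\mathcal E_k$ to be an integer multiple of $\jump{(-1,0)\times B_{1-\lambda_k'}}$ there, and the normalization \eqref{crucial_identification} together with the fact that $\mathcal Y_k$ is supported in $\{t\geq 0\}$ pins the multiple to $1$. Because the decomposition \eqref{eq:dec_Ek} is into indecomposable components with additive masses and the $\partial^*E_{k,i}$ stay away from $(-1,0)\times B_{1-\lambda_k'}$, each $E_{k,i}$ is either disjoint from or contains the whole of $(-1,0)\times B_{1-\lambda_k'}$; cylindrical symmetrization maps a full slice to a full slice and an empty slice to an empty slice, so $\mathbb S(E_k)$ is again the whole of $(-1,0)\times B_{1-\lambda_k'}$ on that region, hence $\partial^*\mathbb S(E_k)$ (and with it $\mathbb S(\piPsiDkXik)$) avoids it. Lemma~\ref{lem:sections_of_symm_current} is the right tool for the single plane $\{t=\eps\}$ (where it is used later, in Section~\ref{subsec:symmetrization_of_the_image_of_D_k_cap_B_eps}), but it cannot substitute for the constancy/decomposition argument on the full interval $(-1,0)$.
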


\begin{proof}
 By definition, $\mathbb S(\piPsiDkXik)$ is the 
 boundary of the current carried by the integration
over the finite perimeter set $\mathbb S(E_k)$ in $C_\longR(1-\lambda_k')$. Hence $\partial \mathbb S(\piPsiDkXik)=0$
in $\mathcal D_1(C_\longR(1-\lambda_k'))$. 
The conclusion then follows from the fact that $\mathbb S(\piPsiDkXik)$ 
 is supported in $[0,\longR)\times B_{1-\lambda_k'}\subset C_\longR(1-\lambda_k')$.
\end{proof}

%
\section
{Towards an estimate of $\vert \mathbb S(\piPsiDkXik)\vert$: two
useful lemmas}
\label{subsec:preliminary_estimates}
Now that the symmetrization $\mathbb S(\piPsiDkXik)$ of the current 
$\piPsiDkXik$ 
in 
$C_\longR(1-\lambda_k')$ is obtained
(see \eqref{symmetrization_of_Sk}), 
we need to estimate its mass. 
This will be done separately
in $\overline C_l^\eps (1-\lambda_k) = [\eps, \longR] 
\times \overline B_{1-\lambda_k}$
and in $\overline C_\eps(1-\lambda_k) = [-1,\eps] \times \overline B_{1-\lambda_k}$.
In formula \eqref{symmetrizedset} of Section \ref{subsec:est_Omegaeps_Dk} 
we express the restriction of 
$\mathbb S(\piPsiDkXik)$ to $C_l^\eps$ as generalized graph of 
suitable functions $\Fke$ and $-\Fke$ and estimate the area of these graphs
(see Proposition \ref{prop_Gf}, below). 
In addition, we need a fine description of the 
trace of the symmetrized set boundary 
$\partial \mathbb S(E_k)$ on 
the lateral part of $\partial C_l(1-\lambda_k')$;
this will be done in Section \ref{subsec:symmetrization_of_the_image_of_D_k_cap_B_eps}.

We start by collecting
in Lemma \ref{lem:estimate_of_Q_k} and Lemma \ref{lemma_theta}
two important preliminary estimates; 
we need to introduce the functions $\minuk$, $\maxuk$.

For any $\sourceradialcoordinate\in (\eps,l)$ 
we consider the 
closed curve\footnote{We use here polar coordinates $(r,\alpha)$.} 
$\alpha \in (0,2\pi]
\mapsto \Psi_k(\sourceradialcoordinate,\sourceangularcoordinate)
\in \{\sourceradialcoordinate\}\times \overline{B}_1$; 
the image of $\Psi_k(\sourceradialcoordinate,\cdot)$
is the slice 
of $\Psi_k(\Omega \setminus \overline \sourcedisk_\eps)$ {with the plane $\{t=\sourceradialcoordinate\}$}.
\begin{definition}[\textbf{The functions $|u_k|^\pm$}]\label{def:q_tilde_k}
For all $\sourceradialcoordinate\in(\eps,l)$ we define
\begin{equation}\label{eq:q_-q_+}
\minuk(\sourceradialcoordinate):=
\min_{\sourceangularcoordinate\in(0,2\pi]}|u_k(\sourceradialcoordinate,\sourceangularcoordinate)|,
\qquad  \maxuk(\sourceradialcoordinate):=\max_{\sourceangularcoordinate\in(0,2\pi]}
|u_k(\sourceradialcoordinate,\sourceangularcoordinate)|,
\end{equation}
\end{definition}
Thus the map $\Psi_k(r,\cdot)$ defined in \eqref{eq:Psi_k} takes values in 
$$\{\sourceradialcoordinate\}\times(\overline{B}_{\maxuk(\sourceradialcoordinate)}\setminus B_{\minuk(\sourceradialcoordinate)}).
$$
Let us remark that $\minuk(\sourceradialcoordinate)$ might be equal to $0$, 
that $\maxuk(\sourceradialcoordinate)\leq 1$, 
and that it might happen that 
$\maxuk(\sourceradialcoordinate)=\minuk(\sourceradialcoordinate)$,
see Fig. \ref{fig_q}. 
Moreover, from 
\eqref{eq:Psi_k_in_R_3_minus_C_l},
\begin{equation}\label{uk_outside_Dk}
 |u_k|\geq 1-\lambda_k\qquad \text{in}~\Omega \setminus \badset,
\end{equation}
so that
\begin{align*}
 \maxuk(\sourceradialcoordinate)\geq 1-\lambda_k \qquad \text{ if ~} 
r ~ \text{is~such~that~} \ 
(\Omega\setminus \badset)\cap \partial \sourcedisk_\sourceradialcoordinate\neq \emptyset,
\end{align*}
whereas it might happen that 
\begin{align}\label{q+small}
 \maxuk(\sourceradialcoordinate)< 1-\lambda_k \qquad \text{ if ~}
r ~ \text{is~such~that~} \ 
(\Omega\setminus \badset)\cap \partial \sourcedisk_\sourceradialcoordinate= \emptyset.
\end{align}
In such a case, since 
$\badset\subseteq A_n$ (Lemma \ref{lem:choice_of_u_k_and_t_k} (ii)), 
this can happen only if $\partial \sourcedisk_\sourceradialcoordinate\subseteq A_n$. 
\begin{definition}[\textbf{The set $\Qke$}]
\label{def:the_set_Q_k}
We define 
\begin{equation}\label{Q_k}
\Qke := \{\sourceradialcoordinate \in (\eps,l): 
 \maxuk(\sourceradialcoordinate)< 1-\lambda_k 
\}.
\end{equation}
\end{definition}
Then 
\begin{equation}\label{eq:Q_k_subset_A_n}
\Qke \subseteq \{\sourceradialcoordinate \in (\eps,l) : 
\partial \sourcedisk_\sourceradialcoordinate \subseteq A_n\}.
\end{equation}
The next lemma, that will be used in 
Section 
\ref{subsec:gluing},
shows that the measure of $\Qke$ is small
(see Fig. \ref{fig_q}).

\begin{lemma}[\textbf{Estimate of $\Qke$}]\label{lem:estimate_of_Q_k}
We have
 \begin{align*}
  \mathcal H^1(\Qke)< \frac{1}{2\pi \eps n}.
 \end{align*}
\end{lemma}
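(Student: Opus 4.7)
The plan is to combine the containment \eqref{eq:Q_k_subset_A_n} with the measure bound \eqref{eq:measure_of_A_n} by integrating in polar coordinates.

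First I would observe that \eqref{eq:Q_k_subset_A_n} tells us that, for every $r \in \Qke$, the whole circle $\partial \sourcedisk_r$ is contained in $A_n$. Consequently the solid annular region
\[
\bigcup_{r \in \Qke} \partial \sourcedisk_r \;\subseteq\; A_n.
\]
By Fubini's theorem (or the coarea formula applied to the map $x\mapsto |x|$), the two-dimensional Lebesgue measure of the left-hand side equals $\int_{\Qke} 2\pi r\,dr$, hence
\[
2\pi \int_{\Qke} r \, dr \;\leq\; |A_n|.
\]

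Next I would use the fact that $\Qke \subseteq (\eps,l)$, so $r\geq \eps$ on $\Qke$, giving $\int_{\Qke} r\,dr \geq \eps\,\mathcal H^1(\Qke)$. Combining with the previous inequality yields
\[
2\pi \eps \,\mathcal H^1(\Qke) \;\leq\; |A_n|.
\]

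Finally I would invoke \eqref{eq:measure_of_A_n}, which gives $|A_n|<1/n$, to conclude
\[
\mathcal H^1(\Qke) \;\leq\; \frac{|A_n|}{2\pi \eps} \;<\; \frac{1}{2\pi \eps n},
\]
as required. There is no real obstacle here; the lemma is essentially a direct consequence of \eqref{eq:Q_k_subset_A_n}, the Egorov-type smallness of $A_n$, and the lower bound $r\geq \eps$ on $\Qke$.
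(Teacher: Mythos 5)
Your proof is correct and follows essentially the same route as the paper's: both rely on the containment $\partial\sourcedisk_r\subseteq A_n$ for $r\in\Qke$, the coarea formula (Fubini in annular coordinates), the lower bound $r\geq\eps$, and $|A_n|<1/n$. The only difference is cosmetic bookkeeping — you separate the weighted integral estimate and the use of $r\geq\eps$ into two steps, while the paper writes them as one chain of inequalities — so the mathematical content is identical.
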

\begin{proof}
If $t \in \Qke$ then $\partial \sourcedisk_t\subseteq A_n$.
Then
\begin{align*}
 \mathcal H^1(\Qke)=\int_{\Qke}1 dt\leq \frac{1}{2\pi\eps}\int_{\Qke}2\pi t~dt
=\frac{1}{2\pi\eps}\int_{\Qke}\mathcal H^1(\partial \sourcedisk_t)dt\leq \frac{1}{2\pi\eps}|A_n|,
\end{align*}
where the last inequality is a consequence of the coarea
formula and \eqref{eq:Q_k_subset_A_n}. 
The thesis then follows recalling that 
 $|A_n|<\frac1n$, see \eqref{eq:measure_of_A_n}.
\end{proof}

\begin{figure}
\begin{center}
    \includegraphics[width=0.7\textwidth]{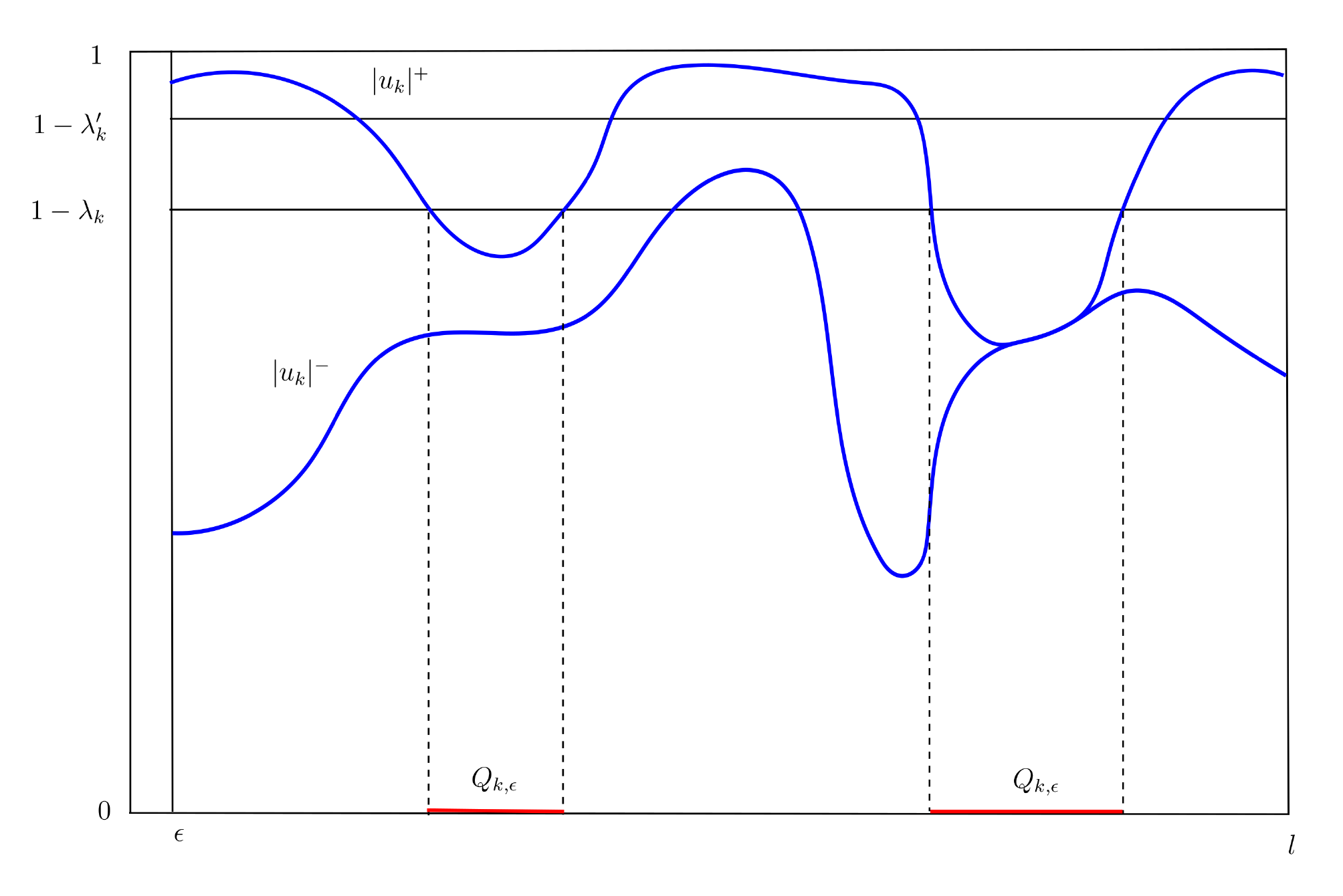}
 \caption{The graphs of the functions $\maxuk$ and $\minuk$ 
defined in \eqref{eq:q_-q_+}, 
and the set $\Qke$ in \eqref{Q_k}.
}
\label{fig_q}
\end{center}
\end{figure}

By slicing and from \eqref{crucial_identification},
\eqref{eq:dec_Ek}, we have for almost every $t\in(0,\longR)$ and
almost every $\rho\in (1-\lambda_k,1-\lambda_k')$,
 \begin{align*}
  (\mathcal X_k)_{t,\rho}=\sum_i(-1)^{\sigma_i}\jump{E_{k,i}\cap(\{t\}\times \partial B_\rho)},
 \end{align*}
 and
\begin{align}\label{eq:ineq_slice_Xk}
 \mathcal H^1(\mathbb S(E_k)\cap (\{t\}\times \partial B_\rho))
\leq \sum_i\mathcal H^1(E_{k,i}\cap(\{t\}\times \partial B_\rho))=|(\mathcal X_k)_{t,\rho}|,
\end{align}
since the decomposition is done in undecomposable components (see
\eqref{eq:in_conclusion}). 

Recalling the definition of $\Theta$
in \eqref{eq:Theta_t_rho}
 we have,  
for fixed $t \in (0,\longR)$ and for any $\rho\in (0,1-\lambda_k']$, 
\begin{equation}\label{eq:Theta_k}
\Theta_k(t,\rho):= \Theta_{\mathbb S(E_k)}(t,\rho)
=\frac{1}{\rho}\mathcal H^1(\mathbb S(E_k)\cap (\{t\}\times \partial B_\rho))   
\end{equation}
denotes the measure (in radiants) of the slice 
$\mathbb S(E_k)\cap (\{t\}\times\partial B_\rho)$.
By construction, 
\begin{align*}
 \Theta_k(t,\rho)=\Theta_k(t,\varrho)\qquad 
\text{for any }\rho,\varrho\in (1-\lambda_k,1-\lambda_k'),
\end{align*}
since the slices of 
$\mathcal X_k$, and hence of the sets $E_{k,i}$, are 
radially symmetric\footnote{Each radial section is (suitably rescaled) the 
same since, by definition, function 
$\tau$ 
in \eqref{eq:tau}
is radial.} in 
$C_l(1-\lambda_k')\setminus \overline{C_l}(1-\lambda_k)$.
Also, the right-hand side of \eqref{eq:Theta_k} 
vanishes for $\rho \in (1-\lambda_k, 1-\lambda_k')$.

We now look
for an estimate of $\Theta_k(t,\rho)$, 
for $t\in(\eps,l)$ 
and $\rho\in (1-\lambda_k,1-\lambda_k')$:
the next lemma will be used in Section \ref{subsec:gluing}.

\begin{lemma}[\textbf{$L^1$-estimate of the angular slices}]\label{lemma_theta}
We have
\begin{equation}\label{eq:L^1_angular_estimate}
 \int_{\eps}^l\Theta_k(t,\rho)~dt\leq \frac{1}{\eps n}+o_k(1)
\qquad
\forall \rho\in (1-\lambda_k,1-\lambda_k'), 
\end{equation}
where $o_k(1)$ is a nonnegative function, depending on $\eps$ and $n$,
and infinitesimal as $k\rightarrow +\infty$.
\end{lemma}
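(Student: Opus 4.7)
The plan starts from the slicing inequality \eqref{eq:ineq_slice_Xk}: for a.e.\ $t\in(-1,l)$ and a.e.\ $\rho\in(1-\lambda_k,1-\lambda_k')$ one has $\rho\,\Theta_k(t,\rho)\le|(\mathcal{X}_k)_{t,\rho}|$. Since $\mathbb{S}(E_k)$ is radially invariant on the annulus (as noted right after \eqref{eq:Theta_k}), the function $\Theta_k(t,\cdot)$ is constant in $\rho\in(1-\lambda_k,1-\lambda_k')$, so it suffices to prove \eqref{eq:L^1_angular_estimate} for a.e.\ such $\rho$. The problem thereby reduces to showing
\[
\int_\eps^l|(\mathcal{X}_k)_{t,\rho}|\,dt\leq\frac{\rho}{\eps n}+o_k(1).
\]

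I would then exploit the decomposition $\mathcal{X}_k=\jump{A}-\mathcal{Y}_k$ from \eqref{eq:defX_k} with $A=C_\longR(1-\lambda_k')\setminus\overline{C_\longR}(1-\lambda_k)$, and analyze the two slices. For $\rho$ in the annular range, $(\jump{A})_{t,\rho}$ is integration over the full circle $\{t\}\times\partial B_\rho$, of mass $2\pi\rho$. Using source polar coordinates $x=(t\cos\alpha,t\sin\alpha)$ and writing $u_k(t,\alpha)=|u_k|(\cos\beta_k(t,\alpha),\sin\beta_k(t,\alpha))$, the formula $\widetilde{\tau}(\rho,t,\alpha)=(t,\rho\cos\beta_k,\rho\sin\beta_k)$ identifies $(\mathcal{Y}_k)_{t,\rho}$, with appropriate orientation, as the pushforward of $\jump{\partial\sourcedisk_t\cap(\Omega\setminus\badset)}$ via $\alpha\mapsto(t,\rho\cos\beta_k,\rho\sin\beta_k)$. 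The uniform convergence $u_k\to u$ on $\Omega\setminus\badset$ from \eqref{eq:u_k_converges_to_u_uniformly_out_of_A_n} (made possible by $\badset\subseteq A_n$, Lemma \ref{lem:choice_of_u_k_and_t_k}(ii)) combined with $|u_k|\ge 1-\lambda_k$ in $\Omega\setminus\badset$ (see \eqref{uk_outside_Dk}) implies $\beta_k(t,\alpha)\to\alpha$ uniformly as $k\to+\infty$. Consequently $(\mathcal{Y}_k)_{t,\rho}$ asymptotically cancels $(\jump{A})_{t,\rho}$ over the arcs corresponding to $\partial\sourcedisk_t\setminus\badset$, leaving a residual essentially supported on the arcs over $\badset\cap\partial\sourcedisk_t$. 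This yields
\[
|(\mathcal{X}_k)_{t,\rho}|\leq\rho\,\frac{\mathcal{H}^1(\badset\cap\partial\sourcedisk_t)}{t}+\eta_k(t)\qquad\text{for a.e. }t\in(\eps,l),
\]
where $\eta_k\ge 0$ satisfies $\int_\eps^l\eta_k(t)\,dt\to 0$ as $k\to+\infty$.

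Finally I would integrate over $t\in(\eps,l)$ and apply the coarea formula $\int_\eps^l\mathcal{H}^1(\badset\cap\partial\sourcedisk_t)\,dt\leq|\badset|$, together with $|\badset|\le|A_n|<1/n$ from Lemma \ref{lem:choice_of_u_k_and_t_k}(ii) and \eqref{eq:measure_of_A_n}, to obtain
\[
\int_\eps^l|(\mathcal{X}_k)_{t,\rho}|\,dt\leq\frac{\rho}{\eps}|\badset|+o_k(1)\leq\frac{\rho}{\eps n}+o_k(1),
\]
and dividing by $\rho$ gives \eqref{eq:L^1_angular_estimate}. The main technical obstacle is making the slice cancellation in the second step rigorous: this calls for lifting the angular component $\alpha\mapsto\beta_k(t,\alpha)$ of $u_k/|u_k|$ to a single-valued function on the rectifiable arcs of $\partial\sourcedisk_t\setminus\badset$, controlling its possible windings, and quantifying the signed multiplicity of $\widetilde{\tau}(\rho,\cdot)$ so that the mass of the residual difference is indeed bounded by the arc length corresponding to $\badset\cap\partial\sourcedisk_t$ up to a vanishing error.
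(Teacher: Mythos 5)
Your overall strategy coincides with the paper's: slice $\mathcal X_k$ in $(t,\rho)$, use $\rho\,\Theta_k(t,\rho)\le|(\mathcal X_k)_{t,\rho}|$ and the decomposition $(\mathcal X_k)_{t,\rho}=\jump{\{t\}\times\partial B_\rho}-(\mathcal Y_k)_{t,\rho}$, identify $(\mathcal Y_k)_{t,\rho}$ as a pushforward of $\jump{H_{k,t}^c}$, argue that the two terms cancel over the arcs lying in $\Omega\setminus D_k$, and finish with the coarea formula and $|D_k|\le|A_n|<1/n$. That is exactly the paper's skeleton.

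The gap is the one you flag yourself at the end: you assert that the residual is supported on the arcs of $D_k\cap\partial\sourcedisk_t$ up to an error $\eta_k(t)$ with $\int_\eps^l\eta_k\,dt\to 0$, but you give no quantitative mechanism producing that decay, and "uniform convergence $\beta_k(t,\alpha)\to\alpha$" alone does not deliver it. The subtlety is that $(\mathcal Y_k)_{t,\rho}$ is a pushforward of a current carried by a union of arcs whose \emph{endpoints} are where the slicing control is needed; after pushing forward, the endpoints $z_2^i,z_1^{i+1}$ on $\partial B_{1-\lambda_k}$ may have reversed cyclic order when they are at distance $\le 2\lambda_k$, so the naïve "short arc" is sometimes the wrong completion and one must choose the image arc by the amplitude condition $|\widehat\beta_i-\beta_i|\le 2\widehat\lambda_k$ in order to preserve the total winding (this is the content of \eqref{differenza_amplitude_beta}--\eqref{subclaim_lemma_archi_final} in the paper, leading to \eqref{claim_lemma_archi}). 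Once the correct extension $P_k$ is in place, \eqref{current_X_doubleslice}--\eqref{passages} bound each residual arc length by $\tfrac{\rho}{\eps}\mathcal H^1(\overline{X_1X_2})+\tfrac{\pi}{2}(\diffuku(X_1)+\diffuku(X_2))+\pi(\lambda_k-\lambda_k')$, and after integrating in $t$ the extra terms vanish precisely because of the definite choice of $\lambda_k$ in Lemma \ref{lem:choice_of_u_k_and_t_k}(iii), via the coarea formula applied to $\diffuku$ and to $\mathcal H^0(\partial H_{k,t})$. Without this quantitative arc-by-arc estimate tied to the values of $d_k$ at $\partial D_k$, your $\eta_k$ term is unsubstantiated; you would also need to treat separately the degenerate slices where $\partial\sourcedisk_t\subset D_k$ (the paper's case (A), giving $\Theta_k(t,\rho)=2\pi\le\eps^{-1}\mathcal H^1(H_{k,t})$) rather than relying on cancellation there.
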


\begin{proof}
 It is convenient to set
\begin{align}\label{1.111}
 H_{k,t}:=\badset\cap \partial \sourcedisk_t,\qquad H_{k,t}^c
:=(\Omega\setminus \badset)\cap \partial \sourcedisk_t
\qquad 
\forall t\in (\eps,l).
\end{align}
Observe that
the relative boundary
of $H_{k,t}$, \textit{i.e.}, the boundary of $H_{k,t}$ when considered as a 
subset of $\partial \sourcedisk_t$, is contained in 
$\partial \badset \cap \partial \sourcedisk_t$.

We fix $t\in(\eps,l)$ such that the relative boundary of 
$H_{k,t}$ is 
a finite set of points 
(this happens for $\mathcal H^1$-a.e. $t$, since
$\mathcal H^1(\partial \badset) < +\infty$ from Lemma
\ref{lem:choice_of_u_k_and_t_k}(i)) and 
fix any $\rho\in (1-\lambda_k,1-\lambda_k')$.
By inequality \eqref{eq:ineq_slice_Xk} we have
\begin{align}\label{theta_arcs}
 \Theta_k(t,\rho)\leq \frac{1}{\rho}|(\mathcal X_k)_{t,\rho}|,
\end{align}
so it is sufficient to estimate the mass of a (slice of a) slice of the 
$3$-current $\mathcal X_k$ defined in \eqref{eq:defX_k}. We recall that 
by \eqref{eq:defX_k} we have\footnote{The orientation of $\partial B_\rho$ is taken counterclockwise.}
\begin{align}\label{eq:slice_t_rho}
 (\mathcal X_k)_{t,\rho}=\jump{\{t\}\times \partial B_\rho}-(\mathcal Y_k)_{t,\rho},
\end{align}
where $\jump{\{t\}\times \partial B_\rho}$ has a natural 
orientation\footnote{The orientation of the 3-current 
$\jump{C_l(\rho)}$ induces an orientation of its slice 
$\jump{\{t\}
\times B_\rho}$. This orientation
induces an orientation of 
$\jump{\{t\}
\times \partial B_\rho}$, which coincides
with the orientation
of $\jump{\{t\}\times \R^2}_{\rho}$ induced by the slicing by $\rho$.}
 inherited by the fact that it is the boundary of 
$\jump{\{t\}\times B_\rho}$ in $\{t\}\times\R^2$, which 
in turn is a slice of 
$\jump{C_l(\rho)}$. 
By \eqref{eq:defY_k}
\begin{align}\label{eq:slice_ipsilon}
 (\mathcal Y_k)_{t,\rho}=
\widetilde{\tau}_\sharp\jump{\{\rho\}\times ((\Omega\setminus \badset)
\cap \partial \sourcedisk_t)}=
\tau(\rho,\projlambdak \circ \Psi_k(\cdot))_\sharp\jump{H_{k,t}^c},
\end{align}
see \eqref{eq:tau}, \eqref{eq:widetilde_tau}, \eqref{eq:P_k}, and 
Remark \ref{remark:orientationofboundary} for the orientation of $\jump{\{\rho\}\times ((\Omega\setminus \badset)
\cap \partial \sourcedisk_t)}$. As for $ \jump{H_{k,t}^c}$ we endow 
the set $H_{k,t}^c\subset \partial \sourcedisk_t$ with the orientation 
inherited by $\partial \sourcedisk_t$, {\it i.e.}, by a counterclockwise
tangent unit vector.
Now, since 
the restriction of 
$\tau(\rho,\projlambdak \circ \Psi_k(\cdot))$ 
to $\partial \sourcedisk_t$ takes values in $\{t\}\times \partial B_\rho$, 
the current $(\mathcal Y_k)_{t,\rho}$ is 
the integration over arcs\footnote{Such arcs could overlap, since in general the multiplicity of $\mathcal Y_k$ might be different from $1$.} in $\{t\}\times \partial B_\rho$. 
To identify these arcs we 
distinguish the following three cases (A), (B), (C): 

(A) 
\nada{$H_{k,t} =\partial\sourcedisk_t$, so that $\partial 
\sourcedisk_t\subset \badset$. 
In this case\footnote{Notice that $\partial \sourcedisk_t\subset \badset$ might happen only for a set of $t\in (\eps,l)$ that is small in measure, see also Lemma \ref{lem:estimate_of_Q_k} (this is a consequence of the fact that  $\badset$ is a small set).}
} $H_{k,t}^c=\emptyset$. From \eqref{eq:slice_ipsilon}
it follows
$(\mathcal Y_k)_{t,\rho}=0$ and 
 $(\mathcal X_k)_{t,\rho}=\jump{\{t\}\times \partial B_\rho}$ from \eqref{eq:slice_t_rho}.
 Thus
 \begin{align}\label{7.12}
  \Theta_k(t,\rho)={2\pi}\leq 2\pi\frac{t}{\eps}=\frac1\eps \mathcal H^1(H_{k,t}).
 \end{align}

(B) 
$H_{k,t}^c = \partial \sourcedisk_t\subset \Omega\setminus \badset$, hence 
$({\mathcal Y_k})_{t,\rho} =
\tau(\rho,\projlambdak \circ \Psi_k(\cdot))_\sharp\jump{\partial \sourcedisk_t}$
from \eqref{eq:slice_ipsilon}. 
Then
\begin{equation}\label{eq:non_trivial}
({\mathcal Y_k})_{t,\rho} =
\jump{\{t\}\times \partial B_\rho}.
\end{equation}
Indeed, fix three points $x_1,x_2,x_3\in \partial
\sourcedisk_t$ in counterclockwise order 
such that $|\frac{x_i}{|x_i|}-\frac{x_j}{|x_j|}|>4\lambda_k$ for $i\neq j$. 
Since $\diffuku(x)=|\frac{x}{|x|}-u_k(x)|<\lambda_k$ for $x\in \Omega\setminus \badset$, $x\neq0$,  
the points $z_i:=\projlambdak\circ\Psi_k(x_i)$ are 
still in counterclockwise order in $\{t\}\times \partial B_{1-\lambda_k}$ 
(the image of the arc $\overline{x_1x_2}$ covers the arc $\overline{z_1z_2}$ 
that does not contain $z_3$). 
Therefore $(\projlambdak\circ\Psi_k(\cdot))_\sharp\jump{\overline{x_ix_{i+1}}}
=\jump{\overline{z_iz_{i+1}}}$ for $i=1,2,3$\footnote{The boundary of $(\projlambdak\circ\Psi_k(\cdot))_\sharp\jump{\overline{x_ix_{i+1}}}$ is
 $\delta_{z_{i+1}}-\delta_{z_i}$, 
hence $(\projlambdak\circ\Psi_k(\cdot))_\sharp\jump{\overline{x_ix_{i+1}}}$ 
is an arc connecting 
$z_i$ and $z_{i+1}$. Since this arc cannot contain the third point, it must be $\jump{\overline{z_iz_{i+1}}}$, counterclockwise oriented.} 
(with the convention $x_4=x_1$, $z_4=z_1$), and hence
\begin{align*}
(\projlambdak\circ\Psi_k(\cdot))_\sharp\jump{\partial \sourcedisk_t}=\sum_{i=1}^3(\projlambdak\circ\Psi_k(\cdot))_\sharp\jump{\overline{x_ix_{i+1}}} =\sum_{i=1}^3\jump{\overline{z_iz_{i+1}}}=\jump{\{t\}\times \partial B_{1-\lambda_k}}. 
\end{align*}
Taking the push-forward by $\tau$  we get 
\eqref{eq:non_trivial}. 
From this and \eqref{eq:slice_t_rho} we deduce
$(\mathcal X_k)_{t,\rho}=0$,
 and 
$\Theta_k(t,\rho)=0$.

Before passing to case (C), we anticipate an observation which will be useful to deal with it.
Let $\overline{x_1 x_{2}}\subset (\Omega \setminus \badset) \cap \partial \sourcedisk_t 
$ be an arc oriented counterclockwise. We want to identify the current $(\projlambdak\circ\Psi_k)_\sharp\jump{\overline{x_1x_{2}}}$; to do that we consider three different cases for $\overline{x_1 x_{2}}$. Case 1: $|\frac{x_1}{|x_1|}-\frac{x_{2}}{|x_{2}|}|>2\lambda_k$. Hence $z_1:=\projlambdak \circ\Psi_k(x_1)$ and $z_2:=\projlambdak \circ\Psi_k(x_2)$ must have the same order on $\partial B_{1-\lambda_k}$ of $x_1$ and $x_2$, moreover $(\projlambdak\circ\Psi_k)_\sharp\jump{\overline{x_1x_{2}}}=\jump{\overline{z_1 z_2}}$, where $\overline{z_1 z_2}$ is the arc connecting $z_1, z_2$, starting from $z_1$ and oriented counterclockwise. Case 2: $|\frac{x_1}{|x_1|}-\frac{x_{2}}{|x_{2}|}|\leq 2\lambda_k$ (that implies  $|z_1-z_2|\leq 4\lambda_k$, and $z_1, z_2$ could have reversed order of $x_1$ and $x_2$). Let $z_1, z_2$ have the same order of $x_1, x_2$, then $(\projlambdak\circ\Psi_k)_\sharp\jump{\overline{x_1x_{2}}}=\jump{\overline{z_1 z_2}}$ where $\overline{z_1 z_2}$ is the arc connecting $z_1, z_2$, starting from $z_1$ and oriented counterclockwise. Now let $z_1, z_2$ have the reversed order of $x_1, x_2$. If $\overline{x_1 x_2}$ is the short path arc connecting $x_1, x_2$, then 
$(\projlambdak\circ\Psi_k)_\sharp\jump{\overline{x_1x_{2}}}=\jump{\overline{z_1 z_2}}$, where $\overline{z_1 z_2}$ is the (short path) arc connecting $z_1$, $z_2$, starting  from $z_1$, and oriented clockwise. If $\overline{x_1 x_2}$ is instead the long path arc joining $x_1, x_2$, then $(\projlambdak\circ\Psi_k)_\sharp\jump{\overline{x_1x_{2}}}=\jump{\{t\}\times \partial B_{1-\lambda_k}}+\jump{\overline{z_1 z_2}}$, where $\jump{\{t\}\times \partial B_{1-\lambda_k}}$ is oriented counterclockwise, and $\overline{z_1 z_2}$ is the (short path) arc starting from $z_1$ and oriented counterclockwise. Notice also that in case 2 we always have $\mathcal H^1(\overline{z_1 z_2})<8\lambda_k$.

Now, we analyse the third case.

(C) $H_{k,t}^c$ is union of finitely many arcs.
Let us denote by $\left\{\overline{x_1^i x_2^i}\right\}_i$ these distinct arcs with
endpoints\footnote{In polar coordinates.} $x_j^i=(t,\alpha_j^i)\in \partial\sourcedisk_t$,
with the index $i \in \{1,\dots,h = h_{k,t}\}$ varying in a finite set,
so that
\begin{align*}
 \jump{H_{k,t}^c}=\sum_{i=1}^{h}\jump{\overline{x_1^i x_2^i}}\qquad \text{and }\qquad\jump{H_{k,t}}=\sum_{i=1}^{h}\jump{\overline{x_2^i x_1^{i+1}}},
\end{align*}
where again the orientation of $\overline{x_1^i x_2^i}$ is the one 
inherited by the
counterclockwise orientation of
$\partial \sourcedisk_t$ 
and, by convention, $h+1=1$. 
Being $H^c_{k,t}$ relatively closed set in $\partial \sourcedisk_t$, 
it might happen that $x_1^i=x_2^i$ for some $i$.
Notice that $x_j^i$ belongs to the relative
boundary of $H_{k,t}$ which, in turn, is a subset of 
$\partial \badset\cap 
\partial \sourcedisk_t$.

 We denote $$
z_j^i:=\projlambdak \circ\Psi_k(x_j^i)\in \{t\}\times \partial B_{1-\lambda_k}.
$$
After applying  $\projlambdak\circ\Psi_k(\cdot)$,
the points $x_j^i$ might also reverse their order, \textit{i.e.}, 
the orientation of the 
arc $\projlambdak\circ
\Psi_k\left(\overline{x_1^i x_2^i}\right)$
could be the opposite of the orientation of 
$\overline{x_1^i x_2^i}$.

In order to describe the current $(\mathcal X_k)_{t,\rho}$ we need first 
to extend $\tau(\rho,\projlambdak \circ \Psi_k(\cdot)) $ to $H_{k,t}$:
note carefully 
that $\projlambdak \circ \Psi_k(\cdot)$ is well-defined 
in $H_{k,t}^c$, but not necessarily in $H_{k,t}$, 
since  $\projlambdak \circ \Psi_k(H_{k,t}) \cap C_l(1-\lambda_k)$ may
not be empty, and in such a case it is not in the domain of $\tau(\rho,\cdot)$. 
The extension we get (see \eqref{eq:extension})
will allow to 
write a specific double slice of $\mathcal X_k$
as push-forward, see \eqref{eq:X_k_is_a_push_forward}.
We stress  that this extension is done for a fixed slice $\{t\}\times \R^2$ 
and in general 
it cannot be done globally\footnote{We do not need a 
global extension since we aim to obtain an estimate which holds for a fixed $t$.} for all $t \in (\eps,l)$.

\begin{figure}
\begin{center}
    \includegraphics[width=0.63\textwidth]{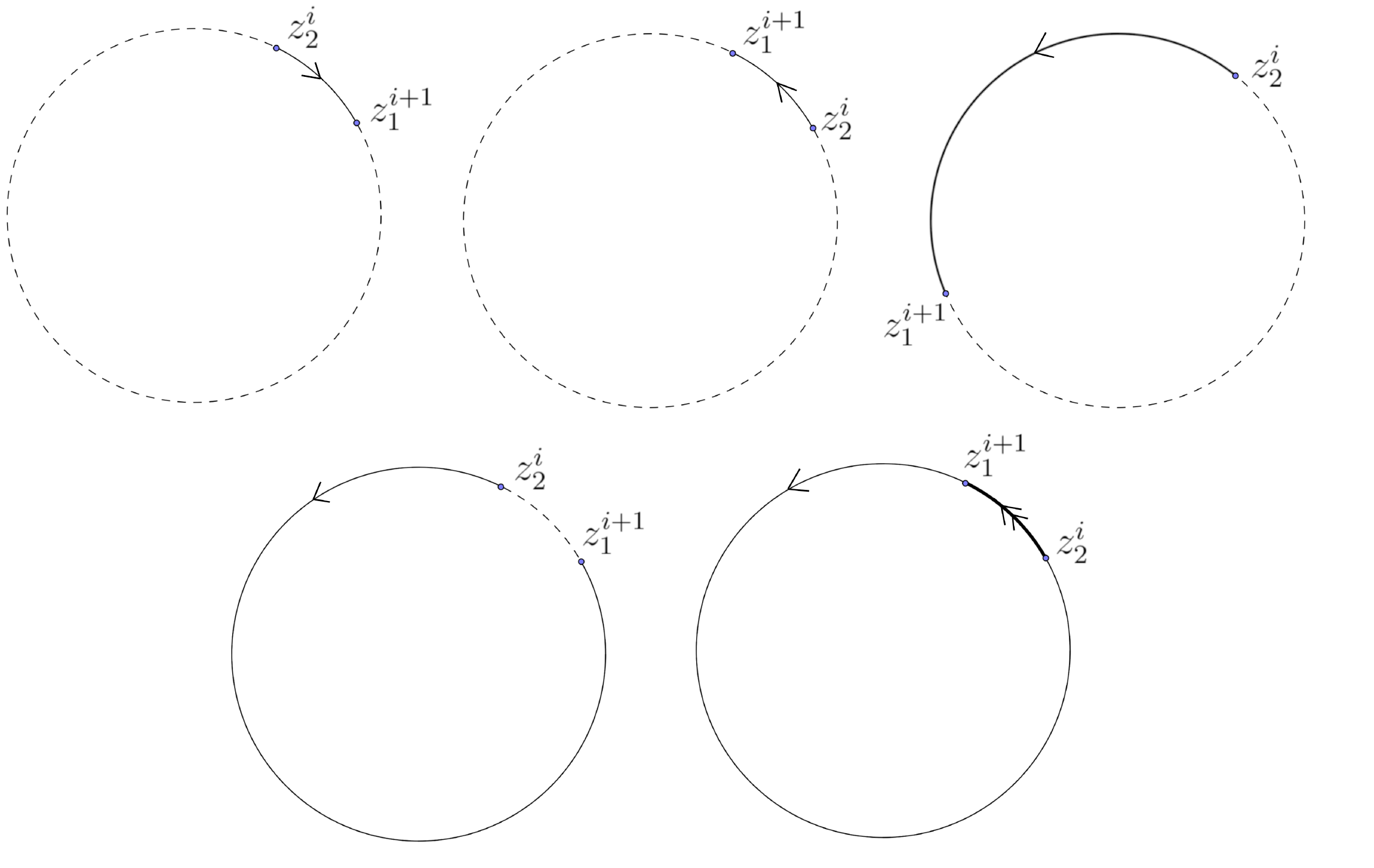}
 \caption{The choice of the arc between $z^i_2$ 
and  $z^{i+1}_1$. The correct arc is the one in bold on the 
dashed
 circle $\{t\}\times \partial B_{1-\lambda_k}$. On 
top left the case  
$|z^i_2-z^{i+1}_1|\leq 2\lambda_k$ and the arc is clockwise oriented;
on 
top center  again case  $|z^i_2-z^{i+1}_1|\leq 2\lambda_k$ 
and the arc counterclockwise oriented; on top right 
the case $|z^i_2-z^{i+1}_1|>2\lambda_k$; 
on bottom left 
again the case $|z^i_2-z^{i+1}_1|\leq 2\lambda_k$ when the oriented arc  
$\overline{z^i_2z^{i+1}_1}$ is the long one. Finally on bottom right it is depicted again the
case 
$|z^i_2-z^{i+1}_1|\leq 2\lambda_k$ but the counterclockwise arc between $z^i_2$ and  $z^{i+1}_1$ has reversed order with
respect to $\alpha_2^i$ and $\alpha_1^{i+i}$, so that $\beta_i$ is the long arc; in this case the correct arc such 
that $|\widehat \beta_i-\beta_i|\leq 2\widehat{\lambda}_k$ is the  
short one connecting $z^i_2$ and  $z^{i+1}_1$ (double bold) together with a complete
 turn around the circle. }\label{figarcs}
\end{center}
\end{figure}

For  $t$ fixed such that case (C) holds, 
we extend the function $\projlambdak \circ \Psi_k(\cdot)$ to $ H_{k,t}$ as follows. Let $\overline{x_2^i x_1^{i+1}}$ 
be an arc of $ H_{k,t}$; we want to map this arc on an arc in $\{t\}
\times \partial B_{1-\lambda_k}$ joining 
the two image points 
$z^i_2,z^{i+1}_1$, with the orientation
from $z^i_2$ to $z^{i+1}_1$. 
However there are infinitely many\footnote{
We can for instance join $z^i_2$ to $z^{i+1}_1$ travelling 
along an oriented arc connecting them, and then travelling along the whole circle an arbitrary number of times (thus considering a self-overlapping
arc).}  
choices of an arc connecting $z^i_2$ to $z^{i+1}_1$.
To specify which arc we choose we distinguish two possibilities: 
$\vert z^i_2 - z^{i+1}_1 \vert \leq 
2\lambda_k$, and 
$\vert z^i_2 - z^{i+1}_1 \vert >
2\lambda_k$.
Notice that 
$\vert z^i_2 - z^{i+1}_1 \vert \leq 
2\lambda_k$
is the only case in which the 
points 
$x_2^i$ 
and $ x_1^{i+1}$ could have
images $z_2^i$ and $z_1^{i+1}$ with a reversed order 
 on $\{t\}\times \partial B_{1-\lambda_k}$. Indeed,
since 
$x_2^i, 
 x_1^{i+1}
\in \partial \sourcedisk_t \cap \partial \badset$, 
we have 
$\diffuku(x_j^i)=|\frac{x_j^i}{|x_j^i|}-u_k(x_j^i)|=\lambda_k$. 
In particular, if the distance between $z_2^i$ 
and $ z_1^{i+1}$ 
is larger than  $2\lambda_k$, it means that the distance between $u_k(x_2^i)$ 
and $ u_k (x_1^{i+1})$
were larger than $2\lambda_k$ ($\projlambdak$ does not increase the distance), so that $z_2^i$ 
and $ z_1^{i+1}$  must have the 
same order of $\frac{x_2^i}{|x_2^i|}$ 
and $\frac{x_1^{i+1}}{|x_{1}^{i+1}|}$ on $\partial B_1$, which is the same order of $x_2^i$ and $ x_1^{i+1}$ on $\partial\sourcedisk_t$.

We are now in a position to specify the arc:
when
$\vert z^i_2 - z^{i+1}_1 \vert >
2\lambda_k$
we define
$\overline{z^i_2z^{i+1}_1}$ 
to be the counterclockwise oriented arc\footnote{Likewise
the orientation from $x_2^i$ to $x_1^{i+1}$.} from $z^i_2$ to $z^{i+1}_1$.
When
$\vert z^i_2 - z^{i+1}_1 \vert \leq 
2\lambda_k$ we argue as follows: Let $\beta_i$ be the angular 
amplitude of the arc $\overline{x_2^ix_1^{i+1}}$. 
We define
$\overline{z^i_2z^{i+1}_1}$ as
the unique oriented arc from $z_2^i$ to $z_1^{i+1}$ satisfying the 
following property: If $\widehat \beta_i$ is its oriented angular amplitude (positive if counterclockwise oriented, negative otherwise), 
then
 \begin{align}\label{differenza_amplitude_beta}
  |\widehat \beta_i-\beta_i|\leq 2\widehat{\lambda}_k,
 \end{align}
where $\widehat{\lambda}_k$ is the angular amplitude of a chord 
on $\partial B_{1-\lambda_k}$ of length $\lambda_k$ (see Fig. \ref{figarcs}). It is easy to check that there is a unique arc $\overline{z^i_2z^{i+1}_1}$ satisfying this property. Moreover 
the same property holds for $\beta_i$ and $\widehat \beta_i$ in the case 
that $\vert z^i_2 - z^{i+1}_1 \vert > 
2\lambda_k$, since 
$\projlambdak\circ \Psi_k(\cdot)$ does not change the angular 
coordinate of a point $x_j^i$ of a quantity larger than $\widehat \lambda_k$.

Once we have specified the image arc, 
we can  define $\widehat P_{k,i}:\overline{x_2^i x_1^{i+1}}\rightarrow \overline{z^i_2z^{i+1}_1}$ 
to be the affine (with respect to the angular coordinate) function 
mapping $x_2^i$ to $z^i_2$ and 
$x_1^{i+1}$ to $z^{i+1}_1$.
We then introduce $P_k = P_{k,t}:\partial \sourcedisk_t\rightarrow \{t\}\times \partial B_{1-\lambda_k}$ as follows:
\begin{align}\label{eq:extension}
 P_k(x):=\begin{cases}
              \projlambdak\circ \Psi_k(x)&\text{if }x\in H^c_{k,t},
\\
\\
              \widehat P_{k,i}(x)&\text{if }x\in \overline{x_2^i x_1^{i+1}} \text{~for some }i.
             \end{cases}
\end{align}
We claim that 
\begin{align}\label{claim_lemma_archi}
 \tau(\rho,P_k(\cdot))_\sharp \jump{\partial \sourcedisk_t}=\jump{\{t\}\times \partial B_\rho}.
\end{align}
 Since the map $\tau(\rho,\cdot)$ is an orientation preserving
homeomorphism between $\partial B_{1-\lambda_k}$ and $\partial
B_\rho$, it is sufficient to show that 
\begin{align}
\label{subclaim_lemma_archi}
 P_k(\cdot)_\sharp\jump{\partial \sourcedisk_t}=\jump{\{t\}\times \partial B_{1-\lambda_k}}.
\end{align}
Equivalently, we will prove that 
\begin{align}
\label{subclaim_lemma_archi_bis}
 \sum_{i=1}^h(\jump{\overline{z_1^iz_2^i}}+\jump{\overline{z_2^iz_1^{i+1}}})=\jump{\{t\}\times \partial B_{1-\lambda_k}}.
\end{align}
Let $\angcoord_i$ (resp. 
$\beta_i$) be the angular amplitude,
in counterclockwise order, of the arc $\overline{x_1^ix_2^i}$ (resp.
$\overline{x_2^ix_1^{i+1}}$). Trivially we have 
$\sum_{i=1}^h(\angcoord_i+\beta_i)=2\pi$.
If  $\widehat \angcoord_i$ (resp. $\widehat \beta_i$) is
 the angular amplitude of $\overline{z_1^iz_2^i}$ 
(resp. $\overline{z_2^iz_1^{i+1}}$),
taken with sign $\pm1$ according to their orientation, we 
see that to prove \eqref{subclaim_lemma_archi_bis} it suffices to show 
\begin{align}\label{subclaim_lemma_archi_final}
 \sum_{i=1}^h(\widehat \angcoord_i+\widehat \beta_i)=2\pi.
\end{align}
To do this we use \eqref{differenza_amplitude_beta}; notice first  
that  the counterpart of \eqref{differenza_amplitude_beta} holds for 
the arc between $x_1^i$ and $x_2^i$: Namely 
the map 
$\projlambdak\circ\Psi_k$ transforms 
the arc $\overline{x_1^ix_2^i}$ 
of angular amplitude $\omega_i$,
in the arc $\overline{z_1^{i}z_2^i}$ of amplitude $\widehat \omega_i$ in such a way that 
\begin{align}\label{differenza_amplitude_omega}
 |\widehat \omega_i-\omega_i|\leq 2\widehat \lambda_k.
\end{align}
Now, if $\theta_j^i$ is the angular coordinate of $z_j^i$, and 
$ \sourceangularcoordinate^i_j$ is  the angular coordinate of $x_j^i$, 
we know that 
\begin{equation}\label{theta_i}
\theta_j^i=\alpha_j^i+r_j^i,\qquad\text{ with }|r_j^i|\leq \widehat \lambda_k. 
\end{equation}
Here again $\widehat \lambda_k$ is the angle of a chord of length $\lambda_k$ on 
$\partial B_{1-\lambda_k}$.
To prove  \eqref{subclaim_lemma_archi_final} we reduce ourselves to show that 
\begin{align}
 &\widehat \omega_i=\omega_i+r_2^i-r_1^i,\label{1_claim_final}\\\label{2_claim_final}
 &\widehat \beta_i=\beta_i+r_1^{i+1}-r_2^i,
\end{align}
for all $i$. 
Fix $i$; we can assume $\alpha_2^i=\alpha_1^i+\omega_i$, and by \eqref{theta_i} we get
$$\widehat \omega_i=\omega_i+r_2^i-r_1^i+2k_i\pi,$$
with $k_i\in \mathbb Z$ accordingly to the number of oriented complete turns
 around the circle $\partial B_{1-\lambda_k}$. From \eqref{differenza_amplitude_omega} we have $k_i=0$ for all $i$, and \eqref{1_claim_final} follows. 
A similar argument, 
using \eqref{differenza_amplitude_beta}, 
leads  to \eqref{2_claim_final}, hence \eqref{subclaim_lemma_archi_final} is proved, and \eqref{claim_lemma_archi} follows at once.
\medskip
Define
$$
y_j^i:=
\tau(\rho,z_j^i)\in \{t\}\times \partial B_\rho.
$$
From \eqref{claim_lemma_archi}, 
\eqref{eq:slice_t_rho}, and \eqref{eq:slice_ipsilon} it follows that 
\begin{align}
\label{eq:X_k_is_a_push_forward}
 (\mathcal X_k)_{t,\rho}=\tau(\rho,P_k(\cdot))_\sharp \jump{\partial \sourcedisk_t}-\tau(\rho,\projlambdak \circ \Psi_k(\cdot))_\sharp\jump{H_{k,t}^c}=\tau(\rho,P_k(\cdot))_\sharp\jump{H_{k,t}},
\end{align}
so that, 
since the maps $\tau(\rho,\cdot)$ send the arcs $\overline{z^i_2z^{i+1}_1}$ onto $\overline{y^i_2y^{i+1}_1}$,
 we have
\begin{align}\label{current_X_doubleslice}
 (\mathcal X_k)_{t,\rho}=\sum_{i=1}^{h}\jump{\overline{y_2^iy_1^{i+1}}},
\end{align}
hence
\begin{align}\label{eq:290}
 |(\mathcal X_k)_{t,\rho}|\leq \sum_{i=1}^{h}\mathcal H^1(\overline{y_2^iy_1^{i+1}}).
\end{align}
We now estimate the length of the arcs $\overline{y_2^iy_1^{i+1}}$. 
For simplicity we fix $i$ and 
set $Y_1:=y_2^i$, $Y_2:=y_1^{i+1}$,
$X_1:=x_2^i$ and $X_2:=x_1^{i+1}$.
Let  ${\rm d}(\cdot,\cdot)$ denote the distance  
between points of 
 $\{t\}\times \partial B_{\rho}$ ({\it i.e.}, the length of the minimal arc connecting the two points), let $\pi_\rho$ be the orthogonal 
projection of $\R^2_{\rm target}$ 
 onto the convex set $\overline B_\rho$,
and write
$Y_i = (t,\widetilde Y_i)$ 
with $\widetilde Y_i \in \overline B_\rho$,
for $i=1,2$.
Then, setting $\widehat X_i := \frac{X_i}{|X_i|}$
and denoting
$ \overline{\widehat X_1 \widehat X_2}$ the arc between 
$\widehat X_1$ and $\widehat X_2$ on 
$\{t\}\times \partial B_1$, we have
\begin{equation}
\label{passages}
\begin{aligned}
\mathcal H^1(\overline{Y_1Y_2})&\leq \mathcal 
H^1\Big(\overline{\pi_\rho(X_1)
\pi_\rho(X_2)}\Big)+{\rm d}(\pi_\rho(X_1),\widetilde Y_1)+{\rm d}(\pi_\rho(X_2),\widetilde Y_2)
\\
&=\rho\mathcal H^1\Big(\overline{
\widehat X_1\widehat X_2}\Big)+{\rm d}(\pi_\rho(X_1),
\widetilde Y_1)+{\rm d}(\pi_\rho(X_2),\widetilde Y_2)
\\
&\leq \rho\mathcal H^1\Big(\overline{\widehat X_1
\widehat X_2}\Big)+\frac\pi2|\pi_\rho(X_1)-\widetilde Y_1|+
\frac\pi2|\pi_\rho(X_2)-\widetilde Y_2|
\\
&\leq \rho\mathcal H^1\Big(\overline{\widehat X_1
\widehat X_2}\Big)+\frac\pi2|\pi_\rho(X_1)-
\pi_\rho\circ u_k(X_1)|+\frac\pi2|\pi_\rho\circ u_k(X_1)-\widetilde Y_1|
\\
&\;\;\;\;+\frac\pi2|\pi_\rho(X_2)-\pi_\rho\circ u_k(X_2)|+\frac{\pi}{2}|\pi_\rho\circ u_k(X_2)-\widetilde Y_2|
\\
&\leq\frac{\rho}{\eps}\mathcal H^1\Big(\overline{X_1X_2}\Big)+\frac\pi2\big( \diffuku(X_1)+\diffuku(X_2)\big)+\pi(\lambda_k-\lambda_k'),
\end{aligned}
\end{equation}
where we use that, for $x \neq 0$,
$$\diffuku(x)=|\frac{x}{|x|}-u_k(x)|=|u(x)-u_k(x)|\geq |\pi_\rho\circ u(x)-\pi_\rho\circ u_k(x)|,$$
because ${\rm Lip}(\pi_\rho)=1$, 
$|\pi_\rho\circ u_k(X_i)-\widetilde Y_i|\leq \lambda_k-\lambda_k'$ 
for $i=1,2$, and $X_i \in \partial \sourcedisk_t$, ~$t>\eps$. 
By \eqref{theta_arcs} 
\eqref{eq:290} 
and \eqref{passages}, we infer
\begin{align}\label{estimate_theta}
 \Theta_k(t,\rho)\leq \frac{1}{\eps}\mathcal H^1(H_{k,t})+\frac{\pi}{2\rho}\sum_{x\in \partial H_{k,t}}(\diffuku(x)+\lambda_k).
\end{align}
Estimate \eqref{estimate_theta} holds for $\mathcal H^1$-almost
every $t\in(\eps,l)$ 
such that neither case (A) nor (B) happens. Moreover, by \eqref{7.12} it holds also in case (A).
Case (B) does not contribute to the $L^1$ norm of $\Theta_k(\cdot,\rho)$, 
and therefore \eqref{estimate_theta} holds for $\mathcal H^1$-almost every $t\in(\eps,l)$.

Denoting by $m(x)=|x|$, so that $|\nabla m|=1$ out of the origin, the coarea formula allows us to write
\begin{align*}
\int_{\partial \badset}\diffuku(\sigma)d\mathcal H^1(\sigma)\geq\int_{\partial \badset}|\frac{\partial  m}{\partial \sigma}|\diffuku(\sigma)d\mathcal H^1(\sigma)=\int_\eps^l\sum_{x\in m^{-1}(t)\cap \partial \badset}\diffuku(x)dt=\int_\eps^l\sum_{x\in\partial H_{k,t}}\diffuku(x)dt.
\end{align*}
Similarly 
\begin{align*}
 \int_{\partial \badset}\lambda_k~d\mathcal H^1(\sigma)
\geq\int_\eps^l\lambda_k\mathcal H^0(\{x\in\partial H_{k,t}\})~ dt.
\end{align*}
Recalling \eqref{eq:measure_of_A_n}, from \eqref{estimate_theta} we 
finally get
\begin{align*}
 \int_{\eps}^l\Theta_k(t,\rho)dt
\leq \frac{1}{\eps}|\badset|+\frac{\pi}{2(1-\lambda_k)}
\int_{\partial \badset}\left(\diffuku(\sigma)+
2\lambda_k\right)~ d\mathcal H^1(\sigma)\leq \frac{1}{\eps n}+o_k(1),
\end{align*}
where $o_k(1)$ depends on $\eps$ and $n$ (since $\lambda_k$ does) and 
vanishes as $k\rightarrow +\infty$, thanks to Lemma 
\ref{lem:choice_of_u_k_and_t_k} (iii).
\end{proof}

\section{
Estimate from below of the mass of $\currgraphk$ 
over $\badset \cap (\Om \setminus \overline \sourcedisk_\eps)$}
\label{subsec:est_Omegaeps_Dk}
Now we want to identify the current $\mathbb S(\piPsiDkXik)$ in 
\eqref{symmetrization_of_Sk}
as 
sum of polar graphs (Section \ref{sec:polar_graphs_in_a_cylinder}), and 
to do this we need some preliminaries.

\begin{definition}[\textbf{The function $\Fke$}]
\label{def:the_functions_F_k}
Recalling the definition of $\Theta_k = \Theta_{\mathbb S(E_k)}$ in \eqref{eq:Theta_k}, 
we set
\begin{align}
&\Fke:(\eps,l)\times (0,1-\lambda_k']\times \{0\}\rightarrow [0,\pi], \qquad 
\Fke(t,\rho,0):=\frac{\Theta_k(t,\rho)}{2}.
\label{eq:Fke}
\end{align}
\end{definition}

Note that ${\rm dom}(\Fke)\subsetneq {\rm dom}(\Theta_k)$.
The polar graph of $\Fke$ is the set
$G^{{\rm pol}}_{\Fke}= \{(t,\rho,\Fke(t,\rho,0)):(t,\rho,0)\in(\eps,l)\times (0,1-\lambda_k']\times\{0\}\}$.
By construction $\mathbb S(E_k)$ is 
the polar subgraph of $\Fke$ restricted to the half-cylinder 
$\{(t,\rho,\theta): t \in (\eps,\longR), \theta\in (0,\pi)\}$. 
More precisely, let $\eta$ be any number\footnote{$\eta=0$ is not allowed,
since in this case the boundary of the subgraph (as a current) does
 not include the set where $\theta=0$.} 
 with $0<\eta<\frac{\pi}{4}$;
then the polar subgraph $$
SG^{\rm pol}_{\Fke}:=\{(t,\rho,\theta)
\in (\eps,l)\times (0,1-\lambda_k'] \times (-\pi/4,\pi) 
:\theta\in (-\eta,\Fke(t,\rho,0))\}$$
satisfies 
\begin{align}\label{subgraph1}
 SG^{\rm pol}_{\Fke}\cap\{\theta\in (0,\pi)\}=\mathbb S(E_k)\cap \{\theta\in (0,\pi)\},
\end{align}
and similarly (for the polar epigraph), setting 
$$UG^{\rm pol}_{-\Fke}:=\{(t,\rho,\theta)
\in (\eps,l)\times (0,1-\lambda_k'] \times (-\pi,\pi/4) 
:\theta\in (-\Fke(t,\rho,0),\eta)\},$$
we have
\begin{align}\label{subgraph2}
 UG^{\rm pol}_{-\Fke}\cap\{\theta\in (-\pi,0)\}=\mathbb S(E_k)\cap \{\theta\in (-\pi,0)\}.
\end{align}

\begin{remark}[\textbf{The sets $\Fke=0$, $\Fke=\pi$}]\rm
Careful attention must be paid to the sets 
$\{\Fke=0\}$ and $\{\Fke=\pi\}$. Indeed on such sets the 
two graphs of $\Fke$ and $-\Fke$ overlap and then, when 
considered as integral currents, they cancel each other. 
Moreover the set $\partial^* \mathbb S(E_k)$ 
includes the two graphs of $\Fke$ and $-\Fke$ with the exception of these two sets.
In other words, from \eqref{subgraph1} and \eqref{subgraph2} we have
\begin{align}\label{symmetrizedset}
 \mathbb S(E_k)\cap C^\eps_l=\big(SG^{\rm pol}_{\Fke}\cap\{\theta\in (0,\pi)\}\big)\cup\big(UG^{\rm pol}_{-\Fke}\cap\{\theta\in (-\pi,0)\}\big)
\end{align}
up to $\mathcal{H}^3$-negligible sets.
From this formula it is evident that  the graphs of $\Fke$ and $-\Fke$  over 
$\{\Theta_k=0\}\cup\{\Theta_k=2\pi\}$ cancel each other, 
and thus they do not belong to the {reduced} boundary of $\mathbb S(E_k)$. 
Moreover, the polar subgraph and the polar epigraph are sets 
of finite perimeter, as is their union in \eqref{symmetrizedset}. 
\end{remark}

\begin{definition}[\textbf{The polar projection map $\pi_0^{\rm pol}$}]
We let
$\pi_0^{\rm pol} = \pi^{{\rm pol}}_{0,\lambda_k',\eps}:\overline {C}_l^\eps(1-\lambda_k')\rightarrow \overline {C}_l^\eps
(1-\lambda_k')\cap\{\theta=0\}$ be the polar projection defined by 
\begin{align}\label{def_pi0}
 \pi_0^{\rm pol}(t,\rho, \theta):=(t,\rho, 0).
\end{align}
\end{definition}
We now introduce various subsets of 
$(0,\longR)\times (0,1)\times \{0\}$ 
in cylindrical coordinates, namely
$ \pi_0^{\rm pol}(\pi_{\lambda_k}\circ \Psi_k(\Omega \setminus 
\overline \sourcedisk_\eps))\subseteq \striptwo\subseteq \striptwo\cup \JQke
\subseteq \stripfour$.
We start with $\striptwo$
 (see also formulas \eqref{eq:stripone} and \eqref{newJ_k} below), and note
preliminarly that 
\begin{equation}
\label{stripe_Jhat}                  
\pi_0^{\rm pol}(\projlambdak \circ \Psi_k(\Omega \setminus \overline \sourcedisk_\eps))
\end{equation} 
coincides with
$$
\left\{(t,\rho,0)\in C_l: t\in (\eps,l),\;\rho\in [\minuk(t)\wedge 
(1-\lambda_k), \maxuk(t)\wedge (1-\lambda_k)]\right\},
$$ 
 $\minuk, \maxuk$ being the functions introduced in \eqref{eq:q_-q_+}.

\begin{definition}
[\textbf{The set $\striptwo$}]\label{def:J_k}
Recalling the expression of $\Qke$ in \eqref{Q_k}, we
define
\begin{align}
 &  \striptwo
 := 
\pi_0^{\rm pol}(\projlambdak \circ \Psi_k(\Omega \setminus \overline \sourcedisk_\eps))
\cup \Big(
((\eps,l)\setminus \Qke)\times [1-\lambda_k,1-\lambda_k']\times\{0\}
\Big),
\label{eq:strip_two}                  
\end{align}
see Fig. \ref{fig_q2}. 
\end{definition}
We have $\striptwo = \pi_0^{\rm pol}
\Big(\projlambdak\circ \Psi_k(\Omega \setminus \overline\sourcedisk_\eps)
\cup \tau\big([1-\lambda_k,1-\lambda'_k]\times A\big)\Big)$, where  
$\tau$ is defined in \eqref{eq:tau}, and 
$A:= \{(t,y) 
\in \projlambdak\circ\Psi_k(\Omega\setminus
\overline \sourcedisk_\eps) 
: t \in (\eps,l), y \in \partial B_{1-\lambda_k}\}$,
since $(t,y) 
\in \projlambdak\circ\Psi_k(\Omega\setminus
\overline \sourcedisk_\eps)$ and $y \in \partial B_{1-\lambda_k}$
implies that $t \in (\eps,\longR) \setminus \Qke$, see Fig. 
\ref{fig_q} and \eqref{eq:pi_k}.

\begin{figure}
	\begin{center}
		\includegraphics[width=0.7\textwidth]{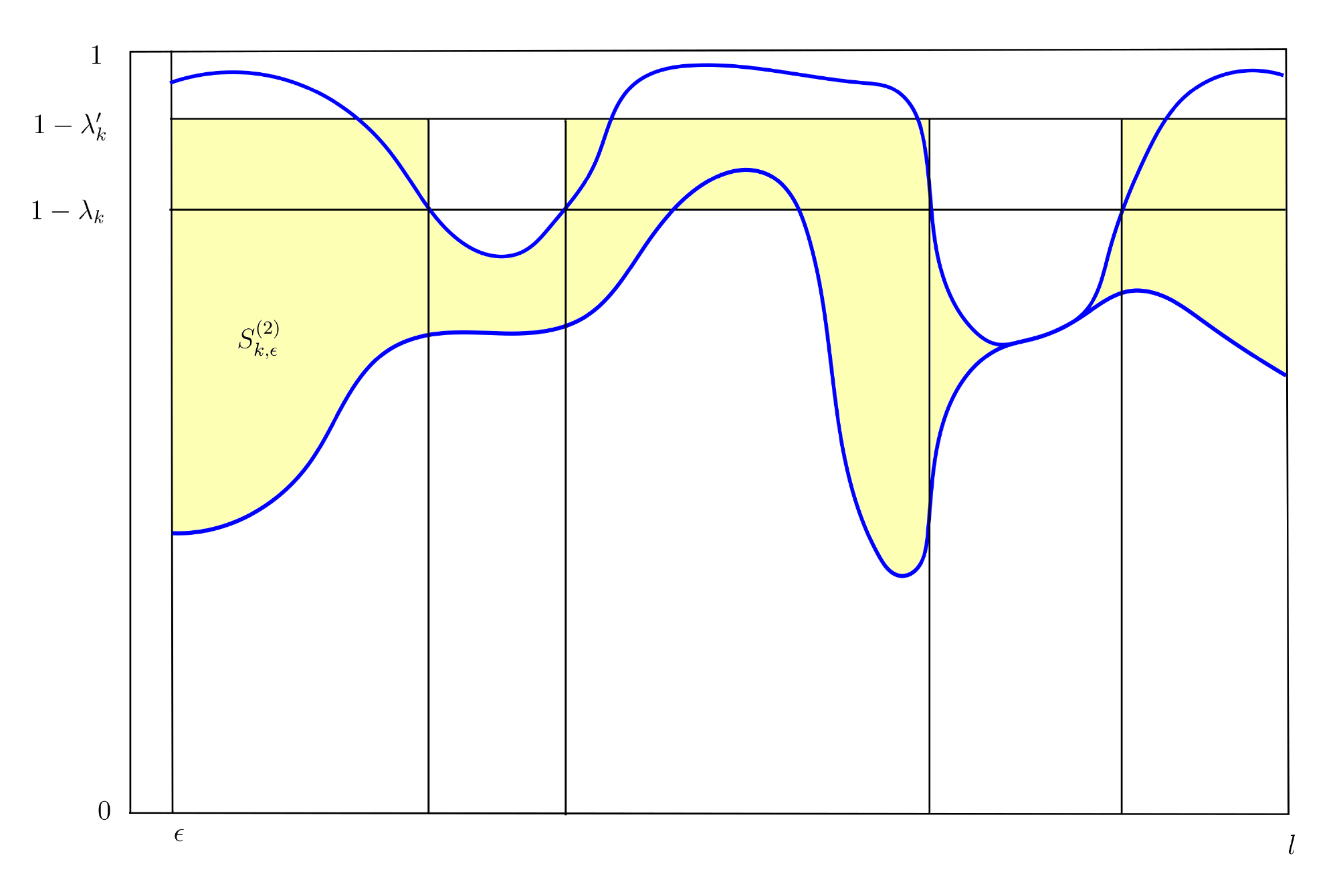}
		\caption{The graphs of the functions $\maxuk$ and $\minuk$ 
			and the set $\striptwo$ in Definition \ref{def:J_k}.
See also Fig. \ref{fig_q}.		}
		\label{fig_q2}
	\end{center}
\end{figure}

\begin{remark}\rm \label{rmk:8.6}
\begin{itemize}
\item[(i)] 
It might happen that $\projlambdak\circ\Psi_k(\badset\setminus \overline{\sourcedisk}_\eps)
= \emptyset$. By construction we have
$$
\pi_0^{\rm pol}(\projlambdak \circ \Psi_k(\Omega \setminus \overline \sourcedisk_\eps))
\cap \Big(
((\eps,l)\setminus \Qke)\times [1-\lambda_k,1-\lambda_k']\times\{0\}
\Big) = ((\eps,l) \setminus \Qke) \times \{1-\lambda_k\} \times 
\{0\}.$$
The two functions $\minuk$ and $\maxuk$ could coincide in large portions 
of $(\eps,l)$ (and even 
everywhere), so that $\pi_0^{\rm pol}(\projlambdak \circ \Psi_k(\Omega \setminus \overline \sourcedisk_\eps))$
could collapse to a curve (for instance if $\Psi_k(\Omega) \subset \partial C_l$);
see also the example in Section \ref{subsec:an_approximating_sequence_of_maps_with_degree_zero:cylinder}. 
On the other hand, 
 $\mathcal H^2(\striptwo)>0$ (see Lemma \ref{lem:estimate_of_Q_k}).
\item[(ii)]
Notice that $A \supseteq\projlambdak\circ\Psi_k(\partial \badset \setminus
\overline \sourcedisk_\eps) \cup \projlambdak\circ\Psi_k((\Omega 
\setminus \overline \sourcedisk_\eps) \setminus \badset)$. 
Moreover $A \cap \projlambdak\circ\Psi_k(\badset)$ may not be empty. 
\item[(iii)] Inside the cylinder $C^\eps_l(1-\lambda_k)$,
 $\striptwo$ is exactly the $\pi_0^{\rm pol}$-projection of 
$\projlambdak\circ \Psi_k(\badset \setminus \overline 
\sourcedisk_\eps)$; 
remember also that $(\projlambdak\circ 
\Psi_k)_{\sharp}(\jump{\badset \setminus \overline 
\sourcedisk_\eps}) = (\partial \mathcal E_k) \res
C_\longR^\eps(1-\lambda_k)$, by \eqref{eq:S_hat_k}, \eqref{eq:S_k} 
and \eqref{eq:primitive}.
\item[(iv)] Recalling the definition of $W_k$ in \eqref{eq:W_k},
\begin{align}\label{projectionofimage}
 \pi_0^{\rm pol}
\Big(\projlambdak\circ\Psi_k(\badset\setminus \overline \sourcedisk_\eps)\cup W_k\Big)\subseteq \striptwo,
\end{align}
and the above inclusion might be strict.
\item[(v)] If $\Fke(t,\rho,0)\in (0,\pi)$ then $(t,\rho,0)\in \striptwo$, by \eqref{projectionofimage}. Indeed in this case the 
circle $(\pi_0^{\rm pol})^{-1}(t,\rho,0)$ intersects both some sets in $\{E_{k,i}\}$ (see \eqref{eq:dec_Ek}) and their complement, so in particular $(\pi_0^{\rm pol})^{-1}(t,\rho,0)$ 
must intersect the {reduced} boundary of some of the sets in $\{E_{k,i}\}$, 
namely  $\projlambdak\circ\Psi_k(\badset\setminus \overline \sourcedisk_\eps)\cup W_k$, for $\mathcal H^2$-a.e. $(t,\rho,0)\in \striptwo$.  Furthermore  
\begin{align*}
 \{(t,\rho,0):\Fke(t,\rho,0)\in (0,\pi)\}=\pi_0^{\rm pol}
\big({\rm spt}(\mathbb S(\piPsiDkXik))\big),
\end{align*}
up to $\mathcal{H}^2-$ negligible sets\footnote{
There could be $(t,\rho,0)
\in \pi_0^{\rm pol}
\big({\rm spt}(\mathbb S(\piPsiDkXik))\big)$ such that 
$\Fke(t,\rho,0)\notin (0,\pi)$.
Indeed take $t \in
(\eps, l)$ such that $\partial \sourcedisk_t \subset  \badset$ and
assume that $\Psi_k(\partial \sourcedisk_t)=\{t\}\times \partial B_\rho$,
$\rho<1-\lambda_k$. Then $\Fke(t,\rho)=\pi$ and $\{t\}\times
\partial B_\rho \subset \partial^*\mathbb S(E_k)$; 
however  this can only happen for $(t,\rho)$ in a negligible
${\mathcal H}^2$-set.}
\end{itemize}
\end{remark}

\begin{remark}\label{remark_Qke}
\begin{itemize}

\item[(i)] $\Theta_k = 2\pi$ on 
$\{(t,\rho,0):t\in\Qke,\rho\in (|u_k|^+(t),1-\lambda_k')\}$. 
Notice that the part of the cylinder $\{(t,\rho,\theta):t\in\Qke,\rho\in (|u_k|^+(t),1-\lambda_k'),\theta\in (-\pi,\pi]\}$ does not intersect 
$\projlambdak\circ \Psi_k(\badset)$, and neither $W_k$, by construction. 
As a consequence it does not intersect ${\rm spt}(\mathbb S(\piPsiDkXik))$.

\item[(ii)] We write
$\{(t,\rho,0)\in \striptwo:~{\rm either }~\Theta_k(t,\rho)=0 ~{\rm or~ }\Theta_k(t,\rho)=2\pi\}
=
\striptwo\cap \{\Theta_k \in \{0,2\pi\}\}$. 
Then
$\striptwo\cap \{\Theta_k \in \{0,2\pi\}\}$ 
corresponds to the values of $t$ 
and $\rho$ for which $(t,\rho,0)\in \striptwo$ and the slice $\mathbb S(E_k)_{(t,\rho)}=\mathbb S(E_k)\cap (\{t\}\times\partial B_\rho)$ is either empty or the whole circle $\{t\}\times \partial B_\rho$ (up to $\mathcal H^1$-negligible sets). Notice also that the intersection $\pi_0^{\rm pol}
\Big(\projlambdak\circ\Psi_k(\badset\setminus \overline \sourcedisk_\eps)\cup W_k\Big) \cap \striptwo\cap \{\Theta_k \in \{0,2\pi\}\} $ may not be empty on a set of positive $\mathcal{H}^2-$measure. Indeed in the proof of Proposition \ref{prop_Gf}, we 
show that the $\pi_0^{\rm pol}$-projection of\footnote{This is the set 
where $\projlambdak\circ\Psi_k(\badset)$ overlaps 
itself with opposite orientation; this set might have positive area, see Fig. \ref{fig:set_cancellation}.} 
$\Big(\projlambdak\circ\Psi_k(\badset)\Big) \setminus
{\rm spt}(\piPsiDkXik)$ 
is contained in $\striptwo\cap \{\Theta_k \in \{0,2\pi\}\}$. 
\end{itemize}
\end{remark}
%
%
\subsection{The current ${\mathbb S}(\piPsiDkXik)$ 
as sum of a polar subgraph and a polar epigraph}
Let $G^{\rm pol}_{\pm \Fke\res \left(
\striptwo\cap \{\Theta_k \in \{0,2\pi\}\}\right)}$ 
be the polar graph of $\pm \Fke\res \left(
\striptwo\cap \{\Theta_k \in \{0,2\pi\}\}\right)$; these two sets, by symmetry, overlap,
and
$$
\jump{G^{\rm pol}_{-\Fke\res \left(\striptwo\cap 
\{\Theta_k \in \{0,2\pi\}\}\right)}}+
\jump{G^{\rm pol}_{\Fke\res \left(\striptwo\cap \{\Theta_k \in \{0,2\pi\}\}\right)}}=0,
$$
due to the fact that $\jump{G^{\rm pol}_{\Fke\res
\left(\striptwo\cap \{\Theta_k \in \{0,2\pi\}\}\right)}}$ 
and $\jump{G^{\rm pol}_{-\Fke\res \left(\striptwo\cap \{\Theta_k \in \{0,2\pi\}\}\right)}}$ are oriented in 
opposite way. Indeed we endow $\jump{G^{\rm pol}_{\Fke\res
\left( \striptwo\cap \{\Theta_k \in \{0,2\pi\}\}\right)}}$ with the orientation inherited by looking at it as the 
boundary of the polar subgraph of $\Fke$, and we endow $\jump{G^{\rm pol}_{-\Fke\res \left(\striptwo\cap \{\Theta_k \in \{0,2\pi\}\}\right)}}$ with 
the opposite orientation,
since we look at it as boundary of an epigraph.

\begin{definition}[\textbf{The currents $\pmcurrentgengraphFkJkzero$}]
\label{def:the_currents_jump_gengraphplusminusFkJkzero}
We 
set
\begin{equation}\label{eq:def_generalizedgraphs_F}
\begin{aligned}
 &\currentgengraphFkJkzero
:=(\partial \jump{SG^{\rm pol}_{\Fke}})\res\big(\{\theta\in (0,\pi)\}\cap C^\eps_l(1-\lambda_k')\big)+
\jump{G^{\rm pol}_{\Fke\res \left(\striptwo\cap \{\Theta_k \in \{0,2\pi\}\}\right)}},  
\\
 &\currentgengraphminusFkJkzero:=
(\partial \jump{UG^{\rm pol}_{-\Fke}})\res\big(\{\theta\in (-\pi,0)\}\cap C^\eps_l(1-\lambda_k')\big)+\jump{G^{\rm pol}_{-\Fke\res \left(\striptwo\cap \{\Theta_k \in \{0,2\pi\}\}\right)}}.
\end{aligned}
\end{equation}
\end{definition}

 The non standard orientation of $
\currentgengraphminusFkJkzero$ 
is chosen in such a way that condition \eqref{eq:punto1} 
in Proposition \ref{prop_Gf} below takes place.
In this proposition we will also see that, being $\mathbb S(E_k)$ a finite perimeter set in $C_l^\eps$, its 
{reduced} boundary, seen as a current, has finite mass. In turn, the integration on its boundary is exactly 
 $ \currentgengraphFkJkzero
+\currentgengraphminusFkJkzero$ (see also \eqref{symmetrizedset}).

\begin{remark}\label{rmk:8.8}\rm
The generalized polar graph of $\Fke$ might have large parts on which $\Fke\in \{0,\pi\}$;
for this reason we neglected this part in the currents introduced in \eqref{eq:def_generalizedgraphs_F} by 
restricting the boundary of the subgraph 
in $\{\theta\in (0,\pi)\}\cap C^\eps_l(1-\lambda_k')$ (and similarly for the epigraph). However we want to consider the graph above the set $\Fke\in \{0,\pi\}$ on the strip $\striptwo$, in particular the 
projection of the set where $\projlambdak\circ\Psi_k(\badset)$ overlaps itself (which may have positive area)\footnote{See the example in Section \ref{sec:catenoid_with_a_flap}.}, for this reason we have to add the term $\jump{G^{\rm pol}_{\Fke\res\left( 
\striptwo\cap \{\Theta_k \in \{0,2\pi\}\}\right)}}$ in formulas \eqref{eq:def_generalizedgraphs_F}. The reason why we have to get rid of the graph of $\Fke$ on $\{\Fke\in \{0,\pi\}\}$ outside $\striptwo$ is that this term is not controlled by the area of $(\projlambdak \circ \Psi_k(\Omega \setminus \overline \sourcedisk_\eps))$ (see also Remark \ref{remark_Qke} (iv)).
\end{remark}

\begin{prop}[\textbf{Estimate of the mass
of $\pmcurrentgengraphFkJkzero$
         }]
\label{prop_Gf}
Let $\eps$ be fixed as in \eqref{eq:H1}  and \eqref{eq:H2},
and recall the definition 
\eqref{symmetrization_of_Sk} of 
$\mathbb S(\piPsiDkXik)$.
 Then the following 
properties hold:
\begin{equation}\label{eq:punto1}
  \currentgengraphFkJkzero
+\currentgengraphminusFkJkzero
=
\mathbb S(\piPsiDkXik)
\res{C^\eps_l(1-\lambda_k')},
 \end{equation}
 \begin{equation}
\label{formulaii}
|  \currentgengraphFkJkzero|+|\currentgengraphminusFkJkzero|
=|\mathbb S(\piPsiDkXik)|_{C^\eps_l(1-\lambda_k')}+2\mathcal H^2\left(
\striptwo\cap \{\Theta_k \in \{0,2\pi\}\}\right),
\end{equation}
\begin{equation}
\label{eq:estimate_useful}
|  \currentgengraphFkJkzero|+|\currentgengraphminusFkJkzero|\leq \int_{\badset\cap 
(\Omega \setminus \overline \sourcedisk_\eps)
}|J(\projlambdak\circ\Psi_k)|~dx+\frac{1}{ n}+o_k(1), 
\end{equation}
where $o_k(1)$ is a nonnegative infinitesimal sequence as $k\rightarrow+\infty$, depending on $n$ and $\eps$.
\end{prop}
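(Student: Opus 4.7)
The plan is to establish the three assertions in sequence, each building on the previous.

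For \eqref{eq:punto1}, I exploit the decomposition \eqref{symmetrizedset}: on $C_\longR^\eps$ the set $\mathbb S(E_k)$ is the polar subgraph of $\Fke$ inside $\{\theta\in(0,\pi)\}$ and the polar epigraph of $-\Fke$ inside $\{\theta\in(-\pi,0)\}$. Taking the distributional boundary and restricting to $C_\longR^\eps(1-\lambda_k')$ isolates, for each half, exactly the polar graph of $\pm\Fke$ at points where the value lies in the open interval $(0,\pi)$. The two compensating flat pieces $\jump{G^{\rm pol}_{\pm\Fke\res(\striptwo\cap\{\Theta_k\in\{0,2\pi\}\})}}$ inserted in Definition \ref{def:the_currents_jump_gengraphplusminusFkJkzero} are supported on $\{\theta=0\}$ and $\{\theta=\pi\}$ respectively, with opposite orientations on coincident supports, so they cancel when $\currentgengraphFkJkzero+\currentgengraphminusFkJkzero$ is formed; what survives reassembles into $\partial\jump{\mathbb S(E_k)}\res C_\longR^\eps(1-\lambda_k')=\mathbb S(\piPsiDkXik)\res C_\longR^\eps(1-\lambda_k')$ by \eqref{symmetrization_of_Sk}.

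For \eqref{formulaii}, I compute masses additively: each of $\currentgengraphFkJkzero$, $\currentgengraphminusFkJkzero$ splits into a polar-graph part (where $\Fke\in(0,\pi)$) and a flat part lying on $\theta\in\{0,\pi\}$ of mass exactly $\mathcal H^2(\striptwo\cap\{\Theta_k\in\{0,2\pi\}\})$, since such pieces are unit-density planar graphs. The two polar-graph parts have disjoint supports and by \eqref{eq:punto1} sum in mass to $|\mathbb S(\piPsiDkXik)|_{C_\longR^\eps(1-\lambda_k')}$, while the two flat parts contribute $2\mathcal H^2(\striptwo\cap\{\Theta_k\in\{0,2\pi\}\})$.

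For \eqref{eq:estimate_useful}, I start from \eqref{formulaii} and apply \eqref{eq:same_inequality_for_the_mass} to bound $|\mathbb S(\piPsiDkXik)|_{C_\longR^\eps(1-\lambda_k')}\leq|\piPsiDkXik|_{C_\longR^\eps(1-\lambda_k')}\leq|\piPsiDk|_{C_\longR^\eps(1-\lambda_k')}+|\Xik|$, absorbing $|\Xik|\leq 1/n$ via \eqref{eq:mass_of_Xik_2}. What remains is the delicate inequality
\[
|\piPsiDk|_{C_\longR^\eps(1-\lambda_k')}+2\mathcal H^2\!\left(\striptwo\cap\{\Theta_k\in\{0,2\pi\}\}\right)\leq \int_{\badset\cap(\Omega\setminus\overline\sourcedisk_\eps)}|J(\projlambdak\circ\Psi_k)|\,dx+o_k(1),
\]
which balances the deficit between the signed mass of $\piPsiDk$ and the multiplicity-counted area $\int|J(\projlambdak\circ\Psi_k)|\,dx$. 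The point is that the multiplicity integral counts every sheet of $\projlambdak\circ\Psi_k(\badset)$ positively, whereas $|\piPsiDk|$ forfeits $2\mathcal H^2$ on each subset where two sheets overlap with opposite orientation. By Remarks \ref{rmk:8.6}(v) and \ref{remark_Qke}, these overlap sets project under $\pi_0^{\rm pol}$ precisely onto $\striptwo\cap\{\Theta_k\in\{0,2\pi\}\}$, modulo the annular contribution from $((\eps,l)\setminus\Qke)\times[1-\lambda_k,1-\lambda_k']$ whose area is $(\lambda_k-\lambda_k')(l-\eps)$, infinitesimal by Definition \ref{def_lambdak'}. Since $\pi_0^{\rm pol}$ only collapses angular fibres and acts as the identity on $(t,\rho)$, it cannot enlarge 2D area in the radial–axial variables; hence the overlap set's $\mathcal H^2$-mass is bounded above by that of its projection, closing the inequality.

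The hard part is making the cancellation identification rigorous: one must verify that the projection under $\pi_0^{\rm pol}$ of the overlap-with-opposite-orientation portion of $\projlambdak\circ\Psi_k(\badset)$ fills $\striptwo\cap\{\Theta_k\in\{0,2\pi\}\}$ precisely, distinguishing genuine cancellation from the $\Qke$-induced full-circle slices, which are excluded from $\striptwo$ by construction and whose measure is controlled via Lemma \ref{lem:estimate_of_Q_k}; the residual contribution from the thin annular layer is absorbed into $o_k(1)$ by the choice of $\lambda_k'$ in Definition \ref{def_lambdak'}.
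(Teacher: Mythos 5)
Your proofs of \eqref{eq:punto1} and \eqref{formulaii} match the paper's in substance: \eqref{eq:punto1} is read off from \eqref{symmetrizedset} together with the cancellation of the two flat pieces $\jump{G^{\rm pol}_{\pm\Fke\res(\striptwo\cap\{\Theta_k\in\{0,2\pi\}\})}}$, and \eqref{formulaii} follows because those flat pieces have disjoint polar-graph parts but overlapping, oppositely-oriented flat parts whose common $\mathcal H^2$-measure equals $\mathcal H^2(\Jkzerotwopi)$ (the paper makes this explicit by introducing $\Jkzerotwopi := \striptwo\cap\{\Theta_k\in\{0,2\pi\}\}$ and observing that $\Fke$ restricted to each piece is an isometry onto its graph). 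Your routing of \eqref{eq:estimate_useful} through $|\piPsiDkXik| \le |\piPsiDk| + |\Xik|$, absorbing $|\Xik|$ early via \eqref{eq:mass_of_Xik_2}, is a valid streamlining of the paper's argument, which instead keeps $\piPsiDkXik$ together and handles $W_k$ through the quantity $M(W_k\cap C^\eps_l(1-\lambda_k'))$; since $W_k$ contributes to the open cylinder $C^\eps_l(1-\lambda_k)$ only on a set of zero $\mathcal H^3$-measure, the two routes are equivalent.

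However, there is a genuine gap in your treatment of \eqref{eq:estimate_useful}. You assert that "these overlap sets project under $\pi_0^{\rm pol}$ precisely onto $\striptwo\cap\{\Theta_k\in\{0,2\pi\}\}$" citing Remarks \ref{rmk:8.6}(v) and \ref{remark_Qke}. Those remarks only give the inclusion $\pi_0^{\rm pol}\big(\projlambdak\circ\Psi_k(\badset)\setminus{\rm spt}(\piPsiDkXik)\big)\subseteq\Jkzerotwopi$, which is the direction \emph{not} needed here. What your inequality requires is the converse: that (up to the infinitesimal annular sliver of measure $\leq(\lambda_k-\lambda_k')l$) the set $\Jkzerotwopi$ is \emph{contained in} the $\pi_0^{\rm pol}$-projection of the zero-multiplicity set $Z_k^0$. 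This is exactly claim \eqref{claim_iii} in the paper's proof and it is not a triviality; the paper establishes it by a double slicing argument (formulas \eqref{claimslice}–\eqref{1.80}), reducing to the decomposition $\mathcal E_k=\sum_i(-1)^{\sigma_i}\jump{E_{k,i}}$ in undecomposable components and analysing, for each slice $(t,\rho)\in\Jkzerotwopi\cap\stripone$, the two alternatives "all $E_{k,i}$ miss the circle" or "some $E_{k,i}$ fills it", in both of which the pushforward must cancel over the fiber. You flag this as "the hard part" but do not supply the argument, so the step is missing. Separately, your final sentence inverts an inequality: since $\pi_0^{\rm pol}$ is $1$-Lipschitz it does not increase $\mathcal H^2$, so the overlap set's mass is bounded \emph{below} (not above) by the mass of its projection; fortunately this is the direction you actually use in the chain $\int|J|\,dx-|\piPsiDk|\geq 2\mathcal H^2(Z_k^0)\geq 2\mathcal H^2(\pi_0^{\rm pol}(Z_k^0))\geq 2\mathcal H^2(\Jkzerotwopi)-o_k(1)$, but the statement as written is the reverse.
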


\begin{proof} Identity
\eqref{eq:punto1} 
follows by definition and from \eqref{symmetrizedset}.
Concerning 
\eqref{formulaii}, setting for simplicity
\begin{equation}\label{J^i}
\Jkzerotwopi:=\striptwo\cap \{\Theta_k \in \{0,2\pi\}\},
\end{equation}
it is sufficient to 
observe that 
${\rm spt}(\jump{\currentgengraphFkJkzero})$ 
and ${\rm spt}(\jump{\currentgengraphminusFkJkzero})$ 
coincide
on the set $\Fke(\Jkzerotwopi)$ 
(whose measure is equal\footnote{
Indeed $\Fke$ restricted to
$\Jkzerotwopi\cap\{\Theta_k=0\}$ is the identity map and $\Fke$ restricted to 
$\Jkzerotwopi\cap\{\Theta_k=2\pi\}$ is 
a $\pi$-rotation.}
 to the measure of $\Jkzerotwopi$). 
Thus, the currents $\currentgengraphFkJkzero$ and 
$\currentgengraphminusFkJkzero$ cancel each other on this set, since they are endowed with opposite orientation. Hence 
\begin{align}
 | \currentgengraphFkJkzero|+| \currentgengraphminusFkJkzero|
=| \currentgengraphFkJkzero+ \currentgengraphminusFkJkzero|+2\mathcal H^2(\Jkzerotwopi),
\end{align}
and \eqref{formulaii} follows from \eqref{eq:punto1}.

Let us prove 
\eqref{eq:estimate_useful}.
 We recall that the rectifiable set 
$\projlambdak\circ\Psi_k(\badset)\cup W_k$ 
includes 
the support of the current $\piPsiDkXik$. 
There might be parts of $\projlambdak\circ\Psi_k(\badset)\cup W_k$ 
where the multiplicity of $\piPsiDkXik$ is zero, and this happens for 
instance where two pieces of $\projlambdak\circ\Psi_k(\badset)\cup W_k$ 
overlap with opposite orientations. 
We decompose 
$\projlambdak\circ\Psi_k(\badset)\cup W_k$ as follows:
\begin{align}\label{dec_sk}
 \projlambdak\circ\Psi_k(\badset)\cup W_k= Z_k^0\cup 
{\rm spt}(\piPsiDkXik)
=Z_k^0 \cup {\rm spt}(\piPsiDk)
\cup {\rm spt}(\Xik),
\end{align}
where 
$$
Z_k^0:= \Big(\projlambdak \circ \Psi_k(\badset) \setminus {\rm spt}(\piPsiDk)\Big) \cup \Big(W_k\setminus {\rm spt}(\Xik)\Big)
$$
is the set where 
$\piPsiDkXik$ has 
vanishing multiplicity.
It is convenient to introduce
the following notation for the set in 
\eqref{stripe_Jhat}: 
\begin{equation}\label{eq:stripone}
\stripone:=\pi_0^{\rm pol}(\projlambdak \circ \Psi_k(\Omega \setminus \overline \sourcedisk_\eps)).
\end{equation}
We claim that 
\begin{equation}\label{claim_iii}
\stripone
\cap \Jkzerotwopi\subseteq \pi_0^{\rm pol}
\big(Z_k^0\big), 
\end{equation}
where $\pi_0^{\rm pol}
$ is the projection introduced in \eqref{def_pi0} (again, here the inclusion is intended up to $\mathcal{H}^2$-negligible sets).
To prove this we argue by slicing: for $t\in(\eps,l)$ set
$$(\stripone
\cap \Jkzerotwopi)_t:=(\stripone
\cap \Jkzerotwopi)\cap (\{t\}\times\R^2).$$
It is sufficient to show that
\begin{align}\label{claimslice}
 (\stripone
\cap \Jkzerotwopi)_t\subseteq \pi_0^{\rm pol}
\big(Z_k^0\big)
\qquad
{\rm for}~ \mathcal H^1-{\rm a.e.} ~t\in(\eps,l).
\end{align}
In turn, denoting $(Z_k^0)_t:=Z_k^0\cap (\{t\}\times\R^2)$ we will 
prove\footnote{The only  fact we 
will use is that the $\pi_0^{\rm pol}$-projection 
of the set $\projlambdak\circ \Psi_k(\badset)$ is 
surjective on $\striptwo$ (essentially by definition) 
and then the inverse image of a point where 
$\mathbb S(\piPsiDkXik)$ is null is covered at least two times. }
\begin{align}\label{1.80}
 (\stripone\cap \Jkzerotwopi)_t\subseteq \pi_0^{\rm pol}
\big((Z_k^0)_t\big)
\qquad
{\rm for}~ \mathcal H^1-{\rm a.e.}~ t\in(\eps,l).
\end{align}
Now, $(Z_k^0)_t$ is, for $\mathcal H^1$-a.e. $t\in (\eps,l)$, the set  where the coefficient 
of the integral current $(\piPsiDkXik)_t $ is zero. 
Recalling \eqref{J^i}, we have that\footnote{See Fig. \ref{fig:set_cancellation}, 
the two bold segments: on one $\Theta_k=0$ and 
on the other one $\Theta_k=2\pi$}  $\Theta_k(t,\rho)\in\{0,2\pi\}$ for $\rho\in 
[\minuk(t)\wedge (1-\lambda_k),\maxuk(t)\wedge (1-\lambda_k)]$ such that $(t,\rho,0)\in \Jkzerotwopi$. This means that 
 either
\begin{itemize}
 \item for all $i$
the intersection between $E_{k,i}$ (see \eqref{eq:dec_Ek}) and $\{t\}\times \partial B_\rho$ is empty (up to $\mathcal H^1$-negligible sets),
or
 \item for at least  one $i$, 
it happens 
$E_{k,i}\cap( \{t\}\times\partial B_\rho)= \{t\}\times\partial B_\rho$ 
(up to $\mathcal H^1$-negligible sets).
\end{itemize}
In both cases, for $\mathcal H^1$-a.e. 
$\rho\in (\stripone
 \cap \Jkzerotwopi)_t $, the current 
$(\mathbb S(\piPsiDkXik))_t$ is 
null on the set $$\{(t,\rho,\theta):\theta\in (-\pi,\pi),\rho\in (\stripone
 \cap \Jkzerotwopi)_t\}.
$$
Indeed, recalling that $\mathbb S(\piPsiDkXik)=\partial \jump{\mathbb S(E_k)}$, 
in the first case this is obvious, 
in the second one it is sufficient to remember that $E_k=\cup_iE_{k,i}$. 
In other words, the set $(\projlambdak\circ\Psi_k(\badset))_t$ must overlap itself with opposite directions in this set, because the multiplicity of $(\piPsiDkXik)_t$ is null there.
Hence we have proved \eqref{1.80}, and claim \eqref{claim_iii} follows.
 
As a consequence of \eqref{claim_iii} and of its proof, we have
\begin{align}\label{1.81}
 2\mathcal H^2(\Jkzerotwopi)&\leq 2\mathcal H^2\big(\pi_0^{\rm pol}
(Z_k^0\cap C^\eps_l(1-\lambda_k))\big)+2(\lambda_k-\lambda_k')l\nonumber\\
 &\leq \int_{\badset\cap (\Omega \setminus \overline \sourcedisk_\eps)}|J(\projlambdak\circ\Psi_k)|dx+M(W_k\cap C^\eps_l(1-\lambda_k'))-|\piPsiDkXik|_{C^\eps_l(1-\lambda_k')}+o_k(1),
\end{align}
see \eqref{eq:area_of_W_k_counted_with_multiplicities}. Indeed, the first inequality is easy to see, recalling that $\Jkzerotwopi$ is the union of $\stripone
 \cap \Jkzerotwopi$ and $\Jkzerotwopi\setminus \stripone
$, and the latter has measure less than $(\lambda_k-\lambda_k')l$ that is 
infinitesimal as $k\rightarrow+\infty$ (we denote it by $o_k(1)$). 
To see the second inequality we use decomposition \eqref{dec_sk}.
Since $Z_k^0$ is covered at least two times (with opposite directions), 
the area 
$M((\projlambdak\circ\Psi_k(\badset)\cup W_k) \cap 
C^\eps_l(1-\lambda_k'))$ 
of $\projlambdak\circ\Psi_k(\badset)\cup W_k$ in $C^\eps_l(1-\lambda_k')$ 
counted with multiplicity, {\it i.e.},
$$
M((\projlambdak\circ\Psi_k(\badset)\cup W_k)\cap C^\eps_l(1-\lambda_k')):=
\int_{\badset\cap (\Omega \setminus \overline \sourcedisk_\eps)}|J(\projlambdak\circ\Psi_k)|dx+ M(W_k\cap{C^\eps_l(1-\lambda_k')}),$$
 satisfies
$$
\begin{aligned}
M((\projlambdak\circ\Psi_k(\badset)\cup W_k)\cap C^\eps_l(1-\lambda_k'))
\geq &
2\mathcal H^2(Z_k^0\cap C^\eps_l(1-\lambda_k'))+|\piPsiDkXik|_{C^\eps_l(1-\lambda_k')}
\\
\geq &
2\mathcal H^2\big(\pi_0^{\rm pol}
(Z_k^0\cap C^\eps_l(1-\lambda_k'))\big)+|\piPsiDkXik|_{C^\eps_l(1-\lambda_k')},
\\
\geq &
2\mathcal H^2\big(\pi_0^{\rm pol}
(Z_k^0\cap C^\eps_l(1-\lambda_k))\big)+|\piPsiDkXik|_{C^\eps_l(1-\lambda_k')},
\end{aligned}
$$
and \eqref{1.81} follows.

In order to prove \eqref{eq:estimate_useful} it is 
now sufficient to observe that 
\begin{equation*}
\begin{aligned}
& |  \currentgengraphFkJkzero|+|\currentgengraphminusFkJkzero|
= 
\vert \mathbb S(\piPsiDkXik) \vert_{C_l^\eps(1-\lambda_k')}
+2\mathcal H^2(\Jkzerotwopi)
\\
\leq & \vert \mathbb S(\piPsiDkXik) \vert_{C_l^\eps(1-\lambda_k')}
+ 
\int_{\badset\cap (\Omega \setminus \overline \sourcedisk_\eps)}|J(\projlambdak\circ\Psi_k)|dx
+
M(W_k\cap C^\eps_l(1-\lambda_k'))-|\piPsiDkXik|_{C^\eps_l(1-\lambda_k')}
+o_k(1)
\\
\leq & \int_{\badset\cap (\Omega \setminus \overline \sourcedisk_\eps)}|J(\projlambdak\circ\Psi_k)|dx+\frac{1}{n}+o_k(1),
\end{aligned}
\end{equation*}
where we have used \eqref{eq:mass_of_Xik_2} and \eqref{eq:same_inequality_for_the_mass} localized in the cylinder $C_l^\eps(1-\lambda_k')$.

\end{proof}

\begin{figure}
\begin{center}
    \includegraphics[width=0.85
    \textwidth]{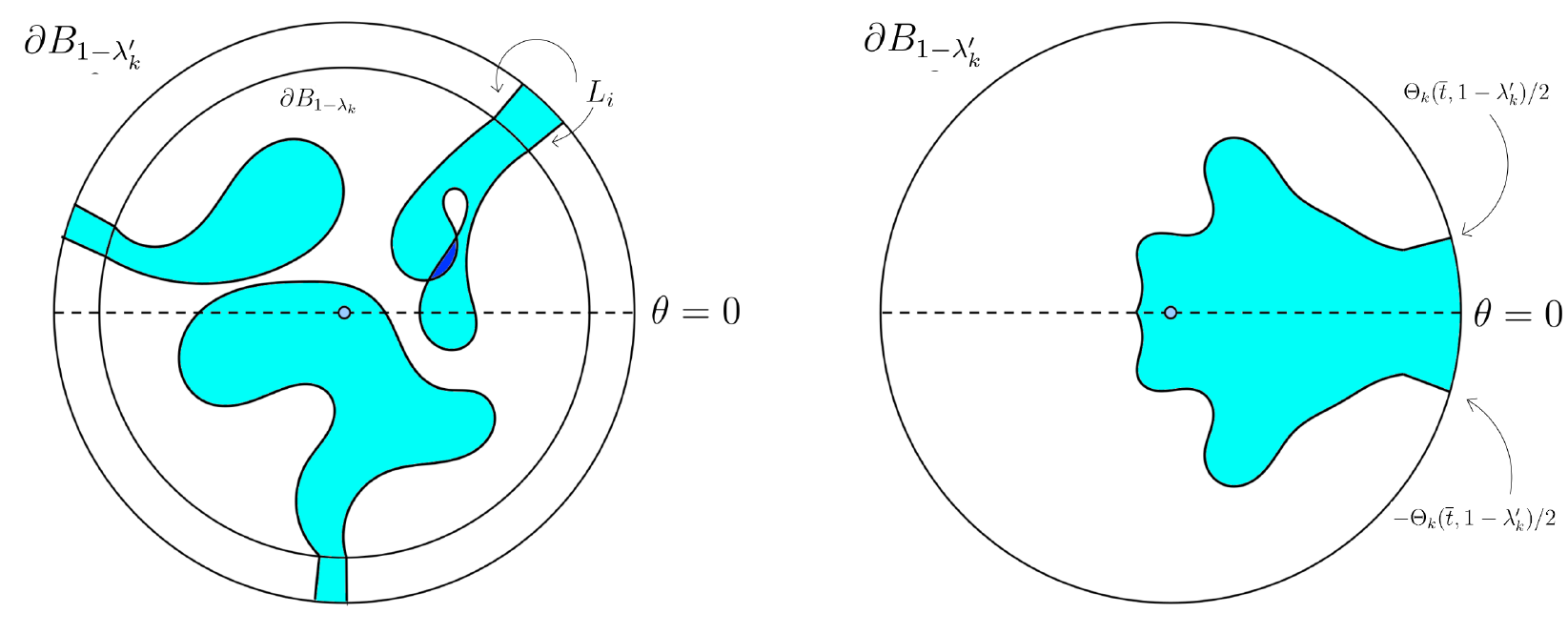}
 \caption{Intersection of the cylinder $C_l(1-\lambda_k)$ with $\{t=\overline t\}\times \R^2$. The 
symmetrization of a 
closed current in $B_{1-\lambda_k'}$, which on the left is emphasized in grey,
 and with dark grey the area in which the multiplicity of the current is $2$. 
The set is bounded by a generic curve with endpoints on  
$\partial B_{1-\lambda_k}$, in turn these endpoints have been 
joined with $\partial B_{1-\lambda_k'}$ by radial segments $L_i$. 
The area emphasized has been symmetrized with the respect to the 
radius $\{\theta=0\}$ in the right picture. In the  picture on the right, we have indicated the angles $\pm \Theta_k(\overline t, 1-\lambda_k')/2$.
}\label{fig3}
\end{center}
\end{figure}

\begin{cor}
We have
$$
\vert \jump{G_{u_k}}\vert_{\badset \cap (\Omega \setminus\overline \sourcedisk_\eps))\times \R^2} 
\geq 
\vert \currentgengraphFkJkzero\vert + \vert \currentgengraphminusFkJkzero \vert -\frac{1}{n} - o_k(1).
$$
\end{cor}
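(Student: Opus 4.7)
The plan is to obtain the corollary as an immediate combination of the area‐comparison inequality \eqref{eq:masses_localized} (which bounds the Jacobian integral of $\projlambdak\circ\Psi_k$ on $\badset\cap(\Omega\setminus\overline{\sourcedisk}_\eps)$ by the mass of $\currgraphk$ on the same region) with the estimate \eqref{eq:estimate_useful} proved in Proposition \ref{prop_Gf}.

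First I would recall that, since $\Pi_{\lambda_k}$ is $1$-Lipschitz and $\Psi_k=R\circ\Phi_k$ with $R$ also $1$-Lipschitz, the area formula yields
\[
\int_{\badset\cap(\Omega\setminus\overline{\sourcedisk}_\eps)}
|J(\projlambdak\circ\Psi_k)|\,dx
\;\leq\;
|\currgraphk|_{(\badset\cap(\Omega\setminus\overline{\sourcedisk}_\eps))\times\R^2},
\]
which is precisely \eqref{eq:masses_localized}. Then I would invoke \eqref{eq:estimate_useful}, namely
\[
|\currentgengraphFkJkzero|+|\currentgengraphminusFkJkzero|
\;\leq\;
\int_{\badset\cap(\Omega\setminus\overline{\sourcedisk}_\eps)}
|J(\projlambdak\circ\Psi_k)|\,dx+\tfrac{1}{n}+o_k(1).
\]
Chaining these two inequalities gives
\[
|\currentgengraphFkJkzero|+|\currentgengraphminusFkJkzero|
\;\leq\;
|\currgraphk|_{(\badset\cap(\Omega\setminus\overline{\sourcedisk}_\eps))\times\R^2}+\tfrac{1}{n}+o_k(1),
\]
and rearranging produces the claim.

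There is no significant obstacle: all the hard work (projection estimates, symmetrization, cancellation accounting, and the control of the vertical walls $W_k$ via $\lambda_k'$) is already encoded in Proposition \ref{prop_Gf}. The corollary is just the statement of \eqref{eq:estimate_useful} re-expressed in terms of the original graph current, using that the projection and symmetrization steps only decrease mass. I would keep the proof to at most a few lines, with explicit pointers to \eqref{eq:masses_localized} and \eqref{eq:estimate_useful}.
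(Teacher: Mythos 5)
Your proof is correct and coincides with the paper's argument: the corollary is obtained by chaining \eqref{eq:estimate_useful} with \eqref{eq:masses_localized} and rearranging, exactly as you do.
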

\begin{proof}
It follows from \eqref{eq:estimate_useful}
and \eqref{eq:masses_localized}.
\end{proof}
Now we restrict our attention to the 
rectifiable sets 
${\rm spt}(\pmcurrentgengraphFkJkzero)$,
the supports of the currents in \eqref{eq:def_generalizedgraphs_F}.
We recall that the function $\Fke$ might take values in $(0,\pi)$ 
only in the ``strip'' $\striptwo$, see Remark \ref{rmk:8.6} (v), 
and 
\begin{align*}
 \striptwo
\subset (\eps,l)\times [0,1]\times \{0\}\subset C_l.
\end{align*}
Now we add to 
$\currentgengraphFkJkzero$ 
a graph on some additional  set outside $\striptwo$, see Fig.\ref{fig_q3}.

\begin{definition}
We let
\begin{equation}
\label{strip_Jextension1}
\JQke  
:=  \{(t,\rho,0)\in C_l: t\in \Qke,\;\rho\in [\maxuk(t),1-\lambda_k']\}.
\end{equation}
\end{definition}

\begin{figure}
	\begin{center}
		\includegraphics[width=0.7\textwidth]{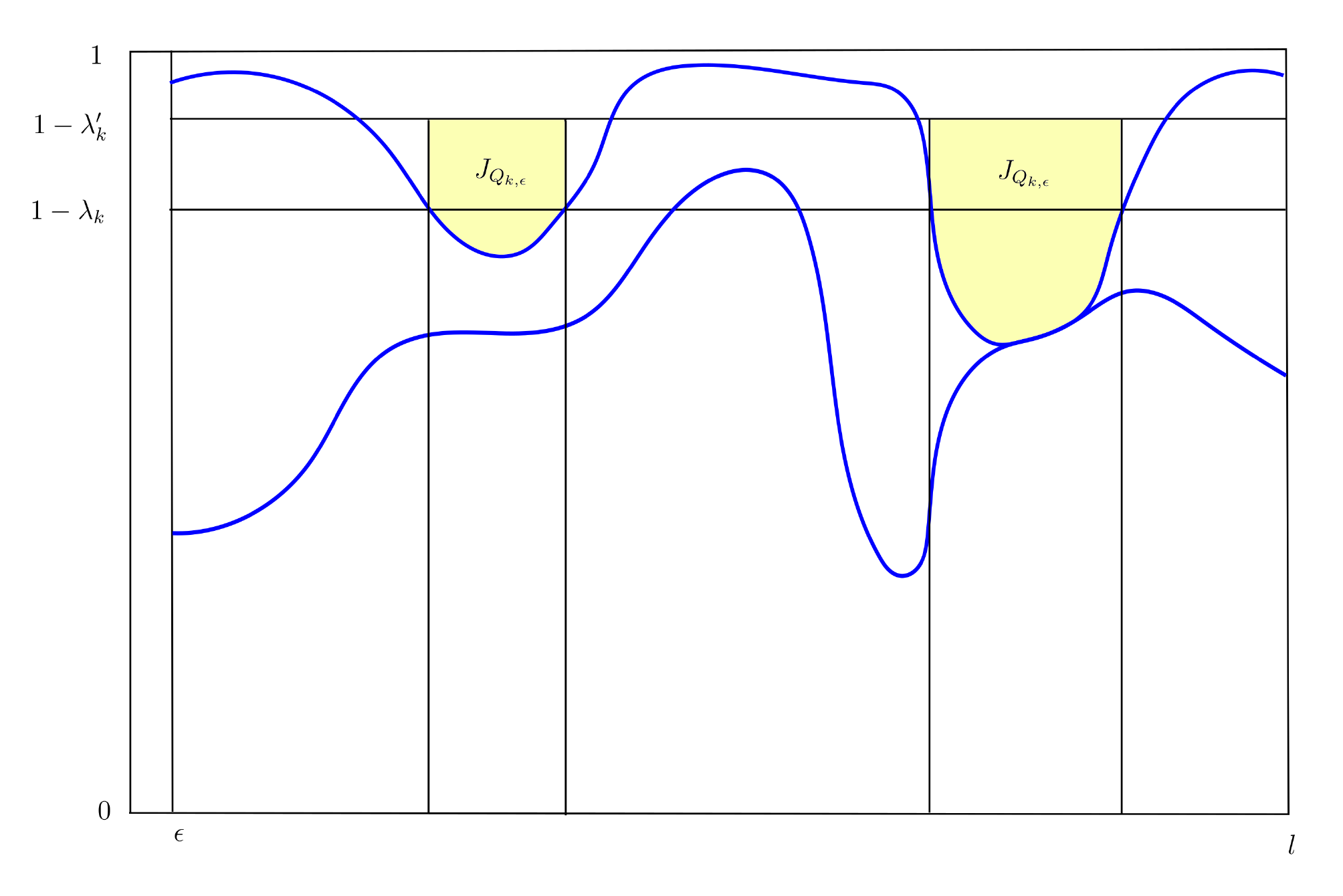}
		\caption{The graphs of the functions $\maxuk$ and $\minuk$ 
			and the set $\JQke$ in \eqref{strip_Jextension1}.
See also Fig. \ref{fig_q}.
		}
		\label{fig_q3}
	\end{center}
\end{figure}

By definition of $\Qke$ in \eqref{Q_k}, we have that for $\mathcal H^2$-a.e. $(t,\rho,0)\in \JQke $ it holds     $(\pi_0^{\rm pol})^{-1}((t,\rho,0))
\cap {\rm spt}(\mathbb S(\piPsiDkXik))=\emptyset$, so that $\Fke\in \{0,\pi\}$ on $\JQke$. Recalling \eqref{crucial_identification} and \eqref{eq:defX_k}, it is not difficult to see that $(\pi_0^{\rm pol})^{-1}(\JQke)\subseteq
\mathbb S(E_k)$. Hence 
$$
\Fke=\pi \qquad {\rm in}~ \JQke
$$
 (see also Remark \ref{remark_Qke}). 

Now, we want to add to the currents 
$\pmcurrentgengraphFkJkzero$ in \eqref{def:the_currents_jump_gengraphplusminusFkJkzero} a new part 
above a region that becomes, in Section
\ref{sec:lower_bound}, 
the subgraph of the function $h$.
\begin{definition}[\textbf{The currents $\GFkethree$ and $\GminusFkethree$}] \label{Def: The current g3}
We define 
\begin{align*}
\GFkethree
:=& 
\currentgengraphFkJkzero
+
\jump{G^{{\rm pol}}_{\Fke\res \JQke }}
\in \mathcal D_2(C^\eps_l(1-\lambda_k')),
\\
\GminusFkethree
:=& 
\currentgengraphminusFkJkzero+\jump{G^{{\rm pol}}_{- \Fke\res \JQke }}\in \mathcal D_2(C^\eps_l(1-\lambda_k')).
\end{align*}
\end{definition}

\begin{lemma}\label{lemma10.3}
The following assertions hold:
\begin{itemize}
\item[(i)] 
\begin{align}\label{123}
 |\GFkethree|=
|\currentgengraphFkJkzero|
+\mathcal H^2(\JQke);
\end{align}
\item[(ii)]
\begin{equation}\label{124}
 \mathcal H^2(\JQke)\leq |\Qke|\leq \frac{1}{2\pi\eps n};
\end{equation}

\item[(iii)]
\begin{equation}\label{eq:G_three_plus_G_minus_three}
 \GFkethree+
\GminusFkethree=\partial\jump{ \mathbb S(E_k)}
{\res C_l^\eps(1-\lambda_k')}.
\end{equation}
\end{itemize}
\end{lemma}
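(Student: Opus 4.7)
The plan is to treat the three statements separately, exploiting the key fact—recorded just before Definition~\ref{Def: The current g3}—that $\Fke\equiv\pi$ on $\JQke$, so that both $G^{\rm pol}_{\Fke\res\JQke}$ and $G^{\rm pol}_{-\Fke\res\JQke}$ are flat $2$-surfaces sitting on the seam $\{\theta=\pi\}=\{\theta=-\pi\}$ of the cylinder, with coincident supports.

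For \eqref{123}, I would first check that the supports of $\currentgengraphFkJkzero$ and $\jump{G^{\rm pol}_{\Fke\res\JQke}}$ meet only in an $\mathcal{H}^2$-negligible set. Indeed the first is contained in $(\pi_0^{\rm pol})^{-1}(\striptwo)$, while the second is $\{(t,\rho,\pi):(t,\rho,0)\in\JQke\}$, and the projections $\striptwo$ and $\JQke$ intersect only along the curve $\rho=\maxuk(t)$: the piece $((\eps,l)\setminus\Qke)\times[1-\lambda_k,1-\lambda_k']\times\{0\}$ of $\striptwo$ avoids $\JQke$ in the $t$-coordinate, while on $\Qke$ the remaining piece $\pi_0^{\rm pol}(\projlambdak\circ\Psi_k(\Omega\setminus\overline{\sourcedisk}_\eps))$ has radial extent contained in $[\minuk(t),\maxuk(t)]$, which meets $[\maxuk(t),1-\lambda_k']$ only at $\rho=\maxuk(t)$. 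Consequently the two masses add. Since $\Fke$ is constant on $\JQke$, the surface $G^{\rm pol}_{\Fke\res\JQke}$ is parametrized by the isometric embedding $(t,\rho)\mapsto(t,-\rho,0)$, whose induced area element is $dt\,d\rho$; hence $|\jump{G^{\rm pol}_{\Fke\res\JQke}}|=\mathcal{H}^2(\JQke)$, giving \eqref{123}.

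For \eqref{124}, by Fubini $\mathcal{H}^2(\JQke)=\int_{\Qke}(1-\lambda_k'-\maxuk(t))\,dt\le|\Qke|$ since the integrand is bounded by $1$, and the quantitative estimate then follows directly from Lemma~\ref{lem:estimate_of_Q_k}.

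For \eqref{eq:G_three_plus_G_minus_three}, combining Proposition~\ref{prop_Gf}\eqref{eq:punto1} with the identification $\mathbb{S}(\piPsiDkXik)=\partial\jump{\mathbb{S}(E_k)}\res C_\longR(1-\lambda_k')$ from \eqref{symmetrization_of_Sk} reduces the claim to showing $\jump{G^{\rm pol}_{\Fke\res\JQke}}+\jump{G^{\rm pol}_{-\Fke\res\JQke}}=0$. Since the two $2$-currents carry the same rectifiable support, it suffices to verify that their orientations are opposite: this is exactly the cancellation mechanism already invoked in the proof of \eqref{formulaii}. Namely, $\jump{G^{\rm pol}_{\Fke\res\JQke}}$ inherits the outward normal $+\partial_\theta$ of the polar subgraph $SG^{\rm pol}_{\Fke}$ at its top boundary $\theta=\pi$, whereas $\jump{G^{\rm pol}_{-\Fke\res\JQke}}$ inherits the outward normal $-\partial_\theta$ of the polar epigraph $UG^{\rm pol}_{-\Fke}$ at its bottom boundary $\theta=-\pi$; upon identifying $\theta=\pi$ with $\theta=-\pi$ on the cylinder, these two normals point to opposite sides of the common flat surface, yielding the required cancellation. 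The only genuinely delicate step in the whole lemma is this last orientation check on the periodic seam of $C_\longR$, which mirrors the corresponding verification already made inside the proof of Proposition~\ref{prop_Gf}.
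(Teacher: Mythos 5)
Your proposal is correct and follows essentially the same route as the paper: mass additivity for (i) via (essentially) disjoint supports together with the isometric identification of $G^{\rm pol}_{\Fke\res\JQke}$ with $\JQke$, Fubini plus Lemma~\ref{lem:estimate_of_Q_k} for (ii), and cancellation of $\jump{G^{\rm pol}_{\pm\Fke\res\JQke}}$ by opposite orientations for (iii). The only difference is cosmetic: you spell out why the two supports in (i) meet only along the curve $\rho=\maxuk(t)$ over $\Qke$, and you make the orientation bookkeeping at the seam $\theta=\pi\equiv-\pi$ explicit, both of which the paper leaves implicit by just invoking disjointness and the mechanism already used in Proposition~\ref{prop_Gf}.
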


\begin{proof} 
(i) follows from the fact that $\currentgengraphFkJkzero$ and $\jump{G_{\Fke\res \JQke}}$ have disjoint supports, and $|\jump{G_{\Fke\res \JQke}}|=\mathcal H^2(\JQke)$.
(ii) follows from
\begin{equation*}
\mathcal H^2(\JQke)=\int_{\Qke}(1
-\lambda_k'-\maxuk(t))~dt\leq |\Qke|\leq \frac{1}{2\pi\eps n},
\end{equation*}
 where the last inequality is a consequence of Lemma \ref{lem:estimate_of_Q_k}.
(iii) follows as in  Proposition \ref{prop_Gf} using the fact that $\jump{G_{\Fke\res \JQke}}$ and $\jump{G_{- \Fke\res \JQke}}$ have opposite orientation. 
\end{proof}
Current $\GFkethree+
\GminusFkethree$ is closed  in $C_l^\eps(1-\lambda_k')$. 
We can  look at its boundary 
as a current in $\mathcal D_2((\eps,l)\times\R^2)$, 
which stands on the lateral boundary of the cylinder $C_l^\eps(1-\lambda_k')$. To this aim we study the trace of $\Fke$ (that is $\Theta_k(t,\rho)/2$) on the segment  
\begin{equation}\label{eq:L_k}
(\eps, l) \times \{1-\lambda_k'\} \times \{0\}.
\end{equation}
Observe that by definition 
$$
\Fke=\pi \quad {\rm on}~
\Qke\times \{1-\lambda_k'\}\times \{0\}\subseteq (\eps, l) \times \{1-\lambda_k'\} \times \{0\},
$$
whereas on $((\eps, l)\setminus \Qke) \times \{1-\lambda_k'\} \times \{0\}$
we have 
$$\Fke(t,1-\lambda_k',0)=\Theta_k(t,1-\lambda_k')/2=\Theta_k(t,\rho)/2,
\qquad t\in(\eps,l)\setminus \Qke,$$
for all $\rho\in (1-\lambda_k,1-\lambda_k')$.
\begin{definition}[\textbf{The $2$-rectifiable set $\Sigmake$}]
\label{def:Sigmake}
We let 
\begin{equation}\label{eq:tr_p_osp}
\Sigmake:= \Big\{
(t,\rho,\theta):t\in (\eps,l),\;\rho=1-\lambda_k',\;\theta\in (-\Theta_k(t,1-\lambda_k')/2,\Theta_k(t,1-\lambda_k')/2)\Big\}.
\end{equation}
\end{definition}
Referring to the right picture in Figure \ref{fig3}, 
the section of $\Sigma_k^\eps$ is the short arc 
connecting the points 
$(\overline t, 1-\lambda_k', - \Theta_k(\overline t, 1-\lambda_k')/2)$ 
and  
$(\overline t, 1-\lambda_k',  \Theta_k(\overline t, 1-\lambda_k')/2)$;
see also Fig. \ref{fig:tricilindro}.

If we denote by $\jump{\Sigmake}$ the current given by 
integration over $\Sigmake$ (suitably oriented), its boundary  
coincides with the boundary of $\GFkethree+
\GminusFkethree$ on $\partial_{{\rm lat}} C_l^\eps(1-\lambda_k')$. 

\begin{lemma}[\textbf{Properties of $\Sigmake$}]
\label{lem:properties_of_Sigmake}
$\Sigmake$,  
oriented by the outward unit normal to the lateral boundary 
of $C_l(1-\lambda_k')$, is such that
\begin{align*}
 \GFkethree+
\GminusFkethree+\jump{\Sigmake}\in \mathcal D_2((\eps,l)\times\R^2)
\qquad {\rm is ~boundaryless}.
\end{align*}
Moreover
\begin{align}\label{mass_Sigma}
\mathcal H^2(\Sigmake)\leq \frac{1}{\eps n}+o_k(1),
\end{align}
where the sequence $o_k(1)\geq0$ depends on  $n$ and $\eps$, and is 
infinitesimal as $k \to +\infty$.
Finally  
\begin{align}\label{orientation_ofsmallarc}
(\partial\jump{\Sigmake})\res(\{\eps\}\times \R^2)=
\jump{
\{\eps\}\times \{1-\lambda_k' \}
\times \left[
\frac{-\Theta_k(\eps,1-\lambda_k')}{2},\frac{\Theta_k(\eps,1-\lambda_k')}{2}\right]},
\end{align}
oriented counterclockwise\footnote{Looking at the plane
$\{\eps\}\times \R^2$ from the side $t > \eps$.}.
\end{lemma}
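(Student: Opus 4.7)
The plan is to identify $\Sigmake$ as the part of the reduced boundary of $\mathbb S(E_k)$ that sits on the lateral wall $\{\rho = 1-\lambda_k'\}$, and then apply Lemma \ref{lemma_theta} directly. View $\mathbb S(E_k) \subset C_l(1-\lambda_k')$ as a finite-perimeter set in the larger open set $U := (\eps,l) \times \R^2$ (extended by zero outside). Its reduced boundary intersected with $U$ splits into two disjoint rectifiable pieces: the part contained in the open cylinder $C_l^\eps(1-\lambda_k')$, which by \eqref{eq:G_three_plus_G_minus_three} carries the current $\GFkethree + \GminusFkethree$; and the part on the cylindrical wall $\{\rho = 1-\lambda_k'\}$. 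Because for every $\rho \in (1-\lambda_k, 1-\lambda_k')$ the slices $\mathbb S(E_k)\cap (\{t\}\times \partial B_\rho)$ are radially invariant (inherited from the structure of $\mathcal X_k$ in the shell $C_l(1-\lambda_k')\setminus \overline C_l(1-\lambda_k)$), the trace on $\{\rho=1-\lambda_k'\}$ is exactly the set defined in \eqref{eq:tr_p_osp}, that is, $\Sigmake$. At such points the outward normal to $\mathbb S(E_k)$ coincides with the outward normal $e_\rho$ to $C_l(1-\lambda_k')$, so the orientation convention in the statement matches, and we obtain the identity $\partial \jump{\mathbb S(E_k)} = \GFkethree + \GminusFkethree + \jump{\Sigmake}$ in $\mathcal D_2(U)$. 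The closedness of this sum is then immediate from $\partial \circ \partial = 0$.

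For the mass estimate \eqref{mass_Sigma}, $\Sigmake$ is an angular sector of the cylinder $\{\rho = 1-\lambda_k'\}$ with aperture $\Theta_k(t,1-\lambda_k')$ at each height $t$, so
\begin{equation*}
\mathcal H^2(\Sigmake) = (1-\lambda_k') \int_\eps^l \Theta_k(t,1-\lambda_k')\, dt.
\end{equation*}
By the radial invariance just recalled, $\Theta_k(t,\rho) = \Theta_k(t,1-\lambda_k')$ for all $\rho \in (1-\lambda_k,1-\lambda_k')$, and thus Lemma \ref{lemma_theta} gives
\begin{equation*}
\int_\eps^l \Theta_k(t,1-\lambda_k')\, dt \leq \frac{1}{\eps n} + o_k(1),
\end{equation*}
which, multiplied by the factor $(1-\lambda_k') < 1$, yields \eqref{mass_Sigma}.

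For \eqref{orientation_ofsmallarc}, the orientation of $\Sigmake$ by the outward unit normal $e_\rho$ corresponds to the tangent bivector $e_\theta \wedge e_t$ (since $e_\rho \wedge e_\theta \wedge e_t = e_t \wedge e_\rho \wedge e_\theta$ is the standard positive orientation of $\R^3$ in cylindrical coordinates). At the face $\{t=\eps\}$ the outward conormal to $(\eps,l)$ is $-e_t$, and Stokes' convention $(-e_t) \wedge \tau_\partial = e_\theta \wedge e_t$ forces $\tau_\partial = e_\theta$. Hence $(\partial \jump{\Sigmake})\res(\{\eps\}\times \R^2)$ is the integration on the circular arc in \eqref{orientation_ofsmallarc}, oriented in the sense of increasing $\theta$, which is the counterclockwise orientation viewed from $t > \eps$.

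I expect the only mildly delicate point to be the identification of the lateral trace of $\partial \jump{\mathbb S(E_k)}$ with $\jump{\Sigmake}$ (with the right multiplicity and orientation). Because the radial symmetry of the slices across the shell $(1-\lambda_k, 1-\lambda_k')$ is built into the construction, this trace is a clean cylindrical ribbon rather than a general rectifiable set, and the bookkeeping of orientations reduces to the elementary computation in $\R^3$ indicated above.
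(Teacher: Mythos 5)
Your proof is correct and follows the same route as the paper: identify $\GFkethree+\GminusFkethree+\jump{\Sigmake}$ with $\partial\jump{\mathbb S(E_k)}$ restricted to $(\eps,l)\times\R^2$ (which makes the closedness immediate), compute $\mathcal H^2(\Sigmake)$ as the angular-sector integral and invoke Lemma \ref{lemma_theta}, and read off the orientation of $\jump{\Sigmake}$ from the outward normal of $\mathbb S(E_k)$. The only difference is that you carry out the Stokes bookkeeping for \eqref{orientation_ofsmallarc} explicitly via wedge products, whereas the paper only states the outcome; that extra step is a welcome clarification but not a departure in method.
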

\begin{proof} 
The fact that the current $\GFkethree+
\GminusFkethree+\jump{\Sigmake}$ is boundaryless in $\mathcal D_2((\eps,l)\times\R^2)$ is a consequence of the fact that $\Sigma_{k,\eps}$ is 
a subset of the polar subgraph of the trace of $\Fke$ on 
$(\eps,l)\times \{1-\lambda_k'\} \times \{0\}$. Concerning
  \eqref{mass_Sigma} {we have 
\begin{equation}
\mathcal H^2(\Sigmake)= \int_{\eps}^l\int_{-\Theta_k(t,1-\lambda_k')/2}^{\Theta_k(t,1-\lambda_k')/2}(1-\lambda_k')d\theta dt\leq \frac{1}{\eps n}+o_k(1),
\end{equation}
where the last inequality follows from Lemma \ref{lemma_theta}}. As for the last assertion, we have to understand which is the orientation of $\jump{\Sigmake}$, which has been 
chosen in such a way that  $\GFkethree+
\GminusFkethree+\jump{\Sigmake}=
(\partial \jump{\mathbb S(E_k)}) \res ((\eps,l) \times [0,1-\lambda_k'] \times 
\{\theta \in (-\pi,\pi]\})$.
Hence, since $\mathbb S(E_k)$ is contained in $C_l(1-\lambda_k')$, 
the orientation of $\jump{\Sigmake}$ is the one inherited by the external 
normal to $\partial \jump{\mathbb S(E_k)}$, namely the outward
unit normal to the lateral
boundary of $C_l(1-\lambda_k')$.
\end{proof}
\section{Estimate from below of the mass of $\currgraphk$ 
over $\badset\cap \sourcedisk_\eps$}
\label{subsec:symmetrization_of_the_image_of_D_k_cap_B_eps}

We now analyse the image of $\badset\cap \sourcedisk_\eps$ through $\Psi_k$.
We want to reduce this set to a current $\mathcal V_k\in \mathcal D_2(\{\eps\}\times \R^2)$ (defined in \eqref{eq:def_Vk}), in order that it contains the necessary information on the area of $\Psi_k(\badset\cap \sourcedisk_\eps)$. To this aim we need first to describe the boundary of $\mathcal V_k$ and then show that 
its mass gives a lower bound for the area of the graph
of $u_k$ (see formula \eqref{estimate_of_Vk}).

Borrowing the notation from the proof of Lemma \ref{lemma_theta}, 
the set $\partial \sourcedisk_\eps$ 
is splitted as:
\begin{equation}
\label{eq:H_k_eps}
 \partial \sourcedisk_\eps=(\badset\cap \partial \sourcedisk_\eps)\cup ((\Omega\setminus \badset)\cap \partial \sourcedisk_\eps)=:H_{k,\eps}\cup H_{k,\eps}^c.
\end{equation}
We denote by
\begin{align}\label{points_xi}
 \{x_i\}_{i=1}^{I_k}\subseteq 
\{\widehat x_i\}_{i=1}^{J_k}:= \partial \sourcedisk_\eps\cap \partial \badset,
\end{align}
the finite family of points 
(see Lemma \ref{lem:choice_of_u_k_and_t_k} (v)) which 
represents the relative boundary of $H_{k,\eps}$ in 
$\partial \sourcedisk_\eps$.
Recall that $\{\widehat x_i\}_{i=1}^{J_k}$ is finite 
as well by Lemma \ref{lem:choice_of_u_k_and_t_k} (iv). For notational simplicity,
we skip the dependence on $\epsilon$. 

Recalling the definition of $W_k$ in \eqref{eq:W_k},
the following crucial lemma states that 
$(\projlambdak\circ\Psi_k(\badset))\cup W_k$ does not intersect the plane $\{\eps\}\times \R^2$ 
in a set of positive $\mathcal H^2$-measure. 

\begin{lemma}\label{lem:Z_k_verticalpart}
 The rectifiable set $(\projlambdak\circ\Psi_k(\badset))\cup {W_k}$ 
satisfies
 $$\mathcal H^2\Big(\big(\projlambdak\circ\Psi_k(\badset)\cup W_k\big)\cap \{t=\eps\}\Big)=0.$$
\end{lemma}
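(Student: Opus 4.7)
The plan is to exploit the fact that the three maps $\Psi_k$, $\projlambdak$ and $\tau$ all preserve the axial coordinate $t$, so that slicing at $\{t=\eps\}$ pulls back, in the source, to the circle $\partial\sourcedisk_\eps$. Indeed, by \eqref{eq:Psi_k} the first coordinate of $\Psi_k(x)$ is $|x|$, and since $\Psi_k(\badset)\subset[0,\longR]\times\overline B_1\subset[-1,l]\times\overline B_1$, the orthogonal projection $\projlambdak$ onto the convex cylinder $\overline C_\longR(1-\lambda_k)=[-1,l]\times\overline B_{1-\lambda_k}$ acts trivially on the $t$-coordinate on $\Psi_k(\badset)$; similarly, $\tau(\rho,t,y)=(t,\rho y/|y|)$ and hence $\widetilde\tau$ fix $t$. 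Consequently the slices can be written as
\begin{align*}
 \big(\projlambdak\circ\Psi_k(\badset)\big)\cap\{t=\eps\}
&=\projlambdak\big(\{\eps\}\times u_k(\badset\cap\partial\sourcedisk_\eps)\big),\\
 W_k\cap\{t=\eps\}
&=\widetilde\tau\big([1-\lambda_k,1-\lambda'_k]\times(\partial\badset\cap\partial\sourcedisk_\eps)\big),
\end{align*}
where in the second identity I have used \eqref{eq:W_k}.

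For the first piece, Lemma \ref{lem:choice_of_u_k_and_t_k}(v) gives $\mathcal H^1(\badset\cap\partial\sourcedisk_\eps)\leq 1/n$; being a subset of the smooth circle $\partial\sourcedisk_\eps$, this set is $1$-rectifiable, and applying the Lipschitz maps $u_k$ and $\projlambdak$ produces a $1$-rectifiable subset of $\{\eps\}\times\overline B_1\subset\R^3$, which is therefore $\mathcal H^2$-negligible. For the second piece, Lemma \ref{lem:choice_of_u_k_and_t_k}(iv) (see also \eqref{points_xi}) ensures that $\partial\badset\cap\partial\sourcedisk_\eps$ is a \emph{finite} set of points $\{\widehat x_1,\dots,\widehat x_{J_k}\}$; hence $W_k\cap\{t=\eps\}$ is the finite union of the $J_k$ radial segments $\widetilde\tau([1-\lambda_k,1-\lambda'_k]\times\{\widehat x_i\})$ inside the disc $\{\eps\}\times\overline B_1$, each of length at most $\lambda_k-\lambda'_k$, so again $\mathcal H^2$-negligible. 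Combining the two bounds proves the claim.

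There is no real obstacle here: the argument is purely a consequence of the coordinate-preserving structure of the three maps combined with the sharp dimensional information on $\badset\cap\partial\sourcedisk_\eps$ and $\partial\badset\cap\partial\sourcedisk_\eps$ supplied by Lemma \ref{lem:choice_of_u_k_and_t_k}. The only mildly delicate check is precisely the one highlighted at the outset, namely that $\projlambdak$ does not alter the first coordinate on $\Psi_k(\badset)$; this is where we crucially use that the cylinder $C_\longR$ in \eqref{eq:portion_of_cylinder} was defined with axial interval $(-1,l)$ strictly larger than the range $[0,l]$ of $|x|$, so that no $t$-projection ever occurs.
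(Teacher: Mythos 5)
Your proof is correct and rests on the same two ingredients as the paper's: the maps $\Psi_k$, $\projlambdak$, $\widetilde\tau$ all preserve the axial coordinate $t$, and Lemma \ref{lem:choice_of_u_k_and_t_k} gives the dimensional control of $\badset\cap\partial\sourcedisk_\eps$ (a subset of a circle, hence $\mathcal H^2$-null) and of $\partial\badset\cap\partial\sourcedisk_\eps$ (a finite set). The paper reaches the same conclusion by contradiction, pulling a hypothetical positive-measure slice back through the Lipschitz maps to $\partial\sourcedisk_\eps$, whereas you write out the slices explicitly; the difference is purely one of presentation.
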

\begin{proof}
 It is sufficient to show that $\mathcal H^2((\projlambdak\circ\Psi_k(\badset))
\cap \{t=\eps\})=0$ and $\mathcal H^2({W_k}\cap \{t=\eps\})=0$. To show the 
first equality, suppose
$\mathcal H^2(\projlambdak\circ\Psi_k(\badset)\cap \{t=\eps\})>0$. 
Since ${\rm Lip}(\projlambdak)=1$ 
and $\projlambdak$ takes on the plane $\{t=\eps\}$ into itself, we have
$\mathcal H^2(\Psi_k(\badset)\cap \{t=\eps\})>0$. Again, 
being $\Psi_k$ Lipschitz continuous, we deduce that
 $\Psi_k^{-1}(\Psi_k(\badset)\cap \{t=\eps\})$ has positive measure. 
But $\Psi_k^{-1}(\Psi_k(\badset)\cap \{t=\eps\})\subset \Psi_k^{-1}(\{t=\eps\})
=\partial \sourcedisk_\eps$ which has obviously $\mathcal H^2$ null measure.
 
 Let us prove that $\mathcal H^2({W_k}\cap \{t=\eps\})=0$. 
Recalling (see \eqref{eq:W_k}) 
that $W_k=\tau([1-\lambda_k,1-\lambda_k']\times \gamma_k)$ 
with $\gamma_k:=\projlambdak\circ\Psi_k(\partial \badset)$,
and since $\tau(\cdot,z)$ in \eqref{eq:tau}
does not change the axial coordinate 
of $z$, we see\footnote{For instance, using the coarea formula.} that $\tau([1-\lambda_k,1-\lambda_k']\times \gamma_k)
\cap \{t=\eps\}$ has positive $\mathcal H^2$ measure only 
if $\gamma_k\cap\{t=\eps\}$ has positive $\mathcal H^1$ measure.
Again, since also $\projlambdak$ does not change the axial coordinate,  
as before this happens only if $\Psi_k^{-1}  (\widehat \gamma_k\cap\{t=\eps\})$ has positive $\mathcal H^1$-measure, where $\widehat\gamma_k:=\Psi_k(\partial \badset)$;  by Lemma \ref{lem:choice_of_u_k_and_t_k},  this is not possible, since 
we know that $\widehat \gamma_k\cap\{t=\eps\}=
\{ \Psi_k(\widehat x_i)\}$ (see \eqref{points_xi}), and then $\Psi_k^{-1}(\widehat \gamma_k\cap\{t=\eps\})= \{\widehat x_i\}$ which is a finite set. 
 \end{proof}

  We recall from \eqref{eq:S_hat_k} and \eqref{eq:S_k} that  
\begin{align}\label{def_sk}
\piPsiDkXik=(\projlambdak\circ\Psi_k)_\sharp\jump{\badset}+ \Xik.
\end{align}
An immediate consequence of Lemma \ref{lem:Z_k_verticalpart},
formula \eqref{def_sk}, and the fact that $\piPsiDkXik$
is boundaryless in $C_l(1-\lambda_k')$, is the following:
 
 \begin{cor}\label{cor:slice_eps}
We have $\piPsiDkXik\res \{t=\eps\}=0$. In particular 
\begin{align*}
 \partial (\piPsiDkXik\res \{\eps<t<l\})\res(\{t=\eps\})=-\partial (\piPsiDkXik\res \{-1<t<\eps\})\res(\{t=\eps\}) \qquad \text{ in }C_l(1-\lambda_k').
\end{align*}
 \end{cor}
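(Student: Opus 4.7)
The plan is to derive both assertions directly from Lemma \ref{lem:Z_k_verticalpart} together with the boundarylessness of $\piPsiDkXik$ in $C_l(1-\lambda_k')$ established in Corollary \ref{boundaryat0_first}; the bulk of the work already resides in Lemma \ref{lem:Z_k_verticalpart}, so what follows is essentially a bookkeeping argument.

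First I would establish the identity $\piPsiDkXik \res \{t=\eps\} = 0$. Recall from \eqref{eq:S_hat_k}, \eqref{eq:def_Xi_k}, and \eqref{eq:S_k} that
\begin{equation*}
\piPsiDkXik = (\projlambdak\circ\Psi_k)_\sharp\jump{\badset} + \widetilde\tau_\sharp\jump{[1-\lambda_k,1-\lambda_k']\times\partial\badset},
\end{equation*}
so its support is contained in the rectifiable set $\projlambdak\circ\Psi_k(\badset)\cup W_k$. By Lemma \ref{lem:Z_k_verticalpart}, the intersection of this set with the plane $\{t=\eps\}$ has $\mathcal H^2$-measure zero. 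Since $\piPsiDkXik$ is a rectifiable $2$-current, its restriction to any $\mathcal H^2$-null Borel set vanishes identically as a current; this yields the first claim.

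Next I would prove the boundary identity. By Corollary \ref{boundaryat0_first}, $\piPsiDkXik$ is boundaryless in $C_l(1-\lambda_k')$. The first claim allows the clean splitting
\begin{equation*}
\piPsiDkXik = \piPsiDkXik\res\{-1<t<\eps\} + \piPsiDkXik\res\{\eps<t<l\}
\end{equation*}
(no extra mass is concentrated on $\{t=\eps\}$). Taking distributional boundaries inside $C_l(1-\lambda_k')$ gives
\begin{equation*}
0 = \partial\bigl(\piPsiDkXik\res\{-1<t<\eps\}\bigr) + \partial\bigl(\piPsiDkXik\res\{\eps<t<l\}\bigr),
\end{equation*}
and restricting this equality of $1$-currents to $\{t=\eps\}$ produces exactly the asserted relation.

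The one point deserving care is the legitimacy of restricting $\partial\bigl(\piPsiDkXik\res\{\eps<t<l\}\bigr)$ to the plane $\{t=\eps\}$, since in general $\partial(T\res\{t<c\}) = (\partial T)\res\{t<c\} - \langle T, t, c\rangle$ involves the slice of $T$ at $c$. Here, however, $\partial\piPsiDkXik = 0$ in $C_l(1-\lambda_k')$ and the slice $\langle \piPsiDkXik, t, \eps\rangle$ is a $1$-current supported on $\{t=\eps\}\cap\mathrm{spt}(\piPsiDkXik)$, which by Lemma \ref{lem:Z_k_verticalpart} has negligible $\mathcal H^1$-trace in the sense that the two boundary pieces must cancel there; this is precisely what the equation above records, and justifies the claimed identity after restriction to $\{t=\eps\}$.
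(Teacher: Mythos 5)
Your proof is correct and follows essentially the same route as the paper, which cites precisely Lemma \ref{lem:Z_k_verticalpart}, formula \eqref{def_sk}, and the boundarylessness in Corollary \ref{boundaryat0_first} as the ingredients: rectifiability plus $\mathcal H^2$-nullity of the support on $\{t=\eps\}$ gives the vanishing restriction, and then the split $\piPsiDkXik = \piPsiDkXik\res\{t<\eps\} + \piPsiDkXik\res\{t>\eps\}$ together with $\partial\piPsiDkXik=0$ yields the boundary identity. One small caveat on your final paragraph: there is actually nothing to worry about in restricting a current to the Borel set $\{t=\eps\}$ (that operation is always defined), and the slicing formula you invoke is not needed once the direct split is available; moreover Lemma \ref{lem:Z_k_verticalpart} gives $\mathcal H^2$-negligibility of $(\projlambdak\circ\Psi_k(\badset))\cup W_k$ on the plane, not $\mathcal H^1$-negligibility of the slice current, which may well be a nontrivial $1$-current (this is exactly what Section~\ref{subsec:symmetrization_of_the_image_of_D_k_cap_B_eps} goes on to analyse). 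This loose remark does not affect the validity of the argument, since the first two paragraphs already give a complete and correct proof.
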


 If $\{E_{k,i}\}_{i\in\mathbb N}$, $E_{k,i}\in C_l$ are the sets which we 
have symmetrized (see \eqref{eq:dec_Ek}), $\mathbb S(E_k)$ is the symmetrized set, and $\mathbb S(\piPsiDkXik)$ is the symmetrized current, 
we have to understand the behaviour of $\mathbb S(\piPsiDkXik)$ on $\{\eps\}\times \R^2$. 
 We have observed that 
$\piPsiDkXik\res (\{\eps\}\times\R^2)=0$ 
because $\mathcal H^2\left((\projlambdak\circ\Psi_k(\badset)\cup W_k)
\cap \{\eps\}\times\R^2\right)=0$. 
The same holds for the symmetrized current, as a particular consequence
of Lemma \ref{lem:sections_of_symm_current}:
 \begin{align*}
\mathbb S(\piPsiDkXik)\res (\{\eps\}\times\R^2)=0.  
 \end{align*}
 
 \subsection{Description of the boundary of the 
current  $\mathbb S(\piPsiDkXik)\res ((-1,\eps)\times B_{1-\lambda_k'})$}
Our first aim is to describe the boundary of  $\mathbb S(\piPsiDkXik)$ 
on $\{\eps\}\times \R^2$ (Corollary \ref{cor_9.5}). 
To do so, let us recall that $\projlambdak$ is given
 in Definition \ref{def:the_projection_pi_k} and that 
the points $x_i$ are defined in  \eqref{points_xi}.

\begin{definition}[\textbf{The current $\scriptHkeps$}]
Recalling \eqref{eq:H_k_eps}, we set
\begin{align}\label{eqn:scriptHkeps}
\scriptHkeps
:=
(\projlambdak\circ\Psi_k)_\sharp\jump{H_{k,\eps}}\in \mathcal D_1(\{\eps\}\times B_{1}),
\end{align}
where $H_{k,\eps}$ is oriented counterclockwise.
\end{definition}

Let us denote by $\{\widetilde x_i\}\subseteq\{x_i\}$ the points which represent the support of the current $\partial \jump{H_{k,\eps}}$.
We can consider 
the orthogonal projection\footnote{Defined  
at least in the region $C_l(1-\lambda_k')\setminus C_l(1-\lambda_k)$.} onto 
the lateral boundary of 
$C_l(1-\lambda_k')$,
and we denote by $L_{k,i}$ the segment connecting $\projlambdak( \Psi_k(\widetilde x_i))$ 
(which belongs to the lateral boundary of 
$C_l(1-\lambda_k)$)
to 
the image point of $ \Psi_k(\widetilde x_i)$ through this projection.

We consider the $1$-integral current in  $\{\eps\}\times B_{1-\lambda_k'}$ given by
\begin{align}\label{eq:def_S_eps_k}
  \scriptHkeps
+\sum_i\jump{L_{k,i}}\in \mathcal D_1(\{\eps\}\times B_{1-\lambda_k'}), 
\end{align}
where $\jump{L_{k,i}}$ are the integrations over the segments $L_{k,i}$ taken with 
suitable orientation in order that 
\begin{equation}
\label{eq:boundary_of_H_keps}
\partial\Big(  \scriptHkeps
+\sum_i\jump{L_{k,i}}\Big)=0 \qquad {\rm in}~ \{\eps\}\times B_{1-\lambda_k'}.
\end{equation}

Before stating the following crucial lemma, we recall that the current $\piPsiDkXik$ is defined in $C_l$ but is supported in $[0,\longR]\times \overline B_{1-\lambda_k'}$.

\begin{lemma}[\textbf{Boundary of $\piPsiDkXik \res((-1,\eps)\times\R^2)$ in 
$\{\eps\}\times B_{1-\lambda_k'}$}]
We have 
\begin{align}\label{slice_eps_S_k}
 \partial\big(\piPsiDkXik\res(
(-1,\eps)\times\R^2)
\big)= \scriptHkeps+\sum_i\jump{L_{k,i}}\qquad{\rm in }~\mathcal D_1(\{\eps\}\times B_{1-\lambda_k'}).
\end{align} 
\end{lemma}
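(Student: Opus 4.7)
The plan is to verify \eqref{slice_eps_S_k} by an explicit computation that exploits the decomposition $\piPsiDkXik = (\projlambdak\circ\Psi_k)_\sharp\jump{\badset} + \Xik$ from \eqref{eq:S_hat_k}--\eqref{eq:S_k} and the fact that both the maps $\projlambdak\circ\Psi_k$ and $\widetilde\tau(\rho,\cdot)$ preserve the axial coordinate $t=|x|$. Setting
\[
 A_k := (\projlambdak\circ\Psi_k)_\sharp\jump{\badset\cap \sourcedisk_\eps},
\qquad
\Xi_k^{\mathrm{in}} := \widetilde\tau_\sharp\jump{[1-\lambda_k,1-\lambda_k']\times(\partial\badset\cap \sourcedisk_\eps)},
\]
and the corresponding ``out'' pieces on $\badset\setminus\overline{\sourcedisk_\eps}$ and $\partial\badset\setminus\overline{\sourcedisk_\eps}$, the preservation of $t$ gives that the ``out'' pieces are supported in $\{t\geq\eps\}\times\R^2$ and, by Lemma \ref{lem:Z_k_verticalpart}, contribute no mass on $\{t=\eps\}$. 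Consequently $\piPsiDkXik\res((-1,\eps)\times\R^2) = A_k+\Xi_k^{\mathrm{in}}$, and it suffices to compute $\partial(A_k+\Xi_k^{\mathrm{in}})$ restricted to $\{\eps\}\times B_{1-\lambda_k'}$.

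For $A_k$ I would use the Gauss--Green-type identity $\partial\jump{\badset\cap \sourcedisk_\eps} = \jump{\partial\badset\cap \sourcedisk_\eps} + \jump{H_{k,\eps}}$ (both pieces carrying the outward-normal orientation inherited from $\badset\cap\sourcedisk_\eps$), which yields
\[
 \partial A_k = (\projlambdak\circ\Psi_k)_\sharp\jump{\partial\badset\cap \sourcedisk_\eps} + \scriptHkeps.
\]
For $\Xi_k^{\mathrm{in}}$ I would apply the product-boundary formula to $\jump{[1-\lambda_k,1-\lambda_k']}\times\jump{\partial\badset\cap\sourcedisk_\eps}$, using that the relative boundary of $\partial\badset\cap\sourcedisk_\eps$ in $\overline{\sourcedisk_\eps}$ consists exactly of the points $\widetilde x_i$ (the endpoints of the curves making up $\partial\badset$ which cross $\partial \sourcedisk_\eps$), and that $\widetilde\tau(1-\lambda_k,\cdot)=\projlambdak\circ\Psi_k$ on its domain. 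This gives
\[
\partial\Xi_k^{\mathrm{in}} = \widetilde\tau(1-\lambda_k',\cdot)_\sharp\jump{\partial\badset\cap \sourcedisk_\eps} - (\projlambdak\circ\Psi_k)_\sharp\jump{\partial\badset\cap \sourcedisk_\eps} \mp \sum_i\jump{L_{k,i}},
\]
where the last sum collects the images $\widetilde\tau([1-\lambda_k,1-\lambda_k']\times\{\widetilde x_i\})$, which are exactly the segments $L_{k,i}$.

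Summing the two formulas, the contributions $(\projlambdak\circ\Psi_k)_\sharp\jump{\partial\badset\cap \sourcedisk_\eps}$ cancel, leaving
\[
 \partial(A_k+\Xi_k^{\mathrm{in}}) = \scriptHkeps + \widetilde\tau(1-\lambda_k',\cdot)_\sharp\jump{\partial\badset\cap\sourcedisk_\eps} \mp \sum_i\jump{L_{k,i}}.
\]
Since $\widetilde\tau$ preserves the $t$-coordinate, the middle term is supported in $\{t\in(0,\eps)\}\times\partial B_{1-\lambda_k'}$ and therefore vanishes after restriction to $\{\eps\}\times B_{1-\lambda_k'}$, while $\scriptHkeps$ and the $L_{k,i}$ sit at $t=\eps$. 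This produces an identity of the form $\partial(\piPsiDkXik\res((-1,\eps)\times\R^2))=\scriptHkeps\pm\sum_i\jump{L_{k,i}}$ in $\mathcal D_1(\{\eps\}\times B_{1-\lambda_k'})$.

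The remaining step is to fix the orientations of the $L_{k,i}$. The main (mild) obstacle is that the signs produced by the product-boundary formula depend on the orientation that $\partial\badset$ inherits from $\badset$ and on the ambient orientation of $\R^3$, so matching them to the signs stipulated in \eqref{eq:boundary_of_H_keps} requires care. I would resolve this by invoking $\partial\partial(\piPsiDkXik\res((-1,\eps)\times\R^2))=0$: this forces $\partial(\scriptHkeps\pm\sum_i\jump{L_{k,i}})=0$, and \eqref{eq:boundary_of_H_keps} uniquely fixes the $L_{k,i}$-orientations (given that the endpoints on $\partial B_{1-\lambda_k'}$ have no further current to cancel against), so the signs must coincide with those of \eqref{slice_eps_S_k}. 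This yields the claimed identity.
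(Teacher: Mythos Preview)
Your proposal is correct and follows essentially the same route as the paper: decompose $\piPsiDkXik=\piPsiDk+\Xik$, exploit that both $\projlambdak\circ\Psi_k$ and $\widetilde\tau$ preserve the axial coordinate to localize to $\badset\cap\sourcedisk_\eps$ and $\partial\badset\cap\sourcedisk_\eps$, and then use the splitting $\partial\jump{\badset\cap\sourcedisk_\eps}=\jump{\partial\badset\cap\sourcedisk_\eps}+\jump{H_{k,\eps}}$ to identify the two boundary contributions $\scriptHkeps$ and $\sum_i\jump{L_{k,i}}$. The paper's write-up is terser (it computes $\partial(\piPsiDk\res\cdot)$ and $\partial(\Xik\res\cdot)$ separately rather than summing first), and fixes the $L_{k,i}$ orientations by direct reference to \eqref{eq:boundary_of_H_keps}, which is exactly the device you arrive at via $\partial\partial=0$.
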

 
 \begin{proof}
 We recall that 
$$
\piPsiDkXik=\piPsiDk+\Xik,
$$  
where $\piPsiDk$ is defined in \eqref{eq:S_hat_k} and,
by  \eqref{eq:def_Xi_k},
$\Xik=\widetilde \tau_\sharp\jump{[1-\lambda_k,1-\lambda_k']\times \partial \badset }$. Observe that 
$$
\partial \left(
\Xik\res ((-1,\eps)\times\R^2)\right)
= 
\sum_i\jump{L_{k,i}} 
\qquad{\rm in~ the~ annulus~ } 
\{\eps\}\times (B_{1-\lambda_k'}\setminus \overline{B}_{1-\lambda_k}).
$$
Indeed, this follows from 
the definition of $L_{k,i}$, the equality\footnote{
Notice that $\partial(\badset \cap \sourcedisk_\eps)=(\partial \badset \cap \sourcedisk_\eps)\cup (\partial \badset \cap \partial \sourcedisk_\eps) \cup (\badset \cap \partial \sourcedisk_\eps)$; recall also that, by Lemma 
\ref{lem:choice_of_u_k_and_t_k} (iv), 
$\partial \badset \cap \partial \sourcedisk_\eps$ consists of a finite set of points.}
$$
\begin{aligned}
\Xik\res ((-1,\eps)\times\R^2)
=&
\widetilde \tau_\sharp\jump{[1-\lambda_k,1-\lambda_k']\times 
(\sourcedisk_\eps \cap
\partial \badset)}
\\
=&
\widetilde \tau_\sharp\jump{[1-\lambda_k,1-\lambda_k']\times \partial 
(\badset \cap \sourcedisk_\eps)}-\widetilde \tau_\sharp\jump{[1-\lambda_k,1-\lambda_k']\times H_{k,\eps}},
\end{aligned}
$$
and \eqref{eq:boundary_of_H_keps}. 
Moreover, from \eqref{eq:S_hat_k}, 
$$
\begin{aligned}
\partial \Big(\piPsiDk\res((-1,\eps)\times\R^2)\Big)\res 
(\{\eps\}\times B_1)
& =
\partial \Big(((\projlambdak\circ \Psi_k)_\sharp \jump{\badset})\res((-1,\eps)\times\R^2)\Big)\res (\{\eps\}\times B_1)
\\
& = 
\partial \Big((\projlambdak\circ \Psi_k)_\sharp \jump{\badset\cap \sourcedisk_\eps }\Big)\res (\{\eps\}\times B_1)
\\
& = 
\Big((\projlambdak\circ \Psi_k)_\sharp \partial \jump{\badset\cap \sourcedisk_\eps }\Big)\res (\{\eps\}\times B_1)
\\
& = 
\scriptHkeps \qquad{\rm on}~ \{\eps\}\times B_1,
\end{aligned}
$$
where in the last equality we use\footnote{
Here we take the boundary of $\badset$ in $\partial \sourcedisk_\eps$ in the sense of currents, so that isolated points are neglected.} 
$\jump{\partial (\badset \cap \sourcedisk_\eps)} = 
\jump{\badset \cap \partial \sourcedisk_\eps}=\jump{H_{k,\eps}}$ 
on $\partial \sourcedisk_\eps$.
\end{proof}

Thanks to Corollary \ref{boundaryat0},
both $\piPsiDkXik$ and $\mathbb S(\piPsiDkXik)$ have no boundary in   $(-\infty,l)\times B_{1-\lambda_k'}$.
Now, we need 
to describe the boundary of the symmetrized current $\mathbb S(\piPsiDkXik)$
restricted to $(-1,\eps) \times B_{1-\lambda_k'}$,
see \eqref{eq:boundary_S_k_epsilon}.
We 
recall the definitions of  $\mathcal X_k$ and 
$\mathcal Y_k$ in \eqref{eq:defX_k} and \eqref{eq:defY_k}, 
and for $t\in (-1,\eps]$ and $\rho\in (1-\lambda_k,1-\lambda_k')$ 
 the function $ \Theta_k(t,\rho)$ defined in \eqref{theta_arcs}. Also in this case 
\begin{equation*}
 \Theta_k(t,\rho)=\Theta_k(t,\varrho)\qquad\text{ for all }\rho,\varrho\in (1-\lambda_k,1-\lambda_k').
\end{equation*}
In cylindrical coordinates, if 
$$
X_1:=(\eps,1-\lambda_k,\Theta_k(\eps,1-\lambda_k)/2),
\qquad 
X_2:=(\eps,1-\lambda_k,-\Theta_k(\eps,1-\lambda_k)/2),
$$
we denote the two $1$-currents 
\begin{align}\label{def_X1X2}
 \mathbb S( L)_1:=\tau(\cdot,X_1)_\sharp\jump{(1-\lambda_k,1-\lambda_k')}\qquad\text{ and }\qquad \mathbb S(L)_2:=\tau(\cdot,X_2)_\sharp\jump{(1-\lambda_k,1-\lambda_k')},
\end{align}
see Fig. \ref{fig:orientation}.
Set 
\begin{equation}
\label{eq:Y_1_Y_2}
Y_1=\tau(1-\lambda_k',X_1), \qquad Y_2=\tau(1-\lambda_k',X_2).
\end{equation}
We know, by construction and definition of $\Theta_k$, that
\begin{align*}
\partial \big( \mathbb S(\piPsiDkXik)\res ((-1,\eps)\times( B_{1-\lambda_k'}\setminus \overline{B}_{1-\lambda_k}))\big)\res(\{\eps\}\times\R^2)=\mathbb S( L)_1-\mathbb S( L)_2
\end{align*}
in $\mathcal D_1(\{\eps\}\times( B_{1-\lambda_k'}\setminus \overline{B}_{1-\lambda_k}))$.
We define
\begin{align}\label{eqn:SscriptHkeps}
\mathbb S(\scriptHkeps):= \partial \big( \mathbb S(\piPsiDkXik)\res ((-1,\eps)\times B_{1-\lambda_k'})\big)\res(\{\eps\}\times\R^2)-\mathbb S( L)_1+\mathbb S( L)_2
\end{align}
in $\mathcal D_1(\{\eps\}\times B_{1-\lambda_k})$, see again Fig. \ref{fig:orientation}.
With these definitions at our disposal we can now write
\begin{equation}\label{eq:boundary_S_k_epsilon}
\begin{aligned}
& \partial\Big(\mathbb S(\piPsiDkXik)\res ((-1,\eps)\times 
B_{1-\lambda_k'})\Big)
\\
=& 
\mathbb S(\scriptHkeps)+\mathbb S( L)_1-\mathbb S( L)_2+\partial\big(\mathbb S(\piPsiDkXik)\res\{t\in(-1,\eps)\}\big)\res(\{-1\}\times 
B_{1-\lambda_k'})\\
=& 
\mathbb S(\scriptHkeps)+\mathbb S( L)_1-\mathbb S( L)_2.
\end{aligned}
\end{equation}
Here we have used once again that $\mathbb S(\piPsiDkXik)$ is supported in 
$[0,\longR]\times B_1$, and then its boundary on $\{t=-1\}$ is always null.

We can clarify the meaning of the last term in formula \eqref{eq:boundary_S_k_epsilon}.

\begin{cor}\label{cor_9.5}We have 
$$
\mathbb S(\scriptHkeps)+\mathbb S( L)_1-\mathbb S( L)_2= -\partial\big(\mathbb S(\piPsiDkXik)\res((\eps,l)\times B_{1-\lambda_k'})\big)\res (\{\eps\}\times \R^2).
$$
\end{cor}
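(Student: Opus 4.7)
The plan is to derive the identity from the fact that $\mathbb S(\piPsiDkXik)$ is boundaryless in the open set $(-\infty,l)\times B_{1-\lambda_k'}$ (Corollary \ref{boundaryat0}), by splitting this current into its restrictions to $\{t<\eps\}$ and $\{t>\eps\}$ and taking boundaries.

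First, I would verify that $\mathbb S(\piPsiDkXik)$ has no vertical part on the plane $\{t=\eps\}$. This follows from Lemma \ref{lem:sections_of_symm_current} applied to the boundaryless $2$-current $\mathcal S := \piPsiDkXik$ in $C_\longR(1-\lambda_k')$: the hypothesis $\mathcal S\res(\{\eps\}\times\R^2)=0$ is guaranteed by Corollary \ref{cor:slice_eps} (whose proof rests on Lemma \ref{lem:Z_k_verticalpart}). Hence
\begin{equation*}
\mathbb S(\piPsiDkXik)\res(\{\eps\}\times\R^2)=0.
\end{equation*}
Since $\mathbb S(\piPsiDkXik)$ is supported in $[0,\longR]\times\overline B_{1-\lambda_k'}$, this yields the additive decomposition
\begin{equation*}
\mathbb S(\piPsiDkXik)\res((-1,l)\times B_{1-\lambda_k'})
=\mathbb S(\piPsiDkXik)\res((-1,\eps)\times B_{1-\lambda_k'})+\mathbb S(\piPsiDkXik)\res((\eps,l)\times B_{1-\lambda_k'})
\end{equation*}
in $\mathcal D_2((-\infty,l)\times B_{1-\lambda_k'})$.

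Next, I would take the boundary of both sides and use Corollary \ref{boundaryat0} to conclude that the left-hand side has zero boundary in $(-\infty,l)\times B_{1-\lambda_k'}$. This gives
\begin{equation*}
\partial\big(\mathbb S(\piPsiDkXik)\res((-1,\eps)\times B_{1-\lambda_k'})\big)
=-\partial\big(\mathbb S(\piPsiDkXik)\res((\eps,l)\times B_{1-\lambda_k'})\big)
\end{equation*}
as currents in $\mathcal D_1((-\infty,l)\times B_{1-\lambda_k'})$.

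Finally, I would restrict this identity to $\{\eps\}\times\R^2$ and plug in formula \eqref{eq:boundary_S_k_epsilon}, which identifies the restriction of the left-hand side to the plane $\{t=\eps\}$ as $\mathbb S(\scriptHkeps)+\mathbb S(L)_1-\mathbb S(L)_2$. This yields the claimed equality. The only subtlety is justifying the additive splitting across $\{t=\eps\}$, and this is exactly where Lemma \ref{lem:sections_of_symm_current} is indispensable: vanishing of the slice of the unsymmetrized current does not automatically transfer to the symmetrized one, and this lemma is precisely tailored to bridge that gap.
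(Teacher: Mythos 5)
Your proposal is correct and takes essentially the same route as the paper: the paper's stated proof is the terse chain ``it follows from \eqref{eq:boundary_S_k_epsilon}, Corollary \ref{cor:slice_eps}, and Lemma \ref{lem:sections_of_symm_current}'', and your write-up is exactly the fleshing out of that chain. The only thing you make explicit that the paper leaves implicit is Corollary \ref{boundaryat0}, i.e.\ the closedness of the symmetrized current $\mathbb S(\piPsiDkXik)$ in $(-\infty,\longR)\times B_{1-\lambda_k'}$; this is indeed what turns ``no slice at $t=\eps$'' into ``the boundaries of the two restrictions cancel'', so stating it is an improvement in readability, not a deviation.
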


\begin{figure}
	\begin{center}
		\includegraphics[width=0.6\textwidth]{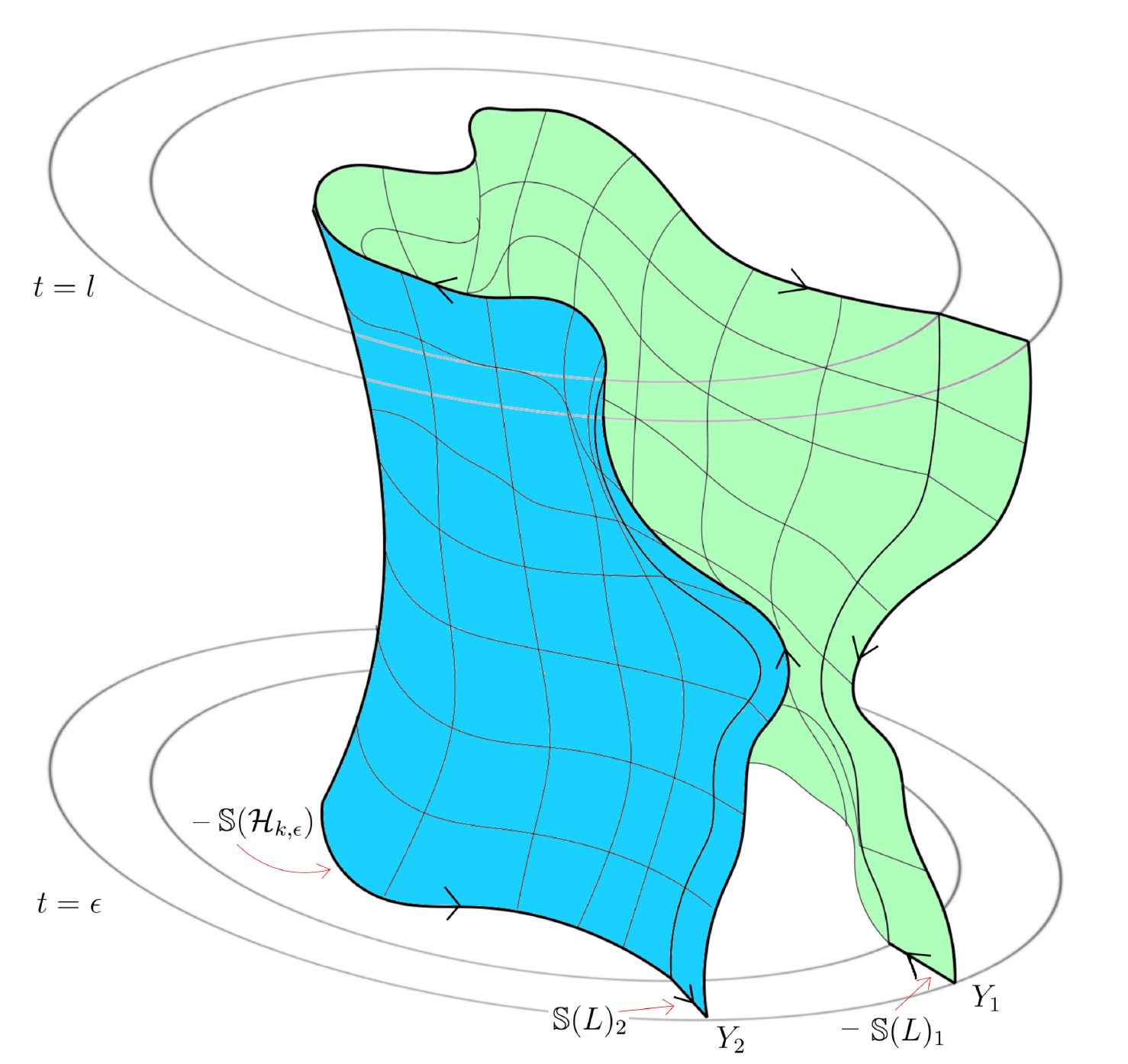}
		\caption{The current $\mathbb S(\piPsiDkXik)\res((\eps,l)\times B_{1-\lambda_k'})$ is depicted. At $t=\eps$ we emphasized the various objects composing its boundary, taken with their orientation. 
		}
		\label{fig:orientation}
	\end{center}
\end{figure}

\begin{proof}
It follows
from \eqref{eq:boundary_S_k_epsilon}, Corollary \ref{cor:slice_eps}, and 
Lemma \ref{lem:sections_of_symm_current}.
\end{proof}

\subsection{Construction of the current $\Vke$}
Let
$\Pi_\eps : \R^3 \to 
\{\eps\}\times \R^2$ be
the orthogonal projection on 
$\{\eps\}\times \R^2$.

\begin{definition}We set
\begin{align}\label{eq:def_Vk}
 \Vke:=(\Pi_\eps)_\sharp\Big(
\mathbb S(\piPsiDkXik)\res ((-1,\eps)\times B_{1-\lambda_k'})\Big) {\in \mathcal D_2(C_l)}.
\end{align}
\end{definition}
\begin{lemma}
We have 
\begin{equation*}
|\currgraphk|_{(\badset\cap \sourcedisk_\eps)\times \R^2} \geq 
 |\Vke| - 
2\pi(\lambda_k-\lambda_k').
\end{equation*}
\end{lemma}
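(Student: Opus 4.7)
The plan is to split the mass $|\Vke|$ across the circle $\partial B_{1-\lambda_k}$ in the target plane $\{t=\eps\}\simeq \R^2$ and to use two different mechanisms on the inner disc and on the outer annulus.

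For the annular piece I will establish the key identification $\Vke = -\jump{\mathbb S(E_k)_\eps}$, which rests on the fact that the pushforward $(\Pi_\eps)_\sharp$ annihilates every $3$-current, since its target is the $2$-plane $\{t=\eps\}$. More precisely, the restriction
\[
\mathbb S(\piPsiDkXik)\res ((-1,\eps)\times B_{1-\lambda_k'}) = \partial\jump{\mathbb S(E_k)\cap ((-1,\eps)\times B_{1-\lambda_k'})} - \jump{\mathbb S(E_k)_\eps}^{+e_t} - \jump{\mathbb S(E_k)\cap\{\rho=1-\lambda_k'\}}^{+e_\rho}
\]
isolates the two outward-oriented caps at $t=\eps$ and at $\rho=1-\lambda_k'$. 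Applying $(\Pi_\eps)_\sharp$: the $\partial$-term vanishes because $\partial\circ(\Pi_\eps)_\sharp=(\Pi_\eps)_\sharp\circ\partial$ and the pushforward of $\jump{\mathbb S(E_k)\cap(\cdots)}$ is a $3$-current in a $2$-plane; the lateral cap at $\rho=1-\lambda_k'$ is mapped to a subarc of $\partial B_{1-\lambda_k'}$, a $1$-dimensional set, so it vanishes as a $2$-current; only the top cap survives and, via the pullback identity $(\Pi_\eps)^*(dx_2\wedge dx_3)=dx_2\wedge dx_3$, is identified with $\jump{\mathbb S(E_k)_\eps}$ endowed with the standard planar orientation. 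Therefore
\[
|\Vke|(B_{1-\lambda_k'}\setminus \overline B_{1-\lambda_k})=\mathcal H^2\bigl(\mathbb S(E_k)_\eps\cap (B_{1-\lambda_k'}\setminus \overline B_{1-\lambda_k})\bigr)\leq \pi\bigl((1-\lambda_k')^2-(1-\lambda_k)^2\bigr)\leq 2\pi(\lambda_k-\lambda_k').
\]

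For the inner piece I use that $\Pi_\eps$ is $1$-Lipschitz, whence $|\Vke|(B_{1-\lambda_k})\leq |\mathbb S(\piPsiDkXik)|_{(-1,\eps)\times B_{1-\lambda_k}}$. A localized version of property (2) in Section~\ref{sec:cylindrical_Steiner_symmetrization}, obtained by applying the global perimeter inequality to $E_{k,i}\cap(\{t<\eps\}\times B_{1-\lambda_k})$ and observing that the cap contributions at $t=\eps$ and at $\rho=1-\lambda_k$ are preserved under cylindrical symmetrization (property (1) for the slices $\{t\}\times \overline B_\rho$), yields, after summation over $i$ together with subadditivity and the undecomposable decomposition \eqref{eq:126},
\[
|\mathbb S(\piPsiDkXik)|_{(-1,\eps)\times B_{1-\lambda_k}}\leq |\piPsiDkXik|_{(-1,\eps)\times B_{1-\lambda_k}} = |\piPsiDk|_{(-1,\eps)\times B_{1-\lambda_k}},
\]
the last equality because $\Xik$ is supported in the annular region $\{1-\lambda_k<\rho\leq 1-\lambda_k'\}$. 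Since $\piPsiDk=(\projlambdak\circ\Psi_k)_\sharp\jump{\badset}$ and $\Psi_k^{-1}((-1,\eps)\times\R^2)=\sourcedisk_\eps$, the pushforward mass inequality combined with $\mathrm{Lip}(\projlambdak)=1$ and the factorization $\Psi_k=R\circ\Phi_k$ with $R$ being $1$-Lipschitz yields
\[
|\piPsiDk|_{(-1,\eps)\times B_{1-\lambda_k}}\leq \int_{\badset\cap \sourcedisk_\eps}|J(\projlambdak\circ\Psi_k)|\,dx\leq |\currgraphk|_{(\badset\cap \sourcedisk_\eps)\times \R^2}.
\]

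Adding the two contributions gives the claim, since $|\Vke|$ is a Borel measure on the plane $\{t=\eps\}$ and the disc $B_{1-\lambda_k}$ and the annulus $B_{1-\lambda_k'}\setminus \overline B_{1-\lambda_k}$ are disjoint. The main technical hurdle will be the sign- and orientation-correct identification $\Vke=-\jump{\mathbb S(E_k)_\eps}$, including the verification that the lateral cap at $\{\rho=1-\lambda_k'\}$ has only $1$-dimensional image, and the check that there is no hidden boundary contribution on $\{t=\eps\}$, which is ensured by Corollary~\ref{cor:slice_eps} and Lemma~\ref{lem:Z_k_verticalpart}.
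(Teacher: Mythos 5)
Your overall architecture (splitting $|\Vke|$ across $\partial B_{1-\lambda_k}$) and your treatment of the inner disc $B_{1-\lambda_k}$ — via $\mathrm{Lip}(\Pi_\eps)=1$, the localized version of \eqref{eq:same_inequality_for_the_mass}, the support observation $\Xik\res B_{1-\lambda_k}=0$, and the pushforward mass bound — are correct and coincide with the paper's four-line proof. The gap is in the annular piece. The proposed identification $\Vke = -\jump{\mathbb S(E_k)_\eps}$ is false, and the reason is that your decomposition of $\partial\jump{\mathbb S(E_k)\cap((-1,\eps)\times B_{1-\lambda_k'})}$ omits a cap at $\{t=-1\}$. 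That cap is not empty: by \eqref{crucial_identification} and \eqref{eq:defX_k}, $\mathcal E_k\res\big((-1,0)\times(B_{1-\lambda_k'}\setminus\overline B_{1-\lambda_k})\big)=\jump{(-1,0)\times(B_{1-\lambda_k'}\setminus\overline B_{1-\lambda_k})}$ (since $\mathcal Y_k$ is supported in $\{t\geq 0\}$, because $\widetilde\tau$ does not move the $t$-coordinate $|x|\geq0$), and symmetrization preserves this already radially symmetric annular region, so $\mathbb S(E_k)\supset (-1,0)\times(B_{1-\lambda_k'}\setminus\overline B_{1-\lambda_k})$. Hence $\mathbb S(E_k)\cap\big((-1,\eps)\times B_{1-\lambda_k'}\big)$ has a positive $\mathcal H^2$-measure trace on $\{t=-1\}$ whose $\Pi_\eps$-pushforward is the missing term. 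The paper's own (later) Proposition \ref{prop:V_k} and Remark \ref{rem:V_k} give the correct identification $\Vke=\jump{B_{1-\lambda_k'}\setminus\mathbb S(E_k)_\eps}$, which differs from your candidate by exactly the pushed-forward cap.

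That said, your final numerical estimate $|\Vke|_{B_{1-\lambda_k'}\setminus\overline B_{1-\lambda_k}}\le 2\pi(\lambda_k-\lambda_k')$ is still true: with the correct identification, $\Vke$ has multiplicity in $\{0,1\}$ and so its mass in the annulus is at most $\pi((1-\lambda_k')^2-(1-\lambda_k)^2)\le 2\pi(\lambda_k-\lambda_k')$. But the multiplicity bound cannot be reached through the flawed decomposition; it genuinely requires a slicing/parity argument along lines $\ell_{\rho,\theta}$ as in the proof of Proposition \ref{prop:V_k} (which accounts, precisely, for the fact that the lines enter $\mathbb S(E_k)$ from $t<0$ in the annular range of $\rho$). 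The paper's own proof of the lemma sidesteps the explicit identification entirely and writes down only the chain of mass inequalities, relying on exactly this multiplicity bound for the annular term.
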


\begin{proof}
By \eqref{eq:def_Vk}, since ${\rm Lip}(\Pi_\eps)=1$, we have,
using \eqref{def_sk}, \eqref{eq:def_Xi_k},
\begin{equation}
\begin{aligned}\label{estimate_of_Vk}
 |\Vke|
= &  
|\Vke|_{(-1,\eps)\times (B_{1-\lambda_k'}\setminus B_{1-\lambda_k})}
+
|\Vke|_{(-1,\eps)\times B_{1-\lambda_k}}
\\
\leq &  
|\Vke|_{(-1,\eps)\times (B_{1-\lambda_k'}\setminus B_{1-\lambda_k})}
+
|\piPsiDkXik|_{(-1,\eps)\times B_{1-\lambda_k}}
\\
= &  
|\Vke|_{(-1,\eps)\times (B_{1-\lambda_k'}\setminus B_{1-\lambda_k})}
+
|\piPsiDk|_{(-1,\eps)\times B_{1-\lambda_k}}
\\
\leq & 
2\pi(\lambda_k-\lambda_k')
+ 
|\currgraphk|_{(\badset\cap \sourcedisk_\eps)\times \R^2},
\end{aligned}
\end{equation}
where we have also 
used a localized version of \eqref{eq:same_inequality_for_the_mass} 
in $(-1,\eps) \times 
B_{1-\lambda_k}$.
\end{proof}

By Corollary \ref{cor_9.5} it  holds\footnote{Recall that $\partial (\Pi_\eps)_\sharp\Big(
\mathbb S(\piPsiDkXik)\res\{t<\eps\}\Big)= (\Pi_\eps)_\sharp \partial \Big(
\mathbb S(\piPsiDkXik)\res\{t<\eps\}\Big)$ and that the map $\Pi_\eps$ 
does not move the plane where $\partial (\mathbb S(\piPsiDkXik)\res \{t<\eps\})$ is supported. }
\begin{align}\label{eq:boundary_Vk}
 \partial \Vke=
\mathbb S(\scriptHkeps)
+
\mathbb S( L)_1-\mathbb S( L)_2\qquad \text{ in }\{\eps\}\times B_{1-\lambda_k'}.
\end{align}
Clearly the above
current is boundaryless in $\{\eps\}\times B_{1-\lambda_k'}$; more
precisely it is an oriented  curve connecting $Y_2$ to $Y_1$ 
(defined in \eqref{eq:Y_1_Y_2}) as soon as $Y_2 \neq Y_1$, 
with
$\mathbb S(\scriptHkeps)$ clockwise oriented\footnote{
When looking at the plane $\{\eps\}\times \R^2$ from 
$t>\eps$.}.
If we extend $\Vke$ to $0$ on the whole plane $\{\eps\}\times \R^2$ 
(keeping the same notation) we have 
\begin{align}\label{eqn:9.16}
 \partial \Vke
=
\mathcal L_k
+\mathbb S(\scriptHkeps)+\mathbb S( L)_1-\mathbb S( L)_2\qquad \text{ on }\{\eps\}\times \R^2,
\end{align}
for some current  $\mathcal L_k$ supported on $\{\eps\}\times\partial B_{1-\lambda_k'}$ and whose boundary is two deltas,  with suitable signs, on $Y_1$ and $Y_2$. In particular $\mathcal L_k$
is the integration between $Y_1$ to $Y_2$
on the circle $\{\eps\}\times \partial B_{1-\lambda_k'}$. 

However there are two arcs which connect these two points, namely (in cylindrical coordinates)
\begin{align}\label{eq:def_Y12}
\{\eps\}\times \{1-\lambda_k' \}
\times \left[
\frac{-\Theta_k(\eps,1-\lambda_k')}{2},\frac{\Theta_k(\eps,1-\lambda_k')}{2}\right] 
\end{align}
oriented clockwise
and 
\begin{align}\label{eq:def_Y12bis}
\left(
\{\eps\}\times \partial B_{1-\lambda_k'}\right)
\setminus 
\left\{
\{\eps\}\times \{1-\lambda_k' \}
\times \left[
\frac{-\Theta_k(\eps,1-\lambda_k')}{2},\frac{\Theta_k(\eps,1-\lambda_k')}{2}\right]\right\} 
\end{align}
oriented counterclockwise.
We have to identify $\mathcal L_k$ with the integration over one of 
these two arcs.

\begin{prop}\label{prop:V_k}
 $\mathcal L_k$ 
is the 
counterclockwise
integration over the arc connecting $Y_1$ and $Y_2$ given by 
\eqref{eq:def_Y12bis}. 
\end{prop}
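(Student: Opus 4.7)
\medskip
\noindent\textbf{Proof proposal for Proposition \ref{prop:V_k}.}

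The plan is to reduce the identification of $\mathcal L_k$ to a direct computation of $\Vke$ as a current on $\{t=\eps\}\times\R^2$, by first determining the structure of the primitive $\mathcal E_k$ in the slab $\{t<0\}$. First I would show that $\mathbb S(E_k)\supseteq (-1,0)\times B_{1-\lambda_k'}$ with multiplicity one. The key observation is that $\widetilde\tau(\rho,x)$ in \eqref{eq:widetilde_tau} has first coordinate $|x|\geq 0$, so $\mathcal Y_k$ has no mass in $\{t<0\}$; consequently $\mathcal X_k\res((-1,0)\times(B_{1-\lambda_k'}\setminus \overline B_{1-\lambda_k})) = \jump{(-1,0)\times(B_{1-\lambda_k'}\setminus \overline B_{1-\lambda_k})}$, and the normalization \eqref{crucial_identification} forces $\mathcal E_k$ to have multiplicity $+1$ on that annular slab. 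Since $\mathbb S(\piPsiDkXik)$ is supported in $\{t\geq 0\}$ (Corollary \ref{boundaryat0}), the current $\mathcal E_k\res((-1,0)\times B_{1-\lambda_k'})$ is boundaryless in this connected open slab; the constancy theorem, together with the matching value $+1$ on the annular part, gives multiplicity $+1$ on the entire slab.

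Next I would compute $\Vke$ directly, using the decomposition $\mathbb S(E_k) = ((-1,0)\times B_{1-\lambda_k'})\cup \mathbb S(E_k)^+$ with $\mathbb S(E_k)^+\subseteq [0,l)\times B_{1-\lambda_k'}$. The 2-current $M\res((-1,\eps)\times B_{1-\lambda_k'})$ then splits into a "top-of-slab" piece $\{0\}\times(B_{1-\lambda_k'}\setminus\mathbb S(E_k)^+_0)$ oriented by $+\hat t$, plus the portion of $\partial\mathbb S(E_k)^+$ lying in $(0,\eps)\times B_{1-\lambda_k'}$. Testing $\Vke$ against $\omega = f(x,y)\,dx\wedge dy$ reduces to evaluating $\int n_t\, f\, d\mathcal H^2$ on these pieces; applying the divergence theorem to $\mathbb S(E_k)^+\cap\{0<t<\eps\}$ with the vector field $g\hat t$ (noting that the lateral contributions at $\rho=1-\lambda_k'$ give $n_t=0$) the inner-boundary integrals telescope and I get
\[
\Vke(\omega) = \int_{B_{1-\lambda_k'}\setminus\, \mathbb S(E_k)_\eps} f(\eps,x,y)\,dx\,dy,
\]
that is $\Vke = \jump{B_{1-\lambda_k'}} - \jump{\mathbb S(E_k)_\eps}$ as currents on $\{t=\eps\}\times\R^2$. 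From here the conclusion is a boundary computation: $\partial\jump{B_{1-\lambda_k'}}$ is the full circle $\partial B_{1-\lambda_k'}$ counterclockwise (from the side $t>\eps$), while the portion of $\partial\jump{\mathbb S(E_k)_\eps}$ on $\partial B_{1-\lambda_k'}$ is precisely the inner arc \eqref{eq:def_Y12} counterclockwise (the outer rim of the symmetric wedge-like slice). Subtracting produces exactly the arc \eqref{eq:def_Y12bis} counterclockwise; the remaining interior part of $\partial\jump{\mathbb S(E_k)_\eps}$ (the two curves $\theta = \pm\Theta_k(\eps,\rho)/2$ for $\rho<1-\lambda_k'$) matches $\mathbb S(\scriptHkeps)+\mathbb S(L)_1-\mathbb S(L)_2$ by the definitions in \eqref{eqn:SscriptHkeps} and \eqref{def_X1X2}. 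Comparing with \eqref{eqn:9.16} yields the claim.

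The main obstacle will be step one: rigorously pinning down the multiplicity of $\mathcal E_k$ on $(-1,0)\times B_{1-\lambda_k'}$ from the normalization in the annulus. A conceptual subtlety worth flagging is that $(\Pi_\eps)_\sharp$ does not commute with $\partial$ when computed on the open cylinder (the ``missing'' boundary piece at $t=-1$ would carry the contribution $-\jump{B_{1-\lambda_k'}}$ in the $\R^3$ computation), and it is precisely this defect that accounts for the $+\jump{B_{1-\lambda_k'}}$ term in $\Vke$, and hence for the fact that $\mathcal L_k$ is the outer (long) arc rather than the inner (short) one.
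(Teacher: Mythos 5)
Your proof is correct in outline and takes a genuinely different route from the paper's. The paper proves the proposition by slicing $\mathbb S(E_k)$ along lines $l_{\rho,\theta}$ parallel to the $t$-axis and tracking the parity of the number of intervals; this forces a case split (Case $(1)$, $|u_k(0)|<1-\lambda_k$, and Case $(2)$, $|u_k(0)|>1-\lambda_k$) because the paper only establishes the slab inclusion over the annulus $(-1,t_k^\delta)\times(B_{1-\lambda_k'}\setminus\overline B_{1-\lambda_k})$ in Case $(1)$ (see \eqref{lastpointequation}) versus the full slab $(-1,0)\times B_{1-\lambda_k'}\subset\mathbb S(E_k)$ only in Case $(2)$ (see \eqref{9.39}). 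Your observation that $\mathcal Y_k$ is supported in $\{t\ge 0\}$, hence by the constancy theorem and \eqref{crucial_identification} the multiplicity of $\mathcal E_k$ is $+1$ on all of $(-1,0)\times B_{1-\lambda_k'}$, gives the full-slab conclusion directly and with no case split — this is precisely the role the artificial buffer $\{-1<t<0\}$ was introduced for. The subsequent Stokes-type computation of $\Vke = \jump{B_{1-\lambda_k'}}-\jump{\mathbb S(E_k)_\eps}$ is clean and reproduces Remark \ref{rem:V_k}'s characterization by direct evaluation rather than by parity counting.

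One step deserves to be made explicit. You need $\mathbb S(E_k)\supseteq(-1,0)\times B_{1-\lambda_k'}$, not just that $\mathcal E_k$ has multiplicity $+1$ there. Recall that $\mathbb S(E_k)=\bigcup_i\mathbb S(E_{k,i})$ and each slice $\mathbb S(E_k)\cap(\{t\}\times\partial B_\rho)$ has angular measure $\max_i\Theta_{E_{k,i}}(t,\rho)$, not $\sum_i\Theta_{E_{k,i}}(t,\rho)$; so multiplicity $+1$ of $\mathcal E_k=\sum_i(-1)^{\sigma_i}\jump{E_{k,i}}$ is not, by itself, enough. What closes the gap is that the decomposition \eqref{eq:dec_Ek} is into undecomposable components, so $|\partial\mathcal E_k|=\sum_i\mathcal H^2(\partial^* E_{k,i})$ as measures; since $\partial\mathcal E_k=\piPsiDkXik$ is supported in $\{t\ge 0\}$ (Corollary \ref{boundaryat0_first}), each $E_{k,i}$ has no reduced boundary in the connected slab $(-1,0)\times B_{1-\lambda_k'}$, hence is either empty or the whole slab there, and since the signed sum is $+1$, some $E_{k,i_0}$ (with $\sigma_{i_0}=0$) must equal the slab, whence $\mathbb S(E_{k,i_0})$ and therefore $\mathbb S(E_k)$ contain it. This is the same mechanism the paper invokes in the footnote to Lemma \ref{lem:sections_of_symm_current}, so it is consistent with the paper's tools, but you should state it, as it is the load-bearing step of your argument.
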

Before proving this proposition we anticipate a
useful observation.

\begin{remark}\label{rem:V_k}
We set 
$$
\mathbb S(E_k)_\eps:=\mathbb S(E_k)\cap \{t=\eps\}. 
$$
Since $\mathbb S(\piPsiDkXik)$ is the boundary of the integration over $\mathbb S(E_k)$,
the current 
$\mathbb S(\piPsiDkXik)\res((-1,\eps)\times B_{1-\lambda_k'})+\jump{\mathbb S(E_k)_\eps}$ is 
boundaryless in $\mathcal D_2(C_l(1-\lambda_k'))$ 
(with $\mathbb S(E_k)_\eps$ suitably oriented). It follows, invoking Corollary \ref{cor_9.5}, that 
\begin{align*}
 \partial \jump{\mathbb S(E_k)_\eps}=-\mathbb S(\scriptHkeps)-\mathbb S( L)_1+\mathbb S( L)_2\qquad \text{ in }\{\eps\}\times B_{1-\lambda_k'}.
\end{align*}
The fact that 
\begin{align}\label{eq:boundary_Vkbis}
 \partial \Vke= \mathcal L_k+\mathbb S(\scriptHkeps)+\mathbb S( L)_1-\mathbb S( L)_2\qquad \text{ in }\{\eps\}\times \R^2,
\end{align}
(where $\mathcal L_k$ is as in Proposition 
\ref{prop:V_k}) means that 
$\Vke$ 
is the integration over the set $$B_{1-\lambda_k'}\setminus \mathbb S(E_k)_\eps.$$
 In particular $\Vke$ 
has coefficient $1$ in $B_{1-\lambda_k'}\setminus \mathbb S(E_k)_\eps$ and zero in $\mathbb S(E_k)_\eps$.
 On the other hand, if $\mathcal L_k$ were the integration over
\eqref{eq:def_Y12} oriented clockwise,
then we would have that 
$\Vke$ had coefficient $-1$ in $\mathbb S(E_k)_\eps$ and $0$ in $B_{1-\lambda_k'}\setminus \mathbb S(E_k)_\eps$.
\end{remark}

We can now prove Proposition \ref{prop:V_k}.

\begin{proof}
Appealing to Remark \ref{rem:V_k}, it is sufficient to show that 
the coefficient of $\Vke$ is 
$1$ in  $B_{1-\lambda_k'}\setminus \mathbb S(E_k)_\eps$. Equivalently we can show that this coefficient is zero in $B_{1-\lambda_k'}\cap \mathbb S(E_k)_\eps$.

Let us recall, by definitions \eqref{eq:defY_k} and \eqref{eq:defX_k}, 
\begin{align}
\label{eq:slice_Y_t}
 &(\mathcal Y_k)_t
=\widetilde \tau_\sharp\jump{[1-\lambda_k,1-\lambda_k']\times ((\Omega\setminus \badset)\cap \partial \sourcedisk_t)} 
\qquad {\rm for~a.e.}~t\in (0,\eps],
\\
\label{eq:slice_X_t}
 &(\mathcal X_k)_t=\jump{\{t\}\times (B_{1-\lambda_k'}\setminus \overline {B}_{1-\lambda_k})}-(\mathcal Y_k)_t
\qquad \ \ \qquad {\rm for~a.e.}~t\in (0,\eps].
\end{align}
Recalling
Lemma 
\ref{lem:choice_of_u_k_and_t_k}(i), we now divide our analysis in two cases:
\begin{itemize}
 \item[(1)] $|u_k(0)|<1-\lambda_k$.
 \item[(2)] $|u_k(0)|>1-\lambda_k$.
\end{itemize}
We notice that, in both cases, by continuity of $u_k$, for 
all $\delta \in (0,1)$ there is $t_k^\delta>0$ such that 
\begin{align}\label{inclusion_ball}
u_k(\sourcedisk_t)\subset B_\delta(u_k(0)) \qquad
\forall t\in (0,{t_k^\delta}].
\end{align}

\textit{Case (1):}
If $\delta$ is sufficiently small, we can also assume that 
\begin{equation}\label{eq:B_delta_contained_in_B_1_minus_lambda_k}
B_\delta(u_k(0))\subset B_{1-\lambda_k}(0),
\end{equation}
and therefore
\begin{align}
\label{inclusions_case1}
 u_k(\sourcedisk_{t})
\subset B_{1-\lambda_k}(0) \qquad
\forall t\in (0,{t_k^\delta}].
\end{align}
In this case it turns out that if $t\leq t_k^\delta$ then the 
current $(\mathcal Y_k)_t$ in \eqref{eq:slice_Y_t} is null, 
because $\vert u_k\vert^+(t) < 1 - \lambda_k$, hence
$(\Omega\setminus \badset)\cap \partial \sourcedisk_t=\emptyset$
by \eqref{uk_outside_Dk}. In particular, by \eqref{eq:slice_X_t}, 
\begin{align*}
 (\mathcal X_k)_t=\jump{\{t\}\times (B_{1-\lambda_k'}\setminus \overline {B}_{1-\lambda_k})}
\qquad {\rm for~a.e.~} t\leq t_k^\delta. 
\end{align*}
Eventually, since $\overline{\sourcedisk}_t \subset \badset$ 
for any $t \in [0,t_k^\delta]$,
from \eqref{inclusion_ball}, we also deduce
$
u_k(\overline \sourcedisk_{t^\delta_k}\cap \badset)
=u_k(\overline \sourcedisk_{t_k^\delta})\subset B_\delta(u_k(0))$, so that 
\begin{align}\label{inclusions_case1_bis}
 \Psi_k(\badset)\cap([0, t_k^\delta]\times B_1) = 
 \projlambdak\circ\Psi_k(\badset)\cap([0, t_k^\delta]\times B_1)
\subset [0,t_k^\delta]\times B_\delta(u_k(0)).
\end{align}

Now, consider 
the decomposition \eqref{eq:dec_Ek} of $\mathcal E_k$.
 By the crucial identification \eqref{crucial_identification} and \eqref{eq:B_delta_contained_in_B_1_minus_lambda_k} we infer that there must be a set $ E_{k,h}\in \{E_{k,i}\}_{i\in\mathbb N}$ with\footnote{Since the decomposition in \eqref{eq:dec_Ek} is done in undecomposable components, such a set is unique.}
\begin{align*}
 \mathcal X_k\res ((-1,t_k^\delta)\times B_{1-\lambda_k'})&=\jump{(-1,t_k^\delta)\times(B_{1-\lambda_k'}\setminus \overline {B}_{1-\lambda_k})}\nonumber\\
 &=\jump{E_{k,h}\cap \big((-1,t_k^\delta)\times(B_{1-\lambda_k'}\setminus \overline {B}_{1-\lambda_k})\big)}.
\end{align*}
 Therefore 
\begin{align*}
 E_{k,h}\cap \big((-1,t_k^\delta)\times(B_{1-\lambda_k'}\setminus \overline {B}_{1-\lambda_k})\big)=(-1,t_k^\delta)\times(B_{1-\lambda_k'}\setminus \overline {B}_{1-\lambda_k}).
\end{align*}
This has the following consequence: denoting as usual 
$\mathbb S(E_{k,h})$ the cylindrical symmetrization of $E_{k,h}$ we infer 
\begin{align*}
 \mathbb S(E_{k,h})\cap \big((-1,t_k^\delta)\times(B_{1-\lambda_k'}\setminus \overline {B}_{1-\lambda_k})\big)=(-1,t_k^\delta)\times(B_{1-\lambda_k'}\setminus \overline {B}_{1-\lambda_k}),
\end{align*}
and since $\mathbb S(E_{k,h})\subset \mathbb S(E_k)$ we also have
\begin{align}\label{lastpointequation}
 \mathbb S(E_k)\cap \big((-1,t_k^\delta)\times(B_{1-\lambda_k'}\setminus \overline {B}_{1-\lambda_k})\big)=(-1,t_k^\delta)\times(B_{1-\lambda_k'}\setminus \overline {B}_{1-\lambda_k}).
\end{align}
We now consider two subcases.
\begin{itemize}
 \item[(1A)] $\mathcal H^2\Big(\big(\{\eps\}\times(B_{1-\lambda_k'}\setminus B_{1-\lambda_k})\big) \setminus\mathbb S(E_k)_\eps\Big) >0$. To conclude the proof it is sufficient to show that 
\begin{align}\label{claim_case(a)}
\text{the multiplicity of }\Vke \text{ on } \big(\{\eps\}\times(B_{1-\lambda_k'}\setminus B_{1-\lambda_k})\big) \setminus\mathbb S(E_k)_\eps\text{ is }1, 
\end{align}
 because $(\{\eps\}\times B_{1-\lambda_k'})\setminus \mathbb S(E_k)_\eps$ is, 
by definition, outside the finite perimeter set $\mathbb S(E_k)$.

We argue by slicing, and consider the lines $l_{\rho,\theta}$ in $\R^3$ given by $l_{\rho,\theta}=\R\times \{\rho\}\times\{\theta\}$, with $\rho$ and $\theta$ fixed.
Consider any point $p_0$ of coordinates $\rho\in (1-\lambda_k,1-\lambda_k')$ and $\theta\in (-\pi,\pi]$ such that 
\begin{align}\label{slicing1d}
p_0\in  (\{\eps\}\times B_{1-\lambda_k'})\setminus \mathbb S(E_k)_\eps.
\end{align}
For a.e. such $\rho\in (1-\lambda_k,1-\lambda_k')$ and $\theta\in (-\pi,\pi]$ 
the slice of $\piPsiDkXik\res ((-1,\eps)\times B_{1-\lambda_k'})$ with respect to this line is the sum of some Dirac deltas with 
suitable signs, according to the orientation of $\piPsiDkXik$. Indeed $\piPsiDkXik$ is the integration over the boundary of the finite perimeter set $\mathbb S(E_k)$, so it turns out that, for a.e. $\rho\in (1-\lambda_k,1-\lambda_k')$ and $\theta\in (-\pi,\pi]$ the slice of $\jump{\mathbb S(E_k)}$ with respect to the line $l_{\rho,\theta}$ is exactly 
\begin{align}\label{firstpointcondition}
 \jump{\mathbb S(E_k)_{\rho,\theta}}=\jump{\mathbb S(E_k)\cap l_{\rho,\theta}},
\end{align}
that is the integration over some disjoint intervals. If $p_1,p_2,\dots p_m$ are the intervals endpoints (written in order\footnote{ $p_1$ is the point closer to $\{\eps\}\times \R^2$} on $l_{\rho,\theta}$) and if we assume that the last  interval between the points $p_1$ and $p_0=(\eps,\rho,\theta)$  is outside $\mathbb S(E_k)$,  then it results 
\begin{equation}\label{correctsum}
\partial\jump{\mathbb S(E_k)_{\rho,\theta}}=-\sum_{\substack{i>0\\i\textrm{ even}}}\delta_{p_i}+\sum_{\substack{i>0\\i\textrm{ odd}}}\delta_{p_i}. 
\end{equation}
 If instead the last interval $[p_1,p_0]$ is inside $\mathbb S(E_k)$ we have
\begin{equation}\label{wrongsum}
\partial\jump{\mathbb S(E_k)_{\rho,\theta}}=\sum_{\substack{i>0\\i\textrm{ even}}}\delta_{p_i}-\sum_{\substack{i>0\\i\textrm{ odd}}}\delta_{p_i}. 
\end{equation}

Let us now prove claim \eqref{claim_case(a)}.
We have obtained that, for a.e.
$\rho\in (1-\lambda_k,1-\lambda_k')$ and 
any $\theta\in (-\pi,\pi]$ such that  \eqref{slicing1d} holds,
 the slice $\partial\jump{\mathbb S(E_k)_{\rho,\theta}}$ is the sum in \eqref{correctsum}, and thanks to \eqref{lastpointequation} we deduce that the total number of points involved in \eqref{correctsum} must be odd. As a 
consequence, the push-forward by $\Pi_\eps$ of $ \partial\jump{\mathbb S(E_k)_{\rho,\theta}}$ is a Dirac delta with coefficient $-1$. Since this holds for a.e. $\rho\in (1-\lambda_k,1-\lambda_k')$ and any $\theta\in (-\pi,\pi]$, the conclusion follows.

\item[(1B)] Suppose $\mathcal H^2\Big(\big(\{\eps\}\times (B_{1-\lambda_k'}\setminus B_{1-\lambda_k})\big) \setminus\mathbb S(E_k)_\eps\Big)=0$.
In this case we pass to the complementary set; namely, if $\{\eps\}\times (B_{1-\lambda_k'}\setminus B_{1-\lambda_k}) =\mathbb S(E_k)_\eps\cap\big(\{\eps\}\times  (B_{1-\lambda_k'}\setminus B_{1-\lambda_k})\big)$, up to $\mathcal{H}^2$-negligible sets, we show that the multiplicity of $\Vke$ on this set is null. To do so it is sufficient to repeat the slicing argument 
 above for a.e. $(\rho,\theta)$ such that $p_0=(\eps,\rho,\theta)\in \{\eps\}\times (B_{1-\lambda_k'}\setminus B_{1-\lambda_k})$. For these points  \eqref{wrongsum} takes place, since by \eqref{lastpointequation} the number of points involved in the sum is even. The conclusion follows.
\end{itemize}

\textit{Case (2):} Choosing $\delta \in (0,1)$ small enough,
\begin{align}\label{inclusions_case2}
\Psi_k(\sourcedisk_{t_k^\delta})\subset [0,t_k^\delta]\times B_\delta(u_k(0)),
\end{align}
and, using $|u_k(0)|>1-\lambda_k$,
$$\projlambdak\circ \Psi_k(\badset\cap\sourcedisk_{t_k^\delta})\subset [0,t_k^\delta]\times (B_{1-\lambda_k'}\setminus B_{1-\lambda_k}).$$
Recalling the definition of $\piPsiDkXik$, it is not difficult to see that the current $\piPsiDkXik\res ((-1,t_k^\delta)\times B_{1-\lambda_k'})$ is supported in $[0,t_k^\delta]\times (\overline{B}_{1-\lambda_k'}\setminus {B}_{1-\lambda_k})$. 
By the properties of cylindrical symmetrization, we have also that $\mathbb S(\piPsiDkXik)\res ((-1,t_k^\delta)\times B_{1-\lambda_k'})$ is supported in $[0,t_k^\delta]\times (\overline{B}_{1-\lambda_k'}\setminus {B}_{1-\lambda_k})$.

Obviously, being $\mathcal Y_k$ null on $(-1,0)\times B_{1-\lambda_k'}$, we 
have
$$\mathcal X_k\res 
\Big((-1,0)\times (B_{1-\lambda_k'}\setminus B_{1-\lambda_k})
\Big)=\jump{(-1,0)\times (B_{1-\lambda_k'}\setminus B_{1-\lambda_k})},$$
and we find a set $E_{k,h}$, such that 
\begin{align*}
 \mathcal X_k\res 
\Big((-1,0)\times (B_{1-\lambda_k'}\setminus B_{1-\lambda_k})
\Big)=\jump{E_{k,h}\cap((-1,0)\times (B_{1-\lambda_k'}\setminus B_{1-\lambda_k}))}.
\end{align*}
If we pass to the  symmetrized set, arguing as in case (1A), we infer
\begin{align*}
\mathbb S(E_k)\cap \big((-1,0)\times (B_{1-\lambda_k'}\setminus B_{1-\lambda_k}) \big)=(-1,0)\times (B_{1-\lambda_k'}\setminus B_{1-\lambda_k}). 
\end{align*}
In other words, 
$(-1,0)\times (B_{1-\lambda_k'}\setminus B_{1-\lambda_k})$ is contained 
in $\mathbb S(E_k)$, and since the support of $\partial^* \mathbb S(E_k)$ does not intersect the set $(-1,0)\times B_{1-\lambda_k'}$, we infer that also 
\begin{align}\label{9.39}
 (-1,0)\times B_{1-\lambda_k'}\subset \mathbb S(E_k).
\end{align}
We now decompose $\{\eps\}\times B_{1-\lambda_k}$ as
\begin{align*}
 \{\eps\}\times B_{1-\lambda_k}=\big((\{\eps\}\times B_{1-\lambda_k})\cap \mathbb S(E_k)_\eps\big) \cup \big((\{\eps\}\times B_{1-\lambda_k})\setminus  \mathbb S(E_k)
 _\eps\big),
\end{align*}
and one of these two sets on the right-hand side must have positive $\mathcal H^2$-measure. 
Assume that $\mathcal 
H^2((\{\eps\}\times B_{1-\lambda_k})\setminus  \mathbb S(E_k)_\eps)>0$.
Then we will prove that the multiplicity of $\Vke$ on this set is $1$ (if instead $(\{\eps\}\times B_{1-\lambda_k})\setminus  \mathbb S(E_k)_\eps$ has zero measure then it is sufficient to prove that $\Vke$ has zero multiplicity on  $(\{\eps\}\times B_{1-\lambda_k})\cap \mathbb S(E_k)_\eps$; we drop this case being completely similar to the former). 

Therefore we now proceed as in case (1), 
slicing with respect to lines $l_{\rho,\theta}$ 
with $(\eps,\rho,\theta)\in \{\eps\}\times (\{\eps\}\times B_{1-\lambda_k})\setminus  \mathbb S(E_k)_\eps$. Since the last point 
$p_0=(\eps,\rho,\theta)$ does not belong to $\mathbb S(E_k)_\eps$, we are concerned with the sum in \eqref{correctsum}, and by \eqref{9.39} we infer that the number of $\{p_i\}$ involved in the sum is odd. The conclusion follows as in case (1).
\end{proof}

\begin{figure}
\begin{center}
    \includegraphics[width=0.75\textwidth]{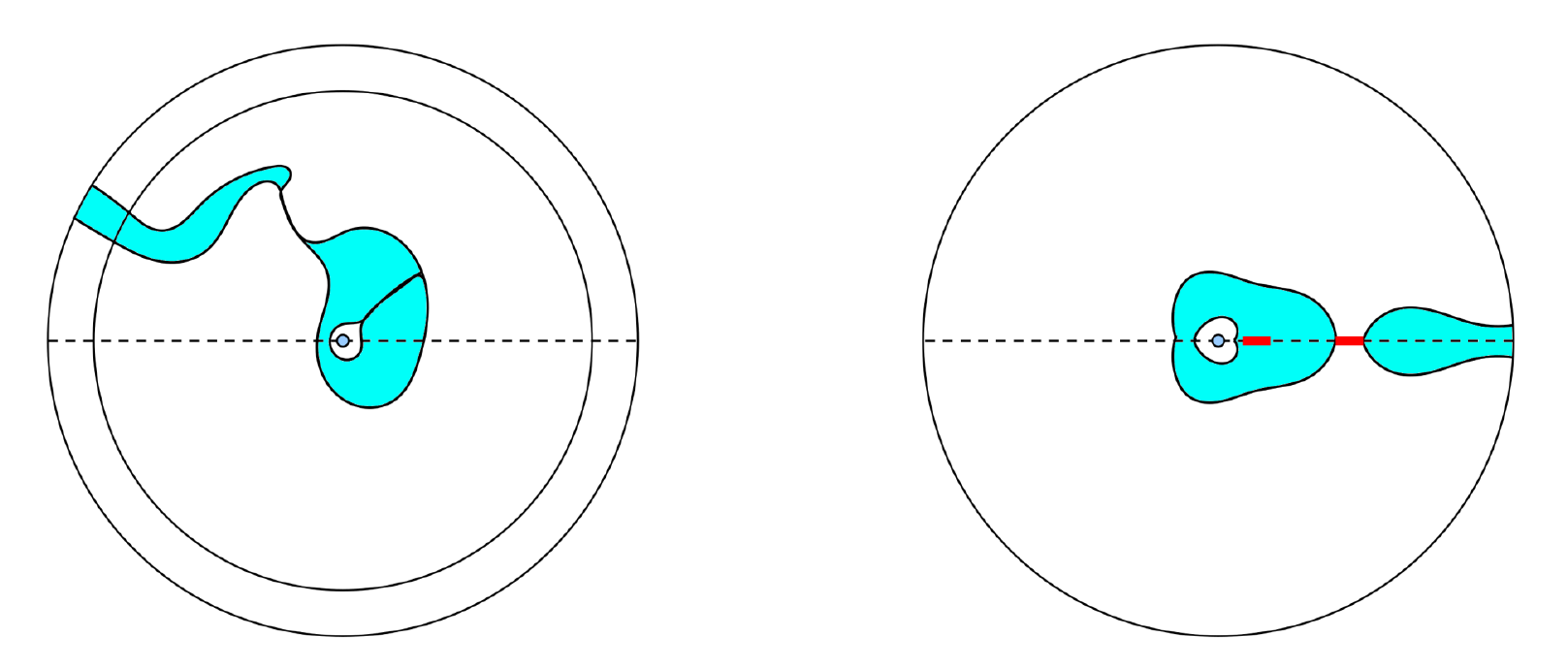}
 \caption{We represent the 
symmetrization of a general closed current in $B_1$. On the left it is visible that on two parts the curve overlaps itself in such a way that the multiplicity of the associated current is zero. In the symmetrized set, on the right picture, we have 
emphasized 
in bold 
 the corresponding set $\Jkzerotwopi$ in \eqref{J^i}. 
}
\label{fig:set_cancellation}
\end{center}
\end{figure}

\section{Gluing rectifiable sets}
\label{subsec:gluing}
In this section we 
show that, up to adding to $\partial \mathbb S(E_k)$ a rectifiable set 
with small $\mathcal H^2$-measure, $\partial \mathbb S(E_k)$ can be 
described as a polar graph of a suitable modification 
of the function $\Fke$ over a subset\footnote{called  $S^{(4)}_{k,\eps}$, see \eqref{newJ_k}.} of the rectangle\footnote{In cartesian coordinates.} 
$(0,\longR)\times [0,1]\times \{0\}\subset \R^3$,
and with Dirichlet boundary conditions independent of $k$. In Section 
\ref{sec:lower_bound} we will 
reduce the estimate of the area of the graph of $u_k$ 
to an estimate for a non-parametric Plateau problem which in turn will be independent of $k$. 

First we remark that $\mathbb S(E_k)
\subseteq C_l(1-\lambda_k')$ and $\mathbb S(\piPsiDkXik) = \partial 
\mathbb S(\mathcal E_k)$ in $C_l(1-\lambda_k')$, see
\eqref{symmetrization_of_Sk}. If we look at $\mathbb S(E_k)$ as a subset of $C_l$, we cannot conclude $\partial^*\mathbb  S(E_k)=\mathbb S(\piPsiDkXik)$ in $C_l$, and $\mathbb S(\piPsiDkXik)$ is not a closed current in $C_l$. For this reason we have to identify the boundary of $\mathbb S(\piPsiDkXik)$ in $C_l$. 

Recalling Corollary \ref{cor_9.5} and Definition \ref{def:Sigmake}, 
\begin{align*}
&{\partial \Big(} \big(\GFkethree+
\GminusFkethree+\jump{\Sigmake}\big)\res((\eps,l)\times \R^2){\Big)}\res(\{\eps\}\times \R^2)\\
=&\partial\big(\mathbb S(\piPsiDkXik)\res((\eps,l)\times \R^2)\big)\res(\{\eps\}\times \R^2)= -\mathbb S({\mathcal  H}_{k,\eps})-\mathbb S(L)_1+\mathbb S(L)_2-\jump{\overline{Y_1Y_2}},
\end{align*} 
in $\mathcal D_2((-\infty,l)\times \R^2)$, where $\jump{\overline{Y_1Y_2}}$ is the integration on ${\overline{Y_1Y_2}}$ (see \eqref{eq:def_Y12}) oriented from $Y_1$ to $Y_2$.
As a consequence, from \eqref{eq:boundary_Vkbis}, we obtain
\begin{align*}
 \partial \big(\GFkethree+
\GminusFkethree+\jump{\Sigmake}+\Vke\big)\res(\{\eps\}\times \R^2)=\jump{\{\eps\}\times \partial B_{1-\lambda_k'}} \qquad \text{ in }\mathcal D_2((-\infty,l)\times \R^2),
\end{align*}
where $\partial B_{1-\lambda_k'}$ is counterclockwisely oriented.

\begin{figure}
\begin{center}
    \includegraphics[width=0.6\textwidth]{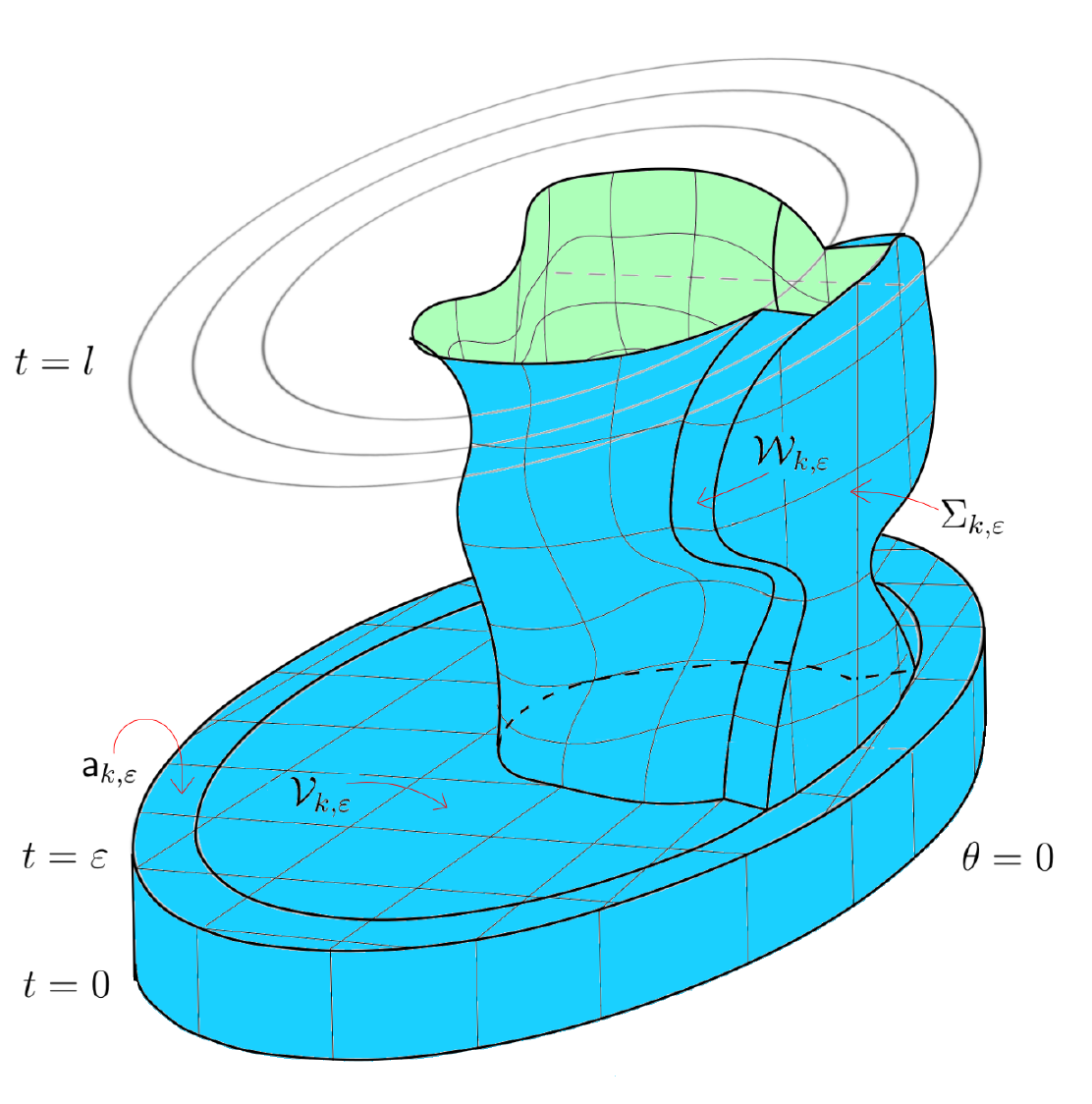}
 \caption{The largest (resp. smaller) basis circle 
has radius $1$ (resp. $1-\lambda_k'$). The smallest 
top circle has radius $1-\lambda_k$. The symbol $\mathcal W_{k,\eps}$
denotes the restriction of $\mathcal W_k$ to $\overline C_\eps^l(1-\lambda_k')$, after
symmetrization. Note that $ \GFkethree+
\GminusFkethree$ does not include $\Sigma_{k,\eps}$ and ${\mathcal V}_{k,\eps}$; see \eqref{eq:G_three_plus_G_minus_three}.
}
\label{fig:tricilindro}
\end{center}
\end{figure}
\subsection{Enforcing boundary conditions at $\{0\}\times \R^2$; 
a modification of $\Fke$}
Let $\intannulus$ denote the integration over the annulus $\{\eps\}\times( B_{1}\setminus B_{1-\lambda_k'})$, in such a way that $$\partial 
\intannulus
=\jump{\{\eps\}\times \partial B_{1}}-\jump{\{\eps\}\times \partial B_{1-\lambda_k'}},
$$
see Fig. \ref{fig:tricilindro}.
Then 
\begin{align}\label{eq:estimateUpsilon}
 |\intannulus|=\pi(1-(1-\lambda_k')^2)\leq 2\pi\lambda_k',
\end{align}
and 
\begin{align*}
 \partial \big(\GFkethree+
\GminusFkethree+\jump{\Sigmake}+\Vke+\intannulus
\big)=\jump{\{\eps\}\times \partial B_{1}} \qquad \text{ in }\mathcal D_2((-\infty,l)\times \R^2).
\end{align*}
Finally, we add to the current $\GFkethree+
\GminusFkethree+\jump{\Sigmake}+\Vke+\intannulus
$ the integration over the lateral boundary of the cylinder $(0,\eps)\times B_1$, so that the resulting current 
\begin{align}
\label{boundary_of_Ok}
 \GFkethree+
\GminusFkethree+\jump{\Sigmake}+\Vke+\intannulus
+\jump{(0,\eps)\times \partial B_1}\in \mathcal D_2((0,\longR)\times\R^2),
\end{align}
satisfies
\begin{align*}
\partial \Big(
\GFkethree+
\GminusFkethree+\jump{\Sigmake}+\Vke+\intannulus
+\jump{(0,\eps)\times \partial B_1}\Big)=\jump{\{0\}\times \partial B_{1}} \text{ in }\mathcal D_2((-\infty,l)\times\R^2);
\end{align*}
in particular it is boundaryless in 
$\mathcal D_2((0,\longR)\times\R^2)$. 

Now, we want to identify the solid
region that we can call the ``inside'' of the current in \eqref{boundary_of_Ok}.
\begin{definition}[\textbf{The sets $\Oke$}]\label{def:O_k}
We let
\begin{equation}\label{eq:def_O_k}
\Oke
:=\big(\mathbb S(E_k)\cap ((\eps,l)\times \R^2)\big)\cup ((0,\eps]\times B_1)\subset [0,\longR] \times\R^2.
\end{equation}
\end{definition}
A direct check shows that the current built in 
\eqref{boundary_of_Ok} is the integration over the  boundary of $\jump{\Oke}$. 
Indeed, by Lemma \ref{lemma10.3}(iii) and Definition \ref{def:Sigmake}
 we see that the integration over
$\mathbb S(E_k)\cap ((\eps,l)\times \R^2)$ has as boundary  $\GFkethree+
\GminusFkethree+\jump{\Sigmake}$ in $(\eps,l)\times \R^2$, 
whereas  $(0,\eps]\times B_1$ trivially
 has boundary $(0,\eps)\times \partial B_1$ in $(0,\eps)\times \R^2$. 
The current $\Vke+\intannulus$ represents the boundary of $\jump{\Oke}$ concentrated on the plane $\{\eps\}\times \R^2$.
 In turn we will see (formulas \eqref{subgraph1extended}, 
\eqref{subgraph2extended}) that $\Oke$ is the polar subgraph of 
a suitable modification of $\Fke$.
Thus we are going to introduce the new  extra ``strip'' (recalling
the definition of $\striptwo$ in \eqref{eq:strip_two} and of $\JQke$ in 
\eqref{strip_Jextension1}):
\begin{equation}
\label{newJ_k}
\begin{aligned}
 \stripfour:&=
\striptwo
\cup \JQke
\cup ((\eps,l)\times [1-\lambda_k',1]\times\{0\})
\\
 &=\{(t,\rho,\theta):t\in(\eps,l),\rho\in[\minuk(t)\wedge (1-\lambda_k),1],\theta=0\},
\end{aligned}
\end{equation}
see Fig. \ref{fig_q4} (and also Figs. \ref{fig_q}, \ref{fig_q3}).

\begin{figure}
	\begin{center}
		\includegraphics[width=0.7\textwidth]{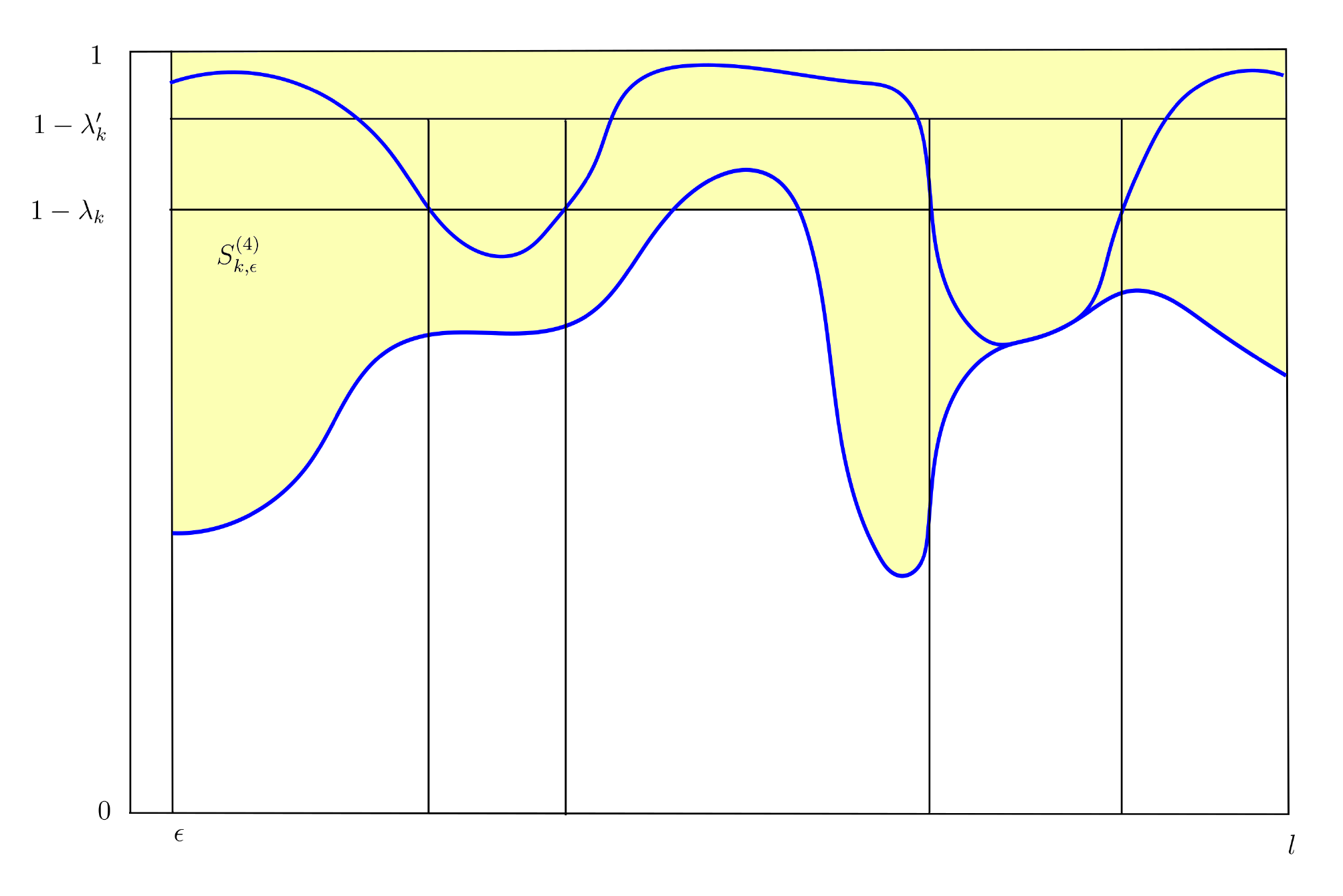}
		\caption{The graphs of the functions $\maxuk$ and $\minuk$ 
			and the set $\stripfour$ in \eqref{newJ_k}.
See also Fig. \ref{fig_q}.
		}
		\label{fig_q4}
	\end{center}
\end{figure}

\begin{definition}[\textbf{The function $\hatFke$}]
\label{def:the_function_hatFke}
We define
$\hatFke : (0,\longR)\times [0,1]\times \{0\}\to \R$ as 
\begin{align}\label{strip_Jextension2}
 \hatFke:=\begin{cases}
\Fke &\text{ in } 
(\eps,l)\times [0,1-\lambda_k']\times \{0\}
\\
                 0 &\text{ in }(\eps,l)\times [1-\lambda_k',1]\times \{0\}\\
                 \pi&\text{ in }(0,\eps]\times [0,1]\times \{0\}.
                \end{cases}
\end{align}
\end{definition}
 Accordingly, we extend the currents $\GFkethree$ and 
$\GminusFkethree$ as follows:
As in \eqref{subgraph1} we fix $\eta \in (0,\frac{\pi}{4})$, 
and set 
\begin{align*}
&SG^{\rm pol}_{\hatFke}:=\{(t,\rho,\theta)\in(0,\longR)\times [0,1]\times \{0\}:\theta\in (-\eta,\hatFke(t,\rho,0))\},\\
&UG^{\rm pol}_{-\hatFke}:=\{(t,\rho,\theta)\in (0,\longR)\times [0,1]\times \{0\}:\theta\in (-\hatFke(t,\rho,0),\eta)\}.
\end{align*}

\begin{remark}\rm
By construction, 
\begin{align}
\label{subgraph1extended}
 &SG^{\rm pol}_{\hatFke}\cap\{\theta\in (0,\pi)\}=\Oke\cap \{\theta\in (0,\pi)\},\\&\label{subgraph2extended}
 UG^{\rm pol}_{-\hatFke}\cap\{\theta\in (-\pi,0)\}=\Oke\cap \{\theta\in (-\pi,0)\},
\end{align}
where the set $\Oke$ is defined in \eqref{eq:def_O_k}.
\end{remark}

The next currents are constructed to reach the 
segment $(0,\longR)\times \{1\}\times \{0\}$.

\begin{definition}[\textbf{The currents $\mathcal G_{\pm
\widehat \vartheta_{k,\eps}}^{(4)}$}]\label{def:G^4_keps}
We define the currents 
\begin{equation}\label{eq:def_generalizedgraphs_Fextended}
\begin{aligned}
 &\GhatFkfour
:=(\partial 
\jump{SG^{\rm pol}_{\hatFke}})
\res\{\theta\in (0,\pi)\}+\jump{G^{\rm pol}_{\hatFke\res \big(\big\{\hatFke \in\{ 0,\pi\}\big\}\cap \stripfour\big)  
}},
\\
 &\GminushatFkfour
:=(\partial \jump{UG^{\rm pol}_{- \hatFke}})\res
\{\theta\in (-\pi,0)\}+\jump{G^{\rm pol}_{-\hatFke\res \big(\big\{\hatFke \in\{ 0,\pi\}\big\}\cap \stripfour\big)
}}.
\end{aligned}
\end{equation}
\end{definition}
In other words, the support of $\GhatFkfour$ coincides with the 
generalized polar graph of $\hatFke$ restricted to $\stripfour\times [0,\pi]$.
Notice that also in this case $\jump{G^{\rm pol}_{-\hatFke
\res
\big(\big\{\hatFke \in\{ 0,\pi\}\big\}\cap \stripfour\big)    
}}+\jump{G^{\rm pol}_{\hatFke
\res \big(\big\{\hatFke \in\{ 0,\pi\}\big\}\cap \stripfour\big)    
}}=0$,
and 
\begin{align}
\label{eq:int_O_k}
 \GhatFkfour
+\GminushatFkfour
=\jump{\partial^* \Oke}\text{ in }(0,\longR)\times \R^2.
\end{align}
Moreover, by \eqref{eq:def_generalizedgraphs_Fextended} 
and \eqref{eq:def_generalizedgraphs_F}, 
\begin{align}\label{sum_generalizedgraphextended}
  |\GhatFkfour|+|\GminushatFkfour|=|
\GhatFkfour
+\GminushatFkfour|+2\mathcal H^2\left(\big\{\hatFke \in\{ 0,\pi\}\big\}\cap \stripfour\right). 
\end{align}
Finally
\begin{align}\label{graph_of_F_ktotal}
\GhatFkfour=&
\GFkethree+\jump{(\eps,l)\times [1-\lambda_k',1]\times \{0\}}+\jump{\Sigmake\cap \{0\leq\theta\leq\pi\}}+\Vke\res\{0\leq\theta\leq\pi\}\nonumber\\
 &+{\intannulus
\res \{0\leq\theta\leq\pi\}}+\jump{((0,\eps)\times \partial B_1)\cap \{0\leq\theta\leq\pi\}},
\end{align}
and
\begin{align*}
 \GminushatFkfour=&
\GminusFkethree-\jump{(\eps,l)\times [1-\lambda_k',1]\times \{0\}}+\jump{\Sigmake\cap \{-\pi\leq\theta\leq0\}}+\Vke\res\{-\pi\leq\theta\leq0\}\nonumber\\
 &+{\intannulus
\res \{-\pi\leq\theta\leq0\}}+\jump{((0,\eps)\times \partial B_1)\cap \{-\pi\leq\theta\leq0\}},
\end{align*}
so that $$\GhatFkfour+
\GminushatFkfour
=\GFkethree+
\GminusFkethree+\jump{\Sigmake}+\Vke+\intannulus
+\jump{(0,\eps)\times \partial B_1}.$$

\begin{remark}
 The function $\hatFke$ is defined on the whole domain $ (0,\longR)\times [0,1]\times \{0\}$, but it might take values in $(0,\pi)$ only in 
$\striptwo$, see Remark \ref{rmk:8.6}(v). Moreover, referring also to Remark \ref{rmk:8.8}, we see that the currents $\GhatFkfour$ and $
\GminushatFkfour$ neglect the generalized polar  graph of $\hatFke$ 
(defined in \eqref{eq:generalized_polar_graph})
on $ ((0,\longR)\times [0,1]\times \{0\})\setminus \stripfour$, with the only 
exception of the ``vertical'' part $\jump{(0,\eps)\times \partial B_1}$.
\end{remark}

An important step in the proof of the estimate
from below in Theorem \ref{teo:step1} is given by the
next inequality.

\begin{prop}[\textbf{Estimate from below in terms of the mass of $\GhatFkfour$ and $\GminushatFkfour$}]
Let $\eps$ be fixed as in \eqref{eq:H1}  and \eqref{eq:H2}.
 The following inequality holds:
 \begin{align}\label{crucial_ineq}
|\currgraphk|_{\badset\times \R^2} 
\geq 
|\GhatFkfour|+|\GminushatFkfour|-
\pi\eps-\frac{C}{\eps n} -o_k(1) 
 \end{align}
for an absolute constant $C>0$, where the sequence 
$o_k(1)\geq0$ depends on $\eps$ and $n$, and is infinitesimal as $k\rightarrow +\infty$.
\end{prop}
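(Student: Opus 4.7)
The plan is to bound $|\GhatFkfour|+|\GminushatFkfour|$ from above by $|\currgraphk|_{\badset\times\R^2}$ plus controlled error terms, using the explicit decomposition \eqref{graph_of_F_ktotal} of $\GhatFkfour$ (and its analogue for $\GminushatFkfour$) together with the quantitative estimates built up in the previous sections. Since the various pieces appearing in \eqref{graph_of_F_ktotal} are supported on essentially disjoint rectifiable sets (their pairwise intersections live on curves, hence have zero $\mathcal H^2$-measure), applying the triangle inequality yields
\begin{align*}
|\GhatFkfour|+|\GminushatFkfour|
\leq\; |\GFkethree|+|\GminusFkethree|+2(l-\eps)\lambda_k'+|\Sigmake|+|\Vke|+|\intannulus|+2\pi\eps,
\end{align*}
where the last summand accounts for the two half-cylinders $\jump{((0,\eps)\times\partial B_1)\cap\{\theta\gtrless 0\}}$, each of mass $\pi\eps$, and the $2(l-\eps)\lambda_k'$ term comes from the flat strip $\jump{(\eps,l)\times[1-\lambda_k',1]\times\{0\}}$ counted twice.

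The main contribution is handled by Proposition \ref{prop_Gf} combined with Lemma \ref{lemma10.3}. Indeed, Lemma \ref{lemma10.3}(i)--(ii) gives
\begin{align*}
|\GFkethree|+|\GminusFkethree|=|\currentgengraphFkJkzero|+|\currentgengraphminusFkJkzero|+2\mathcal H^2(\JQke),\qquad \mathcal H^2(\JQke)\leq \tfrac{1}{2\pi\eps n},
\end{align*}
while \eqref{eq:estimate_useful} together with \eqref{eq:masses_localized} yields
\begin{align*}
|\currentgengraphFkJkzero|+|\currentgengraphminusFkJkzero|\leq \int_{\badset\cap(\Om\setminus\overline\sourcedisk_\eps)}|J(\projlambdak\circ\Psi_k)|\,dx+\tfrac{1}{n}+o_k(1)\leq |\currgraphk|_{(\badset\cap(\Om\setminus\overline\sourcedisk_\eps))\times\R^2}+\tfrac{1}{n}+o_k(1).
\end{align*}
The remaining terms are treated with: $|\Sigmake|\leq \tfrac{1}{\eps n}+o_k(1)$ from \eqref{mass_Sigma}; $|\Vke|\leq |\currgraphk|_{(\badset\cap\sourcedisk_\eps)\times\R^2}+2\pi(\lambda_k-\lambda_k')$ from \eqref{estimate_of_Vk}; $|\intannulus|\leq 2\pi\lambda_k'$ from \eqref{eq:estimateUpsilon}; and the strip contribution $2(l-\eps)\lambda_k'$, all of which are $o_k(1)$ by Definition \ref{def_lambdak'} and Lemma \ref{lem:choice_of_u_k_and_t_k}.

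Adding all these bounds and observing that $\badset$ is the essentially disjoint union of $\badset\cap(\Om\setminus\overline\sourcedisk_\eps)$ and $\badset\cap\sourcedisk_\eps$, so that
\begin{align*}
|\currgraphk|_{(\badset\cap(\Om\setminus\overline\sourcedisk_\eps))\times\R^2}+|\currgraphk|_{(\badset\cap\sourcedisk_\eps)\times\R^2}=|\currgraphk|_{\badset\times\R^2},
\end{align*}
one arrives at
\begin{align*}
|\GhatFkfour|+|\GminushatFkfour|\leq |\currgraphk|_{\badset\times\R^2}+\tfrac{1}{n}+\tfrac{1}{2\pi\eps n}+\tfrac{1}{\eps n}+2\pi\eps+o_k(1),
\end{align*}
which, upon absorbing $\tfrac{1}{n}+\tfrac{1}{2\pi\eps n}+\tfrac{1}{\eps n}$ into a single $\tfrac{C}{\eps n}$ (using $\eps<1$) and the constant in front of $\eps$ into $C$ as well, gives exactly \eqref{crucial_ineq} (rearranging to isolate $|\currgraphk|_{\badset\times\R^2}$). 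There is no conceptual obstacle: the proof is a careful bookkeeping of the many estimates already established, the only subtle point being to keep track of which pieces of the rectifiable structure are common to $\GhatFkfour$ and $\GminushatFkfour$ (cancellations in the sum $\GhatFkfour+\GminushatFkfour$ do not help once one has passed to the sum of masses $|\GhatFkfour|+|\GminushatFkfour|$), and to correctly split the domain $\badset$ into its inner and outer parts with respect to $\sourcedisk_\eps$.
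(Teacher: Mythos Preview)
Your proposal is correct and follows essentially the same route as the paper: decompose $\GhatFkfour$ via \eqref{graph_of_F_ktotal}, apply the triangle inequality to the masses, and invoke in turn Lemma \ref{lemma10.3}(i)--(ii), estimate \eqref{eq:estimate_useful} from Proposition \ref{prop_Gf}, then \eqref{mass_Sigma}, \eqref{estimate_of_Vk}, \eqref{eq:estimateUpsilon}, and the explicit area of the cylinder $(0,\eps)\times\partial B_1$. One small slip in your last sentence: you cannot ``absorb the constant in front of $\eps$ into $C$'', since $C$ is the coefficient of $\tfrac{1}{\eps n}$, not of $\eps$; your computation correctly yields $2\pi\eps$ from $|\jump{(0,\eps)\times\partial B_1}|$, and in fact the paper's own proof produces the same term, so the $\pi\eps$ in the displayed statement appears to be a harmless typo (the term vanishes when $\eps\to 0^+$ in the later limit argument).
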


\begin{proof}
By \eqref{graph_of_F_ktotal} we get
\begin{align*}
 |\GhatFkfour|\leq&|
\GFkethree|+\lambda_k'l+|\jump{\Sigmake\cap \{0<\theta<\pi\}}|+|\Vke\res\{0<\theta<\pi\}|\nonumber\\
 &+|{\intannulus
\res \{0\leq\theta\leq\pi\}}|+|\jump{((0,\eps)\times \partial B_1)\cap \{0\leq\theta\leq\pi\}}|.
\end{align*}
A similar estimate holds for  $|\GminushatFkfour|$ so that 
\begin{equation*}
 |\GhatFkfour|
+|\GminushatFkfour|
\leq  |\GFkethree|
+|\GminusFkethree|
+|\jump{\Sigmake}|+|\Vke|
 +|\intannulus
|+|\jump{(0,\eps)\times \partial B_1}|
+2\lambda_k'l.
\end{equation*}
Coupling the above inequality with 
\eqref{123} and \eqref{124} gives
\begin{align*}
& |\GhatFkfour|+|\GminushatFkfour|
\\
\leq &
|\currentgengraphFkJkzero|+|\currentgengraphminusFkJkzero|+|\jump{\Sigmake}|+|\Vke|+|\intannulus
|+|\jump{(0,\eps)\times \partial B_1}|
+2\lambda_k'l+\frac{1}{\pi\eps n}\nonumber\\
\leq& \int_{\badset\cap 
(\Omega \setminus \overline \sourcedisk_\eps)
}|J(\projlambdak\circ\Psi_k)|dx+|\jump{\Sigmake}|+|\Vke|+|\intannulus
|+|\jump{(0,\eps)\times \partial B_1}|\nonumber\\
&\quad+2\lambda_k'l+\frac{1}{\pi\eps n}+\frac1n\nonumber\\
\leq
& \int_{\badset\cap (\Omega \setminus \overline \sourcedisk_\eps)}|J(\projlambdak\circ\Psi_k)|dx+ 
|\currgraphk|_{(\badset\cap \sourcedisk_\eps)\times \R^2}+\pi\eps+\frac{C}{\eps n}+o_k(1),
\end{align*}
where the second inequality follows from 
\eqref{eq:estimate_useful}, the last inequality follows from
\eqref{mass_Sigma}, \eqref{eq:estimateUpsilon} 
and \eqref{estimate_of_Vk}, and $C>0$ is an absolute constant. Here $o_k(1)$ is a nonnegative quantity 
infinitesimal as $k\rightarrow +\infty$, depending on $\eps$ and $n$.
In conclusion 
\begin{align*}\label{main_estimate}
& |\GhatFkfour|+|\GminushatFkfour|-\pi\eps
-\frac{C}{\eps n}
-o_k(1)
 \\
\leq &
|\currgraphk|_{(\badset\cap (\Omega \setminus \sourcedisk_\eps))
\times \R^2}+|\currgraphk|_{(\badset\cap \sourcedisk_\eps)\times \R^2}
=|\currgraphk|_{\badset\times \R^2}.
\end{align*}
\end{proof}

\section{Three examples}\label{sec:three_examples}
Before concluding the proof of the lower bound, 
we aim to explain the various geometric objects 
introduced in the previous sections (for a recovery sequence) through three
interesting examples of sequences converging to  the vortex map $\vortexmap$. 
We warn the reader that the sequences of Sections 
\ref{subsec:an_approximating_sequence_of_maps_with_degree_zero:cylinder}
 and \ref{sec:catenoid_with_a_flap}, as well as the one of Section 
\ref{sec:smoothing_by_convolution} for $\longR$
small, are not recovery sequences; nevertheless, we believe 
it is useful to describe the various quantities introduced in 
Sections \ref{sec:the_maps}-\ref{subsec:gluing} in correspondence
to these sequences, since this shades some light
on the proof of the lower bound.

\subsection{An approximating sequence of maps with degree zero: cylinder} 
\label{subsec:an_approximating_sequence_of_maps_with_degree_zero:cylinder}
In \cite{AcDa:94} the authors describe 
an approximating sequence $(u_k)$ of smooth maps taking values
in $\mathbb S^1$  
which, in our context,
are defined in polar coordinates as follows:
 \begin{equation}
\label{eqn:u_k_Cylinder}
\veps(r,\theta):=
\begin{cases}
     (\cos \theta, \sin \theta) & \textrm{ \qquad 
in } \Omega_1:=\BallR \setminus (\Balleps \cup \{\theta \in (-\theta_k, \theta_k)\}),\\
     (\cos (\frac{r}{\reps}(\theta -\pi)+\pi),\sin (\frac{r}{\reps}(\theta -\pi)+\pi)) & \textrm{ \qquad in }\Balleps \setminus \{\theta \in (-\theta_k, \theta_k)\},\\
    (\cos (\frac{\theta_k -\pi}{\theta_k}\theta +\pi),\sin(\frac{\theta_k -\pi}{\theta_k}\theta +\pi)) & 
\textrm{ \qquad in } \{\theta \in [0, \theta_k)\}\setminus \Balleps,\\
(\cos (\frac{-\theta_k +\pi}{-\theta_k}\theta +\pi),\sin(\frac{-\theta_k +\pi}{-\theta_k}\theta +\pi)) & 
\textrm{ \qquad in } \{\theta \in (-\theta_k,0)\}\setminus \Balleps,\\
   (\cos( \frac{r}{\reps}(\frac{\theta_k -\pi}{\theta_k}\theta )+\pi),\sin ( \frac{r}{\reps}(\frac{\theta_k -\pi}{\theta_k}\theta )+\pi)) & \textrm{ \qquad in }\Omega_4:=\Balleps \cap \{\theta \in [0, \theta_k)\},\\
   (\cos( \frac{r}{\reps}(\frac{-\theta_k +\pi}{-\theta_k}\theta )+\pi),\sin ( \frac{r}{\reps}(\frac{-\theta_k +\pi}{-\theta_k}\theta )+\pi)) & \textrm{ \qquad in }\Omega_4:=\Balleps \cap \{\theta \in (-\theta_k, \theta_k)\},
    \end{cases} 
\end{equation}
where $(\reps)$ and $(\theta_k)$ are two infinitesimal sequences 
of positive numbers; see Fig. \ref{fig:Omega_l_uk}.
Recall that in the previous sections
we introduced the number $\eps$; 
we may assume here that  
$\reps<<\eps$
for all $k \in \mathbb N$. 
Notice that
$u_k(0,0)= (-1,0)
=u_k(r,0)$ for $r \in (0,\longR)$.
Moreover for $t \in (0,\longR)$  we have  $u_k(\partial \sourcedisk_t)
=\partial B_1 \setminus \{\theta \in (-\theta_k, \theta_k) \}$, and the degree
of $u_k$ is zero.
 
Now we fix an infinitesimal sequence $(\lambda_k)$ of positive
numbers. Inspecting \eqref{eqn:u_k_Cylinder},
it turns out that the set $\badset$ defined in \eqref{eq:D_k_once_for_all}
and \eqref{eq:D_k_delta} satisfies 
$\badset\subsetneq \Balleps \cup \{\theta\in (-\theta_k, \theta_k)\}\cap
\Om$, 
see Fig.  \ref{fig:D_k_in_Cylinder_EX}.

\begin{figure}
	\begin{center}
		\includegraphics[width=0.9\textwidth]{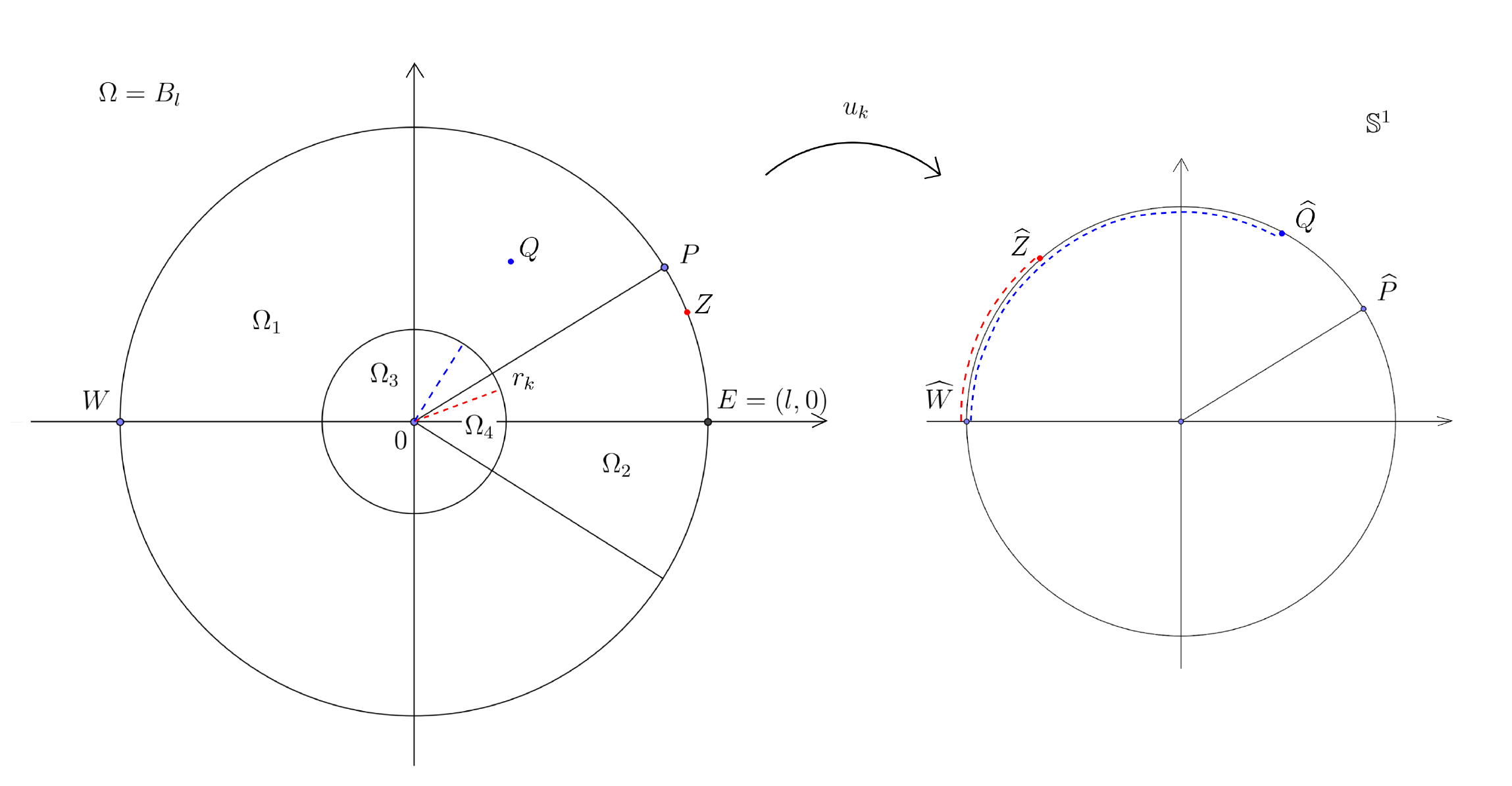}
		\caption{
The map $u_k$ in \eqref{eqn:u_k_Cylinder}. 
We set $\hat P := P/\vert P\vert = \theta_k$, 
$\hat Q := Q/\vert Q\vert$. All points in $\Omega_1 \cup \Omega_2$ are retracted
on $\mathbb S^1$. 
The image of $\Omega_3$ through $u_k$ is as follows:
$u_k$ sends the generic dotted segment onto the 
(long) dotted arc on $\mathbb S^1$.
Finally, the image of $\Omega_4$ through $u_k$ is as follows:
$u_k$ sends the generic dotted segment onto the 
(short) dotted arc on $\mathbb S^1$.
		}
		\label{fig:Omega_l_uk}
	\end{center}
\end{figure}

\begin{figure}
\begin{center}
    \includegraphics[width=0.5\textwidth]{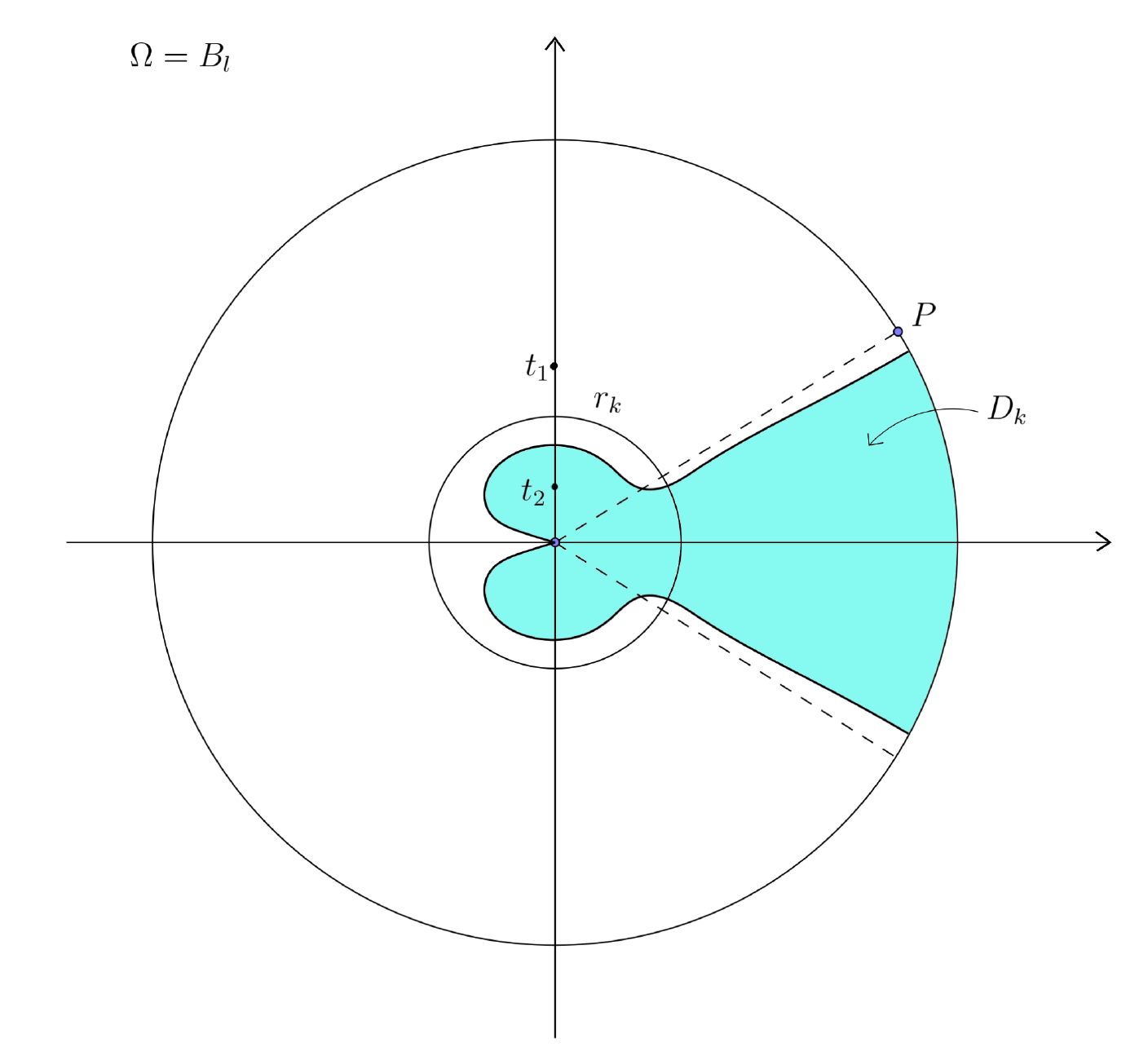}
 \caption{ The set emphasized is $\badset$ 
for the sequence in \eqref{eqn:u_k_Cylinder}.
}
\label{fig:D_k_in_Cylinder_EX}
\end{center}
\end{figure}

Notice that ${\rm spt}({u_k}_\sharp \jump{(\Omega\setminus \badset) \cap \partial 
\sourcedisk_t}) \subseteq  u_k((\Omega\setminus \badset) 
\cap \partial \sourcedisk_t)$ with strict inclusion 
whenever $t$ is such that $(\Omega\setminus \overline \badset) \cap\{\theta 
= \pm \theta_k\}  \cap \partial \sourcedisk_t\neq \emptyset$. 
For instance, for $t\in (\reps, l),$ we have  $u_k((\Omega\setminus \badset) 
\cap \partial \sourcedisk_t)
=\partial B_1 \setminus \{\theta \in (-\theta_k, \theta_k) \}$ and 
$u_k(\badset \cap \partial \sourcedisk_t) 
= \partial B_1 \setminus \{\theta \in (-\theta_k-a_k, \theta_k+a_k) \}$,  
where $a_k$ is a positive small angle;
on the other hand\footnote{This is due to the fact that 
$u_k\vert _{\Omega\setminus \badset}$ covers the arcs 
$(\partial B_1)\cap\{ \theta \in (\theta_k, \theta_k+a_k) 
\cup (-\theta-a_k, -\theta)\}$ twice, with opposite orientations.} 
${\rm spt} ({u_k}_\sharp\jump{(\Omega \setminus \badset) 
\cap \partial \sourcedisk_t})= \partial B_{1} \setminus \{\theta \in (-\theta_k-a_k, \theta_k+a_k) \}={\rm spt} ({u_k}_\sharp\jump{\badset \cap \partial \sourcedisk_t})$.  Similarly we have 
${\rm spt}({u_k}_\sharp \jump{\badset \cap \partial \sourcedisk_t}) 
\subseteq  u_k(\badset \cap \partial \sourcedisk_t)$ 
with strict inclusion\footnote{This could only happen for $t\in (0,\reps)$.} whenever 
$\badset \cap\{\theta = \pm \theta_k\}  \cap \partial \sourcedisk_t \neq \emptyset$ (thus, $t \in (0,r_k)$).

The support of the current $\piPsiDk
=(\projlambdak\circ \Psi_k)_\sharp\jump{\badset}$
in \eqref{eq:S_hat_k}
is a connected set contained in $\partial_{{\rm lat}}(
C_l(1-\lambda_k))$. 
But in this 
example we also have ${\rm spt}(\piPsiDk)
={\rm spt} \left(
(\projlambdak\circ \Psi_k)_\sharp\jump{\Omega\setminus \badset}\right)$; 
moreover 
$$
{\rm spt}(\piPsiDk) \cap 
C^\eps_l
=
 (\eps,l)\times \{1-\lambda_k\}\times \left(
(-\pi,\pi)\setminus (-\theta_k-a_k, \theta_k+a_k)\right).
$$
The support of the current 
  $ \Xik$ in \eqref{eq:def_Xi_k}, an orthogonal ``wall'' 
over $C_l(1-\lambda_k)$ 
of 
height $\lambda_k - \lambda_k'$,
 built on $\partial \piPsiDk$,
divides  $C_l(1-\lambda_k^\prime) \setminus 
C_l (1-\lambda_k)$ into two connected sets; one of them, 
${\rm spt}(\mathcal{Y}_k)$ 
(see \eqref{eq:defY_k}), 
has ${\rm spt} (\projlambdak\circ \Psi_k)_\sharp\jump{\Omega\setminus \badset}$ 
as part of its boundary, and the other one, 
${\rm spt} (\mathcal{X}_k)$ (see \eqref{eq:defX_k}), has $\partial C_l(1-\lambda_k) \setminus 
{\rm spt} ((\projlambdak\circ \Psi_k)_\sharp\jump{\Omega\setminus \badset})$ 
as part of its boundary. 
Hence the support of $\piPsiDkXik=\piPsiDk+\Xik$ (Definition \ref{def:the_current_S_k}) divides  $C_l(1-\lambda_k^\prime)$ 
into two connected sets, one of them, $E_k$, is $C_l(1-\lambda_k) \cup {\rm spt} (\mathcal{X}_k)$, and the second one is 
its complement in $C_l(1-\lambda_k')$ and equals
${\rm spt}( \mathcal{Y}_k)$.
In this particular example the cylindrical Steiner symmetrization introduced in Section \ref{sec:cylindrical_Steiner_symmetrization} is unnecessary, since 
$\mathbb{S}(E_k)= E_k$ and 
hence $\mathbb{S}(\piPsiDkXik)= \piPsiDkXik$. 
Thus the function $\Fke$ in \eqref{eq:Fke} reads as 
\begin{equation*}
\Fke(t,\rho)=
\frac{\Theta_k(t,\rho)}{2}= \frac{1}{2\rho}\Hone((E_k)_{t,\rho})=\begin{cases}
\theta_k+a_k &\qquad \text{for }(t,\rho)\in (\eps,l)\times(1-\lambda_k,1-\lambda_k'],\\
\pi &\qquad  \text{for }(t,\rho)\in (\eps,l)\times(0,1-\lambda_k].
\end{cases}
\end{equation*}
Note that 
\begin{align*}
&{\rm spt }(\piPsiDkXik)
\cap C^\eps_l
=\left({\rm spt}(\piPsiDk) \cup  {\rm spt}(\Xik)\right)
\cap C^\eps_l\\
&=\left({\rm spt}(\piPsiDk) \cup \Big((\eps,l)\times [1-\lambda_k,1-\lambda_k^\prime]\times \{-\theta_k-a_k, \theta_k+a_k\}\Big )
\right)\cap C^\eps_l,
\end{align*}
and
that 
${\rm spt}(\piPsiDkXik) \cap (\{0\} \times \R^2)$ is the segment
$\{0\} \times [1-\lambda_k, 1-\lambda_k'] \times \{\pi\}$.

Concerning the functions 
in \eqref{eq:q_-q_+}, we have $|u_k|^+=|u_k|^-=1$, thus $\pi_0^{\rm pol}(\projlambdak \circ \Psi_k(\Omega \setminus \overline \sourcedisk_\eps))=(\eps,l)\times \{1-\lambda_k\}\times\{0\}$, and the sets $Q_{k,\eps}$
in \eqref{Q_k} and $\JQke $ in \eqref{strip_Jextension1}  are empty. 
Hence, for $\striptwo$
in \eqref{eq:strip_two}, we have  $\striptwo=(\eps,l)\times [1-\lambda_k,1-\lambda_k']\times\{0\}$.
Moreover, for $\Theta_k$ in \eqref{eq:Theta_k},
 we have $\Theta_k(t,\rho)\in (0,\pi)$ for  $(t,\rho,0)\in \striptwo$; thus,
recalling Definitions \ref{def:the_currents_jump_gengraphplusminusFkJkzero} and \ref{Def: The current g3}, 
and since $\jump{G^{\rm pol}_{\pm\Fke\res \striptwo\cap \{\Theta_k \in \{0,2\pi\}\}}}=0$,  
\begin{align*}
\GplusminusFkethree
=\pmcurrentgengraphFkJkzero &= \piPsiDkXik \res \{(t,\rho,\theta): t \in (\eps, l),~\pm \theta \in (0,\pi)\},
\end{align*}
where the last equality follows by construction (see \eqref{symmetrizedset} and \eqref{symmetrization_of_Sk}).

The set $\Sigmake$ in Definition \ref{def:Sigmake} equals 
$$\Sigmake =  (\eps, l)\times\{ 1-\lambda_k^\prime\}\times (-\theta_k-a_k, 
\theta_k+a_k),$$ 
hence the current $\GFkethree +\GminusFkethree + \jump{\Sigmake} = \piPsiDkXik + \jump{\Sigmake}$ is boundaryless in $C^\eps_l$ 
(with $\Sigma_{k,\eps} $ suitably oriented).

Recalling \eqref{eq:def_Vk}, we have 
 $$ \Vke =\jump{ \{\eps\} \times [1-\lambda_k,1-\lambda_k^\prime]\times 
((-\pi,\pi]\setminus (-\theta_k-a_k, \theta_k+a_k))},$$ 
hence
$$
\partial \Vke  = \mathbb S(\scriptHkeps)+\mathbb{S}(L)_1 - \mathbb{S}(L)_2 +  \mathcal L_k,
$$
see \eqref{eqn:scriptHkeps}, \eqref{eqn:SscriptHkeps}, 
\eqref{def_X1X2}, \eqref{eqn:9.16}, and Proposition  \ref{prop:V_k},
\begin{align*}
 \mathbb S(\scriptHkeps) &=
   \scriptHkeps = (\projlambdak\circ\Psi_k)_\sharp\jump{\partial \sourcedisk _\eps \cap \badset}
  =(\projlambdak\circ\Psi_k)_\sharp\jump{\{\eps\}\times (-\theta+a_k, \theta-a_k)}\\
&=\jump{\{\eps\}\times \{1-\lambda_k\}\times ((-\pi,\pi]\setminus (-\theta_k-a_k, \theta_k+a_k))} \text{ (clockwise oriented
when looking at}\\&\text{ the plane $\{\eps\}\times \R^2$ from $t>\eps$)},\\
\mathbb{S}(L)_1 &=\jump{\{\eps\}\times [1-\lambda_k,1-\lambda_k^\prime]\times \{ \theta_k+a_k\} },\\
\mathbb{S}(L)_2&= \jump{\{\eps\}\times [1-\lambda_k,1-\lambda_k^\prime]\times \{-\theta_k-a_k\} },\\
 \mathcal L_k &= \jump{\{\eps\}\times\{ 1-\lambda_k^\prime\}\times  ((-\pi,\pi]\setminus (-\theta_k-a_k, \theta_k+a_k))}\\&\text{ 
(counterclockwise oriented when looking at the plane $\{\eps\}\times \R^2$ 
from $t>\eps$)}.
\end{align*}
Notice that
$$\partial \left(\GFkethree +\GminusFkethree + \jump{\Sigmake}\right)
\res \left(\{\eps\}\times\R^2\right) = - \Big ( \mathbb
S(\scriptHkeps)+\mathbb{S}(L)_1 - \mathbb{S}(L)_2 \Big) +
\jump{\{\eps\}\times \partial B_{1-\lambda_k^\prime}}- \mathcal L_k,
$$
Hence
$$\partial \left(\GFkethree +\GminusFkethree + \jump{\Sigmake} +\Vke\right) 
\res
\left(\{\eps\}\times\R^2\right) = \jump{\{\eps\}\times \partial
B_{1-\lambda_k^\prime}}
$$
Thus, for $\hatFke$ in \eqref{strip_Jextension2},
\begin{equation*}
\hatFke(t,\rho)=\begin{cases}
\theta_k+a_k &\qquad \text{in } (\eps,l)\times(1-\lambda_k,1-\lambda_k']\times \{0\},
\\
0 &\qquad \text{in } (\eps,l)\times(1-\lambda_k',1] \times \{0\},
\\
\pi &\qquad  \text{in } ((0,\eps]\times(0,1]) \cup ((\eps,l)\times 
(0,1-\lambda_k]) \times \{0\},
\end{cases}
\end{equation*}
and
$\GplusminushatFkfour$ in \eqref{eq:def_generalizedgraphs_Fextended}
is given by
\begin{align*}
\GplusminushatFkfour=&
\GplusminusFkethree+
\Big(
\jump{\Sigmake} + \Vke + \intannulus +\jump{(0,\eps)\times \partial B_1}
\Big) \res \{0\leq\pm\theta\leq\pi\}
\pm\jump{(\eps,l)\times [1-\lambda_k',1]\times \{0\}},
\end{align*}
where $\intannulus= \jump{\{\eps\}\times ( B_{1}\setminus B_{1-\lambda_k'})}$.
Observe that
$$
(\{0\} \times \{1\} \times \{\theta\in [0,\pi]\}) \cup\{[0,\longR]\times
\{1\}\times\{0\}\} \subset {\rm supp} (\partial
\GplusminushatFkfour) .$$
Hence
$$\GhatFkfour + \GminushatFkfour = \GFkethree+\GminusFkethree+
\jump{\Sigmake} + \Vke + \intannulus +\jump{(0,\eps)\times \partial B_1},$$
and
$$\partial (\GhatFkfour + \GminushatFkfour )\res \{t<l\}=
\jump{\{0\}\times \partial B_1}
$$
\begin{remark}
$(u_k)$ is not a recovery sequence, 
due to Theorem \ref{Thm:maintheorem}.
We have 
$$
\lim_{k \to +\infty}  
\area(u_k,\Omega) =  \int_\Om|\mathcal M(\nabla \vortexmap)|~dx
+2\pi l,
$$
and $2 \pi l$ has the meaning of the lateral area of the cylinder of 
height $\longR$ and basis the unit disc. This surface is not a minimizer
of the problem on the right-hand side of \eqref{infimum_pb} 
(where it corresponds to $h \equiv 1$).
\end{remark}

\subsection{An approximating sequence of maps
with degree zero: catenoid union a flap}
\label{sec:catenoid_with_a_flap}
In this section we 
discuss another example of a possible approximating sequence $(u_k)$. 
We replace the cylinder lateral surface\footnote{In polar coordinates.} $[0,\longR]\times \{1\}\times(-\pi,\pi]$, which 
contains the image of 
$(r_k, l) \times (-\theta_k,\theta_k)$ through the map 
$\Psi_k$
in the 
example of Section 
\ref{subsec:an_approximating_sequence_of_maps_with_degree_zero:cylinder}, with 
half\footnote{
For convenience, we consider the doubled segment $[0,2l]$,
in order to define the catenoid; then we restrict
the construction to $(0,\longR)$.} 
of a catenoid union a flap (see Fig. \ref{fig:cat_flap}): calling this union $CF \res (0,\longR)\times \R^2$, 
we have
$$ CF=: \{(t, \overline\rho(t),\theta): t\in [0,2l],~ \theta \in (-\pi,\pi] \} \cup \{(t,r,0): t\in (0,2l),~r\in [\overline\rho(t),1]\},$$ 
where $~\overline \rho(t):=a \cosh(\frac{t-l}{a}),$ and $a>0$ is such that $\overline\rho(0)=1$
(and $\overline \rho(2l)=0$).

Notice that $CF$ ``spans'' 
$\Big(\{0,2l\}\times \{1\}\times(-\pi,\pi]\Big) \cup \Big(  [0,2l]\times \{1\}\times\{0\} \Big)$, 
which is the union of two unit circles connected by a segment. 

Let $\reps>0, \theta_k>0$, $\overline \theta_k >\theta_k$ be
such that   $\reps, \theta_k, (\overline \theta_k-\theta_k) \to 0^+$ 
as $k\to +\infty$.
Set $$\rho(t):= \overline \rho 
\left(\frac{t-\reps}{l-\reps}l\right), \qquad t\in (\reps,l).$$
We define $u_k := u$ in $\Omega \setminus \Big( \sourcedisk_{\reps}  \cup \{\theta \in (-\overline \theta_k,\overline \theta_k)\}\Big)$,
in particular
$$
u_k(\partial \sourcedisk_t \setminus \{\theta\in (-\overline\theta_k, 
\overline\theta_k) \})= \partial B_1 \setminus \{\theta \in (-\overline \theta_k, \overline \theta_k) \}, \qquad t \in (r_k,l).
$$
On $ \{\theta \in (-\overline \theta_k,\overline \theta_k)\} \setminus  \sourcedisk_{\reps} $ we define $u_k$ in such a way that for each $t\in (\reps,l)$ we have 
\begin{align*}
u_k\left(\partial \sourcedisk_t \cap 
\{\pm\theta\in (\theta_k, \overline{\theta_k}) \}\right)
&=\partial B_1 \cap  \{\pm\theta \in (0, \overline \theta_k) \} ,\\
u_k\left(\partial \sourcedisk_t \cap \{\pm\theta\in (0,\theta_k) \}\right)
&=\{(r,0)\in B_1: r\in [\rho(t),1]\} \cup\Big(\partial B_{\rho(t)} \cap  \{\pm\theta \in (0, \pi) \} \Big).
\end{align*}
See Fig. \ref{fig:D_k_in_Catenoid_EX} for a representation of the map $u_k$.
\begin{figure}
\begin{center}
    \includegraphics[width=0.9\textwidth]{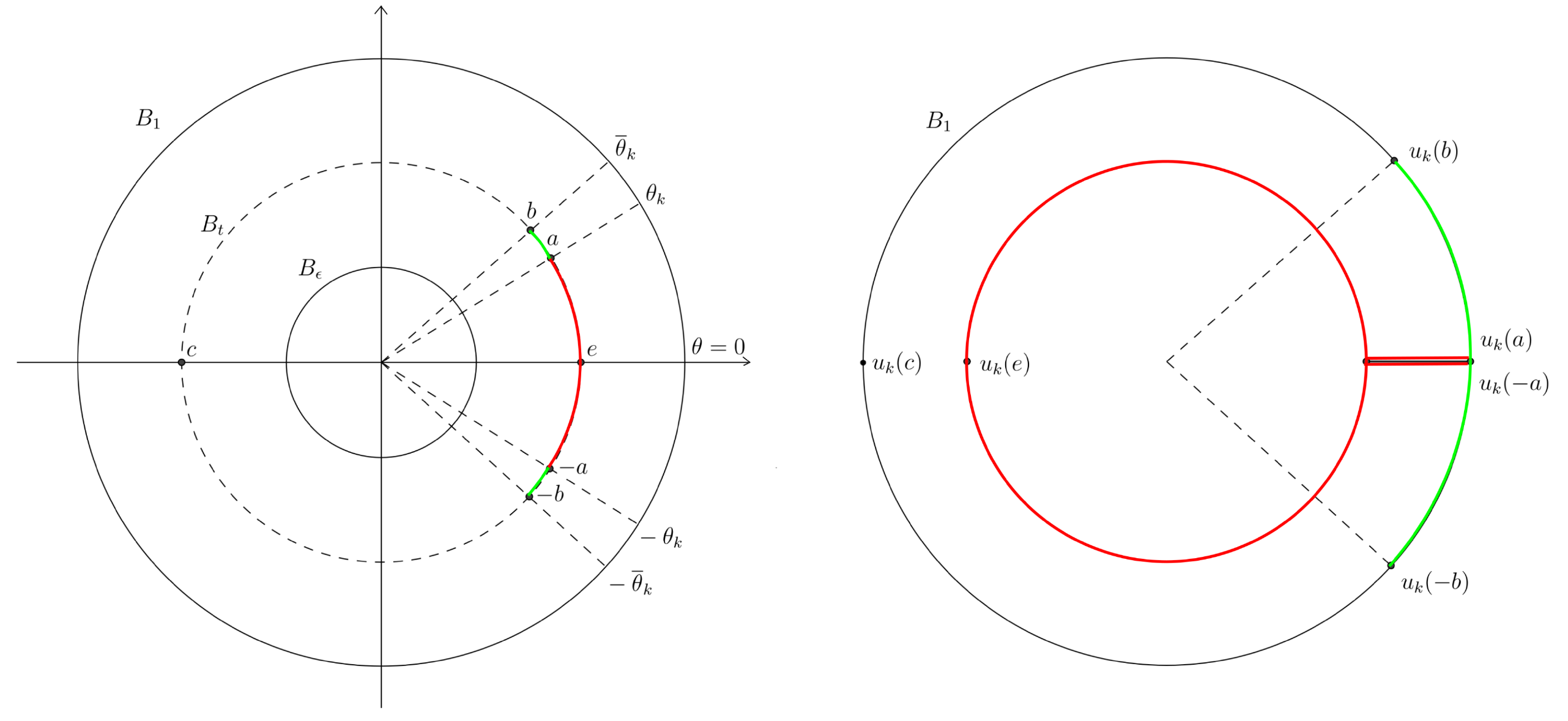}
 \caption{Source and target of the map $u_k$ in the example of Section \ref{sec:catenoid_with_a_flap}. The small interior circle
in the right figure is a $t$-slice of a catenoid, whereas the horizontal segment is the $t$-section of the flap.}
\label{fig:D_k_in_Catenoid_EX}
\end{center}
\end{figure}
To define $u_k$ on $\sourcedisk_\reps$ we adopt a construction similar to
the one in \eqref{eqn:u_k_Cylinder}. First of all, $u_k(0,0):= (-1,0)$.
Then,
in 
$\sourcedisk_\reps \cap \{\theta\in (-\pi,\pi) \setminus(-\overline \theta_k, \overline \theta_k)\}$ we impose $u_k$ as in 
\eqref{eqn:u_k_Cylinder} with $\overline \theta_k$ replacing $\theta_k$. 
In 
$\sourcedisk_\reps \cap \{\theta\in (-\overline \theta_k, \overline \theta_k)\}$ we require
$$u_k([0,r_k],\alpha):= \partial B_1 \cap \{\pm \theta \in (u_k(r_k,\alpha),\pi) \},
\qquad \pm \alpha \in (0,\pi].
$$
\begin{figure}
	\begin{center}
		\includegraphics[width=0.5\textwidth]{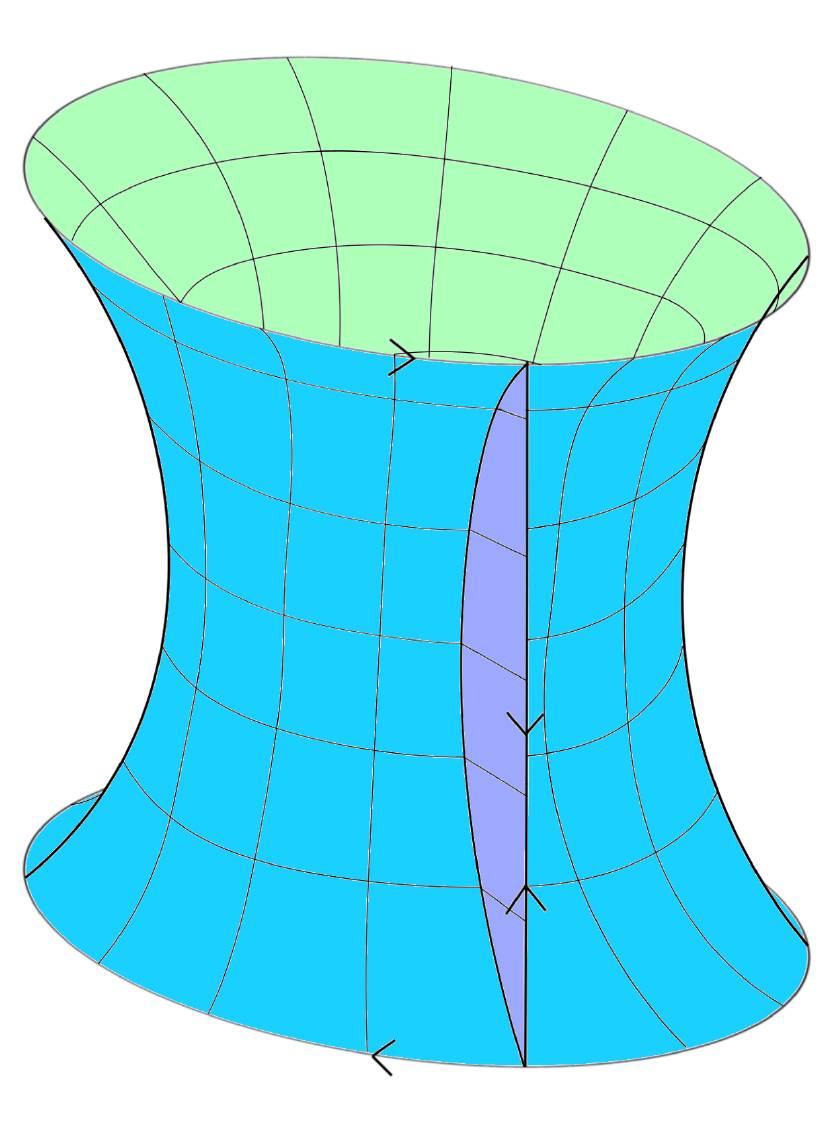}
		\caption{Catenoid union a flap (Section \ref{sec:catenoid_with_a_flap}). This is the set $CF$, 
namely the limit (as $k\rightarrow+\infty$) of the image by $\pi_{\lambda_k}\circ\Psi_k$ of $D_k$.}
		\label{fig:cat_flap}
	\end{center}
\end{figure}
Hence 
\begin{equation*}
u_k(\partial \sourcedisk_t) \begin{cases}
 \subsetneq\partial B_1\qquad  &\text{if } t \in (0,\reps],\\
 = (\partial B_1) \cup \{(r,0)\in \sourcedisk_1: r\in [\rho(t),1]\} \cup (\partial B_{\rho(t)})\qquad  &\text{if } t \in (\reps,l).
\end{cases}
\end{equation*}
\begin{remark}\rm
For an explicit construction, see Section \ref{sec:upper_bound} with $\hstar=\overline\rho$ and 
\begin{equation*}
\psionestar(s,t)=\begin{cases}
0 &\qquad \text{for  } t \in (0,\longR), ~s\in [-1,-\overline\rho(t)],\\
\sqrt{(\overline\rho(t))^2-s^2}&\qquad \text{for  } t\in (0,\longR),~s\in [-\overline\rho(t),\overline\rho(t)].
\end{cases}
\end{equation*}
\end{remark}

Now we fix an infinitesimal sequence $(\lambda_k)$ of positive numbers; 
we may also assume that $\vert (\cos \overline\theta_k, \sin \overline\theta_k)- (1,0)\vert << \lambda_k$. 
Hence $\badset\subsetneq (\Balleps \cup \{\theta\in (-\theta_k, \theta_k)\})$.

By construction there exists $t_k \in (\reps,\eps)$ 
such that $\rho(t_k)=1-\lambda_k$. 
Hence for $t\in [t_k,l)$ we have 
\begin{align*}
{\rm spt}\Big((u_k)_\sharp 
(\partial \sourcedisk_t \cap(\Omega\setminus \badset))\Big)&= 
\partial B_1,\qquad \partial B_1 \subsetneq  
u_k (\partial \sourcedisk_t \cap(\Omega\setminus \badset)),\\
{\rm spt}\Big((u_k)_\sharp (\partial \sourcedisk_t \cap\badset)\Big)&
= \partial B_{\rho(t)},\qquad \partial B_{\rho(t)} \subsetneq u_k(\partial \sourcedisk_t \cap\badset).
\end{align*}
Notice that the above strict inclusions are due to the fact that the segments $\{(r,0)\in B_1: r\in [\rho(t),1]\} \cap u_k(\badset)$ and $\{(r,0)\in B_1: r\in [\rho(t),1]\} \cap u_k(\Omega \setminus \badset)$
are covered twice with opposite orientation. 

On the other hand, for $t\in [\reps,t_k)$ we have 
\begin{align*}
\partial B_1 &\subset {\rm spt}\Big((u_k)_\sharp 
(\partial \sourcedisk_t \cap(\Omega\setminus \badset))\Big), \qquad 
{\rm spt}\Big((\projlambdak\circ \Psi_k)_\sharp 
(\partial \sourcedisk_t \cap(\Omega\setminus \badset))\Big)\subset \partial B_{1-\lambda_k}\\
\partial B_{\rho(t)} &\supset 
{\rm spt}\Big((u_k)_\sharp (\partial \sourcedisk_t \cap \badset)\Big), 
\qquad 
{\rm spt}\Big((\projlambdak\circ \Psi_k)_\sharp (\partial \sourcedisk_t \cap \badset)\Big)\subset \partial B_{1-\lambda_k},
\end{align*}
where the second inclusion is due to the fact that $(\projlambdak\circ \Psi_k)(\partial \sourcedisk_t \cap(\Omega\setminus \badset))$ covers two arcs of $\partial B_{1-\lambda_k}$ twice with opposite orientation.
Notice also that the only cancellation that could happen on $(\projlambdak\circ \Psi_k) (\partial \sourcedisk_t \cap \badset)$ due to covering more than one with opposite orientation is along the 
segment $\{(r,0)\in B_1: r\in [\rho(t),1]\} $, 
in particular we have   
$$ 
{\rm spt}\Big((\projlambdak\circ \Psi_k)_\sharp (\partial \sourcedisk_t \cap(\Omega\setminus \badset))\Big)={\rm spt}\Big((\projlambdak\circ \Psi_k)_\sharp (\partial \sourcedisk_t \cap \badset)\Big).
$$
By definition the above equality holds also for $t\in (0,\reps)$; moreover we have 
$$
{\rm spt}\Big((\projlambdak\circ \Psi_k)_\sharp (\partial \sourcedisk_{t_1} \cap \badset)\Big)\subsetneq {\rm spt}\Big((\projlambdak\circ \Psi_k)_\sharp (\partial \sourcedisk_{t_2} \cap \badset)\Big),\qquad t_1<t_2 , ~ t_1, t_2\in (0,\reps),$$
where
$\projlambdak\circ \Psi_k(0) = \{0\}\times\{1-\lambda_k\}\times\{\pi\}$.

From the above discussion we can see that the support of the current $\piPsiDk$ is a connected set contained in $\overline C_l(1-\lambda_k)$
and the support of $(\projlambdak\circ \Psi_k)_\sharp\jump{\Omega\setminus \badset}$ is a connected set contained in $\partial_{{\rm lat}}C_l(1-\lambda_k)$, in particular we have 
 \begin{align*}
  &{\rm spt}\Big(\piPsiDk \Big)\res C_{t_k}
={\rm spt} \Big((\projlambdak\circ \Psi_k)_\sharp\jump{\Omega\setminus \badset}\Big)\res C_{t_k}~\subsetneq \partial C_{t_k}(1-\lambda_k),\\
 &{\rm spt}\Big(\piPsiDk\Big) \res C^{t_k}_l 
=\{(t,r,\theta): t\in (t_k,l),~ r=\rho(t),~ \theta \in (-\pi,\pi]\}\subsetneq C_l^{t_k}(1-\lambda_k),\\
&{\rm spt}\Big(\partial \piPsiDk\Big)\subset \partial C_{t_k}(1-\lambda_k),\\
 &{\rm spt} \Big((\projlambdak\circ \Psi_k)_\sharp\jump{\Omega\setminus \badset}\Big)\res C^{t_k}_l 
= \partial C_l^{t_k}(1-\lambda_k).
 \end{align*}
 The current 
  $ \Xik$, a normal wall over $\partial \piPsiDk$ over $C_l(1-\lambda_k)$ of 
height $\lambda_k - \lambda_k'$,
 built on $\partial \piPsiDk$,
divides  $C_l(1-\lambda_k^\prime) \setminus 
C_l (1-\lambda_k)$ into two connected sets; one of them, 
${\rm spt} (\mathcal{Y}_k)$, contains $C^{t_k}_l(1-\lambda_k^\prime) \setminus 
C^{t_k}_l (1-\lambda_k)$ and  has ${\rm spt} 
\left((\projlambdak\circ \Psi_k)_\sharp\jump{\Omega\setminus \badset}\right)$ 
as part of its boundary. The other one, 
${\rm spt} (\mathcal{X}_k)$ (see \eqref{eq:defX_k}), 
contains $C_{0}(1-\lambda_k^\prime) \setminus 
C_{0} (1-\lambda_k)$ and has $\partial C_l(1-\lambda_k) \setminus 
{\rm spt} (\projlambdak\circ \Psi_k)_\sharp\jump{\Omega\setminus \badset}$ as part of its boundary. 
 Hence $\piPsiDkXik=\piPsiDk+\Xik$ divides  $C_l(1-\lambda_k^\prime)$ into two connected sets, one of them is
   $$E_k=C_{t_k}(1-\lambda_k) \cup {\rm spt} (\mathcal{X}_k) \cup \{(t,r,\theta)\in C^{t_k}_{l}(1-\lambda_k): r\in(0,\rho(t)]\} , $$ 
  and the second one is 
its complement in $C_l(1-\lambda_k')$ and contains
${\rm spt}( \mathcal{Y}_k)$.
 Note that $
 {\rm spt}(\Xik) \subset [0,t_k]\times [1-\lambda_k,1-\lambda_k^\prime]\times(-\pi,\pi]$ and  that ${\rm spt}(\piPsiDkXik) \cap (\{0\} \times \R^2)$ is the segment
$\{0\} \times [1-\lambda_k, 1-\lambda_k'] \times \{\pi\}$.

In this particular example when we apply the cylindrical Steiner symmetrization introduced in Section \ref{sec:cylindrical_Steiner_symmetrization} nothing changes, {\it i.e.,} $\mathbb{S}(E_k)= E_k$ and 
hence $\mathbb{S}(\piPsiDkXik)= \piPsiDkXik$. 
Observe that 
\begin{align*}
|u_k|^+=1 
,\qquad |u_k|^-=\begin{cases}
1, &  r\leq\reps,\\
\rho(t), & r>\reps.
\end{cases}
\end{align*}
Thus $$\pi_0^{\rm pol}(\projlambdak \circ \Psi_k(\Omega \setminus \overline \sourcedisk_\eps))=\{(t,r,0):t\in (\eps,l)~ r\in[\rho(t),1-\lambda_k]\},$$
and the sets $Q_{k,\eps}$
in \eqref{Q_k} and $\JQke $ in \eqref{strip_Jextension1}  are empty. 
Hence $$\striptwo=\{(t,r,0):t\in (\eps,l)~ r\in[\rho(t),1-\lambda_k^\prime]\}.$$ 
Moreover we have $\Theta_k(t,\rho)=0$ for  $(t,\rho,0)\in \striptwo$ thus,
recalling Definitions \ref{def:the_currents_jump_gengraphplusminusFkJkzero} 
and \ref{Def: The current g3}, 
\begin{align*}
\GplusminusFkethree
=\pmcurrentgengraphFkJkzero &= \piPsiDkXik \res \{(t,\rho,\theta): t \in (\eps, l),~\pm \theta \in (0,\pi)\}+\jump{G^{\rm pol}_{\pm\Fke\res \striptwo\cap \{\Theta_k \in \{0,2\pi\}\}}}\\
&= \piPsiDkXik \res \{(t,\rho,\theta): t \in (\eps, l),~\pm \theta \in (0,\pi)\}\pm \jump{\{(t,r,0):t\in (\eps,l),~r\in [\rho(t),1-\lambda_k^\prime]\}}.
\end{align*}
The set $\Sigmake$ in Definition \ref{def:Sigmake} is empty,
hence $\GFkethree +\GminusFkethree + \jump{\Sigmake} = \piPsiDkXik$ which is boundaryless in $C^\eps_l$.

We have, recalling \eqref{eq:def_Vk}, 
 $$ \Vke =\jump{ \{\eps\} \times [\rho(\eps),1-\lambda_k^\prime]\times (-\pi,\pi]},$$ 
{\it i.e.,}
$$
\partial \Vke  = \mathbb S(\scriptHkeps)+\mathbb{S}(L)_1 - 
\mathbb{S}(L)_2 +  \mathcal L_k=-\jump{\{\eps\}\times \partial B_{\rho(\eps)}}+\jump{\{\eps\}\times \partial B_{1-\lambda_k^\prime}},
$$
where 
(see \eqref{eqn:SscriptHkeps} \eqref{eqn:scriptHkeps}, \eqref{def_X1X2}, \eqref{eqn:9.16}, and Proposition  \ref{prop:V_k})
\begin{align*}
 \mathbb S(\scriptHkeps) &=
   \scriptHkeps = (\projlambdak\circ\Psi_k)_\sharp
\jump{\badset \cap
\partial \sourcedisk _\eps}\text{ (oriented counterclockwise)} =
-\jump{\{\eps\}\times \partial B_{\rho(\eps)}},\\
\mathbb{S}(L)_1&=\mathbb{S}(L)_2= \jump{\{\eps\}\times [1-\lambda_k,1-\lambda_k^\prime]\times \{0\} },\\
 \mathcal L_k &=\jump{\overline{Y_2Y_1} }= \jump{\{\eps\}\times \partial B_{1-\lambda_k^\prime}}.
\end{align*}
Notice that
\begin{align*}
\partial (\GFkethree +\GminusFkethree ) \res \{\eps\}\times\R^2
&=\jump{\{\eps\}\times \partial B_{1-\lambda_k}} \\
&=  - \Big ( \mathbb S(\scriptHkeps)+\mathbb{S}(L)_1 - \mathbb{S}(L)_2
\Big) + \jump{\{\eps\}\times \partial B_{1-\lambda_k^\prime}}-
\mathcal L_k.
\end{align*}
Thus
$$\partial (\GFkethree +\GminusFkethree +\Vke) \res (\{\eps\}\times\R^2)
= \jump{\{\eps\}\times \partial B_{1-\lambda_k^\prime}},$$
and
\begin{align*}
\GplusminushatFkfour=&
\GplusminusFkethree+
\Big(\Vke + \intannulus +\jump{(0,\eps)\times \partial B_1}
\Big) \res \{0\leq\pm\theta\leq\pi\}
\pm\jump{(\eps,l)\times [1-\lambda_k',1]\times \{0\}},
\end{align*}
where $\intannulus= \jump{\{\eps\}\times ( B_{1}\setminus B_{1-\lambda_k'})}$;
observe that
$$(\partial B_{1} \cap\{0\leq\pm\theta\leq\pi\}) \cup\{[0,\longR]\times
\{1\}\times\{0\}\} \subset {\rm supp} (\partial
\GplusminushatFkfour).$$
Hence
$$\GhatFkfour + \GminushatFkfour = \GFkethree+\GminusFkethree + \Vke +
\intannulus +\jump{(0,\eps)\times \partial B_1},$$
and
$$\partial (\GhatFkfour + \GminushatFkfour )\res \{t<l\}=
\jump{\{0\}\times \partial B_1}.
$$

\begin{remark}		
$(u_k)$ is not a recovery sequence, 
due to Theorem \ref{Thm:maintheorem}.
We have 
$$
\lim_{k \to +\infty}  
\area(u_k,\Omega) =  \int_\Om|\mathcal M(\nabla \vortexmap)|~dx
+\mathcal H^2
({\rm catenoid})+2\, \mathcal H^2({\rm flap}).
$$
This surface is not a minimizer
of problem on the right-hand side of \eqref{infimum_pb}.
\end{remark}

\subsection{Smoothing by convolution: the case of the two discs}
\label{sec:smoothing_by_convolution}
In \cite{AcDa:94}, the authors describe
a sequence $(u_k)$ of maps converging to  
the vortex map $\vortexmap$,  simply defined as follows:
 \begin{equation}\label{eqn:u_k_two_disks}
\veps(r,\theta):=\phi_k(r)u(r,\theta),
\end{equation}
where $\phi_k:[0,\longR]\to [0,1]$ is a smooth function 
such that $\phi_k=0$ in $[0,\frac{1}{k^2}]$, 
$\phi_k=1$ in $[\frac{1}{k},l ]$, 
and $0\leq \phi_k^\prime\leq 2k$. Hence $|u_k -u|=1-\phi_k $.
We shall assume that for all $k>0$, we have $\frac{1}{k} << \eps$. 

Now we fix an infinitesimal sequence $(\lambda_k)$ of positive
numbers, hence for any $k \in \mathbb N$ there exists $r_k\in (0,1/k)$ 
such that $\phi_k(r_k)=1-\lambda_k$, and we have  
\begin{equation}
\label{eq:easy_equality}
\badset = \sourcedisk_{r_k}.
\end{equation}
Notice that $u_k(\partial \sourcedisk_r)=\partial B_{\phi_k(r)}$,
\begin{align*}
&
{\rm spt}(\piPsiDk)
=\{(t,r,\theta):t\in [1/k^2,r_k], ~r =\phi_k(t),\; \theta \in (-\pi,\pi]\}\subsetneq \overline C_{r_k}(1-\lambda_k),\\
&{\rm spt}((\projlambdak\circ \Psi_k)_\sharp \jump{\Omega \setminus \badset })=\projlambdak\circ \Psi_k(\Omega \setminus \badset )=\partial C^{r_k}_l(1-\lambda_k).
\end{align*}
Also, using \eqref{eq:easy_equality},
 $\projlambdak\circ \Psi_k(\badset ) 
\setminus {\rm spt}(\piPsiDk)=
[0,1/k^2)\times\{0\}\times\{0\} = 
\projlambdak\circ \Psi_k(\sourcedisk_{1/k^2})$ and,
 unlike the examples in Sections 
\ref{subsec:an_approximating_sequence_of_maps_with_degree_zero:cylinder}, \ref{sec:catenoid_with_a_flap}, there is no cancellation due to 
covering the same $2$-dimensional
set with two opposite orientations; the fact 
that $\projlambdak\circ \Psi_k(\badset ) \setminus {\rm spt}(\piPsiDk)$
is nonempty is due to the fact that 
$\projlambdak\circ \Psi_k(\sourcedisk_{1/k^2})$ is one-dimensional.
Moreover we have $${\rm spt}((\projlambdak\circ \Psi_k)_\sharp \jump{\partial \badset } )={\rm spt}(\piPsiDk) \cap {\rm spt}((\projlambdak\circ \Psi_k)_\sharp \jump{\Omega \setminus \badset }) = \{r_k\}\times \{1-\lambda_k\}\times(-\pi,\pi].$$
 Hence 
$${\rm spt}(\Xik)= \{r_k\}\times [1-\lambda_k,1-\lambda_k^\prime]\times(-\pi,\pi],$$
 {\it i.e., } the current 
  $ \Xik$
divides  $C_l(1-\lambda_k^\prime) \setminus 
C_l (1-\lambda_k)$ into two connected sets; one of them, 
${\rm spt} (\mathcal{Y}_k)= C^{r_k}_l(1-\lambda_k^\prime) \setminus 
C^{r_k}_l (1-\lambda_k) $, and the other one, 
${\rm spt} (\mathcal{X}_k)=C_{r_k}(1-\lambda_k^\prime) \setminus 
C_{r_k} (1-\lambda_k)$.
Therefore $\piPsiDkXik=\piPsiDk+\Xik$ divides  $C_l(1-\lambda_k^\prime)$ into two connected sets, one of them is 
 $$E_k=C_{r_k}(1-\lambda_k^\prime) \setminus \{(t,r,\theta): t\in (1/k^2,r_k),~ r\in(0, \phi_k(t)), ~ \theta \in (-\pi,\pi]\}  , $$
and the second one is 
its complement in $C_l(1-\lambda_k')$ and contains
${\rm spt}( \mathcal{Y}_k)$.

Also in this example,
 when we apply the cylindrical Steiner symmetrization 
introduced in Section \ref{sec:cylindrical_Steiner_symmetrization}, nothing changes, {\it i.e.,} $\mathbb{S}(E_k)= E_k$ and 
hence $\mathbb{S}(\piPsiDkXik)= \piPsiDkXik$. 

Note also that $|u_k|^+=|u_k|^-=1$ in $(r_k,l)$, thus $\pi_0^{\rm pol}(\projlambdak \circ \Psi_k(\Omega \setminus \overline \sourcedisk_\eps))=(\eps,l)\times \{1-\lambda_k\}\times\{0\}$, and the sets $Q_{k,\eps}$
in \eqref{Q_k} and $\JQke $ in \eqref{strip_Jextension1}  are empty. 
Hence $\striptwo=(\eps,l)\times [1-\lambda_k,1-\lambda_k']\times\{0\}$.
We have $\Theta_k(t,\rho)=0$ for  $(t,\rho,0)\in \striptwo$ thus,
recalling Definitions \ref{def:the_currents_jump_gengraphplusminusFkJkzero} 
and \ref{Def: The current g3}, 
\begin{align*}
\GplusminusFkethree
=\pmcurrentgengraphFkJkzero &=\pm\jump{ \striptwo}= \pm\jump{(\eps,l)\times [1-\lambda_k,1-\lambda_k']\times\{0\}}.
\end{align*}
The set $\Sigmake$ in Definition \ref{def:Sigmake} is empty,
hence $\GFkethree +\GminusFkethree + \jump{\Sigmake} = 0$.
We have, recalling \eqref{eq:def_Vk}, 
 $$ \Vke =\jump{ \{\eps\} \times \overline B_{1-\lambda_k^\prime}},$$ 
and 
$$
\partial \Vke  = \mathbb S(\scriptHkeps)+\mathbb{S}(L)_1 - \mathbb{S}(L)_2 +  \mathcal L_k= \jump{\{\eps\}\times \partial B_{1-\lambda_k^\prime}},
$$
where (see \eqref{eqn:SscriptHkeps} \eqref{eqn:scriptHkeps}, \eqref{def_X1X2}, \eqref{eqn:9.16}, and Proposition  \ref{prop:V_k})
\begin{align*}
& \mathbb S(\scriptHkeps) =
   \scriptHkeps = 0,\\
&\mathbb{S}(L)_1 =\mathbb{S}(L)_2=\jump{\{\eps\}\times [1-\lambda_k,1-\lambda_k^\prime]\times \{ 0\} },\\
& \mathcal L_k = \jump{\{\eps\}\times \partial B_{1-\lambda_k^\prime}}.
\end{align*}
Notice that
$$\partial (\GFkethree +\GminusFkethree ) \res (\{\eps\}\times\R^2) = -
\Big ( \mathbb S(\scriptHkeps)+\mathbb{S}(L)_1 - \mathbb{S}(L)_2 \Big)
+ \jump{\{\eps\}\times \partial B_{1-\lambda_k^\prime}}- \mathcal
L_k=0, $$
and
$$\partial (\GFkethree +\GminusFkethree  +\Vke) \res
(\{\eps\}\times\R^2) = \jump{\{\eps\}\times \partial
B_{1-\lambda_k^\prime}}.$$
Finally we have
\begin{align*}
\GplusminushatFkfour=&
\GplusminusFkethree+
\Big(\Vke + \intannulus +\jump{(0,\eps)\times \partial B_1}
\Big) \res \{0\leq\pm\theta\leq\pi\}
\pm\jump{(\eps,l)\times [1-\lambda_k',1]\times \{0\}}
\\
=&
 \jump{ \{\eps\} \times \overline B_{1}}+\jump{(0,\eps)\times \partial
B_1}\pm\jump{(\eps,l)\times [1-\lambda_k,1]\times \{0\}},
\end{align*}
where $\intannulus= \jump{\{\eps\}\times ( B_{1}\setminus B_{1-\lambda_k'})}$.
Notice that

$$(\partial B_{1} \cap\{0\leq\pm\theta\leq\pi\}) \cup\{[0,\longR]\times
\{1\}\times\{0\}\} \subset {\rm supp} (\{\partial
\GplusminushatFkfour\}).
$$
Hence
$$\GhatFkfour + \GminushatFkfour =  \jump{ \{\eps\} \times \overline
B_{1}}+\jump{(0,\eps)\times \partial B_1},$$
and
$$\partial (\GhatFkfour + \GminushatFkfour )\res \{t<l\}=
\jump{\{0\}\times \partial B_1}.
$$
\begin{remark}
$(u_k)$ is a recovery sequence for $\longR$ 
sufficiently large, due to \cite[Lemma 4.2]{AcDa:94}.
We have 
$$
\lim_{k \to +\infty}  
\area(u_k,\Omega) =  \int_\Om|\mathcal M(\nabla \vortexmap)|~dx
+\pi,
$$
and $\pi$ has the meaning of the area of the unit disc.
This surface, for $\longR$ sufficiently large, is a minimizer
of problem on the right-hand side of \eqref{infimum_pb} 
(where it corresponds to $h \equiv -1$).
\end{remark}

\section{Lower bound}
\label{sec:lower_bound}	
In this section we reduce the analysis of
$\GplusminushatFkfour$ in Definition \ref{def:G^4_keps} 
 to a non-parametric Plateau-type problem with a sort of free boundary. 
Precisely, after suitable projections, we will arrive to a Plateau-type problem on 
the closed
rectangle $\overline R_l$, where
$$
R_l:=(0,\longR)\times (-1,1)\times \{0\}
$$
in Cartesian coordinates, equivalently
$R_l = 
\{t\in (0,\longR),\;\rho\in [0,1),\;\theta=0\}\cup\{t\in (0,\longR),\;\rho\in 
[0,1),\;\theta=\pi\}$ in cylindrical coordinates. 
The rectangle $R_l$ will be 
often identified with 
$(0,\longR)\times (-1,1)$, thus neglecting the third coordinate.
We will impose a Dirichlet boundary condition $\varphi$ on a part 
\begin{equation}\label{eq:Dirichlet_part_of_boundary}
\partial_DR_l:=(\{0\}\times [-1,1])\cup([0,\longR]\times \{-1\})
\end{equation}
of $\partial R_l$, while no conditions will be imposed on $\{l\}
\times (-1,1)$; more involved conditions will be assigned
on $(0, \longR) \times \{1\}$,
see the mutual relations between $\psi$ and $h$ in \eqref{eq:X_l}
(see also the problem on the right-hand side of    
\eqref{infimum_pb} and  Section \ref{sec:structure_of_minimizers}). 

Then the strategy to estimate from below the relaxed area of the graph of the 
vortex map $\vortexmap$  will  be the following:
We split
\begin{align*}
 \area(u_k,\Om)=\int_{\Omega\setminus \badset}|\mathcal M(\nabla u_k)|~dx+\int_{\badset}|\mathcal M(\nabla u_k)|~dx.
\end{align*}
In order to estimate the $\liminf_{k \to +\infty}$ of the first term 
on the right-hand side
we will employ \eqref{estimate_Dc}, 
whereas, in order to pass to the limit as $k\rightarrow +\infty$ in 
the second term, we will use
\eqref{crucial_ineq}, so that we first want to render the right-hand side of this latter
inequality independent of $k$. This will be done with the aid 
of the non-parametric Plateau-type problem studied in this section 
and in Section \ref{sec:structure_of_minimizers}.

\begin{definition}[\textbf{The projection $p$}]
We let $p:C_l\cap \{t\geq0\} \to 
\overline R_l$ be the othogonal projection.
\end{definition}
Recall that
$\mathcal G_{\pm
\widehat \vartheta_{k,\eps}}^{(4)}$
 are defined in \eqref{eq:def_generalizedgraphs_Fextended}, 
that $\GhatFkfour$ is the generalized polar graph of $\hatFke$ on its domain of definition (see \eqref{strip_Jextension2}), and that $\hatFke$ takes values in $[0,\pi]$. 
We first prove the following preliminary result:
\begin{lemma}\label{lemma_domain}
Let $\eps \in (0,1)$ be as in \eqref{eq:H1}, \eqref{eq:H2},
 and $k \in \mathbb N$.
Then there is a negligible set $C_{k,\eps}\subset (0,\longR)$ such that 
for all $t\in (0,\longR)\setminus C_{k,\eps}$ 
 \begin{equation}\label{conclusion_lemma_domain}
p\Big({\rm spt}(\GhatFkfour)
\Big)\cap (\{t\}\times \R^2)
 \end{equation}
 is a subinterval of the segment $\overline R_l\cap(\{t\}\times\R^2)
= \{t\} \times [-1,1] \times \{0\}$ 
with  one endpoint $(t,1,0)$.
Moreover $p({\rm spt}(\GhatFkfour))=
p({\rm spt}(\GminushatFkfour))$.
\end{lemma}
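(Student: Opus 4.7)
The plan is to treat the two assertions separately, beginning with the easier symmetry identity.

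The second claim $p({\rm spt}(\GhatFkfour))=p({\rm spt}(\GminushatFkfour))$ follows from a symmetry argument: by construction the cylindrical Steiner symmetrization produces a set $\mathbb S(E_k)$ invariant under the reflection $\sigma:(t,\rho,\theta)\mapsto(t,\rho,-\theta)$, and hence so are $\hatFke$, $SG^{\rm pol}_{\hatFke}$, $UG^{\rm pol}_{-\hatFke}$, and $\Oke$. Applied to \eqref{eq:def_generalizedgraphs_Fextended}, the reflection $\sigma$ swaps, up to orientation, the two defining terms of $\GhatFkfour$ and $\GminushatFkfour$, giving ${\rm spt}(\GminushatFkfour)=\sigma({\rm spt}(\GhatFkfour))$. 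Since $\cos\theta$ is even, $p\circ\sigma=p$, whence the equality of projections.

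For the interval claim, I would define $C_{k,\eps}\subset(0,\longR)$ to be the $\mathcal{H}^1$-negligible set containing $t=\eps$ together with all $t$ for which any of the following fails: (i) $\Oke_t$ is a finite perimeter set in $\R^2$; (ii) the function $\rho\mapsto\hatFke(t,\rho)$ is of bounded variation on $[0,1]$ with values in $[0,\pi]$; (iii) the slice of $\GhatFkfour$ at $\{t\}\times\R^2$ coincides with the integer rectifiable $1$-current carried by the generalized polar graph of $\hatFke(t,\cdot)$, i.e., the graph of $\hatFke(t,\cdot)$ augmented with the circular arcs in $\partial B_{\rho_0}$ that fill in each jump of $\hatFke(t,\cdot)$ at $\rho=\rho_0$. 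Conditions (i)--(iii) hold for $\mathcal{H}^1$-almost every $t$ by the slicing theory of integer rectifiable currents applied to the finite perimeter set $\mathbb S(E_k)$, combined with the fact that its cylindrical symmetry forces each slice $\mathbb S(E_k)_t$ to be described by the angular half-width $\Fke(t,\cdot)$.

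Fix now $t\in(0,\longR)\setminus C_{k,\eps}$. For $t\in(0,\eps)$, $\Oke_t=B_1$ and ${\rm spt}(\GhatFkfour)\cap(\{t\}\times\R^2)$ equals the open upper semicircle of $\partial B_1$, which is connected and contains $(t,1,0)$. For $t\in(\eps,\longR)$, the slice is the generalized polar graph of the BV function $\hatFke(t,\cdot):[0,1]\to[0,\pi]$, namely a connected rectifiable curve in $\{t\}\times\overline B_1\cap\{\theta\in[0,\pi]\}$ that starts on the axis $\{\rho=0\}$ and ends at $(t,1,0)$ (the latter because $\hatFke(t,1)=0$ by \eqref{strip_Jextension2}). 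Since the projection $p(t,\rho,\theta)=(t,\rho\cos\theta,0)$ is continuous, its image is a connected subset of $\{t\}\times[-1,1]\times\{0\}$, hence an interval. Because $\rho\cos\theta\leq\rho\leq 1$ with equality if and only if $\rho=1$ and $\theta=0$, the point $(t,1,0)$ realises the maximum first Cartesian coordinate on the image and is therefore one of the two endpoints of that interval.

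The main technical obstacle is the identification announced in (iii): verifying that slicing the current $\partial\jump{SG^{\rm pol}_{\hatFke}}\res\{\theta\in(0,\pi)\}$ produces exactly the jump arcs of $\hatFke(t,\cdot)$ and that the flat-strip correction $\jump{G^{\rm pol}_{\hatFke\res(\{\hatFke\in\{0,\pi\}\}\cap\stripfour)}}$ correctly fills in the plateaus where $\hatFke\equiv 0$ or $\pi$, notably the tail $\rho\in[1-\lambda_k',1]$ through which the generalized graph reaches $(t,1,0)$. Once this slicing identification is in place, the connectedness of the generalized polar graph together with the continuity of $p$ closes the argument.
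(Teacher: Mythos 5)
Your strategy mirrors the paper's: slice $\GhatFkfour$ by planes $\{t\}\times\R^2$, show each slice is a connected curve, then use continuity of $p$ to get an interval, with $(t,1,0)$ as the extremal point. The symmetry argument for the second claim is also the same (and correctly reduced to one line). However, there is a genuine gap in the first claim: your condition (iii) --- that the slice coincides with the generalized polar graph of $\hatFke(t,\cdot)$ and in particular is a single connected curve --- is exactly the content of the paper's proof, and you assert it rather than prove it. You explicitly flag it as ``the main technical obstacle'' without closing it. The difficulty is not the slicing theorem for finite-perimeter sets (which gives $(\partial\jump{\mathbb S(E_k)})_t = -\partial\jump{\mathbb S(E_k)_t}$ for a.e.\ $t$), but showing that the restricted current $(\GhatFkfour)_t$ --- which is only the part of $\partial\jump{SG^{\rm pol}_{\hatFke}}$ in $\{0<\theta<\pi\}$ together with the correction $\jump{G^{\rm pol}_{\hatFke\res(\{\hatFke\in\{0,\pi\}\}\cap\stripfour)}}$ --- has \emph{connected} support, i.e.\ decomposes into a single simple curve with no additional closed loops. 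The paper proves this by restricting to the sets $U_h$, computing $\partial((\GhatFkfour)_t\res U_h)=\delta_{P_2^h}-\delta_{P_1}$, invoking the decomposition theorem for integral $1$-currents, and then ruling out closed components by the observation that a polar graph meets each circle $\partial B_\rho$ in exactly one point for a.e.\ $\rho$, whereas a curve from $P_1$ to $P_2^h$ already accounts for one such intersection. That last geometric observation is the crux and is absent from your argument.

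There is also a smaller misstatement: the slice does \emph{not} extend to the axis $\{\rho=0\}$. By the definition of $\stripfour$ in \eqref{newJ_k}, the correction term lives only over $\rho\in[\minuk(t)\wedge(1-\lambda_k),1]$, and since $\hatFke=\pi$ for $\rho<\minuk(t)\wedge(1-\lambda_k)$ the first term of \eqref{eq:def_generalizedgraphs_Fextended} contributes nothing there either. Consequently the slice is the generalized polar graph of $\hatFke(t,\cdot)$ over $[\minuk(t)\wedge(1-\lambda_k),1]$ only, and the curve terminates at $P_2=(t,\minuk(t)\wedge(1-\lambda_k),\pi)$ (in cylindrical coordinates), not at $\rho=0$. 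This does not change the projected interval in the end, since the projection of the missing piece would be subsumed anyway, but it is exactly the kind of imprecision that the paper's careful $\sigma_t^h$-approximation argument is designed to avoid.
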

\begin{proof}
The latter  assertion follows by symmetry. To prove the former,
we argue by slicing. 
For a.e. $t\in (0,\longR)$ the set ${\rm spt}(\GhatFkfour)\cap (\{t\}\times \R^2)$ coincides with support of the current
$(\GhatFkfour)_t$, see \cite[Def. 7.6.2]{Krantz_Parks:08}. 
First notice that for all  $t\in(0,\eps)$ the 
conclusion 
follows by construction\footnote{In this set 
we have $\Fke=\pi$ and the current $(\GhatFkfour)_t$ is the integration over the 
half-circle $\{t\}\times ((\partial B_1)\cap \{\theta\in(0,\pi)\})$,
whose projection through $p$ is 
the whole interval with endpoints $(t,1,0)$ and  
$(t,1,0)$ (in cylindrical coordinates).}.

It remains to consider the case $t\in (\eps,l)$.
{}Recall that 
the set $\stripfour$ in \eqref{newJ_k} has the form
\begin{align*}
 \{t\in(\eps,l),\;\rho\in [\minuk(t)\wedge (1-\lambda_k),1],\;\theta=0\}.
\end{align*}
Therefore, for a.e. $t\in (\eps,l)$ the slice $(\GhatFkfour)_t$ 
is the integration over ${\rm spt}(\GhatFkfour)$ 
restricted to the plane $\{t\}\times\R^2$, which in turn is the 
integration over the generalized polar graph (see \eqref{eq:generalized_polar_graph})
of $\hatFke$ restricted to the closed set 
$$
\{t\}\times [\minuk(t)\wedge (1-\lambda_k),1]\times [0,\pi].
$$
Namely $$(\GhatFkfour)_t=\jump{{\rm spt}(\GhatFkfour)\cap\big(\{t\}\times [\minuk(t)\wedge (1-\lambda_k),1]\times [0,\pi]\big)},$$
so that the support $\sigma_t$ of $(\GhatFkfour)_t$ can also be 
obtained as
\begin{align}\label{sigma_t}
\sigma_t=\bigcap_{h=1}^{+\infty}\sigma_t^h,
\end{align}
where 
$$\sigma_t^h:={\rm spt}(\GhatFkfour)\cap\Big(\{t\}\times [\big(\minuk(t)\wedge (1-\lambda_k)\big)-\frac1h,1]\times [0,\pi]\Big).$$

For $h\in \mathbb N$ large enough, let 
$$
U_h:=\{t\}\times \big((\minuk(t)\wedge (1-\lambda_k))-\frac1h,1\big)\times (-\frac1h,\pi+\frac1h),
$$ which is a relatively open set in $\{t\}\times B_1$,
and let $(\GhatFkfour)_t$ be the slice of $\GhatFkfour$ on $\{t\}\times B_1$.
We have
\begin{align}\label{boundary_of_sigmath}
{\rm spt}((\GhatFkfour)_t\res U_h)\subset \sigma_t^h\subset\{t\}\times [\big(\minuk(t)\wedge (1-\lambda_k)\big)-\frac1h,1]\times [0,\pi].
\end{align}
 On the other hand
since $\GhatFkfour\res(\{t\}\times B_1)$ is the boundary of the subgraph of $\hatFke$ in $\{t\}\times B_1$, it is a closed $1$-integral current in $U_h$ and 
in $(\{t\}\times (B_1\setminus \overline{B}_{(\minuk(t)\wedge (1-\lambda_k))-\frac1h})$,
so that the boundary $\partial((\GhatFkfour)_t\res U_h) $ in $\mathcal D_1(\{t\}\times B_1)$ is 
supported on $(\partial U_h)
\cap ((\partial B_1)\cup \partial B_{(\minuk(t)\wedge (1-\lambda_k))-\frac1h})$.
{}From \eqref{boundary_of_sigmath},  the fact that $\hatFke=0$ 
at $(t,1)$ and that $\hatFke$ is constant on the segment $\big((\minuk(t)\wedge (1-\lambda_k))-\frac1h,\minuk(t)\wedge (1-\lambda_k)\big)$ 
with value either $0$ or $\pi$, we deduce that 
\begin{align}\label{boundary_of_sigmath2}
&{\rm spt}(\partial((\GhatFkfour)_t\res U_h))
\nonumber\\
&\subset\big(\{t\}\times \{(\minuk(t)\wedge (1-\lambda_k))-\frac1h\}\times\{0,\pi\}\big)\bigcup\big(\{t\}\times \{1\}\times\{0\}\big).
\end{align}
Moreover, if we set $$P_1:=(t,1,0)\text{ and } P_2^h:=\big(t,(\minuk(t)\wedge (1-\lambda_k))-\frac1h,\hatFke((\minuk(t)\wedge (1-\lambda_k))-\frac1h)\big),$$ 
from \eqref{boundary_of_sigmath2} it follows that 
$$
\partial((\GhatFkfour)_t\res U_h)=\delta_{P_2^h}-\delta_{P_1}.
$$
By decomposition of the integral $1$-current $(\GhatFkfour)_t\res U_h$ (see \cite[Section 4.2.25]{Federer:69}), 
there are at most countable Lipschitz curves $\{\alpha_i^h\}$ such that $\alpha^h_0$ connects $P_2^h$ to $P_1$, and $\alpha^h_i$ is closed for $i>0$. 
We claim that there cannot be closed curves $\alpha^h_i$, namely $\{\alpha^h_i\}_{i\in\mathbb N}=\{\alpha^h_0\}$.
 Indeed, since $\alpha_0^h$ connects $P_1$ and $P_2^h$, we see that $(\{t\}\times \partial B_\rho)\cap \alpha_0^h$ 
consists of at least one point for $\mathcal H^1$-a.e. $\rho\in \big((\minuk(t)\wedge (1-\lambda_k))-\frac1h,1\big)$. 
On the other hand, $(\{t\}\times \partial B_\rho)\cap \sigma_t^h$ consists of only one point\footnote{Because $\sigma_t^h$ is the support of a polar graph; the points where this intersection is not a 
singleton coincide with the values of $\rho$ where $\hatFke$ has a jump.} for $\mathcal H^1$-a.e. $\rho\in \big((\minuk(t)\wedge (1-\lambda_k))-\frac1h,1\big)$. So there cannot be other curves $\alpha_i^h$ otherwise the last condition will be violated.

From the claim we deduce that the current $ (\GhatFkfour)_t\res U_h$ is the integration over a simple curve $\alpha^h_0$ connecting $P_2^h$ and $P_1$, and
its support coincides with $\sigma_t^h$. Now, 
from \eqref{sigma_t} and the fact that $\sigma_t$ is a segment on $\{t\}\times \big((\minuk(t)\wedge (1-\lambda_k))-\frac1h,\minuk(t)\wedge (1-\lambda_k)\big)$, we conclude that also $\sigma_t$ must be a 
unique curve, say $\alpha_0$, 
connecting $P_1$ to $P_2:=\lim_{h\rightarrow\infty}P_2^h$.
By continuity of the projection by $p$, $\alpha_0$ is an interval with one endpoint in $p(P_1)=(t,1,0)$,
for a.e. $t\in (\eps,l)$.
\end{proof}

{\bf The new coordinates $(w_1,w_2,w_3)$.}
In what follows, it is convenient to revert the rectangle with respect to its second
coordinate: 
if $(t,\rho,\theta)\in [0,\longR]\times [0,1]\times (-\pi,\pi]$ are the 
cylindrical coordinates in the cylinder $C_l$ exploited so far, we introduce 
Cartesian coordinates $(w_1,w_2,w_3) \in [0,\longR] \times [-1,1] \times [-1,1]$ 
 defined as
\begin{align}\label{cartesian_coordinates}
 w_1:=t,\;\;w_2:=-\rho\cos\theta,\;\;w_3:=-\rho \sin\theta,
\end{align}
in such a way that the segment 
$\{0\leq t\leq l,
\rho=1,
\theta=0\}$ 
coincides with the bottom edge $[0,\longR]\times \{-1\}\times \{0\}$ of the rectangle
$\overline R_l$.

Thanks to Lemma \ref{lemma_domain} we are allowed to give the following
\begin{definition}[\textbf{The function $\hke$}]
\label{def:h_k}
Let $\eps \in (0,1)$ be as in \eqref{eq:H1}, \eqref{eq:H2},
 and $k \in \mathbb N$.
We define $\hke:[0,\longR]\rightarrow [-1,1]$ as
$$\hke (\axialcoordofcylinder):=\mathcal H^1\Big(p\big({\rm spt}(\GhatFkfour)
\big)\cap (\{\axialcoordofcylinder\}\times \R^2)\Big)-1.$$
\end{definition} 
For all $\axialcoordofcylinder\in(0,\longR)$ 
for which Lemma \ref{lemma_domain} is valid,  
we have that 
$1+\hke(\axialcoordofcylinder)$ equals 
the length of the interval in \eqref{conclusion_lemma_domain}. 
Now the content of Lemma \ref{lemma_domain} is that the $p$-projection of 
${\rm spt}(\GhatFkfour)$ on $\overline R_l$ is of the form
\begin{align}\label{SGh_support}
 p\Big({\rm spt}(\GhatFkfour)\Big)
=SG_{\hke}
:=
\{(w_1,w_2) \in R_l: w_1\in(0,\longR),
w_2\in(-1,\hke(w_1))\},
\end{align}
up to a set of zero $\mathcal H^2$-measure.
The function $\hke$ is built 
in such a way that
$(w_1,-1)$ and $(w_1,\hke(w_1))$ 
are the endopoints of the interval 
$p({\rm spt}(\GhatFkfour))\cap (\{w_1\}\times \R^2)$
for almost every $w_1 \in (0,\longR)$. 
Observe that 
$$\hke\geq -1+\lambda_k' \qquad {\rm in}~ (\eps,l),
$$ 
and 
$$
\hke=1 \qquad{\rm in}~  (0,\eps).
$$
Indeed, from Definition \ref{def:the_function_hatFke}, equation \eqref{newJ_k}
and Definition \ref{def:G^4_keps}, we see that the set $\Big((0,\longR) \times 
[1-\lambda_k',1] \times \{0\}\Big) \cup \Big((0,\eps) \times [-1,1] \times \{0\}\Big)$ 
is contained in $ p({\rm spt}(\GhatFkfour))$.

We have built $\Oke$ in \eqref{eq:def_O_k} as the set enclosed between 
$\GminushatFkfour$ and 
$\GhatFkfour$, see formula \eqref{eq:int_O_k}.
We now perform a (classical) Steiner 
symmetrization\footnote{Despite $\Oke$ is obtained 
by cylindrical symmetrization, it still can have 
``holes'' (see Fig. \ref{fig:set_cancellation} for a slice), that 
disappear
when further performing the Steiner symmetrization.} of the set $\Oke$ with respect to the plane $\{w_3=0\}$. 
We denote by $\mathbb S_{\rm cl}(\Oke)$ the symmetrized set. 
\begin{remark}\label{2.14_bisrem}
We emphasize that the set $\Oke$ in $(0,\eps)\times\R^2$ is exactly $(0,\eps)\times B_1$, and is already symmetric with respect to the plane containing $R_l$. 
For this reason $\Oke$ does not change (in that region)
after Steiner  symmetrization,  
 \begin{align}\label{semicircle}
  \Oke\cap \{w_1\in (0,\eps)\}=\mathbb S_{\rm cl}(\Oke)\cap \{w_1\in (0,\eps)\}.
 \end{align}
\end{remark}
Since
the perimeter does not increase when symmetrizing, from \eqref{eq:int_O_k} 
we conclude
\begin{align}\label{ineq_steinerclassic}
 |\GhatFkfour+\GminushatFkfour|\geq \mathcal H^2(\partial^* \mathbb S_{\rm cl}(\Oke)\cap ((0,l)\times \R^2)).
\end{align}
\begin{definition}[\textbf{The function $\psike$}]
We introduce the function $\psike:R_l\rightarrow[0,+\infty)$ as
\begin{equation}\label{eq:psike}
\psike(w_1,w_2):=\frac12\mathcal H^1(\{w_3:(w_1,w_2,w_3)\in \Oke\}),
\qquad (w_1,w_2) \in R_l.
\end{equation}
\end{definition}

We stress that the set where $\psike >0$ is contained, up to
$\mathcal H^2$-negligible sets,
in the region
$SG_{\hke}$ defined in \eqref{SGh_support}.
Notice also that $\psike$ may take the value $0$ in $SG_{\hke}$ on a set
of positive $\mathcal H^2$-measure. 

\begin{remark}\label{rem_datialbordo}
\begin{itemize}

\item[(i)] By definition of classical Steiner symmetrization, 
\begin{align*}
 \mathbb S_{\rm cl}(\Oke)&=\{w = (w_1,w_2,w_3)\in R_l\times \R:w_3\in (-\psike(w_1,w_2),\psike(w_1,w_2))\}\\
 &=\{w = (w_1,w_2,w_3)\in SG_{\hke}\times \R:w_3\in (-\psike(w_1,w_2),\psike(w_1,w_2))\},
\end{align*}
up to Lebesgue-negligible sets, the second equality 
following from the fact that 
$\psike=0$ almost everywhere in $R_\longR \setminus SG_{\hke}$; 

\item[(ii)] since $\Oke$ has finite perimeter, it follows that 
$\psike \in BV(R_l)$; 

\item[(iii)] since 
$\Oke \res ([0,\eps) \times \R^2) = C_l 
\res ([0,\eps) \times \R^2)$ 
and 
$\Oke \res 
([\eps,l) \times \R^2)$ 
is contained in 
$C_l(1-\lambda_k')\res ([\eps,l) \times \R^2)$ 
(as a consequence 
of \eqref{strip_Jextension2}), it follows that $\psike$ has null trace on the segments $(0,\longR)\times \{-1\}$ and $(0,\longR)\times \{1\}$.
\end{itemize}
\end{remark}

We can split $\partial^*\mathbb S_{\rm cl}(\Oke)$ as 
\begin{equation}\label{eq:we_can_split}
\partial^*\mathbb S_{\rm cl}(\Oke)=((\partial^*\mathbb S_{\rm cl}(\Oke))
\cap \{w_3> 0\})\bigcup 
((\partial^*\mathbb S_{\rm cl}(\Oke))\cap \{w_3<0\})=:(\partial^*\mathbb S_{\rm cl}(\Oke))^+\cup(\partial^*\mathbb S_{\rm cl}(\Oke))^-
\end{equation}
up to a set of $\mathcal H^2$-measure zero,
in such a way that 
\begin{equation}\label{uno}
 (\partial^* \mathbb S_{\rm cl}(\Oke))^+=
(\partial^* SG_{\psike})\cap \big(R_l\times (0,+\infty)\big),
\qquad (\partial^* \mathbb S_{\rm cl}(\Oke))^-
=(\partial^* UG_{-\psike})\cap \big(R_l\times (-\infty,0)\big),
\end{equation}
where $SG_{\psike}$ and $UG_{-\psike}$ are, respectively, 
the (standard) generalized subgraph and epigraph of $\pm\psike$ in $R_l\times \R$. 
Notice that, since $\psike\geq0$, 
\begin{equation}\label{3.150}
\begin{aligned}
 (\partial^* SG_{\psike})&\cap(R_l
\times [0,+\infty))
\\
&=
(\partial^* \mathbb S_{\rm cl}(\Oke))^+
\cup \{(w_1,w_2,0)\in SG_{\hke}:\psike=0\}\cup (R_l\setminus SG_{\hke}),
\\
 (\partial^* UG_{-\psike})&\cap(R_l\times (-\infty,0])
\\
 &=(\partial^* \mathbb S_{\rm cl}(\Oke))^-\cup \{(w_1,w_2,0)\in  SG_{\hke}:\psike=0\}\cup (R_l\setminus SG_{\hke}),
\end{aligned}
\end{equation}
up to $\mathcal H^2$-negligible sets.

We are ready to prove the following:

\begin{lemma}\label{lemma_9}
 We have
\begin{equation}\label{eq:lem9}
 \begin{aligned}
 |\GhatFkfour|+|\GminushatFkfour| \geq& 
  \mathcal H^2\left((\partial^* SG_{\psike})\cap
 (R_l\times [0,+\infty)\right)
+\mathcal H^2\left((\partial^* UG_{-\psike})\cap (R_l\times (-\infty,0])
\right)\\&-2\mathcal H^2(R_l\setminus SG_{\hke}).
 \end{aligned}
\end{equation} 
 Moreover, 
 $\mathcal H^2\left((\partial^* SG_{\psike})
\cap (R_l\times [0,+\infty))\right)=\mathcal H^2\left(
(\partial^* UG_{-\psike})\cap (R_l\times (-\infty,0])\right)$.
\end{lemma}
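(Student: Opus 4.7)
The plan is to chain together the reductions prepared above. First, by \eqref{sum_generalizedgraphextended} and \eqref{eq:int_O_k},
\begin{equation*}
|\GhatFkfour|+|\GminushatFkfour| = \mathcal H^2\big(\partial^*\Oke\cap((0,l)\times\R^2)\big) + 2\,\mathcal H^2\big(\{\hatFke\in\{0,\pi\}\}\cap\stripfour\big),
\end{equation*}
and the Steiner-symmetrization estimate \eqref{ineq_steinerclassic} lets me replace $\partial^*\Oke$ with $\partial^*\mathbb S_{\rm cl}(\Oke)$ without increasing the right-hand side. The decomposition \eqref{eq:we_can_split}--\eqref{uno} then gives, up to $\mathcal H^2$-null sets in $\{w_3=0\}$, the additive splitting
$\mathcal H^2(\partial^*\mathbb S_{\rm cl}(\Oke)\cap((0,l)\times\R^2))=\mathcal H^2((\partial^*\mathbb S_{\rm cl}(\Oke))^+)+\mathcal H^2((\partial^*\mathbb S_{\rm cl}(\Oke))^-)$, and the identities \eqref{3.150}, whose three constituent pieces on each right-hand side are pairwise disjoint, yield by additivity
\begin{align*}
\mathcal H^2\big((\partial^* SG_{\psike})\cap(R_l\times[0,+\infty))\big) &= \mathcal H^2\big((\partial^*\mathbb S_{\rm cl}(\Oke))^+\big) + \mathcal H^2\big(\{\psike=0\}\cap SG_{\hke}\big) + \mathcal H^2\big(R_l\setminus SG_{\hke}\big),\\
\mathcal H^2\big((\partial^* UG_{-\psike})\cap(R_l\times(-\infty,0])\big) &= \mathcal H^2\big((\partial^*\mathbb S_{\rm cl}(\Oke))^-\big) + \mathcal H^2\big(\{\psike=0\}\cap SG_{\hke}\big) + \mathcal H^2\big(R_l\setminus SG_{\hke}\big).
\end{align*}

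Summing these two identities, subtracting $2\,\mathcal H^2(R_l\setminus SG_{\hke})$, and combining with the first step, \eqref{eq:lem9} reduces to the single geometric comparison
\begin{equation*}
\mathcal H^2\big(\{\hatFke\in\{0,\pi\}\}\cap\stripfour\big) \;\geq\; \mathcal H^2\big(\{\psike=0\}\cap SG_{\hke}\big),
\end{equation*}
which I expect to be the main obstacle. The strategy for this estimate is pointwise: if $(w_1,w_2)\in SG_{\hke}$ satisfies $\psike(w_1,w_2)=0$, then the vertical fiber of the open set $\Oke$ at $(w_1,w_2)$ is empty, so in particular $(w_1,w_2,0)\notin\Oke$; writing this fact in cylindrical coordinates --- with $(\rho,\theta)=(|w_2|,0)$ when $w_2<0$ and $(\rho,\theta)=(|w_2|,\pi)$ when $w_2>0$ --- forces $\hatFke(w_1,|w_2|)=0$ in the first case and $\hatFke(w_1,|w_2|)=\pi$ in the second. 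Both assignments $(w_1,w_2)\mapsto(w_1,|w_2|,0)$ are measure-preserving on their respective domains and embed $\{\psike=0\}\cap SG_{\hke}$ into $\{\hatFke\in\{0,\pi\}\}\cap\stripfour$ up to $\mathcal H^2$-null sets; the inclusion into the strip $\stripfour$ is not automatic and will require using the defining inequality $w_2<\hke(w_1)$ of $SG_{\hke}$ together with the characterization in Remark \ref{rmk:8.6}(v) to locate $(w_1,|w_2|)$ correctly.

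Finally, the equality $\mathcal H^2((\partial^* SG_{\psike})\cap(R_l\times[0,+\infty)))=\mathcal H^2((\partial^* UG_{-\psike})\cap(R_l\times(-\infty,0]))$ is immediate from the reflection $w_3\mapsto-w_3$, which is an isometry carrying $SG_{\psike}$ onto $UG_{-\psike}$ and swapping the two closed half-spaces.
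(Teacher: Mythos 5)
Your opening reductions---combining \eqref{sum_generalizedgraphextended} with \eqref{eq:int_O_k}, then applying \eqref{ineq_steinerclassic}, the splitting \eqref{eq:we_can_split}--\eqref{uno}, and the identities \eqref{3.150}---coincide with the paper's and correctly reduce the lemma to the single comparison $\mathcal H^2(\{\hatFke\in\{0,\pi\}\}\cap\stripfour)\geq\mathcal H^2(\{\psike=0\}\cap SG_{\hke})$. The gap lies entirely in the pointwise argument you propose for that comparison, and it fails in both branches.

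For the branch $w_2<0$ (i.e.\ $\theta=0$), the chain ``$\psike(w_1,w_2)=0\Rightarrow(w_1,w_2,0)\notin\Oke\Rightarrow\hatFke(w_1,|w_2|)=0$'' is not pointwise valid: $\Oke$ is only a set of finite perimeter and $\hatFke$ is only $BV$, so membership of a single point carries no information, and in any case the vertical fiber over $(w_1,w_2)$ sweeps a whole range of radii $\rho\geq|w_2|$ rather than just $\rho=|w_2|$, so its being $\mathcal H^1$-null does not pin down $\hatFke(w_1,|w_2|)$. The paper repairs this by working at a Lebesgue point of $\hatFke(w_1,\cdot)$ and projecting the vertical segment through $\pi_0^{\rm pol}$; the bound $\hatFke\leq\theta\circ(\pi_0^{\rm pol})^{-1}\to0$ then contradicts the Lebesgue-point estimate. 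Some version of that measure-theoretic argument is unavoidable.

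For the branch $w_2>0$ (i.e.\ $\theta=\pi$), your implication is in fact reversed. The angular slice of $\Oke$ at $(w_1,\rho)$ is the arc $\{|\theta|<\Theta_k(w_1,\rho)/2\}$ with $\Theta_k\leq2\pi$, so $\theta=\pi$ is inside $\Oke$ exactly when $\Theta_k=2\pi$, i.e.\ $\hatFke=\pi$; hence $(w_1,w_2,0)\notin\Oke$ forces $\hatFke(w_1,|w_2|)<\pi$, not $=\pi$. The correct fact---what the paper actually proves---is that $\{(w_1,w_2)\in SG_{\hke}:w_2>0,\ \psike(w_1,w_2)=0\}$ is $\mathcal H^2$-null: given such a point, there is a higher fiber $w_2'>w_2$ with $\psike(w_1,w_2')>0$, and since the slices of $\Oke$ are arcs centred at $\theta=0$, rotating the positive-measure portion of that fiber towards the axis keeps it inside $\Oke$ and lands it on the fiber at $w_2$, contradicting $\psike(w_1,w_2)=0$. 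Without this null-set observation your two assignments $(w_1,\pm c)\mapsto(w_1,c,0)$ have overlapping images inside $\{\hatFke\in\{0,\pi\}\}\cap\stripfour$ (indeed both now land in $\{\hatFke<\pi\}$), the combined map can be $2$-to-$1$, and the desired measure inequality is lost. Finally, the inclusion of the image points into $\stripfour$, which you flag but do not carry out, remains a genuine missing step.
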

\begin{proof}
The last assertion follows by symmetry. Let us prove the former:
 By \eqref{3.150} we have
 \begin{align*}
  \mathcal H^2(\partial^* SG_{\psike}\cap(R_l\times [0,+\infty)))=&
\mathcal H^2\big((\partial^* \mathbb S_{\rm cl}(\Oke))^+\big)+\mathcal H^2( \{(w_1,w_2)\in SG_{\hke}:\psike=0\})\\&+\mathcal H^2(R_l\setminus SG_{\hke}),\\
 \mathcal H^2(\partial^* UG_{-\psike}\cap(R_l\times (-\infty,0]))=&\mathcal H^2\big((\partial^* \mathbb S_{\rm cl}(\Oke))^-\big)+ \mathcal H^2(\{(w_1,w_2)\in SG_{\hke}:\psike=0\})\\&+\mathcal H^2(R_l\setminus SG_{\hke}).
 \end{align*}
Taking the sum of these two expressions 
and using \eqref{ineq_steinerclassic}, \eqref{eq:we_can_split},
we obtain
\begin{align*}
 &\mathcal H^2(\partial^* SG_{\psike}\cap(R_l\times [0,+\infty)))+\mathcal H^2(\partial^* UG_{-\psike}\cap(R_l\times (-\infty,0]))\\&\leq |\GhatFkfour+\GminushatFkfour|+2\mathcal H^2( \{(w_1,w_2)\in SG_{\hke}
:\psike=0\})+2\mathcal H^2(R_l\setminus SG_{\hke}).
\end{align*}
 Recalling \eqref{newJ_k},
we now claim that, up to $\mathcal H^2$-negligible sets, 
\begin{align}\label{claim_lemma_315}
  \{(w_1,w_2)\in SG_{\hke}:\psike(w_1,w_2)=0\}
\subset
\big\{
\hatFke =0\big\}\cap \stripfour,
\end{align}
see Fig. \ref{fig:set_cancellation}.
From the claim it follows that 
$$\mathcal H^2( \{(w_1,w_2)\in SG_{\hke}:\psike=0\})\leq \mathcal H^2\big(\big\{\hatFke \in\{ 0,\pi\}\big\}\cap \stripfour\big),$$ and hence by \eqref{sum_generalizedgraphextended} we conclude
\begin{align*}
 &\mathcal H^2(\partial^* SG_{\psike}\cap(R_l\times [0,+\infty)))+\mathcal H^2(\partial^* UG_{-\psike}\cap(R_l\times (-\infty,0]))\\
 &\leq |\GhatFkfour|+|\GminushatFkfour|+2\mathcal H^2(R_l\setminus SG_{\hke}),
\end{align*}
that is \eqref{eq:lem9}. 
It remains to show \eqref{claim_lemma_315}. 
As usual, we argue by slicing; hence for almost all 
$w_1\in (0,\longR)$ we will show that  
\eqref{claim_lemma_315} holds (up to $\mathcal H^1$-negligible sets). Notice that both the left and right-hand sides of \eqref{claim_lemma_315} are empty for $w_1<\eps$, so we assume $w_1>\eps$. 
Therefore, fix $(\widetilde w_1,\widetilde w_2)\in SG_{\hke}$ (with $\widetilde w_1>\eps$) such 
that $\psike(\widetilde w_1,\widetilde w_2)=0$ and assume, by contradiction, that $\hatFke(\widetilde w_1,\widetilde w_2)>0$. 
In a first step we will suppose $\widetilde w_2<0$.
We might further assume that $\widetilde w_2$ is a Lebesgue point for 
the function $\hatFke(\widetilde w_1,\cdot)$. 
Hence 
in any left-neighbourhood of this point $\hatFke$ is strictly positive on a set of positive meausure, \textit{i.e.},
we can find positive numbers $\delta_1,\delta_2$  such that 
for all $\delta\in (0,\delta_1)$, 
there exists a set $B 
\subset (\widetilde w_2-\delta,\widetilde w_2)$, 
of positive measure such that 
\begin{align}\label{11.7}
\hatFke(\widetilde w_1,w)>\delta_2>0
\qquad \forall w \in B.
\end{align}
If $\pi_0^{\rm pol}$ is the projection in Definition \eqref{def_pi0}, since $\widetilde w_2<0$ for $\delta_3>0$ small enough the  segment $I:=\{(\widetilde w_1,\widetilde w_2,w_3):w_3\in (0,\delta_3)\}$ satisfies
\begin{align*}
 I_0:=\pi_0^{\rm pol}(I)\subset\{(\widetilde w_1, w_2,0): w_2\in (\widetilde w_2-\delta_2,\widetilde w_2)\}.
\end{align*}
We have that $\pi_0^{\rm pol}:I\rightarrow I_0$ is a homeomorphism.
Now, if $\psike(\widetilde w_1,\widetilde w_2)=0$ the segment $I$ 
cannot intersect the subgraph of $\hatFke$ (on a set of positive $\Hone$-measure), and thus 
\begin{align}\label{limitnull}
 \hatFke(\widetilde w_1,w_2)\leq \theta\big(({\pi_0^{\rm pol}}_{\vert I_0})^{-1}(\widetilde w_1,w_2,0)\big)\qquad \text{ for $\mathcal H^1$-a.e. }(\widetilde w_1,w_2,0)\in I_0,
\end{align}
where $\theta$ represents the usual angular coordinate. Since 
$\theta\big(({\pi_0^{\rm pol}}_{\vert I_0})^{-1}(\widetilde w_1,w_2,0)\big)$ is infinitesimal as $w_2\rightarrow \widetilde w_2^-$, 
condition \eqref{limitnull} contradicts \eqref{11.7}.

Let us now treat the case $\widetilde w_2>0$. 
This is much simpler to deal with, 
up to noticing that $\hatFke$ is defined on $S^{(4)}_{k,\eps}\subset\{(w_1,w_2,w_3):w_2\in [-1,0]\}$. The fact that $\psike(\widetilde w_1,\widetilde w_2)=0$ means 
that the line $(\widetilde w_1,\widetilde w_2)\times \R$ 
does not intersect $\Oke$ on a set of positive $\mathcal H^1$-measure 
 but this contradicts the fact that  $(\widetilde w_1,\widetilde w_2)\in SG_{\hke}$.  
 Indeed since $(\widetilde w_1,\widetilde w_2)\in SG_{\hke}$ hence there exists $w_2>\widetilde w_2$ such that $\psike(\widetilde w_1,w_2)>0$. Let $A:=\Oke \cap (\widetilde w_1, w_2)\times \R)$ then a suitable 
rotation of $A$
around the axis of the cylinder shall meet $(\widetilde w_1, \widetilde w_2)\times \R$ on a set $ \widetilde A$ of positive $\Hone$-measure (note that $\widetilde A\subset \Oke$), which contradicts $\psike(\widetilde w_1,\widetilde w_2)=0.$ 
\end{proof}

\begin{remark}\label{2.14rem}
By \eqref{semicircle}, \eqref{eq:psike} and \eqref{eq:def_O_k}, we deduce
\begin{align}\label{datum_1}
{\rm the~ trace~ of~ } \psike {\rm~ on~ } \overline R_l \cap \{w_1=0\}
{\rm ~is~} 
\sqrt{1-w_2^2},\quad{\rm for  }\; w_2\in[-1,1].
\end{align}
Moreover, by construction and by 
Remark \ref{rem_datialbordo} (iii),
\begin{align}\label{datum_2}
 {\psike}(w_1,-1) = 0\;\;\;\text{ and }\;\;\; 
{\psike}(w_1,1) = 0,\qquad w_1\in(0,\longR).
\end{align}
\end{remark}

\begin{remark}\label{rem_12.9}
 We can write \cite{Giusti:84}
 \begin{align}\label{expression_area}
  \mathcal H^2\Big((\partial^* SG_{\psike})\cap (R_l\times [0,\infty))\Big)=
\areaonecod(\psike, R_l),
 \end{align}
where 
\begin{align*}
 \areaonecod(\psike, R_l)=\int_{R_l}\sqrt{1+|\nabla \psike|^2}~dx+|D^s\psike|( R_l)
\end{align*}
is the classical area of the graph of the $BV$-function $\psike$ in $R_l$.
Moreover, by \eqref{datum_2}, it follows
$|D^s\psike|( R_l)
=|D^s\psike|\big(\overline R_l\setminus (\{w_1=0\}\cup\{w_1=l\})\big)$ 
and hence
\begin{align*}
 \areaonecod(\psike, R_l)=\areaonecod\big(\psike,
\overline R_l\setminus (\{w_1=0\}\cup\{w_1=l\})\big).
\end{align*}
\end{remark}
Recalling the expression \eqref{eq:Dirichlet_part_of_boundary} of 
$\partial_D R_l$, 
define $\varphi: \partial_D R_l \to [0,1]$ as 
\begin{equation}\label{eq:old_phi}
\varphi(w_1,w_2) := \begin{cases}
\sqrt{1-w_2^2}  & {\rm if}~ (w_1,w_2)\in \{0\}\times [-1,1], 
\\
0 & {\rm if} ~ (w_1,w_2)\in (0,\longR)\times \{-1\}. 
\end{cases}
\end{equation}
\begin{definition}[\textbf{The functional $\FBl$}]
\label{def:the_functional_Fhat}
Given 
$h\in L^\infty([0,\longR],[-1,1])$ and $\psi \in \BV(R_l;[0,1])$
we define 
\begin{equation}\label{eq:F}
  \FBl(h,\psi):=\areaonecod
(\psi, R_l)-\mathcal H^2(R_l\setminus SG_h)+\int_{\partial_DR_l}|\psi-\varphi|~d\mathcal H^1+\int_{(0,\longR)\times \{1\}}|\psi|~d\mathcal H^1.
\end{equation}
\end{definition}

We further define 
\begin{equation}\label{eq:X_l}
\oldDom
:=\{(h,\psi):
h\in L^\infty([0,\longR],[-1,1]), \psi \in \BV(R_l,[0,1]),\psi=0 ~{\rm in ~}R_l\setminus SG_h
\}.
\end{equation}
\begin{Remark}\label{remark12.11}\rm
\begin{itemize}
	\item[(i)] The Borel function $h_{k,\eps}:[0,\longR]\rightarrow[-1,1]$ satisfies $h_{k,\eps}=1$ in $[0,\eps)$, and $\psi_{k,\eps}\in  BV([0,\longR]\times [-1,1])$ is such that $\psi_{k,\eps}=0$
almost everywhere in  $R_l \setminus
	SG_{\hke}$. Moreover $\psi_{k,\eps}(w_1,w_2)
	= \sqrt{1-w_2^2} {\rm ~for~} (w_1,w_2)\in (0,\eps)\times [-1,1]$, 
and $\psike(\cdot,-1)=0 ~{\rm in~ } [0,\longR]$.
In particular
$$(\hke,\psike)\in \oldDom.$$
\item[(ii)]
if $(h,\psi) \in \oldDom$,
and if $h$ is smaller than $1$ almost everywhere on $(0,l)$
then
 the last
addendum on the right-hand side of \eqref{eq:F} vanishes.
\item[(iii)] Thanks to \eqref{datum_1} and \eqref{datum_2}, 
it follows from Remark \ref{rem_12.9} that 
\begin{equation*}
 \begin{aligned}
&   \mathcal H^2\big((\partial^* SG_{\psike})\cap (R_l\times [0,+\infty)\big)
-\mathcal H^2(R_l\setminus SG_{\hke})
\\
=&
\areaonecod(\psike, \overline R_l\setminus \big(\{w_1=0\}\cup\{w_1=l\}\big))-\mathcal H^2(R_l\setminus SG_{\hke})
=\FBl(h_{k,\eps},\psi_{k,\eps}).
 \end{aligned}
\end{equation*}
As a consequence, from Lemma \ref{lemma_9} we have
\begin{align}\label{12.20}
 |\GhatFkfour|+|\GminushatFkfour| ~\geq~ 
& 
 2\FBl(h_{k,\eps},\psi_{k,\eps}).
\end{align}
\end{itemize}
\end{Remark}
Notice that in minimizing $\FBl$ we have a free boundary condition on the edge $\{l\}\times [-1,1]$.
By Remark \ref{remark12.11} (i) and \eqref{12.20}
we have 
\begin{align}\label{infimum_pb}
|\GhatFkfour|+|\GminushatFkfour| \geq 
2 \inf_{(h,\psi)\in \oldDom} \FBl(h,\psi),
\end{align}
which leads to the investigation of the minimum problem on the right-hand side.

\begin{remark}\label{leb_point}
 Let $(h,\psi) \in \oldDom$. 
If $t_0\in(0,\longR)$ is a Lebesgue point for $h$, and if $h(t_0)<1$, then the trace of $\psi$ over the segment $\{w_1=t_0,\;h(t_0)\leq w_2\leq 1\}$ 
vanishes. Indeed
for any $\eta>0$ we can find $\delta_\eta>0$ such that  
 \begin{align}\label{markov1}
  \frac{1}{2\delta}\int_{t_0-\delta}^{t_0+\delta}|h(w_1)-h(t_0)|~dw_1<\eta
\qquad 
\forall
\delta \in (0,\delta_\eta).
 \end{align}
Let now $s_0\in (-1,1)$ be such that $h(t_0)<s_0\leq 1$ ({\it i.e.}, $(t_0,s_0)\in \{w_1=t_0,\;w_2> h(t_0)\}$), and set
$2\Delta:=s_0-h(t_0)$.
By 
Chebyschev inequality and \eqref{markov1} it follows that the set $B_\Delta:=\{w_1\in(t_0-\delta,t_0+\delta):|h(w_1)-h(t_0)|>\Delta\}$ satisfies
\begin{align}\label{markov2}
 \mathcal H^1(B_\Delta)\leq \frac{2\delta\eta}{\Delta}.
\end{align}
Then, for any $\xi\in(0,\Delta)$ we 
infer\footnote{In the first inequality we have used that $0 \leq \psi\leq1$; 
in the second inequality 
that $SG_h$ is the subgraph of $h$ in $(0,\longR)\times(-1,1)$;
 in the third inequality we have used that $s_0-h(t_0)=2\Delta$ and that $\xi<\Delta$. }
\begin{equation}
\label{markov3}
\begin{aligned}
 &\frac{1}{2\delta}\int_{t_0-\delta}^{t_0+\delta}
\int_{s_0-\xi}^{s_0+\xi}
\psi(w_1,w_2)~dw_2 dw_1 
\leq  \frac{1}{2\delta}\int_{t_0-\delta}^{t_0+\delta}
\int_{s_0-\xi}^{s_0+\xi}\chi^{}_{\{\psi>0\}}(w_1,w_2)~dw_2 dw_1
\\
\leq & \frac{1}{2\delta}\int_{t_0-\delta}^{t_0+\delta}
\int_{s_0-\xi}^{s_0+\xi}\chi^{}_{SG_h}(w_1,w_2)~dw_2dw_1
\leq 
\frac{\xi}{\delta}\int_{t_0-\delta}^{t_0+\delta}
\chi_{B_\Delta}(w_1)
dw_1\leq \frac{2\xi\eta}{\Delta},
\end{aligned}
\end{equation}
where the penultimate inequality follows from the inclusions 
\begin{align*}
SG_h\cap\big(
[t_0-\delta,t_0+\delta]\times[s_0-\xi,s_0+\xi]\big)&\subseteq SG_h\cap \big([t_0-\delta,t_0+\delta]\times[s_0-\Delta,s_0+\Delta]\big)\\
&\subseteq  B_\Delta\times[s_0-\Delta,s_0+\Delta],
\end{align*}
and the last inequality follows from \eqref{markov2}. 
Now \eqref{markov3} 
entails the claim by the arbitrariness of $\eta>0$ and since $\psi\geq0$.
\end{remark}

We now refine the choice of the class of pairs $(h,\psi)$ where the infimum in \eqref{infimum_pb} is computed.

\begin{definition}[\textbf{The classes $\Hricc$ and 
 $\Wspace$}]\label{def:the_class_W}
We set 
\begin{equation*}
\begin{aligned}
&\Hricc:=\{h\in L^\infty([0,\longR],[-1,1]): \;h\text{ convex and 
nonincreasing in }[0,\longR], h(0)=1\}, 
\\
& \Wspace:=\{(h,\psi) \in X_l:
~ h \in \Hricc\}.
\end{aligned}
\end{equation*}
\end{definition}

\begin{prop}[\textbf{Convexifying $h$}]\label{modifications_of_h}
We have
\begin{align}\label{get_rid_of_eps}
 \inf_{(h,\psi)\in \oldDom} 
\FBl(h,\psi)=\inf_{(h,\psi)\in \Wspace}\FBl(h,\psi).
\end{align}
\end{prop}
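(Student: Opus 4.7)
The direction $\inf_{\Wspace}\FBl\ge\inf_{\oldDom}\FBl$ is immediate from $\Wspace\subseteq\oldDom$. For the reverse inequality, I would produce, for every $(h,\psi)\in\oldDom$ with $\FBl(h,\psi)<+\infty$, a pair $(\hat h,\hat\psi)\in\Wspace$ with $\FBl(\hat h,\hat\psi)\le\FBl(h,\psi)$. The construction is a two-step modification of $(h,\psi)$, guided by the fact that in \eqref{eq:F} only the term $-\Htwo(R_l\setminus SG_h)$ depends on $h$ (with $\psi$ fixed), whereas the remaining terms depend only on $\psi$ and its traces.

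\textit{Step 1 (tighten $h$ against $\psi$).} Set
$$h^*(w_1):=\mathrm{ess\,sup}\{w_2\in(-1,1):\psi(w_1,w_2)>0\},$$
with the convention $h^*(w_1):=-1$ when the set above is $\Hone$-negligible. Then $h^*\le h$ a.e., $\psi=0$ a.e.\ on $R_l\setminus SG_{h^*}$, so $(h^*,\psi)\in\oldDom$. Since $\Htwo(SG_{h^*})\le\Htwo(SG_h)$ and the other terms of $\FBl$ are unchanged, this gives $\FBl(h^*,\psi)\le\FBl(h,\psi)$.

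\textit{Step 2 (convex non-increasing modification).} Starting from $(h^*,\psi)$, define $C\subseteq\overline{R_l}$ as the closed convex hull of $(R_l\setminus SG_{h^*})\cup((0,l)\times\{1\})\cup(\{l\}\times[-1,1])$ and put $\hat C:=C\cap R_l$. Then $\hat C$ is a convex subset of $R_l$ whose boundary in $R_l$ is the graph of a function $\hat h:[0,l]\to[-1,1]$ that is convex, nonincreasing, and satisfies $\hat h(0)=1$ (because $(0,1)\in C$ forces $\hat h(0)\le 1$, while $\hat h\le 1$ by definition). In particular $\hat h\in\Hricc$. Define $\hat\psi:=\psi\cdot\chi_{SG_{\hat h}}$; since $SG_{\hat h}\subseteq SG_{h^*}$ and $\psi\equiv 0$ outside $SG_{h^*}$, we have $\hat\psi=0$ outside $SG_{\hat h}$, hence $(\hat h,\hat\psi)\in\Wspace$. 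One then checks $\FBl(\hat h,\hat\psi)\le\FBl(h^*,\psi)$. The inclusion $SG_{\hat h}\subseteq SG_{h^*}$ gives $-\Htwo(R_l\setminus SG_{\hat h})\le-\Htwo(R_l\setminus SG_{h^*})$, which is a strict gain. On the area side, truncating $\psi$ to zero on $SG_{h^*}\setminus SG_{\hat h}$ replaces $\int_{SG_{h^*}\setminus SG_{\hat h}}\sqrt{1+|\nabla\psi|^2}\,dx+|D^s\psi|(SG_{h^*}\setminus SG_{\hat h})$ with $\Htwo(SG_{h^*}\setminus SG_{\hat h})$, which is non-increasing since $\sqrt{1+|\nabla\psi|^2}\ge 1$; simultaneously the original jump of $\psi$ along $\partial SG_{h^*}\cap R_l$ (contributing $\int\psi^+\,d\Hone$) is \emph{replaced} by a jump of $\hat\psi$ along $\partial SG_{\hat h}\cap R_l$, which, by convexity of $\hat C$ and the elementary observation that the convex-hull boundary is shorter than the original boundary of $R_l\setminus SG_{h^*}$, produces no greater a contribution to $|D^s\hat\psi|$. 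The boundary traces on $\partial_DR_l$ are preserved on $\{0\}\times[-1,\hat h(0)]=\{0\}\times[-1,1]$ (since $\hat h(0)=1$) and on $[0,l]\times\{-1\}$ (where $\hat\psi=\psi=0$), so the boundary penalty terms $\int_{\partial_DR_l}|\hat\psi-\varphi|\,d\Hone$ and $\int_{(0,l)\times\{1\}}|\hat\psi|\,d\Hone$ do not exceed their original values.

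\textit{Main obstacle.} The central delicate point is the bookkeeping of Step~2: one must verify that the creation of a new jump of $\hat\psi$ on $\partial SG_{\hat h}\cap R_l$ (a portion of the convex-hull boundary, lying inside $SG_{h^*}$) is \emph{cheaper} than the original jump of $\psi$ on $\partial SG_{h^*}\cap R_l$. This requires a careful trace comparison and use of the convex-hull geometry of $\hat C$; in particular, it crucially relies on the fact that taking the convex hull strictly decreases the relative perimeter of the set (plus on the BV extension properties of $\psi$ so that the truncation $\hat\psi=\psi\cdot\chi_{SG_{\hat h}}$ is again in $BV$ with a controlled jump). Once this estimate is established, the chain of inequalities $\FBl(\hat h,\hat\psi)\le\FBl(h^*,\psi)\le\FBl(h,\psi)$ yields the proposition.
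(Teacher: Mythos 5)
Your high-level strategy — first tighten $h$ against the support of $\psi$, then replace by a convex nonincreasing function and truncate $\psi$ above its graph — is close in spirit to the paper's items (2) and (3). However, the crux of Step~2 is not established, and your proposed tool is the wrong one. What must be shown is (in your notation) that
\[
\int_{G_{\hat h}\cap R_l}|\psi^-|\,d\Hone \;\le\; \areaonecod\big(\psi,\,SG_{h^*}\setminus SG_{\hat h}\big)\;+\;\int_{G_{h^*}\cap R_l}|\psi^-|\,d\Hone,
\]
i.e.\ the new jump created on $G_{\hat h}$ must be absorbed by the area of the graph of $\psi$ over the removed region, not by the old jump alone. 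The ``convex-hull boundary is shorter'' observation compares $\Hone(G_{\hat h})$ with $\Hone(G_{h^*})$; it says nothing about the weighted integrals $\int|\psi^-|\,d\Hone$. Indeed, if $\psi$ vanishes continuously on $G_{h^*}$ the old jump is zero while the new one is strictly positive, so a pure jump-versus-jump comparison fails. The inequality above has to come from a slicing argument: slice the removed region by lines perpendicular to the chord (or vertically, in the case $h^*(t_1)=h^*(t_2)$), estimate the oscillation of $\psi$ along each slice, and crucially use that the trace of $\psi$ vanishes on the vertical segments $\{t_i\}\times(h^*(t_i),1)$ at Lebesgue points (the paper's Remark~\ref{leb_point}). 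This is exactly what the paper carries out in \eqref{conclusion1}--\eqref{eq:conclusion2} for a single horizontal cut and then extends to a chord; your proposal omits this entirely and the perimeter heuristic cannot replace it.

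There are also two concrete defects in the construction of $\hat h$. Including $\{l\}\times[-1,1]$ in the set whose convex hull you take forces $(l,-1)\in\hat C$, hence $\hat h(l)=-1$; this over-modifies $h^*$ and is not the convex envelope you intend (the paper's nonincreasing truncation in \eqref{modif_omega1} cuts at the level $h(t_0)$, not at $-1$). Moreover, the claim $\hat h(0)=1$ does not follow from your argument: $(0,1)\in C$ gives only the trivial bound $\hat h(0)\le 1$, whereas the lower boundary of the convex hull at $w_1=0$ starts at $(0,h^*(0))$, so $\hat h(0)=h^*(0)$, which can be $<1$. Without $\hat h(0)=1$ the pair $(\hat h,\hat\psi)$ need not belong to $\Wspace$ at all. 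In short: the architecture of the proof is sound, but (i) the jump control is a genuine gap that must be filled by the slicing/trace-zero argument, and (ii) the convex-hull construction must be replaced by the iterative chord-cut-then-truncate procedure so that the resulting $\hat h$ actually lies in $\Hricc$.
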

\begin{proof}
It is enough to show
the inequality ``$\geq$''.
By extending $\psi$ outside $ R_l$ as
$\psi:=0$ in $((0,\longR)\times \R)\setminus R_l$,  
we see that 
\begin{equation}\label{eq:by_extending}
\FBl(h,\psi)=
\areaonecod
\left(\psi, \overline R_l\setminus \big(\{w_1=0\}\cup\{w_1=l\}\big)\right)
-\mathcal H^2(R_l\setminus SG_h)
+
\int_{\{0\}\times[-1,1]}|\psi^--\varphi|~d\mathcal H^1,
\end{equation}
where, with a little abuse of notation,
$$\areaonecod\left(\zeta, \overline R_l\setminus \big(\{w_1=0\}\cup\{w_1=l\}\big)\right)
=\areaonecod
(\zeta,  {R}_l)+\int_{(0,\longR)\times \{1,-1\}}|\zeta^-|~d\mathcal H^1,$$
$\zeta^-$ being the trace of $\zeta \in BV(R_l)$ on $(0,\longR) \times\{1,-1\}$.

The thesis of the proposition will follow from the next three observations:
\begin{itemize}
 \item[(1)] If $h\in \Hricc$ 
is such that  $h(t_0)=-1$ for some Lebesgue point $t_0\in(0,\longR)$, 
then the subgraph $SG_h$ 
of $h$
splits in two mutually disjoint components: 
$SG_h^-=SG_h\cap \{w_1<t_0\}$ 
and $SG_h^+=SG_h\cap \{w_1>t_0\}$. 
Let  $\psi\in BV(R_l,[0,1])$ be such that 
\begin{align*}
 \psi=0 \qquad \text{~a.e.~in~ } R_l \setminus
SG_h.
\end{align*}
The trace of $\psi$ over the segment $\{w_1=t_0,\;h(t_0)\leq w_2\leq 1\}$ is $0$, as  a consequence of Remark \ref{leb_point}.
Then the function $\modpsi: R_l \to [0,1]$ defined as
\begin{align*}
 \modpsi(w_1,w_2):=\begin{cases}
                            \psi(w_1,w_2)&\text{if }w_1<t_0,\\
                            0&\text{otherwise,}
                           \end{cases}
\end{align*}
 still satisfies $(h,\modpsi)\in 
\oldDom$, and
\begin{align*}
\FBl(h,\modpsi)\leq \FBl(h,\psi).
\end{align*}
Being $\modpsi$ identically zero in $\{w_1> t_0\}$, in particular 
in $SG_h\cap\{w_1>t_0\}$, we can introduce
\begin{align*}
 \modh(w_1):=\begin{cases}
                             h(w_1)&\text{if }w_1<t_0,\\
                            -1&\text{otherwise,}                       
                      \end{cases}
\end{align*}
so that $(\modh,\modpsi)\in \oldDom$
and we easily see that
$\FBl(\modh,\modpsi) 
 \leq \FBl(h,\modpsi)$;  
hence
\begin{equation*}
 \FBl(\modh,\modpsi) \leq \FBl(h,\psi).
\end{equation*}

\item[(2)] More generally, let $( h,\psi)\in
\oldDom$ and let $t_0 \in (0,\longR)$ 
be any Lebesgue point of $h$; we can also suppose that $h(t_0)<1$. 
Consider 
\begin{align}\label{modif_omega1}
 \modh(w_1):=\begin{cases}
                             h(w_1)&\text{if }w_1<t_0,\\
                             h(w_1)\wedge h(t_0)&\text{otherwise,}                       
                      \end{cases}
\end{align}
\begin{align*}
 \modpsi(w_1,w_2):=\begin{cases}
                            \psi(w_1,w_2)&\text{if }w_1<t_0,\\
                            \psi(w_1,w_2)&\text{if }w_1\geq t_0,\;w_2\leq h(t_0),\\
                            0&\text{otherwise. }
                           \end{cases}
\end{align*}
We
claim that $\FBl(\modh,\modpsi) \leq \FBl( h,\psi).$
Define \begin{equation*}
  U:=\{(w_1,w_2) \in (0,\longR)\times(-1,1): w_1>t_0,\; h(t_0)<w_2< h(w_1)\},
\end{equation*}
that is the set where we have replaced $\psi$ by $0$. 
To prove the claim, using \eqref{eq:by_extending} and
the equalities
$$
\int_{\{0\}\times[-1,1]}|\psi^--\varphi|~d\mathcal H^1 = 
\int_{\{0\}\times[-1,1]}|{\modpsi}^--\varphi|~d\mathcal H^1,
$$
$$
\mathcal H^2
(R_l \setminus SG_{\modh}) = 
\mathcal H^2
(U \cup (R_l \setminus SG_{h}))=
\mathcal H^2
(U) +
\mathcal H^2(R_l \setminus SG_{h}),
$$
 we have to show that
\begin{align}\label{conclusion1}
\areaonecod
\left(\modpsi,
\overline R_l\setminus \big(\{w_1=0\}\cup\{w_1=l\}\big)
\right)\leq \areaonecod
\left(\psi,
\overline R_l\setminus \big(\{w_1=0\}\cup\{w_1=l\}\big)\right)
+\mathcal H^2( U).
\end{align}
Assume that $U$ is non-empty and that $\mathcal H^2(U)>0$.
It is convenient to introduce
$$V:=\{(w_1,w_2) \in R_l:t_0 < w_1<l,\; h(w_1)\vee h(t_0)\leq w_2< 1\},$$
so that 
$U\cup V=\{(w_1,w_2): w_1>t_0,\; h(t_0)<w_2< 1\}$ is an open rectangle. 
Since we have modified $\psi$ only in $U$, inequality 
\eqref{conclusion1} is equivalent to 
\begin{equation}
\label{conclusion1ter}
\begin{aligned}
& 
\areaonecod
(\modpsi,{U\cup V})+\int_{(t_0,l)\times\{h(t_0)\}}|{\modpsi}^+-{\modpsi}^-|d\mathcal H^1+\int_{(t_0,l)\times\{1\}}|{\modpsi}^-|d\mathcal H^1
\\
\leq
& 
\areaonecod
(\psi,{U\cup V})+\int_{(t_0,l)\times\{h(t_0)\}}|\psi^+-\psi^-|d\mathcal H^1+\int_{(t_0,l)\times\{1\}}|\psi^-|d\mathcal H^1
+\mathcal H^2( U),
\end{aligned}
\end{equation}
with 
$\psi^\pm$ (resp.
${\modpsi}^\pm$)
the external and internal traces of $\psi$ 
(resp. $\modpsi$) on $\partial (U\cup V)$;
here we have used from Remark \ref{leb_point}
that the trace of $\psi$ on $\{t_0\}\times (h(t_0),1)$ 
is zero (hence $
\int_{\{t_0\} \times (h(t_0),1)} \vert \psi^+ - \psi^-\vert 
d\mathcal H^1=
\int_{\{t_0\}\times (h(t_0),1)} 
\vert {\modpsi}^+ - {\modpsi}^-\vert d\mathcal H^1=
0$)
 and that the external traces $\psi^+$, ${\modpsi}^+$ 
on $(t_0,l)\times\{1\}$ vanish as well. 
Hence, exploiting that $\modpsi=0$ on $U\cup V$, so that 
$\areaonecod(\modpsi, U \cup V) = 
\mathcal H^2(U)+ 
\mathcal H^2(V)$, and that $\modpsi = \psi$ on $R_l \setminus (U\cup V)$,
inequality \eqref{conclusion1ter} is equivalent to
\begin{equation}\label{eq:conclusion2}
\begin{aligned}
&\mathcal H^2(V)+\int_{(t_0,l)\times\{h(t_0)\}}|\psi^+|d\mathcal H^1
\\
\leq &\areaonecod
(\psi,{U\cup V})+\int_{(t_0,l)\times\{h(t_0)\}}|\psi^+-\psi^-|d\mathcal H^1+\int_{(t_0,l)\times\{1\}}|\psi^-|d\mathcal H^1.
\end{aligned}
\end{equation}
We split
$$(t_0,l)=H_1\cup H_2\cup H_3,$$
with $H_1:=\{w_1\in (t_0,l):h(w_1)=1\}$, 
$H_2:=\{w_1\in (t_0,l):h(t_0)\leq h(w_1)<1\}$, and 
$H_3:=\{w_1\in (t_0,l):h(w_1)< h(t_0)\}$.
Since $\mathbb A(\psi;U\cup V)=\mathcal H^2(\mathcal G_\psi\cap ((U\cup V)\times\R))$, by slicing and looking at $\mathcal G_\psi$ as an 
integral current, we have\footnote{Here we use that $D_{w_2}\psi=0$ in $V$.}
\begin{align*}
 \areaonecod
(\psi, U\cup V)&\geq \int_{(t_0,l)}
\mathcal H^1\Big((\mathcal G_{\psi})_{t} {\cap 
((t_0,l)\times (h(t_0),1)\times \R})\Big)~dt
\\
 &\geq
\int_{(t_0,l)}\int_{(h(t_0),1)}|D_{w_2}\psi(t,s)|~\;dt
+\mathcal H^2(V) 
\\
 &=
\int_{H_1\cup H_2}\int_{(h(t_0),1)}|D_{w_2}\psi(t,s)|~\;dt
+\mathcal H^2(V)
\\
 &\geq
 \int_{H_2}|\psi^-(t,h(t_0))|~dt+
\int_{H_1}|\psi^-(t,h(t_0))-\psi^-(t,1)|~d t
+\mathcal H^2(V)
\\
&\geq\int_{H_1\cup H_2}|\psi^-(t,h(t_0))|~d t 
-\int_{H_1}|\psi^-(t,1)|~d t+\mathcal H^2(V)
\\
 &=\int_{(t_0,l)}|\psi^-(t,h(t_0))|~d t 
-\int_{H_1}|\psi^-(t,1)|~d t+\mathcal H^2(V)
\\
 &=\int_{(t_0,l)}|\psi^-(t,h(t_0))|~dt 
-\int_{(t_0,l)}|\psi^-(t,1)|~dt+\mathcal H^2(V),
\end{align*}
where $(\mathcal  G_{\psi})_{t}$ is the slice of $\mathcal G_\psi$ 
on the plane $\{w_1=t\}$, 
that is the generalized graph of the function $\psi\res\{w_2=t\}$. 
From the above expression, the 
triangular inequality 
implies \eqref{eq:conclusion2}.

\item[(3)] Let $(h,\psi)\in
\oldDom$. Let $t_1,t_2\in (\eps,l)$ 
be Lebesgue points for $h$ with  $t_1<t_2$, 
and let $r_{12}(t):=h(t_1)+\frac{h(t_2)-h(t_1)}{t_2-t_1}(t-t_1)$. We consider the following modifications of $h$ and $\psi$:
\begin{align*}
 \modhbis(w_1):=\begin{cases}
                            h(w_1)&\text{if }0<w_1<t_1\text{ or }l>w_1>t_2,\\
                            h(w_1)\wedge r_{12}(w_1)&\text{otherwise,}                       
                      \end{cases}
\end{align*}
and 
\begin{align*}
 \modpsibis(w_1,w_2):=\begin{cases}
                            \psi(w_1,w_2)&\text{if }0<w_1<t_1
\text{ or }l>w_1>t_2,\\
                            \psi(w_1,w_2)&\text{if }w_1\in[t_1,t_2] ~\textrm{and} ~w_2\leq r_{12}(w_1),\\
                            0&\text{otherwise. }
                           \end{cases}
\end{align*}
In other words we set $\psi$ equal to $0$ above the segment $L_{12}$ connecting $(t_1,h(t_1))$ to $(t_2,h(t_2))$.
Also in this case we have
\begin{align}\label{ineq_conv}
 \FBl(\modhbis,\modpsibis) \leq \FBl(h,\psi).
\end{align}
Indeed, if $h(t_1)=h(t_2)$ the proof is 
identical to the case (2). Otherwise, it can be obtained by slicing as well, 
parametrizing $L_{12}$ by an arc length parameter, 
then slicing the region $\{(w_1,w_2):w_1\in  (t_1,t_2),\;w_2\in (\ell_{12}(w_1),1)\} $\footnote{$\ell_{12}$ represents the affine function whose graph is $L_{12}$.} by lines perpendicular to $L_{12}$, and 
exploiting the fact that $\psi$ equals zero on the segments $\{t_i\}\times (h(t_i),1)$.
\end{itemize}
 
Let  $(h,\psi)\in \oldDom$ be given; 
from (3) we
can always replace $h$ by its convex envelope 
and modifying accordingly 
$\psi$, we get two functions $\modhbis$ and $\modpsibis$ 
such that \eqref{ineq_conv} holds. Moreover, by (2), 
if $t_0\in(0,\longR)$ is a Lebesgue point for $h^\#$, we 
can always replace $h^\#$ by $\modh$ in \eqref{modif_omega1}, 
so that $\modh$ turns out to be nonincreasing. 
The assertion of the proposition follows.
\end{proof}

Let us rewrite the functional $\FBl$ in a convenient way. 
Let  $(h,\psi)\in \Wspace$, and let $G_h=
\{(w_1,h(w_1)):w_1\in (0,\longR)\}\subset \overline{R}_l$ 
be the graph of $h$. We have, using \eqref{eq:F},
\begin{equation}\label{eq:F_ter}
  \FBl(h,\psi)
=
  \areaonecod
(\psi, SG_h)+\int_{\graphh \setminus \{h=-1\}}|\psi|~d\mathcal H^1+\int_{\partial_DR_l}|\psi-\varphi|~d\mathcal H^1,
\end{equation}
where, in the integral over $\graphh$, 
we consider the trace of $\psi \res SG_h$ on $\graphh$.

\begin{cor}\label{cor:bound_from_below_using_F} 
Let $\eps \in (0,1)$ and $n \in \mathbb N$.
Then for any $k \in \mathbb N$ we have
\begin{align}\label{crucial_ineq3}
|\currgraphk|_{\badset\times\R^2}
\geq 
 2 
\inf_{(h,\psi)\in \Wspace}\FBl(h,\psi)
-\pi \eps-\frac{C}{\eps n}-o_k(1),
\end{align}
for an absolute constant $C>0$, and where the sequence $o_k(1)$ 
depends on $\eps$ and $n$ and is infinitesimal as $k \to +\infty$.
\end{cor}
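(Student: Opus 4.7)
The plan is to simply chain together the inequalities already established in the preceding sections. The hard work has been done; the corollary only asks to package the bounds in a form that is independent of $k$ and ready to be passed to the limit.

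First I would invoke \eqref{crucial_ineq}, which gives
\[
|\currgraphk|_{\badset\times\R^2}\geq |\GhatFkfour|+|\GminushatFkfour|-\pi\eps-\frac{C}{\eps n}-o_k(1),
\]
for an absolute constant $C>0$, with $o_k(1)\geq 0$ depending on $\eps$ and $n$ and vanishing as $k\to+\infty$. This already accounts for the two error terms $\pi\eps$ and $C/(\eps n)$ appearing in the statement; what remains is to bound the right-hand side from below by $2\inf_{\Wspace}\FBl$.

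Next, by Remark \ref{remark12.11}(i), the pair $(\hke,\psike)$ constructed from the Steiner symmetrization of $\Oke$ belongs to the class $\oldDom$ defined in \eqref{eq:X_l}. Moreover, Remark \ref{remark12.11}(iii) combined with Lemma \ref{lemma_9} yields the key estimate \eqref{12.20}:
\[
|\GhatFkfour|+|\GminushatFkfour|\geq 2\,\FBl(\hke,\psike),
\]
so that, taking the infimum over admissible pairs,
\[
|\GhatFkfour|+|\GminushatFkfour|\geq 2\inf_{(h,\psi)\in\oldDom}\FBl(h,\psi).
\]

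Finally, Proposition \ref{modifications_of_h} (formula \eqref{get_rid_of_eps}) shows that restricting the class of admissible pairs from $\oldDom$ to the convex subclass $\Wspace$ does not change the value of the infimum:
\[
\inf_{(h,\psi)\in\oldDom}\FBl(h,\psi)=\inf_{(h,\psi)\in\Wspace}\FBl(h,\psi).
\]
Combining these three facts proves \eqref{crucial_ineq3}. No new obstacle arises here: the genuine difficulties (the construction of $\Psi_k,\projlambdak,\Xik$, the cylindrical Steiner symmetrization, the estimates on $\Qke$ and on the angular slices $\Theta_k(t,\rho)$, the identification of $\Oke$ as a polar subgraph, and the convexification of $h$) have already been dispatched in the previous sections.
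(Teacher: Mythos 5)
Your proof is correct and follows essentially the same route as the paper's: combine \eqref{crucial_ineq} with \eqref{infimum_pb} (which you re-derive from Remark \ref{remark12.11} and \eqref{12.20}) and with the identification of infima in Proposition \ref{modifications_of_h}.
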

\begin{proof}
From \eqref{infimum_pb} and Proposition \ref{modifications_of_h}, we get
\begin{align}\label{crucial_ineq2}
|\GhatFkfour|+|\GminushatFkfour| \geq 
 2 
\inf_{(h,\psi)\in \Wspace}\FBl(h,\psi).
\end{align}
Combining  \eqref{crucial_ineq2} with \eqref{crucial_ineq},
inequality \eqref{crucial_ineq3} follows.
\end{proof}

\subsection{Lower bound: reduction to a Plateau-type 
problem on the rectangle $R_l$}\label{subsec:first_lower_bound}
We now state and prove our first main result.
\begin{theorem}[\textbf{Lower bound for the area of the vortex map}]\label{teo:step1}
 The relaxed area of the graph of the vortex map $\vortexmap$ satisfies
 \begin{align}\label{eq:first_lower_bound}
  \relarea(\vortexmap,\Omega)\geq \int_\Om|\mathcal M(\nabla \vortexmap)|~dx
+2\inf_{(h,\psi)\in \Wspace}\FBl(h,\psi).
 \end{align}
\end{theorem}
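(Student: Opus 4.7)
The plan is to combine the two lower bounds already established into the final estimate by taking the limit in $k$, in $n$, and in $\eps$, in the right order. I would start from a recovery sequence $(u_k)\subset C^1(\Omega,\R^2)$ with $u_k\to u$ in $L^1(\Omega,\R^2)$ and $\lim_{k\to+\infty}\area(u_k,\Omega)=\relarea(u,\Omega)<+\infty$, as in \eqref{eq:with_no_loss}, and I would fix $\eps\in(0,l)$ verifying \eqref{eq:H1}--\eqref{eq:H2}, an integer $n$ together with the Egorov set $A_n$ from \eqref{eq:measure_of_A_n}--\eqref{eq:u_k_converges_to_u_uniformly_out_of_A_n}, and the sequence $(\lambda_k)$ from Lemma \ref{lem:choice_of_u_k_and_t_k}, so that $\badset=D_k^{\lambda_k}$ is well-defined and $\badset\subseteq A_n$.

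Since each $u_k$ is Lipschitz continuous, the area decomposes as
\begin{equation*}
\area(u_k,\Omega)=\int_{\Omega\setminus\badset}|\mathcal M(\nabla u_k)|\,dx+|\currgraphk|_{\badset\times\R^2}.
\end{equation*}
The first summand is controlled from below by \eqref{estimate_Dc}, while the second is controlled by Corollary \ref{cor:bound_from_below_using_F}. Summing the two estimates and taking $\liminf_{k\to+\infty}$, and recalling that the $o_k(1)$ term produced in Corollary \ref{cor:bound_from_below_using_F} depends only on $\eps$ and $n$ and vanishes as $k\to+\infty$, I would arrive at
\begin{equation*}
\relarea(u,\Omega)\geq\int_{\Omega\setminus\overline{\sourcedisk}_\eps}|\mathcal M(\nabla u)|\,dx+2\inf_{(h,\psi)\in\Wspace}\FBl(h,\psi)-\pi\eps-\frac{1}{n}-\frac{C+2}{\eps n}.
\end{equation*}

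The right-hand side is now independent of $k$ and of the choice of the recovery sequence, so I would keep $\eps$ fixed and send $n\to+\infty$ to eliminate both the $1/n$ and $1/(\eps n)$ terms. Finally, letting $\eps\to 0^+$ along a sequence of admissible values for which \eqref{eq:H1}--\eqref{eq:H2} hold (such a sequence is dense near $0$, as already used in the derivation of Lemma \ref{lem:choice_of_u_k_and_t_k}), the term $-\pi\eps$ vanishes, and by monotone convergence
\begin{equation*}
\lim_{\eps\to 0^+}\int_{\Omega\setminus\overline{\sourcedisk}_\eps}|\mathcal M(\nabla u)|\,dx=\int_\Omega|\mathcal M(\nabla u)|\,dx,
\end{equation*}
which is finite since $|\nabla u(x)|\leq 2/|x|$ is integrable on $\Omega\subset\R^2$. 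This delivers \eqref{eq:first_lower_bound}.

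I do not expect any hidden obstacle here: all of the heavy machinery—the projection maps $\Psi_k$ and $\projlambdak$, the cylindrical Steiner symmetrization of $\piPsiDkXik$, the construction of the scalar pair $(\hke,\psike)$ from the symmetrized subgraph $\mathbb{S}_{\mathrm{cl}}(\Oke)$, and the comparison with the non-parametric Plateau energy $\FBl$—has already been carried out in Sections \ref{sec:the_maps}--\ref{sec:lower_bound}. The only delicate point is the strict ordering of the three limits: the remainder $o_k(1)$ depends on both $\eps$ and $n$, and the errors $1/(\eps n)$ and $2/(\eps n)$ blow up if $\eps$ is sent to $0$ before $n$ is sent to infinity, so the limits must be taken as $k\to+\infty$ first, then $n\to+\infty$, and finally $\eps\to 0^+$.
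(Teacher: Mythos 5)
Your proof is correct and follows the same route as the paper: the same splitting of the area integral over $\Omega\setminus\badset$ and $\badset$, the same invocation of \eqref{estimate_Dc} and Corollary \ref{cor:bound_from_below_using_F}, and the same nested limits $k\to+\infty$, then $n\to+\infty$, then $\eps\to 0^+$ (the paper invokes dominated convergence in the last step where you invoke monotone convergence, but both apply since $|\mathcal M(\nabla u)|\in L^1(\Omega)$). Your closing remark about the necessity of the limit ordering is exactly the delicate point the paper also has to respect, since the $o_k(1)$ remainder and the $1/(\eps n)$ errors are not uniform in $\eps$ alone.
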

\begin{proof}
 We write
 $$\area(u_k,\Om)=
\area(u_k,\Om\setminus \badset)
+
\area(u_k,\badset)
= \int_{\Omega\setminus \badset}|\mathcal M(\nabla u_k)|~dx
+\int_{\badset}|\mathcal M(\nabla u_k)|~dx.
$$
Therefore
\begin{equation}\label{eq:liminf_sum_sum_liminf}
  \relarea(\vortexmap,\Omega)\geq \liminf_{k\rightarrow+\infty}\int_{\Omega\setminus \badset}|\mathcal M(\nabla u_k)|~dx+\liminf_{k\rightarrow+\infty}\int_{\badset}|\mathcal M(\nabla u_k)|~dx.
 \end{equation}
Given $\eps \in (0,\longR)$ and $n \in \mathbb N$, from \eqref{estimate_Dc} it follows
\begin{equation}\label{eq:first_liminf}
 \liminf_{k\rightarrow+\infty}\int_{\Omega\setminus \badset}|\mathcal M(\nabla u_k)|~dx\geq \int_{\Omega\setminus \overline{\sourcedisk}_\eps}|\mathcal M(\nabla u)|~dx-\frac1n-\frac{2}{\eps n}.
\end{equation}
{}From \eqref{crucial_ineq3} we have 
\begin{equation}\label{eq:second_liminf}
\int_{\badset}|\mathcal M(\nabla u_k)|~dx=|\currgraphk|_{\badset\times\R^2}\geq 2
\inf_{(h,\psi)\in \Wspace}\FBl(h,\psi)-\pi\eps-\frac{C}{\eps n}-o_k(1).
\end{equation}
Exploiting the fact that the right-hand side of  \eqref{crucial_ineq2}
 does not depend on $k$, we can pass to the liminf as $k\rightarrow+\infty$ in the above expression, to obtain
\begin{align}\label{eq:third_liminf}
\liminf_{k\rightarrow+\infty}\int_{\badset}|\mathcal M(\nabla u_k)|~dx \geq 2
\inf_{(h,\psi)\in \Wspace}\FBl(h,\psi)-\pi \eps-\frac{C}{\eps n}.
\end{align}
{}From \eqref{eq:liminf_sum_sum_liminf}, \eqref{eq:first_liminf}
and \eqref{eq:third_liminf} we obtain
\begin{equation}
 \relarea(\vortexmap,\Om)\geq \int_{\Omega\setminus \overline{\sourcedisk}_\eps}|\mathcal M(\nabla u)|~dx+2
\inf_{(\psi,h)\in \Wspace}\FBl(\psi, h)-\pi\eps - \frac{C+2}{\eps n}
-\frac1n,
\end{equation}
for all $n\in \mathbb N$ and $\eps\in(0,\longR)$.
Letting $n \to +\infty$ 
and then $\eps \to 0^+$, 
by the  dominated convergence theorem 
(since $\Omega\setminus \overline{\sourcedisk}_\eps\rightarrow\Om$ as 
$\eps\rightarrow 0^+$)  we get
\begin{align*}
 \relarea(u,\Om)&\geq \liminf_{\eps\rightarrow 0^+}\Big(\int_{\Om\setminus \overline{\sourcedisk}_\eps}|\mathcal M(\nabla u)|~dx+2
\inf_{(h,\psi)\in \Wspace}\FBl(h,\psi)-\pi \eps\Big)\\
 &=\int_{\Om}|\mathcal M(\nabla u)|~dx+2
\inf_{(h,\psi)\in \Wspace}\FBl(h,\psi).
\end{align*}
\end{proof}

\section{Structure of minimizers of $\FB$}
\label{sec:structure_of_minimizers}
In this section we analyse the minimum problem on the
right-hand side of \eqref{get_rid_of_eps}.
We prove the existence of minimizers, and exploy it to show 
that the inequality \eqref{eq:first_lower_bound}
in Theorem \ref{teo:step1} is optimal. 
First it is convenient to write the analogue of $\FBl$ 
in a doubled rectangle, see \eqref{eqn:FB}.
 
We start by introducing some notation. We denote $\doubledrectangle$ 
the open doubled 
rectangle, $\doubledrectangle
:=(0,2l)\times (-1,1)$, and define its Dirichlet boundary\footnote{Note
that $\partial_D \doubledrectangle$ consists of three edges of 
$\partial \doubledrectangle$,
while $\partial_D R_l$ (see \eqref{eq:Dirichlet_part_of_boundary}) 
consists of two edges of $\partial R_l$.} $\partial_D 
\doubledrectangle\subset \partial R_{2l}$ as
\begin{align*}
 \partial_D \doubledrectangle:=\big(\{0,2l\}\times [-1,1]\big)\cup \big((0,2l)\times\{-1\}\big),
\end{align*}
so that 
$\partial \OmAlaa \setminus \partialbar\OmAlaa = (0,2l)\times \{1\}$.	

\begin{definition}
We set
\begin{equation}
\label{eq:the_space_of_symmetric_convex_traces}
\begin{split}
\Hspace=\big \{\h : [0,2 \longR] \to [-1,1], 
~ \h {\rm~ convex}, ~
\h(\axialcoordofcylinder)=\h(2\longR-\axialcoordofcylinder) 
~\forall \axialcoordofcylinder \in [0,2\longR]
\big \}.
\end{split}
\end{equation}
\end{definition}

For each $h\in \Hspace$, we further define
 $$
\graphh :=\{(\axialcoordofcylinder, h(\axialcoordofcylinder)) 
: \axialcoordofcylinder \in (0,2\longR)\}, \qquad
\subgraphh 
:= \{(\axialcoordofcylinder,\scoord)\in \OmAlaa: \scoord<\h(\axialcoordofcylinder)\},
$$
where $\subgraphh:= \emptyset$ 
in the case $h\equiv-1$.
We set
\begin{equation}\label{eq:Lh}
\Lh:=
\Big(\{0\} \times (h(0),1)\Big) \cup 
\Big(\{2l\} \times (h(2l),1)\Big),
\end{equation}
which is either empty,
or the union of two equal intervals,
see Fig. \ref{fig:graphofh}.

\begin{figure}
\centering
\includegraphics[scale=0.5]{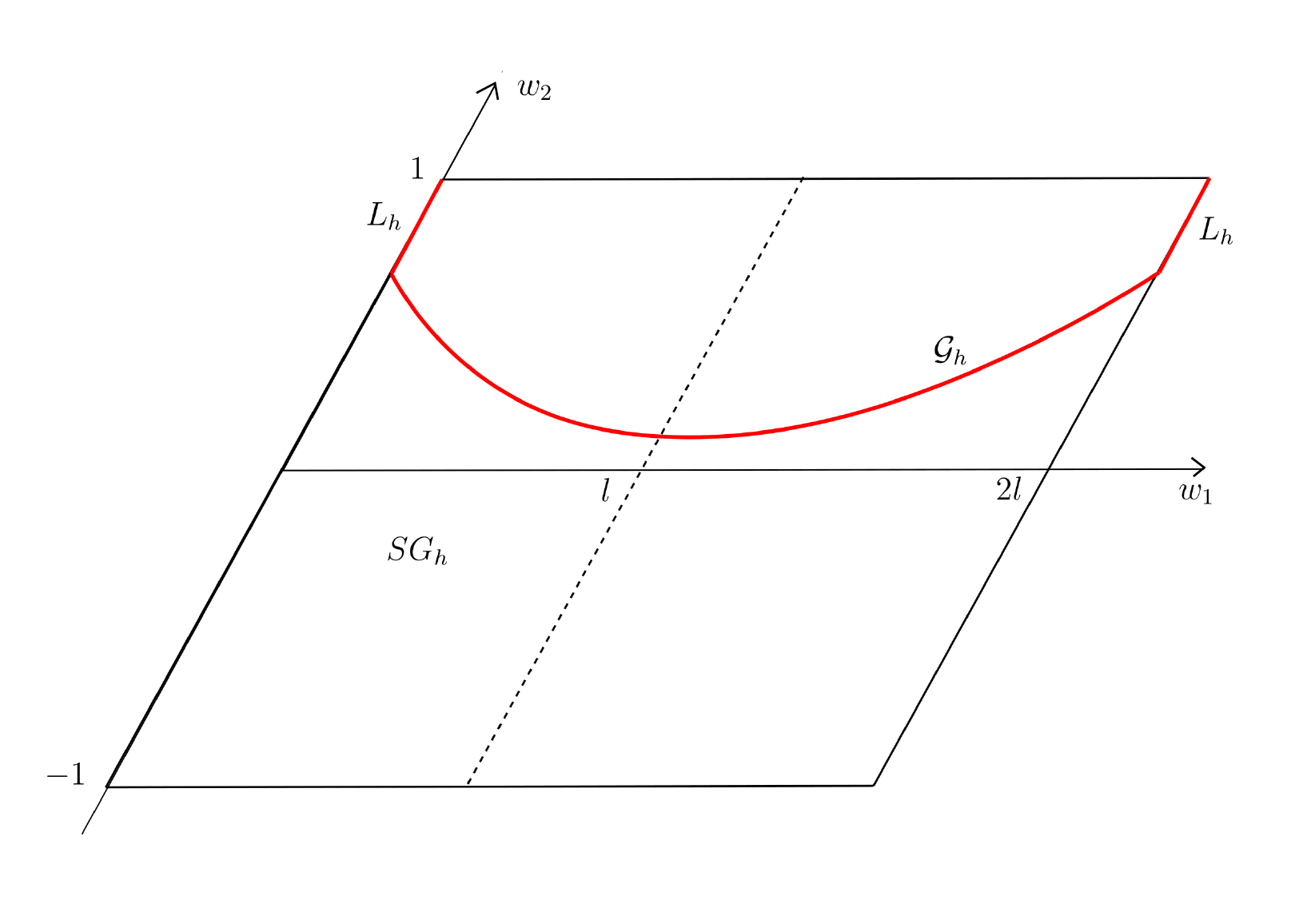}
\caption{the graph of a convex symmetric 
function $\h \in \Hspace$;
$\Lh$, defined in \eqref{eq:Lh},  
consists of  the two vertical segments  
over the boundary of $(0,2\longR)$, from $h(0) = h(2\longR)$ to $1$.}
\label{fig:graphofh}
\end{figure}

Define 
\begin{equation}\label{eq:varphi_doubled}
\dirdatum:\partial_D \OmAlaa \to [0,1],\qquad 
\dirdatum(\axialcoordofcylinder,\scoord):=
\begin{cases}
\sqrt{1-\scoord^2} & \text{ if }
(\axialcoordofcylinder,\scoord)
 \in \{0,2\longR\} \times [-1,1],
\\
0 & \text{ if }(\axialcoordofcylinder,\scoord) 
\in (0,2\longR) \times \{-1\}.
\end{cases}
\end{equation}
The graph of $\dirdatum$ on $\{0,2\longR\} \times [-1,1]$ 
consists of two half-circles of radius $1$ 
centered at $(0,0)$ and $(2 \longR,0)$ respectively, 
see Fig. \ref{fig:graphofphi}. 
We notice that such a $\varphi$ 
extends definition \eqref{eq:old_phi}.
\begin{figure}
\centering
\includegraphics[scale=0.50]{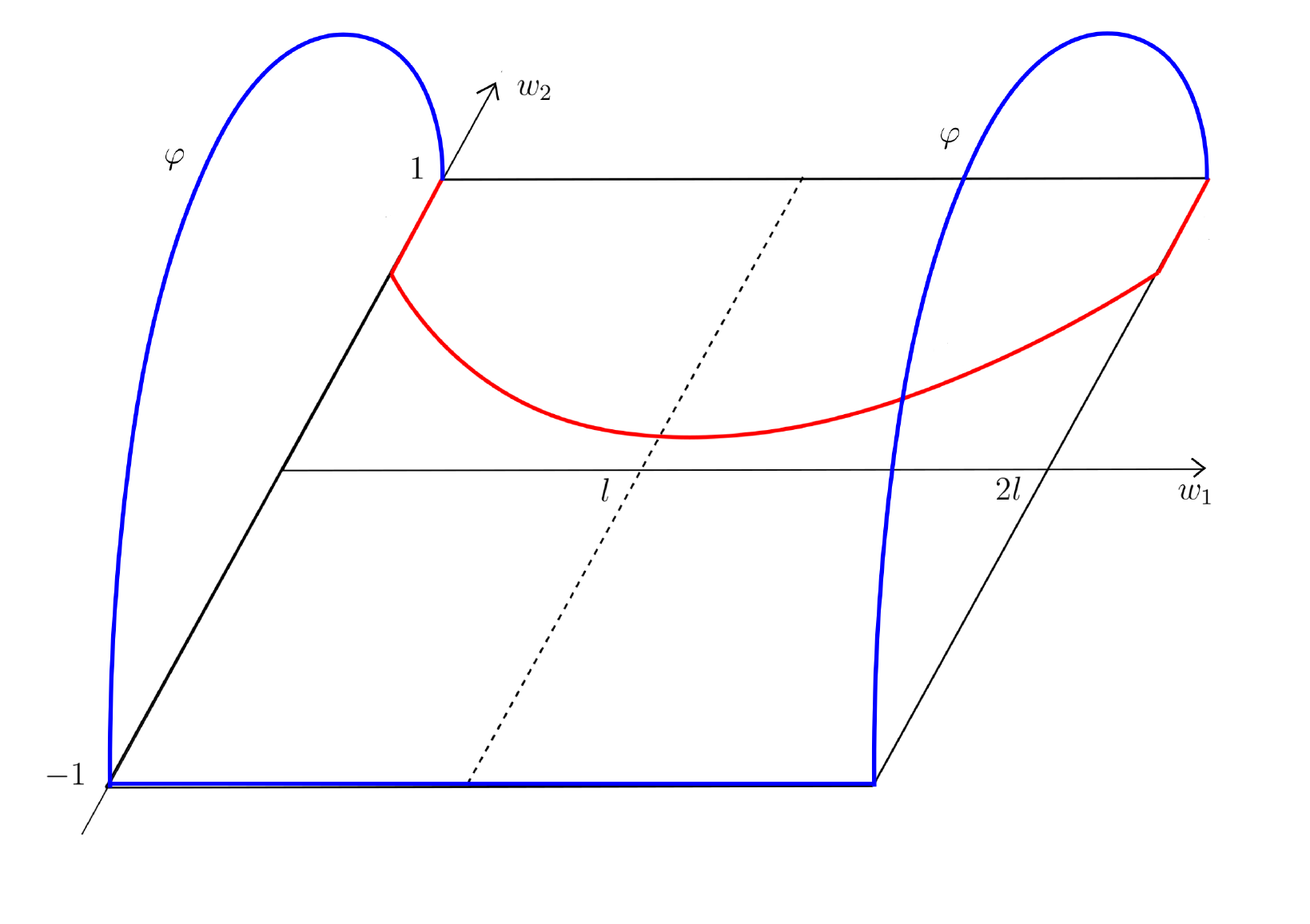}
\caption{the graph of the boundary condition function $\dirdatum$
in \eqref{eq:varphi_doubled} on the Dirichlet
boundary of $R_{2\longR}$. We also draw the graph of a function 
$h \in \mathcal H_{2\longR}$, and the
two segments $L_h$.
}
\label{fig:graphofphi}
\end{figure}

\begin{definition}[\textbf{The functional $\FB$}] \label{def:FB}
We define 
$$
\domalaa
 :=
\left\{(h,\psi): h \in \Hspace, \psi\in BV(\doubledrectangle, [0,1]),\psi = 0 \ 
{\rm on}~ \OmAlaa \setminus \subgraphh \right\}, 
$$
and for any $(h,\psi) \in 
\domalaa$, 
\begin{equation}
\FB(\h,\psione):=\Aone(\psione;\OmAlaa)-
\Htwo(\OmAlaa \setminus \Omegah)+\int_{\partialbar\OmAlaa}  
\vert \psione  -\dirdatum\vert d \Hone 
+ \int_{
\partial \OmAlaa \setminus \partialbar\OmAlaa
}  \vert \psione \vert 
~d \Hone.
\label{eqn:FB}
\end{equation}
\end{definition}
\begin{Remark}\rm \label{rmk:FAremarks}
\begin{itemize}
\item[(i)] The only case in which
the last addendum on the right-hand side of \eqref{eqn:FB} may be positive
is when $h$ is identically $1$ on $\partial \OmAlaa \setminus 
\partial_D \OmAlaa$;
 \item[(ii)] 
We have 
\begin{equation}
\FB(\h,\psione)=\Aone(\psione,\Omegah)
+
\int_{\partialbar \Omegah}  \vert \psione  -\dirdatum\vert ~d \Hone 
+ 
\int_{\graphh \setminus\{w_2=-1\}}\vert \psione^- \vert ~d \Hone 
+
\int_{\Lh} \dirdatum ~d \Hone, 
\label{eqn:FA}
\end{equation}
where
\begin{equation}\label{eq:partial_D_G_h}
\partial_D \Omegah:= (\partial_D R_{2l}) \cap \partial \Omegah,
\end{equation}
and $\psione^-$ denotes the trace of $\psione$ from the side of $\Omegah$.
To show \eqref{eqn:FA}, we start to observe that,  
 using that $\psione=0$ on $\OmAlaa \setminus \Omegah$, it follows 
$\int_{\partial \doubledrectangle \setminus 
\partial_D \doubledrectangle} \vert \psi\vert~d \mathcal H^1 = 
\int_{G_h\cap\{w_2=1\}} \vert \psi\vert~d \mathcal H^1$. This last term is nonzero only if $h\equiv 1$, in which case $L_h$ is empty, and the equivalence between \eqref{eqn:FB} and \eqref{eqn:FA} easily follows. If instead $h$ is not identically $1$, then, using again that $\psione=0$ 
on $\OmAlaa \setminus \Omegah$, we see that the last term on 
the right-hand side of \eqref{eqn:FB} is null,  
and 
\begin{equation}\label{eq:AA}
\Aone(\psione,\Omegah)=
\Aone(\psione,\OmAlaa)-\Htwo(\OmAlaa \setminus \Omegah) -
\int_{\graphh \cap \OmAlaa} \vert\psione^- \vert d \Hone.
\end{equation}
Hence, if $h$ is not identically $1$,
inserting
\eqref{eq:AA} into \eqref{eqn:FB}, 
we obtain, splitting $\partial_D \OmAlaa
  = (\partial_D \Omegah) \cup L_h \cup (G_h\cap \{w_2=-1\})$, and using
that $\dirdatum =0$ on $(0,2\longR)\times\{-1\}$,
\begin{equation*}
\begin{aligned}
\FB(\h,\psione)
=&
\Aone(\psione,\OmAlaa)-\Htwo(\OmAlaa \setminus \Omegah) 
+\int_{\partialbar \OmAlaa}  \vert \psione -\dirdatum\vert d \Hone 
\\
=&\Aone(\psione,\Omegah)
+\int_{\partialbar \OmAlaa}  \vert \psione -\dirdatum\vert d \Hone
+\int_{\graphh \cap \OmAlaa} \vert\psione^- \vert d \Hone
\\
=&
\Aone(\psione,\Omegah)+\int_{\partialbar \Omegah}  
\vert \psione -\dirdatum\vert d \Hone + 
\int_{\Lh} \vert  \dirdatum\vert d \Hone 
+ \int_{G_h\cap\{w_2=-1\}} \vert \dirdatum\vert d \Hone
\\
& 
+\int_{\graphh \cap \OmAlaa} \vert\psione^- \vert d \Hone
\\
=& \Aone(\psione,\Omegah)+
\int_{\partialbar \Omegah}  \vert \psione  -\dirdatum\vert d \Hone 
+\int_{\graphh\setminus \{w_2=-1\}}\vert 
\psione^-\vert d\Hone + \int_{\Lh} \dirdatum d \Hone.
\end{aligned}
\end{equation*}
\item[(iii)] We have 
 \begin{equation}  \label{eqn:equiv_of_inf_on_good_set}
 \begin{aligned}
& \inf_{(h,\psi)\in
 \domalaa} \FB(\h,\psione)
\\
=&
\inf \big \{ \Aone(\psione,\Omegah)
+
\int_{\partialbar \Omegah}  \vert \psione  -\dirdatum\vert ~d \Hone 
+ 
\int_{\graphh \setminus \{w_2=-1\}}\vert \psione^-\vert ~d \Hone 
+
\int_{\Lh} \dirdatum ~d \Hone
\\
 & \quad \quad :\h \in \Hspace \setminus \{\h\equiv -1\},~\psione \in  
\BV (\Omegah,[0,1]) \big \} .
\end{aligned} 
\end{equation}
 \item[(iv)] If $h>-1$ everywhere, then  $\Omegah$ is connected,
$\partial_D \Omegah = \partial_D R_{2l} \setminus \Lh$,
 and the sum of the first three terms on 
the right-hand side of \eqref{eqn:FA} gives the area 
of the graph of $\psi$ 
on $\overline{\Omegah}$, with the boundary condition $\dirdatum$ set to be $0$ on $\graphh$. 
 
\item[(v)]  Our aim is to have a surface in 
$\overline R_{2l} \times \R \subset \R^3=
\R^2_{(\axialcoordofcylinder,\scoord)}\times \R$, of graph type,  whose boundary consists of the union of the graph of $\dirdatum$ and 
the graph of a convex function $\h \in \Hspace$. The last three terms in \eqref{eqn:FA} are an area penalization to force the solution to attain these boundary conditions by filling, with vertical walls, the gap between the boundary of any 
competitor surface (the generalized graph of $\psione$) and the required boundary conditions. In particular the presence of the last term of \eqref{eqn:FA} is explained as follows: 
assume that $\h(0)<1$, \textit{i.e.}, $\Lh \neq \emptyset$; the graph of 
any $\psione \in \BV(\Omegah,[0,1])$ does not
reach the graph of $\dirdatum \vert_{\Lh}$ (simply because $L_h
\cap \overline \Omegah=\emptyset$). To overcome this, 
the graph of $\psione$ is glued to the wall
consisting of the subgraph of 
$\dirdatum \vert_{\Lh}$ (inside $\overline R_{2\longR}$).

\item[(vi)] Take $\hn:=-1+\frac{1}{n}$, and $\psionen:=c>0$ on $\Omegahn$,  
then $\displaystyle
\lim_{n\to +\infty} \areaonecod(\psi_n, \Omegahn)=0$, 
$\displaystyle
\lim_{n\to +\infty} \int_{\partialbar \Omegahn}  
\vert \psione_n  -\dirdatum\vert ~d \Hone 
=2cl$, and 
$\displaystyle
\lim_{n\rightarrow+\infty} \int_{G_{h_n} \setminus \{h_n=-1\}}\vert \psione \vert ~d \Hone =2cl$,
$\displaystyle
\lim_{n\to +\infty} \int_{L_{h_n}} \dirdatum~  d\Hone=\pi$,
hence
$$
\FB(-1,0)=\pi < \lim_{n\to +\infty} \FB(h_n,\psi_n) = 4c\longR +\pi,
$$
that is the functional $\FB$ in some sense forces a minimizing  sequence to attain the boundary conditions as much as possible. 
  \end{itemize}
 \end{Remark}

By symmetry, we easily infer
\begin{align}\label{doubling}
 2\inf_{(h,\psi)\in \Wspace}
\FBl(h,\psi)=\inf_{(h,\psi)\in
 \domalaa}
\FB(h,\psi),
\end{align}
therefore
we can now restate the content of Theorem \ref{teo:step1} as follows:
\begin{equation}\label{eq:restate_theorem}
\relarea(\vortexmap,\Omega)
\geq \int_\Om|\mathcal M(\nabla \vortexmap)|~dx
+
\inf_{(h,\psi)\in \domalaa}
\FB(h,\psi).
 \end{equation}

\begin{Remark}[\textbf{Two explicit estimates from above}]
\label{rem:Alaa}\rm 
Let $\h\equiv 1$ and 
$\psione(\axialcoordofcylinder,\scoord):=\sqrt{1-\scoord^2} =:
\psi_s(w_1,w_2)$ for any $(w_1,w_2) \in R_{2\longR}$. 
Then $(\h, \psione)$ is one of the competitors  in \eqref{doubling} and 
therefore 
$$
\inf_{(h,\psi)\in
 \domalaa} \FB(h,\psi)\leq 
\FB(1,\psione_s)=2 \pi \longR \qquad \forall \longR >0,
$$
which is the lateral area of the cylinder $(0,2\longR) \times B_1$.
Also, $\FB$ is well defined for $h\equiv -1$, 
in which case $\Omegah = \emptyset$, $\psi \equiv 0$ in $R_{2l}$,
and therefore
\begin{equation}\label{eq:FB_minus_one_zero} 
\FB(-1, 0)=\int_{\{0,2\longR\}\times(-1,1)}\dirdatum~ d\Hone =\pi,
\end{equation}
which is the area of  the two 
half-disks joined by the segment $(0,2 \longR) \times \{-1\} $,
see Fig. \ref{fig:graphofphi}.  In particular
\begin{equation} \label{eqn:upperboundofA}
\inf_{(h,\psi)\in
 \domalaa}
\mathcal{F}(h,\psi)\leq \pi \qquad 
\forall
 \longR >0.
\end{equation}
\end{Remark}

In the next section we shall 
prove the existence and regularity of minimizers for the minimum
problem on the right-hand side of \eqref{eq:restate_theorem}.

\subsection{Existence of a minimizer of  $\FB$}
The construction of a suitable recovery 
sequence for the relaxed area of the graph of the vortex map 
$\vmap$ depends on the existence and regularity 
of one-codimensional area minimizing surfaces of graph type for problem
\eqref{doubling}. 
We start by analysing the features of the space 
$\Hspace$ in \eqref{eq:the_space_of_symmetric_convex_traces}.
Clearly the graph of $h \in \Hspace$ is symmetric with 
respect to $\{\axialcoordofcylinder=\longR\}$; also, the convexity of $\h$ implies $\h
\in {\rm Lip}_{{\rm loc}}((0,2\longR))$, 
and $\h$ has a  
continuous extension on $[0, 2\longR]$.

\begin{lemma}[\textbf{Compactness of $\Hspace$}]\label{lem:compactnessofH}
Every sequence $(\h_k) \subset \Hspace$ has a subsequence 
converging uniformly on compact subsets of $(0,2\longR)$ 
to some element of $\Hspace$.
\end{lemma}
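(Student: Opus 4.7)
The plan is to deduce the compactness from the classical fact that a sequence of convex functions uniformly bounded on an open interval is locally equi-Lipschitz, so that Ascoli--Arzelà applies, and then to check that all three defining properties of $\Hspace$ (convexity, symmetry, range in $[-1,1]$) pass to the limit.

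First I would fix a compact subinterval $[c,d] \subset (0,2\longR)$ and use the standard monotonicity of difference quotients for convex functions together with the uniform bound $h_k([0,2\longR]) \subset [-1,1]$. For any $w_1, w_1' \in [c,d]$ with $w_1 < w_1'$ one has, taking auxiliary points $w_1^- \in (0,c)$ and $w_1^+ \in (d,2\longR)$,
\[
\frac{h_k(w_1) - h_k(w_1^-)}{w_1 - w_1^-} \leq \frac{h_k(w_1') - h_k(w_1)}{w_1' - w_1} \leq \frac{h_k(w_1^+) - h_k(w_1')}{w_1^+ - w_1'},
\]
and since each $h_k$ takes values in $[-1,1]$, both outer quotients are bounded in absolute value by $\tfrac{2}{\min(c,\,2\longR-d)}$. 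This yields a Lipschitz constant $L = L(c,d)$ on $[c,d]$ independent of $k$. Hence $(h_k)$ is equicontinuous on every compact subset of $(0,2\longR)$ and uniformly bounded, so by Ascoli--Arzelà and a diagonal extraction over an exhausting sequence $[c_n,d_n] \nearrow (0,2\longR)$, a (not relabelled) subsequence converges uniformly on compact subsets to some function $h:(0,2\longR) \to [-1,1]$.

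Next I would verify $h \in \Hspace$. Convexity is preserved under pointwise (hence uniform on compacta) limits, since the defining inequality $h_k(\lambda w_1 + (1-\lambda)w_1') \leq \lambda h_k(w_1) + (1-\lambda) h_k(w_1')$ passes to the limit; likewise $h(w_1) \in [-1,1]$. The symmetry $h_k(w_1) = h_k(2\longR - w_1)$ is stable under pointwise convergence, so $h(w_1) = h(2\longR - w_1)$ on $(0,2\longR)$. Finally, since $h$ is convex and bounded on the open interval $(0,2\longR)$, it admits a (unique) continuous extension to $[0,2\longR]$ with values in $[-1,1]$, so $h \in \Hspace$.

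The only mild obstacle is bookkeeping at the endpoints $0$ and $2\longR$: convergence is only guaranteed on compact subsets of the open interval, and in general convex functions may jump at the boundary. However, since $\Hspace$ requires only convexity, symmetry, and range in $[-1,1]$, and since we defined the limit on $(0,2\longR)$ and extended by continuity using convexity, no further control at $0$ and $2\longR$ is needed. This step is exactly why the statement is formulated as uniform convergence on compact subsets of the open interval rather than on all of $[0,2\longR]$.
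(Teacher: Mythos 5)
Your proof is correct and spells out precisely the classical argument (uniform boundedness of a family of convex functions implies local equi-Lipschitzianity, then Ascoli--Arzel\`a and a diagonal extraction, then stability of convexity, symmetry and the range constraint under pointwise limits) that the paper simply delegates to \cite[Sec.\ 1.1]{Hormander:94}. The only nitpick is that your Lipschitz constant $2/\min(c,2\longR-d)$ should really be taken with a fixed choice of $w_1^-\in(0,c)$ and $w_1^+\in(d,2\longR)$, giving, say, $4/\min(c,2\longR-d)$; but this does not affect the conclusion, and your handling of the endpoint extension is exactly the right observation.
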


\begin{proof} See for instance \cite[Sec. 1.1]{Hormander:94}.
\end{proof}

It is convenient to extend $\varphi$ in the doubled rectangle
$\overline R_{2l}$ by defining the extension $\widehat \varphi$
as:
\begin{equation}\label{eq:widehat_varphi}
\widehat \varphi(w_1,w_2)=
\widehat 
\varphi(0,w_2):=\sqrt{1-w_2^2}\qquad \forall (w_1,w_2)\in \overline R_{2l}.
\end{equation}
In the rest of this section we want to prove the following result.
\begin{theorem}[\textbf{Minimizing pairs}]
\label{Thm:existenceofminimizer}
There exists $(\hstar,\psionestar) \in  \domalaa$  such that 
\begin{equation}\label{eqn:B}
\FB(\hstar, \psionestar)=\min \big \{ \FB(\h,\psione):  (\h,\psione) \in  \domalaa
 \big \},
\end{equation}
 and $\psionestar$ is symmetric with respect to 
$\{\axialcoordofcylinder=\longR\} \cap R_{2l}$.
Moreover, if $\hstar$ is not identically $-1$, then
\begin{itemize}
\item[(i)] $\hstar(0)=1=\hstar(2\longR)$, and 
$\hstar >-1$ in $(0,2\longR)$;
\item[(ii)] $\psionestar$ is locally Lipschitz (hence analytic) 
and strictly positive
in $\Omegahstar$; 
\item[(iii)] $\psionestar$ 
is continuous up to the boundary of $\Omegahstar$, 
and attains the boundary conditions, \textit{i.e.}, for $(\tcoord, \scoord)
\in 
\partial SG_{h^*}$,  
\begin{equation}\label{eqn:psionestarboundary}
 \psionestar(\axialcoordofcylinder,\scoord)=\begin{cases}
 0& \text{ if}\quad \scoord=-1 \text{ or }\scoord=\hstar(\axialcoordofcylinder),\\
 \sqrt{1-\scoord^2}&\text{ if}\quad \axialcoordofcylinder=0 \text{ or } \axialcoordofcylinder=2\longR,
 \end{cases} 
 \end{equation}
 hence 
\begin{equation}\label{eqn:FAequalareaofhstar}
\FB(\hstar,\psionestar)=\Aone(\psionestar, \Omegahstar);
\end{equation}
\item[(iv)] we have 
\begin{equation}\label{eq:psi_star_below_cylinder}
\psi^\star < \widehat \varphi {\rm ~in}~ R_{2l}.
\end{equation}
\end{itemize}
\end{theorem}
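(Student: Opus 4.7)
The plan is to obtain $(\hstar,\psionestar)$ by the direct method, symmetrize $\psionestar$, and then prove the structural properties (i)--(iv) by a mix of variational comparison and classical regularity for non-parametric minimal surfaces.

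\textbf{Existence and symmetrization.} Let $(h_k,\psi_k)\in\domalaa$ be a minimizing sequence. From \eqref{eqn:upperboundofA} we have $\FB(h_k,\psi_k)\le \pi$, and since $-\Htwo(\OmAlaa\setminus\subgraph_{h_k})\ge -4\longR$ and the boundary terms are nonnegative, the area term $\Aone(\psi_k,\OmAlaa)$ is uniformly bounded; together with $0\le\psi_k\le 1$ this yields a uniform $BV$ bound, hence $\psi_k\to\psionestar$ in $L^1(\OmAlaa)$ along a subsequence with $\psionestar\in BV(\OmAlaa,[0,1])$. By Lemma \ref{lem:compactnessofH}, a further subsequence gives $h_k\to\hstar$ uniformly on compacts, with $\hstar\in\Hspace$, whence $\chi_{\subgraph_{h_k}}\to\chi_{\subgraph_{\hstar}}$ in $L^1(\OmAlaa)$ and the constraint $\psionestar=0$ on $\OmAlaa\setminus\subgraph_{\hstar}$ passes to the limit. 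Each term of $\FB$ is $L^1$-lower semicontinuous (the boundary trace terms are handled by extending $\psi$ by $\dirdatum$ outside $\OmAlaa$, turning them into an area functional on a larger set), so $(\hstar,\psionestar)$ minimizes $\FB$. Symmetry of $\psionestar$ follows since $(w_1,w_2)\mapsto\psionestar(2\longR-w_1,w_2)$ is also a minimizer (by symmetry of $\hstar$, $\OmAlaa$, and $\dirdatum$); convexity of $A\mapsto\sqrt{1+|A|^2}$ together with linearity of the boundary terms implies that the pointwise average of the two is still a minimizer, so we may replace $\psionestar$ by this symmetric average.

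\textbf{Boundary values of $\hstar$ (i).} Assume $\hstar\not\equiv -1$. To show $\hstar(0)=\hstar(2\longR)=1$, argue by contradiction: if $\hstar(0)=a<1$, then $L_{\hstar}\neq\emptyset$ contributes the strictly positive ``vertical wall'' term $\int_{L_{\hstar}}\dirdatum\,d\Hone$ in \eqref{eqn:FA}. Build a competitor $\widetilde h\in\Hspace$ by replacing $\hstar$ on $[0,\delta]\cup[2\longR-\delta,2\longR]$ with the convex envelope of $\hstar$ and an affine function joining $(0,1)$ and $(\delta,\hstar(\delta))$ (and symmetrically at the right endpoint), and extend $\psionestar$ into the added corner regions by a Lipschitz interpolation between the trace $\dirdatum$ on $\{\axialcoordofcylinder=0\}$ and $\psionestar$ along $\{\axialcoordofcylinder=\delta\}$; for $\delta$ small the added graph area is strictly less than the removed wall area (since $\dirdatum(0,\scoord)>0$ on $(a,1)$), contradicting minimality. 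For $\hstar>-1$ on $(0,2\longR)$, if $\hstar(w_0)=-1$ at some interior $w_0$, then by convexity and symmetry $\hstar\equiv-1$ on $[w_0,2\longR-w_0]$; a small convex perturbation raising $\hstar$ on this middle interval creates a nontrivial subgraph, and the resulting gain (attainment of a portion of $\dirdatum$ without the associated vertical wall) strictly reduces $\FB$.

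\textbf{Regularity and comparison (ii)--(iv).} Once (i) is established, $\Omegahstar$ is a connected Lipschitz domain with $\partial_D\Omegahstar=\partial_D\OmAlaa$, and by Remark \ref{rmk:FAremarks}(iv), $\psionestar$ minimizes $\Aone(\psi,\Omegahstar)$ among $\psi\in BV(\Omegahstar,[0,1])$ with Dirichlet data $\dirdatum$ on $\partial_D\Omegahstar$ and $0$ on $G_{\hstar}$. The classical regularity theory of non-parametric area minimizers gives $\psionestar\in C^\infty(\Omegahstar)$ (in fact real analytic, since it satisfies the minimal surface equation there). The key geometric observation for boundary regularity is that convexity of $\hstar$ forces $G_{\hstar}$ to be mean convex with respect to the outward normal of $\Omegahstar$ (curvature of the graph of $\hstar$ as a plane curve, oriented by the outward normal, equals $\hstar{}''/(1+(\hstar{}')^2)^{3/2}\ge 0$); the remaining sides are flat. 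By the Jenkins--Serrin boundary regularity theorem on mean-convex domains, $\psionestar$ is continuous up to $\partial\Omegahstar$ and attains the prescribed traces, which gives \eqref{eqn:psionestarboundary} and hence \eqref{eqn:FAequalareaofhstar}. Strict positivity in $\Omegahstar$ follows from the strong maximum principle, since $\psionestar=\dirdatum>0$ on $\{0,2\longR\}\times(-1,1)$. Finally, for (iv), the function $\widehat\dirdatum(w_1,w_2):=\sqrt{1-w_2^2}$ solves the minimal surface equation on $\OmAlaa$ (its graph being half of the unit cylinder) and satisfies $\psionestar\le\widehat\dirdatum$ on $\partial\Omegahstar$ (equality on the two vertical sides; both sides $0$ on $(0,2\longR)\times\{-1\}$; and $0\le\widehat\dirdatum$ on $G_{\hstar}$). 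The comparison principle for minimal graphs yields $\psionestar\le\widehat\dirdatum$ in $\Omegahstar$, and the strong maximum principle promotes this to the strict inequality \eqref{eq:psi_star_below_cylinder}, because $\psionestar\not\equiv\widehat\dirdatum$ (they differ on $G_{\hstar}$, where $\hstar<1$ on a set of positive measure).

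The main technical difficulty is the variational analysis in step (i): ruling out $\hstar(0)<1$ and interior contact $\hstar=-1$ requires constructing convexity-preserving competitors whose explicit energy comparison is delicate, because the natural modifications must trade a removed vertical-wall contribution against added graph area without breaking convexity of $\hstar$ or admissibility of $\psionestar$.
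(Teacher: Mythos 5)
Your overall strategy (direct method for existence, symmetrization, interior regularity, comparison principle for (iv)) matches the paper's for those parts, but the two crucial steps — showing $\hstar(0)=1$ in (i) and showing $\psionestar=0$ continuously on $G_{\hstar}$ in (iii) — are handled by the paper through a fundamentally different and much more elaborate route (Theorem \ref{prop_zero}, a reduction to disc-type Plateau problems), and your proposed shortcuts for those steps have genuine gaps.

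\textbf{The competitor argument for $\hstar(0)=1$ does not yield a strict decrease.} You replace $\hstar$ near $w_1=0$ by an affine function rising to $1$, remove the wall term $\int_{\Lh}\varphi$, and extend $\psionestar$ over the new corner region. But the added graph area is never strictly below the removed wall area. Slicing the corner region by $w_1={\rm const}$ and using the crude bound $\sqrt{1+|\nabla\widetilde\psi|^2}\geq|\partial_{w_1}\widetilde\psi|$ gives, for any admissible extension $\widetilde\psi$ with the prescribed traces,
\begin{equation*}
\areaonecod(\widetilde\psi,\Lambda)\geq\int_0^\delta\int|\partial_{w_1}\widetilde\psi|\,dw_2\,dw_1\geq \int_{\hstar(0)}^1\sqrt{1-w_2^2}\,dw_2,
\end{equation*}
with equality only in the degenerate limit $\delta\to0^+$; so the competitor costs at least as much as the wall it removes. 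The free variable $h$ cannot be used to strictly lower $\FB$ in this way. The paper avoids this by not comparing competitors at all: Theorem \ref{prop_zero} approximates $\extpsi$ by smooth $\psi_n$, views their doubled graphs as disc-type surfaces spanning curves $\Gamma_n$, compares with area-minimizing discs $\Sigma_n$, passes to the limit, and shows (via Meeks--Yau embeddedness, a Rad\'o-type argument, and analyticity) that the limit surface $\Sigma^+$ is the graph of a new pair $(\widetilde h,\widetilde\psi)$ with $L_{\widetilde h}=\emptyset$, which is then a minimizer with the desired properties. The theorem asserts only that \emph{some} minimizing pair has these properties, and the paper produces one.

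\textbf{The Jenkins--Serrin argument for (iii) does not apply on $G_{\hstar}$.} Since $\hstar$ is convex and $\Omegahstar$ is its \emph{sub}graph, the free boundary curve $G_{\hstar}$ has \emph{negative} curvature with respect to the inward normal of $\Omegahstar$ (the domain is reentrant along $G_{\hstar}$, not mean-convex in the sense required for solvability of the Dirichlet problem with arbitrary continuous data). The classical theory therefore does not guarantee that a minimizer of $\areaonecod(\cdot,\Omegahstar)$ with zero datum on $G_{\hstar}$ attains that datum continuously. The paper's Lemma \ref{lem:attaining} deliberately invokes boundary regularity only on $\partial_D\Omegah$, which is a union of straight segments, precisely because the free part $G_{\hstar}$ cannot be handled this way. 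The attainment on $G_{\hstar}$ (with unbounded gradient there — think of a half-catenoid meeting the plane orthogonally) is again a product of Theorem \ref{prop_zero}: the surface $\Sigma$ is an analytic minimal disc, and $G_{\widetilde h}$ sits in its \emph{interior}, so continuity and $\widetilde\psi=0$ on $G_{\widetilde h}$ follow from interior analyticity of $\Sigma$, not from boundary regularity of a fixed-domain Dirichlet problem.

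\textbf{The $\hstar>-1$ argument is also incomplete.} Raising $\hstar$ on the middle interval where it touches $-1$ does not in itself remove a wall (there is no $\Lh$-type contribution in the interior), and you would need an explicit energy comparison. The paper's Lemma \ref{lemma_reg2_bis} proceeds by a careful slicing estimate showing that if $\hstar$ ever equals $-1$ then $\FB(-1,0)<\FB(\hstar,\psionestar)$, contradicting minimality; your perturbative sketch is not a substitute for that computation.

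The existence, symmetrization, interior analyticity/positivity ($\psionestar>0$ via strong maximum principle), and (iv) via comparison with the supersolution $\widehat\varphi$ are essentially correct and broadly aligned with the paper (the paper proves (iv) with a slightly different phrasing via the convex-hull property of minimal surfaces).
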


The rest of this section is devoted 
to prove this theorem; we start with some preparation.

\begin{definition}[\textbf{Convergence in $ \domalaa$}]
\label{def:convergence_in_Dom_FB}
We say that a sequence $((\hn, \psionen))\subset 
 \domalaa$  
converges to $(\h,\psione)\in  \domalaa$, if 
\begin{itemize}
\item[-] $(\hn)$ converges 
to $\h$
uniformly on compact subsets of $(0,2\longR)$; 
\item[-] $(\psionen)$ converges 
to $\psione$ in $\Lone(\OmAlaa)$.
\end{itemize}
\end{definition}\label{def:convinHxBV}

\begin{lemma}[\textbf{Closedness of $ \domalaa$}]\label{lem:closednessinB}
Let $((\hn,\psionen))\subset  \domalaa$ be a sequence such that 
$(\hn)$ converges 
to $\h \in \Hspace$
uniformly on compact subsets of $(0,2\longR)$,
 and $(\psionen)$ converges 
to $\psione \in \BV(\OmAlaa)$
in $\Lone(\OmAlaa)$.
Then $(\h, \psione)\in  \domalaa$.
\end{lemma}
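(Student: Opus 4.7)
The plan is to verify separately the three defining conditions of the class $\domalaa$ for the limit pair $(\h,\psione)$: namely that $\h \in \Hspace$, that $\psione \in \BV(\OmAlaa,[0,1])$, and that $\psione$ vanishes almost everywhere on $\OmAlaa \setminus \Omegah$. Only the third will require some work; the other two are inherited from the convergences essentially for free.

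First, I would check $\h \in \Hspace$. Since $\h_n \to \h$ uniformly on compact subsets of $(0,2\longR)$, pointwise limits preserve convexity, the symmetry $\h(\tcoord)=\h(2\longR-\tcoord)$, and the bounds $-1 \le \h \le 1$, so $\h$ belongs to $\Hspace$ as defined in \eqref{eq:the_space_of_symmetric_convex_traces}. Next, the hypothesis already states $\psione \in \BV(\OmAlaa)$; since $\psionen \to \psione$ in $L^1(\OmAlaa)$ and $0 \le \psionen \le 1$ almost everywhere, up to a subsequence we have pointwise a.e. convergence, hence $0 \le \psione \le 1$ almost everywhere in $\OmAlaa$.

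The main step is showing $\psione = 0$ almost everywhere on $\OmAlaa \setminus \Omegah$. Because $\h$ is convex, its graph $\graphh$ has zero two-dimensional Lebesgue measure, so it suffices to check $\psione=0$ a.e. on the open set
\[
U := \{(\tcoord,\scoord)\in \OmAlaa : \scoord > \h(\tcoord)\}.
\]
Fix a compact $K \subset U$. Its projection $\pi_1(K)$ onto the first coordinate is a compact subset of $(0,2\longR)$, on which $\h_n \to \h$ uniformly. By compactness there exists $\delta>0$ with $\scoord - \h(\tcoord) \ge 2\delta$ for every $(\tcoord,\scoord)\in K$, and for $n$ large enough $\|\h_n-\h\|_{L^\infty(\pi_1(K))} < \delta$, so that $\scoord > \h_n(\tcoord)$ throughout $K$. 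Hence $K \subset \OmAlaa \setminus \Omegahn$ for every sufficiently large $n$, which together with $(\h_n,\psionen)\in \domalaa$ gives $\psionen \equiv 0$ on $K$. Passing to the $L^1(K)$-limit yields $\psione=0$ a.e.\ on $K$; exhausting $U$ by a countable increasing sequence of such compacts delivers $\psione=0$ a.e.\ in $U$, hence in $\OmAlaa \setminus \Omegah$ up to a set of zero measure.

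The only subtle point is that the uniform convergence $\h_n \to \h$ is only assumed on compact subsets of the \emph{open} interval $(0,2\longR)$, so I would take care never to need uniform control up to $\{0,2\longR\}$: the argument above only uses $\pi_1(K)$ compactly contained in $(0,2\longR)$, which is automatic for any compact $K$ contained in the open set $U \subseteq \OmAlaa$. With this in place all three membership conditions are verified and $(\h,\psione)\in \domalaa$, concluding the proof.
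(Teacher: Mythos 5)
Your proof is correct and rests on the same key observation as the paper's: a point strictly above the graph of $\h$ is eventually strictly above the graph of $\h_n$, by the local uniform convergence. The only cosmetic difference is that you run this argument uniformly on compact subsets of the open epigraph of $\h$ and then use $L^1$-convergence of $\psionen$ to conclude $\psione=0$ a.e.\ there, whereas the paper first passes to a pointwise-a.e.\ convergent subsequence of $(\psionen)$ and argues at individual points. Both routes are standard and equally short; yours avoids the subsequence extraction, the paper's avoids the compact-exhaustion step. Note also that $\h\in\Hspace$ and $\psione\in BV(\OmAlaa)$ are already part of the lemma's hypotheses, so the first two checks in your write-up are not needed (though harmless), and the bound $0\le\psione\le1$ is automatic from a.e.\ convergence exactly as you observe.
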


\begin{proof}
Possibly passing to a (not relabelled) 
subsequence, we can assume that 
$(\psionen)$ 
converges to $\psione$ pointwise in $A \subseteq \OmAlaa$, with 
$\Htwo (\OmAlaa \setminus A)=0$, and
$\psionen =0$ in  $A \cap 
(\OmAlaa \setminus \Omegahn)$
for all $n \in \mathbb N$.
We only have to show that
$\psione =0$ in $A \cap (\OmAlaa \setminus \Omegah)$. We can also assume
that $A$ does not intersect the graph of $h$. 
If $(\axialcoordofcylinder,\scoord) \in A \cap (\OmAlaa \setminus \Omegah)$, 
then $\scoord>\h(\axialcoordofcylinder)$. {}From 
the local uniform convergence of $(\hn)$ to $\h$ 
in $(0,2\longR)$ it follows that
$\scoord>\hn(\axialcoordofcylinder)$  for $n$ large enough, {\it i.e.,} 
$(\axialcoordofcylinder,\scoord)\in A\cap (\OmAlaa \setminus \Omegahn)$,
and the assertion follows.
\end{proof}

\begin{lemma}[\textbf{Lower semicontinuity of $\FB$}]
\label{lem:lowersemicontinuityofFB}
Let $((\hn, \psionen))\subset  \domalaa$ be a sequence 
converging  to $(\h,\psione) \in  \domalaa$ in the sense of Definition \ref{def:convergence_in_Dom_FB}. Then 
\begin{equation}\label{eqn:partofF(B)}
\FB(\h,\psione) \leq \liminf_{n\to +\infty}\FB(\hn,\psionen).
\end{equation}
\end{lemma}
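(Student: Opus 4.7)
The plan is to split $\FB$ into two pieces that can be treated independently: the subgraph--measure term $-\Htwo(R_{2l}\setminus\Omegah)$, which will turn out to be continuous along the sequence, and the bracket
\[
\mathcal{B}(h,\psi):=\Aone(\psi,R_{2l})+\int_{\partial_D R_{2l}}|\psi-\varphi|\,d\Hone+\int_{\partial R_{2l}\setminus\partial_D R_{2l}}|\psi|\,d\Hone,
\]
which will be shown to be lower semicontinuous under $L^1$-convergence of $\psi_n$ only, with no reference to $h_n$.

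For the subgraph--measure term I would argue that, since the $h_n$'s are convex and uniformly bounded in $[-1,1]$, their local uniform convergence on $(0,2l)$ to the convex limit $h$ forces $\chi_{\Omega_{h_n}}\to\chi_{\Omegah}$ pointwise on $R_{2l}$ off the graph $G_h$ (which is $\Htwo$-negligible). Bounded convergence then yields $\Htwo(R_{2l}\setminus\Omega_{h_n})\to\Htwo(R_{2l}\setminus\Omegah)$, so this term is actually continuous along the sequence.

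For $\mathcal{B}$ I would resort to the extension recipe recalled in Section~\ref{sec:generalized_graphs_in_codimension_one}. Fix a strictly larger open rectangle $\widehat R\supset\overline{R_{2l}}$ and construct, once and for all, a function $\widetilde\varphi\in W^{1,1}(\widehat R\setminus\overline{R_{2l}})$ whose trace on $\partial R_{2l}$ equals $\varphi$ on $\partial_D R_{2l}$ and vanishes on $\partial R_{2l}\setminus\partial_D R_{2l}$. Define $\overline{\psi_n}:=\psi_n$ on $R_{2l}$ and $\overline{\psi_n}:=\widetilde\varphi$ on $\widehat R\setminus\overline{R_{2l}}$, and likewise $\overline\psi$. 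The gluing formula of Section~\ref{sec:generalized_graphs_in_codimension_one} yields
\[
\relarea(\overline{\psi_n},\widehat R)=\Aone(\psi_n,R_{2l})+\int_{\partial R_{2l}}|\psi_n-\widetilde\varphi|\,d\Hone+\relarea(\widetilde\varphi,\widehat R\setminus\overline{R_{2l}})=\mathcal{B}(h_n,\psi_n)+\mathrm{const},
\]
since by construction the trace of $\widetilde\varphi$ on $\partial R_{2l}$ coincides with $\varphi$ on $\partial_D R_{2l}$ and with $0$ elsewhere, so the integrand matches that of $\mathcal{B}$. Because $\overline{\psi_n}\to\overline\psi$ in $L^1(\widehat R)$, the classical $L^1$-lower semicontinuity of the relaxed area functional $\relarea(\cdot,\widehat R)$ \cite{DalMaso:80} gives $\mathcal{B}(h,\psi)\le\liminf_n \mathcal{B}(h_n,\psi_n)$. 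Combining with the continuity of the subgraph-measure term yields \eqref{eqn:partofF(B)}.

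The main technical point I expect will be the explicit construction of $\widetilde\varphi$ near the four corners of $\partial R_{2l}$, where one has to interpolate in $W^{1,1}$ between $\sqrt{1-w_2^2}|_{w_2=\pm 1}=0$ on the vertical sides and the value $0$ on the horizontal sides while keeping the trace identities exact; a standard radial cut-off on a small collar of each corner does the job. Once $\widetilde\varphi$ is fixed, one should also check that $\overline{\psi_n}\in BV(\widehat R)$, which is immediate since $\psi_n\in BV(R_{2l})$, $\widetilde\varphi\in W^{1,1}(\widehat R\setminus\overline{R_{2l}})$, and the jump across $\partial R_{2l}$ contributes a finite $\Hone$-mass to $|D\overline{\psi_n}|$; the $L^1$-convergence $\overline{\psi_n}\to\overline\psi$ on $\widehat R$ is then trivial since the two extensions agree outside $R_{2l}$.
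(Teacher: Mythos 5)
Your proposal is correct and follows essentially the same route as the paper's proof: the paper likewise isolates the bracket $\Aone(\psi,R_{2l})+\int_{\partial_D R_{2l}}|\psi-\varphi|\,d\mathcal H^1+\int_{\partial R_{2l}\setminus\partial_D R_{2l}}|\psi|\,d\mathcal H^1$, proves its $L^1$-lower semicontinuity by extending $\varphi$ to a $W^{1,1}$ function on a neighborhood (a disc there, your rectangle $\widehat R$ here, which makes no difference) and invoking the $L^1$-lower semicontinuity of the relaxed area, and then notes that pointwise convergence of $h_n$ to $h$ gives $\mathcal H^2(R_{2l}\setminus\Omega_{h_n})\to\mathcal H^2(R_{2l}\setminus\Omega_h)$. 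One small remark: the corner-matching of $\widetilde\varphi$, which you flag as the main technical point, is actually painless here because $\varphi$ vanishes at all four corners of $R_{2l}$ (note $\varphi(0,\pm1)=\sqrt{1-1}=0$), so the two pieces of boundary data agree continuously and a standard $W^{1,1}$ extension suffices with no special collar argument.
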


\begin{proof}

\smallskip
It is standard\footnote{Indeed, let $\tilde{\dirdatum}:\partial\OmAlaa \to [0,1]$ be defined as 
$\tilde{\dirdatum}:=\dirdatum$  on $\partialbar \OmAlaa$, and 
$\tilde\dirdatum := 0$ on $\partial \OmAlaa \setminus \partialbar \OmAlaa$.
Let $B\subset \R^2$ be an open disc containing $\overline \OmAlaa$. 
We extend $\tilde\dirdatum $ to a $W^{1,1}$ function 
in $B \setminus \overline \OmAlaa$,
 \cite[Thm. 2.16] {Giusti:84},
and we still denote by $\tilde\dirdatum$ such an extension.
For every $\psione \in BV(\OmAlaa)$, define 
$\widehat \psione
:=\psione$ in $\OmAlaa$ and
$\widehat \psione :=
\tilde\dirdatum$ in $B \setminus\OmAlaa$.
We have 
$$
\begin{aligned}
 \Aone(\psione,\OmAlaa)+\int_{\partialbar \OmAlaa}  \vert \psione  -\dirdatum\vert d \Hone +\int_{\partial 
\OmAlaa\setminus \partialbar \OmAlaa}  \vert \psione \vert d \Hone
=  \Aone(\psione,\OmAlaa)+\int_{\partial \OmAlaa}  \vert \psione  -
\tilde \dirdatum\vert d \Hone 
=  \Aone(\widehat \psione,B) -\Aone(\tilde\dirdatum,B\setminus \overline \OmAlaa),
\end{aligned}
$$
where the last equality follows from \cite[(2.15)] {Giusti:84}. 
Thus the lower semicontinuity of the functional in \eqref{eq:first_three_terms} 
follows from the $\Lone(B)$-lower semicontinuity of the area functional.}
 to show that
the functional 
\begin{equation}\label{eq:first_three_terms}
\psi \in BV(\OmAlaa, [0,1]) \to \Aone(\psione,\OmAlaa)+\int_{\partialbar \OmAlaa}  
\vert \psione  -\dirdatum\vert~ d \Hone +\int_{\partial \OmAlaa\setminus \partialbar \OmAlaa} 
 \vert \psione \vert ~d \Hone
\end{equation}
is $\Lone(\OmAlaa)$-lower semicontinuous.
Since $(\hn)$ converges to $\h$ pointwise in $(0,2\longR)$, we also have 
$\lim_{n \to +\infty} \Htwo (\OmAlaa \setminus \Omegahn) = \Htwo (\OmAlaa \setminus \Omegah)$.
The assertion follows.
\end{proof}

\begin{prop}[\textbf{Existence of a minimizer of \eqref{eqn:B}}]\label{Thm:existenceofB}
There exists $(\hstar,\psionestar)\in  \domalaa$ satisfying \eqref{eqn:B}.
\end{prop}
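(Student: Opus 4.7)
The plan is to apply the direct method of the calculus of variations. Thanks to the explicit upper bound $\inf \mathcal{F} \leq \pi$ from \eqref{eqn:upperboundofA} (attained by the competitor $h \equiv -1$, $\psi \equiv 0$), we can fix a minimizing sequence $((h_n,\psi_n)) \subset X_{2\longR}^{\rm conv}$ with $\mathcal{F}(h_n,\psi_n) \leq \pi + 1$ for $n$ large. The first step is to extract compactness for each component separately; the second is to identify the candidate limit as a minimizer via Lemmas \ref{lem:closednessinB} and \ref{lem:lowersemicontinuityofFB}.

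For the sequence $(h_n) \subset \Hspace$, Lemma \ref{lem:compactnessofH} provides, up to a not relabelled subsequence, uniform convergence on compact subsets of $(0,2\longR)$ to some $\hstar \in \Hspace$. For the sequence $(\psi_n)$, since $\psi_n \in [0,1]$ we have an $L^\infty(R_{2\longR})$ bound, hence an $L^1(R_{2\longR})$ bound, and since
\[
\areaonecod(\psi_n, R_{2\longR}) = \mathcal{F}(h_n,\psi_n) + \Htwo(R_{2\longR} \setminus SG_{h_n}) - \int_{\partial_D R_{2\longR}} |\psi_n - \varphi|\, d\Hone - \int_{\partial R_{2\longR} \setminus \partial_D R_{2\longR}} |\psi_n|\, d\Hone,
\]
and the two boundary integrals are nonnegative while $\Htwo(R_{2\longR} \setminus SG_{h_n}) \leq 4\longR$, we deduce a uniform bound on $\areaonecod(\psi_n,R_{2\longR})$, and in particular on $|D\psi_n|(R_{2\longR})$. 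Standard BV-compactness then yields, up to a further subsequence, $\psi_n \to \psistar$ in $L^1(R_{2\longR})$ for some $\psistar \in BV(R_{2\longR},[0,1])$.

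Having produced the candidate pair $(\hstar,\psistar)$, Lemma \ref{lem:closednessinB} ensures that $(\hstar,\psistar) \in X_{2\longR}^{\rm conv}$, i.e.\ $\psistar = 0$ a.e.\ on $R_{2\longR} \setminus SG_{\hstar}$. Since the convergence $(h_n,\psi_n) \to (\hstar,\psistar)$ is precisely the one of Definition \ref{def:convinHxBV}, Lemma \ref{lem:lowersemicontinuityofFB} gives
\[
\mathcal{F}(\hstar,\psistar) \leq \liminf_{n\to+\infty} \mathcal{F}(h_n,\psi_n) = \inf_{X_{2\longR}^{\rm conv}} \mathcal{F},
\]
so that $(\hstar,\psistar)$ attains the infimum.

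The main subtlety will be the verification that the BV bound really follows from $\mathcal{F}(h_n,\psi_n) \leq \pi + 1$: one has to make sure that subtracting the negative term $-\Htwo(R_{2\longR}\setminus SG_{h_n})$ in the definition of $\mathcal{F}$ does not spoil the control on $\areaonecod(\psi_n,R_{2\longR})$, which is exactly what the uniform bound $\Htwo(R_{2\longR}\setminus SG_{h_n}) \leq |R_{2\longR}| = 4\longR$ takes care of. Apart from this bookkeeping, the proof is a direct application of the compactness, closedness and lower semicontinuity results established in Lemmas \ref{lem:compactnessofH}, \ref{lem:closednessinB} and \ref{lem:lowersemicontinuityofFB}.
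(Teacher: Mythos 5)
Your proof is correct and follows essentially the same approach as the paper's: use the trivial competitor $(h\equiv-1,\psi\equiv 0)$ to bound the infimum by $\pi$, extract compactness from Lemma \ref{lem:compactnessofH} and BV-compactness, then conclude via Lemmas \ref{lem:closednessinB} and \ref{lem:lowersemicontinuityofFB}. The only difference is that you spell out the derivation of the uniform BV bound for $(\psi_n)$ — rearranging the definition of $\FB$ and using that $\Htwo(R_{2\longR}\setminus SG_{h_n})\leq 4\longR$ and the boundary terms are nonnegative — a step the paper leaves implicit with a bare "Thus."
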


\begin{proof}
Note that 
$$ 
\h(\axialcoordofcylinder):=-1,\quad 
\psione(\axialcoordofcylinder, \scoord):=0, \qquad (\axialcoordofcylinder,\scoord)\in \OmAlaa,
$$
is a competitor in \eqref{eqn:B}. Hence, 
for a minimizing sequence $((\hn, \psionen))\subset  \domalaa$, 
recalling \eqref{eq:FB_minus_one_zero} 
we have
\begin{equation}
\lim_{n\rightarrow +\infty} \FB(\hn, \psionen)=\inf \big \{ \FB(\h,\psione): (\h,\psione) \in 
 \domalaa
 \big \} \leq \pi.
\end{equation}
Thus 
$\sup_{n \in \mathbb N}
\vert D \psionen \vert (\OmAlaa) < +\infty$, and 
there exists $\psionestar \in BV(\OmAlaa, [0,1])$ such that, up to a 
(not relabelled) subsequence, $(\psionen)$ 
converges to $\psionestar$ in $\Lone(\Omega)$. 

Using Lemmas \ref{lem:compactnessofH} and 
\ref{lem:closednessinB}, we may assume that $(\h_n)$ 
converges locally uniformly to some $\hstar \in \Hspace$,  
and $\psionestar=0$ in $\OmAlaa \setminus \Omegahstar$.
The assertion then follows from 
Lemma \ref{lem:lowersemicontinuityofFB}.
\end{proof}

We now turn to the regularity of minimizers.

\begin{prop}[\textbf{Analyticity and positivity of a minimizer}]
\label{prop_reg}
Suppose that $(\h,\psione)$ is a 
minimizer of \eqref{eqn:B}, and that $h$ is not identically $-1$.
Then $\psione$ is analytic 
in $\Omegah$, and solves the equation
\begin{align}\label{div_eq_bis}
{\rm div} \Big(\frac{\nabla \psione}{\sqrt{1+|\nabla \psione|^2}}\Big)=0
\qquad {\rm in}~ \Omegah.
\end{align}
Moreover $\psione>0$ in $\Omegah$.
\end{prop}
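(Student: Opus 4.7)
My plan is to exploit that with $\h$ fixed, the functional $\psione\mapsto\FB(\h,\psione)$ decomposes, by \eqref{eqn:FA}, as the Cartesian area $\Aone(\psione,\Omegah)$ plus terms depending only on the trace of $\psione$ on $\partial\Omegah$ and on $\h$. Hence, for every open $\omega\subset\subset\Omegah$ and every $\tilde\psi\in BV(\Omegah,[0,1])$ with $\tilde\psi=\psione$ on $\Omegah\setminus\omega$, the pair $(\h,\tilde\psi)$ is admissible, and minimality yields $\Aone(\psione,\Omegah)\le\Aone(\tilde\psi,\Omegah)$. Thus the Cartesian subgraph of $\psione$ is a local perimeter minimizer in $\Omegah\times\mathbb{R}$ under the pointwise constraint $\psione\in[0,1]$.

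I would then invoke the classical interior regularity theory for such constrained non-parametric area minimizers (De Giorgi, Miranda, Massari; see \cite{Giusti:84}): $\psione$ is locally Lipschitz in $\Omegah$, and on the open subset $U:=\{0<\psione<1\}\subseteq\Omegah$ no obstacle is active, so $\psione$ is a classical smooth solution of \eqref{div_eq_bis}; the Morrey--De Giorgi analytic regularity theorem for quasilinear elliptic equations with analytic structure then gives that $\psione$ is real-analytic on $U$. To upgrade to $U=\Omegah$ I would rule out both constraints. The upper one, $\{\psione=1\}$, is excluded by a weak comparison with the supersolution $\widehat\dirdatum(w_1,w_2):=\sqrt{1-w_2^2}$ from \eqref{eq:widehat_varphi}, which satisfies $-\operatorname{div}(\nabla\widehat\dirdatum/\sqrt{1+|\nabla\widehat\dirdatum|^2})\ge 0$ in $\OmAlaa$ and dominates the boundary data $\dirdatum$, yielding $\psione\le\widehat\dirdatum<1$ on $\Omegah$ away from $\{w_2=0\}$; the lower one follows from the positivity step below.

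For the positivity I would argue by contradiction. Suppose $\psione(p)=0$ at some $p\in\Omegah$. Since $\psione(p)<1$, the point $p$ lies in the open set $\{\psione<1\}$, on which $\psione\ge 0$ is a classical analytic solution of the MSE; the strong minimum principle together with unique continuation for the analytic MSE then forces $\psione\equiv 0$ on the whole connected component $C$ of $\Omegah$ containing $p$. After symmetrizing $(\h,\psione)$ about $\{w_1=\longR\}$, which preserves admissibility and minimality thanks to the convexity of $\Aone$ in $\psione$ and the symmetry of $\Hspace$ and $\dirdatum$, I may assume $\psione$ is symmetric, so if $C$ is a proper component of $\Omegah$ its reflection is also a zero-component. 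Because $\h\in\Hspace$ is convex and symmetric, $\{h=-1\}$ is either empty, a point, or a symmetric interval about $\longR$, so $\Omegah$ is either connected or the disjoint union of two symmetric components adjoining $\{w_1=0\}$ and $\{w_1=2\longR\}$; in both cases, replacing $\h$ by $\tilde\h:=-1$ on the $w_1$-projection of the zero-set of $\psione$ and $\tilde\h=\h$ elsewhere yields $\tilde\h\in\Hspace$, and by \eqref{eqn:FA} one gets $\FB(\tilde\h,\psione)\le\FB(\h,\psione)-\Htwo(\Omegah\setminus\Omega_{\tilde\h})<\FB(\h,\psione)$, contradicting minimality.

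The main difficulty lies in the positivity step: the strong minimum principle is applied on $\{\psione<1\}$, which requires first establishing $\psione<1$ throughout $\Omegah$ via the supersolution $\widehat\dirdatum$, and the modified height $\tilde\h$ must be shown to remain in $\Hspace$. This last point is delicate, because arbitrarily lowering a convex function generally destroys convexity; it works here only thanks to the rigid structure of the components of $\Omegah$ dictated by the convexity and symmetry of $\h$, which is the key observation making the whole argument go through.
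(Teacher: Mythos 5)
Your proposal takes a genuinely different and substantially more complicated route, and it has a real gap. You frame the problem as a two-sided obstacle problem and then have to work hard (supersolution comparison, ruling out the upper contact set, etc.) to recover the classical PDE picture on the non-contact set. The key observation you are missing — which is exactly what makes the paper's proof short — is that the constraint $\psi\in[0,1]$ is \emph{soft}: for any local competitor $\psi_1\in BV(\Omegah)$ with ${\rm spt}(\psione-\psi_1)\subset\subset\Omegah$, the truncation $T\psi_1:=(\psi_1\vee 0)\wedge 1$ is admissible and satisfies $\areaonecod(T\psi_1,\Omegah)\le\areaonecod(\psi_1,\Omegah)$, while $T\psi_1=\psione$ where $\psi_1=\psione$. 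Hence a minimizer of \eqref{eqn:B} is automatically a \emph{local minimizer with no constraint}, and the classical unconstrained interior theory (Giusti, Thm.~14.13) gives local Lipschitz regularity, analyticity, and \eqref{div_eq_bis} on \emph{all} of $\Omegah$. There is nothing to rule out.

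Concretely, the gaps this creates in your argument are: (i) your regularity only holds on $\{0<\psione<1\}$, so the strong minimum principle and unique continuation cannot be invoked at a point where $\psione=0$ — in an obstacle problem the solution only satisfies a variational inequality on the contact set, and a nonnegative supersolution touching zero need not vanish identically, so the inference ``$\psione(p)=0\Rightarrow\psione\equiv 0$ on the component'' is unjustified without first knowing $\psione$ is an interior solution of the MSE there; (ii) the proposed comparison with the supersolution $\widehat\dirdatum=\sqrt{1-w_2^2}$ does not close the upper contact set either, since $\widehat\dirdatum=1$ on $\{w_2=0\}$ and, more fundamentally, the boundary inequality $\psione\le\widehat\dirdatum$ on $\partial\Omegah$ is not a priori known at this stage (in the paper $\psi^\star<\widehat\dirdatum$ is only a consequence of the much later Theorem~\ref{Thm:existenceofminimizer}(iv), which rests on the Plateau-problem analysis of Theorem~\ref{prop_zero}). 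Once the truncation observation is in hand, the rest of your positivity step is essentially the paper's comparison of $(\h,0)$ against $(-1,0)$; the symmetrization and the modification of $\h$ on the zero-set add genuine care about disconnected $\Omegah$, but they are unnecessary once analyticity holds globally and the paper's direct comparison is available.
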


\begin{proof}
Since by assumption $h$ is not identically $-1$,
we have that $\Omegah$ is nonempty. Moreover minimality ensures
$$
\int_{\Omegah} \sqrt{1+|D \psione|^2} \leq 
\int_{\Omegah} \sqrt{1+|D \psione_1|^2} 
 $$ 
for any $\psi_1 \in BV(\Omegah)$, ${\rm spt}(\psi
-\psi_1) \subset \subset \Omegah$.
Thus, 
by  \cite[Thm
14.13]{Giusti:84}, $\psione$ is locally Lipschitz, and hence analytic, in
$\Omegah$, and
\eqref{div_eq_bis} follows. 
Now, let $z\in \Omegah$ and take a 
disc $B_\eta(z)\subset \subset \Omegah$. Since
$\psione \geq 0$ on $\partial B_\eta(z)$ we find, by
the strong maximum principle  \cite[Thm. C.4]{Giusti:84}, that 
either $\psione$ is identically zero  in $B_\eta(z)$, or
$\psione>0$ in
$B_\eta(z)$. Hence from the analyticity of $\psione$ and the
arbitrariness of
$z$,  we have that either $\psione$ is identically
zero in $\Omegah$ or $\psione>0$ in $\Omegah$. 
Now 
$\FB(h,0)=|\Omegah|+\pi > \FB(-1, 0)=\pi$, see
\eqref{eq:FB_minus_one_zero}.  Thus $(h,0)$ is not a minimizer, and the positivity of $\psione$ in $SG_h$ is achieved. 

\end{proof}

\begin{lemma}[\textbf{Symmetric minimizers}]\label{lem:symmetry_min}
 There exists a minimizer $(\h,\psione)$ of \eqref{eqn:B} 
such that 
$h(\cdot) = h(2l - \cdot)$ and $\psione$ is
 symmetric with respect to $\{\axialcoordofcylinder=\longR\} \cap R_{2l}$.
\end{lemma}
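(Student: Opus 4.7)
The plan is to start from any minimizer $(h,\psi)$ produced by Proposition \ref{Thm:existenceofB} and symmetrize $\psi$ by averaging with its reflection across $\{w_1 = l\}$, exploiting the fact that each term in the decomposition of $\FB(h,\cdot)$ is convex. Since the class $\Hspace$ in \eqref{eq:the_space_of_symmetric_convex_traces} already enforces $h(w_1)=h(2l-w_1)$, the first component $h$ is automatically symmetric; the work therefore consists only in symmetrizing $\psi$ without increasing the value of $\FB$.

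First, let $(h,\psi) \in \domalaa$ be a minimizer, and define the reflected function $\psi_R(w_1,w_2) := \psi(2l-w_1, w_2)$. I would check that $\psi_R \in BV(R_{2l}, [0,1])$ and that $(h,\psi_R) \in \domalaa$: since $h$ is symmetric, $R_{2l} \setminus SG_h$ is invariant under $w_1 \mapsto 2l-w_1$, so the constraint $\psi_R = 0$ on $R_{2l}\setminus SG_h$ is inherited from $\psi=0$ there. Next, inspecting each summand in \eqref{eqn:FB}: the area term $\Aone(\psi_R,R_{2l})$ equals $\Aone(\psi,R_{2l})$ by a change of variables (the reflection is an isometry); the set $R_{2l}\setminus SG_h$ is unchanged; the Dirichlet boundary $\partial_D R_{2l}$ and the datum $\varphi$ in \eqref{eq:varphi_doubled} are both invariant under $w_1 \mapsto 2l-w_1$, so $\int_{\partial_D R_{2l}}|\psi_R - \varphi|\,d\Hone = \int_{\partial_D R_{2l}}|\psi - \varphi|\,d\Hone$; finally the segment $(0,2l)\times\{1\}$ is symmetric, so the trace integral of $|\psi_R|$ there equals that of $|\psi|$. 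Hence $\FB(h,\psi_R)=\FB(h,\psi)$, and $(h,\psi_R)$ is another minimizer.

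Now set $\widetilde\psi := \tfrac{1}{2}(\psi + \psi_R)$. This is symmetric in $w_1$ by construction, takes values in $[0,1]$, belongs to $BV(R_{2l})$, and vanishes on $R_{2l}\setminus SG_h$, so $(h,\widetilde\psi)\in \domalaa$. To conclude I would invoke convexity of the functional $\psi \mapsto \FB(h,\psi)$: the area functional $\psi \mapsto \Aone(\psi,R_{2l}) = \int_{R_{2l}}\sqrt{1+|\nabla\psi|^2}\,dx + |D^s\psi|(R_{2l})$ is convex on $BV(R_{2l})$, the trace maps $\psi\mapsto \psi|_{\partial R_{2l}}$ are linear and the boundary integrands $t\mapsto |t-\varphi|$ and $t \mapsto |t|$ are convex, while the term $-\Htwo(R_{2l}\setminus SG_h)$ does not depend on $\psi$. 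Therefore
\begin{equation*}
\FB(h,\widetilde\psi) \leq \tfrac{1}{2}\FB(h,\psi) + \tfrac{1}{2}\FB(h,\psi_R) = \FB(h,\psi) = \min_{\domalaa}\FB,
\end{equation*}
so $(h,\widetilde\psi)$ realizes the minimum and is symmetric, proving the lemma.

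There is no substantial obstacle: the whole argument rests on the convexity of the scalar (codimension-one) area functional and on the symmetry of the data $(\partial_D R_{2l},\varphi)$ and of the class $\Hspace$. The only point requiring a little care is the bookkeeping of traces on each part of $\partial R_{2l}$ after reflection, which I have spelled out above.
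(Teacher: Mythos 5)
Your proposal is correct, and it takes a genuinely different route from the paper's proof. The paper argues by reflection (cut-and-paste): it localizes $\mathcal F_{2\longR}(h,\psi;I)$ to intervals $I\subset(0,2\longR)$, observes via minimality that the two halves $\mathcal F_{2\longR}(h,\psi;(0,\longR))$ and $\mathcal F_{2\longR}(h,\psi;(\longR,2\longR))$ must carry equal energy (otherwise reflecting the smaller half would beat the minimizer), and then pastes the left half with its reflection to produce the symmetric minimizer $\tilde\psi$. That argument invokes the interior continuity of $\psi$ from Proposition \ref{prop_reg} to ensure that no jump mass is created or hidden along the matching line $\{w_1=\longR\}$. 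You instead check the invariance $\mathcal F_{2\longR}(h,\psi_R)=\mathcal F_{2\longR}(h,\psi)$ directly (which is the shared structural ingredient in both proofs) and then average $\widetilde\psi=\tfrac12(\psi+\psi_R)$, relying only on the convexity of the $BV$ area functional and of the boundary penalty integrands. Your approach avoids the localization and the interface bookkeeping, and in particular does not need the a priori continuity of $\psi$, so it is slightly cleaner and more robust; the paper's cut-and-paste, on the other hand, has the (unused here) advantage of directly showing that the energy is equidistributed between the two halves, and it also carries over to situations where the functional is not convex but still additive over subdomains. Both arguments are correct, so what you wrote stands as a valid alternative proof.
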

\begin{proof}
 Let $(\h, \psione)$ be a minimizer of \eqref{eqn:B}. 
Let $I \subset (0,2\longR)$ be an open interval; consistently
with \eqref{eqn:FB}, and since $\psi$ is continuous in $SG_h$, we set
\begin{align*}
\FB(\h, \psione; I):= &	
\Aone(\psione, I\times(-1,1))-
\Htwo\Big(I\times(-1,1) \setminus \Omegah\Big)+
\int_{(\partialbar\OmAlaa) \cap (\overline I \times [-1,1)) }  \vert \psione  -\dirdatum\vert ~d \Hone \nonumber
\\
&+ \int_{(\partial \OmAlaa \setminus \partialbar\OmAlaa) 
\cap (\overline I \times (-1,1])}  
\vert \psione \vert~ d\Hone.
\end{align*}  
Recall that $\h\in \Hspace$, hence  its graph 
is symmetric with respect to $\{\axialcoordofcylinder=\longR\}
\cap R_{2l}$. 
Define
$\tilde \psione := \psione $ on $(0,\longR)\times (-1,1)$ 
and  $\tilde \psione (\axialcoordofcylinder,\scoord) := \psione (2\longR -
\axialcoordofcylinder,\scoord)$ 
for  $(\axialcoordofcylinder,
\scoord) \in (\longR, 2\longR)\times (-1,1)$, in particular 
the graph of 
$\tilde\psione$ is symmetric with 
respect to $\{\axialcoordofcylinder=\longR\} \cap 
R_{2l}$. 
Since $\FB 
(\h , \psione;(0,\longR))= 
\FB
(\h ,\psione;(\longR,2\longR))$, it follows 
$\FB(\h , \tilde\psione)= \FB(\h, \psione)$ 
for, if $\FB 
(\h , \psione;(0,\longR))<\FB
(\h ,\psione;(\longR,2\longR))$, then 
$\FB(\h , \tilde\psione)< \FB(\h, \psione)$ which contradicts the 
minimality of $(h,\psione)$. 
\end{proof}

\begin{lemma}\label{lemma_reg2_bis}
Suppose that $(\h,\psione)$ is a 
minimizer of \eqref{eqn:B} such 
that:
\begin{itemize}
\item[(i)]
 $h(\cdot) = h(2l - \cdot)$;
\item[(ii)]
 $\psione$ is
 symmetric with respect to $\{\axialcoordofcylinder=\longR\} \cap R_{2l}$;
\item[(iii)] 
$h$ is not identically $-1$.
\end{itemize}
Then $$h(\axialcoordofcylinder)>-1
\qquad \forall\axialcoordofcylinder\in [0,2\longR].
$$
\end{lemma}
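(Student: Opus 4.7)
I argue by contradiction. Assume $h(w_1^*)=-1$ for some $w_1^*\in[0,2l]$. By assumption (i), convexity of $h$, and $h\ge-1$, the set $\{h=-1\}$ is a $w_1\mapsto 2l-w_1$-symmetric closed convex subset of $[0,2l]$, hence an interval $[a,2l-a]$ with $0\le a\le l$ (in particular $h(l)=-1$). Assumption (iii) rules out $a=0$, so $a\in(0,l]$ and $h>-1$ on $(0,a)\cup(2l-a,2l)$. Then $\psi\equiv0$ on $(a,2l-a)\times(-1,1)$, and an argument identical to Remark~\ref{leb_point} (with $h(t_0)=-1<1$ at $t_0=a$) shows the interior trace of $\psi$ on $\{a\}\times[-1,1]$ vanishes, so no jump contribution appears at $\{w_1=a\}$. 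A direct computation with \eqref{eqn:FB} shows that the contribution of the middle strip $[a,2l-a]\times(-1,1)$ to $\FB(h,\psi)$ is zero (the area exactly cancels $-\Htwo(\doubledrectangle\setminus\Omegah)$ on that strip, and the boundary penalties on $(a,2l-a)\times\{\pm1\}$ all vanish). By (ii), I then obtain $\FB(h,\psi)=2\FB_L$, where $\FB_L$ denotes the sum of all terms of \eqref{eqn:FB} localized to $(0,a)\times(-1,1)$.

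The plan is to prove $\FB_L>\pi/2$, so that $\FB(h,\psi)>\pi=\FB(-1,0)$ (see Remark~\ref{rem:Alaa}), contradicting the minimality of $(h,\psi)$. Following Remark~\ref{rmk:FAremarks}(v), I would realize $\FB_L$ as the mass of an integral $2$-current $T_L\in\mathcal D_2(\R^3)$, obtained in $\R^3=\R_{w_1}\times\R^2_{(w_2,w_3)}$ by summing (with orientations chosen so that all internal edges cancel): the single-valued graph of $\psi\vert_{\Omegah\cap(0,a)\times(-1,1)}$; a wall of height $|\psi^--\varphi|$ over $\{0\}\times[-1,h(0)]$; a wall of height $\varphi$ over $\Lh\cap([0,a]\times[-1,1])$ (present only if $h(0)<1$); and walls of height $|\psi^-|$ over $(0,a)\times\{-1\}$ and over the graph of $h\vert_{[0,a]}$. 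By \eqref{eqn:FA} this yields $|T_L|=\FB_L$, and the boundary $\partial T_L$ splits as $\Gamma_1+\Gamma_2$, where $\Gamma_1:=\{(0,w_2,\sqrt{1-w_2^2}):w_2\in[-1,1]\}$ is the upper half-circle at $w_1=0$ and $\Gamma_2$ is a closed rectifiable curve lying entirely in $\{w_3=0\}$, consisting of the bottom edge of the $\Lh$-wall, the graph of $h$, and the bottom segment $[0,a]\times\{-1\}\times\{0\}$.

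The conclusion follows from a projection argument. Let $\pi_1:\R^3\to\R^2_{(w_2,w_3)}$ be the orthogonal projection $(w_1,w_2,w_3)\mapsto(w_2,w_3)$; since $\pi_1$ is $1$-Lipschitz, $|T_L|\ge|(\pi_1)_\sharp T_L|$. A direct inspection shows that $\Gamma_1$ projects to itself; the bottom segment projects to the single point $(-1,0)$ (mass zero); and the projection of the $\Lh$-wall bottom concatenated with the projection of the graph of $h$ yields precisely the diameter $[-1,1]\times\{0\}$ traversed once. Hence $(\pi_1)_\sharp\partial T_L=\partial\jump{D_+}$ with $D_+:=\{(w_2,w_3):w_2^2+w_3^2\le1,\,w_3\ge0\}$, and the constancy theorem (cf.\ \cite[Proposition 7.3.1]{Krantz_Parks:08}) forces $(\pi_1)_\sharp T_L=\jump{D_+}$, so $\FB_L=|T_L|\ge\pi/2$. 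For the strict inequality, equality in the projection would force the tangent $2$-plane of $T_L$ to be parallel to $\{w_1=\mathrm{const}\}$ at $\mathcal H^2$-a.e.\ point, hence $T_L$ would lie in a single plane $\{w_1=c\}$; the presence of $\Gamma_1\subset\{w_1=0\}$ forces $c=0$, but $\partial T_L$ also contains the bottom segment $[0,a]\times\{-1\}\times\{0\}$ whose points have $w_1>0$ (since $a>0$), a contradiction. Thus $\FB_L>\pi/2$, completing the argument.

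The main technical obstacle I foresee is the careful orientation bookkeeping in assembling $T_L$: verifying that the internal edges shared by the graph of $\psi$ and the various walls cancel exactly, and that the projected $1$-current $(\pi_1)_\sharp\Gamma_2$ reduces, after cancellations between the projected graph of $h$ and the projected $\Lh$-wall bottom, to the diameter of $D_+$ with multiplicity one. Once this is in place, the constancy theorem and the rigidity of the projection inequality yield the contradiction routinely.
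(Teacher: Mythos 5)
Your proposal is correct and reaches the conclusion by a genuinely different route than the paper. The paper's own proof is a direct slicing computation: after writing $\tfrac12\FB(h,\psi)$ via \eqref{eqn:FA}, it slices $\Omegah$ by horizontal lines $\{w_2=s\}$, bounds $\sqrt{1+|\nabla\psi|^2}$ below strictly by $|D_{w_1}\psi|$, applies Fubini and the fundamental theorem of calculus to produce $|\psi(h^{-1}(s),s)-\psi(0,s)|$, and finishes with the triangle inequality to obtain $\tfrac12\FB(h,\psi)>\int_{-1}^1\varphi(0,s)\,ds=\tfrac12\FB(-1,0)$, contradicting minimality. Your version packages the same underlying geometric fact — that the left half of the configuration projects along $e_1$ onto the half-disc $D_+$ — into the language of integral currents: you build $T_L$ with $|T_L|=\FB_L$, identify $\partial T_L$ as the half-circle $\Gamma_1$ concatenated with a curve $\Gamma_2\subset\{w_3=0\}$, push forward by the $1$-Lipschitz projection $\pi_1$, and invoke the constancy theorem. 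The paper's slicing argument is the ``scalar, slice-by-slice'' realization of your projection; yours is shorter to state but requires more machinery (the current formalism, closure of $\partial T_L$, constancy theorem). Your localization step — that the middle strip $[a,2l-a]\times(-1,1)$ contributes nothing, with no jump at $\{w_1=a\}$ thanks to the trace argument of Remark~\ref{leb_point} — is the same structural observation the paper encodes via $w_1^0:=\min\{w_1\in(0,l]:h(w_1)=-1\}$.

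Two points to tighten. First, the strict-inequality step as phrased overreaches: equality $|T_L|=|(\pi_1)_\sharp T_L|$ forces the approximate tangent $2$-plane of $T_L$ to be $\{w_1=\mathrm{const}\}$ $\|T_L\|$-a.e., but this does not by itself force $T_L$ to lie in a single plane (it could concentrate on several parallel slices). The direct route is simpler: on the graph piece of $T_L$ over $\Omegahstar\cap(0,a)\times(-1,1)$, the tangent plane is spanned by $(1,0,\partial_{w_1}\psi)$ and $(0,1,\partial_{w_2}\psi)$, so the Jacobian of $\pi_1$ restricted to it equals $|\partial_{w_1}\psi|/\sqrt{1+|\nabla\psi|^2}<1$ pointwise; since this piece has positive $\|T_L\|$-measure (because $a>0$ and $h>-1$ on $(0,a)$), one gets $|(\pi_1)_\sharp T_L|\le\int|J\pi_1|\,d\|T_L\|<|T_L|$ directly — no rigidity discussion needed. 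This mirrors exactly the paper's $\sqrt{1+|\nabla\psi|^2}>|D_{w_1}\psi|$ strict bound. Second, the ``orientation bookkeeping'' you flag is the real work: one must check that the graph of $\psi$ over $\Omegahstar\cap(0,a)\times(-1,1)$ (using only $W^{1,1}$-regularity of $\psi$, since Lemma~\ref{lem:attaining} cannot yet be invoked) has as boundary the graph of its $BV$-trace, and that the four wall pieces cancel this trace while producing exactly $\Gamma_1+\Gamma_2$, including at the corners $(0,h(0))$, $(0,-1)$, $(a,-1)$ where walls degenerate. All of this is routine but must be stated; the paper's slicing route avoids it entirely.
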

\begin{proof}
By the symmetry of $\h$ and $\psione$ with respect to 
$\{\axialcoordofcylinder=\longR\} \cap \doubledrectangle$
(Lemma \ref{lem:symmetry_min}) we may restrict our argument to $[0,\longR]$.
  Assume by contradiction that there exists $\overline w_1 
\in (0,\longR]$ such that $\h( \overline w_1)=-1$. 
Recall that $h$ is convex,
 nonincreasing in $[0,\longR]$ and continuous at $\longR$.
Let $$w_1^0:=
\min\{\axialcoordofcylinder \in (0,\longR]:h(\axialcoordofcylinder)=-1\}.$$ 
Since $\h$ is not identically $-1$,
 we have $w_1^0>0$, and $h$ is strictly decreasing in $(0,w_1^0)$. Let 
  $$h^{-1}:[-1,\h(0)]\rightarrow [0,w_1^0]$$
be the inverse of $\h\vert _{[0,w_1^0]}$.
We have,
using \eqref{eqn:FA} and \eqref{eq:partial_D_G_h},
 \begin{align*}
 &
\frac12\FB(\h,\psione)
= \frac12\left[ \Aone(\psione, \Omegah)
+
\int_{\partialbar \Omegah}  \vert \psione  -\dirdatum\vert ~d \Hone 
+ 
\int_{\graphh \setminus \{w_2=-1\}}\vert \psione^- \vert ~d \Hone 
+
\int_{\Lh} \dirdatum ~d \Hone \right] 
\\
= & \frac12 \Aone(\psione, \Omegah)
+
\int_{(-1,\h(0))}  \vert \psione(0,\scoord)
  -\dirdatum(0,\scoord)\vert ~d \scoord 
+
\int_{(0,w_1^0)}  \vert \psione 
(\axialcoordofcylinder,-1)  -\dirdatum(\axialcoordofcylinder,-1)\vert 
~d \axialcoordofcylinder 
\\
&\qquad
\ \ \qquad 
+\int_{G_{h{\llcorner{(0,w_1^0)}}}}
\vert \psione^- \vert d \Hone
+
\int_{(\h(0),1)} \dirdatum(0,\scoord) d \scoord.
\end{align*}
Now, 
we argue by slicing 
the rectangle $R_l=(0,\longR)\times (-1,1)$
 with lines $\{\axialcoordofcylinder=\tau\}, \tau \in (0,\longR)$. 
Recalling the expression of $\Omegah$ (which is non empty 
by assumption (iii)), and neglecting the third addendum,
\begin{equation*}
\begin{aligned}
\frac12\FB(\h,\psione)
=&   
\int_{0}^{w_1^0}\int_{-1}^{h(\axialcoordofcylinder)} 
\sqrt{1+\vert \nabla \psione\vert^2}~
d\scoord d\axialcoordofcylinder
\\
&+ 
\int_{(-1,\h(0))}  \vert \psione(0,\scoord)  
-\dirdatum(0,\scoord)\vert d \scoord  +
\int_{(0,w_1^0)}  
\vert \psione (\axialcoordofcylinder,-1)  
-\dirdatum(\axialcoordofcylinder,-1)\vert ~d \axialcoordofcylinder 
\\
&
+\int_{G_{h{\llcorner{(0,w_1^0)}}}}\vert \psione \vert d \Hone
+
\int_{(\h(0),1)} \dirdatum(0,\scoord) d \scoord
\\ 
\geq 
&\int_{0}^{w_1^0}\int_{-1}^{h(\axialcoordofcylinder)} \sqrt{1+\vert\nabla \psione\vert^2}
d\scoord d\axialcoordofcylinder
+
\int_{(-1,\h(0))}  \vert \psione(0,\scoord)  -\dirdatum(0,\scoord)\vert d \scoord 
\\
&+ \int_{G_{h{\llcorner{(0,w_1^0)}}}}\vert \psione^- \vert d \Hone
+
\int_{(\h(0),1)} \dirdatum(0,\scoord) d \scoord
\\
> & 
\int_{0}^{\axialcoordofcylinder_0}
\int_{-1}^{h(\axialcoordofcylinder)} \vert D_\axialcoordofcylinder 
\psione (\axialcoordofcylinder,\scoord) \vert 
d\scoord d\axialcoordofcylinder
+
\int_{(-1,\h(0))}  \vert \psione(0,\scoord) 
 -\dirdatum(0,\scoord)\vert d \scoord 
\\
&+ \int_{G_{h{\llcorner(0,w_1^0)}}}\vert \psione^- \vert d \Hone
+
\int_{(\h(0),1)} \dirdatum(0,\scoord) d \scoord,
 \end{aligned}
\end{equation*}
where $D_{w_1}$ stands for the partial derivative with respect to $w_1$.
Neglecting  $\sqrt{1+(\frac{d}{ds} h^{-1})^2}$ in the third addendum on the
right-hand side, 
we deduce
\begin{equation*}
\begin{aligned}
\frac12\FB(\h,\psione)
> 
&\int_{-1}^{\h(0)}\int_{0}^{\h^{-1}(\scoord)} \vert 
D_\axialcoordofcylinder  \psione (\axialcoordofcylinder,\scoord) 
\vert ~
d\axialcoordofcylinder
d\scoord 
+ 
\int_{(-1,\h(0))}  \vert \psione(0,\scoord)  -\dirdatum(0,\scoord)\vert d \scoord 
\\
& +\int_{(-1,h(0))}\psione (h^{-1}(\scoord),\scoord)d\scoord 
+\int_{(\h(0),1)} \dirdatum(0,\scoord) d \scoord
\\
\geq &
\int_{-1}^{\h(0)}\Big\vert\int_{0}^{\h^{-1}(\scoord)}  
D_\axialcoordofcylinder  \psione (\axialcoordofcylinder,\scoord) 
d\axialcoordofcylinder \Big\vert d\scoord
-
\int_{(-1,\h(0))} \psione(0,\scoord) d \scoord
\\
&  +\int_{(-1,h(0))}\psione (h^{-1}(\scoord),\scoord)d\scoord 
+\int_{(-1,1)} \dirdatum(0,\scoord) d \scoord
\\
\geq &
\int_{(-1,\h(0))}\lvert  \psione (\h^{-1}(\scoord),\scoord) -\psione (0,\scoord)\rvert d\scoord
-
\int_{(-1,\h(0))} \psione(0,\scoord) d \scoord 
\\
& +\int_{(-1,\h(0))}\psione (h^{-1}(\scoord),\scoord)d\scoord 
+ \int_{(-1,1)} \dirdatum(0,\scoord) d \scoord
 \nonumber\\
 \geq & \int_{(-1,1)}\varphi(0,\scoord)d\scoord =\frac12\FB(-1,0).
 \nonumber 
\end{aligned}
\end{equation*}
Hence the value of $\FB$ on 
the pair $\h\equiv-1$, $\psi \equiv 0$  
is smaller than $\FB(\h,\psi)$, thus
contradicting the minimality of $(h,\psi)$. 
\end{proof}

We now prove point (iii) of Theorem \ref{Thm:existenceofminimizer}:
 this will be a consequence of 
the next lemma and Theorem \ref{prop_zero}.

\begin{lemma}\label{lem:attaining}
 Let $(h,\psi)$ be as in Lemma \ref{lemma_reg2_bis}.
Then $\psi$ attains the boundary condition on 
$\partial_D\Omegah$.
\end{lemma}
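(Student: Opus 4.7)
The plan is to argue by contradiction: assume the trace $\psi^-$ differs from $\varphi$ on a Borel set $E \subset \partial_D \Omega_h$ with $\mathcal{H}^1(E) > 0$, and derive a strict decrease of $\FB$ via a first‑variation competitor, contradicting minimality. By Lemma \ref{lem:symmetry_min} and Lemma \ref{lemma_reg2_bis} we have $h(w_1) = h(2l - w_1)$, $\psi$ symmetric, and $h > -1$ on $[0,2l]$; it therefore suffices to treat the bottom edge $\Sigma_- := (0,2l) \times \{-1\}$ (where $\varphi \equiv 0$) and, by the $w_1 \leftrightarrow 2l-w_1$ symmetry, the left vertical segment $\Sigma_0 := \{0\} \times (-1, h(0))$ (where $\varphi(0,w_2) = \sqrt{1-w_2^2}$). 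Moreover Theorem~\ref{Thm:existenceofminimizer}(iv), once used, gives $\psi < \widehat\varphi$ in $R_{2l}$, hence the trace satisfies $\psi^- \leq \varphi$ on $\Sigma_0$ and $\psi^- \geq 0 = \varphi$ on $\Sigma_-$, so only one‑sided strict inequality has to be excluded on each edge.

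For the bottom edge, pick a smooth cutoff $\zeta : \overline{\Omega_h} \to [0,1]$, equal to $1$ near $\Sigma_-$ and vanishing in a neighborhood of $\graphh$ and of $\{0,2l\} \times [-1,1]$, and set $\psi_\epsilon := \max(\psi - \epsilon \zeta, 0)$ for $\epsilon > 0$. The pair $(h,\psi_\epsilon) \in  \domalaa$ since $\psi_\epsilon \geq 0$, agrees with $\psi$ outside the support of $\zeta$, and vanishes wherever $\psi$ did. Using that $\psi$ is analytic and solves $\operatorname{div}(T \nabla\psi) = 0$ in $\Omegah$ with $T := (1+|\nabla \psi|^2)^{-1/2}$ (Proposition~\ref{prop_reg}), integration by parts in the first variation of the area term, together with the linear response of the boundary penalty (sign$=+1$ on $\{\psi^- > 0\} \cap \Sigma_-$, which we are supposing to have positive $\mathcal{H}^1$-measure), yields the expansion
\begin{equation*}
\FB(h,\psi_\epsilon) - \FB(h,\psi) = \epsilon \int_{\Sigma_-} \zeta \big( T\, \partial_{w_2}\psi^- - \mathbf{1}_{\{\psi^->0\}}\big) \, d\mathcal H^1 + o(\epsilon).
\end{equation*}
Minimality forces the first‑order coefficient to be $\geq 0$ for every admissible $\zeta \geq 0$, and since $T\partial_{w_2} \psi^- \leq 1$ with equality only when the graph of $\psi$ is vertical, this is possible only if $T\partial_{w_2}\psi^- = 1$ on the set where $\psi^- > 0$, i.e.\ the graph of $\psi$ is vertical at $\Sigma_-$ along that set. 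An entirely analogous computation on $\Sigma_0$, with the opposite sign of the penalty variation, gives $T\partial_{w_1}\psi^- = -1$ on the subset of $\Sigma_0$ where $\psi^- < \varphi$.

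The final step is to rule out such verticality by comparison with the barrier $\widehat\varphi$ of \eqref{eq:widehat_varphi}: $\widehat\varphi \in C^\infty(\overline R_{2l})$ is itself a bounded classical minimal‑surface solution with $\widehat\varphi^- = \varphi$ on $\partial_D R_{2l}$, and its slope is bounded near $\Sigma_-$ and $\Sigma_0$ (finite $\partial_{w_2}\widehat\varphi$ at $\Sigma_-$, finite $\partial_{w_1}\widehat\varphi$ at $\Sigma_0$). A Hopf‑type boundary lemma for the minimal‑surface operator, applied to $\psi < \widehat\varphi$ in $R_{2l}$, then bounds $|T\nabla\psi|$ at the boundary by the corresponding quantity for $\widehat\varphi$, which is strictly less than $1$. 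This contradicts the verticality derived above and forces $\mathcal{H}^1(E) = 0$, giving $\psi^- = \varphi$ on $\partial_D \Omega_h$ and hence \eqref{eqn:psionestarboundary}--\eqref{eqn:FAequalareaofhstar}.

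The principal obstacle is to justify the first‑variation identity rigorously: $\psi$ is only known to be analytic in the interior $\Omegah$ and of bounded variation up to the boundary, so the boundary integral $\int_{\Sigma_-} \zeta \,T\partial_{w_2}\psi^- d\mathcal H^1$ has to be interpreted via the Anzellotti pairing $[T\nabla\psi, \nu_{\partial_D\Omegah}]$ for the bounded divergence‑free vector field $T\nabla\psi$ against the outer normal, or, equivalently, one must use a shrinking family of interior cutoffs and pass to the limit with the generalized Gauss--Green formula. Once this normal‑trace framework is in place, the comparison with $\widehat\varphi$ furnished by Theorem~\ref{Thm:existenceofminimizer}(iv) closes the argument.
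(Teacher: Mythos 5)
The paper's proof of this lemma is a one-line citation of \cite[Theorem 15.9]{Giusti:84}, the standard boundary-attainment theorem for minimizers of the relaxed Dirichlet problem for the area functional: since the three arcs composing $\partial_D\Omegah$ are straight segments, they have zero (hence nonnegative) generalized mean curvature, and the continuity of $\varphi$ then guarantees that the minimizer is continuous up to $\partial_D\Omegah$ and takes the prescribed trace there. Your proposal tries to re-derive this from scratch via a first-variation plus barrier argument, which is a genuinely different route, but it contains two serious problems.

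First, there is a circular dependency. You invoke Theorem~\ref{Thm:existenceofminimizer}(iv), i.e.\ $\psi^\star<\widehat\varphi$ in $R_{2l}$, both to fix the sign of the trace deviation and to set up the Hopf comparison. But in the paper's logical order, point~(iv) is proved at the very end of the section, \emph{after} Lemma~\ref{lem:attaining} and Theorem~\ref{prop_zero}; in fact the proof of~(iv) relies on the identification of $\Sigma^+$ as the graph of $\widetilde\psi$, which in turn uses Lemma~\ref{lem:attaining} through Theorem~\ref{prop_zero}. So you cannot use~(iv) here without a fresh independent proof of the inequality $\psi\le\widehat\varphi$, which is not provided (and is not obvious, since a maximum-principle comparison of $\psi$ with $\widehat\varphi$ would require precisely the trace information this lemma is meant to establish).

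Second, the Hopf-type barrier argument as stated fails at the bottom edge $\Sigma_-=(0,2l)\times\{-1\}$. You claim $\widehat\varphi$ has finite slope near $\Sigma_-$, but $\widehat\varphi(w_1,w_2)=\sqrt{1-w_2^2}$ has $\partial_{w_2}\widehat\varphi=-w_2/\sqrt{1-w_2^2}\to+\infty$ as $w_2\to-1^+$; the graph of $\widehat\varphi$ is exactly a half-cylinder, vertical along $\Sigma_-$. Therefore comparison with $\widehat\varphi$ cannot exclude verticality of the graph of $\psi$ at $\Sigma_-$. (It is true that \emph{if} one had $\psi\le\widehat\varphi$ in the interior, the BV-trace inequality $\psi^-\le\widehat\varphi^-=0$ on $\Sigma_-$ would already force $\psi^-=0$ there without any Hopf lemma --- but that again requires~(iv), hence the circularity above.) These two gaps together make the argument incomplete; the paper's approach of directly invoking Giusti's boundary-regularity theorem is both shorter and avoids them, since that theorem is proved independently of any a priori bound $\psi\le\widehat\varphi$.
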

\begin{proof}
The result follows from \cite[Theorem 15.9]{Giusti:84},  
since $\partial_D\Omegah$ is union of three segments. 
\end{proof}

\begin{remark}\label{rem:h_id_1}
 In the hypotheses of Lemma 
\ref{lemma_reg2_bis}, if $h\equiv1$ then the graph of $h$ is a 
segment and, as in Lemma \ref{lem:attaining},  $\psi=0$ on $G_h$.
\end{remark}

The conclusion of the proof of Theorem \ref{Thm:existenceofminimizer} (iii)
is given
by the following delicate result.

\begin{theorem}\label{prop_zero}
 Let $(h,\psi)$ be as in Lemma \ref{lemma_reg2_bis}.
Then there exists a solution $(\widetilde h,\widetilde\psi) \in 
X_{2\longR}^{{\rm conv}}$ of the minimum problem 
\eqref{eqn:B} such that $L_{\widetilde h}$ is empty, $\widetilde \psi$ is continuous up
to  $G_{\widetilde h}$, and 
$$
\widetilde\psi = 0 \quad{\rm on}~ G_{\widetilde h}.
$$
\end{theorem}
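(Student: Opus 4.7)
The plan is to refine the minimizer $(h,\psi)$ into a new minimizer $(\widetilde h,\widetilde\psi)$ satisfying the stated properties through a two-stage construction, followed by an application of classical boundary regularity. I recall from Proposition~\ref{prop_reg} and Lemmas~\ref{lemma_reg2_bis}, \ref{lem:attaining} that $h>-1$ everywhere, $\psi$ is analytic and strictly positive in $\Omega_h$, and $\psi$ attains the Dirichlet data $\varphi$ on $\partial_D\Omega_h$.

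First I would address the case $h(0)<1$, in which $L_h\neq\emptyset$ contributes $\int_{L_h}\varphi\,d\mathcal H^1$ to $\FB$. The idea is to enlarge the domain: construct $\widetilde h\in\mathcal H_{2l}$ with $\widetilde h\geq h$ and $\widetilde h(0)=\widetilde h(2l)=1$ (so $L_{\widetilde h}=\emptyset$) by taking the pointwise maximum of $h$ with a suitable convex tent-shaped function peaking at the endpoints. Extend $\psi$ to $\widetilde\psi$ on $\Omega_{\widetilde h}$ by solving an auxiliary minimal surface Dirichlet problem on $\Omega_{\widetilde h}\setminus \Omega_h$, with boundary data $\psi^-$ on $G_h\cap \Omega_{\widetilde h}$ and $\varphi$ on the newly covered portion of $\partial_D R_{2l}$. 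A careful area accounting, exploiting that the graph of $\varphi$ on $L_h$ consists of area-minimizing circular arcs, yields $\FB(\widetilde h,\widetilde\psi)\leq \FB(h,\psi)$, so $(\widetilde h,\widetilde\psi)$ is still a minimizer with $L_{\widetilde h}=\emptyset$.

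Next, let $\widetilde\tau(w_1):=\widetilde\psi^-(w_1,\widetilde h(w_1))$ denote the trace along the top curve. I would argue that $\widetilde\tau\equiv 0$ on $G_{\widetilde h}$. Suppose by contradiction that $\widetilde\tau(w_1^\star)>0$ at some interior Lebesgue point where $\widetilde h$ is differentiable with $\widetilde h(w_1^\star)<1$. Perform a localized downward perturbation of $\widetilde h$ by $\delta\eta$, where $\eta\geq 0$ is a concave bump supported near $w_1^\star$ (ensuring that $\widetilde h-\delta\eta$ remains convex), and set $\widetilde\psi$ equal to $0$ in the strip removed. A first-order expansion of $\FB$ at $\delta=0$ gives leading-order contribution of the form
$$
\Delta \FB \,\approx\, -\delta\int \eta(w_1)\,\widetilde\tau(w_1)\sqrt{1+\widetilde h'(w_1)^2}\,dw_1 \;+\; O(\delta^2),
$$
which is strictly negative for small $\delta>0$, contradicting minimality. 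Thus $\widetilde\tau\equiv 0$ wherever $\widetilde h$ is smooth and strictly below $1$; the remaining points are handled by approximating $\widetilde h$ with smooth strictly convex functions and passing to the limit using the lower semicontinuity of $\FB$ (Lemma~\ref{lem:lowersemicontinuityofFB}).

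Finally, with $\widetilde\tau\equiv 0$, the continuity assertion $\widetilde\psi\in C^0(\overline{\Omega_{\widetilde h}})$ follows from classical boundary regularity for the minimal surface equation on a convex domain with continuous Dirichlet data (cf.~\cite[Theorem~15.9]{Giusti:84}), combined with the interior analyticity from Proposition~\ref{prop_reg}.

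The hard part will be the variational argument of the second step. Reconciling the perturbation with the convexity constraint restricts admissible $\eta$ to nonnegative concave bumps, so the localization is more subtle than in a free variational problem, and the first-order computation requires regularity of traces of both $\widetilde\psi$ and its derivative on $G_{\widetilde h}$, which is only available a posteriori. A rigorous implementation would need to bootstrap between localized boundary regularity and variational estimates, and must treat carefully the behavior near the corners $(0,1)$ and $(2l,1)$, where the data $\varphi$ of the original problem vanishes tangentially and the graph of $\widetilde h$ may develop singularities.
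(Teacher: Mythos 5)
Your proposal takes a genuinely different route from the paper, but it has serious gaps in both main steps.

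The paper does not perturb the given minimizer at all. Instead it approximates the extended function $\widehat\psi$ by mollification $\psi_n$, doubles the resulting graph across $\{w_3=0\}$ to get a disc-type surface with boundary $\Gamma_n$, compares its area with that of a classical Plateau solution $\Sigma_n$, and passes to the limit to obtain a disc-type area-minimizing surface $\Sigma$ spanning the fixed boundary curve $\Gamma$. Using Rado-type level-set arguments for the harmonic coordinate functions of $\Sigma$, it then shows that $\Sigma^+$ is Cartesian over the subgraph of a convex $\widetilde h$, defines $\widetilde\psi$ as the corresponding graph function, and concludes from the analyticity of $\Sigma$ and the nonvanishing curvature of $\Gamma^+$ that $\Sigma^+$ cannot contain any flat vertical wall — which forces both $\widetilde\psi=0$ on $G_{\widetilde h}$ and $L_{\widetilde h}=\emptyset$. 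In other words, the desired minimizer is \emph{rebuilt} as the graph of a minimal disc, and the conclusion becomes a qualitative statement about analytic minimal surfaces rather than a variational inequality.

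The first concrete gap in your argument is the claimed first-order expansion. Using \eqref{eqn:FB}, for the perturbation $h_\delta=\widetilde h-\delta\eta$, $\psi_\delta=\widetilde\psi\,\chi_{\Omega_{h_\delta}}$, a direct computation gives
\begin{equation*}
\Delta\FB
= -\int_{S_\delta}\sqrt{1+|\nabla\widetilde\psi|^2}\,dw
+\int\tau_\delta(w_1)\sqrt{1+h_\delta'(w_1)^2}\,dw_1
-\int\widetilde\tau(w_1)\sqrt{1+\widetilde h'(w_1)^2}\,dw_1,
\end{equation*}
where $S_\delta$ is the removed strip and $\tau_\delta(w_1)=\widetilde\psi(w_1,h_\delta(w_1))$. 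The two boundary integrals nearly cancel to leading order (both walls have height close to $\widetilde\tau$), so the genuine first-order term involves $\partial_{w_2}\widetilde\psi$ and $\widetilde h'$ along $G_{\widetilde h}$ — not $\widetilde\tau$ itself. Your expression $-\delta\int\eta\,\widetilde\tau\sqrt{1+\widetilde h'^2}$ does not appear, and the actual first-order quantity has no fixed sign without knowing $\partial_{w_2}\widetilde\psi$ near $G_{\widetilde h}$. Since $\widetilde\psi$ is only $BV$ on $\Omega_{\widetilde h}$ a priori and its gradient may blow up at $G_{\widetilde h}$, this computation cannot be closed, and your own closing paragraph concedes exactly this.

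The second gap is the domain enlargement for $L_h\neq\emptyset$. You claim that filling $\Omega_{\widetilde h}\setminus\Omega_h$ with an auxiliary minimal graph Dirichlet problem and invoking the minimality of circular arcs yields $\FB(\widetilde h,\widetilde\psi)\leq\FB(h,\psi)$, but no area accounting is offered and the claim is not obvious: the new region contributes extra graph area plus a new trace penalty over $G_{\widetilde h}$, and there is no reason these are dominated by the penalty $\int_{L_h}\varphi$ that is removed. (In the paper, $L_{\widetilde h}=\emptyset$ is obtained almost for free from analyticity of $\Sigma$: a flat vertical wall over $L_{\widetilde h}$ would force all of $\Sigma^+$ to be planar, contradicting the shape of $\Gamma^+$.) Finally, passing $\widetilde\tau\equiv 0$ ``to the limit using lower semicontinuity'' does not work: lower semicontinuity of $\FB$ gives bounds on the functional, not on the trace along $G_{\widetilde h}$.
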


\begin{proof}
 By Remark \ref{rem:h_id_1}, we can assume that $h$ is not
 identically $1$ and, by Lemma \ref{lemma_reg2_bis}, also that 
$h(\axialcoordofcylinder) \geq h(l)>-1$ for any $\axialcoordofcylinder\in 
[0,2\longR]$.
 Therefore, fix a number $\bar s \in (-1,h(l))$ and
set
$$K:=(0,2\longR)\times (\bar s, 1) \subset R_{2\longR}.
$$
We extend $\psi$ in  $\R^2 \setminus R_{2\longR}$ as follows:
we define $\extpsi : \R^2 \to [0,1]$, $\extpsi := \psi$ in $R_{2\longR}$, and 
\begin{align*}
 \extpsi(
\axialcoordofcylinder,\scoord):=\begin{cases}
                \varphi(\scoord) &\text{if } \axialcoordofcylinder
<0 \text{ or } \axialcoordofcylinder>2\longR, \text{ and }|\scoord|\leq 1,\\
                0&\text{if }|\scoord|>1.
               \end{cases}
\end{align*}
In this way  $\extpsi$ is continuous in 
$\R^2\setminus \overline R_{2\longR}$.

Now, we divide the proof
into six steps.
We start by regularizing
$\extpsi$ (step 1)
in order that the regularized functions have smooth graphs (hence of 
disc-type\footnote{We expect the graph of $\extpsi$, considering
also a possible vertical part over the graph of $h$, to be
a surface of disc-type; however, we miss the proof
of this fact, mainly due to possible high
degree of irregularity of the trace of $\extpsi$
over $G_h$.}). Next (step 2), 
we will compare these graphs with the solution of 
a suitable disc-type Plateau problem.

\textit{Step 1: Approximation of $\extpsi$.} 
Let $n>0$ be a natural number
  (that will be sent to $+\infty$ later) 
such that $\bar s+\frac1n<h(l)$,
and consider the enlarged rectangle
\begin{equation}\label{eq:enlarged_rectangle}
K_n:=\left(-\frac1n,2l+\frac1n\right)\times \left(\bar s, 1+\frac1n\right),
\end{equation}
see Fig. \ref{fig:curves_Frechet}.
Note that 
$$
\extpsi {\rm ~ is~ continuous~
on~ } \partial K_n.
$$
Given $n \in \mathbb N$, we claim that 
we can build a sequence 
$(\psi_k^n)_{k\in \mathbb N}$ (depending on $n$) 
which satisfies the following properties:
\begin{equation}\label{eq:properties_of_psi_n}
\begin{aligned}
&\psi_k^n \in C^\infty(K_n,[0,1]) \cap 
C(\overline K_n,[0,1]) \qquad \forall k >0,
\\
 &\psi^n_k\rightharpoonup \extpsi \text{ weakly}^\star \text{ in }BV(K_n)
\text{ as } k\rightarrow +\infty,
\\
& \int_{K_n}|\grad \psi^n_k|~dw \rightarrow |D\extpsi|(K_n)
\text{ as } k\rightarrow +\infty,\\
 &\psi_k^n=\extpsi \text{ on }\partial K_n\qquad\forall k>0.
\end{aligned}
\end{equation}
In order to obtain these features for $\psi^n_k$ we use standard arguments 
(details can be found in \cite[Thm. 3.9]{AmFuPa:00} or \cite[Thm. 1, Section 4.1.1]{GiMoSu:98}). 
To the aim of our discussion, we just recall that we proceed by constructing an 
increasing sequence 
$(U_i)_{i\geq 1}$ of subsets of $K_n$,
$U_i = U_{i,n}$,
 $U_i \subset \subset U_{i+1}
\subset \subset K_n$, $\cup_i U_i= K_n$ (for 
$i \geq 1$ we take
$U_i:=\{x\in\R^2:\textrm{dist}(x,\R^2\setminus K_n)>
\frac{1}{i+n}\}$
for definitiveness) and with the aid of a
partition  
of unity
$(\eta_i)$ 
associated to $V_1:=U_2$, 
$V_i= 
V_{i,n}
:=U_{i+1}\setminus \overline U_{i-1}$ for $i\geq2$, we 
mollify $\widehat \psi$ accordingly in $V_i$. For our purpose we 
choose\footnote{
We need the full set $\overline V_i$ as support 
in order that the argument to detect the behaviour of $h_n$ 
(defined in \eqref{eq:def_h_n}) in 
$[-\frac1n,0]$ applies.} 
$\eta_i$ in such a way that 
\begin{align}\label{support_eta_i}
\supp(\eta_i)=\overline V_i.
\end{align}
Since $\psi_k^n$ is obtained by mollification we have $\psi_k^n\in 
C^\infty(K_n)$ and moreover 
$\psi^n_k\in  C(\overline K_n)$ 
because it attains the continuous boundary datum $\extpsi$
on $\partial K_n$.
Notice that we use the same mollifier $\rho\in C_c^\infty (B_1)$ in each $V_i$, choosing $\rho_i(w) = \rho_{i,k}(w):=\rho(w/r_{i,k})$ with  
$r_{i,k}:=r_i/k>0$,  $r_i$ decreasing with 
respect to $i\geq1$, with\footnote{$\rho_{i,k}$ and 
$r_{i,k}$ depend on $n$. We could 
take $r_i=\frac{1}{i+2+n}$. 
}  
$r_{i}\rightarrow 0^+$ as $i\rightarrow+\infty$. 
Finally, $[0,2l]\times[\bar s+\frac1n,1]\subset U_1\subset V_1$, 
and $V_{i}\cap \left(
[0,2l]\times[\bar s+\frac1n,1]\right)=\emptyset$ for $i \geq 2$.
It follows 
\begin{equation}\label{mollification_in_K}
\psi_k^n=\widehat\psi\star \rho_{1,k}\qquad \text{ in }[0,2l]\times\Big[
\bar s+\frac1n,1\Big] \qquad \forall
n \in \mathbb N. 
\end{equation}
Using \cite[Prop. 3 Sec. 4.2.4 pag. 408, and Th. 1 Sec. 4.1.5 pag. 331]{GiMoSu:98} 
we infer
\begin{align}\label{14.34}
 \areaonecod(\psi_k^n,K_n)\rightarrow \areaonecod(\extpsi, K_n)
\qquad \text{ as } k\rightarrow +\infty. 
\end{align}
Now that properties \eqref{eq:properties_of_psi_n} are achieved, 
by a diagonal argument we select 
functions $\psi_{n}:=\psi_{k_n}^n\in (\psi^n_k)$ such that
\begin{equation}\label{eq:diag}
\begin{aligned}
 &\psi_n\rightharpoonup \extpsi \text{ weakly}^* \text{ in }BV(K)
\text{ as } n\rightarrow +\infty,\\
& \int_{K_n}|\grad \psi_n|~dw\rightarrow |D\extpsi|(\overline{K})
\text{ as } n\rightarrow +\infty,\\
 &\psi_n=\extpsi \text{ on }\partial K_n \qquad \forall n\in\mathbb N.
\end{aligned}
\end{equation}
On the basis of \eqref{14.34} and  \eqref{eq:diag}, 
we can also ensure\footnote{To prove claim \eqref{conv_areas_Kn}, 
fix $m\in \mathbb N$, and set 
$\tilde \psi_n:=\extpsi$ outside $K_n$ and $\tilde\psi_n = \psi_n$
in $K_n$, so 
that 
\begin{align*}
&\tilde \psi_n\rightharpoonup \extpsi \text{ weakly}^* \text{ in }BV(K_m)
\text{ as } n\rightarrow + \infty,
\\
& |\grad \tilde \psi_n|(K_m)\rightarrow|D\extpsi|(K_m)= |D\extpsi|(\overline{K})+|D\extpsi|(K_m\setminus \overline{K})
\text{ as } n\rightarrow + \infty.
\end{align*}
Then   
$\limsup_{n\rightarrow+\infty} \areaonecod(\psi_n, K_n)\leq 
\limsup_{n\rightarrow+\infty} \areaonecod(\tilde \psi_n, K_m)= \areaonecod(\extpsi, K_m)
= \areaonecod(\extpsi, \overline{K})+ \areaonecod(\extpsi, K_m\setminus \overline{K})$,
the first equality following from the strict convergence 
of $\tilde \psi_n$ to $\widehat \psi$
\cite[Prop. 3 Sec. 4.2.4 pag. 408 and 
Thm. 1 Sec. 4.1.5 pag. 371]{GiMoSu:98}. 
Taking the limit as $m\rightarrow +\infty$, 
since 
$\widehat \psi \in W^{1,1}(K_m\setminus \overline{K})$
we conclude
$\limsup_{n\rightarrow+\infty} \areaonecod
(\psi_n, K_n)\leq \areaonecod(\psi, \overline{K})$.
Then 
\eqref{conv_areas_Kn} follows by 
lower-semicontinuity.} 
 that
\begin{align}\label{conv_areas_Kn}
 \areaonecod(\psi_n, K_n)\rightarrow \areaonecod(\extpsi, \overline K) \qquad{\rm 
as}~ n \to +\infty.
\end{align}
Here, by $\mathbb A(\extpsi, \overline K)$ we mean the 
area of the graph of $\extpsi$ relative to the 
closed rectangle $\overline K$, which, recalling also 
Proposition \ref{prop_reg},
 reads as
\begin{equation}\label{eq:area_closure}
\areaonecod(\extpsi, \overline K)=\areaonecod(\extpsi, K)+
\int_{\{0\}\times(\bar s,1)}|\extpsi^--\varphi|~d\mathcal H^1+
\int_{\{2l\}\times(\bar s,1)}|\extpsi^--\varphi|~d\mathcal H^1, 
\end{equation}
where $\extpsi^-$ denotes 
the trace of $\extpsi$ on $\partial K$.

\smallskip

We  now construct functions
$h_n: (-\frac1n,2l+\frac1n) \to (\bar s, 1+\frac1n)$ 
such that
\begin{equation}\label{eq:constr_h_n}
\begin{aligned}
& h_n(\cdot) = h_n(2\longR - \cdot),
\\
&  \psi_n=0 {\rm ~}~ K_n \setminus SG_{h_n},
\\
& h_n {\rm ~is~nondecreasing~in~} \big[-\frac{1}{n}, \longR\big], 
\\
& \mathcal H^2(S_{h_n})\rightarrow\mathcal H^2(\Omegah\cap K),
\end{aligned}
\end{equation}
where
\begin{align}\label{topology_Kn+}
S_{h_n}:=\left\{(\axialcoordofcylinder,\scoord):
\axialcoordofcylinder\in \left(-\frac1n,2l+\frac1n\right),\; \scoord
\in (\bar s,h_n(\axialcoordofcylinder))\right\},
\end{align}
and finally
$$
\lim_{n \to +\infty}\areaonecod(\psi_n,  S_{h_n}) = 
\mathcal F_{2l}(h,\psi)-\areaonecod(\psi, R_{2l}\setminus K).  
$$

For any $n\in \mathbb N$ 
we define
\begin{equation}\label{eq:def_h_n}
\begin{aligned}
 h_n(\axialcoordofcylinder):=&
\sup\left\{\scoord\in \Big(\bar s,1+\frac1n\Big):
\psi_n(\axialcoordofcylinder,\scoord)>0\right\}
\qquad \forall \axialcoordofcylinder
\in \Big(-\frac1n,2l+\frac1n\Big).
\\
\widehat h(\axialcoordofcylinder):=&
\sup\left\{\scoord\in \Big(\bar s,1+\frac1n\Big):
\widehat \psi(\axialcoordofcylinder,\scoord)>0\right\}
\qquad \forall \axialcoordofcylinder
\in \Big(-\frac1n,2l+\frac1n\Big).
\end{aligned}
\end{equation}
Since (see Proposition \ref{prop_reg}) 
$\extpsi$ is positive in $\Omegah\cup((-\frac1n,0)\times (\bar s,1))\cup((2l,2l+\frac1n)\times (\bar s,1))$ 
it turns out, recalling also that 
$\psi_n$ is obtained by mollification, that
\begin{equation}\label{eq:lines}
\begin{aligned}
 & -1 < h(\axialcoordofcylinder)<h_n(\axialcoordofcylinder)<1+\frac1n \qquad 
\forall \axialcoordofcylinder\in (0,2l),
\\
 &1<h_n(\axialcoordofcylinder)<1+\frac1n \qquad \forall
\axialcoordofcylinder\in \Big(-\frac1n,0\Big]\cup\Big[2l,2l+\frac1n\Big).
\end{aligned}
\end{equation}
Also, $\widehat h = h$ in $[0,2l]$, and $\widehat h = 1$
in $(-1/n,0) \cup (2l, 2l + 1/n)$.
Moreover, again the positivity of $\extpsi$ implies
that 
\begin{align}\label{topology_Kn+}
\psi_n>0 \ \  {\rm ~in}~
 \ \ S_{h_n} \subset K_n,
\end{align}
whereas
\begin{align}\label{eq:psi_n_zero}
 \psi_n(\axialcoordofcylinder, \scoord)=0 \qquad \text{if }
\tcoord \in \Big(-\frac{1}{n}, 2 \longR + \frac{1}{n}\Big), 
~\scoord\in \Big[h_n(\axialcoordofcylinder),1+\frac1n\Big),
\end{align}
because $\extpsi(\axialcoordofcylinder,
\scoord)=0$ 
if $\axialcoordofcylinder\in[0,2l]$, $\scoord>h(\axialcoordofcylinder)$ and if $\scoord>1$.
Exploiting \eqref{mollification_in_K}, and the fact that $h$ is nonincreasing 
(resp. nondecreasing)  in $[0,\longR]$ (resp. in $[l,2l]$), one checks\footnote{Let us show for instance that $h_n$ is decreasing in 
$[0,\longR]$.
Recall that the function $\widehat \psi$ vanishes above the graph of $h$, 
which is decreasing in $[0,\longR]$. Now, take a point $(w_1,w_2)\in K_n$, $w_1 
\in [0,\longR)$,  $w_2 > h(w_1)$; suppose first that $w_1 \geq r_1$.
If ${\rm dist}((w_1,w_2), {\rm graph}(h))
> r_1$, then $\psi_n(w_1,w_2)
=\widehat \psi\star \rho_1(w_1,w_2)
=0$, and  
if ${\rm dist}((w_1,w_2), {\rm graph}(h))
< r_1$, then $\psi_n(w_1,w_2)=\widehat \psi\star \rho_1(w_1,w_2)
>0$.
Hence,  if $\widehat \psi\star \rho_1(w_1,w_2)=0$
then also $\widehat \psi\star \rho_1(w_1+\eps,w_2)=0$ 
for $\eps>0$ small enough, because 
${\rm dist}((w_1+\eps,w_2), {\rm graph}(h)) >
{\rm dist}((w_1,w_2), {\rm graph}(h))$, 
being $h$ decreasing in $[0,\longR]$.
This argument applies also when $w_1 \in [0,r_1)$,
by \eqref{mollification_in_K}, since 
$\overline h$ is nonincreasing also in $(-1/n, l)$.
}
that also $h_n$ is nonincreasing in  $[0,\longR]$
(and nondecreasing in $[l,2l]$). 
Concerning the behaviour of $h_n$ in $(-\frac1n,0]$ (and similarly in $[2l,2l+\frac1n)$), we see that in $V_i = V_{i,n}$ ($i>1$), we are mollifying 
with $\rho_{i,k_n}$ 
whose radius of mollification is $r_i/k_n$, so that 
$\widehat \psi\star \rho_{i,k_n}$  equals $0$ 
on the line $\{w_2=1+\frac{r_i}{k_n}\}$, and nonzero below inside $K_n$ 
(this follows from the fact that $\widehat \psi$ is  $0$
 on the line $\{w_2=1\} \cap K$ and nonzero below).
We have defined the radii $r_i$ 
in such a way that they are decreasing 
with respect to $i$, so that, $\psi_n$ being the 
sum
of $\widehat \psi\star \rho_{i,k_n}$  (whose support is 
${\overline V}_{\!i,n}$ by \eqref{support_eta_i}), 
it turns out that $\psi_n$ is $0$ on $\{w_2=1+\frac{r_i}{k_n}\}$ 
and nonzero below\footnote{Notice that in $V_{i,n}
\setminus V_{i-1,n}$ only $\widehat \psi\star \rho_{i,k_n}$ and 
$\widehat \psi\star \rho_{i+1,k_n}$, are nonzero.}
in $V_{i,n}\setminus V_{i-1,n}$ 
(from this it follows that $h_n=1+\frac{r_i}{k_n}$ in $(-\frac1n+\frac{1}{i+n+1},-\frac1n+\frac{1}{i+n}]$). 
In particular $h_n$ is piecewise constant and nondecreasing 
in $(-\frac1n,0]$.

Therefore we have
\begin{equation}\label{h_n_BV}
h_n\in BV\Big((-\frac1n,2l+\frac1n)\Big). 
\end{equation}
Finally, it is not difficult to see that the functions $h_n$ 
converge 
to $h$ in $L^1((0,2l))$ 
as $n\rightarrow \infty$, 
and
\begin{align}
 \mathcal H^2(S_{h_n})\rightarrow\mathcal H^2(\Omegah\cap K).\label{eqn:25}
\end{align}
From this, \eqref{conv_areas_Kn}, Lemma \ref{lem:attaining}, \eqref{eq:area_closure} and
\eqref{eqn:FB}
we deduce
\begin{align}\label{convergenza_plateau}
\areaonecod(\psi_n,  S_{h_n})=
 \areaonecod(\psi_n, K_n)-\mathcal H^2(K_n\setminus S_{h_n})
\rightarrow\mathcal F_{2l}(h,\psi)-\areaonecod(\psi, R_{2l}\setminus K).  
\end{align}

\textit{Step 2: Comparison with a Plateau problem.} In this step we 
want to compare the graph of $\psi_n$ with the solution of a
 disc-type Plateau problem. In particular we will 
obtain a disc-type surface $\dtsnp$ 
whose area is smaller than 
or equal to the area of the graph of $\psi_n$,
see
\eqref{key_ineq_D+}.
 In step 3 (see \eqref{ineq_withF}) we will compare this surface with the graph of $\psi$ on $K$. 

We recall that $\psi_n$ is continuous in 
$\overline K_n$, it is positive on the bottom edge 
$[-\frac1n,2l+\frac1n]\times \{\bar s\}$
of $K_n$ (see \eqref{eq:diag}), 
it is zero on the top edge $[-\frac1n,2l+\frac1n]\times \{1+\frac1n\}$
by \eqref{eq:lines}, and on the lateral edges of $K_n$ it coincides with $\widehat \psi$; more specifically
\begin{equation*}
\begin{aligned}
 &\psi_n\Big(-\frac1n,\scoord\Big)=\psi_n\Big(2l+\frac1n,\scoord\Big)
=\varphi(0,\scoord)>0\qquad \text{for } \scoord\in [\bar s,1),\nonumber\\
 &\psi_n\Big(-\frac1n,\scoord\Big)
=\psi_n\Big(2l+\frac1n,\scoord\Big)=0\qquad \text{for }\scoord\in \Big[1,1+\frac1n\Big).
\end{aligned}
\end{equation*}
Define
$$\partial_DK_n:=\Big(\Big[-\frac1n,2l+\frac1n\Big]
\times \{\bar s\}\Big)\cup\Big(\Big\{-\frac1n,2l+\frac1n\Big\}\times [\bar s,1]\Big).
$$
{}From \eqref{eq:diag}, we see that $\psi_n$ coincides with 
$\extpsi$ over  $\partial_DK_n$, and its graph over this set is a 
curve,
that we 
denote by $\Gamma_n^+$. This curve, excluding its endpoints 
$P_n=(-\frac1n,1,0)$ and $Q_n=(2l+\frac1n,1,0)$, 
is contained in the half-space 
$\{w_3>0\}$, while 
$P_n, Q_n \in \{w_3=0\}$.
We further denote by $\Gamma_n^-$ the
 symmetric of $\Gamma_n^+$ with respect to the plane $\{w_3=0\}$, 
so that 
$$
\Gamma_n:=\Gamma_n^+\cup\Gamma_n^-
$$
is a Jordan curve in $\R^3$, see
Fig. \ref{fig:curves_Frechet}.
We can now solve the disc-type
Plateau problem with boundary $\Gamma_n$ \cite{Hil1} and 
call $\dtsn \subset \R^3$ one of its  
solutions\footnote{$\dtsn$ is the image of an area-minimizing
map from the unit disc into $\R^3$.}. 
Finally, we can assume that $\dtsn$ is symmetric with respect to $\{w_3=0\}$ and that 
\begin{align*}
 \mathcal H^2(\dtsnp)=\mathcal H^2(\dtsnm),
\end{align*}
with $\dtsn^\pm:=\dtsn\cap \{w_3\gtrless0\}$, respectively (see Fig. \ref{fig:curves_Frechet}).

\begin{figure}
	\begin{center}
		\includegraphics[width=0.8\textwidth]{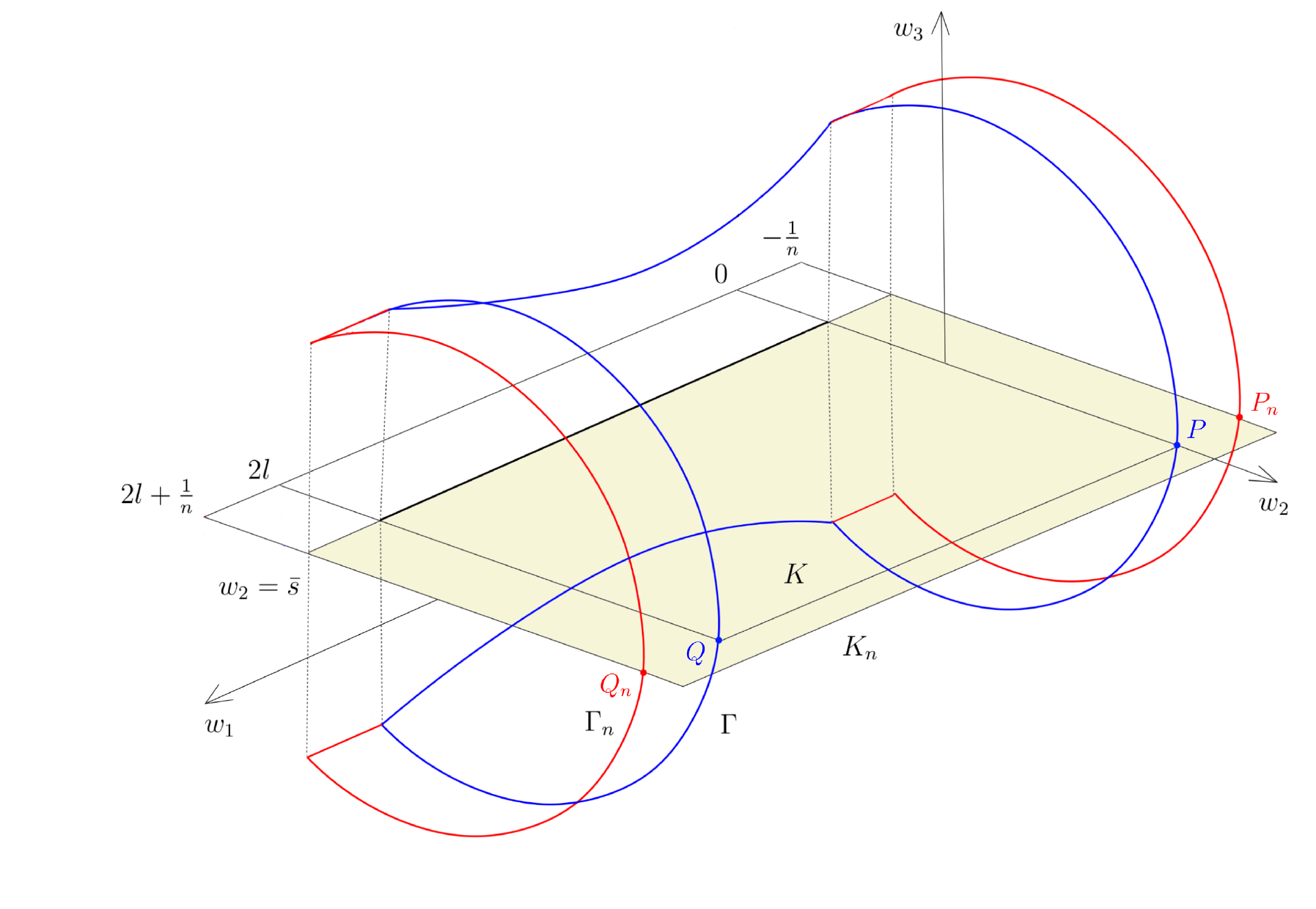}
		\caption{The rectangle $K_n$ 
in \eqref{eq:enlarged_rectangle} is colored, 
and the rectangle inside is $K$. $\Gamma$ is the
curve passing through $Q$ and $P$, the curves $\Gamma_n$
(which pass through $Q_n$ and $P_n$) 
approach the curve $\Gamma$ ($\Gamma$ and $\Gamma_n$ coincide and overlap on the graph of $\psi$ over the bold segment $\{w_2=\bar s\}\cap K$). 
		}
		\label{fig:curves_Frechet}
	\end{center}
\end{figure}

We now want to compare the area of the graph
of $\psi_n$ in 
$S_{h_n}$ with 
$\mathcal H^2(\dtsnp)$. 
To this aim we start by observing that $\psi_n$, 
being smooth in $K_n$ and continuous in $\overline K_n$, is such that 
its graph over $S_{h_n}$ has the topology of 
$S_{h_n}$, 
that is the topology of the 
disc\footnote{
$S_{h_n}$ is
bounded by construction, and it is open
from 
\eqref{topology_Kn+}, \eqref{eq:psi_n_zero}. 
In addition, it is connected and simply connected. Indeed,
take any continuous curve $\gamma:S^1\rightarrow S_{h_n}$.
Using \eqref{eq:lines}, 
let $\widehat s\in (\bar s, 1)$ 
be such that $\{w_2=\widehat s\}\cap K_n\subset S_{h_n}$; hence we can 
(vertically) contract $\gamma$ 
continuously to its projection on the line $\{w_2=\widehat s\}$, 
and then contract it continuously to the middle point of $\{w_2=\widehat s\}\cap K_n$, showing that $\gamma$ is homotopic to the constant curve.
 Hence,
by the Riemann mapping theorem, $S_{h_n}$ is biholomorphic to the 
open unit disc, and 
$\overline S_{h_h}$
is homeomorphic to the closure of the disc, thanks to the fact that $\partial S_{h_n}$ is a Jordan curve, due to the BV-regularity of $h_n$.
}. 
Denoting by $\mathcal G_{\psi_n}^+$ the graph of $\psi_n$ over $S_{h_n}$, 
we consider the graph $\mathcal G_{\psi_n}^-$  of $-\psi_n$ over $S_{h_n}$, 
and observe that the closure of 
$\mathcal G_{\psi_n}^+\cup \mathcal G_{\psi_n}^-$ is a disc-type surface with boundary $\Gamma_n$. Therefore, 
by minimality,
\begin{align}
\label{key_ineq_D+}
 \areaonecod(\psi_n, S_{h_n})=\mathcal H^2(\mathcal G_{\psi_n}^+)\geq \mathcal H^2(\dtsnp).
\end{align}

\textit{Step 3: Passing to the limit as $n\rightarrow+\infty$: the surface $\Sigma$.}
The graph of $\psi$ over the segment $[0,2l]\times \{\bar s\}$ 
and the graph of $\varphi$ over the two segments 
$\{0,2l\}\times [\bar s,1]$ form a simple continuous curve $\Gamma^+$ which,
excluding the two endpoints $P=(0,1,0)$ and $Q=(2l,1,0)$, 
is contained in the half-space $\{w_3>0\}$, while
$P,Q \in \{w_3=0\}$ (see
Fig. \ref{fig:curves_Frechet}).
If we consider
$$
\Gamma:=\Gamma^+\cup\Gamma^-,
$$
with $\Gamma^-$ the symmetric of $\Gamma^+$ with respect to $\{w_3=0\}$, a direct check shows that the curves 
$\Gamma_n$ converge to the curve $\Gamma$
in the sense of 
Frechet \cite{Nitsche:89}, as $n\rightarrow +\infty$. 
As a consequence, the area-minimizing disc-type surfaces $\dtsn$ satisfy $\mathcal H^2 (\dtsn)\rightarrow\mathcal H^2(\dts)$ (see \cite[Paragraphs 301, 305]{Nitsche:89}), with 
$\Sigma$ a disc-type area-minimizing surface 
spanned by $\Gamma$. It follows
\begin{equation}\label{eq:convergence_of_areas_Sigma_n}
 \mathcal H^2(\dtsnp)
\rightarrow \mathcal H^2(\dtsp) \qquad {\rm as}~ n \to +\infty,
\end{equation}
where $\dtsp:=\dts\cap \{w_3>0\}$.  
{}From \eqref{eq:convergence_of_areas_Sigma_n}, 
\eqref{key_ineq_D+}, \eqref{convergenza_plateau}
we deduce
$$
\begin{aligned}
\mathcal H^2(\dtsp)  = &
\lim_{n \to +\infty} 
\mathcal H^2(\dtsnp)
\leq 
\lim_{n \to +\infty} 
 \areaonecod(\psi_n, S_{h_n})
\\
=&
\lim_{n \to +\infty}\left(
 \areaonecod(\psi_n, K_n)-\mathcal H^2(K_n\setminus S_{h_n})\right)
=
\mathcal F_{2l}(h,\psi)-\areaonecod(\psi, R_{2l}\setminus K).  
\end{aligned}
$$ 
Since $\psi_n = \psi$ on $R_{2\longR} \setminus K$, we 
get 
\begin{align}
\label{ineq_withF}
\lim_{n \to +\infty} \left(
\areaonecod(\psi_n, S_{h_n})+\areaonecod(\psi, R_{2l}\setminus K)
\right)=  \FB(h,\psi)\geq \mathcal H^2(\dtsp)+ \areaonecod(\psi, R_{2l}\setminus K).  
\end{align}

Let $\Phi=(\Phi_1,\Phi_2,\Phi_3):\overline B_1\rightarrow \dts \subset \R^3$ be a parametrization of $\Sigma$, which is 
analytic and conformal in the open unit disc
$B_1$ and continuous up to 
$\partial B_1$ with $\Phi(\partial B_1)=\Gamma$. 
Exploiting the results in \cite{Meeks_Yau:82} (see also \cite[pag. 343]{Hil1}) 
we know that %
\begin{equation}
\label{eq:Phi_is_an_embedding}
\Phi {\rm~ is~ an~ embedding}, 
\end{equation}
since $\Gamma$ is a simple curve on
 the boundary of the convex set $K\times \R$.

\medskip
Now, we need to prove several qualitative properties of $\Sigma$.
\smallskip

\textit{Step 4: 
$\dts \cap \{w_3=0\}$ is a simple curve 
connecting $P$ and $Q$.}
 This can be seen as follows:
Assume $\Phi(p_0)=P$ and $\Phi(q_0)=Q$ for two distinct points $p_0,q_0\in \partial B_1$. 
By standard arguments\footnote{See also step 5 where a similar statement is proved.}, 
the disc $B_1$ is splitted into two 
connected components $\{x\in B_1:\Phi_3(x)\geq 0\}$ and $\{x\in B_1:\Phi_3(x)<0\}$ and the set 
$\{\Phi_3=0\}$ must be a simple curve in $B_1$ connecting $p_0$ and $q_0$  (here we use that the points $p_0$ and $q_0$ are the unique points in $\partial B_1$ where $\Phi_3=0$ and that the two arcs in $\partial B_1$ with extreme points $p_0$ and $q_0$ are mapped in $\{w_3>0\}$ and $\{w_3<0\}$ respectively).
By the injectivity of $\Phi$ (property
\eqref{eq:Phi_is_an_embedding}) we conclude that 
\begin{equation}\label{eq:Gamma_0}
\Gamma_0:=\Phi(\{\Phi_3=0\})
\end{equation}
is a simple curve connecting $P$ and $Q$ on the plane $\{w_3=0\}$, 
and more specifically $\Gamma_0 \subset K$. 

\medskip

\medskip

In the next step we show that, due to the particular
form of $\Gamma$, the surface $\Sigma$ admits a semicartesian
parametrization \cite{BePaTe:16}, namely that if we slice
$\Sigma$ with a plane orthogonal to the first coordinate (in 
$(0,2l)$) then the intersection is a curve connecting
the two corresponding points on $\Gamma$; in addition, in this
present case, 
this curve turns out to be simple. We will also show 
that the free part of $\Sigma$, {\textit{i.e.}, $\Gamma_0$,}  leaves a trace on $R_{2l}$
which is the graph of a convex function (of one variable). 

\medskip

\textit{Step 5:
The projection $p(\Sigma)$ of $\Sigma$ on 
the plane $\{w_3=0\}$ is the subgraph of a convex 
function $\widetilde h\in \mathcal H_{2l}$.} 

We first show that $p(\Sigma)$ is the subgraph of a 
function $\widetilde h$, and then we prove that 
$\widetilde h\in \mathcal H_{2l}$. 
 Take a point $W = (W_1,W_2,W_3)\in \Sigma\setminus \Gamma$; 
by the strong maximum principle, $p(W) \notin \partial K$ (this follows since points in $\Sigma\setminus \Gamma$ are in the interior of the convex envelope of $\Gamma$, see \cite{Hil1}). 
Consider the (unique) point $x\in B_1$ 
such that $\Phi(x)=W$. Due to the particular structure of $\Gamma$,
one easily checks that 
 $\partial B_1=\Phi^{-1}(\Gamma)$ splits into two 
connected components, $\Phi_1^{-1}((W_1,2l])\cap \partial B_1$ 
and $\Phi_1^{-1}([0,W_1])\cap \partial B_1$,
 since $\Phi_1^{-1}(\{W_1\})\cap \partial B_1$ consists
of two points $q_1$, $q_2$ in $\partial B_1$. 
In particular, the continuous function $\Phi_1(\cdot)-W_1$ changes 
sign only twice on $\partial B_1$, 
namely at $q_1$ and $q_2$. 
From Rado's lemma \cite[Lemma 2, pag. 295]{Hil1} 
it follows that there are no points on $\Sigma \cap\{w_1=W_1\}$ 
where the two area-minimizing surfaces $\Sigma$ and the plane $\{w_1=W_1\}$ 
are tangent to each other\footnote{If $\pointP$ is a tangence point, 
then the differential of $\Phi_1$ must vanish at $\Phi^{-1}(\pointP)\in B_1$.}.  
It follows that, if $\pointP\in (\Sigma\setminus \Gamma)\cap \{w_1=W_1\}$, 
then the set $(\Sigma\setminus \Gamma)\cap \{w_1=W_1\}$ is, 
in a neighbourhood of $\pointP$, an analytic curve, 
see again 
\cite[Lemma 2, pag. 295]{Hil1}. 
Hence, $\{\Phi_1=W_1\}\cap B_1$ is, 
in a neighbourhood of $\Phi^{-1}(\pointP)$, an analytic curve.
If $\gamma_I:I\rightarrow B_1$ 
is a parametrization of this curve,
$I=(a,b)$ a bounded open interval, 
we see that the limits as 
$t\rightarrow a^+$ and $t \to b^-$
of 
$\gamma_I(t)$ 
exist\footnote{$\overline B_1$ is compact, 
hence $\gamma_I(t)$ has some accumulation point
as $t\rightarrow a^+$. Notice that $I$ and $\gamma_I(I)$ 
are homeomorphic by contruction; in turn $\gamma_I(I)$ is 
homeomorphic to the analytic curve 
$\Phi\circ\gamma_I(I)$.
Assume $x$ is an accumulation point for $\gamma_I(t)$ as $t\rightarrow a^+$. If $x\in B_1$, there is a small neighborhood $U$ of $x$ such that $\sigma:=\Phi(U)\cap \{w_1=W_1\}$ is an analytic curve. Then $\gamma_I$, in a right neighbourhood $J$ of $a$, is homeomorphic to the analytic curve $\Phi\circ\gamma_{I}(J)\in \R^3$ emanating from $\Phi(x)$, which in turn is the restriction of $\sigma$. In particular  $\gamma_I(I)$ is a curve emanating from $x$ and the limit as $t\rightarrow a^+$ of $\gamma_I(t)$ is $x$.
If instead $x\in \partial B_1$ then $x$ must be the unique accumulation point. 
Indeed, 
$\lim_{t\rightarrow a^+}\Phi_1\circ\gamma_I(t)=W_1$, 
and then $x=q_1$ or $x=q_2$ (say $x=q_1$).
Assume there is 
another accumulation point $y$ as $t\rightarrow a^+$; then $y\notin B_1$, otherwise we fall in the previous case, 
and therefore necessarily $y=q_2$. But in this case, we 
see that there must be another accumulation point $z\in B_1$ 
(as $t\rightarrow a^+$, we move between a neighbourhood $U$ of $x$ 
and a neighbourhood $V$ of $y$ frequently, 
so that there should be some 
other accumulation point in $\overline B_1\setminus (U\cup V)$)
leading us to the previous case again.} 
and are points in $ \overline {B_1}$. If 
$\lim_{t \to a^+} \gamma_I(t)$ belongs to
$\partial B_1$, 
it must be either $q_1$ or $q_2$, if instead it is in $B_1$, 
then we can always extend $\gamma_I$ in a neighbourhood of $a$ 
and find a larger interval $J\supset I$ on which $\gamma_I$ can be extended. 
Similarly, for 
$\lim_{t \to b^-} \gamma_I(t)$.
Let now 
$I_m=(a_m,b_m)$ be a maximal interval on which $\gamma_I$ is defined, 
so that, by maximality, the limits as $t \to a_m^+$ and $t \to b_m^-$ 
are $q_1$ and $q_2$, respectively.
We can then consider the closure $\overline I_m$ of $I_m$ 
and we have that $\gamma_{\overline I_m}(\overline I_m)$ 
is a curve in $\overline B_1$ joining $q_1$ and $q_2$.
Thus we have proved that $\sigma_W:=\Sigma\cap \{w_1=W_1\}$ equals
$\Phi(\gamma_{\overline I_m}(\overline I_m))$.
In particular $\sigma_{W}$ is a curve in $\R^3$ contained 
in the plane $\{w_1 = W_1\}$
and connecting the points $Q_1:=\Phi(q_1)\in \Gamma$ 
and  $Q_2:=\Phi(q_2)\in \Gamma$. But we know that $p(Q_1)=p(Q_2)=
(W_1,\bar s,0)$, 
so $p(\sigma_W)$ is a segment in $R_{2l}$  
with endpoints $(W_1,\bar s,0)$ and 
$(W_1,s^+,0)$ for some $s^+>\bar s$, and $s^+\geq W_2$. 
 In particular the whole 
segment ``below'' $p(W)$, 
namely the one with endpoints $(W_1,\bar s,0)$ and $(W_1,W_2,0)$, belongs
 to $p(\Sigma)$, and $p(\Sigma)$ is then the subgraph of some function $\widetilde h$. 
As a remark, due to the symmetry of the curve $\Gamma$, we can 
assume $\widetilde h$ is symmetric with respect to $\{w_1=l\}$, namely $\widetilde h (\cdot)=\widetilde h (2l-\cdot)$.
 
 Now we show that $\widetilde h$ is convex. 
Assume it is not, and 
take two points $(t_1,\widetilde h(t_1),0), (t_2,\widetilde h(t_2),0)\in 
R_{2l}$,  
such that there is a third point $(t_3,\widetilde h(t_3),0)$, 
with $t_1<t_3<t_2$, which is strictly above the segment 
$l_{12}$ in $R_{2l}$ joining $(t_1,\widetilde h(t_1),0)$ 
and $(t_2,\widetilde h(t_2),0)$. Let $f:\R^3\rightarrow \R$ be a 
nonzero affine function\footnote{Take  
the signed distance from the plane.} 
vanishing on the plane passing through $l_{12}$ 
and orthogonal to $\{w_3=0\}$, and assume that 
$f$ is positive at $(t_3,\widetilde h(t_3),0)$. Let $\pointQ\in \Sigma$ be 
such that $p(\pointQ)=(t_3,\widetilde h(t_3),0)$. Then $f\circ \Phi:B_1\rightarrow \R$ is harmonic, and by the maximum principle there is a
continuous curve $\gamma_\pointQ$ in $B_1$ 
joining $\Phi^{-1}(\pointQ)$ to $\partial B_1$ such that 
$f\circ \Phi$ is always positive on $\gamma_\pointQ$. But now, the continuous
curve $p\circ \Phi(\gamma_\pointQ)$ joins $(t_3,\widetilde h(t_3),0)$ 
to $p(\Gamma)$ and remains, in $R_{2l}$, strictly above the segment 
$l_{12}$. This is a contradiction,
 because $p\circ \Phi(\gamma_\pointQ)$ must be in the interior of
 the subgraph of $\widetilde h$.

\smallskip

Before passing to step 6, recall the definition of $\Gamma_0$ in 
\eqref{eq:Gamma_0}, and  observe
that the Jordan curve $\Gamma^+\cup\Gamma_0$ is the boundary of the disc-type surface $\dtsp$.
Let us denote by $U\subset K$ the connected component of $K\setminus \Gamma_0$ with boundary $\Gamma_0\cup (\{0\}\times [\bar s,1])\cup ([0,2l]\times \{\bar s\})\cup (\{2l\}\times [\bar s,1])$.

We are now in a position to show that $\Sigma^+$ admits
a non-parametric description over the plane $\{w_3=0\}$.

\medskip
\textit{Step 6: Graphicality of $\dtsp$: the disc-type surface $\dtsp$ 
can be written as a graph over the plane $\{w_3=0\}$ 
of a $W^{1,1}$ function 
$\widetilde \psi:U\rightarrow [0,+\infty)$}. 
At first we observe that if $\Sigma^+$ is not Cartesian with 
respect to $\{w_3=0\}$, then there is 
some point $\pointP\in \Sigma^+\setminus \partial \Sigma^+$ 
where the tangent plane to $\Sigma^+$ is vertical\footnote{This can be 
seen as follows: as shown in step 5,  
the intersection between $\Sigma^+$ and any plane $\{w_1=\textrm{cost}\}$,
${\rm cost}\in (0,2l)$, is a 
simple curve with endpoints in $\partial \Sigma^+$. If $\Sigma^+$ is not Cartesian, one of these curves $\gamma$ 
is not Cartesian, and then there is a point where 
the tangent vector to $\gamma$ is vertical. 
At such a point the tangent plane to $\Sigma^+$  
is vertical. 
}, 
that is, it contains the 
line $\{\pointP+(0,0,w_3):\;w_3\in \R\}$.
We will show,
with an argument similar to 
the one needed to prove Rado's Lemma \cite[Lemma 2, pag. 295]{Hil1},
 that any vertical plane is tangent to $\Sigma$ 
in 
at most one point. 
\begin{itemize}
	\item[Claim:] If $\Pi$ is a vertical plane which is tangent to $\Sigma$, then there is at most one point where 
$\Pi$ and $\Sigma$ are tangent.
\end{itemize}
Assume $\Pi$ intersects the relative interior of $\Sigma$. It is easy to 
see that the intersection between $\Pi$ and the Jordan curve $\Gamma$ consists at most 
of four points\footnote{A vertical plane intersects $K$ on a straight segment. In turn, this segment intersects $\partial K$ in two points. If a vertical plane intersects $\Gamma$ in a point $(W_1,W_2,W_3)$, then $(W_1,W_2,0)\in \partial K$. Moreover this plane intersects $\Gamma$ also at $(W_1,W_2,-W_3)$. Thus, the points of intersection are at most four. The degenerate cases in which $\Pi$ contains a full $\mathcal H^1$-measured part of $\Gamma$ are excluded by this analysis, because in these cases $\Pi$ does not intersect the interior of $\Sigma$. Instead, the cases in which the intersection consists of $2$ or $3$ points are easier to treat, and we detail only the $4$-points case (notice that by the geometry of $\Gamma$, the case of  $3$ points occurs when this plane is tangent to $\Gamma$ at one of the points $(0,1,0)$ or $(2l,1,0)$).} $p_i$, $i=1,2,3,4$.
Let $f$ be a linear function on $\R^3$ vanishing on $\Pi$.
Then  
 $f\circ \Phi$ is harmonic  in $B_1$ 
and continuous in $\overline B_1$; in addition, it vanishes
at $\{p_i,\;i=1,2,3,4\}$, and alternates its sign on the four arcs
$\overline{p_i p_{i+1}}$
on $\partial B_1$
with endpoints $p_i$.
With no loss of generality, we may assume $f\circ\Phi>0$ on $\overline{p_1p_2}$ and $\overline{p_3p_4}$. 
By harmonicity of $f\circ\Phi$, 
any connected component of the region $\{x\in \overline B_1:f\circ\Phi(x)>0\}$  must contain part of  $\overline{p_1p_2}$ or $\overline{p_3p_4}$, so that we deduce that these connected components are at most two. 

Assume now by contradiction that there are two distinct 
points $\pointP$ and $\pointQ$ of $\Sigma$ such that $\Pi$ 
is tangent to $\Sigma$ at $\pointP$ and $\pointQ$.   
Since $f\circ\Phi$ has null differential at $\Phi^{-1}(\pointP)$ 
and $\Phi^{-1}(\pointQ)$, the set $\{f\circ \Phi=0\}$, 
in a neighbourhood of $\Phi^{-1}(\pointP)$, consists of $2m_p$ analytic 
curves crossing at $\Phi^{-1}(\pointP)$, whereas  in a neighbourhood of 
$\Phi^{-1}(\pointQ)$, it consists of $2m_q$ analytic curves crossing at 
$\Phi^{-1}(\pointQ)$. Therefore, in a neighbourhood of $\Phi^{-1}(\pointP)$, 
the set $\{f\circ \Phi>0\}$ counts at least $2$ open regions 
(and similarly at $\Phi^{-1}(\pointQ)$). Let us call the two of 
these regions $A_i$, $i=1,2$ and $B_i$, $i=1,2$ ($A_i$'s 
around $\Phi^{-1}(\pointP)$ and $B_i$'s around $\Phi^{-1}(\pointQ)$). 
By harmonicity each $A_i$ and $B_i$ must be connected to one of the 
arcs   $\overline{p_1p_2}$ or $\overline{p_3p_4}$. 
Hence some of these regions must belong to the same connected component
of $\{f\circ\Phi>0\}$. Then we are reduced to two following
cases (see Fig. \eqref{fig:twocases}): 
\begin{itemize}
	\item[(Case A)] $A_1$ and $A_2$ belong to the same connected 
component
(say the one containing $\overline{p_1p_2}$). Hence we can construct two disjoint curves in $\{f\circ \Phi>0\}$, both joining $\Phi^{-1}(\mathcal P)$ to a point in   $\overline{p_1p_2}$, emanating from $\Phi^{-1}(\mathcal P)$, one in 
region $A_1$ and one in region $A_2$. This contradicts the 
maximum principle, because these two curves would enclose a region 
where $f\circ\Phi$ takes also negative values, whereas its boundary is in $\{f\circ\Phi>0\}$.
	\item[(Case B)] $A_1$ and $B_1$ are joined 
to $\overline{p_1p_2}$ and $A_2$ and $B_2$ are joined to $\overline{p_3p_4}$. 
In this case we can construct four curves in $\{f\circ\Phi>0\}$:  $\sigma_1$ and $\sigma_2$ emanating from $\Phi^{-1}(\mathcal P)$ in regions $A_1$ and $A_2$ and reaching $\overline{p_1p_2}$ and $\overline{p_3p_4}$, respectively; $\beta_1$ and $\beta_2$ emanating from $\Phi^{-1}(\mathcal Q)$ in regions $B_1$ and $B_2$ and reaching $\overline{p_1p_2}$ and $\overline{p_3p_4}$, respectively.
	The region enclosed between these $4$ curves has boundary contained in $\{f\circ\Phi>0\}$ and necessarily inside it 
the function $f\circ\Phi$ takes also negative values, again in contrast with the 
maximum principle.
\end{itemize} 
From the above discussion our claim follows.

We are now ready to conclude the proof of step 6:
suppose by contradiction  that $\Sigma^+$ is not Cartesian
with respect to $\{w_3=0\}$, and take a point $P^+ 
\in \Sigma^+ \setminus \Gamma$
where the tangent plane $\Pi$ to $\Sigma^+$ at $P^+$ is vertical. By symmetry of $\Sigma$, the point $P^-$, 
defined as the symmetric of $P^+$ with respect to the 
rectangle $R_{2l}$, belongs to $\Sigma^-$, and the tangent plane to $\Sigma^-$ at $P^-$ is the same plane $\Pi$. This contradicts the claim.
We eventually observe that $\widetilde \psi$ is analytic on the subgraph of $\widetilde h$, since its graph is $\Sigma^+$. We conclude that $\widetilde \psi$ belongs to $W^{1,1}(SG_{\widetilde h})$, since also 
its total variation is bounded by the area of its graph, which is finite.

\nada{
Setting $\widetilde \dts:=\partial \mathbb 
S_{cl}(E)\setminus (\partial K\times \R)$, we see that
$\widetilde \dts$ is a disc-type 
surface whose boundary is $\Gamma$ (here we use 
that $\Gamma$ is symmetric and that $\Gamma^+$ and $\Gamma^-$ are Cartesian, 
with respect to $\{w_3=0\}$). Indeed, $\widetilde \Sigma$ is 
a Cartesian surface, graph of some function defined on the subgraph of 
$\widetilde h$, which has the topology of the disc. 
	
We deduce
\begin{equation}\label{strict_ineq}
\mathcal H^2(\widetilde \dts)<\mathcal H^2(\dts),
\end{equation}
 contradicting the minimality of $\dts$.  
This proves our claim, and
therefore there exists 
$$\widetilde \psi\in BV(U,[0,+\infty))$$
such that its 
graph over $\overline{U}$ coincides with $\dtsp$.
Hence we can write
\begin{align}
 \mathcal H^2(\dtsp)= \areaonecod(\widetilde \psi, \overline U),
\end{align}
and from \eqref{ineq_withF} we conclude
\begin{align}\label{ineq_withF2}
 \areaonecod(\widetilde \psi, \overline U)+
\areaonecod(\psi, R\setminus K)\leq \FB(h,\psi).
\end{align}
}

\begin{figure}
	\begin{center}
		\includegraphics[width=0.7\textwidth]{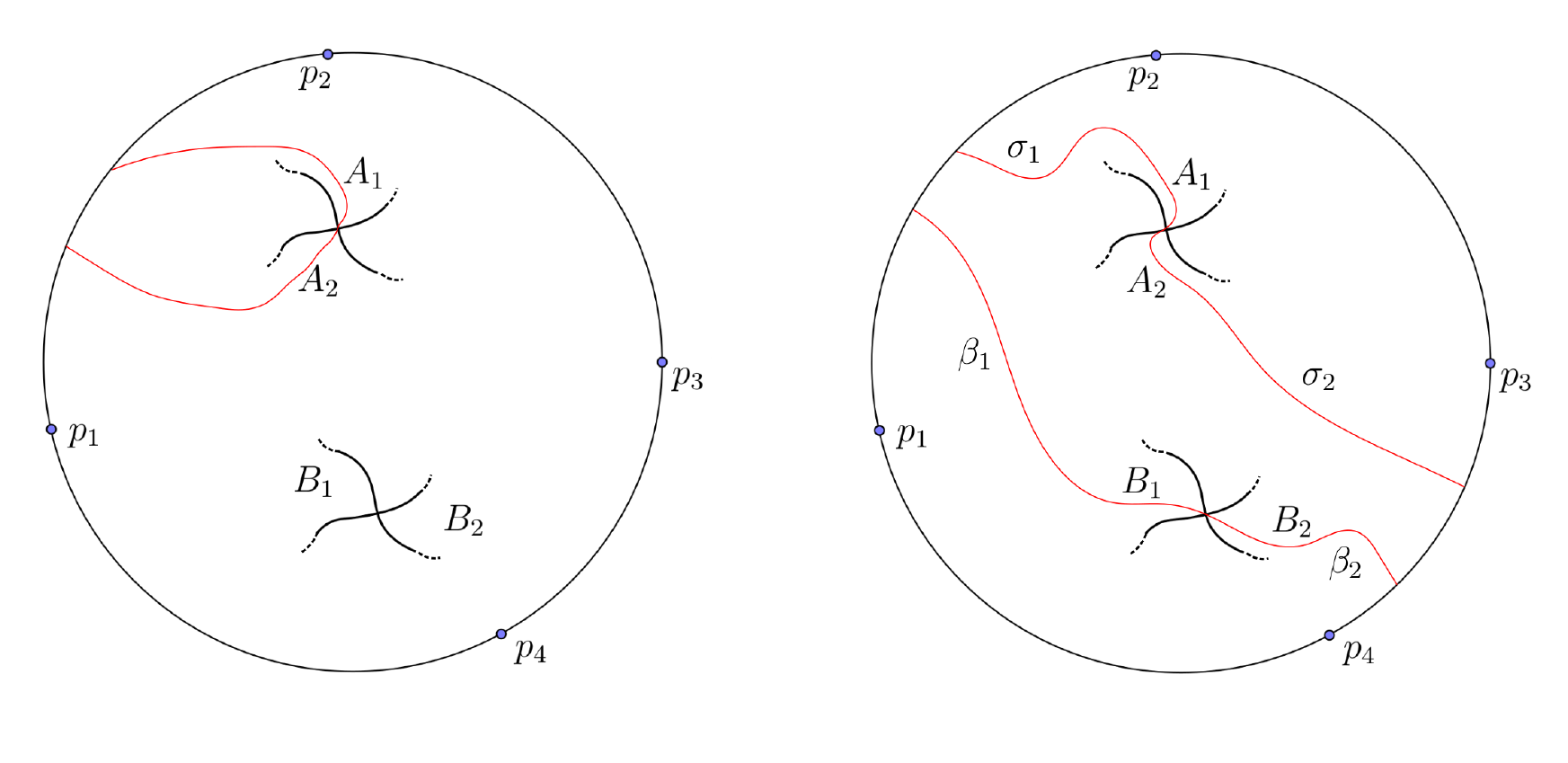}
		\caption{On the left it is represented 
case A in step 6 of the proof of Theorem \ref{prop_zero}. The point $\Phi^{-1}(\mathcal P)$ is in the cross where the two emphasized paths start from. These curves stand in the region $\{f\circ\Phi>0\}$ and join $\Phi^{-1}(\mathcal P)$ with the boundary arc $\overline{p_1p_2}$. The picture on the right represents instead case B. The two cross points are $\Phi^{-1}(\mathcal P)$ and $\Phi^{-1}(\mathcal Q)$ and the paths $\sigma_1$, $\sigma_2$, $\beta_1$, $\beta_2$ are depicted.
		}
		\label{fig:twocases}
	\end{center}
\end{figure}

\medskip

\textit{Step 7: the pair 
$(\widetilde h, \widetilde \psi)$ is an admissible competitor for $\FB$.}
To see this, we recall that in step 5 we 
proved that $\widetilde h$ is convex and 
$\widetilde h(\cdot)=\widetilde h(2l-\cdot)$, \textit{i.e.}  $\widetilde h\in \mathcal H_{2l}$.
Furthermore $\Sigma^+$ is the graph of $\widetilde \psi$, and its projection on the plane $\{w_3=0\}$ is the subgraph of $\widetilde h$. It follows that the area of the graph of $\widetilde \psi$ is exactly the area of $\Sigma^+$ upon $SG_{\widetilde h}$. Let us also recall the 
$W^{1,1}$ regularity of $\widetilde \psi$ proved in 
step 6.
Setting $\widetilde \psi:=\psi$ in $R\setminus K$ we infer the 
admissibility of $(\widetilde h,\widetilde \psi)$.

 \nada{
 the set $U$ is the subgraph, in $K$, of a convex function $\widetilde h:[0,2l]\rightarrow[\bar s,1]$ with $\widetilde h(0)=\widetilde h(2l)=1$. Indeed, 
we first observe that $\Gamma_0$ 
is a graph of a function over $(0,2l)$. Indeed, 
if not, we can pick $t\in (0,2l)$ 
such that $\Gamma_0$ passes through two distinct
points $A:=(t,s_1)$ and $B:=(t,s_2)$, and that the portion 
$\Gamma^{AB}_0
\subset\Gamma_0$ 
of $\Gamma_0$
connecting $A$ and $B$ is not the straight segment
on the line $\overline{AB}$,
and is contained in one of the 
two connected components of $K\setminus\overline{AB}$ only. 
\draftGGG{even if $\Gamma_0$ is not so regular, 
it seems reasonable that this can be done}
If $V$ is the set in $K$ enclosed between $\Gamma_0^{AB}$ and such a segment,
with the same argument used in (2) 
of the proof of Proposition \ref{modifications_of_h}, 
we obtain $\areaonecod(\widetilde\psi, U) >
\areaonecod(\widehat \psi, \overline{U\setminus V})$,
\draftGGG{in that
proof the inequality was weak, not strict, if I am not wrong}
 where 
\begin{align*}
 \widehat \psi:= \begin{cases}
                    \widetilde \psi&\text{in }
U\setminus V,
\\                    0&\text{in }V.
                \end{cases}
\end{align*}
In particular, we find 
$$A(\widehat \psi;\overline{U\setminus V})<\mathcal H^2(\dtsp).$$
Now the generalized graph of $\widehat \psi$ on $\overline{U\setminus V}$ is a disc-type surface\footnote{The trace of $\widetilde \psi$ over the segment connecting $A$ and $B$ is a continuous positive function.} (with a vertical part along the segment connecting $A$ and $B$), and if we double it considering the generalized 
graph of $-\widehat\psi$ on $\overline{U\setminus V}$, and gluing them along the plane $\{w_3=0\}$ we get a disc-type surface with boundary $\Gamma$ 
which has smaller area than $\dts$, a contradiction\footnote{In other words, we have cut the surface $D$ along a plane orthogonal to $\{w_3=0\}$ containing $\overline{AB}$, obtaining a disc-type surface smaller than $D$.}.
We conclude that $\Gamma_0$ is the graph of a function $h$ 
defined on $[0,2l]$.

We now show that $h$ must be convex.
Assume by contradiction it is not. Exploiting 
the arguments of points (1), (2), and (3) 
of the proof of Proposition \ref{modifications_of_h}, we can 
replace $h$  with its convex 
envelope ${\rm co}(h)$, 
and setting 
\draftGGG{is the symbol $\Omega$ correct?}
\begin{align}
 \widehat \psi:= \begin{cases}
                    \widetilde \psi&\text{in }U\cap \Omega_{{\rm co}(h)},
\\                    0&\text{in }U\setminus \Omega_{{\rm co}(h)},
                \end{cases}
\end{align}

we infer
$$
\areaonecod(\widehat \psi,\overline{\Omega_{{\rm co}(h)}})<\mathcal 
H^2(\dtsp).
$$
} 

\medskip
\textit{Step 8: Conclusion.} From \eqref{ineq_withF} we deduce
\begin{align}
 \FB(h,\psi)\geq \mathcal H^2(\dtsp)+ \areaonecod(\psi, R\setminus K)= \FB(\widetilde h,\widetilde \psi),
\end{align}
where the last equality follows from the fact that $\widetilde \psi$ 
is continuous on $\partial_DR_{2l}$. Hence, also $(\widetilde h,\widetilde \psi)$ is a minimizer for $\FB$.
We
 now show that $\widetilde \psi$ is continuous and equals $0$ on $G_{\widetilde h}$. Indeed $\dts=\dtsp\cup \dtsm$ is 
analytic, hence also $\widetilde h$ is smooth (and convex). 
Moreover we know that $\widetilde \psi$ is smooth in 
$SG_{\widetilde h}$. If its  trace $\widetilde \psi^+$ 
on $G_{\widetilde h}$ is strictly positive somewhere, 
we infer that the vertical subset of $\dtsp$ defined as
$$\{(w_1,w_2,w_3):(w_1,w_2)
\in G_{\widetilde h},\;w_3\in (0,\widetilde \psi^+(w_1,w_2))\},$$
has positive $\mathcal H^2$-measure and 
cannot have zero mean curvature (the only case in which 
its mean curvature vanishes is when $\widetilde h$ is linear, 
but in this case $\dtsp$ must be contained in a plane 
containing $G_{\widetilde h}$ which is impossible,
since $\Gamma^+$ is not).
We conclude $\widetilde \psi^+=0$ on $G_{\widetilde h}$.
Finally, $L_{\widetilde h} = \emptyset$ for, if 
not, the vertical part of $\dtsp$ obtained on $L_{\widetilde h}$ 
is flat and then, by analyticity, also $\dtsp$ is, a contradiction.
The thesis of  the theorem, and hence of 
Theorem \ref{Thm:existenceofminimizer} (iii), is achieved.
\qed 

\medskip

To conclude the proof of 
Theorem \ref{Thm:existenceofminimizer},
it remains to show (iv). 
The pair $(h \equiv 1, \widehat \varphi)$ , where 
the function $\widehat \varphi$ is as in \eqref{eq:widehat_varphi},
is one of the competitors for
problem \eqref{eqn:B} (notice that $\widehat \varphi$ attains the boundary condition); in addition,
its subgraph is strictly convex 
(see Fig. \ref{fig:graphofphi}), hence\footnote{As observed, 
the minimal surface $\Sigma^+$ is the graph of $\psi^\star=\widetilde \psi$, and it must be contained in the convex envelope of $\Gamma$, \textit{i.e.}, inside the subgraph of $\widehat\varphi$.}  necessarily
$\psi^\star\leq \widehat \varphi$ in $\overline{R_{2l}}$ (where we have taken $\psi^\star=\widetilde \psi$, the solution given by Theorem \ref{prop_zero}).

Eventually, the strict inequality in \eqref{eq:psi_star_below_cylinder}
is a consequence of the strong maximum principle: indeed, the internal points of a minimal surface are always strictly inside the convex hull of its boundary, with the only exception in the case of part of a plane 
(see \cite[pag 63, section 70]{Nitsche:89}); so that internal points of $\Sigma^+$ are strictly inside the  the graph $G_{\widehat \varphi}$ of $\widehat \varphi$ (that is  half of the lateral boundary of a cylinder).
\end{proof}

We conclude this section by observing a consequence of 
Theorem \ref{prop_zero}: 
Let $G_w$ be the graph in $R_{2l}$ of a 
function $w\in C([0,2l],(-1,1])$ such that $w(0)=w(2l)=1$,
 and consider the curve $\Gamma_w$ obtained by concatenation 
of $G_w$ with the graph of $\varphi$ over $\partial_DR_{2l}$. 
\begin{cor}\label{main_cor}
We have
\begin{align}\label{equiv_plateau}
\mathcal F_{2l}(h,\psi)=\inf\mathcal P_{\Gamma_w}(X_{{\rm min}}),
\end{align}
where $(h,\psi)\in X_{2l}^{\rm{conv}}$ 
is a minimizer of $\mathcal F_{2l}$, $X_{{\rm min}}$ 
is a parametrization of a disc-type 
area-mininizing solution of 
the Plateau problem spanning  $\Gamma_w$ (see \eqref{plateau}), 
and the infimum is computed over all functions $w$ as above.
\end{cor}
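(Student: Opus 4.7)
The plan is to prove the two inequalities $\min_{X_{2l}^{\rm conv}}\mathcal F_{2l}\ge\inf_w m_w$ and $\min_{X_{2l}^{\rm conv}}\mathcal F_{2l}\le\inf_w m_w$ separately, where $m_w:=\mathcal P_{\Gamma_w}(X_{\min})$ denotes the Plateau area of an area-minimizing disc spanning $\Gamma_w$. Throughout I shall work with the distinguished minimizer $(h^\star,\psi^\star)\in X_{2l}^{\rm conv}$ provided by Theorem \ref{prop_zero}, which satisfies $\psi^\star=0$ on $G_{h^\star}$, is continuous up to the boundary of $SG_{h^\star}$, attains $\varphi$ on $\partial_D R_{2l}$, and realizes \eqref{eqn:FAequalareaofhstar}.

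For the inequality $\ge$, the candidate competitor for the Plateau problem is the graph of $\psi^\star$ itself, with the choice $w:=h^\star$. Since $\overline{SG_{h^\star}}$ is simply connected and bounded by a Jordan curve, the Riemann mapping theorem combined with Carath\'eodory's extension yields a conformal homeomorphism $\overline{B_1}\to\overline{SG_{h^\star}}$; post-composing with $(w_1,w_2)\mapsto(w_1,w_2,\psi^\star(w_1,w_2))$ gives a map $X^\star\in C^0(\overline{B_1};\R^3)\cap H^1(B_1;\R^3)$ whose boundary trace is a monotone parametrization of $\Gamma_{h^\star}$. Injectivity onto the graph of $\psi^\star$ and the area formula yield $\mathcal P_{\Gamma_{h^\star}}(X^\star)=\mathcal H^2(\mathrm{graph}\,\psi^\star)=\mathbb A(\psi^\star,SG_{h^\star})=\mathcal F_{2l}(h^\star,\psi^\star)$, the last equality by \eqref{eqn:FAequalareaofhstar}. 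Plateau minimality then gives $\inf_w m_w\le m_{h^\star}\le\mathcal P_{\Gamma_{h^\star}}(X^\star)=\mathcal F_{2l}(h^\star,\psi^\star)$.

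For the inequality $\le$ it suffices to show that for every admissible $w$ there exists $(\tilde h,\tilde\psi)\in X_{2l}^{\rm conv}$ with $\mathcal F_{2l}(\tilde h,\tilde\psi)\le m_w$; combining this with the minimality of $(h^\star,\psi^\star)$ and passing to the infimum over $w$ yields the bound. My strategy is to mirror Steps~4--7 of the proof of Theorem \ref{prop_zero}: given a disc-type Plateau minimizer $\Sigma_w$ with $\mathcal H^2(\Sigma_w)=m_w$, reflect it across $\{w_3=0\}$ to obtain a symmetric surface $\widehat\Sigma$ with doubled boundary, then apply Rado's lemma together with the Meeks--Yau embedding theorem to show that $\widehat\Sigma\cap\{w_3\ge0\}$ is a Cartesian graph over the subgraph of a convex symmetric $\tilde h\in\mathcal H_{2l}$, and define $\tilde\psi$ as the corresponding height function. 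The strong maximum principle (as in Step~8 of Theorem \ref{prop_zero}) forces $\tilde\psi=0$ on $G_{\tilde h}$, while $\tilde\psi=\varphi$ on $\partial_D R_{2l}$ by construction; therefore $\mathcal F_{2l}(\tilde h,\tilde\psi)=\mathcal H^2(\mathrm{graph}\,\tilde\psi)\le m_w$.

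The principal obstacle is the graph-extraction in the $\le$ direction when $w$ is only continuous---in particular neither symmetric nor convex---so that the Rado-type rigidity used in Step~6 of the proof of Theorem \ref{prop_zero} (which crucially exploited symmetry of the doubled boundary to rule out vertical tangent planes) does not apply verbatim. I expect the technical heart of the argument to be a preliminary Steiner-type rearrangement (either of $\Sigma_w$ itself or of the boundary curve $\Gamma_w$) that does not increase the Plateau area, followed by a verification that the graph-extraction procedure applied to the symmetrized object still yields a pair in $X_{2l}^{\rm conv}$ with area bounded by $m_w$.
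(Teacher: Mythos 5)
Your overall architecture (proving $\geq$ and $\leq$ separately, with $\geq$ by exhibiting the graph of $\psi^\star$ as a competitor for $w=h^\star$, and $\leq$ by producing a pair in $X_{2l}^{\rm conv}$ from an arbitrary Plateau solution) is the right one, and it does match the spirit of the paper's one-line remark that one should ``adapt the proof of Theorem~\ref{prop_zero}''. However, there are two points where your argument does not close.

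First, a small point in the $\geq$ direction: composing a Riemann map $\overline{B_1}\to\overline{SG_{h^\star}}$ with the graph map $(w_1,w_2)\mapsto(w_1,w_2,\psi^\star(w_1,w_2))$ does not obviously yield an $H^1$ map. By conformal invariance of the Dirichlet integral, the energy of that composition equals $\int_{SG_{h^\star}}(2+|\nabla\psi^\star|^2)$, and it is not established in the paper that $\nabla\psi^\star$ is square-integrable up to the free boundary $G_{h^\star}$ (only $\int\sqrt{1+|\nabla\psi^\star|^2}<+\infty$ is known; the gradient can a priori blow up where the doubled surface $\Sigma$ meets $\{w_3=0\}$). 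The fix is to use a \emph{conformal} parametrization of the graph surface $\Sigma^+$ itself --- which is available because $\Sigma^+$ is a piece of the area-minimizing disc $\Sigma$ from the proof of Theorem~\ref{prop_zero} --- so that the Dirichlet energy equals $2\,\mathcal H^2(\Sigma^+)<\infty$; this is not the same map as your composition.

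Second, and more seriously, the $\leq$ direction as stated does not work, and the issue goes beyond what you flag. You reflect $\Sigma_w$ across $\{w_3=0\}$ and then try to run Rado/Meeks--Yau on $\widehat\Sigma$. But (a) $\widehat\Sigma$ is not a priori a smooth minimal disc: Schwarz reflection would require $\Sigma_w$ to meet $\{w_3=0\}$ orthogonally along the planar part of $\Gamma_w$, which is not given; (b) Meeks--Yau requires the boundary curve to lie on the boundary of a convex body, which fails because the graph of $w$ is in the interior of $R_{2l}\times\{0\}$ whenever $w$ is not convex (or even when it is, unless $w\equiv -1$); and (c) the very first step of the Rado claim in Step~6 of Theorem~\ref{prop_zero} --- that a vertical plane meets the boundary curve in at most four points --- uses that the planar projection of the boundary is the boundary of a convex quadrilateral, and this fails for non-convex $w$. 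So the whole Rado/Meeks--Yau route is closed off. The Steiner-type rearrangement you mention as a ``preliminary'' step is in fact the \emph{entire} argument, not a preprocessing step before Rado: one takes the $2$-current carried by the Plateau solution, applies the classical Steiner symmetrization of the enclosed region with respect to $\{w_3=0\}$ exactly as in Section~\ref{sec:lower_bound} (cf.\ Lemma~\ref{lemma_9}), obtains directly a $BV$ pair $(h_w,\psi_w)$ with $\mathcal F_{2l}(h_w,\psi_w)\le m_w$, and then convexifies $h_w$ via Proposition~\ref{modifications_of_h}. After symmetrization the surface is Cartesian \emph{by construction}, so no graphicality lemma is needed. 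You should jettison the reflect-and-Rado plan and develop the symmetrization argument instead.
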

The proof of this corollary
 can be achieved by adapting the proof of Theorem \ref{prop_zero}, 
which shows that the solution to the 
Plateau problem in \eqref{equiv_plateau} is Cartesian and the optimal $w$ is convex.

\nada{
 and, 
precisely it is a subset of the boundary of convex hull of the graph 
of $\psi^\star$\draftRRR{I think the convex hull is exactly half cylinder, whereas the graph of $\widehat \varphi$ is its lateral boundary}. As a consequence, if $\psi^\star(w_1,w_2)
=\widehat 
\varphi(w_1,w_2)$ for some $(w_1,w_2)\in R_{2l}$ 
it follows that the mean curvature of $G_{\psi^\star}$ cannot be zero at the point $(w_1,w_2,\psi^\star(w_1,w_2))$, because $G_{\psi^\star}$ is tangent to $G_{\widehat\varphi}$ (which has strictly positive curvature) and it holds $\psi^\star\leq 
\widehat
\varphi$. \draftRRR{Probably we can directly appeal to a result which states that the internal points of a minimal surface are always strictly inside the convex hull of its boundary, with the only exception in the case of part of a plane. 
(see \cite[pag 63, section 70]{Nitsche:89})}\draftAAA{I agree}}

\section{Upper bound}\label{sec:upper_bound}
A minimizer $(h^\star, \psi^\star)$ of \eqref{eqn:B}
needs to be used 
for constructing a recovery sequence $(u_k) \subset {\rm Lip}(\Om, \R^2)$,
see formulas \eqref{vepsonCeps} and \eqref{vepsonCeps-}:
we know that 
$\psi^\star$ 
is locally Lipschitz, but not Lipschitz, in $R_{2l}$; therefore we need first a regularization 
procedure. 

Let $(h^\star,\psi^\star)$ be a minimizer provided by 
Theorem \ref{Thm:existenceofminimizer}, and assume that $h^\star$ is not identically $-1$. 
We fix an integer $m>0$
and, recalling the definition of $\widehat \varphi$ in 
\eqref{eq:widehat_varphi}, define
\begin{align}\label{def_varphi_m}
\varphi_m:=\Big(\widehat 
\varphi-\frac2m\Big)\vee0 
\qquad {\rm in}~ \overline R_{2l}.
\end{align}
We observe that  $\varphi_m$ is Lipschitz continuous in $\overline R_{2l}$.
We then set
\begin{align}\label{def_psi_m}
\psi^\star_m:=\Big(\big(\psi^\star-\frac1m)\vee0\big)\Big)\wedge\varphi_m
\qquad {\rm in}~ R_{2l}.
\end{align}
Since $\psi^\star$ is locally Lipschitz in $R_{2l}$, an easy check
shows that $\psi^\star_m$ is Lipschitz continuous in $R_{2l}$ for
any $m$ (with an unbounded Lipschitz constant as $m\rightarrow+\infty$).
 This follows from the fact that 
$\psi^\star$ is continuous up to the boundary of $R_{2l}$
(see Theorem  \ref{Thm:existenceofminimizer} (iii)) 
and  hence 
$\psi^\star_m$ coincides with 
either $0$ or $\varphi_m$ in a 
neighbourhood of $(\partial_D R_{2l})\cup G_{h^\star}$ in $R_{2\longR}$. Furthermore still $\psi^\star_m=0$ on the upper graph $\overline{R_{2l}}\setminus SG_{h^\star}=\{(w_1,w_2)\in \overline{R_{2l}}:w_2\geq h^\star(w_1)\}$ of $h^\star$.

\begin{lemma}[\textbf{Properties of $\psi_m^\star$}]
\label{lem:properties_of_psi_m}
	Let $(h^\star,\psi^\star)$ be a minimizer of $\FB$ as in 
Theorem \ref{Thm:existenceofminimizer} and assume $h^\star$ is not identically $-1$. 
For all $m>0$ let $\psi^\star_m$ be defined as in \eqref{def_psi_m}.
	Then: 
	\begin{itemize}
		\item[(i)] $\psi^\star_m$ is 
Lipschitz continuous in $\overline{SG_{h^\star}}$,  $\psi^\star_m=0$ 
on $([0,2l]\times\{-1\}) \cup (\overline{R_{2l}}\setminus SG_{h^\star})$, 
and $\psi^\star_m(0,\cdot)=\varphi_m(0,\cdot)$, so that  $|\partial_{w_2}\psi^\star_m(0,\cdot)|\leq|\partial_{w_2}\varphi(0,\cdot)|=|\partial_{w_2}\psi^\star(0,\cdot)|$ a.e. in $[-1,1]$; \
		
		\item[(ii)] $(\psi^\star_m)$ 
converges to $\psi^\star$ uniformly on $\{0,2l\}\times [-1,1]$ as $m \to +\infty$;
		
		\item[(iii)] we have 
\begin{align}\label{eq:lim_m}
		\lim_{m\rightarrow +\infty} 
\areaonecod(\psi^\star_m, SG_{h^\star})=
		\areaonecod(\psi^\star, SG_{h^\star}).
		\end{align}
\end{itemize}
	As a consequence $\FB(h^\star,\psi^\star_m)\rightarrow \FB(h^\star,\psi^\star)$ as $m\rightarrow +\infty$.
\end{lemma}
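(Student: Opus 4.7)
The plan is to exploit the precise regularity of the minimizer $(h^\star,\psi^\star)$ established in Theorem \ref{Thm:existenceofminimizer}: namely, $\psi^\star$ is analytic and strictly positive in $SG_{h^\star}$, continuous up to $\partial SG_{h^\star}$ with the prescribed boundary values \eqref{eqn:psionestarboundary}, and satisfies the strict inequality $\psi^\star<\widehat\varphi$ in $R_{2l}$ with equality only on $\{0,2l\}\times[-1,1]$. The three truncation levels $0,\ 1/m,\ 2/m$ in the definitions \eqref{def_varphi_m}, \eqref{def_psi_m} are chosen precisely so that in a neighbourhood of each piece of $\partial SG_{h^\star}$, exactly one of the three ``branches'' of $\psi_m^\star$ is active, and on the boundary itself the active branch automatically matches the required trace.

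For (i), I would partition a neighbourhood of $\partial SG_{h^\star}$ in $\overline{SG_{h^\star}}$ into three regions, according to continuity of $\psi^\star$: near $G_{h^\star}$ and near $[0,2l]\times\{-1\}$, the trace of $\psi^\star$ is zero, hence $\psi^\star<1/m$ there, so $\psi_m^\star\equiv 0$; near $\{0\}\times[-1,1]$ and $\{2l\}\times[-1,1]$, continuity and the boundary condition $\psi^\star=\widehat\varphi$ imply $\psi^\star\geq \widehat\varphi-1/m$ in a $w_1$-strip, whence $(\psi^\star-1/m)\vee 0\geq \widehat\varphi-2/m\geq \varphi_m$, so $\psi_m^\star=\varphi_m$. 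In the interior, $\psi^\star$ is locally Lipschitz (even analytic), so all three candidates defining $\psi_m^\star$ are locally Lipschitz, and taking max/min preserves this; hence $\psi_m^\star$ is Lipschitz on $\overline{SG_{h^\star}}$. The equality $\psi_m^\star(0,\cdot)=\varphi_m(0,\cdot)$ and the control on $\partial_{w_2}\psi_m^\star(0,\cdot)$ follow from this second case and the explicit form of $\varphi_m$.

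Item (ii) is then immediate from $\psi_m^\star(0,w_2)=(\widehat\varphi(0,w_2)-2/m)\vee 0$ converging uniformly to $\widehat\varphi(0,w_2)=\psi^\star(0,w_2)=\sqrt{1-w_2^2}$ on $[-1,1]$, and similarly on $\{2l\}\times[-1,1]$. For (iii), which is the main analytic point, I would write
$$
\mathbb A(\psi_m^\star,SG_{h^\star})=\int_{A_m}\!\sqrt{1+|\nabla\psi^\star|^2}\,dw+\int_{B_m}\!\sqrt{1+|\nabla\widehat\varphi|^2}\,dw+|C_m|,
$$
with $A_m=\{1/m<\psi^\star<\varphi_m+1/m\}$, $B_m=\{\psi^\star\geq \varphi_m+1/m\}$, $C_m=\{\psi^\star\leq 1/m\}\cap SG_{h^\star}$. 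Strict positivity of $\psi^\star$ in $SG_{h^\star}$ gives $|C_m|\to 0$; the strict inequality $\psi^\star<\widehat\varphi$ in the interior together with continuity at $\{0,2l\}\times[-1,1]$ localizes $B_m$ into a $w_1$-strip of vanishing width, and since $\widehat\varphi$ depends only on $w_2$ with integrable singularity $\partial_{w_2}\widehat\varphi=-w_2/\sqrt{1-w_2^2}$, the second integral vanishes as $m\to\infty$ by Fubini; finally, monotone convergence on the increasing family $A_m$ recovers $\mathbb A(\psi^\star,SG_{h^\star})$ (which equals the full $\mathbb A(\psi^\star,SG_{h^\star})$ because $\psi^\star\in W^{1,1}(SG_{h^\star})$, having no jump part by the continuous attainment of boundary data).

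The main subtlety—and the place I would be most careful—is the integral over $B_m$: although $|B_m|\to 0$, one must rule out a concentration of $|\nabla\widehat\varphi|$ near $w_2=\pm 1$. This is handled by the factorization $\int_{B_m}\leq \delta_m\int_{-1}^{1}\tfrac{dw_2}{\sqrt{1-w_2^2}}=\pi\delta_m$ where $\delta_m$ is the width of the $w_1$-strip controlling $B_m$, which goes to $0$ by uniform continuity of $\psi^\star$ on compact subsets of $\overline{SG_{h^\star}}$. The final statement $\mathcal F_{2l}(h^\star,\psi_m^\star)\to\mathcal F_{2l}(h^\star,\psi^\star)$ follows from (i)--(iii) combined with the representation \eqref{eqn:FA}: $L_{h^\star}=\emptyset$ since $h^\star(0)=h^\star(2l)=1$ by Theorem \ref{Thm:existenceofminimizer}(i), the $G_{h^\star}$- and $\{w_2=-1\}$-trace terms vanish for both $\psi_m^\star$ and $\psi^\star$, and the lateral trace terms converge by (ii).
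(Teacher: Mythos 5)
Your proof is correct and takes essentially the same route as the paper: both split $SG_{h^\star}$ into the ``good set'' $A_m$ where $\psi^\star_m=\psi^\star-\tfrac1m$ and its complement, use the pointwise identity $\nabla\psi_m^\star=\nabla\psi^\star$ on $A_m$, and show that the complement contribution vanishes. The paper's treatment of the complement is terser (it invokes uniform $L^1$-boundedness of $|\nabla\varphi_m|\leq|\nabla\widehat\varphi|$ together with $\mathcal H^2(SG_{h^\star}\setminus A_m)\to0$, implicitly a dominated-convergence argument), whereas you make it concrete via a Fubini estimate on a shrinking $w_1$-strip; both work. Your account of~(i) and~(ii) is more detailed than the paper's (``direct consequences of the definitions''), and your deduction of the final convergence $\FB(h^\star,\psi^\star_m)\to\FB(h^\star,\psi^\star)$ via \eqref{eqn:FA} with $L_{h^\star}=\emptyset$, vanishing trace on $G_{h^\star}\cup\{w_2=-1\}$, and uniform convergence of $\varphi_m$ on $\{0,2l\}\times[-1,1]$, is exactly the right bookkeeping. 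One small imprecision: you call the convergence on $A_m$ ``monotone''; the sets $A_m=\{1/m<\psi^\star<\varphi_m+1/m\}$ are not nested (the upper threshold $\varphi_m+1/m=(\widehat\varphi-2/m)\vee 0 +1/m$ is not monotone in $m$ where $\widehat\varphi\leq 2/m$), so one should invoke dominated convergence, using $\chi_{A_m}\to 1$ a.e.\ on $SG_{h^\star}$ (from $\psi^\star>0$ and $\psi^\star<\widehat\varphi$) and $\sqrt{1+|\nabla\psi^\star|^2}\in L^1(SG_{h^\star})$. This does not affect the validity of the argument.
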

\begin{proof} (i) and (ii) are direct consequences of the definitions. 
To show (iii) 
	we start to observe that 
$\psi^\star_m\rightarrow \psi^\star$ 
pointwise in $R_{2l}$: indeed, 
this follows from the definitions 
of $\varphi^\star_m$ and $\psi^\star_m$ up to noticing that 
$\varphi_m\rightarrow \widehat \varphi$ pointwise in $R_{2l}$ 
as $m\rightarrow +\infty$, 
and $\psi^\star\leq \widehat 
\varphi$ on $R_{2l}$. 
From Theorem \ref{Thm:existenceofminimizer} (iv) it follows that, 
at any point $(w_1,w_2)\in R_{2l}$, 
for $m$ large enough 
$\varphi_m(w_1,w_2)>\psi^\star(w_1,w_2)$ 
(since $\widehat \varphi(w_1,w_2)>\psi^\star(w_1,w_2)$), 
so that 
$\psi^\star_m(w_1,w_2)=\psi^\star(w_1,w_2)-\frac1m$. 
	As a consequence the set $A_m:=\{0<\psi^\star-\frac1m<
	\varphi_m\}$ satisfies 
$$
\lim_{m \to +\infty} \mathcal H^2(SG_{h^\star}\setminus A_m)=0,
$$
and on $A_m$ it holds
 $\psi^\star_m=\psi^\star-\frac1m$ and  $\nabla \psi^\star_m=\nabla \psi^\star$. Moreover,
 on $SG_{h^\star}\setminus A_m$, 
either $\psi^\star_m=0$ (and hence $\nabla \psi_m^\star=0$) or $\psi^\star_m=\varphi_m$ (and hence $\nabla \psi^\star_m=\nabla 
\varphi_m$). Therefore
$$
\int_{SG_{h^\star}\setminus A_m}\sqrt{1+|\nabla\psi^\star_m|^2}~dx
\leq \int_{SG_{h^\star}\setminus A_m}\sqrt{1+|\nabla\varphi_m|^2}~dx
$$
and 
$$
\lim_{m \to +\infty}
\int_{SG_{h^\star}\setminus A_m}\sqrt{1+|\nabla\psi^\star_m|^2}~dx
\leq \lim_{m \to +\infty}
\int_{SG_{h^\star}\setminus A_m}\sqrt{1+|\nabla\varphi_m|^2}~dx
= 0,$$
because $\vert \nabla \varphi_m\vert$ are 
uniformly bounded in $L^1(R_{2l})$.
Also

	\begin{align*}
	\areaonecod(\psi^\star_m, SG_{h^\star})=\int_{A_m}\sqrt{1+|\nabla\psi^\star|^2}~dx+\int_{SG_{h^\star}\setminus A_m}\sqrt{1+|\nabla\psi^\star_m|^2}~dx,
	\end{align*}
and \eqref{eq:lim_m} follows.
\end{proof}

The main result of this section reads
as follows.

\begin{theorem}[\textbf{Upper bound for the area of the vortex map}]
\label{Thm:maintheorem}
The relaxed area of the graph of the vortex map $\vmap$ satisfies 
\begin{equation}
\label{eq:upper_bound_recovery}
\relarea(\vmap,\BallR)\leq 
\int_{\BallR}\vert \M(\nabla \vmap)\vert ~dx  + 2
\min \big \{ \FBl(\h,\psione):  
(\h,\psione) \in  \Wspace \big \}.
\end{equation}
\end{theorem}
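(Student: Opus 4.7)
The plan is to construct, via explicit rescaling of the minimal surface $\Sigma$, a sequence $(u_k)\subset {\rm Lip}(\Omega,\R^2)$ converging to $u$ in $L^1(\Omega,\R^2)$ whose graph area tends to the right-hand side of \eqref{eq:upper_bound_recovery}. First I would use the doubling identity \eqref{doubling} to rewrite the target bound in terms of $\FB$, and by Theorem \ref{prop_zero} select a minimizer $(h^\star,\psi^\star)\in \domalaa$ with $L_{h^\star}=\emptyset$, $h^\star$ convex and symmetric about $w_1=l$, $\psi^\star$ analytic in $SG_{h^\star}$ and continuous up to $\partial SG_{h^\star}$ with $\psi^\star=0$ on $G_{h^\star}\cup([0,2l]\times\{-1\})$. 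The degenerate case $h^\star\equiv -1$ (where $\FB(-1,0)=\pi$) reduces to the classical construction of \cite{AcDa:94}, so the effort goes into the case $h^\star\not\equiv-1$. A preliminary regularization through $\psi^\star_m$ of Lemma \ref{lem:properties_of_psi_m} reduces the problem to the case of a Lipschitz $\psi^\star$ attaining the right boundary values on $\partial_D SG_{h^\star}$, since $\FB(h^\star,\psi^\star_m)\to \FB(h^\star,\psi^\star)$, and a diagonal procedure at the end yields the conclusion.

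Next I would transfer the picture to the source disk $\Omega=\sourcedisk_l$. Using polar coordinates $(r,\alpha)$ in the source and identifying $\R^4 = \R_{w_1}\times\R^2_{w_2,w_3}\times\R$, the target $\R^2$ of $u$ is identified with the vertical plane $\{w_1=r\}\cap (\R_{w_1}\times \R^2_{w_2,w_3})$ via $u_k(r,\alpha)\leftrightarrow(w_2(r,\alpha),w_3(r,\alpha))$. Choose an infinitesimal sequence $\theta_k\to 0^+$ of half-angles and build $u_k$ so that:
\begin{itemize}
\item for $|\alpha|\geq 2\theta_k$, set $u_k(r,\alpha):=u(r,\alpha)=(\cos\alpha,\sin\alpha)$, which carries all the Jacobian needed for the first addendum on the right-hand side of \eqref{eq:upper_bound_recovery};
\item for $|\alpha|\leq \theta_k$, define $u_k(r,\alpha)$ by parametrizing (smoothly and with Lipschitz constant controlled by $\theta_k^{-1}$) the closed curve
\[
\alpha\in[-\theta_k,\theta_k]\;\mapsto\; \bigl(w_2(r,\alpha),w_3(r,\alpha)\bigr), \qquad w_3(r,\alpha)=\pm\psi^\star_m(r,w_2(r,\alpha)),
\]
tracing out the cross-section of (a double copy of) the graph of $\psi^\star_m$ over $SG_{h^\star}$ at $\{w_1=r\}$, traversed once, and starting and ending at the point $(1,0)$;
\item for $\theta_k\le|\alpha|\le 2\theta_k$, linearly interpolate along a Lipschitz collar interpolating between the two regimes, using that the cross-section closes up at $w_2=-1$ and $w_2=h^\star(r)$ because $\psi^\star_m$ vanishes there.
\end{itemize}
By construction $u_k\in{\rm Lip}(\Omega,\R^2)$, and $u_k\to u$ in $L^1$ because the angular thickness $\theta_k$ of the region where $u_k\ne u$ goes to $0$ uniformly in $r$ while $|u_k|\le 1$.

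The estimate of $\area(u_k,\Omega)$ is the main technical step. Using \eqref{eqn:areapolarexpression} and splitting
\[
\area(u_k,\Omega)=\area(u_k,\{|\alpha|\geq 2\theta_k\})+\area(u_k,\{\theta_k\le|\alpha|\le 2\theta_k\})+\area(u_k,\{|\alpha|\leq \theta_k\}),
\]
the first term equals $\int_{\{|\alpha|\geq 2\theta_k\}}|\M(\nabla u)|\,dx$ and tends to $\int_\Omega|\M(\nabla u)|\,dx$ as $k\to+\infty$. The collar contribution is controlled by $C\theta_k$ (bounded uniformly in $r$) and therefore vanishes in the limit. For the central piece I would compute the Jacobians of the parametrization $(r,\alpha)\mapsto(r,u_k(r,\alpha))\in\{w_1=r\}\times\R^2$, exploiting that $\partial_\alpha u_k(r,\alpha)$ is essentially $\theta_k^{-1}$ times the arc-length tangent to the cross-section of $\Sigma_m:=\{(w_1,w_2,\pm\psi^\star_m(w_1,w_2))\}$, while $\partial_r u_k$ is controlled by the derivatives of $\psi^\star_m$; the factor $\theta_k$ in $r\,dr\,d\alpha$ compensates the $\theta_k^{-1}$ in the tangential derivative so that in the limit
\[
\limsup_k\area(u_k,\{|\alpha|\leq \theta_k\})\le \mathcal H^2(\Sigma_m)=2\,\areaonecod(\psi^\star_m,SG_{h^\star})=2\,\FBl(h^\star_{|[0,l]},\psi^\star_m)
\]
where the last equality uses the Dirichlet-type integrals in \eqref{eqn:FA} together with $L_{h^\star}=\emptyset$ and $\psi^\star_m=0$ on $G_{h^\star}\cup(\{w_2=-1\})$. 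The hard part will be precisely this area comparison: matching the non-standard area element in polar coordinates with the Euclidean area of $\Sigma_m$ and absorbing lower-order cross terms in the square root. Once this is done, Lemma \ref{lem:properties_of_psi_m} (iii) and a diagonal selection on $m,k$ give
\[
\relarea(u,\Omega)\le \int_\Omega|\M(\nabla u)|\,dx+2\FBl(h^\star_{|[0,l]},\psi^\star),
\]
which by \eqref{doubling} and the minimality of $(h^\star,\psi^\star)$ is the desired upper bound \eqref{eq:upper_bound_recovery}.
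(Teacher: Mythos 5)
Your approach is essentially the paper's: pick a minimizing pair $(h^\star,\psi^\star)$, regularize $\psi^\star$ to get a Lipschitz $\psi_m^\star$, and build $u_k$ so that on a thin angular wedge it traces the $\{w_1={\rm const}\}$ cross-sections of the graph $\Sigma_m$ of $\pm\psi_m^\star$, while elsewhere it agrees (after a small angular retraction) with $u$. The splitting into main sector, thin central wedge, and interpolation collar is the right decomposition, and the identification of the target $\R^2$ with the $(w_2,w_3)$-slice is the right geometry.

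However, there is a genuine gap at the origin that the proposal does not confront. You directly identify $w_1=r$, so at $r=0$ the cross-section of $\Sigma_m$ is the entire half-circle arc of $\psi_m^\star(0,\cdot)\approx\widehat\varphi(0,\cdot)$, a curve of definite diameter. But the polar point $r=0$ is a single point of $\Omega$, so any Lipschitz (indeed, any continuous) map $u_k$ must take a single value there; there is no way to ``parametrize the cross-section at $w_1=0$'' by $\alpha$ without losing continuity. Related to this, $h^\star$ is convex with $h^\star(0)=1$ but need not be Lipschitz up to the endpoint $w_1=0$: its right derivative could be $-\infty$, so your direct substitution $w_1=r$ would in general give a non-Lipschitz $u_k$ even before the continuity problem. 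The paper resolves both issues simultaneously by introducing a small radius $r_k$, a rescaling $\tau_k:[r_k,l]\to[0,l]$ so that the half-circle slice sits over $\partial \sourcedisk_{r_k}$ rather than the origin, a truncation $h_k^\star$ that makes $h^\star$ Lipschitz with controlled constant, and a separate construction inside $\sourcedisk_{r_k}$ (via a retraction $\Upsilon_k$ onto $\mathbb S^1$) whose area contribution is shown to vanish, using a condition like $\theta_k k\to 0$. Without some version of this, the proposal does not yield a sequence $(u_k)\subset {\rm Lip}(\Omega,\R^2)$ converging to $u$ in $L^1$, so the key inequality cannot be established as stated. The remaining step you flag as ``the hard part'' (matching the polar area element to $\mathcal H^2(\Sigma_m)$) is indeed the main computation, but it is carried out along the same lines as the paper once the change of variables is set up correctly.
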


\begin{proof}[Proof of theorem \ref{Thm:maintheorem}]
To prove the theorem, we need to construct a 
sequence $(\veps) \subset \textrm{Lip}(\BallR, \R^2)$ 
converging to $\vmap$ in $L^1(\BallR,\R^2)$ such that
\begin{equation*}
\lim_{k \to +\infty}\area(\veps, \BallR)\leq
\int_{\BallR}\vert \M(\nabla \vmap)\vert dx + \FB(\hstar,\psi^\star),
\end{equation*}
where $(\hstar, \psi^\star)$ is a pair
minimizing $\FB$ as in Theorem \ref{Thm:existenceofminimizer}. 
We can assume that $h^\star$ is not identically
$-1$, otherwise the result follows from \cite{AcDa:94}.

We will specify various subsets of $\BallR$ and define the sequence 
$(\veps)$ on each of these sets (see Fig. \ref{fig:picture_alaa_last_section}). More precisely, we will define $u_k$ as a map into $\mathbb S^1$ in the largest sector (step 1). This construction is similar 
to the one in \cite{AcDa:94} (see also Remark \ref{rem:below} below). The contribution of the area in this sector will equal, as $k\rightarrow\infty$, the first term in \eqref{eq:upper_bound_recovery}. The second term 
will be instead provided by the contribution of $u_k$ in region $C_k\setminus B_{r_k}$ (step 2), where we will need the aid of 
the functions $(\hstar, \psi^\star)$  (suitably regularized, in order to render $u_k$ Lipschitz continuous). The other regions surrounding $C_k\setminus B_{r_k}$ are needed to glue $u_k$ between the aforementioned regions. This is done in steps 3, 4 and 5, 
where it is also proven that the corresponding 
area contribution is negligible. Finally, in steps 6 and 7 
we show the crucial estimates to prove \eqref{eq:upper_bound_recovery}. 
In Fig. \ref{fig:picture_alaa_last_section} 
this subdivion of the domain $\Omega$ is drawn.

\begin{Remark}\label{rem:below}\rm
Our construction 
differs from the one in \cite{AcDa:94}, even when in place of 
$(\hstar,\psi^\star)$ we use $(1, \sqrt{1-s^2})$ 
(\textit{i.e.},
the one in Section \ref{subsec:an_approximating_sequence_of_maps_with_degree_zero:cylinder})
in the following sense.
We use the full 
graph of $\pm\psi^\star$ to construct 
$u_k$ (and therefore, in the case  
when $(\hstar,\psi^\star)$ is replaced by $(1, \sqrt{1-s^2})$, 
the image of $u_k$ covers the whole cylinder and not
only a part of it). Since $h^\star$ {\it may be
not identically $1$} (and actually is not explicit in general), 
the presence of a new set $T_k$
is now needed, as an intermediate region to glue the 
trace of $u_k$ along two segments $\{\theta=  \pm \overline\theta_k\}$.
The image set $u_k(T_k)$ covers a small
part of the unit circle.
See Fig. \ref{fig:picture_alaa_last_section}.
\end{Remark}
Let $k \in \mathbb N$ and let $(\reps),(\thetaeps), (\thetaepsbar)$ 
be infinitesimal sequences of positive numbers
such that $\thetaepsbar-\thetaeps=:\deltaeps>0$. 
We shall suppose\footnote{This assumption will be used only in step 7.} 
\begin{equation}\label{eq:k_theta_k_to_zero}
\lim_{k \to +\infty}(\theta_k k)=0.
\end{equation}
Let  $\Balleps$ be the 
open disc centered at the origin with radius $\reps$, and 
\begin{equation}\label{eq:C_k}
\Coneeps:=\{(r,\theta)\in 
[0,\longR) \times [0,2\pi):
\theta \in [0,\thetaeps]\cup[2\pi-\theta_k,2\pi)\},
\end{equation}
be the half-cone in $\Omega$, 
with vertex at the origin and aperture equal to $2\thetaeps$, see
Fig. \ref{fig:picture_alaa_last_section}. Let  
\begin{equation}\label{eq:T_k}
\Teps:=\{(r,\theta)\in [0,\longR) \times [0,2\pi):\theta \in [\thetaeps,\thetaepsbar]\cup[2\pi-\thetaepsbar,2\pi-\thetaeps]\}.
\end{equation} 
We set 
$$
C_k^+:=C_k\cap\{\theta\in[0,\theta_k]\}, \quad
C_k^-:=C_k\cap\{\theta\in[2\pi-\theta_k,2\pi]\},
$$
and 
divide $\Coneeps\cap (\BallR\setminus\Balleps)$ into two sets
\begin{equation}\label{eq:C_k_plus_minus}
\begin{aligned}
C_k\setminus \Balleps:=
\big(C_k^+ \setminus\Balleps\big)\cup\big(
C_k^- \setminus\Balleps\big).
\end{aligned}
\end{equation}
\begin{figure}
\begin{center}
    \includegraphics[width=0.9\textwidth]{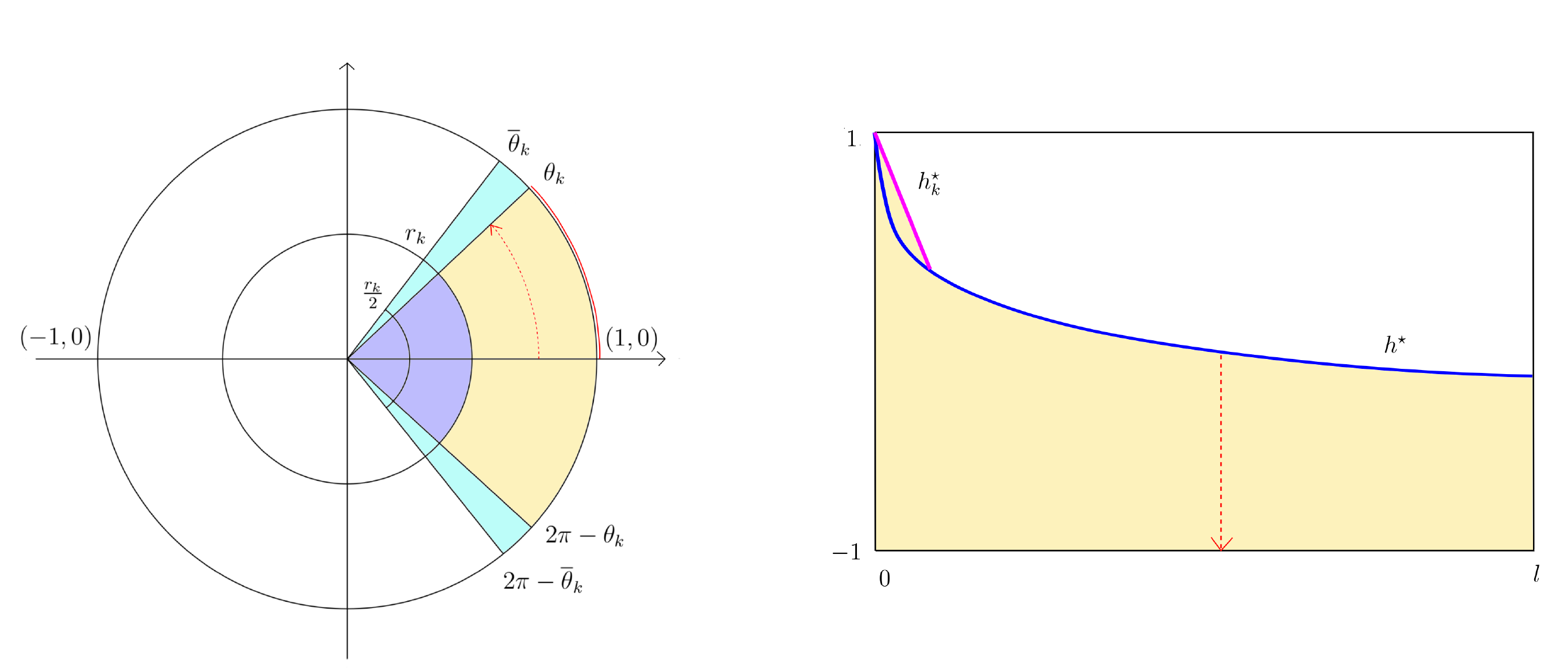}
\caption{On the left the subdivision of $\sourcedisk_l$ in sectors. 
Specifically, the sectors 
$C_k^+\setminus \Balleps$ 
and $C_k^-\setminus \Balleps$ are emphasized in 
light grey. The map $\mathcal T_k$ defined
	in \eqref{eqn:changeofvariableT} sends $C_k^+\setminus \Balleps$ 
 in the (reflected) subgraph of $h^\star_k$ in $R_\longR$, depicted on the right. This parametrization maps the segment joining $(r_k,0)$ to $(1,0)$ onto the graph of $h^\star_k$, and the radius corresponding to $\theta=\theta_k$ to the basis of $R_l$, following the orientation emphasized by the dashed arrow. The graph of $h^\star_k$ starts 
linearly from the point $(0,1)$ with negative derivative,
then joins
 and next coincides 
 with the graph of $h^\star$.  The definition of  $u_k$ in $C_k^+\setminus \Balleps$
 makes use of this parametrization of $SG_{h^\star_k}$ (see \eqref{vepsonCeps}). This parametrization needs a reflection,
in order to glue $u_k$ on the 
horizontal segment $\{\theta=0\}$ with the definition of $u_k$ 
in $C_k^-\setminus \Balleps$. Note that 
$\Psi_{k}(C_k^+\setminus \Balleps) \subset \R^3$,
 with $\Psi_k$ defined in \eqref{eq:Psi_k}, 
 is the graph of $\psi_k^*(w_1,-w_2)$ 
above the 
reflection of the two-dimensional region in between the graph of 
$h_k^*$ and the horizontal segment $\{w_2=-1\}$.}
\label{fig:picture_alaa_last_section}
\end{center}
\end{figure}
\smallskip
{\it Step 1.} 
Definition of $\veps$ in $\overline{\BallR \setminus ( \Coneeps \cup \Teps)}$.

In this step our 
construction is similar to the one in \cite[Lem. 5.3]{AcDa:94},
see also \eqref{eqn:u_k_Cylinder}; in order to define $u_k$, 
in the source we use polar  
coordinates $(r,\theta)$ and in the target Cartesian coordinates.
 Define
\begin{equation}\label{eqn:vepsoutofthecone}
\veps(r,\theta):=
\begin{cases}
      \vmap(r,\theta) = (\cos \theta, \sin \theta), & 
(r,\theta)
\in (\overline{\BallR \setminus (\Coneeps\cup \Teps)})\setminus 
\sourcedisk_{r_k/2},
\\
     \Big(\cos (\frac{2r}{\reps}(\theta -\pi)+\pi),
\sin (\frac{2r}{\reps}(\theta -\pi)+\pi)\Big), & 
(r,\theta) \in
\overline{\sourcedisk_{r_k/2} \setminus (\Coneeps\cup \Teps)}.
    \end{cases} 
\end{equation}
Observe that 
\begin{align}
&\veps(0,0)=(-1,0) = u_k(r,\pi), \qquad r \in [0,\longR);
\nonumber
\\
&\veps(r,\thetaepsbar)=(\cos \thetaepsbar, \sin  \thetaepsbar), \qquad 
\veps(r,2\pi-\thetaepsbar)=(\cos \thetaepsbar, \sin  (-\thetaepsbar)),
\qquad r \in (\reps/2,l);
\nonumber
\\
&\veps(r,\thetaepsbar)=\Big(
\cos(\frac{2r}{\reps}(\thetaepsbar -\pi)+\pi)~,~ 
\sin (\frac{2r}{\reps}(\thetaepsbar -\pi)+\pi)\Big),
\qquad r \in [0,\reps/2],
\nonumber
\\
&\veps(r,2\pi-\thetaepsbar)=\Big(
\cos(\frac{2r}{\reps}(\pi-\thetaepsbar )+\pi)~,~ 
\sin (\frac{2r}{\reps}(\pi-\thetaepsbar)+\pi)\Big),\qquad 
r \in [0,\reps/2].
\label{eqn:vepsboundaryinBeps}
\end{align}

\smallskip
The relevant contribution to the area of the graph of $u_k$ is the one
in region $C_k$, and more specifically in $C_k \setminus B_{r_k}$;
it is in this region that we need to use a minimizing pair of $\FB$.

{\it Step 2.} 
Definition of $\veps$ on $\Coneeps\setminus \Balleps$.

We first need a regularization of $h^\star$:  assuming without 
loss of generality $1/k < \longR$,
we define 
\begin{equation}\label{def_h_k}
h^\star_k (w_1):=
\begin{cases}
h^\star(w_1) & {\rm for~  }w_1\in [\frac1k,\longR],
\\
k\left(h^\star (\frac1k)- \hstar(0)\right)w_1+\hstar(0)& {\rm for~  }w_1\in [0,\frac1k),
\end{cases}
\end{equation}
where we recall that $h^\star(0)=1$ (see Theorem \ref{Thm:existenceofminimizer}), 
and we set $h^\star_k(w_1):=h^\star_k(2\longR -w_1)$ for $w_1\in 
[\longR,2\longR]$ (see Fig. \ref{fig:picture_alaa_last_section}, right). Notice that $\hstark(0)=1$, 
$\h^\star_k \in \Lip([0,2\longR])$ and
the convexity of $h^\star$ implies that also $h^\star_k$ is convex,
$h^\star_k\geq h^\star$, and therefore by Lemma \ref{lem:properties_of_psi_m} (i) we see that
$(h^\star_k, \psi^\star_k) 
\in X_{2\longR}^{{\rm conv}}$,
where 
$\psi^\star_k$ is the approximation of $\psi^\star$ in Lemma 
\ref{lem:properties_of_psi_m} (with $k=m$), see formula
\eqref{def_psi_m}.
Again by Lemma
\ref{lem:properties_of_psi_m}, 
$\FB(h^\star_k,\psi^\star_k)=
\FB(h^\star,\psi^\star_k)+ \int_0^{2\longR} 
\left( h^\star_k(w_1) -\h^\star(w_1) \right)~dw_1\rightarrow \FB(h^\star,\psi^\star)$ as $k\rightarrow +\infty$. 

We start with the construction of $u_k$ on $C_k^+\setminus \Balleps$. Set
\begin{align}
&\teps:[\reps,\longR]\to[0,\longR],&\qquad &\teps(r):=\frac{\longR}{\longR-\reps}(r-\reps), \label{eqn:teps}\\
&\seps:[\reps,\longR]\times[0,\thetaeps]\to [-1,1],&\qquad &\seps(r,\theta):=\frac{1 + \hstark(\teps(r))}{\thetaeps}\theta -\hstark(\teps(r)).\label{eqn:seps}
\end{align}
Note that $\seps(r,\cdot):[0,\thetaeps]\to [-\hstark(\teps(r)),1]$ is a 
bijective increasing function, for any $r \in [r_k, \longR]$. 
Thus 
\begin{align}
&\seps(r,0)=-\hstark(\teps(r)) \quad{\rm for~any}~ 
r \in [r_k,\longR],  \text{ in particular }\seps(\reps,0)=-1,
\label{eqn:sepsboundarythetazero}
\\
&\seps(r,\thetaeps)=1, \qquad r\in[\reps,l]
\label{eqn:sepsboundarythetaeps},
\\
& \seps(r_k,\theta)=\frac{2\theta}{\theta_k} -1, \qquad \theta\in[0,\theta_k].
\label{eq:s_k_r_k_theta}
\end{align}
We have, for all $r\in[\reps,l]$ and $\theta\in[0,\theta_k]$,
\begin{align}
&
\teps'(r)
=\frac{\longR}{\longR-\reps},\label{eqn:tepsderiv}\\
&\partial_\theta \seps(r,\theta)=\frac{1 +\hstark(\teps(r))}{\thetaeps},
\label{eqn:sepsthetaderiv}
\end{align}
and, for almost every $r\in [\reps,l]$ and
all $\theta\in[0,\theta_k]$, 
\begin{align}
\partial_r \seps(r,\theta) =\left(\frac{\theta}{\thetaeps}-1\right)
\teps'(r)  \hstark'(\teps(r)) = 
\frac{\longR}{\longR-\reps}
\left(\frac{\theta}{\thetaeps}-1\right)  
\hstark'(\teps(r)).
\label{eqn:sepsrderiv}
\end{align}
Moreover  we define
\begin{equation}\label{eqn:rfn}
\invr : [0,\longR]\to[\reps,\longR], \qquad
\invr(\tcoord) :=\frac{\longR-\reps}{\longR}\axialcoordofcylinder+\reps ~
\end{equation}
to be the inverse of $\tau_k$ and, recalling that 
$\overline R_l = [0,\longR] \times [-1,1]$, 
\begin{equation}
\label{eqn:thetafn} 
\invtheta
:\Omegahstark \cap \overline R_l  \to [0,\thetaeps],
\qquad
\invtheta(\axialcoordofcylinder,\scoord) :=\frac{\thetaeps}{1 + \hstark (\axialcoordofcylinder)}
(\hstark(\axialcoordofcylinder)-\scoord).
\end{equation}
Notice that 
$\invtheta(\axialcoordofcylinder,\cdot):
[-1,\hstark(\axialcoordofcylinder)]
\to [0,\thetaeps]$ is a linearly decreasing bijective function. 
	
The map 
\begin{equation}\label{eqn:changeofvariableT}
\mathcal T_k:C_k^+\setminus \Balleps
\to 
\Omegahstark
\cap 
\overline R_l, 
\qquad \mathcal T_k(r,\theta):=(\teps(r),-\seps(r,\theta)),
\end{equation} 
is invertible, and its inverse is the map
\begin{equation}\label{eqn:changeofvariableTinverse}
\mathcal T_k^{-1}:
\Omegahstark\cap 
\overline R_l
\to C_k^+\setminus \Balleps, \qquad 
\mathcal T_k^{-1}(\axialcoordofcylinder,\scoord)
:=(\invr(\axialcoordofcylinder),\invtheta(\axialcoordofcylinder,\scoord)).
\end{equation} 
The modulus of the determinant of the Jacobian of $\mathcal T_k^{-1}$ is given by 
\begin{equation}\label{eqn:changeofvariabledeterminant}
\vert J_{\mathcal T_k^{-1}}\vert=\left( 
\frac{\longR -\reps}{\longR}\right)
\frac{\thetaeps}{1 +\hstark(\axialcoordofcylinder)}.
\end{equation}
We set 
\begin{equation}\label{vepsonCeps}
\veps(r,\theta)
:=\Big(\seps(r,\theta),\psionestar_k
\big(\mathcal T_k(r,\theta)\big)\Big) =
\Big(u_{k1}(r,\theta),
u_{k2}(r,\theta)\Big), \qquad 
r \in [r_k, l], \theta \in [0,\theta_k].
\end{equation}
Observe that, using the definition of $\psi_k^\star$,
\begin{equation}\label{eq_13.28} 
\begin{aligned}
&\veps \in {\rm Lip}(C_k^+\setminus \Balleps, \R^2),
\\
&\veps(r,\thetaeps)=(\seps(r,\thetaeps), 
\psi_k^\star(\mathcal T_k(r, \theta_k)))=(1,0),
\\
&\veps(r,0)=(-\hstark(\teps(r)), \psionestar_k(\teps(r),\hstark(\teps(r))))
=(-\hstark(\teps(r)),0),
\\
&\veps(\reps,\theta)
=(\seps(\reps,\theta),\psionestar_k(0,-\seps(\reps,\theta)))
=(\seps(\reps,\theta),\varphi_k(0,-\seps(\reps,\theta))),
\end{aligned}
\end{equation}
for $r \in [r_k, l]$ and $\theta \in [0, \theta_k]$, 
as it follows from
\eqref{eqn:teps}, 
\eqref{eqn:sepsboundarythetazero},  \eqref{eqn:sepsboundarythetaeps}, and \eqref{eqn:psionestarboundary},
where $\varphi_k$ is defined in \eqref{def_varphi_m} (with $k=m$).

Eventually we define $u_k$ on $C_k^-\setminus \Balleps$ as
\begin{align}\label{vepsonCeps-}
\veps(r,\theta)
:=
(u_{k1}(r,2\pi-\theta),
-u_{k2}(r,2\pi-\theta)),
\qquad
r \in [r_k, l], \theta \in [2\pi-\theta_k, 2\pi). 
\end{align}
It turns out
\begin{align*}
&\veps  \in {\rm Lip}(C_k^-\setminus \Balleps, \R^2),
\\
&\veps(r,2\pi-\thetaeps)=(1,0),
\\
&\veps(r,2\pi)=(-\hstark(\teps(r)), -\psionestar_k(\teps(r),\hstark(\teps(r))))
=(-\hstark(\teps(r)),0),
\\
&\veps(\reps,\theta)=(\seps(\reps,2\pi-\theta), -\psionestar_k(0,-\seps(\reps,2\pi-\theta))),
\end{align*}
for $r\in[r_k,l]$, $\theta\in[2\pi-\theta_k,2\pi)$.

The area of the graph of $u_k$ on $C_k \setminus B_{r_k/2}$
will be computed in step 7.
\smallskip

{\it Step 3.} 
Definition of $\veps$ on $\Coneeps\cap(\overline B_{r_k}\setminus {\rm B}_{r_k/2})$
and its area contribution.  

Let $G_{\psi^\star_k(0,\cdot)}\subset\R^2$ (resp.
$G_{\psi^\star(0,\cdot)}\subset\R^2$)
denote the graph of $\psi^\star_k(0,\cdot)$ (resp. 
of $\psi^\star(0,\cdot)$)
on $[-1,1]$.
We 
introduce the retraction map
$\Upsilon:(\R\times [0,+\infty))\setminus O
\subset \R^2_{{\rm target}}
\rightarrow  G_{\psi^\star(0,\cdot)}\subset \R^2_{{\rm target}}$, $O=(0,0)$, defined by
\begin{align*}
\Upsilon(p)=q:=G_{\psi^\star(0,\cdot)}\cap \ell_{Op}\qquad \forall p\in (\R\times [0,+\infty))\setminus O,
\end{align*}
where $\ell_{Op}$ is the line passing through 
$O$ 
 and $p$. Then  $\Upsilon$ is well-defined and it is 
Lipschitz continuous in a neighbourhood of $G_{\psi^\star(0,\cdot)}$ in $\R\times [0,+\infty)$. 
We also define 
$$
\Upsilon_k:G_{\psi^\star_k(0,\cdot)}\rightarrow G_{\psi^\star(0,\cdot)}
$$
as the 
restriction of $\Upsilon$ to $G_{\psi^\star_k(0,\cdot)}$;
see Fig. \ref{fig:graphof_h}.
 As a consequence, since for $k \in \NN$ large enough $G_{\psi_k^\star(0,\cdot)}$ 
is contained in a neighbourhood of $G_{\psi^\star(0,\cdot)}$, 
we have that $\Upsilon_k$ is Lipschitz continuous with Lipschitz constant 
independent of $k$.
Notice also that $\Upsilon_k((-1,0))=(-1,0)$ and $\Upsilon_k((1,0))=(1,0)$. 

We define $u_k$ on $C_k^+\cap (\overline {\rm B}_{r_k}\setminus {\rm B}_{r_k/2})$ setting, 
for $r\in[\frac{r_k}{2},r_k]$ and $\theta\in[0,\theta_k]$,
\begin{align*}
u_k(r,\theta):=\Big(2-\frac{2r}{r_k}\Big)
\Upsilon_k\big(s_k(r_k,\theta),\psi_k^\star(0,-s_k(r_k,\theta))\big)+
\Big(\frac{2r}{r_k}-1\Big)\big(s_k(r_k,\theta),\psi_k^\star(0,-s_k(r_k,\theta))\big).
\end{align*}
We have
$$
u_k(r_k,\theta)=(s_k(r_k,\theta),\psi_k^\star(0,-s_k(r_k,\theta))),
$$
so that $u_k$ glues, on $C_k^+ \cap \partial {\rm B}_{r_k}$, 
 with the values obtained in step 2 (last formula in \eqref{eq_13.28}), and
$$
u_k(r,\theta_k)=(1,0),\qquad u_k(r,0)=(-1,0).
$$
This formula shows that $u_k$ also glues, 
on $C_k^+ \cap \{(r,\theta): r \in [r_k/2,r_k], \theta \in \{0,\theta_k\}\}$, 
with the values obtained in step 2 (second and third formula in \eqref{eq_13.28}). Moreover
\begin{equation}\label{eq:step_3_r_k_2}
	u_k(r_k/2,\theta)=\Upsilon_k\big(s_k(r_k,\theta),\psi_k^\star(0,-s_k(r_k,\theta))\big),
\qquad \theta \in [0,\theta_k].
\end{equation}
In addition,  the derivatives of $u_k$ satisfy, 
for $r\in(\frac{r_k}{2},r_k)$ and $\theta\in(0,\theta_k)$, using \eqref{eq:s_k_r_k_theta},
\begin{align*}
&\partial_ru_k(r,\theta)=-\frac{2}{r_k}\Upsilon_k\big(s_k(r_k,\theta),\psi_k^\star(0,-s_k(r_k,\theta))\big)+\frac{2}{r_k}\big(s_k(r_k,\theta),\psi_k^\star(0,-s_k(r_k,\theta))\big),\\
&\partial_\theta u_k(r,\theta)=
\Big(2-\frac{2r}{r_k}\Big)\nabla \Upsilon_k\big(s_k(r_k,\theta),\psi_k^\star(0,-s_k(r_k,\theta))\big)
\cdot\Big(\frac{2}{\theta_k},-\frac{2}{\theta_k}\partial_{w_2}\psi_k^\star(0,-s_k(r_k,\theta))\Big)\nonumber\\
&\qquad\qquad\qquad +\Big(\frac{2r}{r_k}-1\Big)\Big(\frac{2}{\theta_k},-\frac{2}{\theta_k}\partial_{w_2}
\psi_k^\star(0,-s_k(r_k,\theta))\Big),
\end{align*}
so that 
\begin{align*}
&|\partial_ru_k(r,\theta)|\leq \frac{4}{r_k},\\
&|\partial_\theta u_k(r,\theta)|\leq 
\frac{2(\widehat C+1)}{\theta_k}(|\partial_{w_2}
\psi_k^\star(0,-s_k(r_k,\theta))|+1),
\end{align*}
where $\widehat C$ is a positive constant independent of $k$, which bounds the gradient of $\Upsilon_k$.
Since $\psi_k^\star$ 
is Lipschitz, we deduce that $u_k$ is Lipschitz 
continuous\footnote{The Lipschitz constant of $u_k$ on this set turns out to be unbounded with respect to $k$.} 
on $C_k^+\cap(\Balleps\setminus {\rm B}_{r_k/2})$.

Furthermore the image of $(\frac{r_k}{2},r_k)\times (0,\theta_k)$ through the map $(r,\theta)\mapsto u_k(r,\theta)$
is  the 
region enclosed by $G_{\psi^\star_k}$ and $G_{\psi^\star}$ (with multiplicity $1$). 
The area of this region is infinitesimal as $k\rightarrow +\infty$, 
so that, by the area formula,
$$\int_{r_k/2}^{r_k}\int_0^{\theta_k}r|Ju_k(r,\theta)|d\theta dr=o(1)
\qquad
{\rm as~} k\rightarrow +\infty.
$$
Hence, using the fact that the gradient in polar
coordinates is $(\partial_r,\frac1r\partial_\theta)$, 
we eventually estimate (see also \eqref{eqn:areapolarexpression})
\begin{align}\label{estimate_area_step3}
\int_{r_k/2}^{r_k}\int_0^{\theta_k}r|\mathcal M(\nabla u_k)|d\theta 
dr&\leq 
\int_{r_k/2}^{r_k}\int_0^{\theta_k}\Big(r+\frac{4r}{r_k}+\frac{C}{\theta_k}|\partial_{w_2}
\psi_k^\star(0,1-\frac{2\theta}{\theta_k})|+\frac{C}{\theta_k}\Big)~d\theta dr+o(1),
\nonumber
\\
&=o(1)+C\frac{r_k}{2\theta_k}\int_0^{\theta_k}|\partial_{w_2}\psi_k^\star(0,1-\frac{2\theta}{\theta_k})|d\theta=o(1)
\end{align}
as $k\rightarrow +\infty$.
In the last equality we use that $|\partial_{w_2}\psi^\star_k(0,\cdot)|
\leq|\partial_{w_2}\psi^\star(0,\cdot)|$, which is integrable via the change of variables $w_2=1-\frac{2\theta}{\theta_k}$ (it also makes $\theta_k$ disappear at the denominator in front of the integral 
in \eqref{estimate_area_step3}).

This proves that 
the contribution of area of the graph of 
$u_k$ over $C_k^+\cap(\Balleps\setminus {\rm B}_{r_k/2})$ is infinitesimal as $k\rightarrow +\infty$.

\begin{figure}
	\centering
	\includegraphics[scale=0.6]{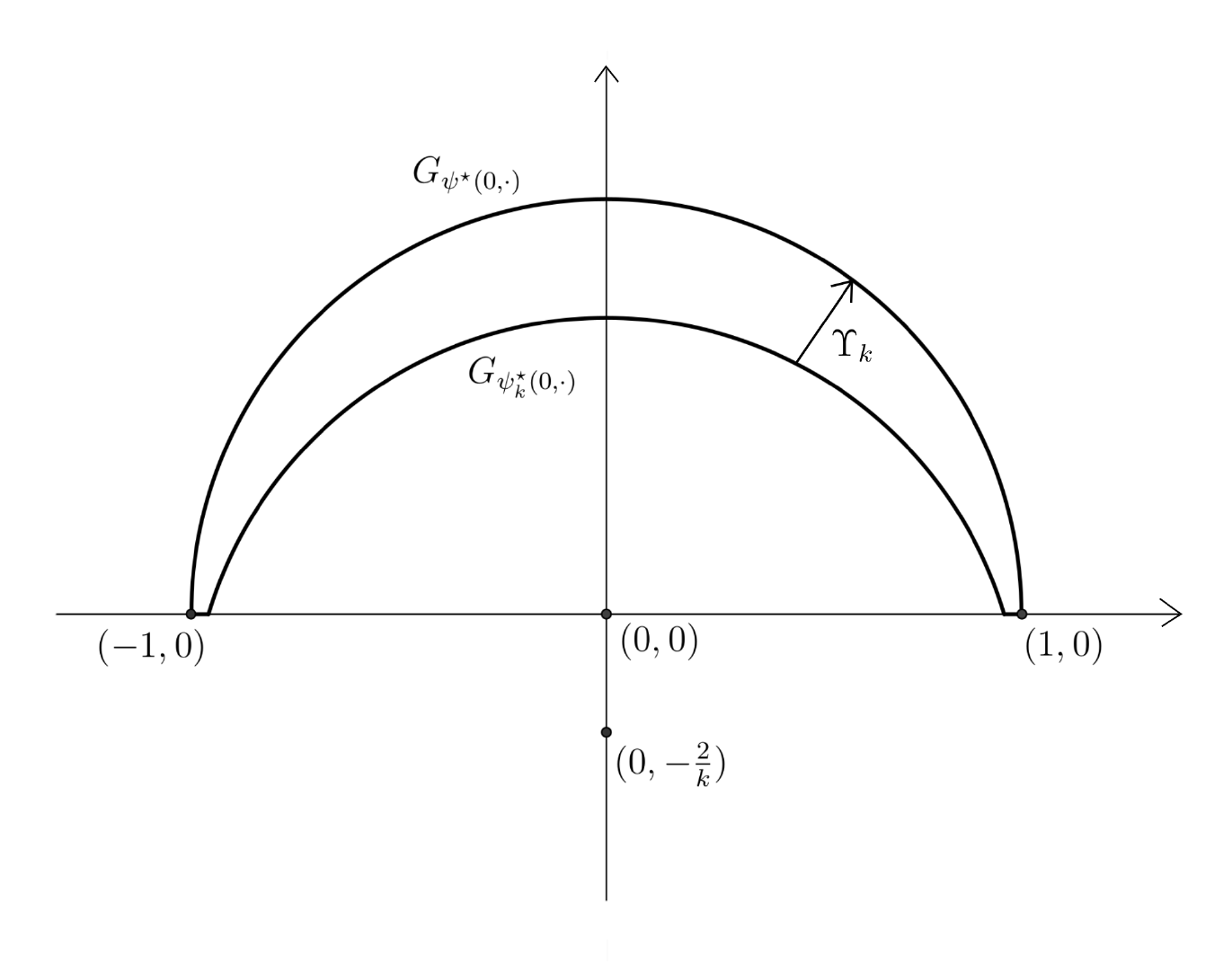}
	\caption{the graphs of the functions $\psi^\star_k(0,\cdot)$ and $\psi^\star(0,\cdot)$; these contain arcs of circle centered at $(0,0)$ and $(0,-\frac2k)$ respectively. The map $\Upsilon_k$ is emphasized. This turns out to be the restriction of $x\mapsto \frac{x}{|x|}$ on $\psi^\star_k(0,\cdot)$.}
	\label{fig:graphof_h}
\end{figure}

Eventually, for $r\in[r_k/2,r_k]$, $\theta\in[2\pi-\theta_k,2\pi)$, we set
\begin{align}
u_k(r,\theta):=(u_{k1}(r,2\pi-\theta),-u_{k2}(r,2\pi-\theta)).
\end{align}
Observe that, thanks to \eqref{vepsonCeps-}, $u_k$ is continuous on $\partial {\rm B}_{r_k}$, and 
similar estimates as in \eqref{estimate_area_step3} for the area hold on $(\sourcedisk_{r_k}
\setminus \sourcedisk_{r_k/2})\cap C_k^-$.
\smallskip

{\it Step 4.} Definition of $\veps$ on $\Coneeps\cap \sourcedisk_{r_k/2}$
and its area contribution.

We start with the construction of $u_k$ on $C_k^+\cap {\rm B}_{r_k/2}$. 
For $r\in[0,r_k/2)$  and $\theta\in [0,\theta_k]$ we set
\begin{align}\label{u_k_step4}
u_k(r,\theta):=\Upsilon_k\Big(\frac{4r\theta}{r_k\theta_k}-1,\psi_k^\star
\big(0,1-\frac{4r\theta}{r_k\theta_k}\big)\Big).
\end{align}
First we observe that
$$
u_k\Big(\frac{r_k}{2},\theta\Big):=\Upsilon_k\Big(\frac{2\theta}{\theta_k}-1,\psi_k^\star
\big(0,1-\frac{2\theta}{\theta_k}\big)\Big), \qquad \theta \in (0,\theta_k),
$$
so that  $u_k$ is continuous on $C_k^+ \cap
\partial {\rm B}_{r_k/2}$ (see \eqref{eq:step_3_r_k_2} and \eqref{eq:s_k_r_k_theta}), and  
\begin{align}\label{eq:u_k_lip_cont}
&u_k(r,\theta_k)=\Upsilon_k\Big(\frac{4r}{r_k}-1,\psi_k^\star(0,1-\frac{4r}{r_k})\Big),
\\
&u_k(r,0)=(-1,\psi_k^\star(0,1))=(-1,0).
\end{align}
Direct computations lead to the following estimates:
\begin{align}
&|\partial_ru_k(r,\theta)|\leq \widehat C\frac{4\theta}{r_k\theta_k}\Big(1+|\partial_{w_2}\psi^\star_k(0,1-\frac{4\theta r}{r_k\theta_k})|\Big),\\
&|\partial_\theta u_k(r,\theta)|\leq \widehat C\frac{4r}{r_k\theta_k}\Big(1+|\partial_{w_2}\psi^\star_k(0,1-\frac{4\theta r}{r_k\theta_k})|\Big),
\end{align}
where $\widehat C$ is the constant bounding the gradient of $\Upsilon_k$ as in 
step 3.
Finally, since by \eqref{u_k_step4} $u_k$ takes values in $\mathbb S^1\subset\R^2$, we have
$Ju_k(r,\theta)=0$ for all $r\in (0,r_k/2)$, $\theta\in[0,\theta_k]$. 
Hence, the area of the graph of $u_k$ on $C_k^+\cap {\rm B}_{r_k/2}$ is 
\begin{align*}
\int_0^{r_k/2}\int_0^{\theta_k}r|\mathcal M(\nabla u_k)(r,\theta)|~d\theta 
dr\leq \int_0^{r_k/2}\int_0^{\theta_k} (r+C) 
+\frac{C}{\theta_k}+\frac{Cr}{r_k}(1+\frac{1}{\theta_k})|\partial_{w_2}\psi^\star_k(0,1-\frac{4\theta r}{r_k\theta_k})|d\theta dr,
\end{align*}
where 
$C$ is a positive constant independent of $k$. Exploiting 
that $|\partial_{w_2}\psi^\star_k(0,\cdot)|\leq|\partial_{w_2}\psi^\star(0,\cdot)|$, we can 
estimate the right-hand side of the previous formula as follows:
\begin{equation}
\label{estimate_area_step4}
\begin{aligned}
&C\int_0^{r_k/2}\int_0^{\theta_k}
\frac{r}{r_k}
\Big(1+\frac{1}{\theta_k}\Big)|\partial_{w_2}\psi^\star(0,1-\frac{4\theta r}{r_k\theta_k})|d\theta dr+o(1)
\\
\leq&C\int_0^{r_k/2}\int_{-1}^1\theta_k
\Big(1+\frac{1}{\theta_k}\Big)|\partial_{w_2}\psi^\star(0,w_2)|dw_2dr+o(1)\\
\leq & C\int_0^{r_k/2}(\theta_k+1)dr+o(1)=o(1),
\end{aligned}
\end{equation}
where $o(1)\rightarrow0$ as $k\rightarrow +\infty$, and $C$ is a positive constant independent of $k$ which might change from line to line.

In $C_k^-\cap \sourcedisk_{r_k/2}$ we set, for $r\in [0,r_k/2)$, $\theta\in[2\pi-\theta_k,2\pi)$, 
\begin{align*}
u_k(r,\theta):=(u_{k1}(r,2\pi-\theta),-u_{k2}(r,2\pi-\theta)).
\end{align*}
 
Similar estimates as in \eqref{estimate_area_step4} 
for the area hold on $C_k^- \cap \sourcedisk_{r_k/2}$.

\medskip

{\it Step 5.} Definition of $\veps$ on $\Teps$
and its area contribution.

We first construct $u_k$ on $\Teps \cap \{(r,\theta): r\in [0,r_k/2],\theta \in [\thetaeps, \thetaepsbar]\}$.
We define $\beta_k: [0,r_k/2]\times [\theta_k,\overline\theta_k]\rightarrow [0,\pi]$ as
\begin{align*}
\beta_k(r,\theta):=\frac{\overline\theta_k-\theta}{\overline\theta_k-\theta_k}\alpha_k(r)+(1-\frac{\overline\theta_k-\theta}{\overline\theta_k-\theta_k})\big(\frac{2r}{r_k}(\overline\theta_k-\pi)+\pi\big),
\end{align*}
where 
$$\alpha_k(r):=\arccos\big(\Upsilon_{k1}(\frac{4r}{r_k}-1,\psi^\star_k(0,1-\frac{4r}{r_k}))\big), \qquad
r \in [0,r_k/2].
$$
Notice that $\alpha_k$ is decreasing and takes values in 
$[0,\pi]$.
Therefore we set
\begin{align*}
u_k(r,\theta):=\big(\cos(\beta_k(r,\theta)),\sin (\beta_k(r,\theta))\big),
\qquad
(r,\theta)\in [0,r_k/2]\times [\theta_k,\overline\theta_k].
\end{align*}
One checks that $\beta_k(r,\theta_k)= \alpha_k(r)$,
$\beta_k(r,\overline \theta_k) = \frac{2r}{r_k}(\overline \theta_k-\pi)+\pi$
(see also \eqref{eqn:vepsoutofthecone}), and   
\begin{align*}
&\alpha_k(r_k/2)=0,\\
&u_k(r_k/2,\theta)=\big(\cos((1-\frac{\overline\theta_k-\theta}{\overline\theta_k-\theta_k})\overline\theta_k),\sin((1-\frac{\overline\theta_k-\theta}{\overline\theta_k-\theta_k})\overline\theta_k)\big),\\
&u_k(r,\theta_k)=\big(\cos(\alpha_k(r)),\sin (\alpha_k(r))\big)=\Upsilon_k(\frac{4r}{r_k}-1,\psi^\star_k(0,1-\frac{4r}{r_k})),\\
&u_k(r,\overline\theta_k)=\big(\cos(\frac{2r}{r_k}(\overline\theta_k-\pi)+\pi),\sin(\frac{2r}{r_k}(\overline\theta_k-\pi)+\pi)\big),
\end{align*}
so that $u_k$ is continuous on $\{\theta \in \{\theta_k, \overline \theta_k\}, \;r\in [0,r_k/2]\} \cap \Omega$,
see \eqref{eqn:vepsboundaryinBeps} and \eqref{eq:u_k_lip_cont}.

Notice also that $u_k$ is continuous 
at $(0,0)\in \R^2$ and $u_k(0,0)=(-1,0)$.
Finally, since $u_k$ takes values in $\mathbb S^1$, the
 determinant of its Jacobian vanishes, 
so that in order to estimate the area contribution
of the graph of $u_k$ in $\Teps \cap \{(r,\theta):r\in [0,r_k/2],\; \theta \in [\thetaeps, \thetaepsbar]\}$ it is sufficient to estimate the derivatives of $u_k$. We have
\begin{align*}
&|\partial_ru_k(r,\theta)|= |\partial_r \beta_k(r,\theta)|\leq |\partial_r\alpha_k(r)|+\frac{2\pi}{r_k},\\
&|\partial_\theta u_k(r,\theta)|=|\partial_\theta\beta_k(r,\theta)|\leq \frac{|\alpha_k(r)|}{\overline\theta_k-\theta_k}+\frac{\pi}{\overline\theta_k-\theta_k}\leq \frac{2\pi}{\overline\theta_k-\theta_k}.
\end{align*}
Therefore 
\begin{align}\label{estimate_area_step5a}
&\int_{0}^{r_k/2}\int_{\theta_k}^{\overline\theta_k}
r|\mathcal M(\nabla u_k)(r,\theta)|d\theta dr\leq \int_{0}^{r_k/2}
\int_{\theta_k}^{\overline\theta_k} 
\Big[\frac{r_k}{2}
(1+|\partial_r\beta_k(r,\theta)|)+|\partial_\theta \beta_k(r,\theta)|\Big]
~d\theta dr\nonumber\\
&\leq o(1)+
\int_{0}^{r_k/2}\int_{\theta_k}^{\overline\theta_k}\left(
\frac{r_k}{2}|\partial_r\alpha_k(r)|+\pi +\frac{2\pi }{\overline\theta_k-\theta_k}\right)d\theta dr=o(1),
\end{align}
with $o(1)\rightarrow0$ as $k\rightarrow +\infty$. 
Notice that the integral of $|\partial_r\alpha_k(r)|$ with respect to $r$ can be computed via the fundamental integration theorem, since $\alpha_k$ 
is monotone.

In $T_k\cap \{(r,\theta):  r\in [0,r_k/2],\theta \in [2\pi-\thetaepsbar,2\pi-\thetaeps]\}$ we set
\begin{align*}
u_k(r,\theta):=(u_{k1}(r,2\pi-\theta),-u_{k2}(r,2\pi-\theta)).
\end{align*}
We now define $u_k$ on $\Teps \cap \{(r,\theta):  r\in(r_k/2,l),\theta \in [\thetaeps, \thetaepsbar]\}$. We set
\begin{equation*}
u_k(r,\theta):=\big(\cos((1-\frac{\overline\theta_k-\theta}{\overline\theta_k-\theta_k})\overline\theta_k),\sin((1-\frac{\overline\theta_k-\theta}{\overline\theta_k-\theta_k})\overline\theta_k)\big).
\end{equation*}
Then  $\veps \in \textrm{Lip}(\Teps,\Sone)$, and 
\begin{align*}
& \veps(r,\thetaeps)=(1,0),\qquad \veps(r,\thetaepsbar) = (\cos \thetaepsbar,\sin \thetaepsbar)\quad \text{ for  } r \in (\reps/2,l),
\\
&\partial_ru_k(r,\theta)= 0,\\
&\partial_\theta u_k(r,\theta)=\frac{\overline \theta_k}{\overline \theta_k-\theta_k}\Big(-\sin((1-\frac{\overline\theta_k-\theta}{\overline\theta_k-\theta_k})\overline\theta_k),\cos((1-\frac{\overline\theta_k-\theta}{\overline\theta_k-\theta_k})\overline\theta_k)\Big).
\end{align*}
Hence 
\begin{align}\label{estimate_area_step5b}
&\int_{r_k/2}^l\int_{\theta_k}^{\overline\theta_k}
r|\mathcal M(\nabla u_k)(r,\theta)|d\theta dr\leq 
\int_{r_k/2}^l\int_{\theta_k}^{\overline\theta_k}\Big(
r+\frac{\overline \theta_k}{\overline \theta_k-\theta_k}\Big)~d\theta dr=o(1)
\end{align}
as $k\rightarrow +\infty$. 

Finally in  
$T_k\cap \{(r,\theta): r\in(r_k/2,l), \theta \in [2\pi-\thetaepsbar,2\pi-\thetaeps]\}$ we set
\begin{align*}
u_k(r,\theta) := (u_{k1}(r,2\pi-\theta),-u_{k2}(r,2\pi-\theta)).
\end{align*}
Similar estimates as in \eqref{estimate_area_step5a}, 
\eqref{estimate_area_step5b} for the area hold on 
$T_k\cap \{(r,\theta): r\in(0,r_k/2), \theta \in [2\pi-\thetaepsbar,2\pi-\thetaeps]\}$,
$T_k\cap \{(r,\theta): r\in(r_k/2,l), \theta \in [2\pi-\thetaepsbar,2\pi-\thetaeps]\}$,
respectively. 

{\it Step 6.} We claim that  
\begin{equation}\label{eqn:integraloutofceps}
\int_{\BallR \setminus 
(C_k\cup T_k)
} \vert \M (\nabla \veps ) \vert dx \longrightarrow \int_\BallR \vert \M (\nabla \vmap)\vert  ~dx \qquad \text{ as }  k \to +\infty,
\end{equation}
where we recall that $C_k\cup T_k = \{(r,\theta)\in \BallR: r\in[0,\longR), \theta \in [0,\thetaepsbar]\cup[2\pi-\thetaepsbar,2\pi)\}.$ 

Indeed, on $\BallR \setminus (C_k\cup T_k)$
the maps $\veps$ and $\vmap$ take values in the circle $\Sone$, hence 
\begin{equation*}
\det (\nabla \veps)=0,\qquad \det (\nabla\vmap )
=0,\qquad \text{ in } \BallR \setminus (C_k\cup T_k).
\end{equation*}
Thus  
\begin{equation*}
\int_{\BallR \setminus (C_k\cup T_k)} 
\vert\M(\nabla \veps)-\M(\nabla \vmap)\vert ~ d x 
\leq \sum _{i=1,2}\int_{\BallR \setminus (C_k\cup T_k)} \vert \nabla (\veps_i - \vmap _i) \vert ~d x.
\end{equation*}
From \eqref{eqn:vepsoutofthecone}, we have 
\begin{equation}\label{eqn:vepsminusvoutofthecone_rderiv}
\begin{aligned}
&\vert \partial_r (\veps -\vmap) \vert
   = 0 \qquad \textrm{ in }\quad\BallR \setminus (\Balleps \cup \Coneeps \cup \Teps),
\\
&\vert \partial_r (\veps -\vmap) \vert\leq \frac{\pi}{\reps} 
\ \quad \textrm{ in }\quad\Balleps\setminus(\Coneeps \cup \Teps),
\\
&\vert \partial_\theta (\veps -\vmap) \vert
   = 0 \qquad \text{ in }\quad\BallR \setminus (\Balleps \cup \Coneeps \cup \Teps),
\\
&\vert \partial_\theta (\veps -\vmap) 
\vert\leq 2 \qquad \textrm{ in }\quad\Balleps \setminus ( \Coneeps\cup \Teps).
\end{aligned}
\end{equation}
Our previous remarks and the fact that $\reps, \thetaeps , (\thetaepsbar -\thetaeps)\to0^+$ as $k \to +\infty$, imply \eqref{eqn:integraloutofceps}. 
\smallskip

{\it Step 7.} 
We know from \eqref{estimate_area_step3}, \eqref{estimate_area_step4}, \eqref{estimate_area_step5a}, and  \eqref{estimate_area_step5b}, that the integral of $\vert\mathcal M(\nabla u_k)\vert$ is infinitesimal as $k\rightarrow 
+\infty$, on the region $(\sourcedisk_{r_k}\cap C_k)\cup T_k$. Therefore it 
remains to compute the area of the graphs of $u_k$ in the region $C_k\setminus {\rm B}_{r_k}$.
We claim that this contribution is 
\begin{equation}\label{eq:this_contribution_will_be}
\lim_{k \to +\infty}\int_{\Coneeps\setminus\Balleps} 
\vert \M (\nabla \veps ) \vert~ dx \leq 2
\FBl(\hstar,\psionestar)=\Aone(\psionestar,\Omegahstar).
\end{equation}
To prove this, we start to compute the area of the graph of $\veps$ restricted to $C_k^+\setminus \Balleps$.
From \eqref{vepsonCeps}, \eqref{eqn:tepsderiv}, \eqref{eqn:sepsrderiv} and \eqref{eqn:sepsthetaderiv}, we have
\begin{equation}\label{eq:comput_first_derivatives}
\begin{aligned}
&\partial_r \veps_1=\Big(\frac{\theta}{\thetaeps}-1\Big)
\teps' 
\hstark'
=
\frac{\longR}{\longR -\reps} 
\Big(\frac{\theta}{\thetaeps}-1\Big) 
\hstark',
\\
& 
\partial_\theta \veps_1=\frac{1 + \hstark}{\thetaeps},
\\
&\partial_r \veps_2=
\teps'\Big[\Big(1-\frac{\theta}{\thetaeps}\Big) 
\hstark' \partial_\scoord \psionekstar 
+ \partial_\axialcoordofcylinder \psionekstar\Big]=
\frac{\longR}{\longR -\reps} 
\Big[ \Big(1-\frac{\theta}{\thetaeps}\Big) 
\hstark'\partial_\scoord \psionekstar + \partial_\axialcoordofcylinder \psionekstar\Big],
\\
&\partial_\theta \veps_2
=
-\Big[\frac{1+ \hstark}{\thetaeps}\Big] 
\partial_\scoord\psionekstar,
\\
& 
\partial_r \veps_1
\partial_\theta \veps_2 - 
\partial_\theta \veps_1
\partial_r \veps_2 = - \left(
\frac{1+\hstark}{\thetaeps}
\right)\frac{\longR}{\longR -\reps} 
\partial_\axialcoordofcylinder\psionekstar,
\end{aligned}
\end{equation}
where $\hstark'$ denotes the 
derivative of $\hstark$ with respect
to $\axialcoordofcylinder$, 
 $\hstark, \hstark'$ 
are evaluated at 
$\teps(r)$, and the two partial derivatives 
$\partial_\scoord\psionekstar$, 
$\partial_\axialcoordofcylinder \psionekstar$
of $\psionekstar$ with respect to $\scoord, \axialcoordofcylinder$ 
are evaluated at $(\teps(r),
-\seps(r,\theta))$. Note carefully
that, in the computation of the Jacobian, the terms containing $\partial_\scoord \psionekstar$ cancel each other.

Notice that, since $\hstark$ is convex, its derivative is nonincreasing, and therefore
$\int_{\reps}^{\longR} \vert \hstark'\vert~dr < +\infty$.
As a consequence of \eqref{eq:comput_first_derivatives}, from \eqref{eqn:areapolarexpression}, we have 
\begin{equation*}
\begin{aligned}
& \area (\veps,C_k^+\setminus \Balleps)
\\
= &
\int_{\reps}^{\longR}\int_{0}^{\thetaeps}r
 \Bigg\{
1
+\left(\frac{\longR}{\longR-\reps}\right)^2
\left(\frac{\theta}{\thetaeps}-1\right)^2(\hstark')^2
\\
& +\left(\frac{\longR}{\longR-\reps}\right)^2
\left[
\big(\frac{\theta}{\thetaeps}-1\big)^2(\hstark')^2(\partial_{\scoord}
\psionekstar)^2 
+
2\big(1-\frac{\theta}{\thetaeps}\big)\hstark'
\partial_{\scoord} \psionekstar
\partial_{\axialcoordofcylinder} \psionekstar
+ (\partial_{\axialcoordofcylinder} \psionekstar)^2
\right]
\\
&
 +\frac{1}{r^2}\left(
\frac{1+ \hstark}{\thetaeps}
\right)^2
\left(
1+(\partial_{\scoord}\psionekstar)^2
+
\left(\frac{\longR}{\longR-\reps}\right)^2
(\partial_{\axialcoordofcylinder} \psionekstar)^2
\right)\Bigg\}^\frac{1}{2}
~  dr d\theta,
\end{aligned}
\end{equation*}
where 
$\partial_\scoord \psionekstar$, $\partial_{\axialcoordofcylinder}
\psionekstar$ 
are evaluated at $(\teps(r),-\seps(r,\theta))$, 
and $\hstark$, $\hstark'$ are evaluated at $\teps(r)$.
Now we use the change of variable \eqref{eqn:changeofvariableT}:
from \eqref{eqn:changeofvariabledeterminant}, we have
\begin{equation*}
\begin{aligned}
& \area (\veps,C_k^+\setminus \Balleps)
\\
=&
\int_{0}^{\longR}
\int^{\hstark(\axialcoordofcylinder)}_{-1}
 \left( \frac{\longR-\reps}{\longR}\right)
 \left( \frac{\thetaeps}{1+\hstark}\right)
 \invr(\axialcoordofcylinder)
 \Bigg\{
1 
+
\big(\frac{\longR}{\longR-\reps}\big)^2
\left(\frac{\invtheta(\axialcoordofcylinder,\scoord)}{\thetaeps}-1\right)^2(\hstark')^2 
\\
&
+\left(\frac{\longR}{\longR-\reps}\right)^2
\Big[
\big(1-\frac{\invtheta(\axialcoordofcylinder,\scoord)}{\thetaeps}\big)^2
(\hstark')^2(\partial_{\scoord} \psionekstar)^2 
+
2\big(1-\frac{\invtheta(\axialcoordofcylinder,\scoord)}{\thetaeps}\big)
\hstark' \partial_{\scoord}\psionekstar
\partial_{\axialcoordofcylinder}\psionekstar
+
(\partial_{\axialcoordofcylinder}\psionekstar)^2
\Big]
\\
&+ \frac{1}{(\invr(\axialcoordofcylinder))^2}
\big(\frac{1+ \hstark}{\thetaeps}\big)^2\Big( 1
+(\partial_{\scoord}\psionekstar)^2
+
\big(\frac{\longR}{\longR-\reps}\big)^2
(\partial_{\axialcoordofcylinder}
\psionekstar
)^2\Big)
\Bigg\}^\frac{1}{2}
~  d\scoord d\axialcoordofcylinder,
\end{aligned}
\end{equation*}
where $\invr(\axialcoordofcylinder)$, 
$\invtheta(\axialcoordofcylinder,\scoord)$ 
are defined in \eqref{eqn:rfn}, \eqref{eqn:thetafn}, $\hstark'$ is evaluated at $w_1$, and $\partial_{w_1} \psi^\star_k$ and $\partial_{w_2} \psi^\star_k$ are evaluated at $(w_1,w_2)$.
Therefore
\begin{equation}\label{last_area}
 \area (\veps,C_k^+\setminus \Balleps)
= 
\int_{0}^{\longR}\int^{\hstark(\axialcoordofcylinder)}_{-1}
\Big\{\textrm{I}_k
+
\textrm{II}_k
+
\textrm{III}_k
+
\textrm{IV}_k
+
\textrm{V}_k
+
\textrm{VI}_k
\Big\}^{\frac{1}{2}} ~d\scoord d\axialcoordofcylinder,
\end{equation}
where
\begin{equation*}
\begin{cases}
\textrm{I}_k = 
\left( \frac{\longR-\reps}{\longR}\right)^2
 \left( \frac{\thetaeps}{1+\hstark}\right)^2(\invr(\axialcoordofcylinder))^2,
\\
\\
\textrm{II}_k = 
 \left( \frac{\thetaeps}{1+\hstark}\right)^2
 \left(1-\frac{\invtheta(\axialcoordofcylinder,\scoord)}{\thetaeps}\right)^2
(\invr(\axialcoordofcylinder))^2(\hstark')^2,
\\
\\
\textrm{III}_k = 
\left( \frac{\thetaeps}{1+\hstark}\right)^2(\invr(\axialcoordofcylinder))^2
\Big[
\big(1-\frac{\invtheta(\axialcoordofcylinder,\scoord)}{\thetaeps}
\big)^2(\hstark')^2(\partial_{\scoord}\psionekstar)^2 
\\
\\
\qquad \qquad 
\qquad \qquad 
\qquad \qquad 
+2\big(1-\frac{\invtheta(\axialcoordofcylinder,\scoord)}{\thetaeps}\big)
\hstark' \partial_{\scoord}\psionekstar
\partial_{\axialcoordofcylinder}
\psionekstar
+
(\partial_{\axialcoordofcylinder}
\psionekstar)^2
\Big],
\\
\\
\textrm{IV}_k  =
\big( \frac{\longR-\reps}{\longR}\big)^2,
\\
\\
\textrm{V}_k  =
\big( \frac{\longR-\reps}{\longR}\big)^2
(\partial_\scoord \psionekstar)^2,
\\ 
\\
\textrm{VI}_k  =
(\partial_{\axialcoordofcylinder}
\psionekstar
)^2.
\end{cases}
\end{equation*}
Since 
$\lim_{k\to \infty} \frac{l-\reps}{l}=1$
and $\lim_{k \to +\infty} \thetaeps=0$, we deduce
from  \eqref{eqn:rfn}, \eqref{eqn:thetafn}, 
\begin{align*}
\lim_{k \to +\infty}\invr(\axialcoordofcylinder) 
= \axialcoordofcylinder, \qquad  
\lim_{k \to +\infty}
\frac{\invtheta(\axialcoordofcylinder,\scoord)}{\thetaeps} = 
\frac{\hstar(\axialcoordofcylinder)-\scoord}{1+\hstar(\axialcoordofcylinder)}.
\end{align*}
%
Therefore we see that 
$$\int_{0}^{\longR}\int^{\hstark(\axialcoordofcylinder)}_{-1}(\textrm{I}_k)^{\frac12}+(\textrm{II}_k)^{\frac12}dw_2dw_1=o(1),$$
as $k\rightarrow +\infty$.
Moreover 
\begin{equation}\label{eq:III_k}
\int_{0}^{\longR}\int^{\hstark(\axialcoordofcylinder)}_{-1}(\textrm{III}_k)^{\frac12}dw_2dw_1=o(1)
\end{equation}
as $k\rightarrow +\infty$.
Indeed 
we may estimate
$$\int_{0}^{\longR}\int^{\hstark(\axialcoordofcylinder)}_{-1}(\textrm{III}_k)^{\frac12}dw_2dw_1\leq C\theta_k\int_{0}^{\longR}\int^{\hstark(\axialcoordofcylinder)}_{-1}|\hstark'(w_1)||\partial_{w_2}\psi^\star_k(w_1,w_2)|+|\partial_{w_2}\psi^\star_k(w_1,w_2)|dw_2dw_1,$$
and using that $|\hstark'(w_1)|\leq 2k$ (see \eqref{def_h_k}), 
if we assume \eqref{eq:k_theta_k_to_zero}, {\it i.e.}, 
$\theta_k k\rightarrow0$, then 
\eqref{eq:III_k} follows, 
since the $BV$-norm of $\psi_k^\star$ is bounded uniformly with respect to  $k$.

Hence, from \eqref{last_area},
\begin{align}\label{eq:almost_the_end}
\area (\veps,C_k^+\setminus \Balleps)
&\leq
\int_{0}^{\longR}\int^{\hstark(\axialcoordofcylinder)}_{-1}
\Big\{
\textrm{IV}_k
+
\textrm{V}_k
+
\textrm{VI}_k
\Big\}^{\frac{1}{2}} ~d\scoord d\axialcoordofcylinder+o(1)\nonumber\\
&\leq\int_0^\longR \int^{\hstark(\axialcoordofcylinder)}_{-1}
\sqrt{1+(\partial_{\axialcoordofcylinder}
	\psionekstar)^2 +(\partial_{w_2}
	\psionekstar)^2}~d\scoord dw_1+o(1)\nonumber\\
&=\areaonecod(\psi^\star_k, SG_{h^\star}\cap R_l)+o(1)=\frac12\areaonecod(\psi^\star_k, SG_{h^\star})+o(1)
\end{align} 
as $k\rightarrow +\infty$.
Then taking the limit as $k \to +\infty$ in \eqref{eq:almost_the_end}, and using Lemma \ref{lem:properties_of_psi_m} (iii),  we get 
\begin{equation}\label{eq:step_8}
\lim_{k \to +\infty} \area(\veps, C_k^+\setminus \Balleps)
 \leq\Aone(\psionestar, \Omegahstar)=\FB(\hstar,\psionestar),
\end{equation}
where the last equality follows from \eqref{eqn:FAequalareaofhstar}.

\smallskip
{\it Step 8.} Conclusion.
Notice that $\veps \in {\rm Lip}(\Omega, \R^2)$,
and $u_k \to \vmap$  in $L^1(\BallR,\R^2)$. 
Inequality 
\eqref{eq:upper_bound_recovery} follows from 
\eqref{eqn:integraloutofceps} (which gives the term $\int_\Omega|\mathcal M(\nabla u)|dx$), from  \eqref{eq:this_contribution_will_be} (which gives the second term in \eqref{eq:upper_bound_recovery}), and from estimates \eqref{estimate_area_step3}, \eqref{estimate_area_step4}, \eqref{estimate_area_step5a}, and \eqref{estimate_area_step5b}, showing that all the other contributions are negligible. 
\end{proof}

\begin{cor}[\textbf{Minimizers for $\longR$ large enough}]
For $\longR$ large enough, a
 solution to
the minimum
problem on the right-hand side of \eqref{doubling} 
is given by $\h\equiv -1$ and $\psi \equiv 0$.
\end{cor}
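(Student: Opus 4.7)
The plan is to read off this corollary essentially as a consequence of the two main theorems of the paper combined with the classical result \eqref{valueA_llarge} of Acerbi--Dal Maso for large $\longR$. The pair $(h\equiv-1, \psi\equiv 0)$ is already a competitor in $\domalaa$, and by \eqref{eq:FB_minus_one_zero} its value is $\FB(-1,0)=\pi$. So the only real content to check is that for $\longR$ large the infimum of $\FB$ is exactly $\pi$.

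To see this, I would invoke \eqref{valueA_llarge} from \cite{AcDa:94}, which gives, for $\longR$ sufficiently large,
\begin{equation*}
\relarea(u,\sourcedisk_\longR)=\int_{\sourcedisk_\longR}\sqrt{1+|\nabla u|^2}\,dx+\pi.
\end{equation*}
On the other hand, Theorem \ref{teo:step1} and Theorem \ref{Thm:maintheorem}, together with the doubling identity \eqref{doubling} that relates $\FBl$ on $R_\longR$ and $\FB$ on $R_{2\longR}$, yield
\begin{equation*}
\relarea(u,\sourcedisk_\longR)=\int_{\sourcedisk_\longR}|\mathcal M(\nabla u)|\,dx+\inf_{(h,\psi)\in\domalaa}\FB(h,\psi).
\end{equation*}
Since $Ju=0$ a.e., $|\mathcal M(\nabla u)|=\sqrt{1+|\nabla u|^2}$, and comparing the two expressions forces $\inf_{\domalaa}\FB=\pi$ whenever $\longR$ is large enough for \eqref{valueA_llarge} to apply.

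Combined with the a priori upper bound $\FB(-1,0)=\pi$ from \eqref{eq:FB_minus_one_zero} (valid for every $\longR>0$, see \eqref{eqn:upperboundofA}), this shows that the infimum $\pi$ is attained at $(h,\psi)=(-1,0)$, proving the corollary. There is essentially no obstacle here: the only nontrivial ingredient is \eqref{valueA_llarge}, which is not proved in the excerpt but quoted from \cite{AcDa:94}; everything else is a direct reading of the equalities already established.
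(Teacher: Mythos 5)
Your proposal is correct and follows the same strategy as the paper: both rest on the Acerbi--Dal Maso formula \eqref{valueA_llarge} for $\longR$ large, the lower bound of Theorem \ref{teo:step1} in the form \eqref{eq:restate_theorem}, and the explicit value $\FB(-1,0)=\pi$ from \eqref{eq:FB_minus_one_zero}. The only cosmetic difference is that you also invoke the upper bound Theorem \ref{Thm:maintheorem} to obtain an equality for $\relarea$, whereas the lower bound alone already gives $\pi\geq\inf_{\domalaa}\FB$, which together with $\FB(-1,0)=\pi$ is all that is needed.
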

\begin{proof}
Recall that for $\longR$ large enough, we have
$$\relarea(\vortexmap,\Omega)= \int_{\BallR}\vert \M(\nabla \vmap)\vert d x + \pi, $$
see \cite{AcDa:94}. The assertion then follows
from Theorem \ref{teo:step1} and 
\eqref{eq:restate_theorem}. 
\end{proof}
%

\section*{Acknowledgements}
The first and third authors acknowledge the support
of the INDAM/GNAMPA.
The first two authors are
grateful to ICTP (Trieste), where part of this paper was written.

\addcontentsline{toc}{section}{References}

\end{document}